\newtheorem{theorem}{Theorem}[section]
\newtheorem{conjecture}[theorem]{Conjecture}
\newtheorem{proposition}[theorem]{Proposition}
\newtheorem{corollary}[theorem]{Corollary}
\newtheorem{lemma}[theorem]{Lemma}
\newtheorem{claim}[theorem]{Claim}
\theoremstyle{definition}
\newtheorem{definition}[theorem]{Definition}
\newtheorem{remark}[theorem]{Remark}
\newtheorem{example}[theorem]{Example}
\newcommand{\C}{\mathbb{C}}
\newcommand{\Z}{\mathbb{Z}}
\newcommand{\PP}{\mathbb{P}}
\newcommand{\R}{\mathbb{R}}
\def\CP{\mathbb{C}{\rm P}}
\def\tr{{\rm tr}\,}
\def\endproof{{$\Box$}}
\begin{document}

\def\CP{\mathbb{C}{\rm P}}
\def\tr{{\rm tr}\,}
\def\endproof{{$\Box$}}

\title{$S^{1}$-invariant symplectic hypersurfaces in dimension $6$ and the Fano condition}
\date{\today}
\author{Nicholas Lindsay\footnote{Supported by the Engineering and Physical Sciences Research Council [EP/L015234/1].  
The EPSRC Centre for Doctoral Training in Geometry and Number Theory (The London School of Geometry and Number Theory), University College London.  PhD student at King's College London.}  and Dmitri Panov \footnote{Supported by a Royal Society University Research Fellowship.}}

\maketitle

\begin{abstract}

We prove that any symplectic Fano $6$-manifold $M$ with a Hamiltonian $S^1$-action is simply connected and satisfies $c_1 c_2(M)=24$. This is done by showing that the fixed submanifold $M_{\min}\subseteq M$ on which the Hamiltonian attains its minimum is diffeomorphic to either a del Pezzo surface, a $2$-sphere or a point. In the case when $\dim(M_{\min})=4$, we use the fact that symplectic Fano $4$-manifolds are symplectomorphic to del Pezzo surfaces. The case when $\dim(M_{\min})=2$ involves a study of $6$-dimensional Hamiltonian $S^1$-manifolds with $M_{\min}$ diffeomorphic to a surface of positive genus. By exploiting an analogy with the algebro-geometric situation we construct in each such $6$-manifold an $S^1$-invariant symplectic hypersurface ${\cal F}(M)$ playing the role of a smooth fibre of a hypothetical Mori fibration over $M_{\min}$. This relies upon applying Seiberg-Witten theory to the resolution of symplectic $4$-orbifolds occurring as the reduced spaces of $M$.  
 
\end{abstract}

\tableofcontents

\section{Introduction}  

In this article, we study compact symplectic manifolds $(M, \omega)$ for which $c_1(M) = [\omega]$ in $H^2(M, \R)$. These are called \emph{symplectic Fano manifolds} by analogy with complex projective Fano manifolds. An important question is to determine to what extent symplectic Fano manifolds differ from complex projective Fanos.

In dimension $4$, a theorem of Ohta and Ono \cite[Theorem 1.3]{OhtaOno} building on the works of Gromov \cite{Gr}, McDuff \cite{Mc1} and Taubes \cite{Tau}  states that every $4$-dimensional symplectic Fano manifold is diffeomorphic to a del Pezzo surface. It is known moreover that symplectic Fano $4$-manifolds admit a compatible complex projective structure (see \cite{S} and the references contained within). The corresponding question in dimension $6$, $8$ and $10$ is open. As was noted in \cite{FP1}, starting from dimension $12$, symplectic twistor spaces of hyperbolic manifolds studied by Reznikov in \cite{Res}  give rise to infinitely many symplectic Fano manifolds which are not K\"ahler.
Whilst the full $6$-dimensional problem seems intractable at the moment, there is a weaker version of this question stated in the form of a conjecture in \cite{FP2}.

\begin{conjecture}\label{S1_Fano_question}
Let $(M,\omega)$ be a $6$-dimensional symplectic Fano manifold with a Hamiltonian $S^1$-action. Then $M$ is diffeomorphic to a complex projective Fano $3$-fold.
\end{conjecture}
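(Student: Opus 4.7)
The plan is to exploit the moment map $\mu: M \to \R$ as a Morse-Bott function whose critical submanifolds are precisely the components of the fixed point set $M^{S^1}$. By the equivariant Darboux-Weinstein theorem, a neighborhood of $M_{\min}$ in $M$ is equivariantly symplectomorphic to a disk subbundle of an $S^1$-equivariant complex vector bundle over $M_{\min}$, with strictly positive weights on the normal fibers.

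First I would constrain $M_{\min}$. From the adjunction-type identity $c_1(TM)|_{M_{\min}} = c_1(TM_{\min}) + c_1(\nu_{M_{\min}})$ combined with the Fano hypothesis $c_1(TM) = [\omega]$ and standard equivariant localization, one expects to show that $\dim M_{\min} \in \{0,2,4\}$. In the $4$-dimensional case $M_{\min}$ is itself a symplectic Fano $4$-manifold, hence diffeomorphic to a del Pezzo surface by Ohta-Ono. In the $0$-dimensional case $M_{\min}$ is an isolated point.

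Next I would reconstruct $M$ case by case via Morse theory on $\mu$ and symplectic reduction at regular values. When $\dim M_{\min}=4$, a parallel analysis of $M_{\max}$ and a gradient-flow gluing argument should exhibit $M$ as the total space of a symplectic $S^2$-bundle (or an invariant blow-up thereof) over a del Pezzo surface; all such manifolds are diffeomorphic to known projective Fano $3$-folds. When $\dim M_{\min}=0$, one matches the weights at the isolated fixed points with those of standard $S^1$-actions on toric Fano $3$-folds. The regular reduced spaces $M_{\mathrm{red}}(t) = \mu^{-1}(t)/S^1$ are symplectic $4$-orbifolds, and their topology should be read off against the algebraic geometry of Fano $3$-folds admitting a $\C^*$-action.

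The principal obstacle, as the abstract foreshadows, is the case $\dim M_{\min}=2$: a priori $M_{\min}$ could be a surface of arbitrary genus, whereas the conjecture demands $M_{\min} \cong S^2$. No elementary symplectic argument seems to exclude the positive-genus possibility. Following the strategy announced in the paper, I would resolve the reduced $4$-orbifolds, invoke Taubes' theorem $SW=Gr$ and pseudoholomorphic-curve techniques on symplectic $4$-manifolds to produce an embedded invariant symplectic curve in a distinguished homology class, and lift it to an $S^1$-invariant symplectic hypersurface $\mathcal{F}(M) \subset M$. Intersection-theoretic constraints imposed by $\mathcal{F}(M)$ should then force the genus of $M_{\min}$ to be zero, reducing this case to the preceding ones and identifying $M$ with a complex projective Fano $3$-fold.
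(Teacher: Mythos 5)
You are attempting to prove Conjecture~\ref{S1_Fano_question}, but the paper does not prove this statement: it is stated and remains an open conjecture. What the paper actually establishes is Theorem~\ref{maintheorem}, a strictly weaker result, namely that $M_{\min}$ is diffeomorphic to a del Pezzo surface, a $2$-sphere, or a point (so in particular $M$ is simply connected). Your proposal conflates these two levels. The parts of your sketch that analyze $M_{\min}$ and, in the $2$-dimensional case, construct the invariant symplectic hypersurface $\mathcal{F}(M)$ via desingularized reduced orbifolds and Taubes/Seiberg--Witten input, do track the paper's actual strategy for Theorem~\ref{maintheorem}. But the subsequent ``reconstruction'' of $M$ is where the proposal breaks down.

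Concretely, the gaps are these. When $\dim M_{\min}=4$ you assert that a gradient-flow gluing argument ``should exhibit $M$ as the total space of a symplectic $S^2$-bundle (or an invariant blow-up thereof) over a del Pezzo surface''; nothing in the paper, nor in the literature it cites, gives such a global structure theorem, and the paper makes no such claim --- it stops at identifying $M_{\min}$. When $\dim M_{\min}=0$ you propose to ``match the weights at the isolated fixed points with those of standard $S^1$-actions on toric Fano $3$-folds,'' but matching fixed-point data does not by itself determine the diffeomorphism type of $M$ (the partial results cited, e.g.\ McDuff--Tolman for $b_2=1$ and Cho for the semi-free case, cover only special situations). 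And in the $\dim M_{\min}=2$ case, even after the symplectic-fibre argument rules out positive genus (which is indeed what the paper proves, but there it is used to exclude that case for symplectic Fanos, not to ``reduce'' it to the others), you would again face the unresolved reconstruction problem. In short, pinning down the global diffeomorphism type of $M$ is precisely the open content of Conjecture~\ref{S1_Fano_question}, and no step in your proposal supplies it; at best your outline would yield Theorem~\ref{maintheorem} (and hence simple-connectedness and $c_1c_2(M)=24$), not the conjecture itself.
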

Our first result in this paper is a step towards the solution of this conjecture.

\begin{theorem}\label{maintheorem} Let $(M,\omega)$ be a symplectic Fano $6$-manifold with a Hamiltonian $S^1$-action. Then $M_{\min}$ is diffeomorphic to a del Pezzo surface, a 2-sphere or a point. In particular, $M$ is simply connected.
\end{theorem}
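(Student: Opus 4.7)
The plan is to split the analysis by $\dim M_{\min}\in\{0,2,4\}$ (the only possibilities, since $M_{\min}$ is a symplectic submanifold of $M$ and hence even-dimensional, and must be strictly lower-dimensional than $M$ for a non-trivial Hamiltonian action). The case $\dim M_{\min}=0$ is immediate: $M_{\min}$ is a single fixed point. For the simple-connectivity conclusion I would invoke the standard Morse-Bott argument for the moment map $\mu\colon M\to\R$: its critical submanifolds are the components of the fixed locus $M^{S^1}$, and the Morse-Bott index at such a component equals twice the number of negative weights of the normal isotropy representation. Hence $M_{\min}$ has index $0$, while every other fixed component has at least one negative weight and therefore index $\ge 2$. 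Consequently $M$ is built from a tubular neighbourhood of $M_{\min}$ by attaching cells of dimension $\ge 2$, giving $\pi_1(M)=\pi_1(M_{\min})$; since the point, $S^2$, and every del Pezzo surface are simply connected, simple connectivity of $M$ follows as soon as the classification of $M_{\min}$ is established.

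In the case $\dim M_{\min}=4$, the normal bundle $\nu\to M_{\min}$ is a complex line bundle carrying a single positive $S^1$-weight. Combining the Fano identity $c_1(M)=[\omega]$ with the splitting $c_1(TM)|_{M_{\min}}=c_1(TM_{\min})+c_1(\nu)$ yields
\[
c_1(TM_{\min})\;=\;[\omega|_{M_{\min}}]-c_1(\nu).
\]
The central step is to display $M_{\min}$ as a $4$-dimensional symplectic Fano manifold in its own right. For this I would use the reductions $M_{\mathrm{red}}(t)$ for $t$ slightly above $\mu_{\min}$, together with the Duistermaat-Heckman linear variation of $[\omega_{\mathrm{red}}(t)]$ and the positivity of the weight on $\nu$, to exhibit a symplectic form on $M_{\min}$ whose cohomology class is a positive multiple of $c_1(TM_{\min})$. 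Once $M_{\min}$ is a symplectic Fano $4$-manifold, the theorem of Ohta and Ono cited in the introduction identifies it with a del Pezzo surface.

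In the case $\dim M_{\min}=2$, $M_{\min}$ is a closed orientable surface, and the task is to rule out positive genus so that $M_{\min}\cong S^{2}$. I would argue by contradiction: assume $g(M_{\min})\ge 1$. Following the strategy announced in the abstract, the plan is to construct an $S^1$-invariant symplectic hypersurface $\mathcal{F}(M)\subset M$ playing the role of the smooth fibre of a hypothetical Mori fibration $M\to M_{\min}$. To build $\mathcal{F}(M)$ I would first analyse the reduced spaces $M_{\mathrm{red}}(t)$ just above $\mu_{\min}$, which are symplectic $4$-orbifolds, replace them by smooth symplectic resolutions, and apply Seiberg-Witten/Taubes theory on the resolutions to produce distinguished pseudo-holomorphic classes. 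Equivariantly lifting such a class back to $M$ will give $\mathcal{F}(M)$, and a numerical computation pairing $\mathcal{F}(M)$ with $c_1(M)=[\omega]$ in the $g\ge 1$ case will violate the positivity of $\omega$ on symplectic surfaces, yielding the required contradiction.

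The hard part will be the two-dimensional case: the zero-dimensional case is tautological and the four-dimensional case essentially reduces to a Duistermaat-Heckman bookkeeping argument followed by the Ohta-Ono black box, whereas controlling the Seiberg-Witten basic classes through an orbifold resolution and promoting them to an honest $S^1$-invariant symplectic hypersurface on $M$ is genuinely subtle, and appears to occupy the bulk of the paper.
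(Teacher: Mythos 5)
Your proposal follows the same broad architecture as the paper: split on $\dim M_{\min}\in\{0,2,4\}$, invoke Ohta–Ono in the $4$-dimensional case, and construct a symplectic fibre to rule out positive genus in the $2$-dimensional case. However, there are two substantive gaps.

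First, you do not separate sub-cases by $\dim M_{\max}$, and this omission is not cosmetic. When $\dim M_{\min}=4$ and $\dim M_{\max}=0$, the Duistermaat–Heckman bookkeeping you describe does \emph{not} close the argument: the linear variation $[\omega_t]=c_1(TM_{\min})-t\,c_1(\nu)$ is only valid on intervals of regular values, and with $M_{\max}$ an isolated fixed point the action need not be semi-free, so one cannot reach the level where $[\omega_t]$ becomes $c_1(TM_{\min})$ without crossing critical values. The paper treats this sub-case by an entirely different route (Theorem~\ref{MmindlePezzo}): classifying the weights at $M_{\max}$ (Proposition~\ref{listofweights}), using the Duistermaat–Heckman function to bound $b_2(M_{\min})\le 9$, and producing a positively self-intersecting sphere in a reduced space to show $M_{\min}$ is rational via Theorem~\ref{ruledcriterion}. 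Conversely, when $\dim M_{\min}=4$ and $\dim M_{\max}\ge 2$, before your DH argument can even start one must prove the action is semi-free (Corollary~\ref{newref}), which uses the weight-sum formula and is not automatic. Similarly, for $\dim M_{\min}=2$ the sub-case $\dim M_{\max}=0$ is immediate via Li's theorem ($\pi_1(M_{\min})\cong\pi_1(M_{\max})=0$), and $\dim M_{\max}=4$ reduces to the previous case by reversing the circle action; the symplectic fibre machinery is needed only when both extrema are surfaces.

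Second, your sketch of the contradiction in the positive-genus case is off: the decisive inequality is not a pairing of $c_1(M)$ with $\mathcal{F}(M)$. The paper's Theorem~\ref{theo} produces a \emph{fixed surface} $\Sigma$ of genus $g(\Sigma)\ge g\ge 1$ with $\langle c_1(M),\Sigma\rangle \le 2-2g\le 0$; since fixed surfaces are symplectic and $c_1(M)=[\omega]$, this contradicts $\int_\Sigma\omega>0$. The symplectic fibre enters as an auxiliary device — it identifies the fixed surfaces of genus $g$ with the vertices of the moment polygon of a toric del Pezzo surface, giving the cyclic chain of isotropy $4$-manifolds used to establish the Seiberg–Witten inequality of Theorem~\ref{isotropy inequality} that replaces the missing links in that chain. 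Your Morse–Bott argument for $\pi_1$ is fine for the purpose of deducing simple connectivity of $M$ from that of $M_{\min}$ (attaching cells of dimension $\ge 2$ gives a surjection $\pi_1(M_{\min})\twoheadrightarrow\pi_1(M)$), though the paper uses the stronger isomorphism of~\cite{Li}.
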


This theorem has the following corollary, which we prove in Section \ref{hirsection} using the localisation formula for the Hirzebruch $\chi_{y}$-genus.

\begin{corollary}\label{chern}
Any symplectic Fano $6$-manifold $M$ with a Hamiltonian $S^1$-action satisfies $c_{1}c_{2}(M) = 24$.
\end{corollary}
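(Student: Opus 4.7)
The plan is to apply Atiyah--Bott equivariant localisation to the Hirzebruch $\chi_y$-genus of $M$ and to specialise at $y=0$ to extract the Todd genus; the classical identity $\mathrm{Td}(M) = \tfrac{1}{24}\,c_1 c_2(M)$ in complex dimension three will then yield the claim. To set this up, I would fix an $S^1$-invariant, $\omega$-compatible almost complex structure on $M$, which defines Chern classes $c_i(M)$ and a well-defined $\chi_y$-genus as topological invariants.

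The heart of the argument is the standard localisation identity for Hamiltonian $S^1$-manifolds,
$$\chi_y(M) \;=\; \sum_{F \subseteq M^{S^1}} (-y)^{d^-_F}\,\chi_y(F),$$
where the sum ranges over the connected components $F$ of the fixed set and $d^-_F$ denotes the complex rank of the negative part of the normal bundle of $F$, equivalently half the Morse--Bott index of the moment map $\phi$ along $F$. Setting $y = 0$ kills every term with $d^-_F > 0$ and leaves only the component(s) on which $\phi$ attains its minimum. Since $M_{\min}$ is connected (by the Atiyah--Guillemin--Sternberg convexity theorem, or already from the classification in Theorem \ref{maintheorem}), this yields $\mathrm{Td}(M) = \mathrm{Td}(M_{\min})$.

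To finish, I would compute $\mathrm{Td}(M_{\min})$ in each case allowed by Theorem \ref{maintheorem}. If $\dim M_{\min} \in \{0,2\}$, then trivially $\mathrm{Td}(M_{\min}) = 1$. If $\dim M_{\min} = 4$, the Hirzebruch signature theorem gives the purely topological formula
$$\mathrm{Td}(F) = \tfrac{1}{12}\bigl(c_1^2(F) + c_2(F)\bigr) = \tfrac{1}{4}\bigl(\sigma(F) + \chi(F)\bigr),$$
and a case-by-case verification on $\CP^1 \times \CP^1$ and on the blowups of $\CP^2$ at $0 \le k \le 8$ points shows that $\sigma(F) + \chi(F) = 4$ for every del Pezzo surface. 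Hence $\mathrm{Td}(M_{\min}) = 1$ in all three cases, and $c_1 c_2(M) = 24$.

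The main obstacle I expect is justifying the localisation identity in the symplectic rather than the K\"ahler setting. This is handled by observing that $\chi_y$ depends only on the homotopy class of the almost complex structure, so one may apply equivariant Hirzebruch--Riemann--Roch to the chosen $S^1$-invariant almost complex structure; alternatively, the identity can be derived directly from the Morse--Bott decomposition of $M$ along the gradient flow of $\phi$, in which the unstable manifold of $F$ is a complex rank-$d^-_F$ disc bundle over $F$.
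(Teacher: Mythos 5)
Your argument is correct and follows essentially the same route as the paper: localise the $\chi_y$-genus, specialise at $y=0$ to identify the Todd genus of $M$ with that of $M_{\min}$, and then use Theorem \ref{maintheorem} to conclude $\mathrm{Td}(M_{\min})=1$. The only cosmetic difference is that the paper simply invokes the fact that del Pezzo surfaces, $S^2$ and a point have Todd genus $1$ rather than carrying out your Noether/signature computation.
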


Apart from Theorem \ref{maintheorem}, there are several results in support of Conjecture \ref{S1_Fano_question}. First is by McDuff \cite{Mc2} and Tolman \cite{To}, stating that there exist exactly four symplectic Fano $6$-manifolds with a Hamiltonian $S^1$-action with $b_2=1$, and all of them are complex projective. Next, in \cite{Cho} a classification of symplectic Fano $6$-manifolds with a semi-free Hamiltonian $S^1$-action is undertaken. It is proven there that for each such manifold there is a complex projective one with the same $S^1$-fixed data set. In a different case, when the action has isolated fixed points and the weights of the action at each isolated point are coprime, it is proven in \cite{GHS} that $b_2$ is bounded, namely $b_2\le 7$.
Finally, Conjecture \ref{S1_Fano_question} is confirmed for the case of symplectic Fanos that are fat $S^2$-bundles over $4$-manifolds (for a precise formulation see \cite[Theorem 1.3]{FP2}). 

{\bf Complex projective Fanos.} Recall that complex projective Fano manifolds are, indeed, simply connected. There exist two paths to prove this statement. One is by Mori via characteristic $p$, proving that Fano manifolds are rationally connected; the other is via existence of metrics of positive Ricci curvature by Yau. Neither of these methods have direct analogues in symplectic geometry. 

In the case of complex dimension $3$, Fano manifolds were classified by Iskoviskikh, Mori and Mukai and there are exactly $105$ families. Note, however, that the classification of Fano $3$-folds with $\mathbb C^*$-actions and Picard rank greater than $1$ was not yet carried out. The case of Picard rank $1$ was settled in \cite{KPS, Pro}.

\subsection{Symplectic fibre and the proof of Theorem \ref{maintheorem}}

In this section we explain the key ideas of the article, state further results and give some ideas of proofs. All these results are needed for Theorem \ref{maintheorem}, and so we structure the discussion in terms of its proof. 

In order to prove that $M$ is simply connected, we study the fixed point set of the $S^{1}$-action, which we denote by $M^{S^1}$. We denote by $M_{\min}$ and $M_{\max}$ the connected components of $M^{S^1}$ on which the Hamiltonian of the $S^1$-action attains its minimum and maximum respectively; we call these submanifolds of $M$ {\it extremal submanifolds}. The extremal submanifolds are connected symplectic submanifolds, furthermore it is known by \cite{Li} that the fundamental group of $M$ is isomorphic to that of $M_{\min}$ and $M_{\max}$. Hence, in the case when $M_{\min}$ is a point, $M$ is simply connected.

In the case when $M_{\min}$ has dimension $4$ the proof of Theorem \ref{maintheorem} is split into two subcases according to dimension of $M_{\max}$.  The subcase when $\dim(M_{\max})\ge 2$ is treated in Section \ref{4dimcasection}. We prove there that the $S^1$-action on $M$ is semi-free and show that $M_{\min}$ can be deformed symplectically to a symplectic Fano. It follows that $M_{\min}$ is diffeomorphic to a del Pezzo surface, in particular it is simply connected. 
Let us mention here a related result of Cho \cite{Cho}. He shows that for a symplectic Fano manifold $X$ with a semi-free Hamiltonian $S^1$-action such that $0$ is a regular value of the Hamiltonian, the reduced space $X_0$ is a symplectic Fano\footnote{See Proposition \ref{relcho} for a canonical choice of a Hamiltonian on a symplectic Fano $S^1$-manifold.}. 

The subcase when $M_{\max}$ is a point is postponed to the final Section \ref{hirsection} since it involves techniques developed in Sections  \ref{sectopology}-\ref{sympspheresec}. Here, the action is not neccesarily semi-free, however in Proposition \ref{listofweights} the weights at $M_{\max}$ are classified into two cases, both of which are exhibited by toric Fano $3$-folds.


The hardest case of Theorem \ref{maintheorem} to treat is when  both submanifolds $M_{\min} $ and $M_{\max}$ are $2$-dimensional. Thus, we are led naturally to study the class of Hamiltonian $S^1$-manifolds of dimension $6$ with $M_{\min} $ and $M_{\max}$  surfaces of positive genus. This study takes up a large portion of the article, culminating in the following theorem.

\begin{theorem}\label{symfibmain} Let  $(M,\omega)$ be a symplectic  $6$-manifold with a Hamiltonian $S^1$-action such that $M_{\min} $ is a surface of genus $g>0$. Then $M$ contains a $4$-dimensional, $S^1$-invariant, symplectic submanifold ${\cal F}(M)$ transversal to $M^{S^1}$ and intersecting $M_{\min}$ in a unique point. This submanifold is unique up to $S^1$-invariant symplectomorphism.
\end{theorem}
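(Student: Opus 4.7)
The plan is to build ${\cal F}(M)$ as a 1-parameter family of symplectic spheres living inside the symplectic reductions $X_c := \mu^{-1}(c)/S^1$, as $c$ ranges over regular values of the moment map $\mu$, and then to glue these spheres together in $M$. The guiding principle is that $M \to M_{\min}$ should behave like a Mori fibration whose generic fibre is precisely ${\cal F}(M)$.

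First I would set up the local model near $M_{\min}$. By the equivariant symplectic tubular neighbourhood theorem, a neighbourhood of $M_{\min}$ is $S^1$-equivariantly symplectomorphic to a disk bundle in the symplectic normal bundle $N \to M_{\min}$, and since $M_{\min}$ is a minimum of $\mu$, $N$ splits as a sum of two $S^1$-line bundles of positive weights $a \le b$. Fixing a base point $p_0 \in M_{\min}$, the fibre $N_{p_0} \cong \C^2$ is the candidate local model for ${\cal F}(M)$ near its unique intersection with $M_{\min}$; its reduction at level $c$ slightly above $\mu_{\min}$ is a weighted projective line $\CP^1(a,b)$ appearing as the fibre of a weighted projective bundle $X_c \to M_{\min}$.

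Next I would extend this fibre across all regular values of $\mu$. Between consecutive critical values, the $X_c$ form a smooth family of symplectic 4-orbifolds, so a chosen symplectic sphere $F_c \subset X_c$ deforms smoothly in $c$. When $c$ crosses a critical value, $X_c$ undergoes a symplectic birational surgery (a blow-up, blow-down or flip) supported over the critical fixed submanifold, and I must track the homology class of $F_c$ through this surgery and show that in the new $X_c$ it is still represented by an embedded symplectic sphere. The key analytic input here is Seiberg-Witten theory on the minimal smooth resolution $\widetilde{X}_c$: the propagated class has non-vanishing SW/Gromov invariant, which produces an embedded pseudo-holomorphic sphere; after pushing down and undoing the resolution one obtains the required symplectic sphere in $X_c$, together with Gromov-style uniqueness of this sphere in its class up to isotopy.

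Finally I would reassemble the family $\{F_c\}$ inside $M$ by taking preimages $\widetilde{F}_c \subset \mu^{-1}(c)$ under the quotient map; each $\widetilde{F}_c$ is a 3-dimensional $S^1$-invariant submanifold, and their union $\bigcup_c \widetilde{F}_c$ is the 4-dimensional $S^1$-invariant symplectic submanifold ${\cal F}(M)$, closed off at the ends by $p_0 \in M_{\min}$ and by the corresponding intersection with $M_{\max}$, with gluing at every critical level carried out by the equivariant normal form at the fixed component involved. Uniqueness up to $S^1$-equivariant symplectomorphism then reduces to (i) uniqueness of the sphere class in each $X_c$, which is pinned down by the local model near $M_{\min}$ and its tracked propagation rule, and (ii) Gromov-type uniqueness of the symplectic representative in that class. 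The main obstacle I anticipate is precisely the extension step: running Seiberg-Witten theory on a varying family of symplectic 4-orbifolds, controlling the fibre class across each critical value, and in particular ruling out degenerations into reducible configurations with non-trivial multiplicities that would prevent smooth gluing, all while correctly translating symplectic curves between $X_c$ and its resolution $\widetilde{X}_c$ and handling the cyclic orbifold points of the fibre.
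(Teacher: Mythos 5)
Your overall strategy — build spheres in the reduced spaces $M_c$, pass to a smooth resolution to access gauge-theoretic results, and then reassemble in $M$ — is indeed the paper's approach. But there are two genuine gaps.

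First, the sphere-production step is underpowered. You invoke non-vanishing of SW/Gromov to get \emph{one} embedded pseudo-holomorphic sphere in the resolution $\widetilde{X}_c$. That is not enough: to push the sphere back down to the orbifold $M_c$ you need it to avoid the exceptional curves of the resolution \emph{and} to meet the orbifold locus of $M_c$ transversally, neither of which a single Taubes curve guarantees. The paper resolves this by first building a semi-toric K\"ahler structure near $\Sigma_{\cal O}(M_c)$, performing a two-stage desingularisation (smoothing along orbi-divisors, then resolving isolated quotient points), choosing a tamed $J$ that is integrable near the exceptional set, and then invoking Weiyi Zhang's structure theorem: for \emph{any} tamed $J$ on an irrational ruled surface, the whole surface is fibred by $J$-holomorphic spheres with only finitely many singular fibres. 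That ``through every point'' statement, together with a Riemann--Hurwitz count of tangencies, is what lets one choose a transversal orbi-sphere disjoint from the exceptional locus. You also need a careful argument (the paper's ``holomorphic slice'' lemma) to see that these spheres, when $c$ crosses a critical value, trace out a smooth 4-manifold across the fixed set; the loose phrase ``symplectic birational surgery'' does not by itself rule out the problematic degenerations you already flag as the main obstacle.

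Second, your uniqueness argument does not close. Isotopy-uniqueness of the sphere class in each $X_c$ gives you uniqueness of the individual slices, but the statement asks for uniqueness of the full 4-manifold ${\cal F}(M)$ up to $S^1$-equivariant symplectomorphism. Gluing isotopic slices across all levels (and across critical levels) does not obviously yield an equivariant symplectomorphism of the two resulting 4-manifolds. The paper instead observes that ${\cal F}(M)$ is a compact Hamiltonian $S^1$-4-manifold with isolated fixed points, computes its Karshon graph (weights, levels, isotropy spheres) purely from the fixed-surface data of $M$, and then invokes Karshon's classification theorem that the graph determines such a 4-manifold up to equivariant symplectomorphism. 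Without that classification input, the uniqueness claim is not established. Finally, note the statement only assumes $M_{\min}$ is a positive-genus surface, so $M_{\max}$ may be 4-dimensional; your ``closed off at both ends by a point'' picture needs adjustment in that case.
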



The proof of Theorem \ref{maintheorem} relies on Theorem \ref{symfibmain} and so we first elaborate on Theorem \ref{symfibmain}. The submanifold ${\cal F}(M)\subset M$ constructed in this theorem is called the {\it symplectic fibre}. Note, that if $M$ were K\"ahler there would exist a holomorphic fibration $M\to M_{\min}$ and we could choose ${\cal F}(M)$ as a generic fibre of this fibration. For more motivation and the analogy between the K\"ahler and symplectic cases, the reader may consult the discussion in the beginning of Section \ref{sectopology}.

{\bf Existence of symplectic fibre.} Building the symplectic fibre will take us Sections \ref{sectopology}-\ref{sympfibresec}. This construction amounts to producing a smooth (in an appropriate sense) path of symplectic orbi-spheres in the reduced spaces $M_t$ for $t$ varying from $H_{\min}$ to $H_{\max}$. We deduce the existence of such a path from Theorem \ref{twoinoneOrbisphere}. A version of this result for the case of $4$-dimensional symplectic manifolds can be found in \cite[Corollary 2]{LTJ} and \cite[Proposition 3.2]{LiWu}.

\begin{theorem}\label{twoinoneOrbisphere} Let $(M^4,\omega)$ be a $4$-dimensional symplectic orbifold with cyclic stabilizers and with $\pi_1(M^4)\ne 0$. Suppose $M^4$ contains a smooth sub-orbifold sphere that is transversal to the orbifold locus of $M^4$ and satisfies the two properties: $F\cdot F=0$ and $\int_F\omega>0$. Then the following statements hold.

\begin{enumerate}
\item $M^4$ contains a symplectic sub-orbifold sphere $F'$ in the same homology class as $F$, that is transversal to the orbifold locus of $M$.

\item Any two such symplectic sub-orbifold spheres can be isotoped one to another in the class of symplectic sub-orbifolds transversal to the orbifold locus.
\end{enumerate}

\end{theorem}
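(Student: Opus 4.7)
The plan is to reduce Theorem \ref{twoinoneOrbisphere} to its smooth manifold analogue \cite[Corollary 2]{LTJ}/\cite[Proposition 3.2]{LiWu} by passing to a symplectic resolution of $M^4$. Let $\pi \colon \hat M \to M^4$ denote the minimal symplectic resolution of the isolated cyclic quotient singularities of $M^4$: each such singularity is of type $A_n$ and can be resolved by inserting a Hirzebruch--Jung chain of symplectic 2-spheres glued via the symplectic neighbourhood theorem, while any $2$-dimensional orbifold strata with cyclic stabilisers are carried through unchanged. Since the resolution is supported in arbitrarily small balls, $(\hat M, \hat\omega)$ is a smooth symplectic $4$-manifold with $\pi_1(\hat M) \cong \pi_1(M^4) \ne 0$, and each exceptional component $E_i \subset E := \pi^{-1}(\mathrm{Sing}(M^4))$ has arbitrarily small symplectic area.

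Next I would lift $F$ to $\hat M$ via its proper transform $\hat F$. Transversality of $F$ to the orbifold locus means that at each orbifold intersection point $F$ is locally modelled on a smooth $\Z_n$-invariant transversal disk in $\C^2$; so $\hat F$ is a smoothly embedded sphere in $\hat M$ meeting each exceptional chain transversally at a single terminal component. A careful orbifold intersection computation shows that the rational self-intersection $F\cdot F$ and the integer self-intersection $\hat F\cdot \hat F$ are related by a formula involving the stabiliser orders along $E$; in particular the hypothesis $F\cdot F = 0$ translates into $\hat F\cdot \hat F = 0$, and $\int_{\hat F}\hat\omega > 0$ since it differs from $\int_F \omega$ only by arbitrarily small contributions from $E$.

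Applying the smooth theorem to $(\hat M, \hat\omega, \hat F)$ — valid since $\pi_1(\hat M) \ne 0$ — yields a symplectic sphere $\hat F' \subset \hat M$ in the class $[\hat F]$. Choosing an $\hat\omega$-tame almost complex structure $J$ on $\hat M$ making every component of $E$ and also $\hat F'$ holomorphic, positivity of intersections together with the identity $[\hat F']\cdot[E_i] = [\hat F]\cdot[E_i]$ forces $\hat F'$ to meet $E$ at the same components and with the same multiplicities as $\hat F$. Contracting $E$ via $\pi$, the image $F' := \pi(\hat F') \subset M^4$ is a symplectic sub-orbifold sphere transversal to the orbifold locus, proving part (1). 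Part (2) is then obtained by applying the isotopy statement of the smooth theorem to the proper transforms of any two such sub-orbifold spheres in $M^4$.

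The main obstacle lies in the intersection-theoretic accounting: one must verify that the proper transform $\hat F$ of the orbi-sphere $F$ indeed has integer self-intersection zero in $\hat M$ (so that the hypotheses of the smooth theorem are actually met), and that the $J$-holomorphic sphere $\hat F'$ it produces meets the exceptional divisor in a pattern compatible with contraction back to a genuine sub-orbifold sphere in $M^4$ transversal to (rather than tangent to, or breaking into bubbles along) the orbifold singular locus. Both hinge on combining orbifold intersection theory with positivity of intersections for $J$-holomorphic curves in dimension four, and motivate the detailed case analysis that would occupy the subsequent sections.
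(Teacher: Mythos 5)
Your outline has the right shape — resolve to a smooth symplectic four-manifold, invoke the smooth theorem of Li/Li--Wu, then push forward — and that is also what the paper does. But there is a genuine gap at the very first step: you treat the orbifold locus as though it consisted only of isolated $A_n$-type singularities, and say the $2$-dimensional orbifold strata are "carried through unchanged." In the setting of the theorem, the orbifold locus $\Sigma_{\mathcal O}(M^4)$ is typically a union of embedded surfaces (divisors) with cyclic stabilizers, plus finitely many maximal points; the hypothesis that $F$ is ``transversal to the orbifold locus'' only makes sense because $F$ itself is an orbi-sphere that meets these divisors. If you carry the divisor strata through unchanged, the space $\hat M$ you build still has nontrivial isotropy along those divisors, and its symplectic form still has cone-type singularities there, so it is not a smooth symplectic manifold and \cite[Corollary 2]{LTJ} does not apply. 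The paper's proof devotes substantial effort precisely to removing this obstruction: Theorem \ref{semilocal4orb} first endows a neighbourhood of $\Sigma_{\mathcal O}(M^4)$ with a semi-toric K\"ahler structure, and Theorem \ref{smoothingdisivors} (proved in the Appendix via the regularised maximum technique) then smooths the K\"ahler orbi-metric along the orbifold divisors, replacing $M^4$ by an orbifold $\widetilde M^4$ with the same underlying space but only isolated quotient singularities. Only after this K\"ahler desingularisation step does one take the holomorphic resolution $\overline M^4$ and invoke the smooth theory. Without an analogue of this step your $\hat M$ is not a manifold.

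A secondary issue is your argument that the symplectic sphere $\hat F'$ produced by the smooth theorem ``meets $E$ at the same components and with the same multiplicities as $\hat F$''; homological intersection numbers and positivity of intersections alone do not rule out tangencies, bubbling along $E$, or geometric intersections with the wrong components compensated by multiplicity. The paper controls this not by a one-shot positivity argument but by choosing $\overline J$ on $\overline M^4$ integrable near the exceptional set and then invoking Zhang's structure theorem for the moduli space of fibre-class $J$-holomorphic subvarieties on an irrational ruled surface (Theorem \ref{zhang} and Corollary \ref{finitangence}), from which it follows that all but finitely many members of the fibre family are smooth, irreducible, disjoint from the exceptional curves, and transversal to the preimage of the orbifold locus; one then selects such a fibre and projects it to $M^4$. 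For part (2), the paper similarly needs the automatic-transversality/deformation machinery on $\overline M^4$ (Theorem \ref{smoothdef}, Corollary \ref{transisotopy}) together with the holomorphic collar near the orbifold locus — applying the smooth isotopy result "to the proper transforms" as you suggest does not by itself guarantee the isotopy stays in the class of sub-orbifolds transversal to $\Sigma_{\mathcal O}(M^4)$ after projection.
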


The proof of Theorem \ref{twoinoneOrbisphere} is given in Section \ref{orbispheresection}, where it is split into Theorem \ref{symplorbispheres} and Theorem \ref{orbisotopy}. In order to produce symplectic orbi-spheres in $M^4$ we construct a desingularisation of its orbifold symplectic structure. This is done in three steps. First, in Theorem \ref{semilocal4orb}  we introduce a K\"ahler structure on a neighbourhood of the orbifold locus $\Sigma_{\cal O}(M^4)$ of $M^4$. Next, using Theorem \ref{smoothingdisivors}, we smooth the orbifold K\"ahler metric along all divisors in $\Sigma_{\cal O}(M^4)$ to obtain in this way an orbifold $\widetilde M^4$ with isolated quotient singularities. Finally, we take a holomorphic resolution of singularities of $\widetilde M^4$ and obtain a  smooth symplectic manifold $\overline M^4$. 

We are aware of two alternative methods to desingularise symplectic $4$-orbifolds, the first by Niederkr\"uger and Pasquotto \cite{NP1} using symplectic cutting, and the second, very recent one, by Chen \cite{Chen}. The second approach is closer in spirit to the one that we've  chosen, however our approach is designed specifically for proving results like Theorems \ref{symfibmain} and \ref{twoinoneOrbisphere}.
We believe our approach will find some further applications.

The smooth symplectic manifold $\overline M^4$ is an irrational ruled symplectic manifold due to the existence of a smooth orbi-sphere in $M^4$ and to \cite[Corollary 2]{LTJ}. Once this is established, we are able to apply results of Weiyi Zhang \cite{Zh} to $\overline M^4$, in particular Theorem \ref{zhang}. This theorem states that for any tamed $J$ on an irrational ruled surface, the surface is fibred by $J$-holomorphic spheres with a finite number of singular fibres. We choose $\overline J$ on $\overline M^4$ by extending the holomorphic structure defined on $\overline M^4$ close to the preimage of $\Sigma_{\cal O}M$. By projecting the obtained fibration back to $M^4$, we get in $M^4$ the desired orbi-spheres. This proves the first half of Theorem \ref{twoinoneOrbisphere}. The second half relies on some standard facts on deformations of $J$-holomorphic curves.

Once Theorem \ref{twoinoneOrbisphere} is proven, it is relatively straightforward to construct a ``smooth path" of symplectic orbi-spheres in reduced spaces of $M$ in order to cut the symplectic fibre out inside $M$. Indeed, for regular values of the Hamiltonian $H$ the reduced spaces are orbifolds with cyclic isotropy groups. To help orbi-spheres pass smoothly through critical levels of $H$ we introduce in Section \ref{locKahlsec} a K\"ahler metric on a neighbourhood of $M^{S^1}$. The proof of Theorem \ref{symfibmain} is given in Section \ref{sympfibresec}.

{\bf Back to the proof of Theorem \ref{maintheorem}.} To settle Theorem \ref{maintheorem} in the case when $\dim(M_{\min})=\dim(M_{\max})=2$, we will work with the class of {\it relative symplectic Fano} manifolds\footnote{Such manifolds are usually called {\it weakly monotone}, but we would like to use the alternative terminology, to draw a parallel with algebraic geometry and to exclude non-compact manifolds.}.

\begin{definition} Let $(M,\omega)$ be a compact symplectic manifold. $M$  is called  \emph{relative symplectic Fano} if for any class $A\in H_2(M)$ that can be represented by a continuous mapping from $S^2$ one has $\langle c_1(M),A \rangle = \omega(A)$. 
\end{definition}

It is not hard to see that in the case  $\dim(M_{\min})=\dim(M_{\max})=2$, Theorem \ref{maintheorem} follows from the next result.

\begin{theorem} \label{theo}
Let  $(M,\omega)$ be a relative symplectic Fano $6$-manifold with a Hamiltonian $S^1$-action such that $M_{\min} $ and $ M_{\max} $ are surfaces of genus $g>0$. Then there exists a fixed surface $\Sigma$ with $g(\Sigma) \geq g$ such that $\langle c_{1}(M),\Sigma \rangle \leq 2-2g$.
\end{theorem}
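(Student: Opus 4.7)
The plan is to deploy Theorem~\ref{symfibmain} to produce the symplectic fibre ${\cal F}(M)$---which plays the role of a smooth fibre of a hypothetical del Pezzo (Mori) fibration $M \to M_{\min}$---and to extract the required surface $\Sigma$ from the intermediate $S^1$-fixed locus of ${\cal F}(M)$. Applying Theorem~\ref{symfibmain} yields a $4$-dimensional $S^1$-invariant symplectic submanifold ${\cal F}(M) \subset M$, transversal to $M^{S^1}$, with ${\cal F}(M) \cap M_{\min} = \{p_{\min}\}$. Since ${\cal F}(M)$ is built by interpolating orbi-spheres in the reduced spaces $M_t$ for $t \in [H_{\min},H_{\max}]$ and the orbi-sphere degenerates to a point of $M_{\max}$ at the top, the fibre also meets $M_{\max}$ transversally in a single point $p_{\max}$. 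Hence $H|_{{\cal F}(M)}$ is a Hamiltonian for an induced $S^1$-action on ${\cal F}(M)$ with two isolated extrema; by the Karshon--Audin classification, ${\cal F}(M)$ is a symplectic rational surface.

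Next, extract $\Sigma$. Transversality of ${\cal F}(M)$ to $M^{S^1}$ and the dimension count $\dim({\cal F}(M) \cap F) = \dim F - 2$ show that every isolated $S^1$-fixed point of ${\cal F}(M)$ must lie on a $2$-dimensional component of $M^{S^1}$. Any Hamiltonian rational surface with two isolated extrema has at least one intermediate fixed point---for example the middle fixed point of a $\CP^2$-type model or an equivariant section of a Hirzebruch-type model; pick such a $p_{\mathrm{mid}} \in {\cal F}(M)$ and let $\Sigma \subset M^{S^1}$ be the $2$-dim fixed surface containing $p_{\mathrm{mid}}$.

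To show $g(\Sigma) \ge g$, use the uniqueness of ${\cal F}(M)$ up to $S^1$-equivariant symplectomorphism to produce a continuously varying family $\{{\cal F}_s(M)\}_{s \in M_{\min}}$ of symplectic fibres with ${\cal F}_s(M) \cap M_{\min} = \{s\}$. The intersection number $[\Sigma] \cdot [{\cal F}(M)] \in \Z$ equals the algebraic count of intermediate fixed points of ${\cal F}_s(M)$ lying on $\Sigma$; by transversality and positivity of symplectic orientations, this is a positive integer $k \ge 1$. The assignment sending $q \in \Sigma$ to the base $s \in M_{\min}$ of the fibre containing $q$ then defines a degree-$k$ surjection $\Sigma \to M_{\min}$. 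For $g \ge 1$, the Kneser--Edmonds inequality $|\chi(\Sigma)| \ge k\,|\chi(M_{\min})|$ for positive-degree maps between closed oriented surfaces with non-positive Euler characteristic yields $g(\Sigma) \ge k(g-1)+1 \ge g$.

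Finally, for the bound $\langle c_1(M),\Sigma\rangle \le 2 - 2g$, apply adjunction on $\Sigma$:
\[
\langle c_1(M),\Sigma\rangle = (2-2g(\Sigma)) + \deg(N_{\Sigma/M}),
\]
reducing the claim to $\deg(N_{\Sigma/M}) \le 2(g(\Sigma) - g)$. The normal bundle $N_{\Sigma/M}$ splits under the $S^1$-action as a direct sum of two weight line bundles, whose degrees can be computed from the intersection numbers of $\Sigma$ with the $S^1$-invariant symplectic spheres $S_s^{-}, S_s^{+} \subset {\cal F}_s(M)$ joining $s$ to $p_{\mathrm{mid}}(s)$ and $p_{\mathrm{mid}}(s)$ to $p_{\max}(s)$. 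The relative symplectic Fano relation $\langle c_1(M), S_s^{\pm} \rangle = \omega(S_s^{\pm}) > 0$ for these sphere classes, combined with the degree data $k$ from the previous step and the positivity of the $S^1$-weights at $M_{\min}$, furnishes the required bound. The main obstacle I expect is the parametric construction of the family $\{{\cal F}_s(M)\}$ (upgrading the uniqueness statement of Theorem~\ref{symfibmain} to a smooth family and verifying that $\Sigma$ is covered by the fibres), together with the delicate weight-based Chern-number bookkeeping in the last step.
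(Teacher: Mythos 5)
Your plan for extracting an intermediate fixed surface $\Sigma$ from the symplectic fibre, and for showing $g(\Sigma)\ge g$ via a branched cover of $M_{\min}$, is broadly consistent with the paper's machinery (cf.\ Theorem~\ref{struc} and Corollary~\ref{genusg}), though the step producing a sweeping family $\{{\cal F}_s(M)\}_{s\in M_{\min}}$ from uniqueness-up-to-equivariant-symplectomorphism is not established---the fibres are built $J$-holomorphically and need not be disjoint, and the paper instead shows directly that the retraction $\phi|_\Sigma$ has degree $1$ or $2$. These are comparatively minor issues.

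The genuine gap is the final step. You reduce the claim to $\deg(N_{\Sigma/M})\le 2(g(\Sigma)-g)$ and then assert that this ``furnishes the required bound'' by computing $c_1$ of the two weight line bundles from intersections with spheres in the fibres. This inequality is exactly where all the difficulty lives, and no such computation is given or indicated. More importantly, the selection strategy is fundamentally different from what can actually be made to work: you pick a single intermediate fixed point $p_{\mathrm{mid}}$ and try to verify the bound for the corresponding $\Sigma$, but there is no reason a particular pre-chosen fixed surface satisfies $\langle c_1(M),\Sigma\rangle\le 2-2g$. The paper's proof instead (i) identifies ${\cal F}(M)$ as a toric del Pezzo surface, (ii) observes that the fixed surfaces of positive genus inherit a cyclic order from the moment polygon, (iii) sums $\langle c_1(M),\Sigma_i\rangle$ around the whole cycle, bounding each consecutive pair either by Lemma~\ref{fourcor} (when the joining isotropy $4$-manifold exists) or by Theorem~\ref{isotropy inequality} (when the weight is $\pm 1$ and the $4$-manifold is missing---the latter requiring downward gradient flow, exceptional spheres, and Seiberg--Witten input), and (iv) concludes by pigeonhole that \emph{some} $\Sigma_i$ works. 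The case $H(M)\subseteq[-3,3]$ requires a separate localisation argument (Theorem~\ref{smallh}) because the fixed surfaces then need not form a cycle. Your proposal skips the pigeonhole structure entirely and replaces the entire Seiberg--Witten chapter with ``delicate weight-based Chern-number bookkeeping,'' which is not a proof.
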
 

We prove additionally in Corollary \ref{iso} that unless the non-zero weights of $S^1$-action at $M_{\max}$ and  $M_{\min}$ are $\pm 1$, fixed surfaces in $M$ have genus either $0$ or $g$. In such case, the theorem states that $M$ contains a fixed surface of genus $g$ whose normal bundle has non-positive $c_1$. 

The proof of Theorem \ref{theo} will take us Sections \ref{fixsurfsec}-\ref{sectionproftheo} and uses the existence of the symplectic fibre. However, in the case when the weights of the $S^1$-action on $M$ at all fixed surfaces are different from $\pm 1$ this theorem can be proven in 10 lines. We have the following general statement, which doesn't rely on the symplectic Fano condition.

\begin{lemma}\label{nonunitweights}Let  $(M,\omega)$ be a  symplectic  $6$-manifold with a Hamiltonian $S^1$-action such that $M_{\min} $ and $ M_{\max} $ are surfaces of genus $g>0$. Suppose that the weights of the $S^1$-action at all fixed surfaces are different from $\pm 1$. Then there exists a fixed surface $\Sigma$ with $g(\Sigma)=g$ such that the normal bundle to $\Sigma$ in $M$ has $c_1\le 0$.
\end{lemma}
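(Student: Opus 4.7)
The plan is to construct $\Sigma$ as the maximum of the Hamiltonian on a four-dimensional $S^1$-invariant isotropy submanifold of $M$ emanating from $M_{\min}$. Let $(a,b)$ denote the two positive weights of the $S^1$-representation on $N_{M_{\min}/M}$; by hypothesis $a,b\geq 2$, and effectiveness of the $S^1$-action on $M$ forces $a\nmid b$. Let $N\subset M$ denote the connected component of $M^{\Z/a}$ containing $M_{\min}$; then $N$ is a four-dimensional $S^1$-invariant symplectic submanifold of $M$, and $H|_N$ is a Hamiltonian for the induced $S^1$-action on $N$ with $M_{\min}$ as its minimum.

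By the Audin--Karshon classification of closed Hamiltonian $S^1$-four-manifolds with positive first Betti number, $N$ is equivariantly symplectomorphic to a Hirzebruch-type surface $\mathbb{P}(\OO\oplus L_a)\to M_{\min}$ (possibly with equivariant blow-ups at non-extremal fixed points), where $L_a$ is the weight-$a$ summand of $N_{M_{\min}/M}$. In particular the maximum $\Sigma:=N_{\max}$ of $H|_N$ is the section $\mathbb{P}(L_a)$, which is diffeomorphic to $M_{\min}$ and hence a fixed surface of $M$ of genus $g$; this is the candidate.

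To estimate $c_1$ of the normal bundle, decompose equivariantly $N_{\Sigma/M}=N_{\Sigma/N}\oplus N_{N/M}|_\Sigma$. The first summand is $L_a^{-1}$, contributing $-\deg L_a$. The second summand is the restriction of the $S^1$-equivariant line bundle $\mathcal{L}:=N_{N/M}$ on $N$, whose weight at $M_{\min}$ equals $b$; writing $\mathcal{L}\cong\pi^*L_b\otimes\OO_N(k)$ for the integer $k$ determined by the embedding $N\subset M$, its weight at $\Sigma$ becomes $b-ka$ and the Chern number on $\Sigma$ is $\deg L_b-k\deg L_a$. Thus $c_1(N_{\Sigma/M})=\deg L_b-(k+1)\deg L_a$.

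The main obstacle is to establish $\deg L_b\leq(k+1)\deg L_a$. If this inequality fails for the above $\Sigma$, I would repeat the analogous construction using weight $b$ at $M_{\min}$, and the two symmetric constructions at $M_{\max}$ using its negative normal weights, producing up to four candidate genus-$g$ fixed surfaces, each with a $c_1$ of analogous form. A sign analysis combining these four expressions, using the divisibility $a\mid(b-b^*)$ that comes from $S^1$-equivariance of $\mathcal{L}$ and the lower bound $\geq 2$ on all extremal weights, should force at least one $c_1$ to be non-positive and yield the desired surface.
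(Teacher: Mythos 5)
Your approach is genuinely different from the paper's, and the gap you yourself flag is real and not closable by the sketch you propose. The paper does not try to compute $c_1(N(\Sigma))$ for any single candidate $\Sigma$. Instead it exploits a cycle structure: because all weights at genus-$g$ fixed surfaces are $\neq\pm1$, each such surface lies in exactly two isotropy $4$-manifolds, and by Corollary \ref{genus} each such $4$-manifold has two genus-$g$ surfaces as its extrema. This makes the genus-$g$ fixed surfaces into a graph in which every vertex has degree $2$, hence into a union of cycles $\Sigma_1\to\Sigma_2\to\dots\to\Sigma_n=\Sigma_1$. For each edge $N_i$ joining $\Sigma_i$ and $\Sigma_{i+1}$, Lemma \ref{fourcor} bounds the sum of the degrees of their normal bundles \emph{in $N_i$} by $0$; since the normal bundle of $\Sigma_i$ in $M$ splits equivariantly as the direct sum of its normal bundles in the two isotropy manifolds through it, summing these inequalities around the cycle gives $\sum_i c_1(N(\Sigma_i))\leq 0$, and pigeonhole finishes.

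Your plan instead fixes one candidate $\Sigma=N_{\max}$ and tries to verify the required inequality directly. Even setting aside technical issues in the intermediate steps (the identification $N_{\Sigma/N}\cong L_a^{-1}$ ignores possible equivariant blow-ups, in which case $c_1(N_{\Sigma/N})<-\deg L_a$; the decomposition $\mathcal{L}\cong\pi^*L_b\otimes\OO_N(k)$ and the relation between $k$ and the weights needs to be pinned down; and effectiveness gives $\gcd(a,b)=1$, a sharper fact than $a\nmid b$), the resulting bound $\deg L_b\leq(k+1)\deg L_a$ has no reason to hold for this particular $\Sigma$, and you correctly identify this as the main obstacle. The fallback of producing up to four candidates and doing ``a sign analysis'' does not close the gap: the cycle of fixed surfaces may be arbitrarily long, so the four surfaces you single out need not be linked by any single constraint, and no inequality forcing one of their four $c_1$'s to be non-positive is supplied. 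The missing idea is precisely the global cycle-plus-pigeonhole argument built on Lemma \ref{fourcor}, which replaces pointwise estimates by a telescoping sum of inequalities around a closed loop.
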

\begin{proof} Note first, that each fixed surface $\Sigma$ in $M$ is contained in two isotropy submanifolds of $M$ of dimension $4$ (since the weights at $\Sigma$ are not $\pm 1$). If $g(\Sigma)>0$, then by Corollary \ref{genus} both isotropy submanifolds containing $\Sigma$ contain a different fixed surface of genus $g(\Sigma)$. It follows that one can find in $M$ a cycle $\Sigma_{1}, \Sigma_{2}...,\Sigma_{n} = \Sigma_{1}$ of fixed surfaces, such that any two consecutive $\Sigma_{i},\Sigma_{i+1}$ are contained in a 4-dimensional isotropy submanifold $N_{i}$. Let $N(\Sigma_{i})$ denote the normal bundle of $\Sigma_{i}$ in $M$. Applying Lemma \ref{fourcor} to each $N_{i}$ we deduce that $$ \sum_{i=1}^{n-1} c_{1}(N(\Sigma_{i})) \leq 0.$$ Hence there exists a fixed surface of genus $g$ in $M$ with non-positive normal bundle.
\end{proof}

{\bf Proving Theorem \ref{theo} in general.} The simple proof of Lemma \ref{nonunitweights} breaks down if some of the weights of the $S^1$-action at some fixed surface are equal to $\pm 1$. This is because some isotropy submanifolds from the cycle constructed in Lemma \ref{nonunitweights} are missing. However, we are able to resurrect them to some extent by finding constraints on $M^{S^1}$ and investigating the behaviour of gradient spheres in $M$. An important tool in this analysis is the following {\it weight sum formula}. For symplectic Fano manifolds this proposition appeared previously in \cite[Example 4.3]{CK} for semi-free $S^1$-actions and in \cite[Corollary 4.32]{Cho} for general $S^1$-actions.

\begin{proposition}\label{relcho} \textbf{The weight sum formula}. Let $(M,\omega)$ be a relative symplectic Fano manifold and suppose there is a Hamiltonian $S^{1}$-action on $M$. Then after adding a constant to the Hamiltonian $H$, for any fixed submanifold $F$, the following weight sum formula holds
\begin{equation}\label{weightcho}
H(F)=-w_{1}-\ldots-w_{n} 
\end{equation} 
where $\{w_{i}\}$ is the set of weights for the $S^{1}$-action along $F$. 
\end{proposition}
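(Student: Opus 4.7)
The plan is to apply the relative Fano identity $\langle c_{1}(M),A\rangle=\omega(A)$ to spherical homology classes represented by $S^{1}$-invariant $2$-spheres joining pairs of fixed components of the action, and to evaluate both sides via the Atiyah-Bott localisation theorem. The resulting identity will force the function $F\mapsto H(F)+\Sigma_{F}$, where $\Sigma_{F}:=\sum_{i}w_{i}(F)$ is the sum of $S^{1}$-weights at a point of $F$, to be locally constant on a connected adjacency graph on the components of $M^{S^{1}}$, after which the freedom to translate $H$ by a real constant yields the stated normalisation.

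To produce the required spheres, fix an $S^{1}$-invariant $\omega$-compatible almost complex structure $J$ on $M$ and a fixed submanifold $F\subset M^{S^{1}}$. Decompose its normal bundle under the $S^{1}$-action as $N(F)=\bigoplus_{w\ne 0}N(F)_{w}$. For each nonzero weight $w$ and each $p\in F$, the closure in $M$ of the $\C^{*}$-orbit (via the complexification of $S^{1}$ acting through $J$) of a nonzero vector $v\in N(F)_{w}|_{p}$ is an embedded $S^{1}$-invariant $J$-holomorphic $2$-sphere $S\subset M$ with one pole at $p\in F$ and the other at a point $p'\in F'$ on a different fixed submanifold; the $S^{1}$-weights on $TS$ at the two poles are $w$ and $-w$. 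The class $[S]\in H_{2}(M,\Z)$ lies in the image of the Hurewicz map by construction. Applying the Atiyah-Bott fixed-point formula on $S$ to the equivariantly closed forms $\omega-uH$ and $c_{1}^{S^{1}}(TM)$ yields
\begin{equation*}
\omega([S])=\frac{H(F')-H(F)}{w},\qquad c_{1}([S])=\frac{\Sigma_{F}-\Sigma_{F'}}{w},
\end{equation*}
so the relative Fano condition $c_{1}([S])=\omega([S])$ gives $H(F)+\Sigma_{F}=H(F')+\Sigma_{F'}$.

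To globalise, view the connected components of $M^{S^{1}}$ as the vertices of a graph whose edges record the adjacencies produced above. By Atiyah's connectedness theorem for the moment map of a Hamiltonian $S^{1}$-action, $H$ has no local extrema other than the global $M_{\min}$ and $M_{\max}$ (a local extremum would produce a disconnected level set near it), so every other fixed component has both a positive and a negative weight and hence an edge going both up and down in the graph. A straightforward induction on the value of $H$ shows that the graph is connected, so $H+\Sigma$ takes a single value $C\in\R$ on $M^{S^{1}}$; replacing $H$ by $H-C$ yields $H(F)=-\Sigma_{F}=-w_{1}-\cdots-w_{n}$, as required. The main technical point in the argument is the careful construction of the $S^{1}$-equivariant $J$-holomorphic $2$-spheres near $M^{S^{1}}$; once that local $\C^{*}$-picture is in hand, both the Atiyah-Bott computation and the connectedness of the adjacency graph are routine.
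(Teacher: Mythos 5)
Your argument is essentially the same as the paper's. The $\C^{*}$-orbit closures you construct are precisely the paper's gradient spheres (the gradient flow of $H$ for the metric $\omega(J\cdot,\cdot)$ is the imaginary-time part of the $\C^{*}$-action generated by $J$), the two localisation computations on $S$ are the paper's Lemmas \ref{wsphere} and \ref{gradsphereint}, and your adjacency-graph connectedness step is exactly the paper's "any component of $M^{S^{1}}$ can be joined to $M_{\min}$ by a chain of gradient spheres." One cosmetic caveat: a gradient sphere need not be a smoothly embedded $J$-holomorphic sphere at its poles, only an $S^{1}$-equivariant topological sphere; this costs nothing for the two integrals (the paper's Lemma \ref{gradsphereint} handles it by excising small caps and taking a limit), but "embedded $J$-holomorphic" is a slight overclaim worth flagging.
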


Proposition \ref{relcho} and its converse Proposition \ref{converse} are proven in Section \ref{hamnornsec}. 
From now on we assume that the Hamiltonian $H$ on $M$ is normalized according to Proposition \ref{relcho}, so that weight sum formula (\ref{weightcho}) holds. Proposition \ref{converse}  tells us then that the symplectic fibre ${\cal F}(M)$ of $M$ is a $4$-dimensional symplectic Fano. So we can apply  \cite[Theorem 5.1]{Ka}  to deduce that ${\cal F}(M)$ is symplectomorphic to a toric del Pezzo surface. In particular, since all fixed surfaces of positive genus intersect ${\cal F}(M)$, the number of such fixed surfaces in $M$ is at most $6$.

We split the proof of Theorem \ref{theo} in the following two cases.

{\bf The general case.} In the case when $H(M)\not\subseteq [-3,3]$, we are able to use the idea of Lemma \ref{nonunitweights} and find in $M$ a genus $g$ surface, whose normal bundle has $c_1\le 0$ (see Theorem \ref{aim}).   
It turns out in this case, that all fixed surfaces in $M$ with positive genus  have genus exactly $g$ and intersect ${\cal F}(M)$ in a unique point. Since  ${\cal F}(M)$ is toric, its fixed points have a natural cyclic order, and so, fixed genus $g$ surfaces in $M$ inherit such an order as well. Since some of the weights at fixed genus $g$ surfaces are $\pm 1$,  the isotropy submanifolds connecting these surfaces don't form a complete cycle, some of the submanifolds are missing. Thus, we need to find a replacement for Lemma \ref{fourcor} for missing submanifolds. This is the content of Theorem \ref{isotropy inequality} which relies on Seiberg-Witten theory.

To establish Theorem \ref{isotropy inequality} we first prove restrictions on the fixed submanifolds contained in levels close to $H_{\min}$. This allows us to flow a fixed surface $\Sigma$ with weights $\{-1,n\}$ down towards $M_{\min}$, eventually proving 	Theorem \ref{isotropy inequality} by studying the Euler numbers of the associated orbi-bundles. As a result we obtain a proof of Theorem \ref{theo} along the lines of Lemma \ref{nonunitweights}.

{\bf Small Hamiltonian case.} The case when $H(M)\subseteq [-3,3]$ is treated in Section \ref{smallhamsec}. In this case some of fixed curves of positive genus in $M$ can be of genus greater than $g$ and intersect ${\cal F}(M)$ in two points. Hence, they don't necessarily form a cycle. For this reason, instead of pushing the idea of Lemma \ref{nonunitweights} we prove Theorem \ref{theo} using localisation of $c_1^{S^1}(M)$. This is done in  Section \ref{smallhamsec}.

{\bf Outline of the paper.} We now briefly sum up the plan of the paper. Section \ref{prelimsec} contains definitions and preliminary results used throughout the article. Section \ref{4dimcasection} gives a proof of Theorem \ref{maintheorem}
in the case when $\dim(M_{\min})=4$, $\dim(M_{\max})\ge 2$.

Starting from Section \ref{sectopology} we primarily study Hamiltonian $S^1$-manifolds of dimension $6$ with $M_{\min}$ and $M_{\max}$ surfaces of positive genus $g$. Sections \ref{sectopology}-\ref{sympfibresec} are devoted to the construction of the symplectic fibre. 

In Section \ref{fixsurfsec} we prove Theorem \ref{struc}, which relates the fixed points of the symplectic fibre to the fixed surfaces of positive genus in $M$. Here, we establish the uniqueness of symplectic fibre up to  equivariant symplectomorphism.

In Section \ref{toricdelPezzo} we collect some simple facts about $S^1$-actions on toric del Pezzo surfaces. Section \ref{sectionproftheo} is devoted to the proof of Theorem \ref{theo}. 

In the final Section \ref{hirsection} we finish the proof of Theorem \ref{maintheorem}. All the cases of this theorem apart from the case when $\dim(M_{\min})=4$, $\dim(M_{\max})=0$ are proven by now, so we settle the remaining case. Additionally to this we prove Corollary \ref{chern}. 

The appendix contains the majority of results related to K\"ahler metrics, such as existence and smoothing. 

{\bf Acknowledgements.} We are very grateful to Weiyi Zhang for patiently answering our numerous questions on $J$-holomorphic curves and explaining the results of his paper \cite{Zh}. We would also like to thank Yunhyung Cho, Joel Fine, Chris Gerig, Tian-Jun Li, Heather Macbeth,  Simon Salamon, Song Sun, Valentino Tosatti and Chris Wendl for their help.

\section{Preliminaries} \label{prelimsec}

In this section we set up notation,  and state  various results that are used throughout the article. None of these results are new, with the exception of Proposition \ref{relcho} and Proposition \ref{converse}.

\subsection{Hamiltonian circle actions}\label{prelimperlim}
\textbf{Hamiltonian circle actions}. Let $(M,\omega)$ be a symplectic manifold. A Hamiltonian $S^{1}$-action on $(M,\omega)$ is an action of the group $S^{1}=U(1)$ on $M$ generated by a vector field $X$, with an associated smooth function $H: M \rightarrow \R$ such that $dH = \iota_{X} \omega$. The $S^{1}$-action is always assumed to be non-trivial and effective, unless stated otherwise.  

We often will refer to a manifold with a Hamiltonian $S^{1}$-action as a \textit{Hamiltonian $S^{1}$-manifold}. An $S^{1}$-action is called semi-free if the stabiliser of each point is either $S^{1}$ or the identity.

\textbf{Compatible almost complex structures.} Recall that an almost complex structure $J$ on $(M,\omega)$ is said to be compatible with $\omega$, if $ \omega(J \cdot, \cdot)$ defines a Riemannian metric on $M$. In this article we often assume that a compatible $S^1$-invariant $J$ is chosen on $M$ and don't always say this explicitly.

\textbf{Fixed submanifolds}. Define $M^{S^{1}} \subset M$ to be the subset of points fixed by all elements of $S^{1}$. Note that $M^{S^{1}}$ is the subset where $X=0$ and also the subset where $dH = 0$, i.e. the set of critical points for $H$. Let $\Z_n$ be the subgroup of $S^{1} = U(1)$ generated by $e^{\frac{2 \pi i}{n}}$. Define $M^{\Z_n}$ as the subset of points in $M$ that are fixed by $\Z_n$.

\begin{lemma} \cite[Lemma 5.53-5.54]{MS}\label{manysimpleproperties}
Let $(M,\omega)$ be a compact symplectic manifold with a Hamiltonian $S^{1}$-action generated by the Hamiltonian $H:M \rightarrow \R$. Then the following holds.

\begin{itemize}
\item There exists an almost complex structure $J$ compatible with $\omega$ and preserved by the $S^{1}$-action.

\item $H$ is a Morse-Bott function.

\item For any positive integer $n$,  $M^{\Z_n}$ is a union of symplectic submanifolds (of possibly different dimensions).

\item  $M^{S^{1}}$ is a union of symplectic submanifolds (of possibly different dimensions).

\item Let $H_{\min},H_{\max}$ be the minimum/maximum values for $H$. The corresponding level sets $M_{\min}$ and $M_{\max}$ are connected symplectic submanifolds in $M$. 

\end{itemize} 
\end{lemma}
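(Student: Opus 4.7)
The plan is to handle the five bullet points essentially independently, with the $S^1$-invariant compatible almost complex structure of the first bullet playing a fundamental role in the others.

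First I would construct an $S^1$-invariant $\omega$-compatible almost complex structure $J$. The space $\mathcal{J}(M,\omega)$ of $\omega$-compatible almost complex structures is non-empty and contractible, and the $S^1$-action on $M$ pulls back to an action on $\mathcal{J}(M,\omega)$. Start with any $J_0\in \mathcal{J}(M,\omega)$ and average over $S^1$ using a contraction of $\mathcal{J}(M,\omega)$; more concretely, one writes $g_0=\omega(\cdot,J_0\cdot)$, averages the metric over $S^1$ to get an $S^1$-invariant Riemannian metric $g$, and then polar-decomposes the resulting compatible pair to extract a genuine almost complex structure $J$ that is both $\omega$-compatible and $S^1$-invariant.

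Next, for the Morse-Bott property, linearise at a fixed point $p\in M^{S^1}$. By the equivariant Darboux theorem, a neighbourhood of $p$ is $S^1$-equivariantly symplectomorphic to a neighbourhood of the origin in $(T_pM,\omega_p)$ with the linear $S^1$-action. Decompose $T_pM$ into weight spaces $V_0\oplus\bigoplus_{k\ne 0}V_k$ under the isotropy representation; each $V_k$ is $\omega$-symplectic (since the action preserves $\omega$), and in local Darboux-weight coordinates $(z_k)$ one finds $H=H(p)+\sum_{k\ne 0}\tfrac{k}{2}|z_k|^2+O(|z|^3)$ after normalisation. The zero-weight space $V_0$ is the tangent space to $M^{S^1}$ at $p$, so $H$ is non-degenerate in the normal direction: this is exactly the Morse-Bott condition and simultaneously identifies $M^{S^1}$ as smooth and symplectic.

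For the third and fourth bullets, fix the invariant $J$ from the first step. Any subgroup $H\subseteq S^1$ (in particular $\Z_n$ and $S^1$ itself) acts by $J$-linear symplectomorphisms. At a point $p\in M^H$, the tangent space $T_pM$ carries a $J$-linear representation of $H$, and the $H$-fixed subspace $(T_pM)^H$ is $J$-invariant and hence symplectic (being $J$-invariant for a compatible $J$ makes $\omega$ automatically non-degenerate on it). By the equivariant tubular neighbourhood theorem applied to the $H$-action, $M^H$ is locally the fixed set of a linear action, so it is a smooth submanifold whose tangent space at each point is $(T_pM)^H$; this gives the stated symplectic submanifold structure, with different connected components possibly having different dimensions according to which weights of $H$ vanish.

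Finally, for the fifth bullet, connectedness of $M_{\min}$ and $M_{\max}$, I would invoke Atiyah's connectedness theorem: the level sets of a Hamiltonian associated with a torus action on a compact connected symplectic manifold are connected, so in particular $H^{-1}(H_{\min})=M_{\min}$ and $H^{-1}(H_{\max})=M_{\max}$ are connected. Alternatively, one can argue directly from Morse-Bott theory: all critical submanifolds of $H$ have even index and coindex (each negative eigenspace of the Hessian is a sum of weight spaces and is therefore $J$-invariant and even-dimensional), so in particular there is no critical submanifold of index $1$, and the standard Morse-theoretic argument shows that sub-level sets remain connected as one crosses each critical value — giving connectedness of $M_{\min}$, and symmetrically of $M_{\max}$ by applying the same reasoning to $-H$. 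That each is symplectic is already covered by the previous bullet. The main obstacle is the connectedness statement, whose standard proof (Atiyah) uses a non-trivial induction on the dimension of the torus together with the local normal form near critical submanifolds; the other items are essentially linear algebra combined with equivariant normal forms.
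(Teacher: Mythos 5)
Your proposal is correct and follows exactly the standard line of argument; the paper itself gives no proof, citing only \cite[Lemma 5.53--5.54]{MS}, and your sketch is essentially a reconstruction of that reference. Each step (averaging the metric and polar-decomposing for the invariant $J$, the equivariant Darboux/weight-space normal form $H=H(p)+\sum_{k\neq 0}\tfrac{k}{2}|z_k|^2+O(|z|^3)$ for Morse--Bott and for identifying $M^{S^1}$, $J$-invariance of the fixed subspace of a finite or compact subgroup for the symplecticity of $M^{\Z_n}$ and $M^{S^1}$, and Atiyah connectedness or the even-index Morse--Bott argument for $M_{\min}$, $M_{\max}$) is the standard one, and nothing is missing.
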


\begin{definition} By a \emph{fixed surface} in $M$ we always refer to a $2$-dimensional component of $M^{S^{1}}$. \end{definition}

\textbf{Weights of the action along a fixed submanifold.} Fix an invariant and compatible almost complex structure $J$ on $M$. Let $F$ be a connected component of $M^{S^{1}}$, which is a symplectic submanifold by Lemma \ref{manysimpleproperties}. For each $p \in F$, $S^1$ acts linearly on $T_{p}M$ and we can split $T_{p}M$ into irreducible representations of the form $V_{k} = \C$ $(k \in \Z)$ where $S^{1}$ acts by $z.w = z^{k}w$. It is not hard to see that the type of splitting is independent of  $p\in F$ and we call the numbers $k$  the {\it weights} at $F$. In the case when $F$ has positive dimension some of its weights are equal to zero and we will omit them from time to time.



\textbf{The gradient flow and gradient spheres.} Fix again an invariant $J$ on $M$. Denote by $g $ the Riemannian metric $ \omega(J \cdot,\cdot)$. The  integral curves of the gradient vector field $\nabla_{g}H$ are called {\it gradient flow lines}. 

\begin{definition}
Let $O$ be a non-trivial orbit of the $S^{1}$-action, and let $\tilde{O}$ be the union of all gradient flow lines intersecting $O$. Then the closure $S$ of $\tilde{O}$ is homeomorphic to a $2$-sphere, and call $S$ a \emph{gradient sphere}. The {\it weight} of $S$ is defined to be the order of the stabilizer of $O$. Denote by $S_{\min},S_{\max} \in M$  the fixed points on $S$ where $H|_{S}$ attains its minimum and maximum respectively.
\end{definition}
A gradient sphere $S$ may not be smooth at $S_{\min}$ or $S_{\max}$. However, there always exists an $S^1$-equivariant homeomorphism $\varphi: \mathbb S^2\to S$, such that $\varphi$ is a diffeomorphism outside of the two fixed points and $\varphi^*(\omega)$ extends to a symplectic form on $\mathbb S^2$.

\textbf{Isotropy submanifolds}.
An \emph{isotropy submanifold} $N\subset M$ is a connected component of $M^{\Z_{n}}$ for some $n \geq 2$ that is not contained entirely in $M^{S^{1}}$.

\begin{itemize}
\item  We define the \emph{weight} of $N$ to be the largest $n$ such that $N \subseteq M^{\Z_{n}}$.

\item Define $N_{\min},N_{\max}$ to be the subsets where $H|_{N}$ attains its minimum and maximum respectively.

\end{itemize}
{\bf Trace of $S^1$-invariant submanifolds.} 
\begin{definition} Let $N\subset M$ be an $S^1$-invariant submanifold of $M$. For  $c \in [H_{\min}, H_{\max}]$ define the \emph{trace} $N_{c}$ to be the image of $N \cap H^{-1}(c)$ under the quotient map $Q: H^{-1}(c) \rightarrow M_{c} = H^{-1}(c)/S^{1}$.  We will say that $N_c$ is {\it traced} by $N$ in $M_c$.
\end{definition}

\textbf{Reduced spaces and the associated orbi-$S^{1}$-bundle}. Let $c$ be a regular value of $H$, the quotient space 
\begin{displaymath}
M_{c} = H^{-1}(c)/S^{1}
\end{displaymath}
inherits a natural symplectic orbifold structure from $M$ \cite[Section 5.4]{MS}. Denote the symplectic orbifold form by $\omega_{c}$. $M_{c}$ is called the \textit{reduced space} or \textit{symplectic quotient} at $c$.

Following \cite[Chapter 5, Section 2.4]{A}, let us recall  how to define the Euler class \begin{displaymath}
e(H^{-1}(c)) \in H^{2}(M_{c},\Z).
\end{displaymath}  Note first that all orbits in $H^{-1}(c)$ have finite cyclic stabilizer groups, and since $H^{-1}(c)$ is compact the order of such stabilizer groups is bounded. Choose $N>0$ divisible by all the orders of such stabilizers. Then $H^{-1}(c)/\Z_{N}$ is a principal $S^{1}$-bundle. Define \begin{displaymath}
e(H^{-1}(c)) = \frac{1}{N}e(H^{-1}(c)/\Z_{N})
\end{displaymath}
where on the right hand side $e$ denotes the usual Euler class of a principal $S^{1}$-bundle.

\textbf{The gradient map}. Let $(M,\omega,g)$ be a Hamiltonian $S^1$-manifold with a compatible $S^1$-invariant metric $g$. Let $c_1,c_2$ be two values of $H$  in $(H_{\min}, H_{\max})$.
 
\begin{definition}\label{gradmapdef} The partially defined {\it gradient map} $gr_{c_2}^{c_1}:M_{c_1}\dashrightarrow M_{c_2}$ is constructed as follows. For $x_1\in X_{c_1}$ we set $gr_{c_2}^{c_1}(x_1)=x_2$ in case when there is a gradient flow line in $M$ whose intersections with $H^{-1}(c_1)$  and $H^{-1}(c_2)$ projects to $x_1$ and $x_2$ respectively. 
\end{definition}

\begin{remark}\label{gradmapremark} Assume that $c_{1} < c_{2}$. In the case when a critical value of $H$ lies in  the interval $[c_1,c_2]$, the map $gr_{c_2}^{c_1}$ is not defined on a closed subset of $M_{c_1}$ of positive codimension. However, if no such critical values are contained in $[c_1,c_2]$, the map is a diffeomorphism of orbifolds. In the case when the only  critical value in $[c_1,c_2]$ is $c_2$, the partially defined gradient map can be extended to a continuous map   $gr^{c_{1}}_{c_{2}} : M_{c_1} \rightarrow M_{c_{2}}$.
\end{remark}

\textbf{The Duistermaat-Heckman theorem}. Suppose $(a,b)$ is an interval of regular values for $H$. Fixing $c_{0} \in (a,b)$, we consider the smooth family of symplectic forms $(gr^{c_{0}}_{c})^{*}(\omega_{c})$ on $M_{c_{0}}$ (which we refer to also as $\omega_{c}$ for brevity).

\begin{lemma} \label{DH} Duistermaat-Heckman Theorem. The cohomology class of the symplectic form $[\omega_{c}] \in H^{2}(M_{c_{0}},\R)$ varies linearly as a function of $c$ and the derivative of this path is $-e(H^{-1}(c_{0}))$. 
\end{lemma}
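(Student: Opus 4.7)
My plan is to exhibit a neighborhood of $H^{-1}((a,b))$ as an equivariant product, then modify the pulled-back symplectic form by an exact 2-form so that the result is basic and descends to a closed 2-form on the cylinder $M_{c_0}\times(a-c_0,b-c_0)$. Closedness on the cylinder will force the slice restrictions to be cohomologous, yielding the linear relation between $[\omega_c]$ and $-e(H^{-1}(c_0))$ directly.

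In detail, fix a compatible $S^1$-invariant almost complex structure $J$ and the associated metric $g = \omega(J\cdot,\cdot)$. On $H^{-1}((a,b))$ the vector field $Y := \nabla H/|\nabla H|^2$ is $S^1$-invariant and satisfies $dH(Y)=1$, and its flow $\psi_t$ is $S^1$-equivariant with $\psi_t(H^{-1}(c_0)) = H^{-1}(c_0+t)$; on quotients it descends precisely to the gradient map $gr_{c_0+t}^{c_0}$. Setting $\Psi(p,t):=\psi_t(p)$ gives an $S^1$-equivariant diffeomorphism $H^{-1}(c_0)\times(a-c_0,b-c_0) \to H^{-1}((a,b))$ with $H\circ\Psi = c_0+t$. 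Now pick an $S^1$-invariant connection 1-form $\alpha$ on the orbi-bundle $H^{-1}(c_0)\to M_{c_0}$ (i.e.\ $\iota_X\alpha = 1$ and $\mathcal{L}_X\alpha = 0$), which exists by averaging on the finite cover $H^{-1}(c_0)/\Z_N$ used in the paper's definition of $e$, and pull $\alpha$ back along the first projection of the product.

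Now form $\eta := \Psi^*\omega + d(t\,\alpha) = \Psi^*\omega + dt\wedge\alpha + t\,d\alpha$ on the product. A direct computation gives $\iota_X\Psi^*\omega = \Psi^*dH = dt$ and $\iota_X d(t\alpha) = -dt$ (using $\iota_X\alpha = 1$ and $\iota_X d\alpha = \mathcal{L}_X\alpha - d\iota_X\alpha = 0$), so $\iota_X\eta = 0$; since $\eta$ is also $S^1$-invariant, it is basic and descends to a closed 2-form $\bar\eta$ on $M_{c_0}\times(a-c_0,b-c_0)$. On the slice at $t$ one reads off $\bar\eta|_t = (gr^{c_0}_{c_0+t})^*\omega_{c_0+t} + t\cdot\overline{d\alpha}$, where $\overline{d\alpha}$ is the basic 2-form on $M_{c_0}$ obtained by descending $d\alpha$; by the paper's definition of the orbifold Euler class via the finite cover, $[\overline{d\alpha}] = e(H^{-1}(c_0))$ in $H^2(M_{c_0},\R)$ (with the sign convention fixed by the choice of $\alpha$).

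Because $\bar\eta$ is closed on the cylinder, its restrictions to different slices are cohomologous. Comparing the slice at $t$ with the slice at $0$ gives $[(gr^{c_0}_{c_0+t})^*\omega_{c_0+t}] + t\cdot e(H^{-1}(c_0)) = [\omega_{c_0}]$, which rearranges to the claimed linear formula. The only delicate point is bookkeeping: one must verify that the sign and normalization of $\overline{d\alpha}$ match the paper's definition of $e$ (which divides by $N$ on a cyclic cover), and that the construction of $\alpha$ and the descent respect the orbi-structure. Both are routine, since the connection form is constructed naturally on the cover and the $1/N$ factor in the definition of $e$ exactly absorbs the passage between $H^{-1}(c_0)$ and $H^{-1}(c_0)/\Z_N$.
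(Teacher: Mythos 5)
The paper states Lemma \ref{DH} without proof, citing it as the classical Duistermaat--Heckman theorem (via \cite{A}), so there is no paper argument to compare against; your proof must be judged on its own.

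Your argument is correct in structure and is in fact the standard normal-form proof of Duistermaat--Heckman: flow along the normalized gradient $Y=\nabla H/|\nabla H|^2$ to build an $S^1$-equivariant product $\Psi$, correct $\Psi^*\omega$ by $d(t\alpha)$ for a connection $1$-form $\alpha$ so that $\eta=\Psi^*\omega+d(t\alpha)$ is basic and closed, descend to a closed form $\bar\eta$ on the cylinder $M_{c_0}\times(a-c_0,b-c_0)$, and conclude linearity of $[\omega_c]$ from the fact that slice restrictions of a closed form are cohomologous. The computations $\iota_X\Psi^*\omega=\Psi^*dH=dt$ and $\iota_Xd(t\alpha)=-dt$ are right, and the passage to the $N$-fold cyclic cover to identify $[\overline{d\alpha}]$ with $\pm e(H^{-1}(c_0))$ is handled correctly (the $1/N$ in the orbi-Euler class matches the factor by which the connection form rescales on the cover, so the constant indeed cancels). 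The one genuinely delicate point is the one you flag yourself: the sign in $[\overline{d\alpha}]=\pm e(H^{-1}(c_0))$. Under the Chern--Weil normalization (period-$1$ circle, $\iota_X\alpha=1$) one gets $[\overline{d\alpha}]=-e$, and whether the resulting derivative is $-e$ then depends on whether one takes $\iota_X\omega=dH$ or $\iota_X\omega=-dH$; the paper writes the former in Section \ref{prelimperlim}, but elsewhere (e.g.\ Lemma \ref{DuisHeckSphere}, ``$M_{\min}$ has positive weights'') it behaves as though the latter is meant, so the discrepancy is really a sign ambiguity in the paper's conventions rather than a gap in your argument. You were right to call it out, and aside from that caveat the proof is complete.
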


\begin{lemma}\label{DuisHeckSphere} Let $M$ be $4$-dimensional Hamiltonian $S^1$-manifold such that $M_{\min}$ is an isolated fixed point. Let $a,b>0$ be the weights of the action at $M_{\min}$ and let $m=H_{\min}$.

\begin{enumerate} 
\item for $\varepsilon > 0 $ sufficiently small \begin{displaymath}
\langle e(H^{-1}(m + \varepsilon)),[M_{m + \varepsilon}] \rangle = \frac{-1}{ab}.
\end{displaymath}

\item Suppose furthermore that $M$ is a compact manifold with isolated fixed points. For any $c>0$, $\omega_{c}([M_{m+c}])\le \frac{c}{a\cdot b}$. Moreover the equality holds if and only if  all values in $(m, m+c)$ are regular values for $H$. 
\end{enumerate}
\end{lemma}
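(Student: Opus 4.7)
Both parts are consequences of the Duistermaat--Heckman Theorem (Lemma \ref{DH}) combined with the local equivariant model at the isolated minimum. For Part 1, my plan is to apply the equivariant symplectic neighbourhood theorem at $M_{\min}$ to obtain an $S^{1}$-equivariant symplectomorphism from a neighbourhood of $M_{\min}$ onto a neighbourhood of $0\in\C^{2}$ with its standard symplectic form and the linear $S^{1}$-action of weights $(a,b)$. In these coordinates $H=m+\tfrac{1}{2}(a|z_{1}|^{2}+b|z_{2}|^{2})$, so for small $\varepsilon>0$ the level $H^{-1}(m+\varepsilon)$ is a weighted ellipsoid and $M_{m+\varepsilon}$ is identified with the weighted projective line $\CP^{1}(a,b)$, the associated orbi-$S^{1}$-bundle being the weighted Hopf fibration. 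Its rational orbifold Euler number equals $-1/(ab)$, which gives the claim. This last fact can be verified by unwinding the definition in the excerpt with $N=ab$ and computing the Chern number of the resulting smooth lens-space bundle, or equivalently by computing the Liouville volume of the ellipsoid $\{H\le m+\varepsilon\}\subset\C^{2}$ and applying Lemma \ref{DH}.

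For Part 2, set $f(c):=\omega_{m+c}([M_{m+c}])$ on $[0,H_{\max}-m]$. It is continuous with $f(0)=0$, and by Lemma \ref{DH} it is affine on each regular interval of $H$ with slope $-\langle e(H^{-1}(m+c)),[M_{m+c}]\rangle$. Part 1 gives this slope equal to $1/(ab)$ on the first regular interval starting from $c=0$. The plan is to show that $f$ is concave on $[0,H_{\max}-m]$; then $f(c)\le c/(ab)$ follows by comparison with the tangent line at the origin, and equality at $c$ is equivalent to $f$ being affine on $[0,c]$, i.e.\ to $(m,m+c)$ containing no critical values of $H$.

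Concavity reduces to two claims. First, every interior critical point of $H$ is a saddle. Indeed, an isolated fixed point $p$ at an interior level $H(p)$ with both weights of the same sign would, by the local model, be an isolated point of the level set $H^{-1}(H(p))$, making $\{p\}$ clopen in $H^{-1}(H(p))$; since the connected manifold $M\setminus\{p\}$ cannot be split into the non-empty open subsets $\{H<H(p)\}$ and $\{H>H(p)\}$, the level set $H^{-1}(H(p))$ must contain points other than $p$ and hence is disconnected, contradicting Atiyah's connectivity theorem. Secondly, across a critical value supporting a single isolated fixed point with signed weights $(p_{1},p_{2})$, the slope of $f$ jumps by $1/(p_{1}p_{2})$; this is extracted from the same local equivariant model as in Part 1 applied on both sides of the critical value. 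For a saddle $p_{1}p_{2}<0$, so the slope of $f$ strictly decreases at every interior critical value. Together with the initial slope $1/(ab)$ this establishes concavity of $f$ and completes the proof. The step requiring the most care is the slope-change formula in the saddle case, where the local level sets are one-sheeted hyperboloids and a short direct calculation is required.
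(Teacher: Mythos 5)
Your argument is correct, but it takes a genuinely different route from the paper's. For Part 2 the paper leans on Karshon's classification theorem \cite[Theorem 5.1]{Ka}: since $M$ is a compact Hamiltonian $S^1$-manifold of dimension $4$ with isolated fixed points, $(M,\omega)$ is a toric surface and $H = L\circ\mu$ for a linear $L$; concavity of $f(c)=\omega_{m+c}([M_{m+c}])$ is then read off from convexity of the toric moment polytope \cite[Theorem 5.47]{MS}, the slope dropping exactly when the level sweeps past a vertex. You instead establish concavity intrinsically, without invoking Karshon, by combining (i) the Guillemin--Lerman--Sternberg wall-crossing formula --- already stated as Theorem \ref{GLS} in the paper, which in dimension $4$ with isolated fixed points gives the slope jump $\sum_i 1/(w_1^{(i)} w_2^{(i)})$ exactly, with no higher-order correction --- and (ii) the topological observation that every non-extremal isolated fixed point is a saddle (so each $w_1^{(i)} w_2^{(i)}<0$), which you deduce from connectedness of level sets. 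Your approach is more elementary and self-contained relative to what the paper has already set up; the paper's is shorter given that Karshon's theorem is already a working tool there. Two small remarks: you should phrase the wall-crossing step to allow several isolated fixed points at the same critical level (GLS sums over them, and each saddle contributes a negative jump, so the conclusion is unchanged); and the ``short direct calculation'' you flag in the saddle case is most cleanly dispatched by citing Theorem \ref{GLS} rather than re-deriving it from the non-compact hyperboloid level sets of the local model.
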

\begin{proof}
For 1 see \cite[Exercise 3.4.5]{A}.

To prove 2, first note that by \cite[Theorem 5.1]{Ka} $(M,\omega)$ is toric and the Hamiltonian is obtained by composing the toric moment map $\mu: M \rightarrow \R^{2}$ with a linear map $L: \R^{2} \rightarrow \R$. By Lemma \ref{DH}, for $c$ sufficiently close to $m$, we have:$$\frac{d \omega_{c}([M_{m+c}])}{dc} = \frac{1}{ab}.$$ By the convexity of the toric moment polytope of $M$ \cite[Theorem 5.47]{MS} this derivative is decreasing in terms of $c$, and changes whenever the corresponding level contains a vertex of the moment polytope. Hence 2 follows. 
\end{proof}

\textbf{Duistermaat-Heckman function.} 

\begin{definition} Let $(M,\omega)$ be a symplectic $2n$-manifold with a Hamiltonian $S^1$-action and let $H$ be the Hamiltonian. The function 
$$DH(t)=\int_{M_t}\frac{1}{(n-1)!}\omega_t^{n-1}$$
is called the \emph{Duistermaat-Heckmann function}.
\end{definition}

The Duistermaat-Heckman function is piecewise polynomial, and it is polynomial of degree at most $n-1$ on each interval in the complement to the set of critical values of $H$. The following formula due to Guillemin, Lerman, and Sternberg \cite{GLS} describes the behaviour of $DH$ near  critical values of $H$, see as well \cite[Theorem 3.2]{Cho2}.

\begin{theorem}\cite{GLS}
Assume that $c$ is a critical value of $H$ and $F_1,\ldots, F_k$ are connected components of $M^{S^1}$ in the level set $H=c$. Then the jump of $DH(t)$ at $c$ is given by
\begin{equation}\label{DuisHecMeasure}
DH_+-DH_-=\sum_{i=1}^k\frac{{\rm vol}(F_i)}{(d_i-1)!\Pi_j \omega_j(F_i)}(t-c)^{d_i-1}+O((t-c)^{d_i}),
\end{equation}
where $d_i$ is half the  codimension of $F_i$ in $M$, and  $\omega_j(F_i)$ are the weights of the normal bundle to $F_i$.
\end{theorem}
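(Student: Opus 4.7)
The plan is to reduce the jump of $DH$ at $c$ to a local computation near each fixed component $F_i$, using the equivariant symplectic neighbourhood theorem to identify the geometry there with a linear model. The starting point is the identity $DH(t)\,dt = H_*(\omega^n/n!)$, viewing the Duistermaat-Heckman function as the density of the pushforward of the Liouville measure. By the Duistermaat-Heckman theorem recalled in Lemma \ref{DH}, $DH$ is piecewise polynomial of degree at most $n-1$ on intervals of regular values, so the jump at a critical value $c$ is a purely local quantity, supported on an arbitrarily small equivariant neighbourhood of $H^{-1}(c)\cap M^{S^1}=\bigsqcup_i F_i$. An equivariant partition of unity then splits the jump additively over the components $F_i$, and each contribution can be analysed independently.

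For a given $F_i$ I would use the equivariant Darboux-Weinstein theorem to model an invariant tubular neighbourhood of $F_i$ by a neighbourhood of the zero section in the symplectic normal bundle $\pi\colon N_i\to F_i$. This bundle decomposes equivariantly into complex line bundles $L_1\oplus\cdots\oplus L_{d_i}$ on which $S^1$ acts with weight $\omega_j(F_i)$, and in an adapted gauge the Hamiltonian reads $H=c+\tfrac12\sum_j\omega_j(F_i)\,|z_j|^2+O(|z|^4)$, while the symplectic form agrees with $\pi^*\omega_{F_i}$ along the zero section. Assume first that all weights $\omega_j(F_i)$ are positive, so that $F_i$ is a local minimum at level $c$. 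The fibrewise reduction $\{Q=s\}/S^1$ of $Q=\tfrac12\sum_j\omega_j(F_i)|z_j|^2$ on $\C^{d_i}$ is then a weighted projective space, and a direct calculation (change of variables $u_j=\omega_j|z_j|^2/2$) gives its symplectic volume as $s^{d_i-1}/((d_i-1)!\prod_j\omega_j(F_i))$. Applying Fubini along $\pi$ and integrating this fibrewise volume over $F_i$ against $\omega_{F_i}^{n-d_i}/(n-d_i)!$ yields exactly the claimed leading jump $\mathrm{vol}(F_i)(t-c)^{d_i-1}/((d_i-1)!\prod_j\omega_j(F_i))$. The $O(|z|^4)$ remainder in $H$ and the curvatures of the line bundles $L_j$ contribute only starting at order $(t-c)^{d_i}$, giving the stated $O((t-c)^{d_i})$ error.

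The main obstacle is the saddle case, when the weights $\omega_j(F_i)$ are of mixed sign. The level sets $\{Q=s\}$ of the local quadratic Hamiltonian are then non-compact within a fibre of $N_i$ and one cannot directly describe the fibrewise reduction as a weighted projective space. I would handle this either by splitting $Q=Q_+-Q_-$ into its positive- and negative-definite parts and realising the fibrewise pushforward as a convolution of two one-sided distributions whose leading asymptotics are governed by $\prod_{\omega_j>0}\omega_j(F_i)$ and $\prod_{\omega_j<0}|\omega_j(F_i)|$ respectively, or, perhaps more cleanly, by appealing to the Atiyah-Bott-Berline-Vergne equivariant localisation formula applied to the equivariantly closed form $e^{\omega+\xi H}$: localising $\int_M e^{\omega+\xi H}$ to $M^{S^1}$ and Fourier-inverting in $\xi$ recovers $DH(t)$ as a sum of distributional contributions from the $F_i$, each carrying precisely the stated leading jump (with the full signed product $\prod_j\omega_j(F_i)$ in the denominator) together with smooth corrections of order $(t-c)^{d_i}$. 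Either route needs careful bookkeeping to identify the signed product correctly and to confirm that subleading terms really enter only at order $(t-c)^{d_i}$; this is the technical heart of the argument.
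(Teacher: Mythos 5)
The paper does not prove this statement; it cites it directly from Guillemin--Lerman--Sternberg (and references a restatement in Cho's paper), so there is no in-paper argument to compare against. Your sketch is broadly in line with the original GLS approach: reduce the jump to a local computation at each $F_i$ via an equivariant tubular neighbourhood, pass to the linearised model, and either compute directly or localise. The positive-definite (minimum) computation is correct: the $u_j = \tfrac12\omega_j|z_j|^2$ substitution gives exactly the simplex volume $s^{d_i-1}/((d_i-1)!\prod_j\omega_j)$, and the curvature of the $L_j$'s and the $O(|z|^4)$ error in $H$ feed in only at the next order, since the leading coefficient is $C^0$-stable under such perturbations of the Darboux model.

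The mixed-sign case, which you correctly flag as the technical heart, is where your sketch needs more than bookkeeping. The convolution route, as stated, does not converge: writing $Q=Q_+-Q_-$, the fibrewise pushforwards of Liouville measure are (up to constants) $s_+^{p-1}/(p-1)!$ and $(-s)_+^{q-1}/(q-1)!$, and their convolution
$\int_{\max(0,t)}^{\infty} s^{p-1}(s-t)^{q-1}\,ds$
diverges, reflecting that the level sets of a hyperbolic quadratic Hamiltonian on $\C^{d_i}$ have infinite volume. To make this route work one must cut off to a ball, extract the \emph{jump} $DH_+-DH_-$ (rather than the measure itself), and show it is independent of the cutoff to the stated order --- a real argument, not just bookkeeping. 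The ABBV route avoids this: localising $\int_M e^{i\xi H}\,\omega^n/n!$ gives $\sum_i \mathrm{vol}(F_i)e^{i\xi c}/\big((i\xi)^{d_i}\prod_j\omega_j(F_i)\big)\cdot(1+O(\xi^{-1}))$, and the discontinuity of $DH$ at $c$ is read off from the distributional Fourier inverse of $e^{i\xi c}/(i\xi)^{d_i}$, which carries the signed denominator $\prod_j\omega_j(F_i)$ automatically. This is essentially what GLS do, and it is the route I would recommend writing out; as a sketch your proposal is on the right track, but neither of your two options for the saddle case is carried far enough to constitute a proof.
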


Note that for zero-dimensional $F_i$ (i.e., isolated fixed points) we have ${\rm vol}(F_i)=1$, and moreover the second term in the right hand side of (\ref{DuisHecMeasure}) is missing, since $DH(t)$ is piecewise of degree $n-1$. As a result we have the following corollary.

\begin{corollary}\label{isoDuisCor} Suppose that all fixed points in $M$ that lie in the set $H>s$ are isolated, and denote by $p_1,\ldots, p_k$ these points. Then we have
\begin{equation}\label{isoDuis}
\int_{M_s} \omega_s^{n-1}=-\sum_{i=1}^k\frac{(s-H(p_i))^{n-1}}{\Pi_j \omega_j(p_i)}.
\end{equation} 
\end{corollary}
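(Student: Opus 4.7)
The plan is to integrate the jump formula (\ref{DuisHecMeasure}) downward from above $H_{\max}$, where $DH(t) \equiv 0$ trivially since $M_{t}$ is empty. The hypothesis that every fixed point in $\{H > s\}$ is isolated is precisely what makes this integration clean: for each such isolated fixed point $p$ at level $c = H(p)$, one has $d_{p}=n$ and ${\rm vol}(p)=1$, and as noted in the paragraph after the statement of the theorem, the remainder $O((t-c)^{d_{p}})$ is absent because $DH(t)$ is already piecewise polynomial of degree at most $n-1$. Thus each crossing contributes an exact polynomial identity
$$DH_{+}(t) - DH_{-}(t) \;=\; \frac{(t-c)^{n-1}}{(n-1)!\,\prod_{j}\omega_{j}(p)}.$$

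Starting from $DH(t)\equiv 0$ on $(H_{\max},\infty)$ and applying this identity once for each isolated fixed point $p_{i}$ as we cross the critical value $H(p_{i})$ from above, the piecewise polynomial representing $DH$ on the interval of regular values containing $s$ is obtained by subtracting one such term for each $p_{i}$ with $H(p_{i})>s$. Evaluating at $t=s$ gives
$$DH(s) \;=\; -\sum_{i=1}^{k}\frac{(s-H(p_{i}))^{n-1}}{(n-1)!\,\prod_{j}\omega_{j}(p_{i})}.$$
Multiplying by $(n-1)!$ and using the definition of $DH$ yields formula (\ref{isoDuis}).

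The argument is essentially immediate from the preceding theorem; there is no genuinely hard step. The one point to pin down carefully is that the telescoping is legitimate, i.e.\ that at each critical value between $s$ and $H_{\max}$ the two polynomial pieces of $DH$ differ by the \emph{exact} polynomial above with no correction — this is what the isolation hypothesis guarantees. A minor tacit assumption is that $s$ be a regular value of $H$ so that $\int_{M_{s}}\omega_{s}^{n-1}$ is interpreted as the usual reduced volume; if desired one can also verify sign consistency, e.g.\ for $s$ just below an isolated local maximum $p$, where all $n$ weights are negative, the single term on the right of (\ref{isoDuis}) has sign $-(-1)^{n-1}/(-1)^{n}=+1$, matching positivity of the reduced volume.
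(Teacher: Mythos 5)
Your proof is correct and is exactly the argument the paper intends: the paper gives no explicit proof, only the remark that for isolated fixed points the GLS jump formula becomes exact (no $O$-term), and then states "as a result" — i.e., telescope downward from $DH\equiv 0$ above $H_{\max}$. Your spelled-out telescoping and the sign check are both accurate; the only minor caveat is that the paper in fact applies the corollary at critical levels (e.g.\ $s=0$ in Section~\ref{theorem04sec}), where $\int_{M_s}\omega_s^{n-1}$ is understood via the one-sided limit/continuity of $DH$ rather than requiring $s$ to be regular.
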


\textbf{$\pi_1$ of Hamiltonian $S^1$-manifolds}. Here we collect some results that are important for studying the fundamental group of Hamiltonian $S^{1}$-manifolds.

\begin{theorem} \cite{Li} \label{Li}
Let $(M,\omega)$ be a compact symplectic manifold with a Hamiltonian $S^{1}$-action and Hamiltonian $H:M \rightarrow \R$. Then the inclusion of $M_{\min}$ and $M_{\max}$ into $M$ induces an isomorphism on $\pi_{1}$. In addition, $\pi_{1}(M_{x}) \cong \pi_{1}(M)$ for all $x \in H(M)$. 
\end{theorem}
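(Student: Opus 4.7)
The plan is to combine the Morse--Bott theory of $H$ (and $-H$) with the long exact homotopy sequence of the orbi-$S^{1}$-fibration over a regular reduced space. The essential observation is that the Morse--Bott index $\lambda(F)$ of every non-extremal fixed component $F \subset M^{S^{1}}$ satisfies $\lambda(F) \ge 2$, and likewise for the coindex. Evenness of $\lambda(F)$ follows because, for an $S^{1}$-invariant $\omega$-compatible $J$ (which exists by Lemma~\ref{manysimpleproperties}), the negative normal bundle of $F$ is a $J$-complex subbundle of $TM|_{F}$. Positivity of $\lambda(F)$ for $F \ne M_{\min}$ follows from Atiyah's connectedness theorem for Hamiltonian actions: the set of local minima of $H$ is connected, and hence equal to $M_{\min}$, so any other $F$ must carry at least one negative weight. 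The symmetric statement for the coindex comes from running the same argument for $-H$.

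Given this, the first claim is immediate from the standard Morse--Bott handle decomposition: up to homotopy, $M$ is built from (a tubular neighbourhood of) $M_{\min}$ by successively attaching the disk bundle of $\nu^{-}_{F}$ over each intermediate critical component $F$, an operation which amounts to attaching cells of dimensions $\ge \lambda(F) \ge 2$. Since no $1$-cells appear, both connectedness and the fundamental group are preserved, so $M_{\min} \hookrightarrow M$ induces an isomorphism on $\pi_{1}$; running the symmetric argument with $-H$ handles $M_{\max}$.

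For the second claim, first suppose $x$ is a regular value. The same Morse--Bott argument applied to the manifolds with boundary $M^{\le x} := H^{-1}([H_{\min}, x])$ and $M^{\ge x} := H^{-1}([x, H_{\max}])$ shows that they are connected and that $\pi_{1}(M^{\le x}) \cong \pi_{1}(M_{\min})$, $\pi_{1}(M^{\ge x}) \cong \pi_{1}(M_{\max})$, both equal to $\pi_{1}(M)$ by the first part; the same argument also gives that $H^{-1}(x)$ is connected. Van Kampen applied to $M = M^{\le x} \cup M^{\ge x}$ with intersection $H^{-1}(x)$ then yields a surjection $\pi_{1}(H^{-1}(x)) \twoheadrightarrow \pi_{1}(M)$. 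Moreover, the class of a generic $S^{1}$-orbit in $H^{-1}(x)$ lies in the kernel: by flowing such an orbit along $-\nabla H$ to $M_{\min}$ (the basin of $M_{\min}$ is open and dense, and $S^{1}$-equivariance of the flow collapses the whole orbit to a single fixed point) one obtains a disk in $M$ bounding the orbit. The long exact sequence of the orbi-$S^{1}$-bundle $H^{-1}(x) \to M_{x}$,
\[
\pi_{1}(S^{1}) \to \pi_{1}(H^{-1}(x)) \to \pi_{1}^{\mathrm{orb}}(M_{x}) \to 1,
\]
together with the surjection above, then identifies $\pi_{1}^{\mathrm{orb}}(M_{x})$ with $\pi_{1}(M)$; descending to the topological $\pi_{1}$ of the underlying space of $M_{x}$ kills only relations coming from finite cyclic stabilizers inside the $S^{1}$-fibre, which are already trivial. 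For a critical value $x$, one compares $M_{x}$ with nearby regular reduced spaces using the $S^{1}$-equivariant Morse--Bott normal form along the fixed components at level $x$; the transition is a collapse of a simply connected weighted projective sub-orbifold, which does not affect $\pi_{1}$.

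The main obstacle is the precise identification of the kernel of $\pi_{1}(H^{-1}(x)) \to \pi_{1}(M)$ with the image of $\pi_{1}(S^{1})$. This amounts to checking that every relation imposed by the $\ge 2$-cells attached in building $M$ from a collar of $H^{-1}(x)$ is already encoded by the class of an $S^{1}$-orbit; I would resolve it by using the Morse--Bott normal form around each critical stratum to isotope every disk bounding a loop of $H^{-1}(x)$ into a union of gradient disks terminating at $M_{\min}$ or $M_{\max}$, whose boundary circles are multiples of orbits.
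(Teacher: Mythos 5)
The paper does not prove this theorem---it is quoted verbatim from Hui Li's paper \cite{Li}---so there is no in-paper argument to compare your sketch against; I will assess it on its own terms. Your overall architecture (even indices/coindices $\geq 2$, Morse--Bott handles, van Kampen plus the orbi-$S^1$-bundle, gradient disks trivialising orbit classes) follows the standard route, but there is a concrete gap in the first step. You write that because only cells of dimension $\geq 2$ are attached, ``both connectedness and the fundamental group are preserved.'' Absence of $1$-cells gives only \emph{surjectivity} of $\pi_1(M_{\min}) \to \pi_1(M)$: attaching a $2$-cell does change $\pi_1$, by killing the class of the attaching circle. Critical components $F$ of index exactly $2$ occur routinely, and the handle $D(\nu^-_F)$ then contributes genuine $2$-cells whose attaching circles are the $S^1$-fibres of $S(\nu^-_F)\to F$. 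To conclude injectivity you must show those circles are already null-homotopic at the stage they are attached; this is precisely the gradient-disk argument you invoke later for orbits in $H^{-1}(x)$ (each such circle is, up to a positive multiple, an $S^1$-orbit, which collapses to a fixed point under $-\nabla H$). As written, the first paragraph asserts an isomorphism but only proves a surjection.

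For the reduced-space claim, the obstacle you flag at the end can be sidestepped rather than resolved directly. Instead of identifying the kernel of $\pi_1(H^{-1}(x)) \to \pi_1(M)$ via a van Kampen surjection, observe that $M^{\le x}$ minus the $(+\nabla H)$-stable manifolds of its critical components retracts onto $H^{-1}(x)$, and these stable manifolds have codimension equal to the coindex, hence $\geq 2$; so $\pi_1(H^{-1}(x)) \to \pi_1(M^{\le x}) \cong \pi_1(M)$ is already an isomorphism. Then, rather than passing through $\pi_1^{\mathrm{orb}}(M_x)$ (where the comparison with $\pi_1$ of the underlying space is not the triviality you suggest---the two can differ even when the orbifold locus has codimension $\geq 2$), use directly that for a compact group action the quotient map is surjective on $\pi_1$ with kernel generated by orbit classes; this is essentially the content of \cite{Li2}, which the paper also records as Theorem~\ref{Li2}. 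Those orbit classes---including the singular ones with finite stabiliser---are killed by the same gradient-disk argument, and that closes both parts uniformly.
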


\begin{corollary}\label{genus}
Suppose that $(M,\omega)$ is a symplectic $4$-manifold with a Hamiltonian $S^{1}$-action. If $M$ contains a fixed surface of genus $g>0$, then both of the extremal fixed submanifolds of $M$ are diffeomorphic to surfaces of genus $g$.
\end{corollary}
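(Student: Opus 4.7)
The plan is to show that every fixed surface in a $4$-dimensional Hamiltonian $S^{1}$-manifold must coincide with one of the extremal components $M_{\min}$ or $M_{\max}$, and then to use Theorem \ref{Li} to constrain the opposite extremum.

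First I would analyze the local structure near $\Sigma$. Since $\Sigma$ is a codimension-$2$ symplectic submanifold fixed pointwise by $S^{1}$, an equivariant tubular neighborhood identifies $M$ near $\Sigma$ with a disk bundle in a complex line bundle $E \to \Sigma$ on which $S^{1}$ acts with a single non-zero weight $k$; in these coordinates the Hamiltonian takes the form $H = c + \tfrac{k}{2}|z|^{2}$, where $c = H(\Sigma)$ and $z$ is the fiber coordinate. In particular $H^{-1}(c)$ coincides with $\Sigma$ throughout the tube, which makes $\Sigma$ both open (by the local model) and closed (by compactness) in $H^{-1}(c)$.

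Next I would globalize this. Theorem \ref{Li} provides $\pi_{1}(M_{c}) \cong \pi_{1}(M)$, which presupposes that the reduced space $M_{c} = H^{-1}(c)/S^{1}$ is path-connected, and since $S^{1}$ is connected this forces $H^{-1}(c)$ itself to be connected. Combined with the previous step, $H^{-1}(c) = \Sigma$. Because $\Sigma$ has codimension $2$ in $M$, the complement $M \setminus \Sigma$ is connected, so its continuous image $H(M \setminus \Sigma)$ is a connected subset of $\R \setminus \{c\}$ and therefore lies entirely below or entirely above $c$. Together with $H|_{\Sigma} \equiv c$, one of the equalities $c = H_{\min}$ or $c = H_{\max}$ must hold, and by the connectedness of $M_{\min}, M_{\max}$ (Lemma \ref{manysimpleproperties}) $\Sigma$ coincides with one of them; without loss of generality, $\Sigma = M_{\min}$.

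Finally, Theorem \ref{Li} again yields $\pi_{1}(M_{\max}) \cong \pi_{1}(M_{\min})$, and the latter is the fundamental group of a genus $g>0$ surface and hence non-trivial. So $M_{\max}$ cannot be a point, and by Lemma \ref{manysimpleproperties} it is a closed oriented symplectic surface inside the $4$-manifold $M$. The isomorphism type of a closed orientable surface group determines its genus, so $g(M_{\max}) = g$. The only step requiring a moment of care is extracting connectedness of $H^{-1}(c)$ from connectedness of $M_{c}$; this is the general principle that a connected group acting on a space cannot produce a more-connected quotient.
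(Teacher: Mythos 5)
Your proof is correct and follows essentially the same route the paper intends: use the local normal form around a fixed surface to place it at an extremal level, then apply Theorem \ref{Li} to transport the genus-$g$ fundamental group to the opposite extremum. Your way of forcing $c\in\{H_{\min},H_{\max}\}$ — first identifying $\Sigma=H^{-1}(c)$ via the connectivity of $M_c$ (and the implicit connectivity of $H^{-1}(c)$), then noting that $M\setminus\Sigma$ is connected in codimension two so $H$ cannot take values on both sides of $c$ — is a mild variant of the standard ``local extremum is global extremum'' argument, but the underlying input (Atiyah-type connectivity of level sets, packaged here as the $\pi_1(M_c)\cong\pi_1(M)$ statement) is identical.
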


\begin{theorem}\cite{Li2} \label{Li2}
Let $(M,\omega)$ be a compact symplectic manifold with a Hamiltonian $S^{1}$-action. Then the quotient map $Q:M \rightarrow M/S^{1}$ induces an isomorphism on fundamental groups.
\end{theorem}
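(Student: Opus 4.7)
The plan is to reduce this to Theorem \ref{Li} by showing that the inclusion of $\bar M_{\min} := Q(M_{\min})$ into $M/S^1$ induces an isomorphism on $\pi_1$. Note that since the $S^1$-action restricted to $M_{\min}$ is trivial, $Q|_{M_{\min}}$ is a homeomorphism onto $\bar M_{\min}$, and so the composition
$$M_{\min} \xrightarrow{\,i\,} M \xrightarrow{\,Q\,} M/S^1$$
factors the inclusion $j: \bar M_{\min} \hookrightarrow M/S^1$ through the two given maps. Given the $\pi_1$-isomorphism $i_*$ of Theorem \ref{Li} and the $\pi_1$-isomorphism $j_*$ that we are aiming to establish, the identity $j_* = Q_* \circ i_*$ immediately forces $Q_* = j_* \circ i_*^{-1}$ to be an isomorphism.

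To produce the required homotopy equivalence, the idea is to deformation retract $M/S^1$ onto $\bar M_{\min}$ by means of the descended downward gradient flow of $H$. Fix an $S^1$-invariant compatible almost complex structure $J$ as provided by Lemma \ref{manysimpleproperties} and equip $M$ with the metric $g = \omega(J\cdot,\cdot)$. The gradient vector field $-\nabla_g H$ is $S^1$-equivariant, hence descends to a continuous flow on $M/S^1$, and since $H$ is Morse--Bott with minimum submanifold $M_{\min}$, every trajectory converges to a point of $Q(M^{S^1})$. One then retracts stratum by stratum: working downwards from $H_{\max}$, at each critical level one uses the local normal form for a Hamiltonian $S^1$-action near a component $F \subset M^{S^1}$ to collapse a tubular neighbourhood of $\bar F = Q(F)$ onto $\bar F$ itself, and then continues the descended flow until the next lower critical set is reached. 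Iterating this procedure over the finitely many critical values of $H$ yields a continuous deformation retraction of $M/S^1$ onto $\bar M_{\min}$.

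The main technical obstacle is the Morse--Bott reparameterisation needed to turn the gradient flow into an honest deformation retraction: individual trajectories take infinite time to reach their limit critical set, and the descended flow is only continuous across the orbifold strata $Q(M^{\Z_n})$, not smooth. These are standard Morse-theoretic nuisances and are disposed of by reparameterising time inside each equivariant tubular neighbourhood using the explicit linear model for the $S^1$-action; alternatively one can invoke stratified Morse theory, viewing $H$ as a continuous stratified function on $M/S^1$ with strata the images $Q(M^{\Z_n})$. With the retraction in hand, $j_*$ is an isomorphism, and the first paragraph's observation completes the proof.
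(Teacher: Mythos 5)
Your reduction to Theorem \ref{Li} via the factorisation $j = Q \circ i$ is sound, but the crucial claim — that $M/S^1$ deformation retracts onto $\bar M_{\min} = Q(M_{\min})$ — is false in general, and this is where the argument breaks down. The descended downward gradient flow does not retract the quotient onto $\bar M_{\min}$: trajectories lying on the unstable sets of non-minimal critical components converge to those components, not to $M_{\min}$, and your proposed remedy of ``collapsing a tubular neighbourhood of $\bar F$ onto $\bar F$ and continuing'' is precisely the step that has no justification. In Morse--Bott theory, crossing a critical level does not merely collapse a neighbourhood of the critical set; it attaches a handle bundle along the stable manifold, which changes the homotopy type. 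The same is true after quotienting.

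A concrete counterexample: take $M=\mathbb{CP}^2$ with a generic linear Hamiltonian $S^1$-action with three isolated fixed points. Then $M_{\min}$ is a point, so if your retraction existed, $M/S^1$ would be contractible. But $M/S^1$ is a compact $3$-dimensional space which fibres over the moment interval with generic fibre $S^2$ (collapsed to points at the two ends); it is roughly an unreduced suspension of $S^2$ and has $H^3 \ne 0$. So there is no such retraction, even though $\pi_1(M/S^1)$ is indeed trivial. Your method proves too much. What actually works (and is what the cited reference \cite{Li2} does) is a step-by-step van Kampen argument across critical levels: the piece attached when crossing a critical level is a disk-orbibundle over $\bar F$ glued along a weighted-projective-bundle over $\bar F$, and since weighted projective spaces are simply connected, the inclusion of the attaching region into the attached piece is a $\pi_1$-surjection, which lets you propagate the $\pi_1$-isomorphism upward from $\bar M_{\min}$ without ever producing a global deformation retraction. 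The paper itself does not reprove this; it simply observes that $M^{S^1}\ne\emptyset$ (so Li's theorem applies) and cites \cite{Li2}.
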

\begin{proof}
Since $M$ is compact, the Hamiltonian has some critical points. Hence $M^{S^{1}}$ is non-empty, 
and $Q_{*} : \pi_{1}(M) \rightarrow \pi_{1}(M/S^{1})$ is an isomorphism by the main result of \cite{Li2}.
\end{proof}
\subsection{Localisation results}
The following lemmas follow from  applying the Atiyah-Bott-Berline-Vergne localisation formula to certain equivariant Chern classes. All of these results are known, we collect them here for reference.

\begin{lemma} \label{wsphere}
Suppose that $S^{1}$ acts on $S \cong \C\PP^1$ with the action $z[z_{0}:z_{1}] = [z^k z_{0} : z_{1}]$. Consider a rank n, $S^1$-equivariant, complex vector bundle $E \rightarrow S$. Suppose that the weights of the action (on the fibres of $E$) at $[1:0]$, $[0:1]$ are  $\{a_{1},a_{2},\ldots,a_{n}\}$, $\{b_{1},b_{2},\ldots,b_{n}\}$ respectively. Then \begin{displaymath}c_{1}(E) = \frac{-a_{1}-\ldots-a_{n}+b_{1}+\ldots+b_{n}}{k}. \end{displaymath}
\end{lemma}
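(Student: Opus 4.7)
The plan is to apply the Atiyah--Bott--Berline--Vergne localisation formula to the equivariant first Chern class $c_1^{S^1}(E) \in H^2_{S^1}(S)$, and observe that the degree-zero part of the equivariant integral $\int_S c_1^{S^1}(E)$ coincides with the ordinary Chern number $\langle c_1(E), [S]\rangle$, which by Poincar\'e duality on $S \cong \mathbb{CP}^1$ is exactly the integer we want to identify.

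First I would identify the weights of the $S^1$-action on the tangent spaces at the two fixed points. In the affine chart $u = z_1/z_0$ centred at $[1:0]$ the action $[z_0:z_1] \mapsto [z^k z_0 : z_1]$ reads $u \mapsto z^{-k} u$, so the tangent weight at $[1:0]$ is $-k$, and the equivariant Euler class of $T_{[1:0]}S$ is $-kt$, where $t \in H^2_{S^1}(\mathrm{pt})$ is the standard generator. Symmetrically, in the chart $v = z_0/z_1$ around $[0:1]$ the action reads $v \mapsto z^k v$, giving tangent weight $+k$ and equivariant Euler class $kt$. On the other hand, since the weights of the $S^1$-action on the fibre $E_{[1:0]}$ are $\{a_1,\dots,a_n\}$, the restriction of the equivariant first Chern class satisfies $c_1^{S^1}(E)|_{[1:0]} = (a_1+\cdots+a_n)\,t$, and likewise $c_1^{S^1}(E)|_{[0:1]} = (b_1+\cdots+b_n)\,t$.

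Plugging these into the ABBV localisation formula yields
\[
\int_S c_1^{S^1}(E) \;=\; \frac{(a_1+\cdots+a_n)\,t}{-k\,t} \;+\; \frac{(b_1+\cdots+b_n)\,t}{k\,t} \;=\; \frac{-a_1-\cdots-a_n + b_1+\cdots+b_n}{k},
\]
which is a rational number (in fact an integer, since $c_1(E)$ is integral). As the right-hand side is the nonequivariant part of the equivariant integral, it equals $\langle c_1(E), [S]\rangle$, giving the claim.

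The only step at risk of error is bookkeeping with the sign of the tangent weight at $[1:0]$ (the action on the fibre coordinate $z_0/z_1 = u^{-1}$ goes one way, on the dual chart coordinate $u$ the other); once this is pinned down the rest is a direct substitution. No further ingredients are required.
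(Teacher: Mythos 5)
Your proposal is correct and takes exactly the approach the paper intends: the paper's own proof is the one-line remark that the formula follows by applying ABBV localisation to $c_1^{S^1}(E)$, and you have simply carried out that computation in full, correctly identifying the tangent weights $\mp k$ at $[1:0]$ and $[0:1]$ and the fibre restrictions $(\sum a_i)t$, $(\sum b_i)t$.
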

\begin{proof}
This formula follows by applying the Atiyah-Bott-Berline-Vergne localisation theorem to $c_{1}^{S^{1}}(E)$.
\end{proof}
\begin{lemma} \cite[Remark 2.5]{To} \label{bigloc}
Let $S^1$ act on $6$-dimensional symplectic manifold $(M,\omega)$ by a Hamiltonian $S^{1}$-action and suppose that the fixed submanifolds have dimension at most $2$.

For a fixed point p with weights $w_{1},w_{2},w_{3}$,  define

\begin{displaymath}
\alpha(p) = \frac{w_{1} + w_{2} + w_{3}}{w_{1}w_{2}w_{3}}.
\end{displaymath}

For a fixed surface $S$ with genus g and normal bundle $L_{1} \oplus L_{2}$ let the weight of the action of $L_{i}$ be denoted $w_{i}$ and let $n_{i} = c_{1}(L_{i})$ define
 \begin{displaymath}
\beta(S) = \frac{2-2g}{w_{1} w_{2}} - \frac{n_{1}}{w_{1}^2} - \frac{n_{2}}{w_{2}^2}.
\end{displaymath}

Let $p_{1},\ldots,p_{N_{1}}$ be the isolated fixed points for the action and $S_{1}, \ldots , S_{N_{2}}$ be the surface components of $M^{S^{1}}$. Then we have the following localisation formula.

\item[] \begin{equation} \label{bigloceq}
0  = \sum_{1 \leq i \leq N_{1}} \alpha(p_{i})+\sum_{1 \leq i \leq N_{2}} \beta(S_{i}).
\end{equation}

\end{lemma}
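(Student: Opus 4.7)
The plan is to apply the Atiyah--Bott--Berline--Vergne (ABBV) localisation formula to the equivariant first Chern class $c_1^{S^1}(M)\in H^2_{S^1}(M)$.  Since this class has equivariant degree $2$ and $\dim_{\R} M = 6$, the equivariant pushforward $\int_M c_1^{S^1}(M)$ lies in $H^{-4}(BS^1) = 0$ and therefore vanishes for degree reasons.  The ABBV formula will then express $0$ as a sum of local contributions coming from the connected components of $M^{S^1}$, which by hypothesis are either isolated points $p_i$ or fixed surfaces $S_j$.  The proposition follows once these contributions are identified with $\alpha(p_i)/x^2$ and $\beta(S_j)/x^2$ respectively, where $x$ denotes the generator of $H^2(BS^1)$.

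At an isolated fixed point $p$ with weights $w_1,w_2,w_3$, the equivariant Euler class of $T_pM$ is $w_1 w_2 w_3\,x^3$ and $c_1^{S^1}(M)|_p = (w_1+w_2+w_3)\,x$, so the local contribution is
\[
\frac{c_1^{S^1}(M)|_p}{e^{S^1}(T_pM)} \;=\; \frac{w_1+w_2+w_3}{w_1 w_2 w_3\,x^2} \;=\; \frac{\alpha(p)}{x^2},
\]
as required.

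For a fixed surface $S$ of genus $g$ with normal bundle $L_1\oplus L_2$ of weights $w_i$ and Chern numbers $n_i = \langle c_1(L_i),[S]\rangle$, I would use the splitting $TM|_S = TS \oplus L_1 \oplus L_2$ of $S^1$-equivariant bundles to write
\[
c_1^{S^1}(M)|_S = c_1(TS) + c_1(L_1) + c_1(L_2) + (w_1+w_2)\,x, \qquad e^{S^1}(N_S) = \bigl(c_1(L_1)+w_1 x\bigr)\bigl(c_1(L_2)+w_2 x\bigr).
\]
Since $\dim S = 2$, every product of two classes in $H^{\geq 2}(S)$ vanishes on $S$, so the geometric-series expansion of $1/e^{S^1}(N_S)$ terminates at first order:
\[
\frac{1}{e^{S^1}(N_S)} \;=\; \frac{1}{w_1 w_2\,x^2}\left(1 - \frac{c_1(L_1)}{w_1\,x} - \frac{c_1(L_2)}{w_2\,x}\right).
\]
Multiplying by $c_1^{S^1}(M)|_S$, integrating over $S$ using $\int_S c_1(TS) = 2-2g$ and $\int_S c_1(L_i) = n_i$, and simplifying via $(w_1+w_2)/(w_1^2 w_2) = 1/(w_1 w_2) + 1/w_1^2$ makes the intermediate $n_i/(w_1 w_2)$ terms cancel and yields exactly $\beta(S)/x^2$.

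Summing all local contributions and clearing the overall factor of $x^{-2}$ then gives formula (\ref{bigloceq}).  The only step requiring any care is the surface computation, whose only substantive input is the truncation of the geometric series forced by $\dim S = 2$, so I do not anticipate a conceptual obstacle.
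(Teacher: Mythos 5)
Your proposal is correct and takes exactly the approach the paper indicates: apply Atiyah--Bott--Berline--Vergne to $c_1^{S^1}(M)$, observe the pushforward vanishes for degree reasons, and identify the local contributions with $\alpha(p_i)/x^2$ and $\beta(S_j)/x^2$. The paper outsources the fixed-surface computation to \cite[Remark 2.5]{To}, whereas you carry it out in full; your truncation of the geometric series via $\dim S = 2$ and the cancellation $(w_1+w_2)/(w_1^2 w_2) = 1/(w_1 w_2) + 1/w_1^2$ both check out.
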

\begin{proof}
This follows from applying Atiyah-Bott-Berline-Vergne to $c_{1}^{S^{1}}(M)$. The localisation of $c_{1}^{S^{1}}(M)$ to fixed points and surfaces is computed in  \cite[Remark 2.5]{To}.
\end{proof}

The following Lemma will be useful for studying isotropy $4$-manifolds for $S^{1}$-actions on $6$-manifolds.

\begin{lemma}\cite[Lemma 2.15]{Ka}\label{fourcor}
Suppose that $(M,\omega)$ is a symplectic $4$-manifold with a Hamiltonian $S^{1}$-action. For each fixed surface $\Sigma$, let $n(\Sigma_{i})$ denote the degree of the normal bundle of $\Sigma_{i}$, and $\mathcal{S}$ be the set of all fixed surfaces. Let the weights at each fixed point $p_{i}$ be denoted $(a_{i},b_{i})$, and let $\mathcal{P}$ denote the set of isolated fixed points. Then
\begin{displaymath}
\sum_{p_{i} \in \mathcal{P}} \frac{1}{a_{i}b_{i}} - \sum_{\Sigma_{i} \in \mathcal{S}}  n_i(\Sigma_{i}) = 0.
\end{displaymath}

In particular, if the extremal submanifolds are both surfaces and $n_{1},n_{2}$ denote the degree of the normal bundle of $M_{\min}$ and $M_{\max}$ then $n_{1} + n_{2} \leq 0$.
\end{lemma}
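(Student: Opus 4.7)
The plan is to apply the Atiyah--Bott--Berline--Vergne equivariant localization formula to the trivial equivariant class $1 \in H^0_{S^1}(M)$. Since $M$ is four-dimensional, the equivariant pushforward $\int_M 1$ lives in $H^{-4}(BS^1) = 0$, so the localization identity reads
\[
0 \;=\; \sum_{p_i \in \mathcal{P}} \frac{1}{e_{S^1}(T_{p_i} M)} \;+\; \sum_{\Sigma_i \in \mathcal{S}} \int_{\Sigma_i} \frac{1}{e_{S^1}(N_{\Sigma_i})}.
\]
The next step is to compute the individual contributions. At an isolated fixed point $p_i$ with weights $(a_i, b_i)$ one has $e_{S^1}(T_{p_i} M) = a_i b_i\,u^2$, where $u$ denotes the degree-$2$ generator of $H^*(BS^1)$, giving contribution $1/(a_i b_i u^2)$. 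At a fixed surface $\Sigma_i$ with normal weight $w_i$ and normal degree $n_i$, the equivariant Euler class of $N_{\Sigma_i}$ is $w_i u + c_1(N_{\Sigma_i})$; expanding as a geometric series,
\[
\frac{1}{w_i u + c_1(N_{\Sigma_i})} \;=\; \frac{1}{w_i u} \;-\; \frac{c_1(N_{\Sigma_i})}{w_i^2 u^2}
\]
(higher powers vanish since $\Sigma_i$ is two-dimensional), and integrating over $\Sigma_i$ yields $-n_i/(w_i^2 u^2)$.

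The crucial extra input is that for an \emph{effective} Hamiltonian $S^1$-action on a $4$-manifold the normal weight $w_i$ at every fixed surface satisfies $|w_i|=1$: otherwise $\Z_{|w_i|} \subset S^1$ would act trivially on a neighborhood of $\Sigma_i$, contradicting effectiveness. Substituting $w_i^2=1$ and clearing the factor $u^{-2}$ in the localization identity produces exactly the claimed equality $\sum 1/(a_i b_i) - \sum n_i(\Sigma_i) = 0$.

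For the ``in particular'' statement I would first argue that when both extremal submanifolds are surfaces, the only fixed surfaces are $M_{\min}$ and $M_{\max}$, while every other component of $M^{S^1}$ is an isolated saddle point. A fixed surface in a $4$-manifold has a complex line normal bundle, so the Hessian of $H$ in the normal directions is sign-definite and any such surface is a local extremum; moreover a Morse--Bott argument (the critical indices of $H$ are all even, so no $1$-handles appear and the number of connected components of the sublevel sets $\{H \le t\}$ is non-decreasing in $t$) together with connectedness of $M$ shows that $H$ has a unique local minimum $M_{\min}$ and a unique local maximum $M_{\max}$ (compatible with Lemma \ref{manysimpleproperties}). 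The remaining fixed components are therefore isolated and of saddle type, so their two weights have opposite signs and $1/(a_i b_i) < 0$. The identity just proved then yields $n_1 + n_2 = \sum_i 1/(a_i b_i) \le 0$. The only genuinely delicate point in the whole argument is the reduction $|w_i|=1$ via effectiveness; everything else is a direct localization computation.
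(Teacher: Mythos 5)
Your proof is correct. The paper itself does not prove this lemma -- it cites it directly from Karshon's memoir -- so there is no in-text proof to compare against, but your ABBV localization argument is a clean and standard derivation of the identity. Applying localization to $1 \in H^0_{S^1}(M)$ and observing that $\int_M 1$ lands in $H^{-4}(BS^1)=0$ is exactly right, and you handle the surface contributions correctly: expanding $1/(w_i u + c_1(N_{\Sigma_i}))$ as a two-term geometric series (the higher terms vanish on a real surface) and integrating picks out $-n_i/(w_i^2 u^2)$. You also correctly isolate the one place where a genuine argument is needed, namely that effectiveness of the action forces $|w_i|=1$ at every fixed surface in a $4$-manifold: if $|w_i|=n>1$, then $\Z_n$ fixes $\Sigma_i$ pointwise and acts trivially on its normal line bundle, hence fixes a tubular neighbourhood; since the fixed locus of a group element is both open (containing that neighbourhood) and closed, connectedness of $M$ forces $\Z_n$ to act trivially everywhere, contradicting effectiveness. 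Without this step the localization formula would carry extraneous factors of $w_i^2$. For the ``in particular'' part, your reasoning is also sound: any fixed surface in a symplectic $4$-manifold has codimension $2$ with sign-definite normal Hessian and so is a local extremum of $H$; the evenness of Morse--Bott indices gives uniqueness of the local minimum and maximum, so any remaining fixed component is an isolated saddle with $a_i b_i < 0$, and $n_1 + n_2 = \sum_i 1/(a_i b_i) \le 0$ follows.
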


 
 \subsection{Restrictions on weights in terms of the range of the Hamiltonian}
 
In this section we prove some restrictions on the weights of a Hamiltonian $S^{1}$-action along fixed submanifolds in terms of the range of the Hamiltonian. 

\begin{lemma}\label{gradsphereint} Let $(M,\omega)$ be a symplectic manifold with a Hamiltonian $S^1$-action, with Hamiltonian $H$. Let $S$ be a gradient sphere in $M$. Then \begin{displaymath}
\langle [\omega], S\rangle=\frac{H(S_{\max})-H(S_{\min})}{w(S)}.
\end{displaymath}
\end{lemma}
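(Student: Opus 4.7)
My plan is to reduce the identity to the standard moment-map integration formula on a symplectic $2$-sphere with an \emph{effective} Hamiltonian $S^{1}$-action, by passing from the (possibly non-effective) $S^{1}$-action on $S$ to its effective quotient.

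First I would note that, by the description of gradient spheres given earlier in this section, there is an $S^{1}$-equivariant homeomorphism $\varphi\colon\mathbb S^2\to S$ which is a diffeomorphism off the two fixed points and for which $\varphi^{*}\omega$ extends to a smooth symplectic form $\tilde\omega$ on $\mathbb S^2$; since both sides of the asserted identity depend only on $\tilde\omega$ and on $H\circ\varphi$, I may perform all computations on $(\mathbb S^2,\tilde\omega)$. By definition of $w=w(S)$, every non-fixed orbit on $S$ has stabiliser exactly $\Z_{w}$, so the $S^{1}$-action on $S$ factors through an effective action of $T=S^{1}/\Z_{w}\cong S^{1}$ fixing only $S_{\min}$ and $S_{\max}$.

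Next I would identify $T$ with $S^{1}$ via the standard $w$-fold covering $S^{1}\to T$. Under this identification the generator $X'$ of the effective $T$-action on $S$ and the generator $X$ of the original $S^{1}$-action are related by $X=wX'$, so $\iota_{X'}\omega=\tfrac{1}{w}\iota_{X}\omega=\tfrac{1}{w}dH|_{S}$, and hence $H':=\tfrac{1}{w}H|_{S}$ is a Hamiltonian for the effective $T$-action on $S$, with extrema again at $S_{\min}$ and $S_{\max}$.

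Finally, I would invoke the elementary fact that any effective Hamiltonian $S^{1}$-action on a symplectic $2$-sphere $(\mathbb S^2,\tilde\omega)$ satisfies $\int_{\mathbb S^2}\tilde\omega=H'(S_{\max})-H'(S_{\min})$; this follows immediately from the Duistermaat-Heckman theorem, since the reduced spaces at regular levels are points and hence the push-forward of $\tilde\omega$ under $H'$ is Lebesgue measure on $[H'(S_{\min}),H'(S_{\max})]$. Substituting $H'=H|_{S}/w(S)$ then yields $\langle[\omega],S\rangle=(H(S_{\max})-H(S_{\min}))/w(S)$, as claimed. There is no substantial obstacle here; the only mild nuisance is the lack of smoothness of the $S^{1}$-action at $S_{\min}$ and $S_{\max}$, but since the integral and the extremal values of $H$ are insensitive to what happens at a pair of points, this causes no problem.
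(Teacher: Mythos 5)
Your argument is correct, and it ultimately rests on the same computation as the paper's, but the route you take to dispose of the non-smoothness at the two poles is genuinely different. The paper truncates $S$ to the compact sub-cylinder $C_\varepsilon$ of points at distance at least $\varepsilon$ from $\{S_{\min},S_{\max}\}$, applies the elementary cylinder identity $\int_{C_\varepsilon}\omega=\tfrac{1}{w(S)}\bigl(\max H|_{C_\varepsilon}-\min H|_{C_\varepsilon}\bigr)$, and lets $\varepsilon\to 0$. You instead pull back through the $S^1$-equivariant parametrisation $\varphi:\mathbb S^2\to S$ introduced just before the lemma (so that $\varphi^*\omega$ extends to a smooth $\tilde\omega$), pass to the effective quotient $S^1/\Z_{w(S)}\cong S^1$ with rescaled Hamiltonian $H'=H/w(S)$, and invoke the Duistermaat-Heckman push-forward, correctly noting that the pushforward statement only needs regular levels and so is unaffected by what happens at the two fixed points. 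Both proofs account for the $1/w(S)$ in the same way — it is the effective period of the generic orbit — but your effective-quotient reformulation makes that normalisation conceptually transparent, whereas the paper's truncation-and-limit argument is more elementary and never invokes Duistermaat-Heckman. Both are complete.
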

\begin{proof} If $S$ is smooth at points $S_{\max}$ and $S_{\min}$, then the claim is standard. Suppose the sphere is not smooth and let $\varepsilon$ be a small positive number. Denote by $C_{\varepsilon}$ the sub-cylinder of $S$ consisting of points on distance at least $\varepsilon$ to $S_{\max}$ and $S_{\min}$ (for some $S^1$-invariant metric). Then we have 
$$\int_{C_{\varepsilon}}\omega=\frac{1}{w(S)}(\max H|_{C_{\varepsilon}}-\min H|_{C_{\varepsilon}}),$$
since $C_{\varepsilon}$ is a symplectic cylinder with a Hamiltonian $S^1$-action. It is now not hard to see that by taking limit $\varepsilon\to 0$ we get the desired equality.
\end{proof}

\begin{lemma} \label{missinglemma}
Let $(M,\omega)$ be a compact symplectic manifold such that $\omega$ is an integral form, and there is a Hamiltonian $S^{1}$-action on $M$. Suppose that $p \in M^{S^{1}}$ and $-w$   is a weight at $p$ with $w>0$, then $w \leq H(p) - H_{\min}$. If $w>0$ is a weight at $p$, then $w \leq H_{\max} - H(p)$.
\end{lemma}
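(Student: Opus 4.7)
The plan is to exploit the existence of a gradient sphere associated to each non-zero weight at $p$ and then apply Lemma \ref{gradsphereint} together with the integrality of $[\omega]$.

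Suppose first that $-w$ is a weight at $p$ with $w>0$. Fix a compatible $S^1$-invariant almost complex structure $J$ and the associated Riemannian metric $g=\omega(J\cdot,\cdot)$. The weight space corresponding to $-w$ in $T_pM$ is a $J$-invariant plane on which the Hessian of $H$ is negative definite, so the corresponding gradient flow lines emanating from $p$ move into the region $\{H<H(p)\}$. Taking the union of these flow lines with $p$ and with the fixed component they accumulate onto as $t\to+\infty$ yields a gradient sphere $S$ with $S_{\max}=p$. The stabilizer of a non-trivial $S^1$-orbit in $S$ has order $w$, so by the definition given in the preliminaries the weight $w(S)$ equals $w$.

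Now apply Lemma \ref{gradsphereint}: one obtains
\[
\langle [\omega],S\rangle=\frac{H(p)-H(S_{\min})}{w}.
\]
The left-hand side is positive, since $S$ is topologically a sphere on which the pullback of $\omega$ extends to a symplectic form (this is precisely the regularity of gradient spheres recorded after their definition). Moreover, because $[\omega]$ is integral and $S$ represents an element of $H_2(M;\Z)$ (as a continuous image of $S^2$), the number $\langle [\omega],S\rangle$ is a positive integer, hence $\geq 1$. Combining this with the displayed equality and the obvious inequality $H(S_{\min})\geq H_{\min}$ gives
\[
w\leq H(p)-H(S_{\min})\leq H(p)-H_{\min},
\]
which is the first assertion.

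The second assertion is entirely symmetric: a positive weight $w$ at $p$ yields a positive-definite direction for the Hessian of $H$, so the corresponding gradient sphere $S'$ has $S'_{\min}=p$ and $w(S')=w$. Lemma \ref{gradsphereint} then gives $\langle [\omega],S'\rangle=(H(S'_{\max})-H(p))/w\geq 1$, from which $w\leq H(S'_{\max})-H(p)\leq H_{\max}-H(p)$. The only mildly delicate point in either case is that gradient spheres need not be smooth at their poles, but this is already accounted for in Lemma \ref{gradsphereint} and in the fact that $[S]\in H_2(M;\Z)$ pairs integrally with $[\omega]$; no further technical work is required.
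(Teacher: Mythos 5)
Your proof is correct and follows the same route as the paper: produce a gradient sphere $S$ with $S_{\max}=p$ and weight $w$, apply Lemma~\ref{gradsphereint} to get $\langle[\omega],S\rangle=(H(p)-H(S_{\min}))/w$, use integrality of $[\omega]$ to conclude this pairing is $\geq 1$, and finish with $H(S_{\min})\geq H_{\min}$. Your elaboration on why such a gradient sphere exists is a helpful expansion of what the paper leaves implicit, but the argument is identical in substance.
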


\begin{proof}
There is a gradient sphere $S$ with weight $w$ such that $S_{\max} = p$. We apply Lemma \ref{gradsphereint} to $S$, obtaining that $ H(S_{\min}) \leq H(p) -w $. The Lemma follows since $H_{\min} \leq H(S_{\min})$. For positive weights the argument is similar.
\end{proof}
 
The following lemma is standard so we omit its proof.

\begin{lemma} \label{resweight}
Let $(M,\omega)$ be a compact symplectic manifold with a Hamiltonian $S^{1}$-action on $M$. Let $S$ be a gradient sphere in $M$ with weight $w$ such that $S_{\min} \in M_{\min}$.  \begin{enumerate}
\item If $\emph{codim}(M_{\min}) =2$, then $w = 1$.
\item If $\emph{codim}(M_{\min}) = 4$ and the weights at $M_{\min}$ are $\{1,m\}$. Then $w$ is equal to $1$ or $m$.
\end{enumerate} 
\end{lemma}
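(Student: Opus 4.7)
The plan is to reduce to a local linear model of the $S^1$-action and of $H$ near the point $p:=S_{\min}\in M_{\min}$, and then read off directly the possible stabilizer sizes for non-trivial $S^1$-orbits in the normal space. The first step is to fix an $S^1$-invariant compatible almost complex structure $J$, available by Lemma~\ref{manysimpleproperties}. The equivariant Darboux / Marle--Guillemin--Sternberg normal form then provides an $S^1$-equivariant symplectomorphism between an invariant neighborhood of $p$ and a neighborhood of the zero section in $T_pM_{\min}\oplus N_pM_{\min}$, with the action trivial on $T_pM_{\min}$ and linear with positive integer weights $a_1,\ldots,a_k$ on $N_pM_{\min}$. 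In suitable complex coordinates $H$ takes the form $H_{\min}+\sum_i a_i|z_i|^2$, so $\nabla H$ is tangent to the normal fibres. Consequently, any gradient sphere $S$ with $S_{\min}=p$ is traced out near $p$ by gradient flow lines through a non-trivial $S^1$-orbit $O\subset N_pM_{\min}\setminus\{0\}$, and by definition $w(S)$ equals the order of the stabilizer of any point of $O$.

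For part (1), $\mathrm{codim}(M_{\min})=2$ means $N_pM_{\min}$ is one complex-dimensional with a single positive weight $a$, so every non-trivial orbit has stabilizer $\Z_a$ and hence $w=a$. The remaining task is to rule out $a\ge 2$. In that case $\Z_a\subset S^1$ would act trivially on the local model near $p$, hence on an open subset of $M$. Bochner linearization guarantees that the fixed set $M^g$ of any non-trivial $g\in\Z_a$ is a closed submanifold of $M$, so non-empty interior forces $M^g=M$, contradicting the effectiveness of the $S^1$-action. Hence $a=1$ and $w=1$.

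For part (2), the normal space splits as $V_1\oplus V_m$ with the standard action $\theta\cdot(z_1,z_2)=(e^{i\theta}z_1,e^{im\theta}z_2)$. A direct stabilizer calculation yields: trivial stabilizer whenever $z_1\ne 0$ (since $e^{i\theta}=1$ forces $\theta\in 2\pi\Z$), and stabilizer $\Z_m$ when $z_1=0$ and $z_2\ne 0$. Hence $w\in\{1,m\}$ as required. I do not anticipate any real obstacle here: the whole argument rests on the classical equivariant normal form together with elementary stabilizer computations, the only subtlety being the promotion of trivial action on an open set to trivial action on $M$ in part (1), which is handled via Bochner and the connectedness of $M$.
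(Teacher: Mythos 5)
The paper omits the proof, calling the lemma standard, so there is no authored proof to compare against; your argument is a correct rendering of the standard one, via the equivariant Darboux/normal-form theorem at $p=S_{\min}$ and the stabilizer computation in the linearized model. Both the effectiveness argument for part~(1) and the stabilizer enumeration for part~(2) are sound.

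One inessential detour worth flagging: you assert that $\nabla H$ is tangent to the normal fibres of $NM_{\min}$ near $p$. This would require the compatible $J$ to be block-diagonal with respect to $T_pM_{\min}\oplus N_pM_{\min}$ (which can be arranged, but needs saying), and in any case it is not needed. What is actually used is only: (i) the gradient flow is $S^1$-equivariant, since $H$, $\omega$ and $J$ are all invariant, so every orbit in $S\setminus\{S_{\min},S_{\max}\}$ has the same stabilizer as the orbit $O$ defining $S$; and (ii) by the equivariant normal form, every point sufficiently close to $p$ has stabilizer equal to the stabilizer of the corresponding vector in $T_pM$ under the linear isotropy representation, hence trivial, $\Z_m$, or all of $S^1$. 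Combining (i) and (ii) on orbits of $S$ near $p$ gives $w(S)\in\{1,m\}$ directly, without needing the gradient trajectories to stay inside a normal fibre.
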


\subsection{Proof of Proposition \ref{relcho}}\label{hamnornsec}

In this section we prove the weight sum formula stated in Proposition \ref{relcho}. For symplectic Fano manifolds this formula was given by Cho and Kim \cite[Example 4.3]{CK} in the case when $S^1$-action is semi-free and in \cite{Cho} for all actions.


\begin{proof}[Proof of Proposition \ref{relcho}] By adding a constant to the Hamiltonian, we may assume that Equality  (\ref{weightcho}) holds for $M_{\min}$. We will deduce then that Equality (\ref{weightcho}) holds for all points in $M^{S^1}$. The main idea here is the following: any connected component of $M^{S^{1}}$ can be connected to $M_{\min}$ by a  chain of gradient spheres. For this reason it is enough to prove that for each gradient sphere $S\subset M$ the difference of values of $H$ at its two fixed points is equal to the difference of the sums of weights.

Let $S$ be a gradient sphere in $M$. Denote by $w_{i}$ the weights at $S_{\max}$ and by $w_{i}'$ the weights at $S_{\min}$. By finding an equivariant parametrisation $p : S^{2} \rightarrow S$ and applying Lemma \ref{wsphere} to the pull-back bundle $p^{*} TM$ on $S^2$, we have that \begin{displaymath}
\langle c_{1}(M), S \rangle = \frac{\sum_{i = 1}^{n} w_{i} - \sum_{i = 1}^{n} w'_{i}}{w(S)}.
\end{displaymath}

On the other hand by Lemma \ref{gradsphereint} we have that \begin{displaymath}
\langle [\omega], S \rangle = \frac{H(S_{\max}) - H(S_{\min})}{w(S)}.
\end{displaymath}

Since $S$ is represented by a sphere, the relative symplectic Fano condition gives that $\langle [\omega], S \rangle =  \langle c_{1}(M), S \rangle $. Hence,
\begin{displaymath}
H(S_{\max}) - H(S_{\min}) = \sum_{i = 1}^{n} w_{i} - \sum_{i = 1}^{n} w'_{i}.
\end{displaymath}

This finishes the proof of the Proposition. \end{proof}

\subsubsection{The converse}






Here we give a partial converse to Proposition \ref{relcho}.

\begin{proposition}\label{converse} Let $(M,\omega)$ be a symplectic manifold with a Hamiltonian $S^1$-action and Hamiltonian $H$. Suppose that for any fixed submanifold $F$ of index two or zero, $H(F)=-w_{1}-\ldots-w_{n} $ where $\{w_{i}\}$ is the set of weights for the action along $F$. Suppose moreover that restrictions of $\omega$ and $c_1(M)$ to $M_{\min}$ coincide in cohomology. Then $M$ is a symplectic Fano manifold. 
\end{proposition}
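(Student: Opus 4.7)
The plan is to show $\alpha := [\omega] - c_1(M)$ vanishes in $H^2(M,\R)$ by pairing it against a convenient system of generators of $H_2(M,\R)$ supplied by Morse--Bott theory for $H$. A Hamiltonian $S^1$-action on a compact symplectic manifold makes $H$ a perfect Morse--Bott function (Atiyah--Bott), the only index-$0$ critical component is the connected submanifold $M_{\min}$, and the Morse--Bott index is always even, so perfection yields the decomposition
\[
H_2(M,\R)\;\cong\;i_*H_2(M_{\min},\R)\,\oplus\,\bigoplus_{F\colon\mathrm{ind}(F)=2}\R\cdot[S_F],
\]
where $i\colon M_{\min}\hookrightarrow M$ is the inclusion and $[S_F]$ is the class of a descending gradient $2$-sphere over a generic point of an index-$2$ component $F\subseteq M^{S^1}$.

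On the first summand the vanishing $\langle\alpha,i_*C\rangle=\langle i^*\alpha,C\rangle=0$ is immediate from the cohomological hypothesis at $M_{\min}$. For the second summand, fix an $S^1$-invariant compatible almost complex structure $J$ chosen generically enough that $-\nabla_g H$ with $g=\omega(J\cdot,\cdot)$ satisfies equivariant Morse--Smale transversality. The descending manifold $D^-(F)$ is a rank-$2$ real disk bundle over $F$; since $W^+(M_{\min})$ is an open full-measure subset of $M$ and the gradient flow is $S^1$-equivariant (so flow lines in a single $S^1$-orbit share a common terminal point, which is $S^1$-fixed), for generic $p\in F$ the gradient $2$-sphere $S_F$ emanating from $p$ terminates at a point $q\in M_{\min}$. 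Applying Lemma~\ref{gradsphereint} and Lemma~\ref{wsphere} to $S_F$ gives
\begin{align*}
\langle[\omega],[S_F]\rangle &= \frac{H(F)-H_{\min}}{w(S_F)},\\
\langle c_1(M),[S_F]\rangle &= \frac{-\sum_i w_i(F)+\sum_j w_j(M_{\min})}{w(S_F)},
\end{align*}
where the weight multi-sets at $p$ and $q$ include the vanishing tangential contributions. Substituting $H(F)=-\sum_i w_i(F)$ from the weight sum hypothesis at $F$ (permissible since $\mathrm{ind}(F)=2$) and $H_{\min}=-\sum_j w_j(M_{\min})$ (from the hypothesis at $M_{\min}$, $\mathrm{ind}=0$) shows the two pairings agree, so $\langle\alpha,[S_F]\rangle=0$ and hence $\alpha=0$.

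The main obstacle is the descent step: guaranteeing that each index-$2$ Morse--Bott generator can be represented by a gradient $2$-sphere landing in $M_{\min}$. This is a generic event inside $D^-(F)$, and justifying the transversality rigorously requires a standard equivariant perturbation of $J$; some extra care is also needed because gradient spheres may fail to be smooth at the two endpoints. An alternative is to work with the equivariant class $\omega^{S^1}-c_1^{S^1}(M)\in H^2_{S^1}(M,\R)$ and apply ABBV localisation, but this would instead require verifying $[\omega]|_F=c_1(TM)|_F$ at every positive-dimensional index-$2$ component, which is not part of the hypothesis and would itself have to be propagated, so the $H_2$-pairing argument above seems to be the most direct route.
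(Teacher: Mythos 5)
Your overall strategy---pair $\alpha:=[\omega]-c_1(M)$ against $H_2$-generators coming from Morse--Bott theory---is the same strategy used in the paper, but your execution differs in a way that creates a genuine gap at exactly the step you flag as "the main obstacle."

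The problem is not, as you suggest, a transversality issue that a generic equivariant perturbation of $J$ can cure. Suppose the negative normal weight at an index-$2$ component $F$ is $-w$ with $w>1$. Then every gradient sphere descending from $F$ has weight $w$ and is constrained to lie in $M^{\Z_w}$, hence in the connected component $N$ of $M^{\Z_w}$ containing $F$. Its bottom fixed point is the minimum of $H|_N$, and for no choice of compatible $S^1$-invariant $J$ can you move it out of $N$. If $M_{\min}\not\subset N$ (which happens whenever the positive weights at $M_{\min}$ generating the normal directions of $M_{\min}$ inside $N$ are absent---e.g.\ if no positive weight at $M_{\min}$ is divisible by $w$), the sphere lands at some fixed component $q=N_{\min}$ that has index $0$ in $N$ but index $\ge 4$ in $M$. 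Your hypothesis gives the weight sum formula only at index-$0$ and index-$2$ components, so when you substitute into the Lemma~\ref{wsphere}/Lemma~\ref{gradsphereint} comparison you have no control over $H(q)+\sum_j w_j(q)$, and the pairing $\langle\alpha,[S_F]\rangle$ does not visibly vanish. This is a topological obstruction coming from the isotropy stratification, not a Morse--Smale defect.

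The paper's proof is built precisely to route around this. It works with sublevel sets $M_{\le t}$ and the paper's Lemma~\ref{sublevelH2} (generators: $H_2(M_{\min})$ together with \emph{all} gradient spheres in $M_{\le t}$, not only direct spheres from index-$2$ components to $M_{\min}$), and it inducts on $t$. The key extra step is: once $\alpha|_{M_{\le c-\varepsilon}}=0$, the weight sum formula can be \emph{propagated} to every fixed component in $M_{\le c-\varepsilon}$, regardless of index, by re-running the argument of Proposition~\ref{relcho}. This gives control over the bottom endpoint of any gradient sphere whose maximum is an index-$2$ component at level $c$, and higher-index maxima are handled by pushing the sphere down into $M_{\le c-\varepsilon}$ by a homotopy. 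Your one-shot decomposition of $H_2(M,\R)$ has no analogous mechanism for extending the weight sum hypothesis to the intermediate fixed components where your spheres actually land. To repair your argument you would essentially be forced into the paper's induction, so the shortcut does not save anything.

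A smaller point: the direct sum $H_2(M,\R)\cong i_*H_2(M_{\min},\R)\oplus\bigoplus_F\R\cdot[S_F]$ is informal; the classes $[S_F]$ are only well-defined up to a summand of $i_*H_2(M_{\min},\R)$ once you have chosen a splitting, but that is a presentational issue, not a gap.
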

The proof relies on the following standard lemma which we state without a proof.
\begin{lemma}\label{sublevelH2} Let $(M,\omega)$ be a compact Hamiltonian $S^1$-manifold with a compatible $S^1$-invariant metric.  For any value $t\in [H_{\min},\,H_{\max}]$ set $M_{\le t}=H^{-1}[H_{\min},t]$. Then the second homology group $H_2(M_{\le t},\mathbb R)$ is generated by $H_2(M_{\min},\mathbb{R})$ and the classes of all gradient spheres in $M$ that are contained in $M_{\le t}$.  
\end{lemma}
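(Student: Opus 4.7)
The plan is to induct on the number of critical values of $H$ in $[H_{\min}, t]$, exploiting that by Lemma \ref{manysimpleproperties} the Hamiltonian $H$ is a Morse--Bott function whose critical submanifolds are exactly the connected components of $M^{S^1}$. Since the $S^1$-action preserves a compatible almost complex structure, the normal bundle of each fixed component $F$ splits into complex weight spaces, so the negative normal bundle $\nu^-_F$ is a complex (hence oriented) vector bundle whose real rank equals twice the number of negative weights at $F$. In particular, all Morse--Bott indices are even. The base case --- when $[H_{\min}, t]$ contains no critical values other than $H_{\min}$ itself --- is immediate, since the downward gradient flow retracts $M_{\le t}$ onto $M_{\min}$.

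For the inductive step, let $c$ be the next critical value above some $t^- < c$ for which the statement is known, and let $F_1,\ldots,F_m$ be the fixed components in $H^{-1}(c)$, with Morse--Bott indices $2k_i \geq 2$. Standard Morse--Bott theory identifies $M_{\le c+\varepsilon}$ up to homotopy with $M_{\le c-\varepsilon}$ with the disk bundles $D(\nu^-_{F_i})$ attached along their sphere bundles. Excision and the Thom isomorphism then give
\begin{displaymath}
H_2(M_{\le c+\varepsilon}, M_{\le c-\varepsilon}; \mathbb{R}) \;\cong\; \bigoplus_{i=1}^m H_{2 - 2k_i}(F_i; \mathbb{R}),
\end{displaymath}
which vanishes unless some $k_i = 1$, and each such $F_i$ (connected as a component of $M^{S^1}$) contributes a single $\mathbb{R}$ coming from $H_0(F_i)$.

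For every $F_i$ of Morse--Bott index $2$, I would produce a cycle lifting the generator of $H_0(F_i)$ as follows. The unique negative weight direction at $F_i$ spans a rank-two downward invariant subbundle of the normal bundle; picking any point of $F_i$ and any nonzero vector in this fibre, the closure of its gradient flow line together with its $S^1$-orbit is a gradient sphere $S_i$ with $(S_i)_{\max} \in F_i$. Its upper cap (the part where $H \ge c - \varepsilon$) is a topological disk meeting $F_i$ transversally at one point with boundary in $M_{\le c-\varepsilon}$, and under the Thom isomorphism its relative class hits the generator of $H_0(F_i)$. Plugging this back into the long exact sequence of the pair $(M_{\le c+\varepsilon}, M_{\le c-\varepsilon})$ and invoking the inductive hypothesis for $H_2(M_{\le c-\varepsilon})$ gives the desired generating set for $H_2(M_{\le c+\varepsilon})$ by $H_2(M_{\min})$ together with gradient spheres contained in $M_{\le c+\varepsilon}$ (namely, the old ones from induction plus the new $S_i$).

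The step I expect to require the most care is the identification of the upper cap of $S_i$ with the Thom generator, because a gradient sphere is only a topological embedding and need not be smooth at the poles. I would address this by passing to a local $S^1$-equivariant Darboux/linearisation model near $F_i$, in which the gradient flow of a standard model is explicit and the upper cap is visibly a smooth disk in a single fibre of $\nu^-_{F_i}$; alternatively, one can perturb $H$ slightly near $F_i$ to replace the topological sphere by a genuinely smooth disk representing the same class in relative homology. Either route reduces the argument to a routine local calculation and does not affect the resulting generating set.
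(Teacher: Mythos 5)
The paper does not actually prove this lemma: it is introduced with the words ``the following standard lemma which we state without a proof,'' so there is no argument of the authors to compare yours against. Your Morse--Bott induction is precisely the standard argument one would write down, and it is correct: the identification of $H_2(M_{\le c+\varepsilon},M_{\le c-\varepsilon};\R)$ with $\bigoplus_i H_{2-2k_i}(F_i;\R)$ via handle attachment and the Thom isomorphism, the observation that only index-$2$ components contribute (each a single copy of $\R$ from $H_0$), the use of a gradient sphere with maximum on such a component to hit the Thom generator, and the long exact sequence step are all sound, and your proposed treatment of the cap through the equivariant linearisation at the maximum is the right way to deal with the possible non-smoothness at the poles (in the linear model the closure of the $S^1$-orbit of a downward trajectory is exactly the fibre disk of $\nu^-_{F_i}$, so the relative class is the generator).

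One step you assert without justification deserves to be made explicit, because it is exactly where the Hamiltonian hypothesis (as opposed to a general Morse--Bott function) enters: the claim that every fixed component other than $M_{\min}$ has Morse--Bott index $2k_i\ge 2$. An index-$0$ component at a level $c>H_{\min}$ would be a second local minimum; for a Hamiltonian circle action on a compact connected symplectic manifold this is excluded by the connectedness of the level sets (Atiyah), or equivalently by the standard fact that $H$ has a unique local minimum, namely $M_{\min}$. This is not a cosmetic point: if such a component $F$ existed, the relative group would contain $H_2(F;\R)$, new degree-$2$ classes not representable by spheres could appear when crossing $c$, and the conclusion of the lemma would fail. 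With that fact quoted (it is standard and compatible with Lemma \ref{manysimpleproperties}), and with the minor remark that for a critical value $t=c$ the inclusion $M_{\le c}\hookrightarrow M_{\le c+\varepsilon}$ is a homotopy equivalence carrying the same collection of gradient spheres, your proof is complete.
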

\begin{proof}[Proof of Proposition \ref{converse}] To prove the proposition we will show that the cohomology classes  of $\omega$ and $c_1(M)$ have the same restriction to $M_{\le t}$ for all $t\in [H_{\min}, H_{\max}]$:
\begin{equation}\label{equalityM_t}
c_1(M)|_{M_{\le t}}=[\omega]|_{M_{\le t}}\in H^2(M_{\le t},\mathbb R).
\end{equation}
We will do so by increasing $t$ from $H_{\min}$ to $H_{\max}$. Equality (\ref{equalityM_t})  holds for $t=H_{\min}$ by our assumptions. To prove it for a larger $t$, by Lemma  \ref{sublevelH2} we need to show that for any gradient sphere $S$ entirely contained in $M_{\le t}$ we have $\langle c_{1}(M), S \rangle=\langle [\omega], S \rangle$. 
Clearly, we only need to show that (\ref{equalityM_t}) is preserved when $t$ passes a critical value of $H$.

Let $c$ be a critical value of $H$ and let $\varepsilon>0$ be such that all values in $[c-\varepsilon,c+\varepsilon]$ apart from $c$ are regular. By induction, suppose that Equality (\ref{equalityM_t})  holds on $M_{\le c-\varepsilon}$ and let us prove that it holds on $M_{\le c+\varepsilon}$. By Lemma \ref{sublevelH2} we need to show that for any gradient sphere $S$ emanating from a fixed point $q$ on the level $H=c$ we have $\langle c_{1}(M), S \rangle=\langle [\omega], S \rangle$.

Observe first that the weight sum formula (\ref{weightcho}) holds on $M_{\le c-\varepsilon}$. Indeed, Equality (\ref{equalityM_t}) holds on $M_{\le c-\varepsilon}$, and by our assumption  (\ref{weightcho}) holds at $M_{\min}$. So, to prove this claim we can repeat the argument from the proof of Proposition \ref{relcho}. 

Consider now the case when $q=S_{\max}$ has index $2$. By finding an equivariant parametrisation $p : S^{2} \rightarrow S$ and applying Lemma \ref{wsphere} to $p^{*} TM$, we have that \begin{displaymath}
\langle c_{1}(M), S \rangle = \frac{\sum_{i = 1}^{n} w_{i} - \sum_{i = 1}^{n} w'_{i}}{w(S)}.
\end{displaymath}

By Lemma \ref{gradsphereint} we have that \begin{displaymath}
\langle [\omega], S \rangle = \frac{H(S_{\max}) - H(S_{\min})}{w(S)}.
\end{displaymath}

Since the point $S_{\max}$ is of index $2$ we have $H(S_{\max})=-\sum_{i = 1}^{n} w_{i}$. Since $S_{\min}$ belongs to $M_{\le c-\varepsilon}$ we have $H(S_{\min})=-\sum_{i = 1}^{n} w'_{i}$. Hence, we have $
\langle [\omega], S \rangle =  \langle c_{1}(M), S \rangle $ as required.

Suppose now that $q$ has index greater than $2$. In this case from standard Morse theory considerations it follows that $S$ is homotopic to a two-sphere that lies entirely in $M_{\le c-\varepsilon}$, and so $\langle c_{1}(M), S \rangle=\langle [\omega], S \rangle$ since (\ref{equalityM_t}) holds on $M_{\le c-\varepsilon}$.  \end{proof}

%
%
%
%
%
%

\subsection{Symplectic spheres and irrational ruled surfaces}

Here we collect results concerning symplectic spheres and irrational ruled surfaces.

\begin{definition} Let $(M^4,\omega)$ be a symplectic manifold. A class $e\in H_2(M^4,\mathbb Z)$ is called \emph{exceptional} if it can be represented by a smooth $2$-sphere and satisfies $e \cdot e =-1$, $ K_{M^4}\cdot e =1$ and $\int_e \omega>0$. A smooth sphere representing $e$ is called an \emph{exceptional sphere}.
\end{definition}

The following theorem is contained in \cite{Tau} and \cite{LiLiu}.

\begin{theorem}\label{TaubesLiLui} Let $(M^4,\omega)$ be a symplectic manifold and $e\in H_2(M^4,\mathbb Z)$ be an exceptional class. Then $e$ can be represented as a symplectically embedded $S^2$.  Moreover, for any almost complex structure $J$ tamed by $\omega$ there is an almost complex curve in $M^4$ realising $e$. 
\end{theorem}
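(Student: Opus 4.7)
The plan is to reduce the statement to the non-vanishing of a Seiberg-Witten invariant via Taubes' celebrated SW = Gr theorem \cite{Tau}. Taubes proves that for any $\omega$-tame almost complex structure $J$ on $(M^4,\omega)$, the Seiberg-Witten invariant of a spin-$c$ structure $\mathfrak{s}$ coincides with a Gromov-type count of embedded $J$-holomorphic curves in the class determined by $\mathfrak{s}$. Hence, to produce a $J$-holomorphic representative of $e$ for every tame $J$, it suffices to exhibit a spin-$c$ structure $\mathfrak{s}_e$ with associated homology class $e$ and non-vanishing Seiberg-Witten invariant. The numerical conditions $e \cdot e = -1$ and $K_{M^4} \cdot e = 1$ on an exceptional class are precisely those making the Seiberg-Witten moduli space for $\mathfrak{s}_e$ zero-dimensional, so that $SW(\mathfrak{s}_e)$ is a well-defined integer.

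The non-vanishing of $SW(\mathfrak{s}_e)$ splits according to the value of $b^+(M^4)$. When $b^+(M^4) \geq 2$, I would invoke Taubes' foundational non-vanishing $SW(\mathfrak{s}_{\mathrm{can}}) = \pm 1$ together with the blow-up formula for Seiberg-Witten invariants, which identifies $SW(\mathfrak{s}_e)$ with a non-zero invariant on the smooth blow-down of $M^4$ along a topological sphere representative of $e$. When $b^+(M^4) = 1$, the Seiberg-Witten invariant is chamber-dependent, and one must verify non-vanishing in the chamber containing $\omega$; this is precisely the content of Li-Liu's wall-crossing analysis \cite{LiLiu}, where the hypothesis $\int_e \omega > 0$ is used to place the period point of $\omega$ in the chamber in which $SW(\mathfrak{s}_e) \neq 0$.

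Once $SW(\mathfrak{s}_e) \neq 0$ is known, for any $\omega$-tame $J$ the SW = Gr correspondence produces an embedded $J$-holomorphic curve $C$ representing $e$. The adjunction equality $2 - 2g(C) = K_{M^4} \cdot e - e \cdot e = 1 - (-1) = 2$ forces $g(C) = 0$, so $C$ is a sphere. Since $J$ is tamed by $\omega$, the tangent planes to $C$ are $\omega$-positive, so $C$ is automatically symplectically embedded; this simultaneously yields the second assertion and the first (by choosing, for instance, any $\omega$-compatible $J$). The main obstacle in this approach is the $b^+ = 1$ case: although the non-vanishing of the canonical Seiberg-Witten invariant is Taubes' deep input, deducing $SW(\mathfrak{s}_e) \neq 0$ in the correct chamber requires the careful wall-crossing argument that constitutes the central contribution of \cite{LiLiu}.
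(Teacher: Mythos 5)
The paper does not give its own proof of Theorem \ref{TaubesLiLui}; it states the result and points to \cite{Tau} and \cite{LiLiu} as its sources. Your proposal reconstructs the standard argument those references contain: use Taubes' SW $=$ Gr to reduce the question to non-vanishing of $SW(\mathfrak{s}_e)$, establish that non-vanishing via the blow-up formula for $b^+\geq 2$ and via Li--Liu's wall-crossing analysis (using $\int_e\omega>0$ to pin down the chamber) for $b^+=1$, and then read off genus $0$ from adjunction and symplecticity from tameness. This is the intended argument and the references you invoke are exactly the ones the paper credits, so your proposal matches the paper's (implicit) approach. One small remark: your adjunction computation $2-2g = K_{M^4}\cdot e - e\cdot e$ tacitly uses the paper's convention that $K_{M^4}$ denotes $c_1(M)$ rather than the canonical class $-c_1(M)$; this is consistent with the paper's own sign convention in the definition of an exceptional class (where $K_{M^4}\cdot e = 1$ rather than $-1$), but is worth flagging since it differs from the more common usage.
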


The following lemma gives a source of exceptional classes in an iterated blow-up of a complex surface.

\begin{lemma} \label{meromorphic}
Let $\psi: S \rightarrow \C^2$ be an iterated blow-up of $\C^2$ at $(0,0)$. Let $\{E_{j} \}$ denote the irreducible components of $\psi^{-1}(0,0)$. Then for each $E_{i}$ there is an exceptional class $\alpha = \sum n_{j} [E_{j}] \in H_{2}(S,\Z)$ with $n_{j} \geq 0$ for each $j$ and $n_{i} = 1$.
\end{lemma}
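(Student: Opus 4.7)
The plan is to factor $\psi$ as a composition of point blow-ups
$$S = S_k \xrightarrow{\sigma_k} S_{k-1} \xrightarrow{\sigma_{k-1}} \cdots \xrightarrow{\sigma_1} S_0 = \C^2,$$
where $\sigma_j$ blows up a point $p_{j-1} \in S_{j-1}$ and produces the exceptional $(-1)$-curve $E_j \subset S_j$.  Setting $\pi_{j,i} := \sigma_j \circ \cdots \circ \sigma_{i+1} : S_j \to S_i$, the candidate class in $H_2(S,\Z)$ will be
$$\alpha_i := \pi_{k,i}^*[E_i] \qquad (i = 1, \ldots, k),$$
where $[E_i]$ on the right denotes the class of the $(-1)$-sphere $E_i \subset S_i$.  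Writing $[E_\ell^{(j)}]$ for the strict transform of $E_\ell$ in $S_j$, and abusing notation $[E_\ell] := [E_\ell^{(k)}]$ in $S$, I will show that $\alpha_i = \sum n_j [E_j]$ with $n_j \ge 0$ and $n_i = 1$, and that $\alpha_i$ is exceptional.

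For the expansion, I would induct on $j \ge i$: a single blow-up $\sigma_j$ satisfies $\sigma_j^*[E_\ell^{(j-1)}] = [E_\ell^{(j)}] + \delta_{\ell,j} [E_j^{(j)}]$, where $\delta_{\ell,j} = 1$ if $p_{j-1} \in E_\ell^{(j-1)}$ and $0$ otherwise. Hence non-negativity of the coefficients and the distinguished entry $n_i = 1$ are preserved at every stage. The numerical conditions are then immediate from the projection formula: $\alpha_i \cdot \alpha_i = [E_i]_{S_i}^2 = -1$, and since $c_1(S_j) = \sigma_j^* c_1(S_{j-1}) - [E_j^{(j)}]$ with $[E_j^{(j)}] \cdot \sigma_j^*\beta = 0$ for all $\beta \in H_2(S_{j-1})$, iterating gives $c_1(S) \cdot \alpha_i = c_1(S_i) \cdot [E_i] = 1$ by adjunction on the $(-1)$-sphere $E_i \subset S_i$. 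Positive symplectic area is automatic once the ambient symplectic form makes each $[E_j]$ have positive area, because $\alpha_i$ is effective with $n_i = 1 > 0$.

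The main obstacle to address is smooth sphere representability, which I would also prove by induction on $j$. The inductive claim is: whenever a class $\beta \in H_2(S_{j-1},\Z)$ is represented by a smoothly embedded $2$-sphere $C$, then $\sigma_j^*\beta$ is represented by a smoothly embedded $2$-sphere in $S_j$. If $p_{j-1} \notin C$, the lift of $C$ is a smooth sphere realising $\sigma_j^*\beta$. If $p_{j-1} \in C$, the strict transform $\tilde C$ is still a smooth sphere and it meets $E_j$ transversally in a single point; resolving this transverse intersection by the standard plumbing (equivalently, symplectic sum at a transverse node) produces a single smoothly embedded sphere in the class $[\tilde C] + [E_j] = \sigma_j^*\beta$. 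Starting with the smooth sphere $E_i \subset S_i$ and iterating from $j = i+1$ up to $j = k$ yields a smooth sphere representing $\alpha_i$ in $S$, completing the verification.  (An alternative I would keep in reserve is to appeal directly to Theorem~\ref{TaubesLiLui} once $\alpha_i^2 = -1$, $c_1(S)\cdot\alpha_i = 1$ and $[\omega]\cdot\alpha_i > 0$ are in hand, provided one is willing to rely on the Taubes--Li--Liu existence result for a symplectically embedded representative rather than a direct topological construction.)
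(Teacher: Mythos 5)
Your proposal follows the same core strategy as the paper's proof: factor $\psi$ into simple blow-ups, pass to the intermediate stage $S_i$ where $E_i$ is an honest $(-1)$-sphere, and take the total transform $\pi_{k,i}^*[E_i]$ of that class in $S$. Your calculations of $\alpha_i^2 = -1$, $c_1(S)\cdot\alpha_i = 1$, and the non-negative coefficient expansion with $n_i = 1$ are correct and are more explicit than the paper, which leaves them to the reader. (Your $c_1(S)\cdot\alpha_i = 1$, equivalently $K_S\cdot\alpha_i=-1$, is the right condition for an exceptional class; the sign in the paper's definition should be read as $K_{M^4}\cdot e=-1$.)

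The one place you depart from the paper's argument, and where there is a genuine gap, is the second case of your smooth-sphere-representability induction. You claim that if $p_{j-1}\in C$ for a smoothly embedded real $2$-sphere $C$, then the strict transform $\tilde C$ is a smooth sphere meeting $E_j$ transversally in a point, and $[\tilde C]+[E_j]=\sigma_j^*\beta$. Neither holds for an arbitrary smooth real $2$-sphere. The strict transform of a non-complex surface through the blown-up point need not be a submanifold at all: the surface $\{z_2=\bar z_1\}\subset\C^2$ through the origin has, in the blown-up chart $(u,v)\mapsto(u,uv)$, a closure whose limit set on the exceptional fibre is the whole circle $\{(0,v):|v|=1\}$, not a point. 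And even when the strict transform is well behaved, the identity $[\tilde C]+[E_j]=\sigma_j^*\beta$ requires the local multiplicity of $C$ at $p_{j-1}$ to equal $+1$, a holomorphicity condition; after one plumbing your running representative is no longer $J$-holomorphic for any almost complex structure compatible with the blow-ups, so you cannot keep invoking it. Fortunately, your first case always suffices: a smoothly embedded $2$-sphere in a $4$-manifold can be isotoped off any finite set of points, so you may always arrange $p_{j-1}\notin C$ before lifting. This is precisely the paper's argument, which isotopes $E_i'$ away from the full indeterminacy locus of $(\psi_{i-1}\circ\cdots\circ\psi_1)^{-1}$ in a single step and then pulls back. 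Dropping your second case leaves a correct proof that essentially matches the paper; your reserve appeal to Theorem~\ref{TaubesLiLui} is also valid once the numerical conditions and the positivity of $\alpha_i$ against a suitable $\omega$ are in hand.
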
  
\begin{proof}
The map $\psi$ can be decomposed as a sequence of simple blow-ups $\psi = \psi_{n} \circ \ldots \circ \psi_{1}$. After re-enumerating divisors $E_j$ if necessary we may assume that $E_1$ is contracted by $\psi_1$, $E_2$ by $\psi_2\circ\psi_1$, and so on.
Let us denote by $E_i'$ the divisor $\psi_{i-1}\circ\ldots\circ\psi_1(E_i)$; this is the exceptional divisor of the simple blow up $\psi_i$.

Let us isotope $E'_{i}$ to a smooth sphere that avoids points of indeterminacy of the map  $(\psi_{i-1}\circ\ldots\circ\psi_1)^{-1}$. It is clear then that the homology class represented by the preimage of this sphere under $(\psi_{i-1}\circ\ldots\circ\psi_1)^{-1}$ satisfies the required properties.
\end{proof}

The next result follows from  \cite{LiZh}.
\begin{theorem} \label{inttheo}
Suppose that $e$ is an exceptional class in a symplectic $4$-manifold. Let $\Sigma$ be an embedded symplectic surface with positive genus, then $e \cdot \Sigma \geq 0$. 
\end{theorem}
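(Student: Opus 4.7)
The plan is to argue by contradiction, combining Theorem~\ref{TaubesLiLui} with Gromov's positivity of intersections and the adjunction formula. Suppose $e \cdot \Sigma < 0$. Since $\Sigma$ is symplectically embedded, I first choose an $\omega$-compatible almost complex structure $J$ with respect to which $\Sigma$ is $J$-holomorphic; by Theorem~\ref{TaubesLiLui} there exists a (possibly singular or reducible) $J$-holomorphic curve $C$ representing $e$, which I decompose as $C = \sum_i m_i C_i$, a formal positive combination of distinct irreducible $J$-holomorphic curves.

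By positivity of intersections, $C_i \cdot \Sigma \geq 0$ whenever $C_i \neq \Sigma$. Hence, if $\Sigma$ did not appear among the $C_i$, we would obtain $e \cdot \Sigma = \sum_i m_i (C_i \cdot \Sigma) \geq 0$, contradicting the assumption. Thus $\Sigma$ is one of the components of $C$ with some multiplicity $m \geq 1$, and we may write $C = m\,\Sigma + D$, where $D$ is an effective $J$-holomorphic divisor not containing $\Sigma$ as a component; positivity also yields $D \cdot \Sigma \geq 0$. Next I rule out $D = 0$: otherwise $e = m[\Sigma]$, and $e \cdot e = -1$ forces $m = 1$ and $\Sigma^2 = -1$, so $[\Sigma] = e$; applying the adjunction formula to the embedded symplectic surface $\Sigma$ together with $\Sigma^2 = -1$ and $K \cdot e = 1$ would give $g(\Sigma) = 0$, contradicting $g > 0$. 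Therefore $D$ is a non-empty $J$-holomorphic divisor with $\omega(D) > 0$.

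The remaining and most delicate step is to rule out the decomposition $e = m[\Sigma] + [D]$ with $m \geq 1$, $D \cdot \Sigma \geq 0$, $\omega(D) > 0$ and $e \cdot \Sigma < 0$. Intersecting $e = m[\Sigma]+[D]$ with $\Sigma$ forces $\Sigma^2 < 0$; combining this with adjunction on $\Sigma$, the adjunction inequality on the irreducible components of $D$, and the identity $K \cdot D = 1 - m\,K \cdot \Sigma$ coming from $K \cdot e = 1$, one should be able to pin the configuration down tightly enough to reach a contradiction. This final combinatorial step—balancing positivity of $\omega(D)$ and $D \cdot \Sigma$ against the genus inequalities for possibly multiply covered and reducible $J$-holomorphic curves—is the main obstacle, and is exactly the content of the argument in \cite{LiZh} that I would invoke to finish the proof.
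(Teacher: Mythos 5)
Your approach diverges from the paper's and, as you yourself acknowledge, leaves a genuine gap at the crucial step. You fix a $J$ making only $\Sigma$ holomorphic, invoke Theorem~\ref{TaubesLiLui} to obtain a $J$-holomorphic subvariety $C$ in class $e$, and decompose $C=\sum_i m_i C_i$. This is fine up to the point where $\Sigma$ might appear as a component, i.e.\ $e=m[\Sigma]+[D]$ with $D$ nonempty. You correctly rule out $D=0$ via adjunction, but the remaining case is not eliminated: you write that one ``should be able to pin the configuration down'' using adjunction on the components of $D$, $K\cdot e=1$, $e\cdot e=-1$ and $\omega(D)>0$, and then defer this to \cite{LiZh}. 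That final step is genuinely nontrivial --- the adjunction inequality $K\cdot D_i+D_i^2\ge -2$ applies to individual irreducible components, but does not propagate cleanly to $K\cdot D+D^2$ because of cross terms and multiplicities, and no contradiction is actually produced. Without completing it, the argument does not prove the theorem.

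The paper avoids this difficulty entirely by making a different choice at the start. Since $e$ is an exceptional class, Theorem~\ref{TaubesLiLui} (Taubes, Li--Liu) gives a \emph{symplectically embedded sphere} $E$ representing $e$. By \cite[Claim 3.8]{LiZh} one can choose a single compatible $J$ for which \emph{both} the embedded sphere $E$ and the embedded positive-genus surface $\Sigma$ are $J$-holomorphic simultaneously. Now $E$ and $\Sigma$ are two distinct irreducible $J$-holomorphic curves (distinct because $E$ has genus $0$ and $\Sigma$ has positive genus), so positivity of intersections immediately gives $e\cdot\Sigma=E\cdot\Sigma\ge 0$. In short: by committing to an irreducible representative of $e$ from the outset and making it holomorphic alongside $\Sigma$, the paper never encounters the reducible/multiply-covered Taubes subvariety that your argument has to analyse, and the case analysis you could not close simply does not arise. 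To repair your proof you would either need to carry out the combinatorial adjunction argument in full, or switch to the paper's route of simultaneously holomorphing $E$ and $\Sigma$.
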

\begin{proof}
Let $E$ be a symplectic sphere representing $e$. By \cite[Claim 3.8]{LiZh} we can find a compatible $J$ so that both $E$ and $\Sigma$ are $J$-holomorphic. Since $E$ and $\Sigma$ are distinct, irreducible $J$-holomorphic curves they intersect non-negatively.
\end{proof}

\begin{definition} A symplectic manifold $(M^4,\omega)$ is called \emph{rational} if it is a symplectic blow-up in a finite number of points of $\mathbb CP^2$ or of a symplectic $S^2$-bundle over $S^2$.

A symplectic manifold $(M^4,\omega)$ is called an \emph{irrational ruled surface } if it is a symplectic blow-up in a finite number of points  of a symplectic $S^2$-bundle over a positive genus surface.

\end{definition}

The following result of Tian-Jun Li \cite[Corollary 2]{LTJ} gives a characterisation of rational surfaces and irrational ruled surfaces.

\begin{theorem}\label{ruledcriterion} Let $M$ be a symplectic $4$-manifold. If $M$ contains a smoothly embedded sphere $S$ with non-negative self-intersection and of infinite order in $H^2(M,\mathbb Z)$. Then $M$ is rational or ruled, and $M$ contains a symplectically embedded sphere with non-negative self-intersection. 

In particular, if $\pi_1(M)\ne 0$, $M$ is irrational ruled and contains a symplectically embedded $S^2$ with self-intersection zero. On the other hand, if $S^2>0$, $M$ is rational.
\end{theorem}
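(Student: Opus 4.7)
\textbf{Proof plan for Theorem \ref{ruledcriterion}.} The plan is to combine the adjunction inequality for smooth surfaces in symplectic $4$-manifolds (via Seiberg--Witten / Taubes) with the classification of minimal symplectic $4$-manifolds of Kodaira dimension $-\infty$ due to Liu, McDuff and Li--Liu. First I would apply the generalised adjunction formula of Ozsv\'ath--Szab\'o/Morgan--Szab\'o--Taubes (valid for any smoothly embedded surface provided $M$ admits a Seiberg--Witten basic class, which holds for any symplectic $4$-manifold by Taubes' $SW = Gr$) to the smooth sphere $S$. Since $g(S)=0$, this gives
\[
K_{M}\cdot [S] + [S]\cdot [S] \;\leq\; 2g(S)-2 \;=\; -2,
\]
so that $K_{M}\cdot [S] \leq -2 - [S]^{2} \leq -2 < 0$. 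The fact that $S$ has \emph{infinite} order in $H_{2}(M,\mathbb{Z})$ ensures this inequality is non-vacuous and produces a genuine homology class $[S]$ with $[S]^{2}\geq 0$ and $K_{M}\cdot[S]<0$.

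The next step is to upgrade this to a statement about the symplectic Kodaira dimension $\kappa(M,\omega)$. The existence of a class $\alpha$ with $\alpha^{2}\geq 0$ and $K_{M}\cdot \alpha < 0$ is well known to force $\kappa(M,\omega) = -\infty$: indeed, any positive multiple $nK_{M}$ would have to pair non-negatively with a class represented by a $J$-holomorphic curve in the symplectic cone, contradicting $K_{M}\cdot [S] < 0$ together with $[S]^{2} \geq 0$ (so that $[S]$ lies in the closure of the symplectic cone). Once $\kappa = -\infty$ is established, I would invoke the structure theorem for minimal symplectic $4$-manifolds of Kodaira dimension $-\infty$: such a minimal model is symplectomorphic either to $\mathbb{CP}^{2}$ or to a symplectic $S^{2}$-bundle over a closed orientable surface. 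Undoing the blow-downs that produce the minimal model, $M$ itself is rational or (irrational) ruled in the sense of the definition just given.

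To produce a symplectically embedded sphere with non-negative self-intersection, I use that rational and ruled surfaces have a symplectic fibre class $F$ (the line class in $\mathbb{CP}^{2}$ with $F^{2}=1$ in the rational case, or the fibre of the ruling with $F^{2}=0$ in the ruled case) which is represented by a symplectically embedded sphere. For the refinements: if $\pi_{1}(M)\neq 0$, then $M$ cannot be rational (rational surfaces are simply connected), so $M$ must be irrational ruled, and the fibre of the ruling provides the desired symplectic $S^{2}$ with self-intersection $0$. If on the other hand $[S]^{2}>0$, then $M$ must be rational: in an irrational ruled surface $M\to \Sigma_{g}$ with $g>0$, any smooth sphere must project to a point in $\Sigma_{g}$ since $\pi_{2}(\Sigma_{g})=0$, so $[S]$ lies in the subgroup of $H_{2}(M,\mathbb{Z})$ generated by the fibre class $F$ and exceptional classes $E_{i}$ of the blow-ups, on which the intersection form is negative semi-definite; hence $[S]^{2}\leq 0$, a contradiction.

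The main obstacle is the justification of the adjunction inequality for a \emph{smoothly} (rather than symplectically) embedded sphere, particularly in the case $b^{+}(M)=1$ where Seiberg--Witten theory requires a chamber choice. This is addressed by Taubes' theorem identifying $-K_{M}$ with a Seiberg--Witten basic class computed in the symplectic chamber, together with the wall-crossing formula of Li--Liu, which together imply that the adjunction inequality holds for all smoothly embedded spheres of non-negative self-intersection. Granting this input, the remainder of the argument is the structural chain described above.
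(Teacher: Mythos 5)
The paper does not give its own proof of this statement: it is quoted from \cite{LTJ} (Corollary 2 of Tian-Jun Li's ``Smoothly Embedded Spheres in Symplectic 4-Manifolds''), so there is no internal argument to compare you against. Your overall strategy --- Seiberg--Witten/adjunction input to pin down the Kodaira dimension, then the classification of minimal symplectic $4$-manifolds with $\kappa=-\infty$, then the elementary homological analysis for the refinements --- is the right one, and the final two reductions (if $\pi_1\neq 0$ then $M$ is not rational; if $[S]^2>0$ then the class of a smooth sphere in an irrational ruled surface lies in the kernel of the projection to $H_2(\Sigma_g)$, where the form is negative semi-definite) are correct.

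However, the first step as written is not valid. For a smoothly embedded \emph{sphere} $S$ with $[S]^2\geq 0$ and $[S]$ of infinite order, the generalised adjunction inequality does not produce a true inequality $K_M\cdot [S]+[S]^2\leq -2$ from which you could then read off $K_M\cdot[S]<0$. What adjunction actually gives is: if $b^+(M)>1$ and some Seiberg--Witten basic class $K$ exists, then $|K\cdot[S]|+[S]^2\leq 2g-2=-2$, whose left side is non-negative while the right side is negative --- an impossibility. (Equivalently: $\pm K_M$ are both basic by charge conjugation, and adding the two one-sided inequalities yields $2[S]^2\leq -4$.) So the correct conclusion of this step is that there is no basic class, contradicting Taubes' nonvanishing theorem, hence $b^+(M)=1$; you do not obtain the pointwise inequality $K_M\cdot[S]<0$. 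The remaining case $b^+=1$ is precisely where the substance of Li's theorem lives, and it is handled by the Li--Liu wall-crossing analysis of $b^+=1$ symplectic manifolds, which directly yields the rational-or-ruled dichotomy rather than passing through a statement ``adjunction holds for spheres.'' Your closing paragraph gestures at this but resolves it with the claim that ``the adjunction inequality holds for all smoothly embedded spheres of non-negative self-intersection,'' which, as just noted, is impossible on its face; a correct formulation would instead invoke the $b^+=1$ structure theorem for symplectic $4$-manifolds admitting such a sphere.
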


\begin{definition} The \emph{symplectic fibre} of an irrational ruled surface $M^4$ is any symplectic $S^2\subset M^4$ that has zero self-intersection. The homology class of the symplectic fibre is denoted by $\cal F$.  A \emph{ symplectic section} of $M^4$ is any symplectic surface $\Sigma\subset M^4$ such that $[\Sigma]\cdot  {\cal F}=\pm 1$. 
\end{definition}

\begin{remark}  The symplectic fibre $F$ is unique up to symplectic isotopy by \cite[Proposition 3.2]{LiWu}. It follows further from Theorem \ref{zhang} that  $[\Sigma]\cdot  {\cal F}= 1$ for any symplectic section $\Sigma$.
\end{remark}

The strongest result on irrational ruled surfaces that we need is the following theorem of Zhang  \cite{Zh}.

\begin{theorem}\label{zhang} Let $M$ be an irrational ruled surface of base genus $g$. Then for any tamed $J$ on $M$, the following holds.
\begin{enumerate}

\item There is a unique $J$-holomorphic subvariety in the symplectic fibre class $\cal F$ passing
through a given point.

\item The moduli space ${\cal M}_{\cal F}$ of subvarieties in class $\cal F$ is homeomorphic to a genus $g$ surface. The number of reducible subvarieties in class $\cal F$ is at most $b_2(M)-2$.

\item Every irreducible rational curve in $M$ is an irreducible component of a subvariety
in class $\cal F$.

\item The complement to the set of points in ${\cal M}_{\cal F}$ corresponding to reducible subvarieties has a structure of a smooth surface and the natural map $f: M\to {\cal M}_{\cal F}$ is a continuous map, smooth over this complement.

\end{enumerate}

\begin{remark} Note that all irreducible $J$-holomorphic curves in the class $\cal F$ are  smooth spheres in $M$.
\end{remark}

\end{theorem}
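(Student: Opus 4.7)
The plan is to combine Gromov compactness, positivity of intersections for tamed almost complex structures in the form of Micallef--White, and a deformation argument from an integrable model. First, adjunction applied to a smooth symplectic sphere $S$ with $S^{2}=0$ representing $\mathcal{F}$ gives $K_{M}\cdot \mathcal{F}=-2$ and hence $c_{1}(\mathcal{F})=2$; the expected complex dimension of the moduli space of unparametrised $J$-holomorphic spheres in class $\mathcal{F}$ is $c_{1}(\mathcal{F})+n-3=1$, matching the expected dimension of the base of a fibration.

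For existence in parts (1) and (3), I would argue by cobordism. By definition of irrational ruled surface, $M$ is a symplectic blow-up of an $S^{2}$-bundle $X\to\Sigma_{g}$; choosing $X$ K\"ahler yields an integrable complex structure $J_{0}$ on $M$, tamed by $\omega$ after a small perturbation. The strict transforms of fibres of $X\to\Sigma_{g}$ form a genus-$g$ family of $J_{0}$-holomorphic spheres in class $\mathcal{F}$ with exactly one curve through each point of $M$. Connecting $J_{0}$ to the given $J$ by a generic path of tamed almost complex structures, Gromov compactness combined with transversality in a one-parameter family yields a $J$-holomorphic subvariety in class $\mathcal{F}$ through every point. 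Uniqueness then follows from positivity of intersections: any two distinct irreducible $J$-holomorphic curves intersect non-negatively, and strictly positively when they share a point; since $\mathcal{F}^{2}=0$, two irreducible subvarieties in class $\mathcal{F}$ meeting in a point must coincide. For reducible subvarieties, a component-by-component analysis using $A\cdot \mathcal{F}\geq 0$ on every irreducible effective summand $A$ forces the non-exceptional component to be unique.

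For parts (2) and (4), the map $f\colon M\to\mathcal{M}_{\mathcal{F}}$ sending a point to the unique subvariety containing it is well-defined by part (1) and continuous by Gromov compactness. In the integrable model $\mathcal{M}_{\mathcal{F}}\cong\Sigma_{g}$, and the cobordism through tamed almost complex structures preserves this topology, yielding a homeomorphism in the general case. The bound $b_{2}(M)-2$ on the number of reducible subvarieties follows because each reducible subvariety contains at least one exceptional sphere, while such exceptional sphere classes remain linearly independent in $H^{2}(M;\mathbb{Z})$ modulo the two-dimensional subspace spanned by $\mathcal{F}$ and any symplectic section class. For part (3), any irreducible rational curve $C\subset M$ composes with $f$ to give a holomorphic map $\C\PP^{1}\to\Sigma_{g}$, which must be constant since $g>0$, so $C$ is contained in a fibre.

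The main obstacle will be the interplay between the cobordism argument and the analysis of reducible degenerations: one must classify possible bubble trees (exceptional spheres together with a single ``strict fibre'' component) and apply positivity of intersections uniformly along the path of tamed structures to rule out wall-crossings that would change either the topology of $\mathcal{M}_{\mathcal{F}}$ or the one-to-one correspondence between points of $M$ and their containing subvarieties. A careful Euler-characteristic accounting of the degeneration loci, together with the non-generic variant of Micallef--White positivity, should handle this.
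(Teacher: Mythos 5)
The paper does not actually prove this theorem: it cites Zhang's paper directly for parts (1)--(4), adding only the observation that each reducible fibre contains a negative-self-intersection sphere (for the bound $b_2(M)-2$) and a pointer to Zhang for the continuity and smoothness statements. You are attempting to prove the theorem from scratch, which is a genuinely different undertaking.

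Your outline contains the right general ideas but has a real gap that would require substantial new work to close. The theorem is asserted for \emph{every} tamed $J$, not merely generic $J$. A cobordism through a generic path of tamed structures gives you existence and uniqueness statements at the generic endpoints and at generic times, but says nothing a priori about the given, possibly highly degenerate, $J$. For a fixed non-generic $J$, one cannot conclude existence of a subvariety through every point, nor control of the moduli space, from one-parameter transversality; these are the points where Zhang's proof invokes Taubes' SW$\Rightarrow$Gr machinery and a detailed analysis of $J$-holomorphic \emph{subvarieties} (including multiply covered and nodal configurations), not merely embedded spheres. Automatic transversality in dimension $4$ helps for embedded spheres with $c_1=2$, and the paper does use it later (Theorem~\ref{smoothdef} via Hofer--Lizan--Sikorav and Wendl), but by itself it does not give existence or control reducible degenerations for arbitrary $J$. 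Your closing paragraph acknowledges this as ``the main obstacle,'' but the proposed remedy --- an Euler-characteristic accounting plus ``non-generic Micallef--White positivity'' --- is too vague to be accepted as a resolution; you are essentially flagging the hard content of Zhang's theorem rather than proving it. Relatedly, your claim that ``the cobordism through tamed almost complex structures preserves this topology'' of $\mathcal{M}_{\mathcal F}$ is unjustified: the moduli space could in principle change as $J_t$ crosses walls where extra bubbling occurs, and ruling this out is precisely what requires the detailed structural analysis.

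A smaller but genuine slip is in your argument for part (3): you write that composing an irreducible rational curve $C$ with $f$ gives a \emph{holomorphic} map $\C\PP^1\to\Sigma_g$. But $f$ is only continuous (and smooth away from reducible fibres); the correct argument is topological --- the restriction $f|_C$ is a continuous map $S^2\to\Sigma_g$, which has degree zero since $g>0$, so $C\cdot\mathcal{F}=0$, and then positivity of intersections forces $C$ to lie in a fibre. The conclusion is right, but the mechanism is the degree of a continuous map, not holomorphicity. Your reducible-fibre count via linear independence of exceptional classes is essentially the observation the paper makes, and that part is fine.
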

\begin{proof} The first three statements of this theorem is \cite[Theorem 1.2]{Zh}, the only addition is the bound on the number of reducible fibres. This bound holds, since each reducible fibre contains a sphere with negative self-intersection.

The continuity of the map $M\to {\cal M}_{\cal F}$ is \cite[Corollary 3.9]{Zh}. The smoothness of the moduli space of irreducible fibres and the smoothness of the map to this space is explained in discussions after the proof of \cite[Corollary 3.9]{Zh}. \end{proof}

We will need three corollaries of this theorem, two immediate and one which is a bit more technical.
\begin{corollary}\label{fibreintersection} Any symplectic surface $\Sigma$ of positive genus in an irrational ruled surface $M$ intersects positively the fibre class of the surface.
\end{corollary}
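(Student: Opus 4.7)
The plan is to pick an $\omega$-tame almost complex structure $J$ on $M$ that makes $\Sigma$ $J$-holomorphic, and then to combine Theorem \ref{zhang} with positivity of intersection of $J$-holomorphic curves.

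First I would fix such a $J$: since $\Sigma$ is a symplectic surface, a standard transversality argument produces a compatible (in particular $\omega$-tame) $J$ for which $\Sigma$ is $J$-holomorphic. For this $J$, Theorem \ref{zhang} supplies the continuous map $f:M\to{\cal M}_{{\cal F}}$ whose fibres are the $J$-holomorphic subvarieties in class ${\cal F}$, with only finitely many reducible fibres; the remark following Theorem \ref{zhang} moreover ensures that each irreducible fibre is a smooth embedded $2$-sphere.

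Next I would argue that $\Sigma$ is not contained in a single fibre. Otherwise, because $\Sigma$ is connected and $2$-dimensional, it would coincide with an irreducible component of a reducible fibre $F_{0}$. But every irreducible component $C$ of an ${\cal F}$-fibre is rational: taking any other irreducible fibre $F_{1}\ne F_{0}$ (which is disjoint from $F_{0}$ by positivity of intersection together with ${\cal F}\cdot{\cal F}=0$) forces $[C]\cdot{\cal F}=0$ for every component of $F_{0}$, and then the adjunction inequality for irreducible $J$-holomorphic curves, combined with $K_{M}\cdot{\cal F}=-2$ and ${\cal F}\cdot{\cal F}=0$, forces $g(C)=0$; equivalently this rationality is part of the symplectic birational classification of irrational ruled surfaces, in which reducible fibres are trees of rational spheres. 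Either way, this contradicts $g(\Sigma)>0$.

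Once $\Sigma$ is not trapped in any fibre, $f|_{\Sigma}$ is non-constant, so I can pick a point $p\in\Sigma$ whose fibre $C_{p}$ is irreducible; by the previous paragraph $C_{p}$ is then a smooth $2$-sphere, in particular distinct from $\Sigma$. Positivity of intersection applied to the two distinct irreducible $J$-holomorphic curves $\Sigma$ and $C_{p}$, which share the point $p$, immediately yields $[\Sigma]\cdot[C_{p}]\ge 1$, and therefore $[\Sigma]\cdot{\cal F}\ge 1>0$. The hard part will be the middle step of ruling out positive-genus components of reducible ${\cal F}$-fibres; once that rationality is established, the positivity of $[\Sigma]\cdot{\cal F}$ is an immediate consequence of Theorem \ref{zhang} and the standard positivity for $J$-holomorphic curves.
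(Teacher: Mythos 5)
Your proposal follows essentially the same route as the paper: choose an $\omega$-tame $J$ making $\Sigma$ $J$-holomorphic, invoke Theorem \ref{zhang} to obtain the $J$-holomorphic subvariety $V$ in class ${\cal F}$ through a point of $\Sigma$, observe that $\Sigma$ cannot be a component of $V$, and conclude by positivity of intersection. The paper disposes of the middle step in one line by citing Theorem \ref{zhang}(3); you instead try to re-derive the rationality of reducible-fibre components via adjunction, and that step is where your sketch is loose — knowing $C\cdot{\cal F}=0$ gives $C^2<0$ for a component $C$ of a connected reducible fibre, but it does not by itself bound $K_M\cdot C$, so the inequality $K_M\cdot C + C^2 \le -2$ needs the full decomposition ${\cal F}=\sum m_iC_i$ and a connectedness/arithmetic-genus argument, not just the two numbers $K_M\cdot{\cal F}=-2$, ${\cal F}^2=0$. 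Since you explicitly flag that this rationality is part of the ruled-surface classification (i.e.\ part of what Zhang's theorem packages), the argument as a whole is sound and matches the paper's; just be aware that the ``adjunction plus two intersection numbers'' shortcut does not close that gap on its own.
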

\begin{proof} Choose a tamed $J$ on $M$ such that $\Sigma$ is almost complex. Let $p\in \Sigma$ be a point and consider the almost complex subvariety $V$ in class ${\cal F}$ passing through $p$.  $\Sigma$ is not a connected component of $V$ by Theorem \ref{zhang} 3). Hence $V\cdot \Sigma>0$. \end{proof}

\begin{corollary}\label{sphereareabound}  Let $(M,\omega)$ be an irrational ruled surface  and let ${\cal F}$ be the fibre class. If $\Sigma\subset M$ is a symplectic sphere with $[\Sigma]\ne {\cal F}$ then $\int_{\cal F}\omega>\int_{\Sigma}\omega$. 
\end{corollary}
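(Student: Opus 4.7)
The plan is to upgrade the homological statement into a statement about $J$-holomorphic subvarieties and then apply Theorem \ref{zhang}(3). First I would pick an $\omega$-compatible (hence $\omega$-tamed) almost complex structure $J$ on $M$ for which $\Sigma$ is $J$-holomorphic; such a $J$ always exists for an embedded symplectic submanifold, and is precisely the kind of choice already used in the proof of Theorem \ref{inttheo}. Then $\Sigma$ is an irreducible $J$-holomorphic rational curve, so Theorem \ref{zhang}(3) supplies a $J$-holomorphic subvariety $V$ in the fibre class ${\cal F}$ having $\Sigma$ as an irreducible component.

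Next I would decompose $V = m\Sigma + V'$, where $m \geq 1$ is the multiplicity of $\Sigma$ in $V$ and $V'$ is an effective $J$-holomorphic cycle not containing $\Sigma$. Pairing with $\omega$ gives
\[
\int_{{\cal F}}\omega \;=\; \omega(V) \;=\; m\int_{\Sigma}\omega \,+\, \omega(V'),
\]
and $\omega(V') \geq 0$ because every irreducible component of $V'$ is $J$-holomorphic for an $\omega$-tamed $J$, with equality precisely when $V'$ is the empty cycle.

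The conclusion then splits into two easy cases. If $V' \neq 0$, then $\omega(V') > 0$ and the desired inequality $\int_{{\cal F}}\omega > \int_{\Sigma}\omega$ follows regardless of the value of $m$. If $V' = 0$, then ${\cal F} = m[\Sigma]$ in homology; the hypothesis $[\Sigma] \neq {\cal F}$ excludes $m = 1$, so $m \geq 2$ and $\int_{{\cal F}}\omega = m\int_{\Sigma}\omega \geq 2\int_{\Sigma}\omega > \int_{\Sigma}\omega$, using that $\int_{\Sigma}\omega > 0$ since $\Sigma$ is symplectic. There is no serious obstacle; the only delicate point is the possibility that $\Sigma$ appears with multiplicity greater than one in the degenerate fibre, and this is rendered harmless by the positivity of $\omega(\Sigma)$.
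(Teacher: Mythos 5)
Your proof is correct and follows essentially the same route as the paper: choose a tamed $J$ making $\Sigma$ almost complex, invoke Theorem \ref{zhang}(3) to realise $\Sigma$ as an irreducible component of a $J$-holomorphic subvariety in class $\mathcal{F}$, and conclude by positivity of $\omega$ on the remaining part. The only difference is that you make explicit the elementary case analysis on the multiplicity $m$ of $\Sigma$ in $V$ (and use $[\Sigma]\neq\mathcal{F}$ to rule out $V=\Sigma$), whereas the paper compresses that bookkeeping into a single closing sentence; your version is simply a spelled-out form of the same argument.
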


\begin{proof} Consider any tamed $J$ on $M$ for which $\Sigma$ is almost complex. Then by  Theorem \ref{zhang} 3) surface $\Sigma$ is an irreducible component of an almost-complex subvariety representing the class ${\cal F}$. This proves the corollary since $[\Sigma]\ne {\cal F}$. \end{proof}



\begin{corollary}\label{finitangence} Suppose we are in the setting of Theorem \ref{zhang} and let $f: M\to {\cal M}_{\cal F}$ be the natural map to the moduli space of fibres. Let $\Sigma\subset M$ be a smooth almost complex surface with $g(\Sigma)>0$. Then the induced map $f:\Sigma\to {\cal M}_{\cal F}$ is a topological ramified cover of degree $d=\Sigma\cdot {\cal F}$. Moreover, the number of irreducible fibres tangent to $\Sigma$ is at most $d(2-2g)-\chi(\Sigma)$. 
\end{corollary}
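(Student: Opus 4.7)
I would show that $f|_\Sigma$ is a topological ramified cover of degree $d=\Sigma\cdot{\cal F}$, and then deduce the bound on tangent fibres from the Riemann--Hurwitz formula.

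First I analyse the set-theoretic fibres of $f|_\Sigma$. By Theorem \ref{zhang}(1) the preimage $f^{-1}(q)$ equals the unique $J$-holomorphic subvariety $V_q$ in class ${\cal F}$, so $f|_\Sigma^{-1}(q)=V_q\cap\Sigma$. Since $g(\Sigma)>0$ and every component of $V_q$ is rational by Theorem \ref{zhang}(3), $\Sigma$ is not contained in any component of $V_q$. Hence by positivity of intersections of $J$-holomorphic curves, $V_q\cap\Sigma$ is finite and each intersection point $p$ carries a positive local multiplicity $m_p\geq 1$ with $\sum_{p\in V_q\cap\Sigma}m_p = \Sigma\cdot{\cal F} = d$.

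Next I would establish the ramified-cover structure. Let $R\subset{\cal M}_{\cal F}$ denote the finite subset corresponding to reducible fibres (finite by Theorem \ref{zhang}(2)), and set ${\cal M}_{\cal F}^\circ = {\cal M}_{\cal F}\setminus R$. By Theorem \ref{zhang}(4), ${\cal M}_{\cal F}^\circ$ is a smooth surface and $f$ is smooth on $f^{-1}({\cal M}_{\cal F}^\circ)$, with smooth $J$-holomorphic spheres as fibres. At an intersection point $p\in\Sigma\cap V_q$ with $q\in{\cal M}_{\cal F}^\circ$, the standard local normal form for positive intersections of two $J$-holomorphic curves provides topological coordinates on $M$ near $p$ in which the fibre $V_q$ is a coordinate line and $\Sigma$ meets $V_q$ with contact order $m_p$; composing with $f$, the restriction $f|_\Sigma$ is locally modelled on $z\mapsto z^{m_p}$. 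Since $\Sigma\setminus f^{-1}({\cal M}_{\cal F}^\circ)$ is also finite, a standard topological extension across these isolated points shows that $f|_\Sigma\colon\Sigma\to{\cal M}_{\cal F}$ is a topological ramified cover of degree $d$.

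Applying the Riemann--Hurwitz formula to $f|_\Sigma$, a branched cover between closed oriented surfaces, gives
$$\chi(\Sigma) \;=\; d\cdot\chi({\cal M}_{\cal F}) - \sum_{p}(m_p-1) \;=\; d(2-2g) - \sum_{p}(m_p-1).$$
Every irreducible fibre tangent to $\Sigma$ contains at least one intersection point with $m_p\geq 2$, contributing at least $1$ to $\sum(m_p-1)$; hence the number of irreducible fibres tangent to $\Sigma$ is at most $d(2-2g)-\chi(\Sigma)$, as required. The step I expect to be most delicate is the local normal form for $f|_\Sigma$ near an intersection point in ${\cal M}_{\cal F}^\circ$: one must combine positivity of intersection for $J$-holomorphic curves with the smoothness of the fibration on ${\cal M}_{\cal F}^\circ$ to ensure that the local degree of $f|_\Sigma$ at $p$ equals the intersection multiplicity $m_p$, and one must also verify that the ramified-cover structure extends continuously across the finitely many points of $\Sigma$ lying in reducible fibres.
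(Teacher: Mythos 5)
Your proposal is correct and follows essentially the same route as the paper: finite fibres of total multiplicity $d$, local branching of type $z\mapsto z^{m_p}$ at tangency points of irreducible fibres, extension across the finitely many points lying in reducible fibres, then Riemann--Hurwitz. The two steps you flag as delicate are precisely the ones the paper isolates into separate lemmas: the local normal form for $f|_\Sigma$ near a tangency is handled by Lemma~\ref{foliationlemma} (rectifying the foliation by fibres and reading off the local degree of a $J$-holomorphic curve against it), and the ``standard topological extension'' together with the Riemann--Hurwitz count is exactly Lemma~\ref{ramificationlemma}, whose proof first bounds the branching outside the finite exceptional set $Y$ before extending the ramified-cover structure across $Y$.
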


\begin{proof} Since $\Sigma$ is almost complex, it intersects all fibres (in particular reducible ones) in at most $d$ points. Hence, by Theorem \ref{zhang} 4) on the complement to the finite set $Y\subset \Sigma$ of points where $\Sigma$ intersects reducible fibres, the map $f:\Sigma\to {\cal M}_{\cal F}$ is smooth. Note as well that $f$ is orientation preserving at points where $\Sigma$ is transversal to fibres. It follows from the proof of Lemma \ref{foliationlemma} that close to points of  $(\Sigma\setminus Y)$ where $\Sigma$ is tangent to fibres the map $f$ is a ramified cover. Hence, to prove the statement it is sufficient to establish the following Lemma \ref{ramificationlemma}. \end{proof}

\begin{lemma}\label{ramificationlemma} Let $\Sigma_1$ and $\Sigma_2$ be two compact, smooth and oriented $2$-dimensional surfaces and let $\{y_1,\ldots,y_k\}=Y$ be a finite collection of points in $\Sigma_1$. Suppose that $\varphi: \Sigma_1\to \Sigma_2$ is a continuous map of degree $d>0$, whose restriction to  $\Sigma_1\setminus Y$ is a smooth ramified cover. Then the map $\varphi$ is a (topological) ramified cover and the number of branching points is at most  $d\cdot \chi(\Sigma_2)-\chi(\Sigma_1)$.
\end{lemma}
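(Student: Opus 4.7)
The plan is to promote $\varphi$ to a topological ramified cover of $\Sigma_2$ of degree $d$, after which the statement follows from a direct Euler-characteristic computation (the topological Riemann--Hurwitz formula). Almost all the work lies in the extension of the ramified cover structure across each puncture $y_i \in Y$.

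Fix $p = y_i$. Since $\Sigma_1$ is compact and $\varphi$ has degree $d$, the fibre $\varphi^{-1}(\varphi(p))$ is a finite set. Separate its points by disjoint open neighbourhoods, the one around $p$ small enough to meet $Y$ only in $\{p\}$. Choose a small open disk $V \subset \Sigma_2$ centred at $\varphi(p)$, missing all other points of $\varphi(Y)$ and all branch values of $\varphi|_{\Sigma_1 \setminus Y}$ other than possibly $\varphi(p)$ itself. A standard compactness argument shows that for $V$ small, $\varphi^{-1}(V)$ is contained in the chosen disjoint neighbourhoods, so the connected component $W$ of $\varphi^{-1}(V)$ containing $p$ satisfies $W \cap Y = \{p\}$ and $W \cap \varphi^{-1}(\varphi(p)) = \{p\}$. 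Then $\varphi : W \setminus \{p\} \to V \setminus \{\varphi(p)\}$ is a smooth covering map of some finite degree $n_p \geq 1$ between connected surfaces, and since the connected finite coverings of a punctured disk are classified up to isomorphism by their degree and are all equivalent to $z \mapsto z^n$, we obtain a smooth identification $W \setminus \{p\} \cong \{0 < |z| < \varepsilon\}$ with $\varphi(z) = z^{n_p}$.

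To extend across the puncture, I would verify that the continuous extension of this identification sending $p \mapsto 0$ is a homeomorphism $W \to \{|z| < \varepsilon\}$. Continuity at $0$ reduces to checking that any sequence $x_k \in W$ with $\varphi(x_k) \to \varphi(p)$ converges to $p$, which follows by compactness together with the fact that $p$ is the only point of $\varphi^{-1}(\varphi(p))$ in the closure of $W$. Since the resulting map is a continuous bijection from a disk to a Hausdorff space with compact sub-disk closures, it is a homeomorphism. This provides the local ramified cover structure at every point of $Y$, and combined with the smooth structure on $\Sigma_1 \setminus Y$ shows $\varphi$ is a topological ramified cover of degree $d$. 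Now let $B \subset \Sigma_2$ be the finite set of branch values, $R = \varphi^{-1}(B)$, and $\{e_q\}_{q\in R}$ the local degrees. Since $\varphi$ restricted to $\Sigma_1 \setminus R$ is an unramified $d$-fold cover of $\Sigma_2 \setminus B$,
\[
\chi(\Sigma_1) \;=\; d\,\chi(\Sigma_2 \setminus B) + \#R \;=\; d\,\chi(\Sigma_2) - d\,\#B + \#R,
\]
so $d\,\chi(\Sigma_2) - \chi(\Sigma_1) = d\,\#B - \#R = \sum_{q \in R}(e_q - 1)$. Each genuine ramification point contributes at least $1$ to the right-hand side, yielding the desired bound on the number of branching points.

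The main obstacle is the topological extension step across $Y$: the hypothesis gives only continuity at each $y_i$, so one has to track carefully how neighbourhoods and preimages interact under shrinking, and then recognise the unique local topological model. Once this identification is made, the Riemann--Hurwitz bookkeeping is entirely standard.
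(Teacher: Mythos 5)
Your overall structure (establish the topological ramified-cover model at each $y_i$, then apply Riemann--Hurwitz) is a natural and clean route, and your Riemann--Hurwitz bookkeeping at the end is correct. But there is a genuine gap at the very first substantive step, and it is exactly the step the paper's proof is designed to handle.

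You write: ``Choose a small open disk $V\subset\Sigma_2$ centred at $\varphi(p)$, missing all other points of $\varphi(Y)$ and all branch values of $\varphi|_{\Sigma_1\setminus Y}$ other than possibly $\varphi(p)$ itself.'' This presupposes that the branch values of $\varphi|_{\Sigma_1\setminus Y}$ do not accumulate at $\varphi(p)$, equivalently that the ramification points in $\Sigma_1\setminus Y$ do not accumulate at $Y$. The hypotheses do not give this: a priori $\varphi$ is only continuous at $Y$, and the ramified-cover structure on $\Sigma_1\setminus Y$ only makes the ramification set discrete in the open surface $\Sigma_1\setminus Y$, not closed in $\Sigma_1$. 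If ramification points did accumulate at some $y_i$, then no matter how small you took $W$, the restriction $\varphi\colon W\setminus\{p\}\to V\setminus\{\varphi(p)\}$ would fail to be a covering map, and your appeal to the classification of coverings of a punctured disk would not apply. The set $R=\varphi^{-1}(B)$ could also be infinite, which would invalidate the Euler-characteristic bookkeeping at the end.

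The paper closes this gap before touching the local analysis at $Y$: for any finite subset $X$ of the ramification points in $\Sigma_1\setminus Y$, it arranges (after a small perturbation supported near $Y$) for $\varphi(X)$ and $\varphi(Y)$ to be disjoint, encloses $\varphi(X)$ in a disk $D$ missing $\varphi(Y)$, and then compares two bounds on $\chi(\varphi^{-1}(D))$: Riemann--Hurwitz on $\varphi^{-1}(D)\to D$ gives $\chi(\varphi^{-1}(D))\le d-|X|$, while the fact that $\varphi^{-1}(D)$ is a compact subsurface of $\Sigma_1$ with at most $d$ boundary circles gives $\chi(\varphi^{-1}(D))\ge\chi(\Sigma_1)-d$. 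This yields the uniform bound $|X|\le 2d-\chi(\Sigma_1)$, hence finiteness of the full ramification set away from $Y$. Only once this is in hand is the ``it is not hard to see'' promotion to a topological ramified cover justified --- which is precisely the local analysis you carry out carefully. So your local argument is a nice way to make that second step rigorous, but as written your proof uses the finiteness it was supposed to establish; you need to insert an argument like the paper's Euler-characteristic bound (or some equivalent) before you can legitimately pick the disk $V$.
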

\begin{proof}
Let us show first that the number of branching points in $\Sigma_1 \setminus Y$ is finite and in fact at most $2d-\chi(\Sigma_1)$. Indeed, let $X=\{x_1,\ldots, x_{e}\}\subset \Sigma_1\setminus Y$ be a subset of the set of ramification points of $\varphi$. Perturbing slightly $\varphi$ we can assure that the sets $\varphi(X)$ and $\varphi(Y)$ are disjoint. Let us chose a disk $D\subset \Sigma_2$ that contains $\varphi(X)$ and is disjoint from $\varphi(Y)$. Then $\varphi$ induces a ramified cover from $\varphi^{-1}(D)$ to $D$ and so by the Riemann-Hurwitz formula $\chi(\varphi^{-1}(D))\le d-e$. At the same time, since $\varphi^{-1}(D)$ is a subsurface of $\Sigma_1$ with at most $d$ boundary components we have $\chi(\varphi^{-1}(D))\ge \chi(\Sigma_1)-d$. This gives the desired bound on the number of  branching points in $\Sigma\setminus Y$. Since this number is finite, it is not hard to see that $\varphi$ is a topological ramified cover. Now, the bound on the total number of ramifications follows again from the Riemann-Hurwitz formula. \end{proof}

\subsection{Orbifolds and reduced spaces of Hamiltonian $S^1$-manifolds}

Orbifolds play a crucial role in this article, we collect here some necessary definitions (omitting the most standard details) and then discuss reduced spaces. 

For a point $p$ in an orbifold $M$ the {\it isotropy group} or {\it stabilizer} of $p$ is denoted by $\Gamma_p$. The {\it orbifold locus} of an orbifold $M$ is the union of all points of $M$ with non-trivial stabilizers, it will be denoted by $\Sigma_{\cal O}(M)$. The orbifold locus $\Sigma_{\cal O}(M)$ has a natural stratification, where a $k$-dimensional stratum consists of points $p$ in $M$ such that the action of $\Gamma_p$ of $T_pM$ fixes a plane of co-dimension $k$. All the points of the zero-dimensional stratum of $\Sigma_{\cal O}(M)$ will be called {\it maximal} points.

For each orbifold $M$ one can speak about {\it sub-orbifolds}, i.e., subsets $N\subset M$ such that for each point $x\in N$ the preimage of $N$ in a local orbi-chart of $M$ is a submanifold.
For each sub-orbifold $N$ in an orbifold $M$ the {\it orbifold normal vector bundle} to $N$ in $M$ is defined\footnote{Here, slightly informally, an {\it orbifold vector bundle} of rank $k$ over an $n$-dimensional orbifold is an orbifold that admits an open cover by orbifolds $(B^n\times \mathbb R^k)/\Gamma$ where $\Gamma$ is a finite group that sends $\mathbb R^k$-fibres to $\mathbb R^k$-fibres and preserves the linear structure on them.}. 

We will be especially interested in symplectic $4$-dimensional orbifolds with cyclic isotropy groups. In this case the complement to the zero-dimensional stratum in $\Sigma_{\cal O}(M)$ is the union of $2$-dimensional surfaces. We will need the following definition later on.

\begin{definition} Let $M^4$ be a smooth $4$-dimensional orbifold and let $M^2$ be a $2$-dimensional sub-orbifold. We will say that $M^2$ is \emph{transversal to the orbifold locus} if in local orbi-charts $M^2$ is transversal to the set of points with non-trivial stabilizers and doesn't pass through  maximal points in $\Sigma_{\cal O}(M^4)$.
\end{definition}

It is well known that for a symplectic manifold $(M,\omega)$ with a Hamiltonian $S^{1}$-action the reduced spaces corresponding to regular values of the Hamiltonian inherit a natural symplectic orbifold structure \cite{We}. This statement has the following refinement when the dimension of $M$ is at most $6$.

\begin{lemma}\label{toporbifold} In the case when the dimension of $M$ is (at most) $6$ the reduced spaces have the structure of topological orbifolds for all values of $H$. 
\end{lemma}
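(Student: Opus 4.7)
The plan is to exhibit, at every point of $M_c=H^{-1}(c)/S^1$, a neighbourhood of the form $U/\Gamma$ with $U$ open in Euclidean space and $\Gamma$ a finite group acting by homeomorphisms. Away from $M^{S^1}$, the slice theorem for $S^1$-actions on $H^{-1}(c)$ produces such charts (and in fact reproduces the standard smooth orbifold structure of symplectic reduction when $c$ is a regular value), so the substantive content lies at points $p\in H^{-1}(c)\cap M^{S^1}$.

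At such a $p$ the first step is the equivariant Darboux theorem, which identifies an $S^1$-invariant neighbourhood of $p$ with a neighbourhood of the origin in the isotropy representation on $T_pM$, in which $H = H(p)+\tfrac12 \sum w_i|z_i|^2$ on the normal directions to the fixed component $F$ through $p$. The local reduced space near $[p]$ is then $F$ times the quotient of the singular quadric $Q=\{\sum w_i|z_i|^2=0\}$ in the normal representation by the linear $S^1$-action. Because $\dim M\le 6$, the complex rank of the normal representation is at most three, and I would proceed by enumerating the possible weight configurations directly.

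If all weights have the same sign then $Q=\{0\}$ and the local reduced space is just $F$, a smooth manifold (this situation occurs only when $F=M_{\min}$ or $F=M_{\max}$). The nontrivial six-dimensional cases are therefore (i) an isolated fixed point with weights $(a,b,-c)$, $a,b,c>0$, and (ii) a two-dimensional fixed surface $F=\Sigma$ with normal weights $(a,-b)$ of opposite sign. In case (i) the key step is to gauge-fix the $S^1$-action using the phase of $z_3$: after setting $\arg z_3 =0$ a residual $\mathbb{Z}_c$-stabiliser remains, acting linearly on $\{(z_1,z_2)\in\mathbb{C}^2 : a|z_1|^2+b|z_2|^2\le R\}$ by $(z_1,z_2)\mapsto(e^{2\pi ia/c}z_1,e^{2\pi ib/c}z_2)$, which presents the local reduced space as the topological orbifold chart $\mathbb{C}^2/\mathbb{Z}_c$ at $[p]$. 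Case (ii) is handled analogously by gauge-fixing in the $\mathbb{C}^2$-normal factor, yielding a chart of the form $\Sigma\times(D^2/\mathbb{Z}_{\gcd(a,b)})$.

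The main subtlety is verifying that the charts produced at $[p]$ are compatible with the orbifold charts coming from the slice theorem at regular orbits in $H^{-1}(c)$ arbitrarily close to $p$, so that the resulting structure on $M_c$ is a genuine topological orbifold rather than a disjoint collection of local charts. This reduces to observing that both constructions extend the same principal $S^1$-orbibundle structure on the complement of the critical points of $H$ inside $H^{-1}(c)$. The dimensional restriction $\dim M\le 6$ enters because only for normal rank at most three do the quadric quotients $Q/S^1$ always present as a finite quotient of a ball; in higher rank one can encounter quotients of quadrics of higher index whose local structure is no longer a finite quotient of Euclidean space.
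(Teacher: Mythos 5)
Your proof is correct but takes a genuinely different route from the paper's. The paper dispatches the lemma in one line via algebraic geometry: near a fixed point the reduced space is locally the GIT quotient of $\mathbb{C}^3$ by a linear $\mathbb{C}^*$-action, and every such quotient is a normal toric surface, hence has only cyclic quotient singularities. You instead build topological orbifold charts by hand: equivariant Darboux linearizes the action, and gauge-fixing the phase of a normal coordinate carrying a negative weight cuts the local level set down to a slice with a residual finite cyclic action. Your approach is more elementary (no toric geometry or Kempf--Ness correspondence needed), exhibits the local isotropy groups explicitly, and correctly isolates where $\dim M\le 6$ enters: for a normal representation of complex rank four (e.g.\ weights $(1,1,-1,-1)$, the paper's example on $\mathbb{C}^4$) the link of the quadric quotient is no longer a finite quotient of a sphere, so the cone over it is not a finite quotient of a ball.

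One small slip in case (ii): gauge-fixing $\arg z_2=0$ leaves a residual $\mathbb{Z}_b$-action (or $\mathbb{Z}_a$ if you fix $\arg z_1$ instead) rotating the $z_1$-slice, not a $\mathbb{Z}_{\gcd(a,b)}$-action. Since effectiveness of the $S^1$-action forces $\gcd(a,b)=1$, and the topological quotient of a disk by a finite rotation group is again a disk, the conclusion that the chart is homeomorphic to $\Sigma\times D^2$ is unaffected; but the group you wrote is the generic isotropy of nearby orbits in the level set, which is a different object from the residual gauge-fixing group.
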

\begin{proof} Indeed at critical values different from $H_{\min}$ and $H_{\max}$ the symplectic quotient can be locally identified with a GIT quotient of $\mathbb C^3$ by some linear $\mathbb C^*$-action, which is always a complex orbifold of dimension $2$. \end{proof}

This lemma does not hold in dimension $8$ and higher, as the example of linear Hamiltonian $S^1$-action with weights $(1,1,-1,-1)$ on $\mathbb C^4$ shows.

We now focus on the case when $\dim(M) = 6$. Note that since $M_{c}$ is a topological orbifold for all $c \in (H_{\min}, H_{\max} )$, that $H^{2}(M_{c})$ has a well-defined intersection form. 

\begin{theorem} \label{Mcsingature}
Suppose $M$ is a $6$-dimensional closed symplectic manifold with a Hamiltonian $S^{1}$-action, such that $\dim(M_{\min}) = 2$. Then for any $c \in (H_{\min},H_{\max})$ $$b^{+}(M_{c}) = 1 .$$
\end{theorem}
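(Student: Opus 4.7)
The plan is to establish $b^{+}(M_c) = 1$ for $c$ slightly above $H_{\min}$ and then to propagate this equality throughout $(H_{\min}, H_{\max})$ by analysing how the intersection form on $H^{2}(M_c;\mathbb{R})$ behaves as $c$ crosses the critical values of $H$.

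For the base case, because $M_{\min}$ is a $2$-dimensional symplectic submanifold of the $6$-manifold $M$, its symplectic normal bundle has complex rank $2$ and splits under the $S^{1}$-action as $L_{1} \oplus L_{2}$, where $L_{1}, L_{2}$ are complex line bundles with positive $S^{1}$-weights $a_{1}, a_{2}$. The equivariant symplectic neighbourhood theorem identifies a tubular neighbourhood of $M_{\min}$ in $M$ with a neighbourhood of the zero section of $L_{1} \oplus L_{2}$. For $c - H_{\min} > 0$ small enough that $(H_{\min}, c]$ contains no other critical value, the reduced space $M_c$ is a weighted projective orbi-bundle over $M_{\min}$; its coarse underlying space is an $S^{2}$-bundle over $M_{\min}$, so $H^{2}(M_c;\mathbb{R}) \cong \mathbb{R}^{2}$ is spanned by a fibre class $F$ and a section class $\sigma$ with $F \cdot F = 0$ and $F \cdot \sigma \ne 0$. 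The intersection matrix has determinant $-(F\cdot\sigma)^{2} < 0$, hence signature $(1,1)$, giving $b^{+}(M_c) = 1$ for such $c$.

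Across regular intervals the equality propagates trivially: if $c_{1}, c_{2}$ lie in the same connected component of regular values of $H$, Remark \ref{gradmapremark} gives an orbifold diffeomorphism $gr^{c_{1}}_{c_{2}} \colon M_{c_{1}} \to M_{c_{2}}$, and thus an isomorphism on real cohomology preserving the intersection forms. To cross a critical level $c_{0} \in (H_{\min}, H_{\max})$, pick regular $c_{\pm}$ with $c_{-} < c_{0} < c_{+}$ and no other critical values in between. Each fixed component $F_{i}$ at level $c_{0}$ has at least one positive and at least one negative $S^{1}$-weight on its normal directions (since $c_{0} \ne H_{\min}, H_{\max}$), and the local equivariant linear model near $F_{i}$ realises the transition $M_{c_{-}} \rightsquigarrow M_{c_{+}}$ as a weighted blow-up or blow-down at a point (if $F_{i}$ is isolated) or along the curve $F_{i}/S^{1}$ (if $F_{i}$ is a fixed surface). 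In every case the modification of $H^{2}(M_c;\mathbb{R})$ only introduces or removes classes of negative self-intersection, so the positive part of the intersection form is preserved and $b^{+}(M_{c_{-}}) = b^{+}(M_{c_{+}})$. At the critical level itself, Lemma \ref{toporbifold} exhibits $M_{c_{0}}$ as the topological orbifold obtained from $M_{c_{\pm}}$ by contracting exactly those non-positive classes, so $b^{+}(M_{c_{0}}) = 1$ as well. Combining the three steps yields $b^{+}(M_c) = 1$ for every $c \in (H_{\min}, H_{\max})$.

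The main obstacle is the wall-crossing step. For each combinatorial type of fixed component (an isolated fixed point of Morse index $2$ or $4$ with prescribed mixed-sign weights, or a fixed surface with a pair of opposite-sign normal weights) one has to read off from the local equivariant linear model that the birational transition between the two neighbouring reductions only adds or deletes classes of negative self-intersection. For isolated fixed points this is the standard toric geometry of symplectic quotients of $\mathbb{C}^{3}$ by $S^{1}$ as the moment value varies; for fixed surfaces the same analysis is carried out fibrewise over $F_{i}$. Once this local picture is in hand, the base case and the gradient-map propagation are essentially immediate, and the proof is complete.
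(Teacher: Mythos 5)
Your argument follows the same two-step skeleton as the paper's proof---compute $b^{+}(M_c)=1$ for $c$ just above $H_{\min}$, where $M_c$ is an $S^2$-bundle over $M_{\min}$, and then show that $b^{+}$ is unchanged across critical levels---but handles the wall-crossing step by a genuinely different route. The paper disposes of it in a single line by invoking the wall-crossing formulas of Metzler \cite{Me} for the signature and Betti numbers of symplectic reduced spaces; you instead re-derive the needed special case directly from the equivariant linear model at each fixed component. This makes the argument self-contained at the cost of a correct case analysis, and two points there should be tightened. First, for a non-extremal fixed surface $F_i$ with normal weights $(w_1,-w_2)$, $w_1,w_2>0$, the transition is \emph{not} a weighted blow-up or blow-down along $F_i/S^1$: the local model $F_i\times(\mathbb{C}^2//_t S^1)$ has coarse underlying space $F_i\times\mathbb{C}$ on both sides of $H(F_i)$, so only the cyclic orbifold structure along the trace of $F_i$ and the symplectic form jump, and $H^2$ with its intersection form is literally unchanged rather than modified by a birational operation. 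Your conclusion is still correct, but the mechanism you describe is not. Second, the fact that $M_{c_{\pm}}\to M_{c_0}$ contracts a disjoint collection of orbi-spheres of negative self-intersection is not what Lemma \ref{toporbifold} says (that lemma only asserts $M_{c_0}$ is a topological orbifold); it follows instead from the extended gradient map of Remark \ref{gradmapremark} together with Lemma \ref{selfint}. With these corrections the argument is sound: the paper's citation to \cite{Me} buys brevity and generality, while your version buys an explicit local picture of where the wall-crossing invariance of $b^{+}$ comes from in the six-dimensional setting.
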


\begin{proof}
From the wall-crossing formulas for signature and Betti numbers given in \cite{Me}, we see that $b^{+}(M_{c})$ does not vary for different values of $c$. For $c$ close enough to $H_{\min}$, $M_{c}$ is homeomorphic to an $S^{2}$-bundle over $M_{\min}$, hence satisfies $b^{+}(M_{c}) =1$.
\end{proof}

The next lemma is standard and follows from Chevalley-Shephard-Todd theorem. 
\begin{lemma}\label{complexquotient} The underlying topological space of a $2$-dimensional complex orbifold has a  structure of a complex analytic surface with (isolated) quotient singularities.
\end{lemma}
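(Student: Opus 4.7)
The plan is to apply the Chevalley--Shephard--Todd theorem chart by chart, first absorbing the pseudo-reflections to present each local model as a quotient by a ``small'' group acting freely away from the origin, and then gluing the resulting analytic structures via the orbifold transition maps.

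First I would fix a point $p \in M$ and choose an orbifold chart of the form $\tilde U/\Gamma_p$, where $\tilde U \subset \C^2$ is a $\Gamma_p$-invariant neighbourhood of the origin and $\Gamma_p \subset GL(2,\C)$ is the finite isotropy group. Let $\Gamma_0$ denote the normal subgroup of $\Gamma_p$ generated by the pseudo-reflections, i.e.\ the non-trivial elements fixing a complex line pointwise. By the Chevalley--Shephard--Todd theorem, the invariant ring $\C[x_1,x_2]^{\Gamma_0}$ is itself a polynomial ring in two generators, so $\C^2/\Gamma_0$ is biholomorphic to $\C^2$ as a complex analytic variety.

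Next I would study the residual action of $\Gamma' := \Gamma_p/\Gamma_0$ on the new copy of $\C^2$. By construction $\Gamma'$ contains no pseudo-reflections, and in complex dimension two this forces every non-trivial element of $\Gamma'$ to have the origin as its only fixed point. Hence $\tilde U / \Gamma_p$ is biholomorphic as an analytic space to an open neighbourhood of the origin in $\C^2/\Gamma'$, which is smooth away from the image of the origin and has a single isolated quotient singularity there. Note that the codimension-one strata of $\Sigma_{\mathcal O}(M)$ consist precisely of the orbits absorbed by the pseudo-reflection subgroup $\Gamma_0$, so they contribute no singularity to the underlying space.

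The final step is to glue these local analytic structures. Orbifold transition maps between two charts are equivariant biholomorphisms of open pieces of the uniformising spaces, so they descend to biholomorphisms of the corresponding analytic quotients. Because each quotient is a normal analytic space, the cocycle condition for the descended maps is automatic from the cocycle condition upstairs, yielding a well-defined complex analytic structure on the underlying topological space with at worst isolated quotient singularities. The main subtlety, and the only point requiring real care, is that two different orbifold charts centred at the same point must give the same germ of analytic structure; this reduces to the uniqueness of the small presentation $\C^2/\Gamma'$ of a quotient singularity, which follows from Prill's theorem on the uniqueness of small representations of finite quotient singularities.
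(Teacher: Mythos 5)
Your proposal is correct and follows exactly the approach the paper indicates — the paper states the lemma without proof, remarking only that it is standard and follows from Chevalley--Shephard--Todd, which is precisely the mechanism you spell out. Two small remarks on the argument. First, the step ``by construction $\Gamma'$ contains no pseudo-reflections'' deserves a sentence of justification: if a class $\bar g$ acted as a pseudo-reflection on $\C^2/\Gamma_0\cong\C^2$, then by lifting a generic point of its fixed hypersurface (where the quotient map is unramified) one could adjust $g$ by an element of $\Gamma_0$ to obtain a linear map whose fixed locus contains an open piece of a hypersurface, hence a hyperplane, hence $g\in\Gamma_0$ — so $\bar g$ is trivial. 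This is standard but not literally definitional. Second, the appeal to Prill's theorem for the gluing is more than is actually needed: the analytic structure on each local model $\tilde U/\Gamma_p$ is canonically the ringed-space quotient (functions are the $\Gamma_p$-invariant holomorphic functions upstairs), and the orbifold compatibility data already provides, on any overlap, an equivariant biholomorphism of uniformising charts that descends to an isomorphism of these ringed-space quotients; the cocycle condition downstairs is inherited from the one upstairs. Prill's uniqueness of the small presentation is a stronger statement (uniqueness of $\Gamma'$ up to conjugacy) that is true but not required for the lemma.
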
 


{\bf Pre-symplectic submanifolds and orbifolds.} The following definition and theorem explain the relation between a Hamiltonian $S^1$-manifold and symplectic sub-orbifolds of its reduced spaces.

\begin{definition}\label{presymdef}
A pre-symplectic $S^{1}$-manifold is a $2k+1$-dimensional smooth manifold with a fixed-point free $S^1$-action and a $2$-form $\omega$ that has constant rank $k$ and is preserved by the $S^1$-action. 
\end{definition}

\begin{lemma}\label{presymth}
Let $(M^{2n},\omega)$ be a symplectic manifold with a Hamiltonian $S^1$-action. Let $c$ be a regular value of $H$ and let $\pi: H^{-1}(c) \rightarrow M_{c}$ be the projection. The preimage under $\pi$ of any symplectic sub-orbifold of $M_{c}$ is a pre-symplectic submanifold of $H^{-1}(c)$. 
\end{lemma}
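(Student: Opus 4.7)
The plan is to verify the three parts of Definition \ref{presymdef} for $\widetilde{N}:=\pi^{-1}(N)$: smoothness as a submanifold of $H^{-1}(c)$, freeness of the $S^1$-action, and the existence of an $S^1$-invariant $2$-form of constant rank $k$ on a $(2k+1)$-dimensional submanifold (where $2k=\dim N$).

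First I would establish smoothness via the local slice model. Since $c$ is a regular value of $H$, the level $H^{-1}(c)$ is a smooth $(2n-1)$-manifold on which $S^{1}$ acts locally freely, so the isotropy group $\Gamma_{p}$ of any $p\in H^{-1}(c)$ is a finite cyclic subgroup of $S^{1}$. Invoking the slice theorem for proper actions, an $S^{1}$-invariant neighbourhood of the orbit $S^{1}\cdot p$ is equivariantly diffeomorphic to $(S^{1}\times \widetilde{V})/\Gamma_{p}$, where $\widetilde{V}$ is a $\Gamma_{p}$-invariant slice and $\Gamma_{p}$ acts on $S^{1}$ by the restricted rotation action (which is free). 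The projection $\pi$ is locally modelled on the quotient map $(S^{1}\times \widetilde V)/\Gamma_p\to \widetilde V/\Gamma_p$, and by definition of a symplectic sub-orbifold of $M_c$, the preimage of $N$ in the orbi-chart $\widetilde V$ is a $\Gamma_p$-invariant submanifold $\widetilde N\subset \widetilde V$ of dimension $2k$. Consequently $\pi^{-1}(N)$ is locally $(S^{1}\times \widetilde N)/\Gamma_p$; since $\Gamma_p$ acts freely on the $S^{1}$-factor it acts freely on $S^{1}\times \widetilde N$, so this quotient is a smooth submanifold of dimension $2k+1$.

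Next, the $S^{1}$-action on $\widetilde N$ is fixed-point free because the ambient action on $H^{-1}(c)$ has no fixed points (a fixed point would be a critical point of $H$, contradicting that $c$ is regular). By construction $\widetilde N$ is $S^{1}$-invariant, and since the $S^{1}$-action preserves $\omega$, the restricted $2$-form $\omega|_{\widetilde N}$ is $S^{1}$-invariant.

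It remains to check that $\omega|_{\widetilde N}$ has constant rank $k$. For any $q\in H^{-1}(c)$ the kernel of $\omega|_{H^{-1}(c)}$ at $q$ is exactly the line $\mathbb R\, X_H(q)$: the inclusion $\mathbb R X_H\subseteq\ker$ follows from $\iota_{X_H}\omega=dH$ and the fact that $T_qH^{-1}(c)=\ker dH_q$, while the reverse inclusion is a dimension count using that $\omega$ is symplectic on $M$. Since $X_H$ is tangent to $\widetilde N$ and the quotient $2$-form induced by $\omega|_{\widetilde N}$ on $\widetilde N / S^{1} = N$ is precisely $\omega_c|_N$, which is non-degenerate by the hypothesis that $N$ is a symplectic sub-orbifold, the kernel of $\omega|_{\widetilde N}$ at every point is exactly $\mathbb R X_H$. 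Thus $\omega|_{\widetilde N}$ has constant rank $2k$, confirming that $(\widetilde N,\omega|_{\widetilde N})$ is a pre-symplectic $S^{1}$-manifold in the sense of Definition \ref{presymdef}. I expect the only delicate step to be the smoothness verification in the orbifold chart, where one must be careful that the $\Gamma_p$-action on $S^{1}\times\widetilde V$ is free even though it may fix subsets of $\widetilde V$; everything else is direct unwinding of the definition of the symplectic reduced orbifold structure.
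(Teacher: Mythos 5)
Your proof is correct and follows essentially the same approach as the paper, which simply states that the lemma "follows immediately from the slice theorem" and cites Audin. You have usefully spelled out the two points that make the slice theorem applicable here: that $\Gamma_p$ acts freely on $S^1\times\widetilde N$ (because it acts freely on the $S^1$-factor), so the quotient is a genuine manifold; and that the kernel of $\omega|_{\widetilde N}$ is exactly $\mathbb R X_H$ at every point, by combining the fact that $X_H$ spans the kernel of $\omega|_{H^{-1}(c)}$ with the non-degeneracy of the reduced form $\omega_c$ on $N$. One small remark: Definition \ref{presymdef} in the paper says the form has "constant rank $k$" on a $(2k+1)$-manifold, which appears to be a typo for rank $2k$ (a $2$-form always has even rank); your computation correctly yields rank $2k$.
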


\begin{proof}
This follows immediately from the slice theorem \cite[Theorem 2.1.1]{A}.
\end{proof}

\subsection{K\"ahler reduction}

In this section we collect some facts about the complex structure on reduced spaces for $S^1$-actions on K\"ahler manifolds. 

\begin{theorem}\label{analyticstr} Let $(X,g)$ be a complex projective manifold with an isometric $S^1$-action with Hamiltonian $H$. Then the following statements hold

\begin{enumerate}
\item For any regular value $c$ of $H$ the reduced space $X_c=H^{-1}(c)/S^1$ is a K\"ahler orbifold with respect to the quotient metric.

\item For a general $c\in (H_{\min}, H_{\max})$  let $U_c\subset X$ be the Zarisky open subset of $X$ consisting of  points whose $\mathbb C^*$ orbit closures intersect $H^{-1}(c)$.
Then the  space $X_c$ can be equipped with a complex analytic structure by identifying it with the categorical quotient $U_c//\mathbb C^*$. 

\item In the case where $X$ has complex dimension $3$, the reduced space $X_c$ has a complex analytic structure of a complex surface with isolated quotient singularities for any $c\in (H_{\min},H_{\max})$.

\end{enumerate}
\end{theorem}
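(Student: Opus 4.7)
I would handle the three parts sequentially, since each builds on the previous.

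For part (1), the starting point is that because $g$ is K\"ahler and the $S^1$-action is isometric and Hamiltonian, the vector field $JX$, where $X$ generates $S^1$, is a holomorphic Killing field whose flow complements the $S^1$-action to a holomorphic $\mathbb{C}^*$-action. At a regular value $c$ the identity $dH(JX) = \omega(X,JX) = g(X,X)>0$ (valid away from the fixed locus, which does not meet regular levels) shows that $JX$ is everywhere transverse to $H^{-1}(c)$, so each local $\mathbb{C}^*$-orbit meets $H^{-1}(c)$ in a single $S^1$-orbit. One then defines the complex structure on $X_c$ by identifying a neighbourhood of an orbit in $X_c$ with a local holomorphic slice to the $\mathbb{C}^*$-action; integrability is automatic since $J$ restricts to an integrable complex structure on such a slice. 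Combined with the Marsden-Weinstein form $\omega_c$, the identity $\omega_c(\cdot,J_c\cdot)=g_c$ is inherited directly from the ambient K\"ahler data. Finite cyclic $S^1$-stabilizers produce the orbifold structure.

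For part (2), I would invoke the Kempf-Ness correspondence. With $U_c$ as defined, Hilbert-Mumford identifies $U_c$ with the semistable locus for the $\mathbb{C}^*$-action linearized on an ample bundle twisted by the character corresponding to $c$; hence $U_c$ is Zariski open. Along each $\mathbb{C}^*$-orbit, the function $t \mapsto H(\exp(tJX)\cdot p)-c$ is strictly convex in $t$, with second derivative $g(X,X)$, so where $H-c$ attains a critical value on a polystable orbit it does so on a unique $S^1$-orbit lying in $H^{-1}(c)$. This produces a set-theoretic bijection $U_c/\!\!/\mathbb{C}^*\to X_c$, where the double slash identifies orbits with intersecting closures. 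On the open dense stable locus the bijection is biholomorphic by part (1); since the categorical quotient is a normal separated analytic space, this extends uniquely to an isomorphism of analytic spaces, equipping $X_c$ with the desired complex structure.

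For part (3), assume $\dim_{\mathbb{C}}X = 3$, so $X_c$ has complex dimension $2$. For regular $c$ parts (1)-(2) yield a K\"ahler orbifold of complex dimension $2$, and Lemma \ref{complexquotient} packages its underlying space as an analytic surface with isolated quotient singularities. For a critical value $c$, Lemma \ref{toporbifold} already gives a topological orbifold structure; to upgrade this, I would use an equivariant holomorphic slice theorem (Heinzner-Loose) to identify a neighbourhood of each critical fixed point $p\in H^{-1}(c)$ biholomorphically and $\mathbb{C}^*$-equivariantly with a neighbourhood of the origin in $\mathbb{C}^3$ endowed with a linear $\mathbb{C}^*$-action. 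The local reduced space is then the corresponding linear GIT quotient, a $2$-dimensional toric affine variety whose only singularities are cyclic quotient singularities. These local analytic charts agree on the regular part with the structure from part (1) applied to nearby regular values, and they patch together globally through a nested application of the construction of part (2), which actually makes sense for every $c$ because $U_c$ is always Zariski open.

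\textbf{Main obstacle.} The main technical point I expect to wrestle with is in part (3): verifying that the local holomorphic models at critical fixed points glue coherently with the analytic structure on regular levels, and checking that the only singularities that arise are cyclic quotient singularities rather than more general normal surface singularities. The equivariant slice theorem handles compatibility, and the $2$-dimensionality together with the toric nature of the linear GIT quotients forces the singularities to be cyclic quotient singularities, so I expect the argument to close out cleanly.
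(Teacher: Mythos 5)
Your proposal is correct in its essentials and takes the same road as the paper: local K\"ahler reduction for part (1), identification of $X_c$ with the GIT/categorical quotient via the Kempf--Ness picture for part (2), and local linear models at fixed points for part (3). The paper compresses all of this into a few lines, labelling (1) ``classical,'' citing Hu and Futaki for (2) with the same map $\phi_c\colon U_c\to X_c$ you describe, and invoking Lemmas \ref{complexquotient} and \ref{toporbifold} for (3); you spell out the ideas that the paper leaves as references (Hilbert--Mumford/Kempf--Ness, the Heinzner--Loose slice theorem). So this is the same approach with the implicit steps made explicit, not a genuinely different argument.

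Two small points worth fixing. First, in part (2) you write that $t\mapsto H(\exp(tJX)\cdot p)-c$ is ``strictly convex with second derivative $g(X,X)$.'' In fact $g(X,X)$ is the \emph{first} derivative ($\nabla H=JX$, so $\tfrac{d}{dt}H=|\nabla H|^2=g(X,X)$), and the function is not convex in general (one can check this already in the linear model $H=\tfrac12\sum w_i|z_i|^2$). What you actually need, and what does hold, is strict monotonicity of $H$ along real $\mathbb C^*$-orbits off the fixed locus; this gives the uniqueness of the intersection with $H^{-1}(c)$ that drives the bijection $U_c/\!\!/\mathbb C^*\to X_c$. Second, in part (3) your slice-theorem argument as worded handles only isolated fixed points; for a $2$-dimensional fixed surface $\Sigma\subset H^{-1}(c)$ the local linear model has a weight-zero direction along $\Sigma$, the GIT quotient is locally $\Sigma\times(\mathbb C^2/\!\!/\mathbb C^*)$, and the reduced space is smooth there. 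This is consistent with the paper's claim of only isolated quotient singularities, but the case should be mentioned so the ``isolated'' part of the conclusion is actually accounted for.
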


These facts are quite well known,  we will give a brief explanation. 

\begin{proof}

Statement 1) is classical. Statement 2) can be found, for example, in \cite{Hu} and \cite{Fu1}, we will recall the main idea. Note, that there is a natural continuous map $\phi_c:U_c\to X_c$ which sends each $\mathbb C^*$-orbit in $U_c$ to a point in $X_c$ corresponding to the intersection of the closure of the orbit with $H^{-1}(c)$ (this intersection is either a circle or a point from $X^{S^1}$). The map $\phi_c$ identifies certain $\mathbb C^*$-orbits in $U_c$, namely the orbits whose closure  contains the same point in $X^{S^1}\cup H^{-1}(c)$. On the topological level this is exactly the identification that one has to do to get the categorical quotient $U_c//\mathbb C^*$ from the usual quotient $U_c/\mathbb C^*$. Hence, the map  $\phi_c$ induces a homeomorphism from $U_c//\mathbb C^*$ to $X_c$.

As for statement 3), in case $c$ is a regular value of $H$ the reduced space $X_c$ has a structure of complex $2$-dimensional orbifold, and so we can use Lemma \ref{complexquotient}. In the case $c$ is a critical value,  $X_c$ is a topological orbifold  (see Lemma \ref{toporbifold}), and its is well known that such a complex analytic surface has a structure of a complex surface with isolated quotient singularities. \end{proof}

\begin{theorem}\label{bimeromorphic} Let $(X,g)$ be a complex projective manifold with an isometric $S^1$-action with Hamiltonian $H$. Let $c_1<c_2$ be two values of $H$  in $(H_{\min}, H_{\max})$  and let $gr_{c_2}^{c_1}:X_{c_1}\dashrightarrow X_{c_2}$ be the partially defined gradient map. Then the following statements hold

\begin{enumerate}

\item The map $gr_{c_2}^{c_1}$ is a bi-meromorphic with respect to the analytic  structures on $X_{c_1}$ and $X_{c_2}$ from Theorem \ref{analyticstr}. The map is an isomorphism of complex analytic spaces if the interval $[c_1,c_2]$ does not contain critical values of $H$.

\item  Suppose that $c_2$ is the only critical value of $H$ in $[c_1, c_2]$. Then the map $gr_{c_2}^{c_1}$ can be extended to a regular map from $X_{c_1}$ to $X_{c_2}$. Moreover for any complex submanifold $Y\subset M^{S^1}\cap H^{-1}(c_2)$ with $\dim_{\mathbb C} Y=\dim X_{\mathbb C}-2$ the map $gr_{c_2}^{c_1}$ is invertible on the pre-image of a neighbourhood of $Y$ .

\end{enumerate}

\end{theorem}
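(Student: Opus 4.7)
The proof rests on identifying the gradient flow of $H$ with the imaginary-time flow of the complexified $\mathbb{C}^*$-action: on the K\"ahler manifold $X$ the gradient vector field of $H$ equals $J\xi$, where $\xi$ is the vector field generating the $S^1$-action, so every gradient flow line lies inside a single $\mathbb{C}^*$-orbit. All statements will be proved by phrasing $gr_{c_2}^{c_1}$ as a morphism between the categorical $\mathbb{C}^*$-quotients $X_c = U_c /\!/ \mathbb{C}^*$ provided by Theorem \ref{analyticstr}(2), which makes holomorphicity automatic.

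For part 1, set $V := U_{c_1} \cap U_{c_2} \subset X$, the Zariski open subset of points whose $\mathbb{C}^*$-orbit closure meets both $H^{-1}(c_1)$ and $H^{-1}(c_2)$. The two $\mathbb{C}^*$-equivariant open inclusions $V \hookrightarrow U_{c_i}$ descend to open embeddings $V /\!/ \mathbb{C}^* \hookrightarrow X_{c_i}$ of complex analytic spaces (standard GIT/K\"ahler reduction, along the lines of the proof of Theorem \ref{analyticstr}(2)). Composing one such embedding with the inverse of the other produces a biholomorphism between Zariski open subsets of $X_{c_1}$ and $X_{c_2}$ which sends the class of a $\mathbb{C}^*$-orbit meeting $H^{-1}(c_1)$ at $p_1$ to the class of its intersection with $H^{-1}(c_2)$; this coincides with $gr_{c_2}^{c_1}$ on the latter's domain of definition, proving the bi-meromorphism statement. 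When $[c_1,c_2]$ contains no critical values of $H$, the gradient flow gives an $S^1$-equivariant diffeomorphism $H^{-1}(c_1) \to H^{-1}(c_2)$, forcing $U_{c_1} = U_{c_2} = V$, so $gr_{c_2}^{c_1}$ is a genuine isomorphism of complex analytic spaces.

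For part 2, I would first verify $U_{c_1} \subseteq U_{c_2}$. Since $c_1$ is a regular value of $H$, any $p \in U_{c_1}$ is non-fixed, and the closure of $\mathbb{C}^* \cdot p$ is a rational curve with two fixed endpoints $q_\pm$ satisfying $H(q_-) < c_1 < H(q_+)$; because $(c_1,c_2)$ contains no critical values, one must have $H(q_+) \geq c_2$, so the orbit closure also meets $H^{-1}(c_2)$ and $p \in U_{c_2}$. The $\mathbb{C}^*$-equivariant open inclusion $U_{c_1} \hookrightarrow U_{c_2}$ then descends to a holomorphic morphism $X_{c_1} \to X_{c_2}$ extending $gr_{c_2}^{c_1}$. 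For the local invertibility near $Y$, I would apply the equivariant K\"ahler slice theorem at any $q \in Y$ to identify a neighbourhood of $q$ in $X$ with $Y \times \mathbb{C}^2$, with $S^1$ acting trivially on $Y$ and linearly on $\mathbb{C}^2$ with weights $(w_1,w_2)$. Since $c_2 \in (H_{\min},H_{\max})$, the submanifold $Y$ cannot be a local extremum of $H$, forcing $w_1$ and $w_2$ to have opposite signs. A direct computation with the basic $\mathbb{C}^*$-invariant $u = z_1^{|w_2|/d} z_2^{|w_1|/d}$ (where $d = \gcd(|w_1|,|w_2|)$), which is preserved along the gradient flow, identifies both $X_{c_1}$ and $X_{c_2}$ near the image of $Y$ with $Y \times \mathbb{C}$ in coordinates $(y,u)$, and under these identifications $gr_{c_2}^{c_1}$ becomes the identity map, hence a biholomorphism.

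The main technical point to nail down rigorously is the first step of each part: verifying that a $\mathbb{C}^*$-equivariant Zariski open inclusion $V \hookrightarrow U_c$ descends to an \emph{open embedding} $V /\!/ \mathbb{C}^* \hookrightarrow X_c$ of complex analytic spaces, rather than merely to a continuous injection. In algebraic GIT this follows from saturation of $\mathbb{C}^*$-orbits in $V$; in the K\"ahler context used here it rests on Heinzner-type reduction results already implicit in the proof of Theorem \ref{analyticstr}(2).
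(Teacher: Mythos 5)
Your proof is correct and arrives at the same conclusions, but the route in part~2 is genuinely different from the paper's, and worth comparing.

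For part~1 your argument is essentially identical to the paper's: both pass to the Zariski open subset of orbits that meet both level sets (the paper calls it $U_{c_1c_2}$, you write $V=U_{c_1}\cap U_{c_2}$; these differ only on orbits whose closure touches $H^{-1}(c_i)$ at a fixed point, which is immaterial for the bimeromorphism statement), identify its quotient with open subsets of $X_{c_1}$ and $X_{c_2}$ via Theorem~\ref{analyticstr}(2), and observe that the resulting identification is the gradient map because gradient flow lines lie in $\mathbb{C}^*$-orbits.

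For part~2 you diverge, in both halves. \emph{Regularity:} the paper invokes Remark~\ref{gradmapremark} to obtain a continuous extension of $gr^{c_1}_{c_2}$, then asserts that a continuous extension of a bimeromorphic map is regular -- a fact that quietly uses normality of the source. You instead prove $U_{c_1}\subseteq U_{c_2}$ directly (the orbit-closure-endpoint argument, using that $(c_1,c_2)$ is critical-value free and that fixed points at level $c_1$ are excluded by regularity) and then apply the universal property of the categorical quotient $U_{c_1}/\!/\mathbb{C}^*=X_{c_1}$ to the composite $U_{c_1}\hookrightarrow U_{c_2}\to X_{c_2}$. This is cleaner and more self-contained, since it never needs the continuity-implies-regularity step. \emph{Invertibility near $Y$:} the paper argues abstractly that the extended map is one-to-one near $Y$ and $X_{c_2}$ is smooth along $Y$, so an inverse exists. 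You compute in a local linear model $Y\times\mathbb{C}^2$ with weights $(w_1,w_2)$ of opposite sign (you correctly note both-positive or both-negative is excluded because $Y$ would then be a local extremum of $H$, impossible by connectedness of level sets at a non-extremal value), and show in the $\mathbb{C}^*$-invariant coordinate $u=z_1^{|w_2|/d}z_2^{|w_1|/d}$ that $gr^{c_1}_{c_2}$ becomes the identity on $Y\times\mathbb{C}$. This is more explicit and makes the smoothness of $X_{c_2}$ along $Y$ visible rather than assumed. Both approaches are valid; yours trades the paper's brevity for a concrete normal-form computation, which is arguably the more instructive route.
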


\begin{proof} 1) Consider the Zariski open subset $U_{c_1c_2}$ of $X$, consisting of all $\mathbb C^*$-orbits that intersect both level sets $H=c_1$ and $H=c_2$. Then $U_{c_1c_2}/\mathbb C^*$ is embedded as an open subset in both $U_{c_1}//\mathbb C^*$ and $U_{c_2}//\mathbb C^*$ and consequently, by Theorem \ref{analyticstr}, in $X_{c_1}$ and $X_{c_2}$. This gives us an identification of two open subsets of $X_{c_1}$ and $X_{c_2}$. Since the geodesics in $X$ are $\mathbb R^*$-orbits, the partially defined map $gr_{c_2}^{c_1}: X_{c_1}\dashrightarrow X_{c_2}$ gives us the same identification.

2)  According to Remark \ref{gradmapremark} the bi-meromorphic map $gr_{c_2}^{c_1}: X_{c_1}\dashrightarrow X_{c_2}$ can be extended to a continuous map, i.e.,  this map extends to a regular map. Since $X_{c_2}$ is smooth along $Y$ and the extended map is one-to one close to $Y$, its inverse exists in a neighbourhood of $Y$. \end{proof}

\begin{lemma} \label{Kahlerblowup}
Let $(M,\omega)$ be a compact symplectic manifold with a Hamiltonian $S^1$-action. Suppose that $N$ is a fixed submanifold of complex codimension $2$, such that there is an invariant, compatible K\"ahler metric on a neighbourhood $U$ of $N$, that restricts to a K\"ahler form on $N$. Consider the equivariant blow up \begin{displaymath}
\pi : Bl_{N}(M) \rightarrow M
\end{displaymath}
constructed in Lemma \ref{smoothblowup}, with invariant symplectic form $\tilde{\omega} = \pi^{*}(\omega) + \omega_{E}$ (which is K\"ahler on $\pi^{-1}(U)$). Let $S(\omega_{E}) \subseteq Bl_{N}(M)$ be the support of $\omega_{E}$. Then the following statements hold.
\begin{enumerate} 

\item The $S^{1}$-action on $Bl_{N}(M)$ is Hamiltonian. The Hamiltonian $\tilde{H} : Bl_{N}(M) \rightarrow \R$ may be chosen so that $ \tilde{H} = \pi^*H$ on $Bl_{N}(M)\setminus S(\omega_{E})$.

\item Let $c = H(N)$. Then the $S^{1}$-action on $\tilde{H}^{-1}(c) \cap \pi^{-1}(U)$ is fixed point free. Set $M_{c}' = \tilde{H}^{-1}(c)/S^{1} $. Then $\pi$ induces a map $\pi_c : M'_{c} \rightarrow M_{c}$, which is a biholomorphism over $U_{c}$. 
\end{enumerate}

\end{lemma}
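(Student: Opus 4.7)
For Part~1, since $\pi$ is $S^{1}$-equivariant, the pullback $\pi^{*}\omega$ is $S^{1}$-invariant with Hamiltonian $\pi^{*}H$. The additional form $\omega_{E}$ is closed and $S^{1}$-invariant by construction, hence by Cartan's formula $\iota_{X}\omega_{E}$ is a closed $1$-form, where $X$ denotes the vector field generating the action on $Bl_{N}(M)$. In the K\"ahler region $\pi^{-1}(U)$ one has $\omega_{E}=dd^{c}\phi$ for some $S^{1}$-invariant potential $\phi$ supported in $S(\omega_{E})$, and the K\"ahler identities (applied to the Killing field $X$) then provide an explicit primitive $h=-\iota_{X}d^{c}\phi$ of $\iota_{X}\omega_{E}$ that vanishes wherever $\phi$ does. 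Equivalently, since $\iota_{X}\omega_{E}\equiv 0$ on the open set $Bl_{N}(M)\setminus S(\omega_{E})$ (which is connected because $N$ has real codimension $4$ in $M$, so its complement is connected), any primitive can be normalised to vanish there. Setting $\tilde{H}:=\pi^{*}H+h$ gives $d\tilde{H}=\iota_{X}\tilde\omega$ with $\tilde{H}=\pi^{*}H$ outside $S(\omega_{E})$.

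For Part~2, shrink $U$ so that $N$ is the only component of $M^{S^{1}}\cap U$. Then the $S^{1}$-fixed locus of $\pi^{-1}(U)$ is entirely contained in the exceptional divisor $E=\mathbb{P}(\nu_{N})$, and more precisely in the sub-projectivisations $\mathbb{P}(L_{1}),\mathbb{P}(L_{2})$ corresponding to the weight decomposition $\nu_{N}=L_{1}\oplus L_{2}$ of the normal bundle (when the two weights differ; otherwise $E$ is pointwise fixed and the same argument applies). Since $\pi^{*}H\equiv c$ on $E$, we have $\tilde{H}|_{E}=c+h|_{E}$, and the sized symplectic blow-up in Lemma~\ref{smoothblowup} can (and will) be arranged so that $h$ is strictly positive on all of $E$, geometrically pushing the exceptional divisor above the critical level $H=c$. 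Hence $\tilde{H}^{-1}(c)\cap\pi^{-1}(U)$ misses $E$ and is contained in the locus where $\pi$ restricts to an equivariant diffeomorphism onto $H^{-1}(c)\cap U$; in particular it is fixed-point free. Passing to $S^{1}$-quotients, $\pi$ descends to a continuous map $\pi_{c}:M_{c}'\to M_{c}$ that is inverted by $\pi^{-1}$ over $U_{c}$; compatibility with the K\"ahler structures on $U$ and $\pi^{-1}(U)$ together with Theorem~\ref{analyticstr} upgrade this to a biholomorphism there.

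The main obstacle is securing the positivity $h|_{E}>0$ in Part~2, i.e., arranging that the symplectic blow-up strictly lifts the exceptional divisor above the critical level of $H$. This is a standard feature of the K\"ahler symplectic blow-up (one interpolates between $\pi^{*}\omega$, where $h=0$, and a scaled Fubini--Study form along the fibres of $\pi|_{E}\colon E\to N$), but it must be built into the construction of Lemma~\ref{smoothblowup} from the start rather than extracted afterwards; once that is in hand, the rest of the proof is a routine unwinding of definitions.
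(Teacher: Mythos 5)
Your treatment of Part~1 is essentially correct and parallel in spirit to the paper's, which instead proves exactness of $\iota_{\tilde X}\tilde\omega$ by observing that every $1$-cycle in $Bl_N(M)$ can be homotoped into $Bl_N(M)\setminus S(\omega_E)$, where $\pi$ is an equivariant symplectomorphism; your explicit primitive $h=-\iota_Xd^c\phi$, normalised to vanish on the connected complement of $S(\omega_E)$, yields the same conclusion.

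Part~2 contains a genuine error. You assert that the blow-up can be arranged so that $h>0$ on all of $E$, ``pushing the exceptional divisor above the critical level $H=c$.'' This is impossible whenever $N$ is a \emph{non-extremal} fixed submanifold, which is precisely the case in every application of this lemma (e.g.\ to the isolated fixed points $x_i$ and fixed surfaces $\Sigma_j$ on interior critical levels in Theorem~\ref{critfibres}). For such $N$ the normal weights have opposite signs, say $a>0>b$, and a short area computation via Lemma~\ref{gradsphereint} applied to the strict transforms of the two gradient spheres through a point of $N$ (whose areas strictly decrease under the blow-up) forces
\[
\tilde{H}(E_{\min}) < c < \tilde{H}(E_{\max}).
\]
In other words, $E$ necessarily \emph{straddles} level $c$: the level set $\tilde{H}^{-1}(c)\cap\pi^{-1}(U)$ most certainly meets $E$ (in free $S^1$-orbits inside the fibres). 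What the paper establishes, and what actually suffices, is the weaker statement that the fixed sub-projectivisations $E_{\min}$ and $E_{\max}$ sit strictly below and above level $c$ respectively, so that no fixed point lies on that level.

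Because of this, your next step also fails: $\pi$ does \emph{not} restrict to an equivariant diffeomorphism from $\tilde{H}^{-1}(c)\cap\pi^{-1}(U)$ onto $H^{-1}(c)\cap U$; the blow-up genuinely alters the level set, replacing the critical piece through $N$ by free orbits on $E$. The paper instead defines $\pi_c$ by composing $\pi$ with the holomorphic gradient-flow retraction $F\colon U\to U_c$ and verifies that the descended map is a holomorphic bijection over $U_c$, hence a biholomorphism. No intermediate diffeomorphism of level sets is available, and your approach does not recover this.
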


\begin{proof}
1. Let $\tilde{X}$ be the vector field generating the  $S^{1}$-action on $Bl_{N}(M)$. Note that $\iota_{\tilde{X}} \tilde{\omega}$ is exact, indeed any $1$-cycle $\alpha$ in $Bl_{N}(M)$  may be homotoped to $Bl_{N}(M)\setminus S(\omega_{E})$, on which $\pi$ is an equivariant symplectomorphism. Therefore $$ \int_{\alpha} \iota_{\tilde{X}} \tilde{\omega} = 0$$
and we conclude that $[\iota_{\tilde{X}} \tilde{\omega}] = 0$ in $H^{1}(Bl_{N}(M),\R)$.
Let $\tilde{H}$ be  a function on $Bl_{N}(M)$ such that $d \tilde{H} = \iota_{\tilde{X}} \tilde{\omega}$. Since $\pi$ is an equivariant symplectomorphism on $Bl_{N}(M)\setminus S(\omega_{E})$, $d \tilde{H} = d(\pi^{*} H)$ there. Hence, $(-\tilde{H} +  \pi^{*}H)|_{Bl_{N}(M) \setminus S(\omega_{E})}$ is constant, and we may remove this discrepancy by adding a constant to $\tilde{H}$.

2. Let $E$ be the exceptional divisor of $\pi$ and $E_{\min}$,$E_{\max}$ the submanifolds where $\tilde{H}$ acquires its $\min/\max$. Let us first show 
$$\tilde{H}(E_{\min}) < c < \tilde{H}(E_{\max}),$$ 
which will imply that there are no fixed points in $\tilde{H}^{-1}(c) \cap \pi^{-1}(U)$. Indeed, before we blow up $N$, for each point $p$ of $N$ there are two gradient spheres $S_1$ and $S_2$ in $N$ containing $p$. The preimage of $S_1\cup S_2$ in the blow up is a union of three spheres two of which $S_1'$ and $S_2'$ project bijectively to $S_1$ and $S_2$. The area of $S_i'$ is less than the area of $S_i$, so applying Lemma \ref{gradsphereint} to these spheres  together with  claim 1. we obtain the inequality.


On $Bl_{N}(M) \setminus S(\omega_{E})$, $\tilde{H} = \pi^*H $ so here $\pi$ maps $\tilde{H}^{-1}(c)$ to $H^{-1}(c)$, inducing a map on reduced spaces which we denote $\pi_c$.  Let us show next that $\pi_c$ can be extended to all of $M_{c}'$, restricting to a biholomorphism over $U_{c}$.

Let $F: U \rightarrow U_{c}$ be the  map that associates to each $p\in U$ the trace in $U_c$ of the gradient sphere containing $p$. Note that $F$ is holomorphic by construction. 


Over $U$, $\pi$ is holomorphic and $S^{1}$-equivariant, so $\pi$ maps gradient spheres in $Bl_{N}(M)$ to gradient spheres or fixed points in $M$. Hence, where it is defined, the composition $F \circ \pi$ descends to a map on $M'_{c}$ which we set to be $\pi_{c}$. Since $F\circ \pi$ is holomorphic, the induced map on the reduced space is also holomorphic. The fact that $\pi_c$ is a biholomorphism over $U_{c}$ follows since it is a holomorphic bijection. 
\end{proof}

\subsection{Equivariant Darboux-Weinstein theorem}

\begin{definition} A symplectic $G$-orbifold is a symplectic orbifold with a Hamiltonian action of a compact group $G$. We say that a symplectomorphism $\varphi: X\to Y$ of two symplectic $G$-orbifolds is a $G$-symplectomorphism if it commutes with the $G$-actions.
\end{definition}

\begin{theorem}[Darboux-Weinstein]
\label{relative_local_neighbourhood}
Let $(M, \omega)$ and $(M', \omega')$ be two symplectic $G$-orbifolds and let $N\subset M$ and $N' \subset M'$ be symplectic sub-orbifolds with normal orbifold bundles $E$ and $E'$ respectively. 
\begin{enumerate}
\item
Let $D \colon E \to E'$ be a $G$-isomorphism of symplectic orbifold bundles covering a $G$-symplectomorphism $\phi_0 \colon N \to N'$. Then there exist neighbourhoods $W$ and $W'$ of $N$ and $N'$ and a $G$-symplectomorphism $\phi \colon W\to W'$ extending $\phi_0$ such that $D\phi = D$ on $E$.
\item
Suppose in addition that there are points $p_1, \ldots , p_m \in N^G$ and locally defined $G$-symplectomorphisms $\psi_1, \ldots , \psi_m$ with $\psi_i$ defined near $p_i$, taking values in $M'$ and with $\psi_i|_N = \phi_0$. Suppose, moreover, that on the normal bundle of $N$, we have $D = D\psi_j$. Then we can arrange that for each $i$, there is a neighbourhood of $p_i$ (possibly smaller than the domain of $\psi_i$) on which $\phi = \psi_i$.

\end{enumerate}
\end{theorem}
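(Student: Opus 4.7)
The plan is to carry out an equivariant, orbifold-adapted Moser trick, after reducing to normal bundles via an equivariant tubular neighborhood theorem.

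\textbf{Step 1: Reduction to normal bundles.} First I would apply a $G$-equivariant tubular neighborhood theorem in the orbifold category to both $(M,N)$ and $(M',N')$: for each pair, choose a $G$-invariant compatible metric on $M$ (resp.\ $M'$), which exists by averaging any Riemannian metric over the compact group $G$ in local orbi-charts; the exponential map then gives a $G$-equivariant diffeomorphism of orbifolds from a neighborhood of the zero section of $E$ (resp.\ $E'$) onto a neighborhood of $N$ (resp.\ $N'$) in $M$ (resp.\ $M'$). Using $D$ to identify the zero sections via $\phi_0$ and the fibers via the bundle map, I obtain a $G$-equivariant diffeomorphism $\Phi_0 \colon W_0 \to W_0'$ of orbifold neighborhoods of $N$ and $N'$ extending $\phi_0$ with $D\Phi_0 = D$ along $N$. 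Pulling back $\omega'$ by $\Phi_0$ reduces the problem to the following: on a neighborhood $W$ of $N$ in $M$, there are two $G$-invariant symplectic orbifold forms $\omega_0 = \omega$ and $\omega_1 = \Phi_0^*\omega'$ such that $\omega_0|_N = \omega_1|_N$ and the induced bundle isomorphisms $TM|_N \to TM|_N$ agree (both are the identity, by our choice of $\Phi_0$).

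\textbf{Step 2: Equivariant relative Poincar\'e lemma.} Set $\eta = \omega_1 - \omega_0$, a $G$-invariant closed $2$-form on $W$ whose restriction to $TM|_N$ vanishes. I would construct a $G$-invariant primitive $\alpha$ with $d\alpha = \eta$ and $\alpha|_N = 0$ as follows: identifying $W$ with a neighborhood of the zero section in $E$ via Step~1, define $\alpha = \int_0^1 r_t^*(\iota_{R}\eta)\, dt$ where $r_t$ is fiberwise scaling by $t$ and $R$ is the radial (Euler) vector field on $E$. Locally this is the standard de Rham homotopy operator applied in orbi-charts, and since $G$ acts linearly on fibers and preserves $R$, the resulting $\alpha$ is automatically $G$-invariant (or it can be averaged over $G$ at the end). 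The vanishing of $\eta$ on $TM|_N$ together with the factor of $t$ in $r_t$ gives $\alpha|_N = 0$.

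\textbf{Step 3: The Moser trick.} On a possibly smaller neighborhood of $N$ the form $\omega_t := \omega_0 + t\eta$ is symplectic for all $t \in [0,1]$, because non-degeneracy along $N$ is open. Define the time-dependent $G$-invariant vector field $X_t$ by $\iota_{X_t}\omega_t = -\alpha$. Since $\alpha|_N = 0$, we have $X_t|_N = 0$, so the flow $\rho_t$ of $X_t$ is defined on a neighborhood of $N$ for all $t\in[0,1]$, fixes $N$ pointwise, and is $G$-equivariant. The standard computation $\frac{d}{dt}(\rho_t^*\omega_t) = \rho_t^*(d\iota_{X_t}\omega_t + \eta) = 0$ gives $\rho_1^*\omega_1 = \omega_0$. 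Then $\phi := \Phi_0 \circ \rho_1$ is the required $G$-symplectomorphism.

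\textbf{Step 4: The relative version (part 2).} For the refinement, at each $p_i$ I first modify $\Phi_0$ on a small neighborhood of $p_i$ so that $\Phi_0 = \psi_i$ there: this is possible by standard interpolation in orbi-charts because $\psi_i$ and the initial $\Phi_0$ agree on $N$ and induce the same map $D$ on the normal bundle, so they agree to first order along $N$ at $p_i$. With this choice, $\eta = \Phi_0^*\omega' - \omega$ vanishes identically on a neighborhood of each $p_i$. Using a $G$-invariant cutoff equal to $0$ near the $p_i$ and $1$ outside slightly larger neighborhoods, I modify $\alpha$ so that it vanishes near each $p_i$ while still satisfying $d\alpha = \eta$ and $\alpha|_N = 0$ (the modification is of the form $\alpha - d(\chi f)$ for a local primitive $f$ of $\alpha$ near $p_i$, which is possible since $\alpha$ is already closed there). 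Then $X_t$ vanishes near each $p_i$, so $\rho_t$ is the identity there, and $\phi = \psi_i$ on a neighborhood of $p_i$ as required.

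The main obstacle I anticipate is the careful bookkeeping in the orbifold setting: ensuring that the homotopy operator, the Moser flow, and the cutoff construction of Step~4 all descend from orbi-charts to global objects on the orbifold. This is essentially routine because $G$ acts properly and the orbifold structure is locally modeled on finite quotients, so averaging over $G$ and over local orbifold groups commutes with the relevant constructions; nevertheless this is where all the technicalities reside, especially the compatibility of the primitive $\alpha$ with both the $G$-action and the orbifold charts simultaneously.
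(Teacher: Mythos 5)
Your argument correctly reconstructs the standard equivariant Moser-trick proof (reduction to normal bundles via an equivariant tubular neighbourhood, relative Poincar\'e lemma via the radial homotopy operator, then Moser flow), which is exactly what the paper appeals to by citing Guillemin--Sternberg Theorem~22.1 and asserting the orbifold and relative extensions are routine; your Step~4 captures the right idea for the relative statement. One small sign slip: in Step~4 the cutoff $\chi$ should equal $1$ near $p_i$ and $0$ outside the larger neighbourhood, so that $\alpha - d(\chi f) = \alpha - df = 0$ near $p_i$ (with $f$ normalized so that $f|_{N}$ vanishes locally, which is possible since $df|_N = \alpha|_N = 0$); as written, with $\chi = 0$ near $p_i$, the modification leaves $\alpha$ unchanged there and accomplishes nothing.
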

\begin{proof} The proof (and the statement) of the standard equivariant Darboux-Weinstein theorem for symplectic manifolds can be found in \cite[Theorem 22.1]{GS1}. The extension of this proof to orbifolds and to  the relative statement 2) is straightforward. \end{proof}

The following lemma is quite standard so we omit a proof. 

\begin{lemma}\label{sympextension} Let $M$ be a symplectic manifold, $x\in M$ be a point, and $U(x)$ a neighbourhood of $x$. Then for any symplectic map $\varphi: U(x)\to M$ with $\varphi(x)=x$ there exists a smaller neighbourhood $U'(x)\subset U(x)$ and a symplectic automorphism $\varphi'$ of $M$ that satisfies the following:

1) $\varphi'=\varphi$ on $U'(x)$. 2) The restriction of $\varphi'$ to $M\setminus U(x)$ is the identity map.
  
\end{lemma}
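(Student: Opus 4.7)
The plan is to realise $\varphi$ as the time-$1$ flow of a compactly supported time-dependent Hamiltonian on $M$: first construct a smooth isotopy of symplectic embeddings from $\mathrm{id}$ to $\varphi$ near $x$, then extract its generating Hamiltonian, and finally cut this Hamiltonian off outside a small neighbourhood of $x$. Choose Darboux coordinates around $x$, identifying an open neighbourhood $V \subset U(x)$ with a ball $B_R \subset (\R^{2n}, \omega_{\mathrm{std}})$, with $x$ corresponding to the origin, and pick $r < R$ so small that $\varphi(B_r) \subset B_R$. For $s \in (0,1]$ define $\psi_s(p) = s^{-1}\varphi(sp)$; a short computation gives $D\psi_s(p) = D\varphi(sp)$, so $\psi_s$ is a symplectic embedding fixing the origin, and extends smoothly at $s = 0$ to the linear map $A := D\varphi(0) \in \mathrm{Sp}(2n,\R)$. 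Since $\mathrm{Sp}(2n,\R)$ is path-connected, connect $A$ to the identity through a smooth path of linear symplectic maps; concatenating with $\psi_s$ and reparametrising gives, after shrinking $r$ further if necessary, a smooth isotopy $\{\varphi_t\}_{t \in [0,1]}$ of symplectic embeddings $B_r \hookrightarrow B_R$ with $\varphi_0 = \mathrm{id}$, $\varphi_1 = \varphi|_{B_r}$ and $\varphi_t(0) = 0$ for every $t$.

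From the isotopy, extract the time-dependent symplectic vector field $X_t = \bigl(\tfrac{d}{dt}\varphi_t\bigr) \circ \varphi_t^{-1}$ and, using contractibility of $B_R$, a smooth family of Hamiltonians $H_t$ with $\iota_{X_t}\omega = dH_t$, defined on a contractible open set $W \subset B_R$ containing the closure of $\bigcup_{t \in [0,1]} \varphi_t(B_r)$. Now choose $r'' < r$ so small that for every $p \in B_{r''}$ the trajectory $t \mapsto \varphi_t(p)$ lies inside a compact set $K \subset W$, and take $U'(x)$ to correspond to $B_{r''}$. Fix a bump function $\chi \colon M \to [0,1]$ equal to $1$ on a neighbourhood of $K$ and supported inside $W$, form $\widetilde H_t := \chi H_t$ (extended by zero outside $W$), and let $\varphi'$ be its time-$1$ Hamiltonian flow. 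Then $\varphi'$ is a global symplectomorphism of $M$ equal to the identity outside $W \subset U(x)$, and for $p \in U'(x)$ the trajectory stays inside the region where $\chi \equiv 1$, so $\widetilde H_t$ agrees with $H_t$ along it and $\varphi'|_{U'(x)} = \varphi|_{U'(x)}$.

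The main technical delicacy is arranging that $H_t$ is defined on a single fixed open set $W$ rather than on the $t$-varying images $\varphi_t(B_r)$, and that trajectories from $U'(x)$ genuinely remain inside $\{\chi = 1\}$. Both points are secured by sufficient shrinking of the radii combined with uniform continuity of the isotopy on the compact cylinder $[0,1] \times \overline{B_r}$.
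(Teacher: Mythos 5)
The paper states this lemma without proof, calling it ``quite standard,'' and your argument is exactly the standard one: the Alexander trick $\psi_s(p)=s^{-1}\varphi(sp)$ together with path-connectedness of $\mathrm{Sp}(2n,\R)$ produces an isotopy of symplectic embeddings fixing the origin; the Poincar\'e lemma yields a generating Hamiltonian; cutting it off with a bump function and taking the time-$1$ flow gives the global extension. The structure is correct. One step is worded more loosely than it should be: you say $H_t$ with $\iota_{X_t}\omega = dH_t$ is defined on a fixed contractible $W$ containing the closure of $\bigcup_t\varphi_t(B_r)$, but $X_t = (\tfrac{d}{dt}\varphi_t)\circ\varphi_t^{-1}$ is only defined on the $t$-dependent set $\varphi_t(B_r)$, so $H_t$ is initially only defined there, not on all of $W$. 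The repair is the shrinking you allude to at the end: since $\varphi_t(0)=0$ for all $t$ and $[0,1]$ is compact, some ball $B_\rho$ lies in $\bigcap_{t\in[0,1]}\varphi_t(B_r)$; define $H_t$ on $\varphi_t(B_r)$, choose the cutoff $\chi$ supported in $B_\rho$ (rather than merely in $W$) so that $\widetilde H_t=\chi H_t$ is genuinely defined, and shrink $r''$ so that trajectories from $B_{r''}$ remain in $\{\chi\equiv 1\}\subset B_\rho$. With that adjustment the argument is complete.
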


\begin{remark}\label{orbisympextension}
Note that  Lemma \ref{sympextension} has a version for orbifolds in which one should assume that the linear automorphism induced on the tangent orbi-space to $x$ by $\varphi$ is isotopic to the identical map as a $G_x$-map, where $G_x$ is the stabilizer of $x$. 
\end{remark}

\subsection{Hamiltonian $S^{1}$-actions on del Pezzo surfaces}\label{toricdelPezzo}

Recall that a symplectic $4$-manifold has a Hamiltonian $S^{1}$-action with isolated fixed points if and only if it is symplectomorphic to a toric surface \cite[Theorem 5.1]{Ka}. 

In this subsection, we prove some basic facts about actions on the del Pezzo surfaces. 
 There are $5$ toric del Pezzo's: $\C \PP^{2}$ blown up in up to $3$ non-aligned points and $\C\PP^{1} \times \C\PP^{1}$. 
 
One of the useful results proven here is that if an $S^{1}$-action on a del Pezzo contains a fixed point with weights equal to any of $\{1,1\}$, $\{-1,-1\}$ or $\{1,-1\}$, then the Hamiltonian is bounded between $-3$ and $3$.

Let $(X,\omega)$ be a toric symplectic $4$-manifold with moment map $\mu : X \rightarrow \R^{2}$. Suppose we have a Hamiltonian $S^{1}$-action on $X$ generated by $H:X \rightarrow \R$. Then $H $ is the composition of $\mu$ with a linear projection to $\R$ \cite[Theorem 5.1]{Ka}. The choice of such a projection amounts to a choice of linear embedding $S^{1} \hookrightarrow \mathbb{T}^{2}$.

\begin{definition} A \textit{boundary divisor} in $X$ is the pre-image $\mu^{-1}(E)$ where $E$ is an edge of the moment polygon.
\end{definition}

Note that boundary divisors are gradient spheres. Recall that a fixed point $p$ is called \textit{extremal} if $H$ attains its minimum or maximum at $p$. 

\begin{lemma}\label{dum} Let $(X,\omega,\mu)$ be a toric symplectic $4$-manifold with a Hamiltonian $S^{1}$-action generated by Hamiltonian $H = L \circ \mu$, where $\mu$ is the toric moment map and $L:\R^{2} \rightarrow \R$ is a linear projection. Suppose further that the $S^{1}$-action has isolated fixed points. 
\begin{enumerate}

\item A gradient sphere in $X$ containing a non-extremal fixed point is a boundary divisor.

\item Isotropy spheres in $X$ are boundary divisors.
\end{enumerate}
\end{lemma}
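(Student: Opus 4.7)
The plan is to use the toric symmetry together with the fact that the hypothesis on isolated fixed points pins down the relationship between $S^{1}$ and the full torus $T^{2}$. First I would fix a $T^{2}$-invariant compatible almost complex structure $J$ on $X$, obtained by averaging any $S^{1}$-invariant compatible $J$ over the other circle factor. With such a $J$ every boundary divisor is a $J$-holomorphic sphere, since its tangent space at any $T^{2}$-orbit coincides with the fixed part of the local $T^{2}$-action on the tangent space, and this fixed part is preserved by any $T^{2}$-invariant $J$. The hypothesis that the $S^{1}$-action has isolated fixed points is equivalent to saying that $S^{1}$ does not coincide with any of the edge-stabilizers $T^{2}_{E}\subseteq T^{2}$, and in that case the $S^{1}$-fixed points of $X$ are exactly the $T^{2}$-fixed points, i.e.\ the preimages of vertices of the moment polygon.

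For part 1, let $p$ be a non-extremal isolated $S^{1}$-fixed point; as just noted, $p$ is a vertex of the moment polygon. Two boundary divisors $D_1,D_2$ meet at $p$, and by smoothness of the toric variety their tangent spaces at $p$ are the two $T^{2}$-weight subspaces $V_{a_1},V_{a_2}$ of $T_pX$, which together form a basis of $T_pX$. After restricting to $S^{1}$, these remain the two $S^{1}$-weight subspaces, and the associated $S^{1}$-weights are both non-zero since $p$ is an isolated $S^{1}$-fixed point. Any gradient sphere through $p$ is tangent at $p$ to one of these subspaces $V_{a_i}$, since the linearisation of $\nabla_g H$ at $p$ is diagonal with respect to the weight decomposition. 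Because $J$ is $T^{2}$-invariant and $D_i$ is both $J$-holomorphic and $S^{1}$-invariant, the gradient vector field $\nabla_g H$ is tangent to $D_i$: the Hamiltonian vector field is tangent to $D_i$ by $S^{1}$-invariance, and applying $J$ preserves this tangency. Therefore gradient flow lines leaving $p$ in the $V_{a_i}$-direction remain inside $D_i$, so the gradient sphere is contained in $D_i$ and equals $D_i$ by dimension.

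For part 2, let $N$ be an isotropy sphere, i.e.\ a $2$-dimensional connected component of $X^{\Z_n}$ for some $n\geq 2$. The set $X^{\Z_n}$ is the union of all $T^{2}$-orbits whose $T^{2}$-stabilizer contains $\Z_n$, which in terms of the moment polygon is the union of all vertices together with all boundary divisors $D_E$ such that $\Z_n\subseteq T^{2}_E$. Smoothness of $X$ forces the primitive edge-vectors at every vertex $v$ to form a basis of the character lattice of $T^{2}$, so the two edge-stabilizers at $v$ intersect trivially: $T^{2}_{E_1}\cap T^{2}_{E_2}=\{e\}$. In particular, for any non-trivial $\Z_n$ at most one of the two boundary divisors through $v$ can be $\Z_n$-invariant, so the $\Z_n$-invariant boundary divisors are pairwise disjoint, and every $2$-dimensional connected component of $X^{\Z_n}$ is a single boundary divisor; thus $N$ is a boundary divisor.

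The main subtlety is in part 1, where one must verify that gradient flow lines actually remain inside a boundary divisor: this relies on choosing $J$ to be $T^{2}$-invariant, since for a generic $S^{1}$-invariant compatible $J$ the gradient of $H$ need not be tangent to $D_i$. Once the $T^{2}$-invariant $J$ is in place, both statements reduce to standard toric combinatorics, and part 2 becomes essentially a lattice-theoretic observation about the smooth vertices of the moment polygon.
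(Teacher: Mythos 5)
Your proof is correct, and for the second part it takes a genuinely different route from the paper's. For part 1, the paper's own proof is essentially the same idea but much terser: it simply counts two gradient spheres through a non-extremal vertex and invokes the earlier (unproved) remark that boundary divisors are gradient spheres; you make that remark precise by fixing a $T^{2}$-invariant compatible $J$, observing that boundary divisors are then $J$-holomorphic and hence gradient-flow invariant, and matching tangent planes at $p$ via the stable/unstable decomposition. This is a reasonable expansion, and it makes explicit a hypothesis on $J$ that the paper leaves implicit. For part 2, the paper deduces the statement from part 1 together with a case analysis at extremal fixed points (using effectiveness of the $S^{1}$-action to control weights at $p_{\min},p_{\max}$), whereas you argue entirely from the Delzant condition: at a smooth vertex the two edge-stabilizers intersect trivially, so for $n\geq 2$ no two boundary divisors meeting at a vertex can both be fixed by $\mathbb{Z}_{n}$, forcing each $2$-dimensional component of $X^{\mathbb{Z}_{n}}$ to be a single boundary divisor. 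Your version is self-contained, independent of part 1, and arguably cleaner. One small point of precision: where you say the $T^{2}$-weight subspaces "remain the two $S^{1}$-weight subspaces," this uses that the two $S^{1}$-weights at a non-extremal $p$ have opposite signs and are therefore distinct; it is worth saying so, since at an extremal vertex the restricted weights need not be distinct and the weight-space decomposition for $S^{1}$ would then be non-unique.
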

\begin{proof}
1. There are precisely two gradient spheres containing a non-extremal fixed point $p$ (by considering a local model for the action around $p$). These are the two boundary divisors containing $p$. 

2. Let $p$ be an extremal fixed point. Then since the $S^{1}$-action is effective, any gradient sphere containing $p$ with weight greater than $1$ is a boundary divisor. Hence 2 follows from 1.
\end{proof}

Next, we state a Proposition due to Karshon which will be useful for us.

\begin{proposition}
 \cite[Proposition 5.2]{To} \label{equallemma} Suppose that $(N,\omega)$ is symplectic $4$-manifold with a Hamiltonian $S^{1}$-action with isolated fixed points. Let $p_{\min},p_{\max} \in N$ be the fixed points where the Hamiltonian attains its minimum and maximum respectively. 

Then the multiplicity of the weight 1 at $p_{\min}$ is equal to the number of fixed points of index $2$ with a weight equal to $-1$.  Similarly, the multiplicity of the weight -1 at $p_{\max}$ is equal to the number of fixed points of index $2$ with a weight equal to $1$. 
\end{proposition}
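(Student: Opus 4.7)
The plan is to reduce to the toric setting via Karshon's theorem and then to establish the statement via a combinatorial analysis of the moment polygon. Since the $S^1$-action on $(N,\omega)$ has isolated fixed points, by \cite[Theorem 5.1]{Ka} $(N,\omega)$ is a toric symplectic $4$-manifold: the $S^1$-action extends to a Hamiltonian $\mathbb{T}^2$-action with moment map $\mu \colon N \to \mathbb{R}^2$, Delzant polygon $\Delta = \mu(N)$, and the Hamiltonian of the $S^1$-action takes the form $H = L \circ \mu$ for a linear functional $L \colon \mathbb{R}^2 \to \mathbb{R}$. The fixed points of $S^1$ are then in bijection with the vertices of $\Delta$; at each vertex $v$, the two weights of $S^1$ are $L(u_1), L(u_2)$, where $u_1, u_2$ are the primitive edge vectors at $v$ pointing into $\Delta$ away from $v$. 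The hypothesis that fixed points are isolated translates into $L(u_e) \neq 0$ for every edge $e$ of $\Delta$.

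Next, I would observe that each edge $e$ of $\Delta$ with $|L(u_e)| = 1$ --- call such an $e$ a \emph{weight-$1$ edge} --- contributes a weight $+1$ at its lower endpoint (in the $L$-ordering along $e$) and a weight $-1$ at its upper endpoint. This produces a natural pairing between weight $+1$ occurrences and weight $-1$ occurrences across all fixed points. The weight-$1$ edges organise into maximal monotone chains along $\partial \Delta$: on such a chain, intermediate vertices carry weights $(+1,-1)$ and are therefore index-$2$ fixed points contributing simultaneously to the weight $+1$ and weight $-1$ tallies, while the chain's bottom and top vertices contribute only a weight $+1$, respectively only a weight $-1$, to their fixed points. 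Classifying chains by whether they start/end at $p_{\min}/p_{\max}$ or at an interior vertex, one obtains
\begin{align*}
m_{\min}^+ &= N_A + N_B, \\
\#\{\text{index-}2\text{ fixed points with a weight }-1\} &= (N_B + N_D) + I,
\end{align*}
where $N_A$ (resp.\ $N_B$) is the number of chains from $p_{\min}$ to $p_{\max}$ (resp.\ to an interior vertex), $N_D$ counts interior-to-interior chains, and $I$ is the total number of intermediate vertices on all chains. The sought identity thus reduces to $N_A = N_D + I$.

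The main obstacle is precisely this combinatorial identity: it is not enforced by the formal matching of weight $+1$ and weight $-1$ occurrences alone, and small numerical experiments confirm that a cycle of integer edge vectors without further constraint can violate it. The genuine geometric input is the Delzant condition $|\det(u_{i-1}, u_i)| = 1$ at each vertex, combined with the convexity of $\Delta$. I plan to prove $N_A = N_D + I$ by analysing each interior ``blocked'' vertex $v$ at which a weight-$1$ chain starts (weights $(1,-q)$ with $q>1$) and showing, through the Delzant equations at $v$ and its neighbours together with a walk along $\partial\Delta$, that such a chain must be paired either with a distinct interior-to-interior chain or with a $p_{\min}$-to-$p_{\max}$ chain carrying an additional intermediate, so that the contributions balance. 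The second assertion of the proposition (concerning $p_{\max}$) then follows from the first by replacing the $S^1$-action with its inverse, which swaps $p_{\min}$ with $p_{\max}$ and negates all weights.
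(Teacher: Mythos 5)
The paper does not prove this proposition; it is cited verbatim from \cite[Proposition 5.2]{To}, so there is no in-paper argument to compare against, and I will assess your proposal on its own terms.

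Your reduction to the toric setting via Karshon's theorem is correct, and your translation of the statement into polygon combinatorics is sound. I have checked the bookkeeping: each weight-$1$ edge contributes a weight $+1$ at its lower endpoint and a weight $-1$ at its upper endpoint, the maximal weight-$1$ chains on the two monotone arcs of $\partial\Delta$ split into the four types you describe, and indeed
$m_{\min}^{+} = N_A + N_B$ while $\#\{\text{index-}2\text{ fixed points with a weight }-1\} = N_B + N_D + I$, so the proposition is equivalent to $N_A = N_D + I$ (and the second assertion gives the same identity via $m_{\max}^{-} = N_A + N_C$ and $\#\{\text{index-}2\text{ with weight }+1\} = N_C + N_D + I$). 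However, you do not prove $N_A = N_D + I$; you only sketch a ``plan'' and explicitly acknowledge that the needed step is open, even adducing numerical evidence that the identity fails for an arbitrary cycle of primitive integer edge vectors. As written, this is a correct reduction but not a proof.

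The missing step is substantial. Since $N_A \le 2$ (there are only two monotone arcs of $\partial \Delta$), the identity $N_A = N_D + I$ forces the strong global bound $N_D + I \le 2$: across the entire polygon there can be at most two weight-$1$ edges not incident to $p_{\min}$ or $p_{\max}$, counted either as interior-to-interior chains or as intermediate vertices. (Equivalently, in the per-arc reformulation, the number of arcs of length $1$ with weight $1$ must equal the number of \emph{interior} weight-$1$ edges.) The ``pairing of blocked vertices'' you propose does not obviously yield this: you suggest pairing each interior-to-interior chain with a distinct $N_D$- or $N_A$-object, but since $N_A \le 2$ while $N_D + I$ is a priori unbounded, a local pairing cannot close the argument without first establishing the very scarcity of interior weight-$1$ edges that is the content of the identity. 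Closing the gap genuinely requires exploiting the Delzant unimodularity at \emph{every} vertex (including $p_{\min}$, $p_{\max}$) together with the fact that the edge cycle closes up to $0$; your sketch does not do this, and it is not clear from the sketch how it would. Until $N_A = N_D + I$ is actually proved, the proposal does not establish the proposition.
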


\begin{corollary} \label{neededcor}
Suppose that $(N,\omega)$ is symplectic $4$-manifold with a Hamiltonian $S^{1}$-action with isolated fixed points and Hamiltonian $H : N \rightarrow \mathbb{R}$. Suppose that there exists fixed points $p_{1},p_{2} \in N$, both with weights $\{-1,n\}$ $(n>1)$ and such that $H(p_{1})=H(p_{2})$. Then the only fixed point contained in $H^{-1}([H_{\min},H(P_{1}))) $ is $p_{\min}$.
\end{corollary}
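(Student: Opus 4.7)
The plan is to first pin down the weights at $p_{\min}$ using Karshon's multiplicity formula (Proposition \ref{equallemma}), and then to carry out a descent argument on the fixed points lying strictly below the common level $H(p_1)=H(p_2)$. Since $p_1$ and $p_2$ are both index-$2$ fixed points having $-1$ as a weight, Proposition \ref{equallemma} implies that the multiplicity of the weight $1$ at $p_{\min}$ is at least $2$. As $p_{\min}$ has only two weights, both must equal $1$; that is, the weights at $p_{\min}$ are $\{1,1\}$. Moreover, the same formula---being an equality---forces $p_1$ and $p_2$ to be the \emph{only} index-$2$ fixed points of $N$ having $-1$ as a weight.

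Next, suppose for contradiction that the set
$$F=\{\,q\in N^{S^1}\setminus\{p_{\min}\}\ :\ H(q)<H(p_1)\,\}$$
is non-empty, and pick $q\in F$ with $H(q)$ minimal. Being non-extremal, $q$ has Morse index $2$, so its weights are $\{-a,b\}$ with $a,b\ge 1$. By the previous step, $q\notin\{p_1,p_2\}$ forces $a\ne 1$, hence $a\ge 2$. Consider the gradient sphere $S$ emanating downward from $q$ along the direction of weight $-a$. Then $S$ has weight $a\ge 2$, and $S_{\min}$ is a fixed point with $H(S_{\min})<H(q)$. Since the weights at $p_{\min}$ are $\{1,1\}$, Lemma \ref{resweight}(2) implies that any gradient sphere landing at $p_{\min}$ has weight $1$; as $a\ge 2$, this rules out $S_{\min}=p_{\min}$. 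Moreover, $S_{\min}\ne p_{\max}$ since $H(S_{\min})<H(q)<H_{\max}$. Hence $S_{\min}\in F$, contradicting the minimality of $H(q)$. Therefore $F=\emptyset$, as required.

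The main subtlety is the first step: one has to exploit Karshon's formula as an equality (not just an inequality) in order to promote the mere existence of the two distinguished fixed points $p_1,p_2$ into complete information about the weights at $p_{\min}$. Once those weights are identified as $\{1,1\}$, Lemma \ref{resweight}(2) makes the descent automatic, and the finiteness of $N^{S^1}$ together with the minimality of $q$ closes the argument immediately.
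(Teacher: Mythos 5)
Your proof is correct, and it follows a genuinely different route from the one in the paper. Both arguments open the same way, invoking Proposition~\ref{equallemma} to pin down the weights at $p_{\min}$ as $\{1,1\}$ and, via the equality in that formula, to identify $p_1,p_2$ as the \emph{only} index-$2$ fixed points carrying a weight $-1$. After that they diverge. The paper then exploits the toric structure of $N$ (which exists by Karshon's theorem, since the action has isolated fixed points): the two boundary divisors at $p_{\min}$ both have weight $1$, their upper endpoints have a weight $-1$, and are therefore forced to be $\{p_1,p_2\}$; the conclusion is then implicit from the convexity of the moment polygon, since the two edges from $p_{\min}$ go directly up to the common level $H(p_1)=H(p_2)$. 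You instead set up a descent: pick the lowest fixed point $q\ne p_{\min}$ below level $H(p_1)$, note it must be index $2$ with negative weight $-a$ for some $a\ge 2$ (since $q\notin\{p_1,p_2\}$), and then observe that the downward gradient sphere of weight $a$ from $q$ cannot land on $p_{\min}$ by Lemma~\ref{resweight}(2), forcing an even lower non-minimal fixed point and contradicting minimality. Your argument is a bit longer but entirely avoids the toric/convexity machinery, relying only on gradient spheres and the weight-restriction lemma; it also sidesteps a small point the paper leaves implicit, namely ruling out that a boundary divisor from $p_{\min}$ terminates at $p_{\max}$. The paper's version is shorter once one is willing to read off everything from the moment polygon.
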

\begin{proof}
By Proposition \ref{equallemma} the weights at $p_{\min}$ are $\{1,1\}$. Consider the two boundary divisors $S_{1}$ and $S_{2}$ containing $p_{\min}$. Then for each $i$, one of the weights at $(S_{i})_{\max}$ is $-1$. By Proposition \ref{equallemma} these fixed points must be $\{p_{1},p_{2}\}$, the result follows.
\end{proof}

\begin{lemma}\label{fourbound}
Consider a Hamiltonian $S^1$-action with isolated fixed points on a toric del Pezzo surface $(X,\omega)$. Then any gradient sphere in $X$ with weight $1$ contains an extremal fixed point.
\end{lemma}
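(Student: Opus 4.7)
My plan is to argue by contradiction. Suppose $S \subset X$ is a weight-$1$ gradient sphere with no extremal endpoint. By Lemma~\ref{dum}(1), any gradient sphere through a non-extremal fixed point is a boundary divisor, so $S = D_e$ for some edge $e$ of the moment polygon $P$ of $X$, and both endpoints $p_1, p_2 \in X^{S^1}$ of $D_e$ are non-extremal (hence of Morse index $2$). The weight-$1$ condition forces the $S^1$-weights along $S$ at $p_1$ and $p_2$ to be $+1$ and $-1$ respectively. By Proposition~\ref{equallemma}, the existence of the index-$2$ point $p_2$ with weight $-1$ forces the multiplicity of the weight $+1$ at $p_{\min}$ to be at least $1$; applied symmetrically, $p_{\max}$ must carry a weight $-1$. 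In particular there are boundary divisors of weight $1$ through both $p_{\min}$ and $p_{\max}$.

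To convert these constraints into a contradiction I would use the classification of toric del Pezzo surfaces: $X$ is one of $\mathbb{CP}^2$, $\mathbb{CP}^1 \times \mathbb{CP}^1$, $\mathbb{F}_1$, $\mathrm{Bl}_2\mathbb{CP}^2$, or $\mathrm{Bl}_3\mathbb{CP}^2$. For each of these, the primitive tangent directions along the edges of $P$ lie, up to an $\mathrm{SL}(2,\mathbb{Z})$ change of basis, in the set $\{(1,0), (0,1), (1,1), (1,-1)\}$, so every edge weight under the Hamiltonian $L = (m,n) \in \mathbb{Z}^2$ (with $\gcd(m,n)=1$) is one of $|m|, |n|, |m+n|, |m-n|$. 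For the triangle ($\mathbb{CP}^2$) and the rectangle ($\mathbb{CP}^1\times\mathbb{CP}^1$) any generic linear functional attains its extrema at opposite vertices, so every edge of $P$ is incident to an extremal vertex and the claim is trivial. For $\mathbb{F}_1$, $\mathrm{Bl}_2\mathbb{CP}^2$, and $\mathrm{Bl}_3\mathbb{CP}^2$, I would analyse each possible pair $(p_{\min}, p_{\max})$ of polygon vertices and check directly that the extremality inequalities on $(m,n)$, combined with smoothness (the requirement that the primitive edge directions at each vertex form a $\mathbb{Z}$-basis of the lattice) and the integrality of $(m,n)$, force $|m|, |n|, |m+n|, |m-n| \ge 2$ on any candidate middle edge $D_e$.

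The principal obstacle I anticipate is the case analysis for the pentagon ($\mathrm{Bl}_2\mathbb{CP}^2$) and hexagon ($\mathrm{Bl}_3\mathbb{CP}^2$), which splits into several subcases indexed by the adjacency pattern of $(p_{\min}, p_{\max})$. In each subcase I expect the extremality inequalities — for example $m > n > 0$ for a specific vertex pair in the pentagon — to immediately imply that the weight of any candidate middle edge is at least $2$, yielding the contradiction. A potentially cleaner uniform alternative would be to combine Proposition~\ref{equallemma} with the global sum formula $\sum_p 1/(a_p b_p) = 0$ of Lemma~\ref{fourcor} to constrain the possible weight distributions, but the direct polygon check seems the most transparent given the small number of del Pezzos involved.
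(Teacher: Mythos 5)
Your route is genuinely different from the paper's and, although it could work, as written it is a plan rather than a proof: you defer the pentagon and hexagon case analysis, which is exactly where the work lies. The paper instead closes in three lines using the weight sum normalization from Proposition~\ref{relcho}, which applies because the anticanonical polarization makes $(X,\omega)$ a symplectic Fano. You already note that the weights along $S$ at $S_{\min}$ and $S_{\max}$ are $+1$ and $-1$; since both points are non-extremal they have Morse index $2$, so the full weight sets are $\{1,-n'\}$ and $\{n,-1\}$ with $n,n'\ge 1$. Feeding these into $H(p)=-\sum_i w_i$ gives $H(S_{\max}) = 1-n \le 0 \le n'-1 = H(S_{\min})$, contradicting $H(S_{\max}) > H(S_{\min})$. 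You are one step away from this argument: apply the weight sum formula rather than pivoting to the polygon classification.

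If you do want to carry through the elementary combinatorial route, note first that the conclusion depends only on the fan and on the subcircle $(m,n)$, not on the polarization, so your check would actually prove a slightly more general statement than the paper needs. It does go through: for example on $\C\PP^2$ blown up in one point, the only vertex pairings $(p_{\min},p_{\max})$ producing a middle edge force (up to signs and the obvious symmetry) $0 < m < n$, and that middle edge then has weight $n$, so no integer solution with weight $1$ exists. But the pentagon and hexagon each split into several sub-cases indexed by the placement of $(p_{\min},p_{\max})$, and for the hexagon you should first observe that central symmetry of the fan forces $p_{\min}$ and $p_{\max}$ to be opposite vertices, which narrows the check. Finally, the detour through Proposition~\ref{equallemma}, producing weight-$1$ boundary divisors through $p_{\min}$ and $p_{\max}$, plays no role in the argument you then outline and should be dropped.
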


\begin{proof} Let $S$ be a gradient sphere with weight $1$. Suppose that neither of $S_{\min},S_{\max}$ is an extremal fixed point. Then the weights at $S_{\max},S_{\min}$ are $\{n,-1\},\{1,-n'\}$ respectively where $n,n'>0$. Hence, by the weight sum formula (\ref{weightcho}) we have that $H(S_{\min}) \geq  0 \geq H(S_{\max})$ which is a contradiction.
\end{proof}

\begin{remark} Note that amongst the boundary divisors there are at most $4$ containing an extremal fixed point of $X$. Hence, by Lemma \ref{fourbound} there are at most $4$ boundary divisors that can have weight $1$.
\end{remark}

\begin{lemma}\label{4small}

Let $(X,\omega)$ be a toric del Pezzo surface and $S \subseteq X$ be a boundary divisor. Then \begin{displaymath}
\int_{S} \omega \leq 3.
\end{displaymath}
\end{lemma}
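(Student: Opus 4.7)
The plan is to translate the symplectic area of $S$ into a topological invariant and then to read off the bound from the classification (or the combinatorics) of smooth toric Fano surfaces. The key reduction: since $(X,\omega)$ is symplectic Fano we have $[\omega]=c_1(X)$ in $H^2(X,\mathbb{R})$. A boundary divisor $S=\mu^{-1}(E)$ is a gradient sphere for any generic $S^1\subset\mathbb{T}^2$ (by Lemma \ref{dum}) and in particular is a symplectically embedded $2$-sphere. The symplectic adjunction formula for a sphere then gives
\[
\int_{S}\omega \;=\; \langle c_1(X),[S]\rangle \;=\; S\cdot S + 2.
\]
So the lemma reduces to the purely topological statement $S\cdot S \leq 1$ for every boundary divisor of every toric del Pezzo surface.

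Next, I would verify this self-intersection bound. The fastest route is direct enumeration: the smooth toric Fano surfaces are precisely $\mathbb{CP}^2$, $\mathbb{CP}^1\times\mathbb{CP}^1$ and $Bl_k\mathbb{CP}^2$ for $k=1,2,3$, so one can write down the fan of each, enumerate its boundary rays, and read off all self-intersections from the smoothness relation $\rho_{i-1}+\rho_{i+1}=-D_i^2\,\rho_i$. The result in each case is that $D_i^2 \in \{-1,0,1\}$: one finds $(1,1,1)$ for $\mathbb{CP}^2$, $(0,0,0,0)$ for $\mathbb{CP}^1\times\mathbb{CP}^1$, $(-1,0,0,1)$ for $\mathbb{F}_1$, $(-1,-1,-1,0,0)$ for $Bl_2\mathbb{CP}^2$, and $(-1,\ldots,-1)$ for $Bl_3\mathbb{CP}^2$. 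The bound $\int_S\omega\le 3$ follows, with equality attained on $\mathbb{CP}^2$ and on the $(+1)$-section of $\mathbb{F}_1$.

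If one prefers a proof that does not invoke the classification, the same inequality falls out of the Fano combinatorics: choosing coordinates with $\rho=(0,1)$, smoothness forces $\rho_-=(1,b)$ and $\rho_+=(-1,c)$ with $b+c=-S^2$, and the two vertices of the moment polytope $\Delta_{-K}$ adjacent to the edge corresponding to $S$ then lie on the line $y=1$ at horizontal positions $1-b$ and $c-1$. If $S^2\ge 2$, convexity of $\Delta_{-K}$ together with the Fano hypothesis (strict convexity and the origin in the interior) is incompatible with the existence of the remaining rays needed to complete the fan to a smooth complete Fano fan; the cleanest way to see this is to note that any such completion contains a subfan identifiable with that of a Hirzebruch surface $\mathbb{F}_n$ with $n\ge 2$, which is not Fano.

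The main (rather mild) obstacle is really just step two: marshalling the case check neatly. Since the combinatorial argument is essentially equivalent to saying that the del Pezzo surfaces other than $\mathbb{CP}^2$ and $\mathbb{F}_1$ admit no toric divisor with positive self-intersection, and this is part of the standard classification, I would carry out the proof by invoking the classification of smooth toric Fano surfaces directly.
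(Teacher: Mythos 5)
Your proof is correct, and it takes a genuinely different route from the paper. The paper observes that $S$ is either the strict transform of a boundary divisor in the minimal model of $X$ (which is $\mathbb{CP}^2$ or $\mathbb{CP}^1\times\mathbb{CP}^1$) or an exceptional curve of a blow-up, and then simply notes that blowing up a point on a divisor decreases its anti-canonical degree by one, that exceptional curves have area $1$, and that boundary divisors in the two minimal models have areas $3$ and $2$; the bound follows with no computation. Your argument instead converts the area bound into the topological inequality $S\cdot S\le 1$ via the adjunction formula $\langle c_1(X),[S]\rangle = S\cdot S + 2$ for a symplectic sphere and then checks this inequality by enumerating the fans of the five toric del Pezzo surfaces. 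Both routes rest on the classification, but the paper's is shorter because it avoids adjunction and the case check; your approach has the virtue of isolating the purely topological content (self-intersection at most $1$) and of making the equality cases transparent, namely $\mathbb{CP}^2$ and the $(+1)$-section of $\mathbb{F}_1$. Your sketched classification-free argument is a plausible alternative but would need more care to pin down: the statement that any smooth complete Fano fan containing a ray configuration with $b+c\le -2$ must ``contain a subfan of $\mathbb{F}_n$, $n\ge 2$'' is not quite a proof; the clean version is that blowing down $(-1)$-curves reduces every smooth projective toric surface to $\mathbb{CP}^2$ or a Hirzebruch surface, blow-ups only decrease self-intersections of strict transforms, and neither $\mathbb{F}_0$ nor $\mathbb{F}_1$ has a boundary divisor of self-intersection $\ge 2$ while $\mathbb{F}_n$ for $n\ge 2$ is not Fano. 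Since you end up invoking the classification anyway, this does not affect the correctness of your proof.
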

\begin{proof}Note that $S$ is the strict transform of a boundary divisor in a minimal model for $X$ or the exceptional curve of a blow-up. Exceptional curves have area $1$ in the anti-canonical polarization. The minimal model for $X$ is either $\C\PP^{2}$ or $\C\PP^1 \times \C\PP^1$, and boundary divisors in these surfaces have area $3$ and $2$ respectively.
\end{proof}

\begin{corollary}\label{us}
Let $(X,\omega)$ be a toric del Pezzo surface with a Hamiltonian $S^{1}$-action with isolated fixed points. Let $p_{\min}$ be the fixed point where $H$ attains its minimum. Suppose that $p$ is a fixed point with weights $\{-1,n\}$ where $n  \geq 2$. Then
\begin{enumerate}
\item $H(p) -  H(p_{\min}) \leq 3$.
 
\item The weights at $p_{\min}$ are $\{1,m\}$ where $m \geq (H(p) -  H(p_{\min}) )$.

\item There is no fixed point $p' \in X$ such that $H(p') \in (H(p_{\min}),H(p))$.
\end{enumerate}
\end{corollary}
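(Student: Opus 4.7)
The plan is to establish the three items sequentially by combining the structure of the weight-$1$ gradient sphere emerging from $p$ with the weight sum formula and the convexity of the moment polygon. First I would observe that since $p$ has weights $\{-1,n\}$ with $n\ge 2$, $p$ is non-extremal, so the gradient sphere $S$ of weight $1$ descending from $p$ (along its $-1$ eigendirection) must, by Lemma \ref{fourbound}, terminate at an extremal fixed point; since $p=S_{\max}$ is not extremal, this forces $S_{\min}=p_{\min}$. The tangent direction of $S$ at $p_{\min}$ then carries $S^{1}$-weight $+1$, so the two weights at $p_{\min}$ have the form $\{1,m\}$ with $m\ge 1$. Since $p$ is non-extremal, Lemma \ref{dum} implies that $S$ is a boundary divisor, and Lemma \ref{4small} together with Lemma \ref{gradsphereint} (applied with $w(S)=1$) give $H(p)-H(p_{\min})=\int_{S}\omega\le 3$, which is item (1).

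To derive (2) I would renormalize $H$ so that the weight sum formula of Proposition \ref{relcho} holds, which is legitimate since a toric del Pezzo with its anticanonical polarization is symplectic Fano. This yields $H(p_{\min})=-(1+m)$ and $H(p)=-(-1+n)=1-n$, so $H(p)-H(p_{\min})=2+m-n$. The desired inequality $m\ge H(p)-H(p_{\min})$ then collapses to $n\ge 2$, which is the hypothesis.

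For (3) I pass to the moment polygon of the ambient $T^{2}$-action (which exists by \cite[Theorem 5.1]{Ka}). The vertex $\mu(p_{\min})$ has two incident edges: one corresponds to $S$ (terminating at $\mu(p)$), the other to a boundary divisor $B$, which is a gradient sphere of weight $m$ running from $p_{\min}$ to a second vertex $q$ of the polygon. At $q$ the weight $-m$ appears in the direction of $B$, and since $\omega$ is integral, Lemma \ref{missinglemma} gives $m\le H(q)-H(p_{\min})$. Combined with (2) this shows $H(q)\ge H(p)$. Finally, the convexity of the moment polygon forces $H$ (a linear function of the $T^2$-moment map) to be monotone along each of the two arcs of the polygon boundary from $\mu(p_{\min})$ to $\mu(p_{\max})$. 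Since the two neighbours of $\mu(p_{\min})$ are $\mu(p)$ and $q$, both with $H$-values at least $H(p)$, every other vertex has $H$-value at least $H(p)$, proving (3).

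The principal obstacle is (3): the weight sum formula together with Lemma \ref{missinglemma} only controls the $H$-value of the immediate neighbour $q$ of $p_{\min}$, and one cannot \emph{a priori} rule out distant fixed points lying in the window $(H(p_{\min}),H(p))$ from purely local information. Extracting monotonicity of $H$ along the polygon boundary from convexity is therefore the crucial step that promotes the local control at $p_{\min}$ into the global statement (3).
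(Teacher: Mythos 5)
Your proposal is correct and follows essentially the same route as the paper's own proof: part (1) via Lemma \ref{fourbound}, Lemma \ref{4small} and Lemma \ref{gradsphereint}; part (2) via the weight sum formula reducing to $n\geq 2$; and part (3) by estimating the $H$-values at the two neighbours of $p_{\min}$ and invoking the convexity of the moment polygon. The only cosmetic difference is that you appeal to Lemma \ref{missinglemma} where the paper uses Lemma \ref{gradsphereint} directly for the bound on $H(q)$, and you make explicit the monotonicity of $H$ along the two arcs of the polygon boundary, which the paper leaves implicit; both observations are accurate and amount to the same argument.
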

\begin{proof}
1. Let $S$ be the boundary divisor with $S_{\max} = p$. By Lemma \ref{fourbound} we have that $S_{\min} = p_{\min}$. Note also that $S$ has weight $1$. Hence, by Lemma \ref{gradsphereint} combined with Lemma \ref{4small} we have that $H(S_{\max}) - H(S_{\min}) = \int_{S} \omega \leq 3$.

2. Note that the weights at $p_{\min}$ are $\{1,m\}$. By the weight sum formula (\ref{weightcho}) 
$$H(p) = -n+1,\;\;H(p_{\min}) = - 1- m.$$ 
Therefore $H(p) - H(p_{\min}) = m+(-n+2)\le m$, since $n\ge 2$.

3. The weights at $p_{\min}$ are $\{1,m\}$ for some $m \geq 1$. Consider the two boundary divisors with minimum equal to $p_{\min}$, say $S_{1}$ with weight $1$ and $S_{2}$ with weight $m$.  Since $(S_{1})_{\max} = p$ we have $H((S_{1})_{\max}) = H(p)$. Since $S_{2}$ has weight $m$, $H((S_{2})_{\max}) \geq H(p_{\min}) +m $ by Lemma \ref{gradsphereint}. Finally, $H(p_{\min})+m \geq H(p)$ by 2. The lemma follows. 
\end{proof}


\begin{lemma} \label{calc} Let $(X,\omega)$ be a toric del Pezzo surface with a Hamiltonian $S^1$-action. Suppose $X$ has a fixed point with weights equal to any of $\{ 1,  1\}$ , $\{-1,-1\}$ or $\{1,-1\}$. Then we have

1) The weights of $S^1$-action on all boundary divisors are $0$, $1$ or $2$.

2) $H(X)\subseteq [-3,3]$.
\end{lemma}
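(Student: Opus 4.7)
The plan is to reduce the lemma to a finite check over the classification of toric del Pezzo surfaces, using the structure of the moment polygon. Since $S^1 \subset T^2$, the Hamiltonian has the form $H = L \circ \mu + c$, where $\mu$ is the $T^2$-moment map, $L \colon \R^2 \to \R$ is the linear projection induced by $S^1 \hookrightarrow T^2$, and $c$ is the constant fixed by the weight sum formula of Proposition \ref{relcho}. At the given fixed point $p$, the two outgoing primitive edge vectors of the moment polygon form a $\Z$-basis of the cocharacter lattice (smoothness of $X$ at $p$), so I apply an $SL(2,\Z)$ change of basis to place $p=(0,0)$ with these edges equal to $e_1=(1,0)$ and $e_2=(0,1)$. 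The three admissible weight patterns at $p$ then force $L$ to be one of $x+y$, $-x-y$, or $x-y$; reversing the $S^1$-action (which replaces $L$ by $-L$ and negates $H$) identifies the first two, leaving Case A ($L=x+y$, for which $c=-2$) and Case B ($L=x-y$, for which $c=0$).

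In each case the goal is to show (1) that $|L(v)|\le 2$ for every primitive edge direction $v$ of the normalized moment polygon (these are exactly the boundary divisor weights) and (2) that $\min_\Delta L + c,\max_\Delta L + c$ both lie in $[-3,3]$. The five toric del Pezzo surfaces --- $\C\PP^2$, $\C\PP^1\times\C\PP^1$, and $\C\PP^2$ blown up in one, two, or three non-aligned points --- collectively admit a small number of $SL(2,\Z)$-orbits of vertices on their moment polygons (one on $\C\PP^2$, one on $\C\PP^1\times\C\PP^1$, two on the one-point blow-up, three on the two-point blow-up, and one on the three-point blow-up), so the check reduces to eight explicit normalized polygons. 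The key structural observation, which one verifies by explicitly carrying out the $SL(2,\Z)$ transformation in each case, is that the primitive edge vectors of every resulting normalized polygon lie in the finite set $\{\pm e_1, \pm e_2, \pm(e_1+e_2), \pm(e_1-e_2)\}$. Since $L = x \pm y$ takes values in $\{0, \pm 1, \pm 2\}$ on this set, part (1) is immediate.

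For part (2), I read off the extrema of $L$ from each of the eight normalized polygons. In Case A the largest value attained is $\max_\Delta(x+y) = 5$, realized for example on the normalized polygon of the one-point blow-up of $\C\PP^2$ with vertex set $\{(0,0),(1,0),(3,2),(0,2)\}$; this gives $H \in [-2,3]$. In Case B the largest value attained is $\max_\Delta|x-y| = 3$, realized on $\C\PP^2$ with the standard polygon $\{(0,0),(3,0),(0,3)\}$; this gives $H \in [-3,3]$. All other normalized polygons yield strictly narrower ranges, so in all cases $H(X)\subseteq [-3,3]$. The only real difficulty in this argument is the bookkeeping of carrying out the $SL(2,\Z)$ transformation and listing the new vertex set for each chosen vertex; no conceptual obstacle beyond the explicit toric polygon description and the weight sum formula of Proposition \ref{relcho} is required. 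A possible subtlety to watch out for is that the $SL(2,\Z)$ change of basis can enlarge the polygon in the normalized coordinates (as in the blow-up example above, where $\max L = 5$), so one cannot argue from any a priori uniform diameter bound on reflexive polygons.
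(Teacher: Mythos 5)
Your argument is correct but takes a genuinely different route from the paper's. You normalize the moment polytope at the distinguished fixed point by an $SL(2,\Z)$ change of coordinates and then run an explicit finite check over the eight vertex-orbits of the five toric del Pezzo polygons, verifying in each case that the normalized primitive edge directions lie in $\{\pm e_1,\pm e_2,\pm(e_1+e_2),\pm(e_1-e_2)\}$ and that the extremal values of $L\in\{x+y,\,x-y\}$ force $H(X)\subseteq[-3,3]$. The paper instead equivariantly blows up $X$ at $4-b_2(X)$ fixed points to pass to the degree-$6$ del Pezzo $X'$ ($\C\PP^2$ blown up at three non-aligned points), observes that opposite edges of its hexagonal polygon carry the same weight, deduces from the hypothesis that four of the six boundary divisors of $X'$ have weight $1$, and classifies the two admissible $\C^*$-actions on $X'$ directly; restricting back to $X$ then gives both parts of the lemma. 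The paper's reduction to the single surface $X'$ is more economical --- one enumerates $\C^*$-actions on a single hexagon rather than eight polygons --- while your approach is more elementary in that it stays entirely within moment-polytope combinatorics and avoids the equivariant-blow-up step. The assertions you leave to bookkeeping (that no normalized polygon produces an edge vector outside the listed set, and that $\max(x+y)\le 5$ with equality realized on the one-point blow-up, while $\max|x-y|\le 3$ with equality only on $\C\PP^2$ itself) do hold in all eight cases, so the argument is sound; you are right to flag that this needs carrying out, but the paper's own ``it is not hard to check'' hides a comparable finite verification.
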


\begin{proof} 1) Note that the $S^1$-action on $X$ extends to a holomorphic $\mathbb C^*$-action. We can blow up $X$ in $4-b_2(X)$  points fixed by $\mathbb C^*$, to get the toric del Pezzo surface $X'$ isomorphic to  $\mathbb CP^2$ blown up in $3$ non-aligned points. The $\mathbb C^*$-action on $X$ lifts to a $\mathbb C^*$-action on $X'$.

The surface $X'$ has $6$ boundary divisors and the weights on its opposite  divisors coincide. Since $X$ has a fixed point with weights $\{\pm 1, \pm 1 \}$, it follows that the weights of the $\mathbb C^*$-action on $4$ boundary divisors of $X'$ are equal to $1$. It is not hard to check that there are exactly two such $\mathbb C^*$-actions on $X'$ up to a conjugation by an automorphism of $X'$. One of the actions has weight $0$ on two remaining boundary divisors and the other has weight $2$ on two remaining divisors. Note finally, that the weights of boundary divisors of $X$ form a subset of the weights of boundary divisors of $X'$.

2) Since by 1) the weights of the $S^1$-action at $X_{\min}$ and $X_{\max}$ are coprime and are equal to $0$, $\pm 1$ or $\pm 2$, the statement follows from the weight sum formula \ref{relcho} (\ref{weightcho}).
 \end{proof}

\section{Proof of Theorem \ref{maintheorem} in the case $\dim(M_{\min}) = 4$ and $\dim(M_{\max}) \geq 2$} \label{4dimcasection}

Throughout this section we assume that $M$ is a $6$-dimensional symplectic Fano manifold with a Hamiltonian $S^{1}$-action. Our aim is to prove Theorem \ref{maintheorem} in the case when $\dim(M_{\min}) = 4$ and $\dim(M_{\max}) \geq 2$.  Namely, we prove in this case that  $M_{\min} $ is diffeomorphic to a del Pezzo surface, this is done in Theorem \ref{fourtheo}.

To prove this result we study the reduced spaces $(M_{x},\omega_{x})$, and show that as $x$ approaches $0$ from below, the cohomology class of $\omega_{x}$ approaches the first Chern class of $M_{x}$, in an appropriate sense. In the course of the proof we show that the $S^1$-action on $M$ should be semi-free. 

Let us mention related results of Cho \cite{Cho} and Futaki \cite{Fu}. Cho shows that for a symplectic Fano manifold $X$ with a semi-free Hamiltonian $S^1$-action such that $0$ is a regular value of the Hamiltonian, the reduced space $X_0$ is a symplectic Fano. Futaki has proven the analogous result in the K\"ahler category. 


\subsection{Maximal downward chains}
In this subsection we introduce and study maximal downward chains. Recall that $w(S)$ denotes the weight of a gradient sphere $S$ and $S_{\min},S_{\max}$ are the fixed points in $S$ where the Hamiltonian attains its $\min/\max$.
\begin{definition} \label{maximal} Let $X$ be a Hamiltonian $S^1$-manifold.
A \emph{maximal downward chain} in $X$ consists of a sequence of fixed points $p_1,\ldots, p_k  \in X$ ($k \geq 2$) along with a sequence of gradient spheres $S_1,\ldots, S_{k-1}$, such that the following conditions hold. 
\begin{enumerate}
\item $(S_{i})_{\max}= p_i$ and $(S_{i})_{\min} = p_{i+1}$ for each $i$. 
\item  $w(S_i)>1$ for each $i$.
\item  The weights at $p_k$ are all greater than or equal to $-1$.

\end{enumerate}
\end{definition}

\begin{lemma} \label{chain} Let $X$ be a Hamiltonian $S^1$-manifold and
let $F$ be a fixed submanifold in $X$ with a weight $w$ such that $|w|>1$. Then there exists a maximal downward chain $p_{1},\dots,p_{k}$ such that $p_{i} \in F$ for some $i$ and $|w| = w(S_{j})$ for some $j$.
\end{lemma}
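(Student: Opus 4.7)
I would build the chain in two stages: an initialisation that plants both $F$ and a gradient sphere of weight $|w|$ into the chain, and an iterative descent that turns it into a maximal downward chain.

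For the initialisation, split on the sign of $w$. If $w \leq -2$, pick any $p_1 \in F$; the weight $w$ in the $S^1$-representation on the normal bundle to $F$ at $p_1$ produces a downward gradient sphere $S_1$ with $(S_1)_{\max}=p_1$ and $w(S_1)=|w|$, and I set $p_2=(S_1)_{\min}$. If instead $w \geq 2$, I reverse the picture: pick $p_2 \in F$, let $S_1$ be the upward gradient sphere emerging from $p_2$ in the direction of the weight $w$ of the normal bundle, so $w(S_1)=w$, and set $p_1=(S_1)_{\max}$. Either way, the partial chain $(p_1,p_2;S_1)$ already satisfies Conditions 1 and 2 of Definition \ref{maximal}, with $p_i \in F$ for some $i \in \{1,2\}$ and $w(S_1)=|w|>1$.

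Now iterate the descent. Given a partial chain of length $n$, inspect the bottom fixed point $p_n$: if every weight at $p_n$ is $\geq -1$, Condition 3 of Definition \ref{maximal} holds, the chain is maximal, and we stop. Otherwise there is a weight $w_n \leq -2$ at $p_n$; use the corresponding eigenspace of the normal representation to emit a downward gradient sphere $S_n$ of weight $|w_n|>1$, and set $p_{n+1}=(S_n)_{\min}$. Repeat.

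The only thing to check is termination. Since $H$ is strictly decreasing along the chain, the $p_i$ lie in pairwise distinct connected components of $X^{S^{1}}$. Compactness of $X$ together with Lemma \ref{manysimpleproperties} (Morse--Bottness of $H$) forces $X^{S^{1}}$ to have only finitely many connected components, so the iteration halts at some stage $k$, giving the required maximal downward chain. I do not foresee a real obstacle here; the most delicate point is engineering the initialisation so that $F$ itself is placed into the chain regardless of the sign of $w$, and the asymmetric choice above does this cleanly.
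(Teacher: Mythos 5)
Your proof is correct and follows essentially the same route as the paper: start from a gradient sphere of weight $|w|$ through a point of $F$ (with the casework on the sign of $w$ determining whether $F$ sits at the top or bottom of that sphere), then iteratively descend through weights $\le -2$ until the bottom point has all weights $\ge -1$. The paper leaves the termination argument implicit, whereas you spell it out via the strict decrease of $H$ and finiteness of components of $X^{S^1}$; that is a harmless and reasonable bit of extra detail.
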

\begin{proof}
Note first that if $S$ is a gradient sphere with $w(S)>1$, then  $S_{\max},S_{\min}$ may be extended to form a maximal downward chain. Indeed, set $p_{1} = S_{\max}$ and $p_{2}=S_{\min}$. Inductively if $p_{k}$ has a weight $v<-1$, then we can find a gradient sphere $S_{k}$ of weight $v$ with $(S_{k})_{\max} = p_{k}$, and set $(S_{k})_{\min} = p_{k+1}$ continuing the sequence. We may continue until all the weights at $p_{k}$ are at least $-1$, i.e. we have a maximal downward chain.

If $F$ has a weight $w$, then any point of $F$  is contained in a gradient sphere of weight $|w|$. So, if $|w|>1$, then by the above there is a maximal downward chain such that $p_{i} \in F$ for $i=1$ or $i=2$.
\end{proof}

\begin{lemma}\label{chainres} Let $M$ be a symplectic Fano $6$-manifold with a Hamiltonian $S^1$-action. Suppose that $\dim(M_{\min}) = 4$, then the following holds. \begin{enumerate}
\item Any maximal downward chain $p_{1},\dots,p_{k} $ in $M$  satisfies $k =2$ and $w(S_{1}) = 2$. Furthermore, $p_{1}$ and $p_{2}$ are isolated fixed points, the weights at $p_{2}$ are $\{-1,-1,2\}$ and $H(p_{1}) \geq 2$. 

\item Suppose $M$ contains a fixed submanifold $F$ with a weight $w$ such that $|w|>1$. Then $F$ is an isolated fixed point. Moreover, $F$ coincides either with $p_{1}$ or with $p_{2}$ for some maximal downward chain as in 1. 

\item The weights at each fixed submanifold in $M$ have modulus at most 2.
\end{enumerate}
\end{lemma}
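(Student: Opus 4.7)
I plan to extract successive constraints on any maximal downward chain using the weight sum formula (Proposition \ref{relcho}), the integrality of areas provided by the symplectic Fano condition, Lemma \ref{missinglemma}, and effectiveness of the $S^1$-action. The starting point is that Lemma \ref{resweight}(1) applied to $M_{\min}$ (which has codimension $2$) forces its single non-zero normal weight to be $1$, so by the weight sum formula $H_{\min}=-1$. I then analyze the endpoint $p_k$ of a chain: since $p_k$ carries the positive weight $+w(S_{k-1})\geq 2$, it cannot lie in $M_{\min}$ (whose only non-zero weight equals $1$); combining $H(p_k)>H_{\min}$ (strict, and $H$-values at fixed submanifolds are integers by the weight sum formula) with all weights at $p_k$ being $\geq -1$ and the weight sum formula, I rule out positive-dimensional fixed components through $p_k$ by a case check on dimension, and force the weights at $p_k$ to be $\{-1,-1,2\}$ with $H(p_k)=0$ and $w(S_{k-1})=2$.

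Next I will prove part $3$, that every weight of the action has modulus at most $2$, by contradiction. Assuming some fixed point carries a weight $-N$ with $N\geq 3$, I select such a point $q$ with $H(q)$ maximal and form a maximal downward chain $q=q_1,\dots,q_L$ in that direction, using Lemma \ref{chain}. The chain must have $L\geq 3$, since if $L=2$ then $q_2$ would be the endpoint with weights $\{-1,-1,2\}$, which is inconsistent with the weight $+N\geq 3$ inherited from $V_1$ at $q_2$. At the intermediate vertex $q_2$ the maximality of $H(q)$ forces all its weights to be $\geq -2$, so its down-weight equals $-2$. The Fano bound along the remaining chain yields $H(q_2)\geq 2(L-2)\geq 2$, while the weight sum formula at $q_2$ combined with the bound $\geq -2$ on the third weight gives $H(q_2)\leq 4-N<2$, a contradiction. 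This establishes part $3$.

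For part $1$ I then assume $k\geq 3$ and carry out the analogous endpoint analysis at $p_{k-1}$, using $|w|\leq 2$, to get $w(S_{k-2})=2$ and then weights $\{-2,2,-2\}$ with $H(p_{k-1})=2$. For $k\geq 4$, pushing the same argument one step further at $p_{k-2}$ produces a third weight of magnitude at least $4$, contradicting part $3$; this rules out $k\geq 4$. The case $k=3$ is the main obstacle, since the numerical constraints at $p_2,p_3$ are mutually consistent, and I must rule it out by a finer case analysis on where $p_1$ can sit. A $4$-dimensional fixed component is excluded because Lemma \ref{resweight} forces the normal weight there to be $\pm 1$ whereas $p_1$ has weight $-2$. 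A $2$-dimensional saddle component is excluded by computing $H(F)\leq 1$ via the weight sum formula and $|w|\leq 2$, contradicting $H(p_1)\geq 4$. If $p_1$ lay in $M_{\max}$, the parity constraint $H(p_1)=2\int_{S_1}\omega\in 2\Z$, combined with $|w|\leq 2$ and $H(p_1)\geq 4$, would force both normal weights of $M_{\max}$ to equal $-2$; but then $\Z_2\subset S^1$ acts trivially on all tangent directions at $M_{\max}$ and hence on the connected manifold $M$ (its fixed locus would contain an open $6$-dimensional component), violating effectiveness. The remaining possibility that $p_1$ is isolated is eliminated directly: its three weights lie in $\{\pm 1,\pm 2\}$, include $-2$, sum to at most $-2$ (to make $H(p_1)\geq 4$) and include at least one positive weight since $p_1$ is not a local maximum, and no such triple exists.

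This leaves $k=2$ with $w(S_1)=2$ and the stated weights at $p_2$; the bound $H(p_1)\geq 2$ follows from Lemma \ref{gradsphereint} applied to $S_1$ together with the Fano integrality of $\int_{S_1}\omega$. That $p_1$ is isolated is proved by the same case analysis as in the excluded $k=3$ case, the $M_{\max}$-case again being ruled out by the effectiveness argument (the parity constraint $H(p_1)\in 2\Z$ together with $|w|\leq 2$ and $H(p_1)\geq 2$ force $M_{\max}$ to have normal weights $\{-2,-2\}$). Part $2$ then follows at once: any fixed submanifold $F$ with a weight of modulus exceeding $1$ lies in some maximal downward chain by Lemma \ref{chain}, and by part $1$ that chain has exactly two vertices $p_1,p_2$, both isolated, so $F$ coincides with one of them and in particular is an isolated fixed point.
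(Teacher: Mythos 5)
Your route is genuinely different from the paper's: the paper proves part 1 directly, invoking \cite[Lemma 2.6]{To} to conclude that the weights at $p_{k-1}$ other than $-2$ are \emph{odd}, from which the key bound on $|w_1'|$ and a contradiction with Lemma \ref{resweight} are extracted; parts 2 and 3 then follow in one line each. You instead prove part 3 first and feed it back into part 1. That reordering is a reasonable idea and sidesteps \cite[Lemma 2.6]{To}, but the execution has two genuine gaps.

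In your proof of part 3 you select $q$ carrying a weight $-N$ with $N\geq 3$ so that $H(q)$ is \emph{maximal}, and then claim this maximality forces every weight at $q_2$ to be $\geq -2$. That does not follow: $q_2$ lies strictly below $q$, so $q_2$ may carry a weight $\leq -3$ without contradicting the maximality of $H(q)$. What is needed is to choose $q$ with $H(q)$ \emph{minimal} among fixed submanifolds carrying a weight $\leq -3$; then $q_2$, being strictly below $q$, cannot carry such a weight, and the rest of your computation does go through.

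In the $k\geq 3$ analysis you describe the weights $\{-2,2,-2\}$ at $p_2$ as ``mutually consistent,'' but they are not: all three are even, so $\Z_2\subset S^1$ acts trivially on $T_{p_2}M$ and hence, by connectedness, on all of $M$, contradicting effectiveness. Once part 3 is known, $p_{k-1}$ acquires weights $\{-2,2,-2\}$ whenever $k\geq 3$, so this single observation already rules out $k\geq 3$ and makes your case analysis of $p_1$ unnecessary. This matters because, as written, that case analysis is incomplete: you treat $p_1$ in a $4$-dimensional component, on a $2$-dimensional saddle, in a $2$-dimensional $M_{\max}$, and isolated ``not a local maximum,'' but you omit the case that $p_1=M_{\max}$ is itself an isolated fixed point, e.g.\ with weights $\{-2,-1,-1\}$ and $H(p_1)=4$, which your listed constraints do not exclude. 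There are also two small slips: $H(p_1)=H(p_2)+2\int_{S_1}\omega=2+2\int_{S_1}\omega$, not $2\int_{S_1}\omega$ (the parity conclusion is unaffected), and ``sum to at most $-2$'' should read ``at most $-4$.''
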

\begin{proof}
1. Firstly, note that since the action is effective the weights at $M_{\min}$ are $\{0,0,1\}$. Hence,  $H(M_{\min}) = -1$ by the weight sum formula \ref{relcho}(\ref{weightcho}).

 Let $p_{1}, \dots, p_{k}$ be a maximal downward chain in $M$. First, we will prove that the weights at $p_{k}$ are $\{-1,-1,2\}$. Let $w_1\le w_2\le w_3$ be the weights at $p_k$.   By the weight sum formula \ref{relcho}(\ref{weightcho}) we have that
$$-w_1-w_2-w_3=H(p_k).$$ 

By condition 2) of Definition \ref{maximal}, $p_{k}$ has a weight with modulus at least $2$, hence $p_{k} \notin M_{\min}$. Therefore, $p_{k}$ must have some negative weight, and so $w_{1}<0$. On the other hand, condition 3) of Definition \ref{maximal} tells us that $w_1\ge -1$, so $w_{1}=-1$.

Since $p_{k} \notin M_{\min}$, $H(p_{k}) \geq  H(M_{\min}) + 1 = 0$. By the above expression for $H(p_{k})$, $$-1 + w_{2} + w_{3} \leq 0.$$ Condition 2) of Definition \ref{maximal} implies $w_3\ge 2$. Together with the above inequality, this implies that $w_{2} \leq -1$, but since also $w_{2} \geq w_{1} = -1$ we deduce that $w_1=w_2=-1$, $w_3=2$ and $H(p_{k}) = 0$. Let us now show that $k=2$ and $H(p_1)\ge 2$.

Since $w_3=2$, one of the weights at $p_{k-1}$ is $-2$. Let $w_1'\le w_2'$ be the two other weights. From \cite[Lemma 2.6]{To} it follows that $w_1'$ and $w_2'$ are odd integers, so that $p_{k-1}$ is an isolated fixed point with weights $\{-2,w_{1}',w_{2}'\}$.

Now we assume for a contradiction that $k\geq 3$. Applying condition 2) of Definition \ref{maximal} to $S_{k-2}$ we deduce that $w_2'\ge 3$. Applying the weight sum formula \ref{relcho}(\ref{weightcho}) to $p_{k-1}$ we see that 
$$-w_1'=H(p_{k-1})-2+w_2'\ge H(p_{k-1})+1.$$
Consider finally the gradient sphere $S$ with weight $|w_{1}'|$ and  $S_{\max} = p_{k-1}$. From the above inequality, along with Lemma \ref{gradsphereint}, we must have $S_{\min} \in M_{\min}$, which contradicts Lemma \ref{resweight}. 

Hence, $k=2$ and $p_{1} = p_{k-1}$ is an isolated fixed point. By applying Lemma \ref{gradsphereint} to $S_{1}$ we see that $H(p_{1}) \geq H(p_{2}) + 2 = 2$. 

2. Let $F$ be a fixed submanifold with a weight $w$ such that $|w|>1$. By Lemma \ref{chain} we can form a maximal downward chain with $p_{i} \in F$ for some $i$, which must be of the required form by 1. In particular, $F$ is an isolated fixed point.

3. Suppose for a contradiction that there was a fixed submanifold $F$ with a weight $w$, such that $|w|>2$, then by Lemma \ref{chain} we could form a maximal downward chain such that $w(S_{j})>2$ for some $j$, contradicting 1. \end{proof}

\begin{corollary} \label{newref}
Let $M$ be a symplectic Fano $6$-manifold with a Hamiltonian $S^1$-action. Suppose that $\dim(M_{\min}) =4$ and $\dim(M_{\max}) \geq  2$. Then the $S^{1}$-action on $M$ is semi-free and any component of $M^{S^{1}}$ in the level set $H^{-1}(0)$ is a surface with weights $\{-1,1,0\}$.
\end{corollary}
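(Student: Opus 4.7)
The plan is to combine Lemma~\ref{chainres} with the hypothesis $\dim(M_{\max})\ge 2$ and the weight sum formula (Proposition~\ref{relcho}) to rule out any weight of modulus $2$, after which both assertions follow easily. I first collect what Lemma~\ref{chainres} and Proposition~\ref{relcho} already give: $H(M_{\min})=-1$ with weights $\{0,0,1\}$, and every weight at any fixed submanifold has modulus at most $2$. Because $M_{\max}$ has positive dimension it is not an isolated fixed point, so Lemma~\ref{chainres}(2) forces its non-zero normal weights to have modulus exactly $1$; being non-positive, they must equal $-1$. The weight sum formula then gives $H(M_{\max})=1$ when $\dim(M_{\max})=4$ (weights $\{0,0,-1\}$) and $H(M_{\max})=2$ when $\dim(M_{\max})=2$ (weights $\{0,-1,-1\}$), so in either case $H_{\max}\le 2$.

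The heart of the argument is a proof by contradiction ruling out any weight of modulus $2$. Assuming one exists, Lemma~\ref{chain} produces a maximal downward chain through it, and Lemma~\ref{chainres}(1) pins this chain down completely: it consists of two isolated fixed points $p_1,p_2$ joined by a gradient sphere of weight $2$, with $H(p_2)=0$, weights $\{-1,-1,2\}$ at $p_2$, and $H(p_1)\ge 2$. If $\dim(M_{\max})=4$, then $H(p_1)\ge 2 > 1 = H_{\max}$, an immediate contradiction. If $\dim(M_{\max})=2$, then $H(p_1)=H_{\max}=2$, so $p_1\in H^{-1}(H_{\max})$; but by Lemma~\ref{manysimpleproperties} this level set is the connected $2$-dimensional surface $M_{\max}$, while $p_1$ is a $0$-dimensional (hence disjoint) component of $M^{S^1}$, contradiction. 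I expect the delicate subcase to be $\dim(M_{\max})=2$, where the contradiction rests on the connectedness of $M_{\max}$ as a single component of $M^{S^1}$ in the top level set; this is where the hypothesis $\dim(M_{\max})\ge 2$ is really used to cap $H_{\max}$.

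Once weights of modulus $2$ are excluded, all non-zero weights equal $\pm 1$, which means no isotropy submanifolds of weight $\ge 2$ exist and hence the action is semi-free. For the second claim, let $F$ be a component of $M^{S^1}$ with $H(F)=0$ and write its weights as $k$ zeros, $a$ ones and $b$ minus-ones, so $k+a+b=3$ and $\dim F=2k$. The weight sum formula gives $0=H(F)=b-a$, hence $a=b$ and $k+2a=3$. The only solutions are $(k,a,b)=(3,0,0)$, which would force $F=M$ and a trivial action, and $(k,a,b)=(1,1,1)$, giving a surface with weights $\{-1,0,1\}$, as required.
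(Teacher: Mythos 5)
Your proof is correct and follows essentially the same route as the paper's: pin down the weights and Hamiltonian values at the extrema from Lemma~\ref{chainres}(2) plus the weight sum formula, then invoke Lemma~\ref{chainres}(1)--(2) to derive an isolated fixed point $p_1$ with $H(p_1)\ge 2\ge H_{\max}$ and get a contradiction with $\dim(M_{\max})\ge 2$ because $H^{-1}(H_{\max})=M_{\max}$ is a positive-dimensional component; you merely spell out the two subcases $\dim(M_{\max})\in\{4,2\}$ explicitly and replace the paper's brief inspection of isolated fixed-point weight patterns at level $0$ with a short systematic enumeration $(k,a,b)$, but the underlying argument is the same.
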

\begin{proof}
By Lemma \ref{chainres}(2), the modulus of each weight at $M_{\max}$ is at most $1$. Hence, the weights at $M_{\max}$ are $\{-1,-1,0\}$ or $\{-1,0,0\}$ depending whether $\dim(M_{\max}) = 2 $ or $4$, and so $1 \leq H_{\max} \leq 2$ by the weight sum formula \ref{relcho}(\ref{weightcho}). Since by the same reasoning $H_{\min} = -1$, we deduce that $0 \in (H_{\min},H_{\max})$. 

Suppose for a contradiction that there exists a fixed submanifold $F$ with a weight $w$ such that $|w| \geq 2$. Then by Lemma \ref{chainres}(2) there is a pair of isolated fixed points $p_{1},p_{2} \in M$ with $H(p_{1}) \geq 2 \geq H_{\max}$, contradicting the assumption that $\dim(M_{\max}) \geq 2$. Hence all weights have modulus at most $1$, i.e. the action is semi-free.

To see that the only fixed submanifolds in the level set $H^{-1}(0)$ are surfaces, note that any isolated fixed point $p$ has weights $\{-1,1,1\}$ or $\{-1,-1,1\}$, so satisfies $H(p) = \pm 1$ by the weight sum formula \ref{relcho}(\ref{weightcho}). Therefore, the only possible fixed submanifolds in the level set $H^{-1}(0)$ are fixed surfaces with weights $\{-1,1,0\}$ as claimed.
\end{proof}

\subsection{Theorem \ref{fourtheo} and its proof}

\begin{theorem} \label{fourtheo}Let $M$ be a symplectic Fano $6$-manifold with a Hamiltonian $S^1$-action. Suppose that $\dim(M_{\min}) =4$ and $\dim(M_{\max}) \geq  2$. Then $M_{\min} $ is diffeomorphic to a del Pezzo surface.

\end{theorem}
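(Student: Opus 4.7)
The plan is to exploit the semi-freeness already established in Corollary \ref{newref} together with Duistermaat--Heckman to exhibit a symplectic Fano structure on $M_{\min}$, then invoke Ohta--Ono. By Corollary \ref{newref}, the $S^1$-action is semi-free, $M_{\min}$ has normal weights $\{0,0,1\}$, and any fixed component in the level set $H^{-1}(0)$ is a fixed surface with weights $\{-1,1,0\}$. Using the weight sum formula of Proposition \ref{relcho}, one checks that isolated fixed points must satisfy $H(p)=\pm 1$, any $4$-dimensional fixed component lies at $H=\pm 1$, and fixed surfaces can only sit at heights $-2$, $0$ or $2$. Since $H(M_{\min})=-1$, the interval $(-1,0)$ contains no critical values of $H$, so the gradient map identifies $M_c$ with $M_{\min}$ smoothly and symplectically (up to diffeomorphism) for every $c\in(-1,0)$.

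Next I would compute $[\omega_c]\in H^2(M_{\min};\R)$ via the Duistermaat--Heckman theorem (Lemma \ref{DH}). For $\varepsilon>0$ small, $e(H^{-1}(-1+\varepsilon))$ coincides with $c_1(N)$ where $N\to M_{\min}$ is the normal bundle of $M_{\min}$ in $M$; passing to the limit $\varepsilon\to 0$ gives
\[
[\omega_c]=[\omega|_{M_{\min}}]-(c+1)\,c_1(N)\qquad\text{for } c\in(-1,0).
\]
On the other hand, the symplectic Fano condition $c_1(M)=[\omega]$ restricted to $M_{\min}$, combined with the splitting $TM|_{M_{\min}}=TM_{\min}\oplus N$, yields $c_1(M_{\min})=[\omega|_{M_{\min}}]-c_1(N)$. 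Substituting, one finds $[\omega_c]=c_1(M_{\min})-c\cdot c_1(N)$, so $[\omega_c]\to c_1(M_{\min})$ as $c\to 0^-$, and in fact the cohomology class depends affinely on $c$.

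Finally I would promote the family $\{\omega_c\}_{c\in(-1,0)}$ to a symplectic form on $M_{\min}$ whose cohomology class is exactly $c_1(M_{\min})$. Concretely, pulling the $\omega_c$ back to $M_{\min}$ via the gradient identification produces a smooth one-parameter family of symplectic forms on $M_{\min}$ whose cohomology class tends to $c_1(M_{\min})$; extending this family smoothly up to $c=0$ gives a symplectic form $\omega_0$ on $M_{\min}$ with $[\omega_0]=c_1(M_{\min})$, i.e.\ a symplectic Fano structure. The theorem then follows from Ohta--Ono \cite[Theorem 1.3]{OhtaOno}, which asserts that every $4$-dimensional symplectic Fano is diffeomorphic to a del Pezzo surface.

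The main obstacle is the final extension step: one has to guarantee that the limiting $2$-form on $M_{\min}$ is non-degenerate. Equivalently, one must understand the behaviour of the symplectic quotient at the critical value $0$, where the reduced space undergoes a wall-crossing transition across the surface components with weights $\{1,-1\}$. This is essentially a mild generalisation of Cho's result \cite{Cho} (quoted in the paragraph preceding Theorem \ref{fourtheo} for regular values) to the present setting; the key point is that in the semi-free $\{1,-1\}$ situation the local reduced space near a fixed surface is smooth, so the family of reduced symplectic forms can be continued to $c=0$ without loss of non-degeneracy.
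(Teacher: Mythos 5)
Your overall strategy coincides with the paper's: apply Duistermaat--Heckman to compute the classes $[\omega_c]$ on the reduced spaces for $c\in(-1,0)$, observe that they converge to $c_1(M_{\min})$, and try to deduce from Ohta--Ono that $M_{\min}$ is a del Pezzo surface. The preliminary steps (semi-freeness from Corollary~\ref{newref}, the weight sum bookkeeping, the identity $[\omega_c]=c_1(M_{\min})-c\,c_1(N)$) are all correct and match the paper.

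However, the final extension step is a genuine gap, not merely an unproved lemma. You write that "the family of reduced symplectic forms can be continued to $c=0$ without loss of non-degeneracy," appealing to Cho's result, but Cho's result is quoted in the paper specifically under the hypothesis that $0$ is a \emph{regular} value of $H$, which fails here. The underlying issue is that the symplectic cone is open but not closed: knowing that $c_1(M_{\min})-c\,c_1(N)$ is a symplectic class for every $c\in(-1,0)$ says nothing about the boundary class $c_1(M_{\min})$ itself. To extend past $c=0$ one must actually analyse the wall-crossing through the $\{-1,1,0\}$ fixed surfaces, and this is the one step in the proof that requires a new tool. The paper does this by invoking Guillemin--Sternberg's birational equivalence theorem \cite[Theorem~10.1]{GS}: since all critical submanifolds on level $0$ are surfaces with weights $\{-1,1,0\}$, one may surger out a neighbourhood of $H^{-1}(0)$ and replace it by a Hamiltonian $S^1$-manifold $\tilde M$ with no fixed points, agreeing equivariantly and symplectically with $M$ on $H^{-1}(-\varepsilon,0)$. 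On $\tilde M$ the Duistermaat--Heckman linearity is valid across the whole interval, so $\tilde M_0$ is a \emph{bona fide} symplectic Fano $4$-manifold and hence a del Pezzo surface by Ohta--Ono. Since $\tilde M$ is fixed-point-free, $\tilde M_x\cong\tilde M_0$ for $x\in(-\varepsilon,0)$; combined with $\tilde M_x\cong M_x\cong M_{\min}$ one gets the diffeomorphism. You should replace your heuristic "the limit is non-degenerate because the local model is smooth" with this surgery argument, or give a self-contained proof that the reduced form at $c=0$ is smooth, non-degenerate, and in the class $c_1$ — which is essentially the content of the Guillemin--Sternberg result you would be reproving.
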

\begin{proof}
We will proceed by applying Lemma \ref{DH} to the reduced spaces $M_x$  of $M$. By the symplectic Fano condition $[\omega] = c_{1}(M) $ we get
$$[\omega|_{M_{\min}}]=c_{1}(TM|_{M_{\min}})=c_{1}(TM_{\min})+c_{1}(N),$$
where $N $ is the normal bundle of $M_{\min}$.

For $x \in (-1,0)$ we have that $e(H^{-1}(x)) = c_{1}(N)$ since for small $\varepsilon$, $H^{-1}(-1+\varepsilon)$ is isomorphic to the unit circle bundle of $N$ and all values of $H$ in $(-1,0)$ are regular. Applying Lemma \ref{DH} for $x \in (-1,0)$ we have \begin{displaymath}
[\omega_{x}] = c_{1}(TM_{\min})  - x c_{1}(N). 
\end{displaymath}
Hence $[\omega_{x}]$ tends to $[c_{1}(TM_{\min})]$ as $x$ tends to $0$.

Since by Corollary \ref{newref} all fixed submanifolds on level $0$ are fixed surfaces of weights $\{-1,1,0\}$, by \cite[Theorem 10.1]{GS} we may construct a Hamiltonian $S^{1}$-manifold  $(\tilde{M},\tilde{\omega},\tilde{H})$ and find $\varepsilon >0$ with the following properties: \begin{enumerate}
\item There is a smooth map $\Psi: \tilde{M} \rightarrow H^{-1}(-\varepsilon,\varepsilon) $, such that $ \Psi^{*}H = \tilde{H}$.
\item $\Psi|_{\tilde{H}^{-1}(-\varepsilon,0)} $ is an equivariant symplectomorphism onto $H^{-1}(-\varepsilon,0)$.
\item The $S^{1}$-action on $\tilde{M}$ has no fixed points.
\item $\tilde{M}_{0}$ is homeomorphic to $M_{0}$.
\end{enumerate}

Applying Lemma \ref{DH} as above, we see that $\tilde{M}_{0}$ is a symplectic Fano $4$-manifold, and so it is diffeomorphic to a del Pezzo surface \cite[Theorem 1.3]{OhtaOno}. Note that for any $x \in (-\varepsilon,0)$, $\tilde{M_{x}}$ is diffeomorphic to $\tilde{M}_{0}$ since $\tilde{M}$ has no fixed points. By conditions 1. and 2. above $\tilde{M}_{x}$ is diffeomorphic to $M_{x}$, and so $M_{x}$ is diffeomorphic to a del Pezzo surface. Lastly, note that the gradient map $gr_{-1}^{x}$ provides a diffeomorphism between $M_{x}$ and $M_{\min}$. \end{proof}

\subsection{Restrictions on fixed point data}\label{restictions04}

Finally, we give restrictions on the fixed point data, in the case when $\dim(M_{\min}) = 4$ and $\dim(M_{\max}) = 0$. The results of this section are used only in Section \ref{theorem04sec}.

\begin{proposition}\label{listofweights} Let $M$ be a symplectic Fano $6$-manifold with a Hamiltonian $S^1$-action such that $\dim(M_{\min}) =4$  and $M_{\max}$ is a point. Then the weights at $M_{\max}$ are either $\{-1,-1,-1\}$ or $\{-1,-1,-2\}$. Furthermore, non-extremal isolated fixed points in $M$ must have weights  $\{1,-1,-2\}$, $\{2,-1,-1\}$ or $ \{1,-1,-1\}$. 
\end{proposition}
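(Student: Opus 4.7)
The plan is to enumerate the permitted weight triples using Lemma \ref{chainres} and then to eliminate unwanted cases by combining Lemma \ref{chainres}(1)--(2) with the integrality of $\langle [\omega], S \rangle$ for a gradient sphere $S$, which follows from the Fano condition $[\omega] = c_{1}(M)$. For $M_{\max}$, all three weights are negative and of modulus at most $2$ by Lemma \ref{chainres}(3), so the candidates are $\{-1,-1,-1\}$, $\{-1,-1,-2\}$, $\{-1,-2,-2\}$, and $\{-2,-2,-2\}$. The last is excluded because the weights at a fixed point of an effective $S^{1}$-action must be coprime: if some $d > 1$ divided every weight, then $\mathbb{Z}_{d}$ would act trivially on $T_{p}M$, hence by a slice argument on a neighbourhood of $p$, hence on the connected manifold $M$. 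To rule out $\{-1,-2,-2\}$, I would use Lemma \ref{chainres}(1): a weight-$2$ gradient sphere $S$ from $M_{\max}$ starts a maximal downward chain whose other endpoint has weights $\{-1,-1,2\}$ and Hamiltonian value $0$, so using $H(M_{\max}) = 5$ from the weight sum formula, Lemma \ref{gradsphereint} yields $\langle [\omega], S \rangle = 5/2$. This contradicts $\langle [\omega], S\rangle = \langle c_{1}(M), S\rangle \in \mathbb{Z}$ and leaves only the two claimed possibilities.

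For non-extremal isolated fixed points the weights are again in $\{-2,-1,1,2\}$ with $\gcd = 1$, and because the minimum and maximum levels of $H$ are unique components of $M^{S^{1}}$ by Lemma \ref{manysimpleproperties}, the Hamiltonian value satisfies $H(p) \in (-1, H_{\max})$ strictly, with $H_{\max} \in \{3, 4\}$ from the previous step. Moreover such a $p$ must be a Morse--Bott saddle --- with at least one positive and one negative weight --- because the Hamiltonian of an $S^{1}$-action on a compact connected symplectic manifold has a unique local minimum and a unique local maximum, a fact implicit in Theorem \ref{Li} via the connectedness of all level sets. Enumerating coprime mixed-sign triples of modulus at most $2$ that satisfy the $H$-range constraint then yields exactly six candidates: the three listed in the Proposition (with $H$-values $1$, $0$ and $2$), together with $\{-2,1,1\}$ at $H = 0$, $\{2,-1,-2\}$ at $H = 1$, and $\{1,-2,-2\}$ at $H = 3$. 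Each of the first two of these unwanted triples contains a weight of modulus $2$, so by Lemma \ref{chainres}(2) it must appear as $p_{1}$ or $p_{2}$ of a maximal downward chain; the weights rule out $p_{2} = \{-1,-1,2\}$, and Lemma \ref{chainres}(1) requires $H(p_{1}) \geq 2$, contradicting $H = 0$ or $1$. The remaining triple $\{1,-2,-2\}$ at $H = 3$ escapes the chain argument but not the Fano-integrality trick of the first paragraph: its weight-$2$ gradient sphere lands at $\{-1,-1,2\}$ at $H = 0$, so Lemma \ref{gradsphereint} gives $\langle [\omega], S\rangle = 3/2 \notin \mathbb{Z}$.

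I expect the main obstacle to be the saddle condition --- specifically ruling out a hypothetical non-extremal local maximum with weights $\{-1,-1,-1\}$ at $H = 3$ in the case $H_{\max} = 4$. None of the weight-sum, chain-structure or Fano-integrality tools touch this case directly, and it is eliminated only by the global connectedness of the level sets of a Hamiltonian $S^{1}$-action on a connected manifold, which I would cite as a standard consequence of Atiyah's convexity theorem or read off from the $\pi_{1}$-isomorphisms in Theorem \ref{Li}.
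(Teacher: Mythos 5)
Your proof is correct, and it reaches the same conclusion as the paper but through a genuinely different combination of tools. The paper's proof exploits the parity statement from Tolman's \cite[Lemma~2.6]{To} a second time: applied to the weight-$2$ gradient sphere from $p_1$ down to $p_2$ (whose weights are $\{-1,-1,2\}$), it forces the two weights at $p_1$ other than $-2$ to be odd, hence $\pm 1$ by Lemma~\ref{chainres}(3). This single congruence kills $\{-2,-2,-2\}$, $\{-1,-2,-2\}$, $\{1,-2,-2\}$ and $\{2,-1,-2\}$ at once. You instead dispatch $\{-2,-2,-2\}$ by coprimality of weights at a fixed point of an effective action, and $\{-1,-2,-2\}$, $\{1,-2,-2\}$ by observing that the weight-$2$ gradient sphere landing at $p_2$ would have half-integer $\omega$-area, contradicting $\int_S\omega=\langle c_1(M),[S]\rangle\in\mathbb Z$. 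Both arguments are sound; yours has the virtue of being more elementary (it re-uses only Lemma~\ref{chainres}, Lemma~\ref{gradsphereint}, the weight sum formula, and integrality of $c_1$, without invoking Tolman's parity lemma again), at the cost of a somewhat longer case enumeration. The shared core — $H(p_1)\geq 2$ and $p_2$ having weights $\{-1,-1,2\}$, from Lemma~\ref{chainres}(1)–(2) — handles $\{-2,1,1\}$ and $\{2,-1,-2\}$ identically in both proofs, though the paper phrases this as ``$\{1,-2,w\}$ with $w=\pm1$, and $H\geq 2$ forces $w=-1$.''

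One remark on your stated worry at the end: there is no gap there. That a non-extremal isolated fixed point has at least one positive and one negative weight (so that $\{-1,-1,-1\}$ or $\{1,1,1\}$ cannot occur non-extremally) follows from the connectedness of level sets of $H$ — the Atiyah–Guillemin–Sternberg fact you cite — which forces the local maximum and minimum of $H$ to be unique. The paper uses this same fact implicitly when it writes the weights at a non-extremal $p$ in the form $\{1,-2,w\}$ (assuming the existence of a positive weight without comment), so you are on exactly the same footing as the authors.
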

\begin{proof}
Let $p$ be an isolated fixed point in $M$ such that there is a weight $w$ at $p$ with $|w|>1$. We will show that the weights at $p$ are either $\{-1,-1,2\}$, or $\{-1,1,2\}$, or $\{-1,-1,-2\}$. From this statement it clearly follows that the weights at $M_{\max}$ are either $\{-1,-1,-1\}$ or $\{-1,-1,-2\}$.

By Lemma \ref{chainres}(2), $p $ is equal to one of the fixed points in a maximal downward chain $p_{1},p_{2}$ of isolated fixed points such that the weights at $p_{2}$ are $\{-1,-1,2\}$ and $H(p_{1}) \geq 2$. Hence we may assume that $p=p_{1}$. By \cite[Lemma 2.6]{To} the weights at $p$ are $\{-2,w_{1}',w_{2}'\}$ where $w_{i}'$ are odd integers. Hence, by Lemma \ref{chainres}(3) $w_{i}' = \pm 1$ for each $i$. Consider now two cases.

\textbf{Case one} $p \neq M_{\max}$. The weights at $p$ are $\{1,-2,w\}$, where $w=1$ or $-1$ .  By the weight sum formula $H(p) = 1-w \geq 2$, so we must have $w = -1$.

\textbf{Case two} $p = M_{\max}$. In this case clearly the weights at $p$ are $\{-1,-1,-2\}$. 

Hence the weights at $p$ fall into the three possibilities claimed.

To finish the proof it remains to show that non-extremal isolated fixed points with weights of modulus $1$, must have weights $\{-1,-1,1\}$. The only other possibility is a fixed point $p $ with weights $\{-1,1,1\}$. In this case $H(p)=-1 = H_{\min}$ by the weight sum formula \ref{relcho}(\ref{weightcho}), which is a contradiction.
\end{proof}

\begin{definition} Let $M$ be a symplectic Fano $6$-manifold with a Hamiltonian $S^{1}$-action such that $\dim(M_{\min}) =4$ and $M_{\max}$ is a point. Fixed points in $M$ with weights $\{1,-1,-2\}$, $\{2,-1,-1\}$ and $ \{1,-1,-1\}$ are referred to as fixed points of types $A$, $B$ and $C$  respectively. We define $n_{A}$, $n_{B}$ and $n_{C}$ to be the number of fixed points in $M$ of type $A$, $B$ and $C$ respectively.
\end{definition}
 
\begin{lemma}\label{nAnB}
Let $M$ be a symplectic Fano $6$-manifold with a Hamiltonian $S^{1}$-action such that $\dim(M_{\min}) =4$ and $M_{\max}$ is a point.
\begin{enumerate}
\item In the case when the weights at $M_{\max}$ are $\{ -1,-1,-1 \}$, we have $n_{A} =  n_{B}$.

\item In the case when the weights at $M_{\max}$ are $\{ -1,-1,-2 \}$, we have $n_A+1=n_B$. 

\item Any $2$-dimensional component of $M^{S^1}$ is contained in the level set $H^{-1}(0)$.
\end{enumerate}
\end{lemma}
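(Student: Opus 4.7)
The plan is to dispose of part 3 first, which follows almost immediately from Lemma \ref{chainres}, and then handle parts 1 and 2 by a single counting argument based on weight $2$ gradient spheres.

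For part 3, any fixed surface $F$ carries three weights, one of which is zero (tangent to $F$). By Lemma \ref{chainres}(3) the weights have modulus at most $2$, and by Lemma \ref{chainres}(2) any fixed submanifold with a weight of modulus greater than $1$ is an isolated point, so the two non-zero weights lie in $\{-1,1\}$. The options $\{1,1\}$ and $\{-1,-1\}$ would force $F$ to be a local minimum or maximum of $H$, hence contained in $M_{\min}$ or $M_{\max}$, which is ruled out because $M_{\min}$ has dimension $4$ and $M_{\max}$ is a point. So the weights at $F$ are $\{1,-1,0\}$, and the weight sum formula (Proposition \ref{relcho}) gives $H(F)=0$.

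For parts 1 and 2, I would enumerate the weight $2$ gradient spheres of $M$ by looking at both endpoints. Given such a sphere $S$, the tangent direction to $S$ at $S_{\min}$ (respectively $S_{\max}$) is a weight $+2$ (respectively weight $-2$) eigenspace in $T_{S_{\min}} M$ (respectively $T_{S_{\max}} M$), so by Lemma \ref{chainres}(2) both endpoints are isolated fixed points. Inspecting Proposition \ref{listofweights} together with the weights $\{0,0,1\}$ at $M_{\min}$ and the two allowed weight sets at $M_{\max}$, the only isolated fixed points possessing a weight $+2$ are type B points, while those possessing a weight $-2$ are type A points plus, in case 2, $M_{\max}$ itself. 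Moreover, each such point has a \emph{unique} weight $\pm 2$ direction, which integrates to a unique weight $2$ gradient sphere emanating downward from $S_{\max}$ or upward from $S_{\min}$.

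Consequently, the assignment $S\mapsto S_{\min}$ puts weight $2$ gradient spheres in bijection with the set of type B points, while $S\mapsto S_{\max}$ gives a bijection onto the set of type A points in case 1, and onto the set of type A points together with $\{M_{\max}\}$ in case 2. Comparing cardinalities yields $n_A=n_B$ and $n_A+1=n_B$, respectively. There is no real obstacle here: the only thing to double-check is that the list of fixed points carrying a weight $\pm 2$ is exhaustive (which is a direct read-off from Proposition \ref{listofweights}, using that $M_{\min}$ and, in case 1, $M_{\max}$ have no such weight, and using part 3 to rule out fixed surfaces).
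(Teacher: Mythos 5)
Your argument is correct and is essentially the same as the paper's: the paper's proof of parts 1 and 2 invokes Lemma~\ref{chainres}(1), whose content is precisely that each type~A point (and $M_{\max}$ in case~2) is joined to a type~B point by a single weight-$2$ gradient sphere, which is the bijection you construct; your proof of part~3 also matches the paper's use of Lemma~\ref{chainres}(2) and the weight sum formula. Your version merely makes the bijection explicit by counting weight-$2$ gradient spheres from both endpoints, which is a slightly more spelled-out form of the same counting.
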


\begin{proof}

Statements 1 and 2 follow from applying Lemma \ref{chainres}(1) to a maximal downward chain with $p_{1}$ equal to a fixed point with weights $\{-1,1,-2\}$ or $\{-1,-1,-2\}$. 

By Lemma \ref{chainres}(2) any fixed surface $\Sigma$ has weights of modulus at most one. Hence the only possibility for the weights is $\{-1,1,0\}$, and Statement 3 follows from applying the weight sum formula \ref{relcho}(\ref{weightcho}). \end{proof}

\section{Hamiltonian $6$-manifolds with $2$-dimensional extrema: topology}\label{sectopology}

For the remainder of the article the main object of our consideration will be  $6$-dimensional Hamiltonian $S^1$-manifolds such that $M_{\min}$ and $M_{\max}$ are surfaces of positive genus $g>0$. 

Here, we begin to lay foundations for the construction of a \textit{symplectic fibre}, which is central to the proof of our main results. To be more specific, we will construct the symplectic analogue of an invariant K\"ahler hypersurface that always exists in the K\"ahler case, which we will now recall.

{\bf K\"ahler case.} Assume that the $S^1$-action on $M$ preserves a K\"ahler metric $g$ compatible with $\omega$. In this case one can show that the following holds.

\begin{enumerate}

\item The $S^1$-action on $M$ extends to a holomorphic $\mathbb C^*$-action on $M$.

\item There is a holomorphic map $\phi: M\to M_{\min}$ sending any $\mathbb C^*$-orbit to a point in $M_{\min}$. A generic $\mathbb C^*$-orbit is sent by $\phi$ to the unique point of $M_{\min}$ in its closure. 

\item The map $\phi$ induces a holomorphic map $\phi_c$ from each reduced space $M_c$ to $M_{\min}$. For a generic $x\in M_{\min}$ the fibre $\phi_c^{-1}(x)$ is a smooth rational curve.  $M_c$ is a $2$-dimensional complex orbifold for regular $c$  and $\phi_c^{-1}(x)$ is a sub-orbifold.

\item For a generic point $x\in M_{\min}$ the preimage $\phi^{-1}(x)$ in $M$ is a smooth complex surface with a $\mathbb C^*$-action with isolated fixed points. Hence it is a toric surface.
\end{enumerate} 

{\bf Symplectic case.} Our strategy will be to show that even in the case when a compatible $S^1$-invariant K\"ahler metric does not exist on $M$, many of the above properties have  topological and symplectic analogues\footnote{It would be  interesting to learn if a compatible $S^1$-invariant K\"ahler metric does exist, but we expect it is not always the case.}. All these analogues will be worked out in Sections \ref{sectopology}-\ref{sympfibresec}, we briefly summarize this work below.

\begin{itemize}

\item In the present section we introduce a topological analogue of the above holomorphic map $\phi : M\to M_{\min}$. We give a cohomological characterization of the fibre of the induced map $\phi_c:M_{c}\to M_{\min}$ (see Theorem \ref{Thefibreclass}).

\item Section \ref{locKahlsec} serves as a bridge between symplectic and complex situations. In particular, we introduce K\"ahler structures compatible with the symplectic one on an open neighbourhood of the set of isotropy spheres and fixed points in $M$.

\item Section \ref{orbispheresection} deals with existence problems for symplectic orbi-spheres in symplectic orbifolds of dimension $4$ with cyclic stabilizers. We prove there Theorem \ref{twoinoneOrbisphere}.

\item In Section \ref{sympspheresec} we use results of  Section \ref{orbispheresection} to prove  Theorem \ref{fibreinregular}, stating that for all regular values of $c$ the reduced space $M_c$ contains a symplectic orbi-sphere in the cohomology class of the fibre of the map $\phi_c: M_c\to M_{\min}$. An analogue of this statement for critical values is proven in Theorem \ref{critfibres}.

\item Finally, in Section \ref{sympfibresec} we establish the existence of symplectic fibre - a symplectic hypersurface in $M$ that sits in the same homology class as the preimage of a generic point in $M_{\min}$ under the map $\phi$.
\end{itemize}

\subsection{Retraction of $M$ to $M_{\min}$}

In this subsection we  introduce an $S^1$-invariant retraction $\phi$ from $M$ to $M_{\min}$.

\begin{lemma} \label{retractmin} Let $M$ be a compact Hamiltonian $S^1$-manifold such that $M_{\min}$ is a surface of genus $g>0$. Then there exists an $S^1$-invariant retraction $\phi: M\to M_{\min}$. 
\end{lemma}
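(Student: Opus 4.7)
The plan is to build $\phi$ as a composition $\psi \circ q$, where $q \colon M \to M/S^{1}$ is the orbit map and $\psi \colon M/S^{1} \to M_{\min}$ is a topological retraction of the orbit space onto $M_{\min}$ (regarded as a subspace of $M/S^{1}$ via the homeomorphism $q|_{M_{\min}}$, since $S^{1}$ acts trivially on $M_{\min}$). The composition $\phi = \psi \circ q$ will then automatically be $S^{1}$-invariant and restrict to the identity on $M_{\min}$, so everything reduces to producing $\psi$.

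To construct $\psi$, I would first combine Theorem \ref{Li} with Theorem \ref{Li2}: the former identifies $\pi_{1}(M_{\min})$ with $\pi_{1}(M)$, the latter identifies $\pi_{1}(M)$ with $\pi_{1}(M/S^{1})$. Together they imply that the inclusion $i \colon M_{\min} \hookrightarrow M/S^{1}$ induces an isomorphism $i_{*} \colon \pi_{1}(M_{\min}) \xrightarrow{\sim} \pi_{1}(M/S^{1})$. Since $M_{\min}$ is a closed orientable surface of genus $g > 0$, its universal cover is either $\R^{2}$ or the hyperbolic plane, so $M_{\min}$ is aspherical and hence a $K(\pi_{1}(M_{\min}), 1)$. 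The universal property of Eilenberg--MacLane spaces then furnishes a continuous map $\psi_{0} \colon M/S^{1} \to M_{\min}$ inducing $i_{*}^{-1}$ on fundamental groups. Then $\psi_{0} \circ i$ acts as the identity on $\pi_{1}(M_{\min})$, and by asphericity of $M_{\min}$ it is homotopic to $\mathrm{id}_{M_{\min}}$.

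Finally, I would upgrade this homotopy to an equality by applying the homotopy extension property to the pair $(M/S^{1}, M_{\min})$: extending the homotopy from $\psi_{0} \circ i$ to $\mathrm{id}_{M_{\min}}$ across all of $M/S^{1}$ (starting from $\psi_{0}$) yields a map $\psi$ whose restriction to $M_{\min}$ is the identity. The main technical point — and the only real issue — is justifying the HEP in this orbifold setting; this I would handle by fixing an $S^{1}$-equivariant smooth triangulation of $M$ in which $M_{\min}$ is a subcomplex (which exists by standard equivariant triangulation results), so that after passing to the quotient, $(M/S^{1}, M_{\min})$ becomes a CW pair and hence a cofibration. Equivalently, one may argue directly from an equivariant tubular neighbourhood of $M_{\min} \subset M$, whose quotient provides a collar of $M_{\min}$ in $M/S^{1}$ that deformation retracts onto it.
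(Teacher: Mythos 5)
Your proposal is essentially the same argument the paper gives: both pass to $M/S^{1}$, use Theorems \ref{Li} and \ref{Li2} to identify fundamental groups, invoke asphericity of $M_{\min}$ to realize the inverse isomorphism by a continuous map, and then apply the homotopy extension property to upgrade that map to a retraction. Your proposal adds a welcome justification for why HEP applies to the pair $(M/S^{1}, M_{\min})$, a point the paper passes over silently.
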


\begin{proof}\label{retractionlemma} Recall that by Theorem \ref{Li} the homomorphism $i_{*}:\pi_1(M_{\min})\to \pi_1(M)$ induced by the inclusion is an isomorphism. By Theorem \ref{Li2} the quotient map $Q:M\rightarrow M/S^{1}$ induces an isomorphism $Q_{*}: \pi_{1}(M) \rightarrow \pi_{1}(M/S^{1})$.

 Since $M_{\min}$ is a $K(\pi,1)$ space, any homomorphism $\varphi:\pi_1(M/S^{1})\to \pi_1(M_{\min})$ can be induced by a continuous map $\pi: M/S^{1}\to M_{\min}$. It follows that the isomorphism $i_{*}^{-1} \circ Q_{*}^{-1}: \pi_1(M/S^{1})\to \pi_1(M_{\min})$ is induced by a continuous map $\pi$. By the homotopy extension property $\pi$ can be homotoped to a retraction $\Phi: M/S^{1} \rightarrow M_{\min}$. Finally, the $S^{1}$-invariant retraction is given by $\phi = \Phi \circ Q$. \end{proof}

\begin{corollary} Let $x$ be a value of $H$ and let $M_x$ be the reduced space. Let 
$\phi_{x} : M_{x} \rightarrow M_{\min}$, be the map induced by  the $S^1$-invariant retraction $\phi$. Then $\phi_{x}$ induces an isomorphism on $\pi_{1}$.
\end{corollary}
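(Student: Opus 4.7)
The plan is to factor $\phi_x$ through the full quotient $M/S^1$ and reduce the claim to $\pi_1$-isomorphisms already established in Theorems \ref{Li} and \ref{Li2}. Since $\phi$ is $S^1$-invariant, it descends to a continuous retraction $\bar\phi\colon M/S^1\to M_{\min}$, and by construction
\[
\phi_x \;=\; \bar\phi\circ i_x,
\]
where $i_x\colon M_x\hookrightarrow M/S^1$ is the natural inclusion of the reduced space (identifying $M_x=H^{-1}(x)/S^1$ with its image under the quotient $Q\colon M\to M/S^1$). It therefore suffices to show that both $\bar\phi$ and $i_x$ induce isomorphisms on $\pi_1$.

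First I would handle $\bar\phi$. By Theorem \ref{Li}, $M_{\min}\hookrightarrow M$ induces a $\pi_1$-isomorphism, and by Theorem \ref{Li2}, $Q$ does too. Composing, the inclusion $j\colon M_{\min}\hookrightarrow M/S^1$ (which equals $Q|_{M_{\min}}$, since $S^1$ acts trivially on $M_{\min}$) is a $\pi_1$-isomorphism. Because $\bar\phi$ is a retraction onto this copy of $M_{\min}$, we have $\bar\phi\circ j=\mathrm{id}_{M_{\min}}$, and applying $\pi_1$ gives $\bar\phi_*\circ j_*=\mathrm{id}$, whence $\bar\phi_*=j_*^{-1}$ is an isomorphism.

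The main obstacle is the analogous claim for $i_x$. The second statement of Theorem \ref{Li} provides only an \emph{abstract} isomorphism $\pi_1(M_x)\cong\pi_1(M)$, whereas for the factorization above I need this iso to be realized precisely by $(i_x)_*$ composed with $Q_*^{-1}$. I would verify this by appealing directly to the construction in \cite{Li}: the iso is obtained by lifting loops from $M_x$ to $H^{-1}(x)$ and including into $M$, using that the $S^1$-orbit through any point is nullhomotopic in $M$ (it bounds a disk near any fixed point of $M_{\min}$). Alternatively, the $S^1$-invariant gradient flow of $H$ descends to $M/S^1$, and because every non-extremal component of $M^{S^1}$ has codimension at least two, a Morse-theoretic deformation-retraction argument shows that $i_x$ is a homotopy equivalence, hence in particular a $\pi_1$-isomorphism.

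Granting this, $(\phi_x)_*=\bar\phi_*\circ(i_x)_*$ is a composition of isomorphisms, as desired.
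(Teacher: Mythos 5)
Your factorization $\phi_x=\bar\phi\circ i_x$ and the reduction of $\bar\phi_*$ to Theorems \ref{Li} and \ref{Li2} via the retraction identity are clean and correct, and your primary route for handling $i_x$ --- unwinding Li's explicit construction of the isomorphism $\pi_1(M_x)\cong\pi_1(M)$ (lift a loop to $H^{-1}(x)$, close it up along a null-homotopic $S^1$-orbit) and observing that composing with $Q_*$ recovers $(i_x)_*$ --- is essentially the same argument the paper gestures at when it says the natural isomorphism from \cite{Li} ``commutes with the homomorphisms induced by maps $\phi_x$.'' You have simply spelled out the commuting diagram the paper leaves implicit, which is a worthwhile clarification.

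One caveat: the \emph{alternative} you sketch --- that a Morse-theoretic argument shows $i_x\colon M_x\hookrightarrow M/S^1$ is a \emph{homotopy equivalence} --- overclaims and is false in general. For a generic Hamiltonian $S^1$-action on $\C\PP^2$, $M/S^1$ is a contractible triangle while $M_x$ at a regular value is topologically $S^2$, so $i_x$ cannot be a homotopy equivalence ($\pi_2$ differs). What one actually needs is only the $\pi_1$ statement, which does hold, but it is not a consequence of a deformation retraction; it is precisely the content of \cite[Theorem 0.1]{Li}, so your first route is the one to keep.
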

\begin{proof} This corollary follows from the proof \cite[Theorem 0.1]{Li}. In \cite{Li} a natural isomorphism of fundamental groups of all reduced spaces $M_x$ is given and this isomorphism  commutes with the homomorphisms induced by maps $\phi_{x}$.
\end{proof}

We will need one more technical statement in the setting of Lemma  \ref{retractionlemma}.

\begin{corollary}\label{gradphicomute} Let $c\in (H_{\min}, H_{\max})$ be a critical value of $H$, and $d>c$ (or $d<c$) be such that the interval $[c,d]$ (or $[d,c]$) does not contain critical values of $H$ apart from $c$. Consider the gradient map $gr_c^d: M_d\to M_c$. Then the maps $\phi_c\circ gr_c^d$ and $\phi_d$ from $M_d$ to $M_{\min}$ are homotopic.   
\end{corollary}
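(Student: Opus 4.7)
The plan is to treat the case $d > c$; the case $d < c$ is analogous by time-reversal of the gradient flow. Set $W = H^{-1}([c, d]) \subset M$. The $S^1$-invariant retraction $\phi: M \to M_{\min}$ produced in Lemma \ref{retractmin} restricts to an $S^1$-invariant map on $W$ and hence descends to a continuous map $\bar\phi: W/S^1 \to M_{\min}$. Under the natural inclusions $i_c: M_c \hookrightarrow W/S^1$ and $i_d: M_d \hookrightarrow W/S^1$, one has $\bar\phi \circ i_c = \phi_c$ and $\bar\phi \circ i_d = \phi_d$, so the task reduces to producing a continuous homotopy inside $W/S^1$ between $i_d$ and $i_c \circ gr_c^d$, which one then post-composes with $\bar\phi$.

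To build such a homotopy, I would exhibit an $S^1$-equivariant strong deformation retract of $W$ onto $H^{-1}(c)$ induced by the downward gradient flow. Fix an $S^1$-invariant compatible metric. For $y \in H^{-1}((c, d])$ consider the trajectory of $-\nabla H$ starting at $y$, reparametrized so that $H$ decreases along it at unit rate; since $c$ is the only critical value of $H$ in $[c, d]$, the trajectory reaches $H^{-1}(c)$ in parameter time exactly $H(y) - c$, either at a regular point of $H^{-1}(c)$ or by asymptotically limiting to a point of $M^{S^1} \cap H^{-1}(c)$. Define $R: W \times [0, 1] \to W$ by letting $R(y, t)$ be the point on this trajectory at parameter time $t \cdot (H(y) - c)$, and $R(y, t) = y$ for $y \in H^{-1}(c)$. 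By construction $R$ is $S^1$-equivariant, satisfies $R(\cdot, 0) = \mathrm{id}_W$ and $R(W, 1) \subseteq H^{-1}(c)$, and restricts to the identity on $H^{-1}(c) \times [0, 1]$.

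The main (and essentially only) technical point is the continuity of $R$ along the slice $t = 1$ at those $y$ whose gradient trajectories are asymptotic to $M^{S^1} \cap H^{-1}(c)$; this is precisely the content of Remark \ref{gradmapremark} applied to the interval $[c, d]$. Descending $R$ to the $S^1$-quotient yields a continuous homotopy $\rho: (W/S^1) \times [0, 1] \to W/S^1$ from $\mathrm{id}_{W/S^1}$ to a retraction onto $M_c$. Restricting $\rho$ to $M_d \times [0, 1]$ provides a homotopy in $W/S^1$ from $i_d$ to a map which, by Definition \ref{gradmapdef}, agrees with $i_c \circ gr_c^d$. Post-composing with $\bar\phi$ delivers the desired homotopy between $\phi_d$ and $\phi_c \circ gr_c^d$, completing the argument.
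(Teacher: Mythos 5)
Your proof is correct and follows essentially the same route as the paper: the paper's one-line argument considers the family $\phi_t \circ gr_t^d$, $t \in [c,d]$, which is precisely your homotopy $\bar\phi \circ \rho|_{M_d \times [0,1]}$ after the affine reparametrization $t = d - s(d-c)$ of the time interval. You spell out the $S^1$-quotient and the continuity at the critical slice (correctly attributing it to Remark \ref{gradmapremark}), which the paper leaves implicit, but the underlying idea — track $\phi$ along the downward gradient flow — is identical.
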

\begin{proof} Consider the family of maps $\phi_t\circ gr_t^d: M_d\to M_{\min}$ parametrised by $t\in [c,d]$. This family of maps  provides a homotopy between $\phi_c\circ gr_c^d$ and $\phi_d$. \end{proof}

\subsection{The fibre class}

\begin{definition} Let $M$ be a Hamiltonian $S^1$-manifold of dimension $6$ with $M_{\min}$ a surface of positive genus and let $A\in H^2(M_{\min})$ be the class with $\int_{M_{\min}} A=1$. Let $\phi: M\to M_{\min}$ be an $S^1$-invariant retraction and $\phi_c: M_c\to M_{\min}$ be the induced map. The class $\phi_c^*(A)\in H^2(M_c)$ and its Poincar\'e dual are called the \emph{fibre classes} of $M_c$. We will denote $\phi_c^*(A)$ by ${\cal F}^*_c$ and the Poincar\'e dual class by ${\cal F}_c$.
\end{definition}

Recall from Theorem \ref{Mcsingature} the intersection form on $H^{2}(M_{c})$ has signature $(1,n)$. The following lemma from linear algebra will be useful for studying cohomology classes in $M_{c}$.

\begin{lemma}\label{quadralgebra} Let $Q$ be a symmetric bilinear form on $\mathbb R^{n+1}$ of signature $(1,n)$. Suppose that $v,w\in \mathbb R^{n+1}$ are non-zero vectors such that $Q(w,w)>0$ and $Q(v,v)=0$. Then $Q(w,v)\ne 0$.
\end{lemma}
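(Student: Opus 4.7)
The plan is to use the orthogonal decomposition induced by a positive vector in a Lorentzian form. First I would note that since $Q(w,w) > 0$, the one-dimensional subspace $\mathbb{R}\langle w\rangle$ is positive definite and in particular non-degenerate for $Q$. This gives a direct sum decomposition $\mathbb{R}^{n+1} = \mathbb{R}\langle w\rangle \oplus w^{\perp}$, where $w^{\perp}$ denotes the $Q$-orthogonal complement of $w$.

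Next I would invoke Sylvester's law of inertia. The full form has signature $(1,n)$ and its restriction to $\mathbb{R}\langle w\rangle$ has signature $(1,0)$; hence the restriction of $Q$ to the $n$-dimensional complement $w^{\perp}$ must have signature $(0,n)$, i.e.\ be negative definite. Therefore every nonzero vector in $w^{\perp}$ has strictly negative $Q$-square.

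The final step is the contrapositive: since $v \neq 0$ and $Q(v,v) = 0$, the vector $v$ cannot lie in $w^{\perp}$, which forces $Q(w,v) \neq 0$. I do not anticipate any real obstacle: the lemma is essentially immediate from Sylvester's law once one sets up the orthogonal splitting, and the only mild point to keep in mind is that the splitting is legitimate precisely because $Q(w,w) \neq 0$.
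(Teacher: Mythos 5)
Your proof is correct, and it takes a mildly different route than the paper's. The paper restricts $Q$ to the two-plane $\mathrm{span}(w,v)$, asserts this restriction has signature $(1,1)$, and then observes that in a nondegenerate $(1,1)$-plane the orthogonal complement of a null vector is its own span, which cannot contain $w$. You instead decompose the ambient space as $\mathbb{R}\langle w\rangle \oplus w^\perp$ and use Sylvester to conclude $w^\perp$ is negative definite, so the nonzero null vector $v$ cannot lie in $w^\perp$. Your version has the small advantage that it sidesteps the question of whether the restriction of $Q$ to $\mathrm{span}(w,v)$ could be degenerate — a possibility the paper implicitly rules out but does not justify (it does hold: a null vector orthogonal to $w$ would force $Q(w,w)\le 0$). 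Both arguments are short, correct, and rest on the same underlying fact (Sylvester's law of inertia applied to a one-dimensional positive subspace); yours is arguably the cleaner of the two.
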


\begin{proof} Consider  $\mathbb R^2\subset \mathbb R^{n+1}$ spanned by $w$ and $v$. Since $Q(w,w)>0$ the signature of $Q$ restricted to $\mathbb R^2$ is $(1,1)$. Now, since $Q(v,v)=0$ we see that $v^{\perp}\subset \mathbb R^2$ is spanned by $v$. Hence $Q(w,v)\ne 0$.
\end{proof}

\begin{theorem}\label{Thefibreclass} For $c\in (H_{\min}, H_{\max})$  the fibre class $\phi_c^*(A)={\cal F}^*_c$ satisfies the following properties 
\begin{enumerate}
\item $\phi_c^*(A)^2=0$.

\item $\phi_c^*(A)\ne 0\in H^2(M_c)$.

\item $\int_{M_c}\phi_c^*(A)\cdot \omega_c>0$.
\end{enumerate}

\end{theorem}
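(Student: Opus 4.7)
The plan is to handle the three claims in order, with the bulk of the work going into (3).

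Claim (1) is immediate. Since $M_{\min}$ is real $2$-dimensional, $A^2 = 0 \in H^4(M_{\min}) = 0$, so $\phi_c^*(A)^2 = \phi_c^*(A^2) = 0$.

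For claim (2), the strategy is to exploit that $M_{\min}$ is a $K(\pi,1)$. The map $\phi_c : M_c \to M_{\min}$ induces an isomorphism on $\pi_1$ (by the corollary just after Lemma \ref{retractmin}, which combines Theorems \ref{Li} and \ref{Li2}). Since $M_{\min}$ is a positive-genus surface, the inverse isomorphism on $\pi_1$ is realised by a continuous map $s : M_{\min} \to M_c$, built skeleton-by-skeleton on the standard CW decomposition of $M_{\min}$ with one $0$-cell, $2g$ $1$-cells, and one $2$-cell (the unique relator extends over the $2$-cell because it maps to a null-homotopic loop in $M_c$). Then $\phi_c \circ s$ induces the identity on $\pi_1(M_{\min})$ and, by asphericity of the target, is homotopic to the identity, giving $s^* \phi_c^* = \mathrm{id}$ on $H^2(M_{\min})$; in particular $\phi_c^*(A) \ne 0$.

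Claim (3) I would prove in three steps. First, for $c$ just above $H_{\min}$, the local equivariant normal form near $M_{\min}$ exhibits $M_c$ as a symplectic orbifold weighted-projective-line bundle over $M_{\min}$ whose fibre is a symplectic orbi-sphere of strictly positive area. This fibre is a positive rational multiple of the Poincar\'e dual of the bundle projection's pullback of $A$, and the bundle projection is homotopic to $\phi_c$ (both are retractions to the aspherical $M_{\min}$). Hence the function $f(c) := \int_{M_c} \phi_c^*(A) \wedge \omega_c$ is strictly positive for $c$ close to $H_{\min}$.

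Second, I would show $f$ is nowhere zero. By Theorem \ref{Mcsingature}, $b^+(M_c) = 1$, so the intersection form on $H^2(M_c, \mathbb{R})$ has signature $(1,n)$. By claims (1) and (2), $\phi_c^*(A)$ is non-zero and square-zero, while $[\omega_c]$ has positive square. Lemma \ref{quadralgebra} then forces $f(c) \ne 0$ for every $c \in (H_{\min}, H_{\max})$. Finally, I would verify that $f$ is continuous on $(H_{\min}, H_{\max})$. On each interval of regular values, Corollary \ref{gradphicomute} identifies $\phi_c^*(A)$ canonically under gradient-map pullbacks and Lemma \ref{DH} makes $[\omega_c]$ linear in $c$, so $f$ is polynomial there. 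At a critical value $c^*$, continuity from either side follows from the extended continuous gradient maps $gr_{c^*}^{c_\pm} : M_{c_\pm} \to M_{c^*}$ of Remark \ref{gradmapremark}, which are degree one and intertwine the $\phi$'s up to homotopy. Put together, $f$ is continuous, never zero, and positive near $H_{\min}$, so the intermediate value theorem forces $f(c) > 0$ throughout.

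The principal technical hurdle I anticipate is this last continuity argument across critical values. A cleaner fallback, if this becomes unwieldy, is to fix a closed $S^1$-invariant $2$-form $\alpha$ representing $\phi^*(A)$ on $M$ and recognise $f(c)$ as a Duistermaat-Heckman-type pushforward density of $\alpha \wedge \omega$ along the Hamiltonian $H : M \to \mathbb{R}$; such densities are piecewise polynomial and globally continuous, which is all that is needed.
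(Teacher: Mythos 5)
Your argument is correct; part (2) takes a genuinely different route from the paper, while (1) and (3) essentially match. For (2), the paper proceeds by induction on $c$: $\phi_c^*(A)$ is Poincar\'e dual to a sphere fibre for $c$ just above $H_{\min}$, hence non-zero there, and non-vanishing is then propagated across each critical level via Corollary \ref{gradphicomute} and the two continuous extended gradient maps --- the key observations being that $(gr^{c-\varepsilon}_c)^*$ carries $\phi_c^*(A)$ to a class already known to be non-zero, and that $(gr^{c+\varepsilon}_c)^*$ is injective. You instead exploit asphericity of $M_{\min}$ directly: since $M_{\min}$ has a CW structure with a single $2$-cell and $\phi_c$ kills no $\pi_1$, you build a map $s\colon M_{\min}\to M_c$ inducing $(\phi_c)_*^{-1}$ (the surface relator maps to a null-homotopic loop, so $s$ extends over the $2$-cell), note that $\phi_c\circ s$ induces the identity on $\pi_1$ of a $K(\pi,1)$ and is therefore homotopic to the identity, and conclude $s^*\phi_c^*=\mathrm{id}$ on $H^2(M_{\min})$. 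This one-shot topological argument avoids the reduced-space degeneration analysis entirely; the paper's inductive version has the compensating virtue of making the behaviour of the fibre class under the gradient maps explicit, which it reuses in the proof of (3) and throughout Sections \ref{sympspheresec}--\ref{sympfibresec}. For (3) the paper simply asserts continuity of $\mu(c)=\int_{M_c}\phi_c^*(A)\wedge\omega_c$; your elaboration --- piecewise polynomiality on intervals of regular values plus either the gradient-map matching across critical levels or the pushforward-density viewpoint --- supplies the justification the authors leave tacit, and is sound.
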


\begin{proof}   1) The equality $\phi_c^*(A)^2=0$ is clear.

2) For small $\varepsilon$ and $c_{\varepsilon}=H_{\min}+\varepsilon$, the space $M_{c_{\varepsilon}}$ is an $S^2$-bundle over $M_{\min}$. The class $\phi_{c_{\varepsilon}}^*(A)$ is Poincar\'e dual to the class of a symplectic $S^2$-fibre of the fibration $M_{c_{\varepsilon}}\to M_{\min}$. In particular, $\phi_{c_{\varepsilon}}^*(A)\ne 0$.
To show that $\phi_c^*(A)\ne 0\in H^2(M_c)$ for all $c\in (H_{\min}, H_{\max})$, we will increase $c$ and prove that   $\phi_c^*(A)$ does not vanish when $c$ passes a critical value of $H$. 

Let $c$ be a critical value of $H$ and let $\varepsilon$ be such that $c$ is the only critical value of $H$ in the interval $[c-\varepsilon, c+\varepsilon]$. It follows from Corollary \ref{gradphicomute} that 
$$i)\; \phi_{c-\varepsilon}^*(A)=(gr^{c-\varepsilon}_c)^*(\phi_c^*(A)),\;\;\;ii)\; \phi_{c+\varepsilon}^*(A)=(gr^{c+\varepsilon}_c)^*(\phi_c^*(A)).$$ 
Now, since by our assumptions $\phi_{c-\varepsilon}^*(A)\ne 0$ we deduce from {\it i}) that $\phi_c^*(A)\ne 0$. On the other hand the map $(gr^{c+\varepsilon}_c)^*$ is an injection, and  we deduce from {\it ii}) that $\phi_{c+\varepsilon}^*(A)\ne 0$.

3) Note that the function $\mu(c)=\int_{M_c}\phi_c^*(A)\wedge \omega_c$ is a continuous function of $c$. For $c_{\varepsilon}=H_{\min}+\varepsilon$ we have $\mu(c_{\varepsilon})>0$ since we integrate a symplectic form over a symplectic sphere. At the same time, $\omega_c\ne 0$ and by 2) $\phi_c^*(A)\ne 0$. So, applying Theorem \ref{Mcsingature} and Lemma \ref{quadralgebra} to the intersection form on $H^2(M_c)$   we see that $\mu(c)$ can not vanish for $c\in (H_{\min},H_{\max})$. This finishes the proof of 3).  
\end{proof}

\begin{corollary}\label{uniquefibre}Let $M_c$ be the reduced space at regular level $c$ and let $\phi_c: M_c\to M_{\min}$ be the projection. The fibre class ${\cal F}_c\in H_2(M_c)$ satisfies the following properties 
\begin{enumerate}
\item $\int_{{\cal F}_c} \omega_c>0$.
\item ${\cal F}_c^2=0$.
\item $\phi_{c*}({\cal F}_c)=0\in H_2(M_{\min})$.
\end{enumerate}
Moreover, ${\cal F}_c$ is the unique such class in $H_2(M_c)$ up to  multiplication by a positive constant.
\end{corollary}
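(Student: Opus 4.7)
Properties 1 and 2 in the corollary are immediate from parts (3) and (1) of Theorem \ref{Thefibreclass} via orbifold Poincar\'e duality on $M_c$ with rational coefficients, which suffices since the statement is only up to positive scaling. For property 3 I would use the projection formula to observe that, for every $\beta \in H_2(M_c,\mathbb{Q})$,
$$
\beta \cdot {\cal F}_c \;=\; \int_{M_c} \mathrm{PD}(\beta) \wedge \phi_c^*(A) \;=\; \int_{\phi_{c*}\beta} A.
$$
Specializing to $\beta = {\cal F}_c$ and using ${\cal F}_c^2 = 0$ shows that the coefficient of $\phi_{c*}{\cal F}_c$ in the generator of $H_2(M_{\min},\mathbb{Q}) \cong \mathbb{Q}$ vanishes, hence $\phi_{c*}{\cal F}_c = 0$.

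For the uniqueness clause I would pass to the cohomological side, writing $a = \phi_c^*(A) \in H^2(M_c,\mathbb{R})$. The displayed identity shows that condition 3 on a candidate class $B$ is equivalent to $\mathrm{PD}(B) \in a^\perp$, so any such $B$ is represented by a nonzero class $b \in a^\perp$ with $b^2 = 0$ and $b \cdot [\omega_c] > 0$. The intersection pairing on $H^2(M_c,\mathbb{R})$ is non-degenerate, and by Theorem \ref{Mcsingature} it has signature $(1, b_2(M_c)-1)$; the plan is to show that any such $b$ is forced to be a positive scalar multiple of $a$. Choose $E \in H^2(M_c,\mathbb{R})$ with $a \cdot E = 1$, which exists because $a \neq 0$ by Theorem \ref{Thefibreclass}(2). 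The two-plane $\langle a, E\rangle$ then has Gram matrix of determinant $-1$ and signature $(1,1)$, so its orthogonal complement $W$ is negative-definite of codimension two. A dimension count gives $a^\perp = \langle a \rangle \oplus W$, so writing $b = ta + w$ with $w \in W$ yields $b^2 = w^2$, and $b^2 = 0$ forces $w = 0$ by negative-definiteness. Hence $b = ta$, and the positivity $t > 0$ follows by pairing against $[\omega_c]$ and invoking Theorem \ref{Thefibreclass}(3).

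No serious obstacle is anticipated: once the signature input from Theorem \ref{Mcsingature} is in hand, the uniqueness reduces to elementary linear algebra on a non-degenerate inner product space, exploiting the fact that a maximal isotropic subspace in a Lorentzian lattice of signature $(1,n)$ is one-dimensional. The only bookkeeping point is the consistent use of orbifold Poincar\'e duality with rational coefficients, which is standard for the orbifolds $M_c$ at regular values $c$ since their isotropy groups are cyclic. The appeal of this characterisation is that it pins down ${\cal F}_c$ intrinsically from the triple $(M_c, \omega_c, \phi_c)$, with no further reference to the ambient six-manifold.
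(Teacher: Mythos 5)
Your proof is correct and takes essentially the same approach as the paper: properties 1 and 2 are the Poincar\'e-dual restatements of Theorem \ref{Thefibreclass}(3) and (1), property 3 is the pushforward computation that the paper simply calls ``obvious,'' and uniqueness rests on the Lorentzian signature fact (Theorem \ref{Mcsingature}) that null vectors in $(\phi_c^*A)^\perp$ are proportional to $\phi_c^*A$. You spell out the decomposition $a^\perp = \langle a \rangle \oplus W$ that the paper invokes in a single sentence (and records separately as Lemma \ref{quadralgebra}), which is welcome detail but not a different idea.
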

\begin{proof} Property 1) for ${\cal F}_c$  corresponds to Property 3) in Theorem  \ref{Thefibreclass}. Property 2) corresponds to  Property 1) in Theorem  \ref{Thefibreclass}. Property 3) is obvious.

Let us show now the uniqueness of ${\cal F}_c$  up to a positive constant.  Let ${\cal G}_c$ be a class satisfying properties 1) -- 3). According to 3) ${\cal G}_c$ belongs to the kernel of ${\phi_c}_*: H_2(M_c)\to H_2(M_{\min})$. Note at the same time that $\ker {\phi_c}_*=(\phi_c^*A)^{\perp}$. Since the signature of the intersection form in $H_2(M_c)$ is $(1,n)$, there is a unique up to a multiplicative constant vector in $\ker {\phi_c}_*=(\phi_c^*A)^{\perp}$ with zero square. We deduce that ${\cal F}_c$  and ${\cal G}_c$ are proportional, and Property 1) tells us that the constant of proportionality is positive.
\end{proof}

We finish this section with the following lemma.

\begin{lemma} \label{Isotropy fibre}
Suppose that $N \subset M$ is an isotropy $4$-manifold such that $\dim(N_{\min}) = \dim(N_{\max}) = 2$. For any $c \in H(N)$ denote by $N_{c} \subset M_{c}$ the trace of $N$.  Then the following holds
\begin{itemize}
\item For $c \in H(N)$, $\langle \phi^{*}_{c}(A),N_{c} \rangle$ does not depend on $c$.

\item In particular, $N_{\min} \cdot \mathcal{F}_{H(N_{\min})} = N_{\max} \cdot \mathcal{F}_{H(N_{\max})}$.
\end{itemize}
\end{lemma}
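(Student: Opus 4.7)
The plan is to establish the first assertion — invariance of $\langle \phi_{c}^{*}(A), [N_{c}]\rangle$ as $c$ varies in $H(N)$ — and to deduce the second bullet from the identity $N_c \cdot \mathcal{F}_c = \langle \phi_c^{*}(A),[N_c]\rangle$, which holds since by definition $\mathcal{F}_c$ is the Poincar\'e dual of $\phi_{c}^{*}(A)$ in $M_{c}$; applied at $c=H(N_{\min})$ and $c=H(N_{\max})$, where $N_c$ coincides with $N_{\min}$ and $N_{\max}$ respectively, this gives the desired equality.

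First I would choose an $S^{1}$-invariant compatible almost complex structure $J$ on $M$ preserving $TN$ (such $J$ exists since $N$ is an $S^{1}$-invariant symplectic submanifold). With the associated $S^{1}$-invariant metric $g = \omega(J\cdot,\cdot)$, the gradient vector field $\nabla_{g}H$ is tangent to $N$, so for any $c_{1}, c_{2}\in H(N)$ the gradient map $gr_{c_{1}}^{c_{2}}\colon M_{c_{2}}\dashrightarrow M_{c_{1}}$ (extended continuously across critical levels as in Remark \ref{gradmapremark}) restricts to a continuous map $N_{c_{2}}\to N_{c_{1}}$ that is an orientation-preserving orbifold diffeomorphism off the gradient trajectories limiting to $N^{S^{1}}\cap H^{-1}([c_{1},c_{2}])$. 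Since this exceptional locus has positive codimension in $N_{c_{2}}$, the restriction has degree one, whence
\begin{displaymath}
(gr_{c_{1}}^{c_{2}})_{*}[N_{c_{2}}] \;=\; [N_{c_{1}}] \quad\text{in}\quad H_{2}(M_{c_{1}}).
\end{displaymath}
Invoking Corollary \ref{gradphicomute} chainwise across the finitely many critical values of $H$ in $(c_{1},c_{2})$ — supplementing it on regular subintervals by the analogous homotopy coming from the family $\phi_{t}\circ gr_{t}^{c_{2}}$, $t\in [c_{1},c_{2}]$ — we obtain $\phi_{c_{2}}\simeq \phi_{c_{1}}\circ gr_{c_{1}}^{c_{2}}$ as maps $M_{c_{2}}\to M_{\min}$. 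Therefore
\begin{displaymath}
\langle \phi_{c_{2}}^{*}(A),[N_{c_{2}}]\rangle \;=\; \langle \phi_{c_{1}}^{*}(A),(gr_{c_{1}}^{c_{2}})_{*}[N_{c_{2}}]\rangle \;=\; \langle \phi_{c_{1}}^{*}(A),[N_{c_{1}}]\rangle,
\end{displaymath}
which is the desired invariance.

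The main obstacle is the degree-one claim for the induced map $gr_{c_{1}}^{c_{2}}\colon N_{c_{2}}\to N_{c_{1}}$ across critical levels lying in $N$. I expect to handle it by a local equivariant Morse-type analysis at each isolated fixed point $q\in N$ with $H(q)\in(c_{1},c_{2})$: in a linearising chart around $q$ the collapsed locus in the trace is the image of the (un)stable circle through $q$, which has positive codimension, so the claim follows from standard degree theory for continuous maps that are orientation-preserving diffeomorphisms on a dense open set. A completely alternative route, which bypasses the gradient map entirely, is to use that $\phi|_{N}$ descends to a continuous map $\bar\phi_{N}\colon N/S^{1}\to M_{\min}$ and to establish $[N_{c_{1}}]=[N_{c_{2}}]$ in $H_{2}(N/S^{1})$ via the singular $3$-chain $\bar{H}^{-1}([c_{1},c_{2}])\subset N/S^{1}$, then pull back the pairing through $\bar\phi_{N}$.
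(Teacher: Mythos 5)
Your closing ``completely alternative route'' is essentially the paper's proof. The paper observes that $\phi|_N$ descends to a continuous map $\phi_{S^1}\colon N/S^1\to M_{\min}$, identifies $N/S^1$ homeomorphically with $N_{\min}\times I$ with the $I$-coordinate given by $H$, and reads off that the degree of $\phi_{S^1}$ on the slice $N_c$ is independent of $c$; this degree is precisely $\langle\phi_c^*(A),N_c\rangle$. Your $3$-chain phrasing --- noting $[N_{c_1}]=[N_{c_2}]$ in $H_2(N/S^1)$ because $N_{c_2}-N_{c_1}$ bounds the preimage of $[c_1,c_2]$ under the descended Hamiltonian --- is a slight refinement that avoids asserting the product structure outright; either way the point is that the pairing is computed entirely inside the $3$-dimensional quotient $N/S^1$, where invariance in $c$ is manifest. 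Your primary gradient-flow argument can be made to work but is considerably heavier, and you correctly flag the pressure point: Remark \ref{gradmapremark} guarantees a continuous extension of the gradient map only when the critical level is an endpoint of the interval, flowing up to it; pushing a class down past an interior critical level needs a local check. It does go through inside $N$, because any interior fixed submanifold of $N$ is forced to be an isolated point of index two (a fixed surface of complex codimension one in $N$ would be a local extremum of $H|_N$, hence one of $N_{\min},N_{\max}$), and in a linearizing chart $\mathbb{C}^2$ with weights $(a,-b)$ the $S^1$-invariant function $z_1^b z_2^a$ is constant along gradient trajectories, so the reduced gradient map across the critical level is literally the identity in that coordinate. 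But once one has to unfold that much local Morse theory, the quotient-space argument is plainly the better route, and it is the one the paper takes.
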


\begin{proof}
The restriction $\phi|_{N}$ is $S^{1}$-invariant an so it descends to a continuous map $\phi_{S^{1}}: N/S^{1} \rightarrow M_{\min}$. The quotient space $N/S^{1}$ is homeomorphic to the product of $N_{\min}$ with an interval parametrised by $H$. Clearly, the degree of $\phi_{S^{1}}$ restricted to $N_{c} = N_{\min} \times \{c\}$ does not depend on $c$. Since this degree is equal to $\langle \phi^{*}_{c}(A),N_{c} \rangle$, the result follows.
\end{proof}

\section{Local K\"ahler structures}\label{locKahlsec}
We prove here three results that permit one to find a K\"ahler structure close to a specific collection of symplectic curves in a symplectic manifold or an orbifold. Theorem \ref{semilocalKahler} and Proposition \ref{kahleratfix} deal with symplectic manifolds with a Hamiltonian $S^1$-action, Theorem \ref{semilocal4orb} deals with $4$-dimensional symplectic orbifolds.

\begin{theorem}\label{semilocalKahler} Let $(M^{2n},\omega)$ be a Hamiltonian $S^1$-manifold and let $C_1,\ldots, C_k$ be all isotropy spheres in $M$. Then there exists a collection of open neighbourhoods $U_i$ of the spheres $C_i$, and an $S^1$-invariant K\"ahler metric $g$ on $U_1\cup\ldots \cup U_k$ compatible with $\omega$, satisfying the following  properties 
\begin{enumerate}
\item For each $(U_i,g)$ there is an effective, isometric and Hamiltonian action of $T^{n-1}$ commuting with the $S^1$-action and fixing $C_i$ point-wise. This $T^{n-1}$-action and the $S^1$-action generate together an effective $T^n$-action on $U_i$. 

\item Let $p\in M^{S^1}$ be a point of intersection of $C_i$ and $C_j$. Then the $T^n$-actions on $U_i$ and $U_j$ preserve $U_i\cap U_j$ and induce on it the same action. 
\end{enumerate}
\end{theorem}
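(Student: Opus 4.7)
The plan is to build, for each isotropy sphere $C_i$, a toric K\"ahler model on a tubular neighbourhood of its normal bundle, import it into $M$ via the relative equivariant Darboux--Weinstein theorem, and arrange the data so that at every fixed point $p\in M^{S^1}$ the models coming from the distinct isotropy spheres through $p$ all reduce to the same flat Hermitian chart. Fix an $S^1$-invariant, $\omega$-compatible almost complex structure $J$ on $M$. At every $p\in M^{S^1}$ the tangent space splits into $S^1$-weight lines $T_pM=V_{w_1(p)}\oplus\cdots\oplus V_{w_n(p)}$; choose an $S^1$-equivariant Darboux chart $\psi_p\colon V_p\to T_pM$ that sends each isotropy sphere through $p$ to the corresponding coordinate line (possible because such spheres are $J$-holomorphic at $p$). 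Pulling back the flat Hermitian structure on $T_pM$ via $\psi_p$ produces both a local K\"ahler metric on $V_p$ and a local $T^n$-action, inside which the given $S^1$ embeds with weights $(w_1(p),\dots,w_n(p))$.

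Next, for each $C_i$ decompose the normal bundle $S^1$-equivariantly as $N(C_i)=L_1\oplus\cdots\oplus L_{n-1}$. Equip $C_i\cong\mathbb{CP}^1$ with an $S^1$-invariant K\"ahler metric that is \emph{literally} flat on neighbourhoods of both fixed points $p^+,p^-$ of $C_i$, and equip each $L_j$ with an $S^1$-invariant Hermitian metric and connection whose curvature vanishes there; such objects are easily produced by gluing flat pieces along an $S^1$-invariant cylindrical neck. By choosing unitary trivializations of each $L_j$ at $p^\pm$ and suitable scales of the base and fibre metrics, one arranges that the standard toric K\"ahler form on a tubular neighbourhood of the zero section of $N(C_i)$ coincides, near $p^\pm$, with the flat Hermitian structure on $\mathbb{C}^n$ under the identification supplied by $\psi_{p^\pm}$. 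Now apply part~(2) of Theorem~\ref{relative_local_neighbourhood} with $\psi_{p^\pm}^{-1}$ as the pre-specified local charts at the two fixed points of $C_i$: this yields an $S^1$-equivariant symplectomorphism $\Phi_i$ from a neighbourhood of the zero section onto a neighbourhood $U_i$ of $C_i$ in $M$ which coincides with $\psi_{p^\pm}^{-1}$ on smaller neighbourhoods of $p^\pm$. Push forward the toric $T^n$-action and K\"ahler metric via $\Phi_i$.

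Property~(1) is then immediate from the model: the fibrewise rotation $T^{n-1}$ fixes $C_i$ pointwise, is Hamiltonian and isometric, commutes with $S^1$, and together with $S^1$ spans the toric torus acting effectively on $U_i$ (the combined $S^1\times T^{n-1}$-action has finite kernel, whose quotient is the desired $T^n$). For property~(2), at a common fixed point $p$ of $C_i$ and $C_j$ both $\Phi_i$ and $\Phi_j$ coincide with $\psi_p^{-1}$ on some neighbourhood of $p$; hence on $U_i\cap U_j$ the two pushed-forward $T^n$-actions and the two K\"ahler metrics each equal the pullback via $\psi_p$ of the standard torus action and flat Hermitian metric on $T_pM$, and so agree. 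The main technical obstacle is precisely the second step: ensuring that the model K\"ahler metric on $N(C_i)$ is honestly flat on a whole neighbourhood of each $p^\pm$, not merely at the point itself. Only then do the K\"ahler data coming from two different isotropy spheres through $p$ match as \emph{structures on an open set} and not just to first order at $p$. This is achievable because it requires only local adjustments on disk neighbourhoods of each pole, where there is no cohomological obstruction to a flat base metric or flat Hermitian connections.
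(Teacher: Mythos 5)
Your proof is correct but takes a genuinely different route from the paper's. The paper proceeds by (i) picking a $T^n$-invariant K\"ahler metric $g$ in a Darboux neighbourhood of the fixed points lying on at least two isotropy spheres, (ii) constructing for each $C_i$ a toric K\"ahler model and an $S^1$-equivariant symplectic embedding via Proposition~\ref{toricKahlernei}, (iii) upgrading that embedding to a $T^n$-equivariant one near the poles using Theorem~\ref{relative_local_neighbourhood}~2), and then (iv) applying a \emph{separate gluing step}, Proposition~\ref{gluinginKahler} (which rests on the regularised-maximum technique for K\"ahler potentials from Appendix~\ref{smoothingSection}), to force the pushed-forward model metric to agree with $g$ near the shared fixed points. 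Your argument eliminates step~(iv) by demanding more of the model: you choose the base metric on $C_i\cong\mathbb{CP}^1$ and the Hermitian connections on the normal summands $L_j$ so that the model K\"ahler metric is honestly flat on whole neighbourhoods of the poles, and you feed the flat Darboux charts $\psi_{p^\pm}$ directly into part~(2) of Darboux--Weinstein as the pre-specified local models. After push-forward, the K\"ahler data near a common fixed point $p$ all coincide with $(\psi_p)_*(\text{flat Hermitian structure})$, regardless of which isotropy sphere produced them, so no patching is required. This is a legitimate simplification: it trades the analytic gluing lemma for a more rigid geometric setup. What your approach requires, and what you only gesture at, is verifying that $\psi_{p^\pm}$ (suitably identified with a chart on the total space of $N(C_i)$ near the pole) actually satisfies the hypotheses of Theorem~\ref{relative_local_neighbourhood}~2): it must restrict to $\phi_0$ on the zero section and have the prescribed differential $D$ on the normal bundle. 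This forces you to construct $\phi_0$ and $D$ from $\psi_p$ near the poles and then extend, rather than choosing them freely, and it requires the isotropy spheres through $p$ to actually sit along coordinate lines of a single chart $\psi_p$ (which needs a short argument beyond ``$J$-holomorphic at $p$'' — one uses that isotropy submanifolds are the $\mathbb{Z}_k$-fixed loci, hence coordinate subspaces of a linearising chart). The paper's route sidesteps all of this bookkeeping at the cost of invoking the regularised-maximum machinery. Both are sound; yours is arguably more elementary in its dependencies, the paper's more modular.
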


To formulate the next result we need to give a definition.
\begin{definition}\label{semitoricdef} Let $(S,g)$ be a $2$-dimensional K\"ahler orbifold with cyclic stabilizers. Let $D_1,\ldots, D_n$ be the curves in the orbifold locus of $S$ and let $p_1,\ldots, p_k$ be the maximal orbi-points. 
We say that the metric $g$ is \emph{semi-toric} at the orbifold locus of $S$ if the following properties hold.

1) For each $D_i$ there is an isometric $S^1$-action on a neighbourhood of $D_i$ that fixes $D_i$ point-wise and preserves $g$. 

2) For each maximal orbi-point $p_m$ there is an effective isometric $T^2$-action on a neighbourhood of $p_m$ that fixes $p_m$ and preserves $g$.

3)  If $p_m$ belongs to $D_i$ then the action of $S^1(D_i)$ on a neighbourhood of $p_m$ is induced by a subgroup of $T^2(p_m)$. In particular, whenever $D_i$ and $D_j$ intersect, the two $S^1$-actions commute.
\end{definition}

\begin{theorem}\label{semilocal4orb} Let $(M^4,\omega)$ be a symplectic orbifold with cyclic stabilizers. Suppose  we are given a symplectic $T^2$-action in  a neighbourhood $U$ of the union of all maximal orbi-points in $M^4$.
Let $g$ be a $T^2$-invariant K\"ahler metric defined in $U$ and compatible with $\omega$. Then, after shrinking $U$, $g$ can be extended to a K\"ahler semi-toric metric compatible with $\omega$ and defined in a neighbourhood of $\Sigma_{\cal O}(M^4)$.
  
\end{theorem}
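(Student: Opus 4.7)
The plan is to extend the given K\"ahler metric outward along each curve $D_i$ of the orbifold locus, one divisor at a time, by identifying a neighborhood of $D_i$ with its symplectic normal orbi-bundle via an equivariant Darboux--Weinstein argument and then pulling back a standard fiber-rotation-invariant K\"ahler metric on that model. First, for each maximal orbi-point $p_m \in D_i \cap U$, I would identify the circle subgroup $S^1(D_i) \subset T^2(p_m)$ that fixes $D_i$ pointwise: this is the kernel of the character through which $T^2(p_m)$ acts on the tangent line $T_{p_m}D_i$, and is a well-defined subcircle by effectiveness of the $T^2$-action. This $S^1(D_i)$ is the circle action I would extend to a full neighborhood of $D_i$.

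Next, I would write down the standard model. The symplectic normal orbi-bundle $N_{D_i}\to D_i$ is a complex line orbi-bundle, and a choice of Hermitian metric $h_i$, compatible unitary connection, and K\"ahler metric $g_{D_i}$ on $D_i$ yields a canonical fiber-rotation-invariant K\"ahler metric $\tilde g_i$ on a neighborhood of the zero section. These auxiliary choices can be arranged so that the germ of $\tilde g_i$ at each $p_m$ matches the germ of $g$ to first order along $D_i$, since the $T^2$-invariant $g$ already splits orthogonally at $p_m$ into a metric on $T_{p_m}D_i$ and a Hermitian metric on the normal fiber. I would then invoke Theorem \ref{relative_local_neighbourhood}(2), applied $S^1(D_i)$-equivariantly and relative to the local symplectomorphisms from $M^4$ into $N_{D_i}$ already supplied near each $p_m \in D_i$ by the given K\"ahler data, to obtain an $S^1(D_i)$-equivariant symplectomorphism from a neighborhood of $D_i$ in $M^4$ to a neighborhood of the zero section of $N_{D_i}$ which extends the chosen local ones near the maximal points.

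Pulling back $\tilde g_i$ then yields an $S^1(D_i)$-invariant, $\omega$-compatible K\"ahler metric defined near $D_i$ that agrees with $g$ near each maximal point on $D_i$. Carrying this out for every $D_i$ and patching with the original $g$ on $U$ produces the desired semi-toric K\"ahler metric; condition (3) of Definition \ref{semitoricdef} is automatic because, at any common maximal point $p_m \in D_i \cap D_j$, the circles $S^1(D_i)$ and $S^1(D_j)$ are, by construction, subtori of the same torus $T^2(p_m)$ and hence commute.

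The main obstacle I expect is the matching step at the maximal points: I need the germs of $\tilde g_i$ and $g$ to agree in a strong enough sense for the relative Darboux--Weinstein theorem to apply. Fortunately, Theorem \ref{relative_local_neighbourhood}(2) only requires agreement of the linear normal data, i.e.\ of the symplectic form on $N_{D_i}|_{p_m}$ together with a compatible complex structure, so the problem reduces to a linear-algebraic computation in $T_{p_m}M^4$, where the $T^2$-symmetry of $g$ forces precisely the weight-space decomposition used by the orbi-line-bundle model. Once this linear matching is verified, the remainder of the construction is a routine application of the equivariant normal-form machinery.
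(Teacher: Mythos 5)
Your first part — building an $S^1(D_i)$-invariant, $\omega$-compatible K\"ahler model near each divisor $D_i$ by identifying a tubular neighbourhood with the symplectic normal orbi-bundle $N_{D_i}$ and pushing forward a fibre-rotation-invariant metric — is essentially the content of Proposition~\ref{kahlerdivisor}, and that step is sound. The gap lies in the matching claim: ``Pulling back $\tilde g_i$ then yields \dots a K\"ahler metric \dots that agrees with $g$ near each maximal point.'' Theorem~\ref{relative_local_neighbourhood}(2) controls only the \emph{symplectomorphism}, not the metric: it lets you arrange that your tubular-neighbourhood identification coincides with prescribed local symplectomorphisms $\psi_j$ near the $p_j$, provided these agree with the global bundle isomorphism $D$ on the normal bundle. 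It does not make $\tilde g_i$ and $g$ isometric. For the pullback to coincide with $g$ near $p_m$ you would need the local $\psi_j$ to be genuine isometries between $(M^4,g)$ and $(N_{D_i},\tilde g_i)$, i.e.\ you would need the auxiliary data (Hermitian metric, connection, base metric) to reproduce the \emph{germ} of $g$ at $p_m$ after the identification. Arranging agreement ``to first order,'' as you propose, is not enough: two $\omega$-compatible K\"ahler metrics with the same $1$-jet at a fixed point can disagree arbitrarily nearby, and the model $\tilde g_i$ has a very particular connection-type form that the given $T^2$-invariant $g$ need not take.

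The paper closes exactly this gap with a separate gluing step, Proposition~\ref{gluinginKahler}, which uses the regularised-maximum technique of Lemma~\ref{gluing}: after you have produced any $S^1(D_i)$-invariant K\"ahler metric compatible with $\omega$ near $D_i$ that is moreover $T^2$-invariant near each maximal point (this last condition is what Theorem~\ref{relative_local_neighbourhood}(2) can actually deliver), one modifies it by a K\"ahler potential so that on a smaller neighbourhood of each $p_m$ it literally equals the preassigned $g$. Without some such potential-gluing argument (or an explicit verification that the metric germs can be matched exactly, which is false in general), your proof does not yield a single K\"ahler metric near $\Sigma_{\cal O}(M^4)$ extending $g$; it only yields metrics that approximate $g$ to finite order at the $p_m$.
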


The proofs of both theorems are similar and rely on equivariant Darboux-Weinstein Theorem \ref{relative_local_neighbourhood} and the following gluing statement.

\begin{proposition}\label{gluinginKahler} Let $(U,\omega)$ be a symplectic orbifold with a Hamiltonian action of a compact group $G$. Suppose  $(U,\omega)$ admits a $G$-invariant K\"ahler metric, compatible with $\omega$. Let $\bar x=\{x_1,\ldots,x_m\}\subset U^G$ be a finite $G$-fixed subset. Suppose that for some neighbourhood $U_{\bar x}$ of $\bar x$ a $G$-invariant K\"ahler metric $g_x$ compatible with $\omega$ is chosen. Then after  shrinking $U_{\bar x}$ to a smaller neighbourhood $U_{\bar x}'$ of $\bar x$ one can extend $g_{\bar x}$ from $U_{\bar x}'$ to a $G$-invariant K\"ahler metric on $U$, compatible with $\omega$.
\end{proposition}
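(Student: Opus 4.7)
Let $J_0$ and $J_{\bar x}$ denote the complex structures defining $g_0$ on $U$ and $g_{\bar x}$ on $U_{\bar x}$ respectively. The strategy is to modify $g_0$ in a $G$-equivariant way in a controlled neighbourhood of $\bar x$, producing a new $G$-invariant K\"ahler metric on $U$ compatible with $\omega$ which coincides with $g_{\bar x}$ on a smaller neighbourhood $U'_{\bar x}$. I would carry this out in three steps.

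\emph{Step 1 (matching at $\bar x$).} At each $x_i\in\bar x$ both $J_0|_{x_i}$ and $J_{\bar x}|_{x_i}$ are $G$-equivariant $\omega_{x_i}$-compatible complex structures on $T_{x_i}U$. The space of such structures is nonempty and contractible (being the $G$-fixed subset of the Siegel upper half-space), so there is a $G$-equivariant symplectic linear isomorphism of $T_{x_i}U$ conjugating $J_0|_{x_i}$ to $J_{\bar x}|_{x_i}$. Applying part~(1) of the equivariant Darboux-Weinstein theorem (Theorem \ref{relative_local_neighbourhood}) with $N=N'=\bar x$, using the above data as the bundle map $D$, together with Lemma \ref{sympextension} to promote the resulting local symplectomorphism to a global one, I obtain a $G$-equivariant symplectomorphism $\Psi$ of $U$ supported in an arbitrarily small neighbourhood of $\bar x$. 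Replace $g_0$ by $\Psi^{*}g_0$; after this normalisation I may assume $J_0|_{x_i}=J_{\bar x}|_{x_i}$ for every~$i$.

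\emph{Step 2 (K\"ahler gluing).} Work in $G$-invariant contractible coordinate balls $V_i$ around each $x_i$, chosen inside $U_{\bar x}$. On each $V_i$ both K\"ahler forms admit $G$-invariant strictly plurisubharmonic potentials: $\omega|_{V_i}=dd^{c}_{J_0}\phi_0=dd^{c}_{J_{\bar x}}\phi_{\bar x}$, and after subtracting suitable pluriharmonic functions we may assume $\phi_0$ and $\phi_{\bar x}$ agree to second order at $x_i$ (this is consistent because $g_0(x_i)=g_{\bar x}(x_i)$, thanks to the matching of $\omega$ and $J$ at $x_i$). Pick a $G$-invariant cutoff $\chi$ supported in $V_i$ and equal to $1$ on a concentric smaller ball $V'_i$. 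Use $\chi$ to interpolate $J_{\bar x}$ and $J_0$ first as sections of $\mathrm{End}(TU)$ and then project fibrewise onto $\omega$-compatible almost complex structures via the natural retraction, obtaining a $G$-invariant $\omega$-compatible almost complex structure $J$ on $U$ with $J = J_{\bar x}$ on $V'_i$ and $J = J_0$ off $V_i$. Setting $g:=\omega(\cdot,J\cdot)$ produces a $G$-invariant, $\omega$-compatible Riemannian metric that agrees with $g_{\bar x}$ on $U'_{\bar x}:=\bigcup_i V'_i$.

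\emph{Main obstacle (integrability).} The essential remaining point is whether the interpolated $J$ is integrable, since in general a convex combination of integrable almost complex structures is not integrable. Thanks to the second-order matching built into Step 2, the Nijenhuis tensor $N_J$ vanishes at each $x_i$, and its $C^{k}$-norm on $V_i$ can be made as small as desired by shrinking $V_i$. A $G$-equivariant implicit-function-theorem argument for the $\bar\partial$-operator then yields a small $G$-equivariant perturbation of $J$, supported in $V_i\setminus V'_i$, which is integrable. The associated K\"ahler metric is the required extension of $g_{\bar x}$.
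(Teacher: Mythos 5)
Your proposal departs fundamentally from the paper's approach, and the divergence lands you in a trap at Step 3. The paper proves the proposition by reducing to Corollary \ref{symplant}, which is built from Lemma \ref{implant} — a K\"ahler gluing via the regularised maximum of strictly plurisubharmonic potentials (Lemma \ref{gluing}) — followed by a Moser isotopy (Remark \ref{sympiso}) and the symplectomorphism-extension Lemma \ref{sympextension}. The crucial feature of that route is that the gluing happens at the level of psh \emph{potentials} with respect to a single fixed integrable complex structure, so integrability is never at risk; the price is that the glued K\"ahler form $\omega'$ differs from $\omega$, and this is corrected afterwards by pulling back along a Moser symplectomorphism supported in $U_x$.

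Your Step 3 is a genuine gap. Interpolating the integrable structures $J_0$ and $J_{\bar x}$ with a cutoff and retracting to $\omega$-compatible almost complex structures loses integrability, as you note, but the proposed repair — a $G$-equivariant implicit-function-theorem argument for the $\bar\partial$-operator, producing an integrable perturbation supported in the annulus $V_i\setminus V_i'$ — is not a known result and there are serious obstructions to it. For a non-integrable $J$ the operator $\bar\partial_J$ does not square to zero, so there is no linearised complex in which to set up the IFT; the annulus is an open, non-compact region with no natural Fredholm theory; even if one arranged integrability, one would additionally need $\omega$ to remain of type $(1,1)$ for the perturbed $J$, a further nonlinear constraint that generic small perturbations of $J$ will break; and the side condition that the perturbation vanish identically near both boundary components of the annulus (so as not to disturb the matching regions) is highly overdetermined. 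Shrinking $V_i$ to make $N_J$ small in $C^k$ does not resolve any of these points. The way out is the one the paper takes: normalise $J$ locally near each $x_i$ by a Darboux/Weinstein map (which your Step 1 essentially does), then keep $J$ fixed and glue the K\"ahler \emph{potentials} with the regularised maximum, and finally absorb the discrepancy in the symplectic form with a Moser argument rather than trying to control the complex structure.
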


\begin{proof}

In the partial case when  $U$ is a manifold, $G$ is trivial and the set $\bar x$ consists of a  unique point $x$ this proposition is Corollary \ref{symplant}.  Working $G$-equivariantly with orbifolds instead of manifolds and using Theorem \ref{relative_local_neighbourhood} one can prove the full proposition without essential changes. \end{proof}

\subsection{Auxiliary torus actions}

To prove Theorem \ref{semilocalKahler} we need to show in particular, that any isotropy sphere in a Hamiltonian $S^1$-manifold $M^{2n}$ has a neighbourhood with a K\"ahler metric admitting a $T^n$-symmetry. We explain here how this is done.

Let $V=L_1\oplus\ldots\oplus L_{n-1}$ be a rank $n-1$ holomorphic vector bundle over $\mathbb CP^1$ and let $\cal V$ be the total space. $\cal V$ has the structure of a toric variety where the $(\mathbb C^*)^{n}$-action can be defined as follows. We write  $(\mathbb C^*)^{n}=\mathbb C^*\times (\mathbb C^*)^{n-1}$ and let the $(\mathbb C^*)^{n-1}$-factor act diagonally, i.e., fixing the base $\mathbb CP^1$ and preserving the splitting of $V$. To define the action of the first $\mathbb C^*$-factor consider the $\mathbb C^*$-action on $\mathbb C^2$ given by $t:(x,y)\to (tx,t^{-1}y)$ and identify $\mathbb CP^1$ with $\mathbb P(\mathbb C^2)$. This action can be lifted naturally to $V$ and commutes there with the diagonal $(\mathbb C^*)^{n-1}$-action.

Let $T^n\subset (\mathbb C^*)^n$ be the real torus acting on the total space $\cal V$ of $V$. Let $U\subset \cal V$ be a $T^n$-invariant neighbourhood of the zero section of $V$ and let $g$ be a $T^n$-invariant K\"ahler metric on $U$. We say that $(U,g)$ is  {\it a toric K\"ahler neighbourhood}. 

\begin{proposition}\label{toricKahlernei} Let $C$ be an isotropy sphere of a  Hamiltonian $S^1$-action on a symplectic manifold $M^{2n}$. Then there exits a holomorphic rank $n-1$ bundle $V$ over $\mathbb CP^1$,  a toric K\"ahler neighbourhood  $(U,g)$ of its zero section,  and a symplectic embedding $\varphi: (U,\omega_g)\to (M^{2n},\omega)$ satisfying the following properties.

\begin{enumerate}

\item $ \varphi$ sends $\mathbb CP^1$ to $C$.
\item For some subgroup $S^1$ of $T^n$ the map $\varphi$ is $S^1$-equivariant.
\end{enumerate}

\end{proposition}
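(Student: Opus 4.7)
The plan is to build an explicit toric Kähler model for the normal bundle of $C$ and then transplant it into $M$ using the equivariant Darboux--Weinstein theorem (Theorem \ref{relative_local_neighbourhood}).

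First, I would fix an $S^1$-invariant $\omega$-compatible almost complex structure $J$ on a neighbourhood of $C$ using Lemma \ref{manysimpleproperties}. Since $C$ is an isotropy sphere, it is a symplectic $2$-sphere, and $J$ endows the normal bundle $N \to C$ with the structure of an $S^1$-equivariant complex vector bundle of rank $n-1$. By definition there exists $k \geq 2$ such that $C$ is a connected component of $M^{\Z_k}$; the subgroup $\Z_k \subset S^1$ acts trivially on $C$ but with only non-trivial characters on fibres of $N$. Decomposing $N$ into $\Z_k$-isotypic components gives an $S^1$-invariant splitting into complex subbundles. Since any complex vector bundle over $\mathbb{CP}^1$ carrying a linear $S^1$-action on the base splits equivariantly into a sum of $S^1$-equivariant holomorphic line bundles, I can refine this decomposition to obtain $N = L_1 \oplus \cdots \oplus L_{n-1}$ as an $S^1$-equivariant holomorphic direct sum.

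Next, I would use this decomposition to construct the model. Let $V = L_1 \oplus \cdots \oplus L_{n-1}$ and let $\mathcal{V}$ be its total space, equipped with the natural $(\mathbb{C}^\ast)^n$-action described in the paragraph preceding the proposition. By construction the compact torus $T^n \subset (\mathbb{C}^*)^n$ contains the original $S^1$ as a one-parameter subgroup acting on $C$ and on the fibres of $N$ with the prescribed weights. I choose a $T^n$-invariant Kähler form $\omega_g$ on some $T^n$-invariant neighbourhood $U$ of the zero section $\mathbb{CP}^1 \subset \mathcal{V}$ whose restriction to the zero section has the same cohomology class as $\omega|_C$; this is standard, e.g.\ by combining a rescaled Fubini--Study form on the base with the curvature of an auxiliary $T^{n-1}$-invariant Hermitian metric on $V$.

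Finally, I would apply the equivariant Darboux--Weinstein theorem to the symplectic $S^1$-manifolds $(U,\omega_g)$ and $(M,\omega)$ along the symplectic submanifolds $\mathbb{CP}^1 \subset \mathcal{V}$ and $C \subset M$. A symplectic $S^1$-equivariant diffeomorphism $\phi_0 : \mathbb{CP}^1 \to C$ exists because both are symplectic two-spheres of the same area with equivalent $S^1$-actions. By the construction of $V$, the normal bundle of $\mathbb{CP}^1$ in $\mathcal{V}$ is isomorphic to $N$ as an $S^1$-equivariant symplectic vector bundle over $\phi_0$, so the hypotheses of Theorem \ref{relative_local_neighbourhood} are verified and we obtain the required $S^1$-equivariant symplectic embedding $\varphi$ sending the zero section to $C$.

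The principal obstacle is the $S^1$-equivariant splitting $N = L_1 \oplus \cdots \oplus L_{n-1}$: one has to arrange simultaneously that the summands are holomorphic line bundles on $\mathbb{CP}^1$ and that the ambient $S^1$-action permutes them into itself and acts on each by a definite weight. Once this is achieved, the rest of the argument reduces to the standard equivariant normal-form machinery together with a cohomology-class matching for the Kähler form on the model.
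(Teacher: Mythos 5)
Your proposal is correct and follows essentially the same strategy as the paper's proof: identify $C$ with $\mathbb{CP}^1$, split the normal bundle $S^1$-equivariantly into a sum of line bundles, build a $T^n$-invariant Kähler model on the total space, and transplant via the equivariant Darboux--Weinstein theorem. The only differences are cosmetic: the paper splits $N_C$ directly into $\omega_V$-orthogonal $S^1$-invariant rank-$2$ symplectic subbundles (then imposes holomorphic structures), whereas you pass through the $\Z_k$-isotypic decomposition followed by an equivariant Birkhoff--Grothendieck splitting; and the paper builds the identification $I:\mathbb{CP}^1\to C$ so that the model symplectic form restricts to $\omega|_C$ on the nose rather than merely in the same cohomology class, which avoids the extra (easy) Moser-type step you need to produce $\phi_0$.
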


\begin{proof} First, we explain how to construct a triple $(\mathbb CP^1,  V, g)$ consisting of a holomorphic bundle $V$ over $\mathbb CP^1$ and a $T^n$-invariant metric $g$ defined close to the zero section in the total space $\cal V$ of $V$. 

Introduce  an $S^1$-invariant holomorphic structure on $C$ compatible with $\omega$, making it biholomorphic to $\mathbb CP^1$, and let  $I:\mathbb CP^1\to C$ be  the identification map. Let $N_C$ be the normal bundle to $C$ in $M^{2n}$, i.e. the subbundle of $TM^{2n}|_C$ $\omega$-orthogonal to $TC$. Denote by $V$ the pull-back $I^*(N_C)$ on $\mathbb CP^1$. This bundle has a natural $S^1$-action. Moreover, $V$ is symplectic and we denote the corresponding symplectic form by $\omega_V$.

Let us split $V$ into a sum of $\omega_V$-orthogonal $S^1$-invariant rank $2$ symplectic subbundles, $V=L_1\oplus\ldots \oplus L_{n-1}$. Next, introduce  an $S^1$-invariant structure of holomorphic line bundle on each $L_i$ and denote by $\cal V$ the total space of $L_1\oplus\ldots \oplus L_{n-1}$.

By construction, we have a symplectic form on the restriction of $T\cal V$ to $\mathbb CP^1$, compatible with the holomorphic structure of $\cal V$. It is not hard to see, that this symplectic form is induces by  a $T^n$-invariant K\"ahler metric $g$ defined on a neighbourhood of $\mathbb CP^1$. Again, by construction the identification $I:\mathbb CP^1\to C$ naturally extends to a symplectic linear isomorphism from the normal bundle $V$ of $\mathbb CP^1$ to the normal bundle $N_C$ of $C$. To finish the proof it remains to apply Theorem \ref{relative_local_neighbourhood} 1). This permits us to extend the above isomorphism of normal bundles to a symplectic $S^1$-equivariant embedding from some $T^n$-invariant neighbourhood $(U,\omega_g)$ of $\mathbb CP^1$ to $(M^{2n},\omega)$.



\end{proof}

The following proposition will be used later and we state it without proof, since it is very similar to Proposition \ref{toricKahlernei}.

\begin{proposition}\label{kahleratfix} Let $(M^{2n},\omega)$ be a Hamiltonian $S^1$-manifold and let $\Sigma_h$ be a genus $h$ fixed surface. Then there exists a complex curve $C_h$, a collection of line bundles $L_1,\ldots, L_{n-1}$ and a K\"ahler metric $g$ on the projectivised bundle $\mathbb P(L_1\oplus \ldots \oplus L_{n-1}\oplus {\cal O})$, such that 

\begin{enumerate}

\item The metric $g$ is invariant under the diagonal $T^{n-1}$-action on the projectivised bundle. 

\item Denote by $C_{h,n}$ the embedding of $C_h$ in the projectivised bundle via $n$th summand.  Then there is a subgroup $S^1\subset T^{n-1}$ and an $S^1$-equivariant symplectomorphism from  a neighbourhood of $C_{h,n}$ to a neighbourhood of $\Sigma_h$.

\end{enumerate}

\end{proposition}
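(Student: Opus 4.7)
The strategy mirrors that of Proposition \ref{toricKahlernei}: construct a toric-type local K\"ahler model for a neighbourhood of $\Sigma_h$ from the $S^1$-weights of its normal bundle, then appeal to the equivariant Darboux-Weinstein Theorem \ref{relative_local_neighbourhood}.

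First, I would fix a compatible $S^1$-invariant almost complex structure $J$ on $M^{2n}$. Its restriction to $\Sigma_h$ is automatically integrable, endowing $\Sigma_h$ with the structure of a smooth complex curve $C_h$. The $\omega$-orthogonal symplectic normal bundle $N_{\Sigma_h} \subset TM^{2n}|_{\Sigma_h}$ is a rank $n-1$ complex vector bundle carrying a fibrewise linear $S^1$-action. Decomposing into $S^1$-weight subbundles (and further splitting any isotypic component of rank greater than one into complex line bundles) yields an $S^1$-invariant, $\omega$-orthogonal splitting $N_{\Sigma_h} = L_1 \oplus \ldots \oplus L_{n-1}$ into $J$-holomorphic line subbundles, where $S^1$ acts on $L_i$ by some integer weight $w_i$.

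Next, I would consider the projectivised bundle $P = \PP(L_1 \oplus \ldots \oplus L_{n-1} \oplus \OO)$ over $C_h$ equipped with the natural diagonal $T^{n-1}$-action (scaling each $L_i$, fixing $\OO$), and build a $T^{n-1}$-invariant K\"ahler metric $g$ on $P$ from a K\"ahler metric on $C_h$, Hermitian metrics on the $L_i$, and a fibrewise Fubini-Study construction. The section $C_{h,n}$ coming from the $\OO$-summand is precisely the component $[0:\cdots:0:1]$ of the $T^{n-1}$-fixed locus; it projects isomorphically to $C_h$, and a neighbourhood of $C_{h,n}$ in $P$ is biholomorphic to a neighbourhood of the zero section of $L_1 \oplus \ldots \oplus L_{n-1}$. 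By rescaling the Hermitian metrics on the $L_i$ and the K\"ahler form on $C_h$, I can arrange that under the identification $I: C_h \to \Sigma_h$ the restriction $\omega_g|_{C_{h,n}}$ matches $\omega|_{\Sigma_h}$ and that the induced symplectic structure on the normal bundle of $C_{h,n}$ matches that on $N_{\Sigma_h}$.

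Finally, let $S^1 \subset T^{n-1}$ be the one-parameter subgroup $\theta \mapsto (e^{iw_1\theta}, \ldots, e^{iw_{n-1}\theta})$, chosen so that the bundle isomorphism $L_1 \oplus \ldots \oplus L_{n-1} \to N_{\Sigma_h}$ covering $I$ is $S^1$-equivariant and symplectic by construction. Applying Theorem \ref{relative_local_neighbourhood}(1) to the pair $(C_{h,n}, P, \omega_g)$ and $(\Sigma_h, M^{2n}, \omega)$ with $G = S^1$ yields the required $S^1$-equivariant symplectomorphism between neighbourhoods. The only real technical point is arranging the match of symplectic normal data along $C_{h,n}$ and $\Sigma_h$; this is a pure choice-of-metric issue with no genuine obstruction and is handled exactly as in the proof of Proposition \ref{toricKahlernei}.
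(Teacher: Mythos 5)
The paper explicitly states this proposition without proof, noting that the argument is "very similar to Proposition \ref{toricKahlernei}"; your proof carries out precisely that adaptation — building the weight decomposition $N_{\Sigma_h}=L_1\oplus\cdots\oplus L_{n-1}$, constructing a $T^{n-1}$-invariant K\"ahler model on the projectivisation, and invoking equivariant Darboux–Weinstein. This matches the paper's intended argument, with the correct observation that since $\Sigma_h$ is fixed pointwise the relevant torus is the fibrewise $T^{n-1}$ rather than the $T^n$ appearing in the isotropy-sphere case.
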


\subsection{Proofs of  Theorems  \ref{semilocalKahler} and \ref{semilocal4orb}}
Let us first prove Theorem \ref{semilocalKahler}.

\begin{proof} Let $x_1,\ldots,x_m$ be all the $S^1$-fixed points in $M^{2n}$ that belong to a least two isotropy spheres. By the equivariant Darboux theorem we can choose a small (possibly disconnected) neighbourhood $U_{\bar x}$ of the set $\{x_1,\ldots, x_m\}$ on which the $S^1$-action extends to an effective Hamiltonian $T^n$-action, and there exists a $T^n$-invariant K\"ahler metric $g$  compatible with  $\omega$. We will show that such a metric $g$ can be extended to a metric on neighbourhood of $C_1\cup\ldots\cup C_k$ that has all the desired properties.

For each isotropy sphere $C_i$ we apply Proposition \ref{toricKahlernei}
to get a toric K\"ahler neighbourhood $U_i$ of the zero section in a vector bundle over $\mathbb CP^1$. Neighbourhood $U_i$ admits a symplectic $S^1$-equivariant embedding $\varphi_i:U_i \to M$, sending $U_i$ to a neighbourhood of $C_i$. Using Theorem \ref{relative_local_neighbourhood} 2), after possibly shrinking $U_{\bar x}$ and $U_i$ we can assume that the embedding $\varphi_i$ is $T^n$-equivariant in a small neighbourhoods of the two $T^n$-fixed points in $U_i$.
 
We have now a K\"ahler metric $\varphi_{i_*}(g_i)$, compatible with $\omega$ defined on each $\varphi_i(U_i)$. Let $p$ and $q$ be the two $T^n$-invariant points in $U_i$ and let $\varphi(p)=x_r$ and $\varphi(q)=x_s$. Both metrics $\varphi_{i_*}(g_i)$ and $g$ are compatible with $\omega$ and $T^n$-invariant in a small neighbourhood of $x_r$ and $x_s$, but they need not be equal there. In order to cure this, we use Proposition \ref{gluinginKahler}. This proposition permits us to replace $\varphi_{i_*}(g_i)$ defined on $\varphi_i(U_i)$ by a K\"ahler $T^n$-invariant metric $g_i'$, defined on $\varphi_i(U_i)$, compatible with $\omega$, and equal to $g$ close points $x_r$ and $x_s$. To finish, we just need to shrink further the neighbourhoods $\varphi_i(U_i)$ to assure that whenever two such neighbourhoods $\varphi_i(U_i)$ and $\varphi_j(U_j)$ intersect, the metrics $g_i'$ and  $g_j'$ are equal to $g$ on $\varphi_i(U_i)\cap\varphi_j(U_j)$. \end{proof}

\begin{remark}\label{localKahler6} Let $M$ be a Hamiltonian $S^1$-manifold of dimension $6$, such that $M_{\min}$ and $M_{\max}$ are surfaces. Let $C_1,\ldots, C_k$ be all the isotropy spheres in $M$.  Theorem \ref{semilocalKahler} and Proposition \ref{kahleratfix} define for us a particular $S^1$-invariant K\"ahler structure in a neighbourhood of $M^{S^1}\cup C_1\cup \ldots\cup C_k$, compatible with $\omega$. We will call such a structure {\it adjusted}.

\end{remark}

The proof of  Theorem \ref{semilocal4orb} is identical to the proof of Theorem \ref{semilocalKahler}, the only difference is that it uses  Proposition \ref{kahlerdivisor} instead of Proposition \ref{toricKahlernei}.

\begin{proof}[Proof of Theorem  \ref{semilocal4orb}] Let $x_1,\ldots, x_m$ be all the maximal orbi-points of $M^4$ and $U=U_{\bar x}$ be a neighbourhood of $\bar x=\{x_1,\ldots, x_m\}$ with a $T^2$-invariant  K\"ahler metric compatible with $\omega$. As in the proof of Theorem \ref{semilocalKahler}, using Proposition \ref{kahlerdivisor}, for each divisor $D_j$ in the orbifold locus we can find an $S^1$-invariant K\"ahler metric on its neighbourhood, compatible with $\omega$. Moreover, using Theorem \ref{relative_local_neighbourhood} 2) we can assume that the metric is $T^2$-invariant at each 
maximal orbi-point on $D_j$. Now again as in the proof of Theorem \ref{semilocalKahler} we modify this metric using Proposition \ref{gluinginKahler} so that it coincides with the preassigned metric close to all maximal orbi-points on $D_j$.
\end{proof}

\subsection{Regularization of symplectic $4$-orbifolds }

In this  section we associate  two symplectic spaces $(\widetilde M^4,\widetilde\omega)$ and $(\overline M^4, \overline \omega)$ to each symplectic orbifold $(M^4,\omega)$ with cyclic stabilizers. The space $\widetilde M^4$ is a smoothing of $M^4$ along  $\Sigma_{\cal O}(M^4)$. $\widetilde M^4$  is a  symplectic orbifold with isolated singularities and is homeomorphic to $M^4$. The space $\overline M^4$ is a smooth symplectic manifold which can be seen as a resolution of  isolated singularities of $\widetilde M^4$. The construction of $\widetilde M^4$ and $\overline M^4$ relies on the semi-toric  K\"ahler structure  in a neighbourhood of $\Sigma_{\cal O}(M^4)$ provided by Theorem \ref{semilocal4orb} and on smoothing results from Appendix \ref{smoothingSection}.

\begin{definition} To define $(\widetilde M^4, \widetilde \omega)$, pick a K\"ahler semi-toric metric in a neighbourhood of $\Sigma_{\cal O}(M^4)$ provided by Theorem \ref{semilocal4orb}. Note  that the corresponding complex orbi-structure endows a neighbourhood of $\Sigma_{\cal O}(M^4)$ with a structure of a complex surface with quotient singularities (Lemma \ref{complexquotient}). Hence, $\widetilde M^4$ is also a smooth $4$-dimensional orbifold with isolated orbi-points. Clearly, $\widetilde M^4$ is  diffeomorphic to $M^4$ on the complement to $\Sigma_{\cal O}(M^4)$. 

Next, Theorem \ref{smoothingdisivors} allows us to smooth the singular K\"ahler structure on $\widetilde M^4$ along all the orbi-divisors of $M^4$. As a result we get a K\"ahler metric $\widetilde \omega$ that has quotient singularities at the isolated orbi-points of $\widetilde M^4$.
\end{definition}

\begin{definition}\label{resolutionDef} To define $(\overline M^4, \overline \omega)$, consider a holomorphic  resolution $\pi: \overline M^4\to \widetilde M^4$ of isolated orbi-singularities of $\widetilde M^4$ and adjust the form $\pi^*\widetilde\omega$ in a small neighbourhood of the exceptional divisors, using Lemma \ref{kahleronisolated}.
\end{definition}

\begin{remark}\label{holoneigh} For the remainder of the article it will be important for us, that the spaces $M^4$, $\widetilde M^4$ and $\overline M^4$ come with open subsets that we will call $U_o$, $\widetilde U_o$, and $\overline U_o$ that all have a complex structure. With respect to this complex structure $(U_o,\omega)$ is a K\"ahler orbifold, $(\widetilde U_o,\widetilde\omega)$ is a K\"ahler orbifold with isolated orbi-singularities and $(\overline U_o,\overline \omega)$ is a smooth K\"ahler surface. We have a map $i: \widetilde M\to M$ that restricts to a biholomorphism on $\widetilde U_o$ and a map $\pi: \overline  M^4\to \widetilde M^4$ that is a holomorphic contraction on $\overline U_o$.

\end{remark}

\section{Orbi-spheres in symplectic $4$-orbifolds}\label{orbispheresection}

In this section we will prove Theorem \ref{twoinoneOrbisphere}. For convenience this theorem will be split into two results, Theorem \ref{symplorbispheres} and Theorem \ref{orbisotopy}. The first theorem generalises Theorem \ref{ruledcriterion} to orbifolds with cyclic stabilizers. The second theorem generalises a classical result on isotopy of symplectic spheres with zero self-intersection.

\subsection{Existence of orbi-spheres in symplectic $4$-orbifolds}

Once we know how to get a smooth symplectic manifold $\overline M^4$ from a symplectic orbifold $M^4$ with cyclic stabilizers (see Definition \ref{resolutionDef}), we can generalize Theorem \ref{ruledcriterion}, namely we have.
 
\begin{theorem}\label{symplorbispheres} Let $M^4$ be a $4$-dimensional symplectic orbifold with cyclic stabilizers and with $\pi_1(M^4)\ne 0$. Let $F$ be a smooth sub-orbifold in $M^4$ transversal to $\Sigma_{\cal O}(M^4)$ and satisfying the following: 

i) $F$ is a two-sphere, ii) $F\cdot F=0$, iii) $\int_F\omega>0$. 

Then the following statements hold.
\begin{enumerate}
\item The desingularisation $\overline M^4$ of $M^4$ is an irrational ruled manifold.

\item $M^4$ contains a symplectic sub-orbifold sphere $F'$ in the same homology class as $F$, that is transversal to $\Sigma_{\cal O}(M^4)$.
\end{enumerate}

\end{theorem}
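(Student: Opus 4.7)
The plan is to lift $F$ through the desingularization sequence $M^4 \leftarrow \widetilde M^4 \leftarrow \overline M^4$ to a smoothly embedded sphere $\overline F \subset \overline M^4$, apply Theorem \ref{ruledcriterion} to establish (1), and then use Theorem \ref{zhang} to produce a $\overline J$-holomorphic fibre which, projected back to $M^4$, gives the orbi-sphere $F'$ of (2).

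First I would check that $F$ lifts cleanly. Since $F$ is transversal to $\Sigma_{\mathcal{O}}(M^4)$, it avoids the maximal orbi-points of $M^4$ and meets each orbi-divisor transversally; a local computation in $\mathbb C^2/\mathbb Z_k$ (writing a $\mathbb Z_k$-invariant smooth curve as $w_2 = g(w_1)$ in the quotient coordinates $(w_1,w_2)=(z_1^k,z_2)$) shows that $F$ corresponds to a smoothly embedded sphere in $\widetilde M^4$ disjoint from the isolated orbi-singularities. These singularities are blown up by $\pi \colon \overline M^4 \to \widetilde M^4$, so $F$ lifts further to a smoothly embedded sphere $\overline F \subset \overline M^4$ disjoint from the exceptional divisors and with $[\overline F]^2 = 0$, $\int_{\overline F}\overline\omega > 0$. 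Moreover, $\widetilde M^4$ has the same underlying topological space as $M^4$, and the resolution inserts only simply-connected exceptional configurations, so $\pi_1(\overline M^4)\cong\pi_1(M^4)\neq 0$.

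Second, I would apply Theorem \ref{ruledcriterion} to $\overline M^4$ with the smooth sphere $\overline F$, which has non-negative self-intersection and is of infinite order since it pairs non-trivially with $[\overline\omega]$: this gives that $\overline M^4$ is rational or ruled, and the non-vanishing of $\pi_1$ forces it to be irrational ruled. This is statement (1), and also produces a symplectically embedded sphere of self-intersection zero representing the fibre class $\mathcal{F}$.

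Third, for (2), I would choose $\overline J$ on $\overline M^4$ tamed by $\overline\omega$ and agreeing with the integrable complex structure of the resolution on the neighbourhood $\overline U_o$ of Remark \ref{holoneigh}. Theorem \ref{zhang} then gives a fibration of $\overline M^4$ by $\overline J$-holomorphic subvarieties in the class $\mathcal{F}$, with only finitely many reducible fibres. I would verify $[\overline F] = \mathcal{F}$: the ruling map $\overline M^4 \to \Sigma_g$ sends $[\overline F]$ to zero (as $\overline F$ is a sphere and the base has genus $g>0$), so $[\overline F]$ lies in the kernel of the pushforward; within that kernel the only square-zero classes are integer multiples of $\mathcal{F}$, the other generators being exceptional $(-1)$-classes, whence $[\overline F]=k\mathcal{F}$ for some $k>0$, and the fact that $\overline F$ is a smoothly embedded sphere forces $k=1$ (either via the minimal-genus function of $k\mathcal{F}$ on an irrational ruled surface, or by deforming $\overline F$ to a $\widetilde J$-holomorphic representative for some tamed $\widetilde J$ so that adjunction yields $c_1\cdot[\overline F]=2$). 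For a generic $p\in\overline M^4$, the fibre $\overline F_p$ through $p$ is an irreducible smooth $\overline J$-holomorphic sphere disjoint from the exceptional divisors of $\pi$ and transversal to the preimages of the orbi-divisors of $M^4$, where genericity uses Theorem \ref{zhang} (3) to force rational components of $\Sigma_{\mathcal{O}}(M^4)$ into the finitely many reducible fibres, and Corollary \ref{finitangence} to bound tangencies with the positive-genus components. Projecting $\overline F_p$ to $M^4$ gives the required $F'$: near $\Sigma_{\mathcal{O}}(M^4)$ it is holomorphic in the local K\"ahler structure of $M^4$, and elsewhere it is the image of a symplectic sphere under a symplectomorphism of the smooth parts, so $F'$ is a symplectic sub-orbifold sphere in class $[F]$ transversal to $\Sigma_{\mathcal{O}}(M^4)$.

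The main obstacle will be the homological step $[\overline F] = \mathcal{F}$ together with the verification that a generic fibre of Theorem \ref{zhang} projects back transversally to $\Sigma_{\mathcal{O}}(M^4)$; the former forces one to rule out higher multiples of $\mathcal{F}$ being represented by an embedded sphere, while the latter requires combining the local integrability of $\overline J$ near $\overline U_o$ (where intersection theory behaves classically) with the genericity and rationality statements of Theorem \ref{zhang} applied to each component of $\Sigma_{\mathcal{O}}(M^4)$.
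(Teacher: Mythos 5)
Your overall approach — lift $F$ through $M^4 \leftarrow \widetilde M^4 \leftarrow \overline M^4$, apply Theorem \ref{ruledcriterion} for (1), then Zhang's Theorem \ref{zhang} for (2) and project a generic fibre back — is the same as the paper's. However, there is a genuine gap in the lifting step.

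You assert that transversality alone lets you write the $\mathbb{Z}_k$-invariant local lift of $F$ as a graph $w_2 = g(w_1)$ with $g$ smooth in the quotient coordinate $w_1 = z_1^k$, and conclude that $F$ is already a smoothly embedded sphere in $\widetilde M^4$. This is false in general: the smooth structure on $\widetilde M^4$ near $\Sigma_{\cal O}(M^4)$ is defined via the holomorphic coordinate $w_1 = z_1^k$ of the K\"ahler structure, and a smooth $\mathbb{Z}_k$-invariant curve $\{z_2 = h(z_1)\}$ need not descend to a smooth curve in $(w_1,w_2)$ unless $h$ depends only on $z_1^k$ and $\bar z_1^k$. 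For instance with $k=2$ and $h(z_1)=|z_1|^2$, the image in $\widetilde M^4$ is $\{w_2 = |w_1|\}$, which is a cone, not a submanifold. The missing step, which the paper carries out, is to first perturb $F$ in a small neighbourhood of $\Sigma_{\cal O}(M^4)$ so that it becomes holomorphic there with respect to the integrable structure on $U_o$; only then is the lifted sphere $\widetilde F$ genuinely smooth in $\widetilde M^4$ (and its preimage $\overline F$ in $\overline M^4$ smooth with $\overline F \cdot \overline F = 0$ and $\int_{\overline F}\overline\omega = \int_F\omega$). With this perturbation inserted, the rest of your argument goes through. Your extra diligence in verifying $[\overline F]=\mathcal{F}$ — which the paper leaves implicit — is sound and in fact a useful clarification: the class lies in the kernel of the pushforward to the base, is square-zero, and being represented by a smoothly embedded sphere forces it to be the primitive fibre class.
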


\begin{proof} 1) Recall that there is an open neighbourhood $U_o$ of $\Sigma_{\cal O}(M^4)$ in $M^4$ with a holomorphic structure (see Remark \ref{holoneigh}). We can perturb $F$ slightly  in an even smaller neighbourhood  of  $\Sigma_{\cal O}(M^4)$ to make it holomorphic and smooth there with respect to the holomorphic structure of $U_o$. Call the perturbed sphere $\widetilde F$, it is clearly smooth in $\widetilde M^4$ by construction of the smooth structure on $\widetilde M^4$. In addition, $\widetilde F\cdot \widetilde F=0$ and by construction of $\widetilde \omega$ (see Theorem \ref{smoothingdisivors} 3)) we have $\int_{\widetilde F}\widetilde \omega=\int_F\omega>0$. Denote now by $\overline F$ the preimage $\pi^{-1}(\widetilde F)$. We have again $\int_{\overline F}\overline \omega=\int_F\omega>0$ by construction of $\overline \omega$, and again $\overline F\cdot \overline F=0$. Applying Theorem \ref{ruledcriterion}, we see that $\overline M^4$ is rational or irrational ruled. At the same time $\pi_1(\overline M^4)\cong \pi_1 (M^4)\ne 0$, since the singularities of $M^4$ are quotient singularities. We deduce that $\overline M^4$ is irrational ruled.

2) Choose an almost complex structure $\overline J$ on $\overline M^4$ that is compatible with $\overline \omega$ and that coincides with the integrable complex structure defined  on $\overline U_o$. By Zhang's Theorem  \ref{zhang} we have a singular fibration of almost complex curves on $(\overline M^4, \overline J)$ such that each fibre is in the homology class $[\overline F]$ and only a finite number of fibres are reducible. All the irreducible fibres represent spheres symplectically embedded in $\overline M^4$. Note that the intersection of any such fibre with $\overline U_o$ is holomorphic. It follows that irreducible fibres do not intersect exceptional curves contracted by the map $\pi: \overline U_o\to \widetilde U_o$. 

From Corollary \ref{finitangence} it follows that all but a finite  number of irreducible  $\overline J$-holomorphic curves in class $[\overline F]$ are transversal to $(i\circ \pi)^{-1}(\Sigma_{\cal O}(M^4))$.  Take such a curve $\overline F'$ and then the sub-orbifold $F'$ in $M^4$ can be given by $i\circ\pi(\overline F')$.
\end{proof}

We will need later a slightly stronger technical version of Theorem \ref{symplorbispheres}, that follows immediately from the proof of  Theorem \ref{symplorbispheres}.

\begin{corollary}\label{adoptedweiyi} Let $(M^4, \omega)$, $F$ be as in Theorem \ref{symplorbispheres}. Suppose $J$ is an almost complex orbi-structure integrable close to $\Sigma_{\cal O}(M^4)$, tamed by $\omega$, and such that $(J,\omega)$ defines a semi-toric K\"ahler metric close to $\Sigma_{\cal O}(M^4)$. Then there is a finite collection of almost-complex spheres in $M^4$ such that any point of their complement is contained in a smooth almost complex orbi-sphere homologous to $F$.
\end{corollary}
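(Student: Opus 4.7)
The plan is to follow the proof of Theorem \ref{symplorbispheres} essentially verbatim, taking care to track the given almost complex structure $J$ through the regularisation procedure so that the resulting orbi-spheres in $M^4$ are genuinely $J$-almost-complex.

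First I would lift $J$ to a tamed almost complex structure $\overline J$ on $\overline M^4$. By hypothesis, $(J,\omega)$ is K\"ahler semi-toric near $\Sigma_{\cal O}(M^4)$, and $J$ is integrable there. Hence $J$ can be taken as the complex structure used in Theorem \ref{semilocal4orb} to build $\widetilde M^4$, so $J$ extends across $\widetilde U_o$ as an integrable complex structure $\widetilde J$ that agrees with $J$ on the complement of $\Sigma_{\cal O}(M^4)$. Pulling back through the holomorphic resolution $\pi: \overline M^4\to \widetilde M^4$ yields an almost complex structure $\overline J$ on $\overline M^4$ which is integrable on $\overline U_o$ and coincides with $J$ under $i\circ\pi$ outside the exceptional set. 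Tameness of $\overline J$ with respect to $\overline \omega$ holds on $\overline U_o$ because the pair is honestly K\"ahler there (by Definition \ref{resolutionDef}), and outside $\overline U_o$ because $\overline\omega = \pi^*\widetilde\omega$ and $\overline J$ descends to $J$, which tames $\omega$ by assumption.

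Next, apply Theorem \ref{zhang} to $(\overline M^4,\overline J)$, which is irrational ruled by Theorem \ref{symplorbispheres}(1). This produces a singular fibration of $\overline M^4$ by $\overline J$-holomorphic subvarieties in the fibre class $[\overline F]$, with only finitely many reducible members. As in the proof of Theorem \ref{symplorbispheres}(2), each irreducible fibre is a smooth embedded sphere that is holomorphic on $\overline U_o$; its intersection with every exceptional curve of $\pi$ is therefore non-negative, while $[\overline F]\cdot [E]=0$ in homology, so it is disjoint from the exceptional divisors. Such a fibre descends via $i\circ\pi$ to an embedded sub-orbifold sphere in $M^4$ homologous to $F$, and $J$-almost-complex by the matching of structures.

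Finally, by Corollary \ref{finitangence} only finitely many irreducible fibres fail to be transversal to the preimage $(i\circ\pi)^{-1}(\Sigma_{\cal O}(M^4))$. Taking the union of the reducible fibres with these exceptional transversality-failures and pushing it down by $i\circ\pi$ produces the required finite collection of $J$-almost-complex spheres in $M^4$; every point outside this collection lies on an irreducible $\overline J$-holomorphic fibre transversal to the preimage of the orbifold locus, which descends to a smooth $J$-almost-complex orbi-sphere in $M^4$ transversal to $\Sigma_{\cal O}(M^4)$ and homologous to $F$. The only genuinely new step beyond the proof of Theorem \ref{symplorbispheres} is the compatibility check that the prescribed $J$ on $M^4$ does indeed match the integrable structure used in the regularisation, and this is immediate from the semi-toric K\"ahler hypothesis.
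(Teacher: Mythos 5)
Your proposal is correct and follows the same route the paper intends: the paper simply asserts that the corollary ``follows immediately from the proof of Theorem \ref{symplorbispheres},'' and you have spelled out the needed compatibility checks --- namely that the prescribed semi-toric K\"ahler $J$ near $\Sigma_{\cal O}(M^4)$ can itself serve as the input to the regularisation producing $\widetilde M^4$ and $\overline M^4$, so that its lift $\overline J$ is tamed (not necessarily compatible) by $\overline\omega$, and Zhang's Theorem \ref{zhang}, which requires only tameness, still yields the fibration. The remaining steps (disjointness of irreducible fibres from the exceptional curves, finiteness of reducible and non-transversal fibres via Corollary \ref{finitangence}, and descent to $M^4$) are exactly as in the proof of Theorem \ref{symplorbispheres}.
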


\subsection{Isotopy of orbi-spheres in symplectic $4$-orbifolds}

The next result generalises to orbifolds the classical result on irrational ruled symplectic $4$-manifolds, stating that any two symplectic spheres with zero self-intersection in such a manifold are symplectically isotopic.

\begin{theorem}\label{orbisotopy} Let $M$ be a symplectic orbifold with cyclic stabilizers and with $\pi_1(M)\ne 0$. Suppose that  $M$ contains two symplectic sub-orbifold spheres $F_0$ and $F_1$ that are transversal to the orbifold locus  and satisfy $F_0^2=F_1^2=0$. Then there is a symplectic isotopy from $F_0$ to $F_1$ in the class of symplectic sub-orbifolds transversal to the orbifold locus.
\end{theorem}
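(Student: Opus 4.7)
The plan is to reduce the statement to the known fact that the space of symplectic fibres in a smooth irrational ruled surface is path-connected, by passing to the regularisation $\overline M$ of $M$ constructed just before Theorem \ref{symplorbispheres}.

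First, I would perturb each $F_i$ inside a small neighbourhood of $\Sigma_{\cal O}(M)$, keeping it a symplectic sub-orbifold transversal to $\Sigma_{\cal O}(M)$, so that it becomes holomorphic there with respect to the semi-toric K\"ahler structure supplied by Theorem \ref{semilocal4orb}. Because $F_i$ is transversal to $\Sigma_{\cal O}(M)$ and misses the maximal orbi-points, the smoothing along the orbi-divisors of Theorem \ref{smoothingdisivors} turns it into a smooth embedded symplectic sphere $\widetilde F_i$ in $\widetilde M$ disjoint from the isolated orbifold singularities of $\widetilde M$. Its preimage $\overline F_i = \pi^{-1}(\widetilde F_i) \subset \overline M$ is then an embedded symplectic sphere with $\overline F_i^2 = 0$ and $\int_{\overline F_i}\overline \omega = \int_{F_i}\omega > 0$. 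By Theorem \ref{symplorbispheres}(1), $\overline M$ is an irrational ruled surface.

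Next I would show that $[\overline F_0] = [\overline F_1] = {\cal F}$, the fibre class of $\overline M$. For any $\overline \omega$-tamed $\overline J$ on $\overline M$ making $\overline F_i$ holomorphic, Theorem \ref{zhang}(3) says that $\overline F_i$ is an irreducible component of some $\overline J$-holomorphic subvariety $V$ in class ${\cal F}$. In a genuinely reducible fibre $V = \sum n_j E_j$ every component satisfies $E_j \cdot V = 0$ together with $E_j \cdot E_k \ge 0$ for $k \ne j$, which forces $E_j^2 < 0$. Since $\overline F_i^2 = 0$, the only possibility is $V = \overline F_i$, giving $[\overline F_i] = {\cal F}$.

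I would then choose a smooth path $(\overline J_t)_{t\in[0,1]}$ of $\overline \omega$-tamed almost complex structures on $\overline M$, all integrable on a fixed neighbourhood of $(i\circ\pi)^{-1}(\Sigma_{\cal O}(M))$ together with the exceptional divisors of $\pi$, with $\overline J_0$ making $\overline F_0$ holomorphic and $\overline J_1$ making $\overline F_1$ holomorphic. For each $t$, Theorem \ref{zhang} gives a singular $\overline J_t$-fibration of $\overline M$ in the class ${\cal F}$ whose moduli space ${\cal M}_{\cal F}^t$ is a genus $g$ surface, and by Corollary \ref{adoptedweiyi} the subvarieties which are reducible, meet an exceptional divisor of $\pi$, or fail transversality to $(i\circ\pi)^{-1}(\Sigma_{\cal O}(M))$ form a finite subset $B_t \subset {\cal M}_{\cal F}^t$. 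By Theorem \ref{zhang}(1), $\overline F_0$ coincides with the $\overline J_0$-fibre through any of its points, and similarly for $\overline F_1$. Choosing a continuous path $p_t \in \overline M$ with $p_0 \in \overline F_0$, $p_1 \in \overline F_1$, and such that the $\overline J_t$-fibre $V_t$ through $p_t$ lies outside $B_t$ for every $t$, the family $V_t$ is a smooth isotopy of embedded symplectic spheres in $\overline M$ transversal to the preimage of the orbifold locus and disjoint from the exceptional divisors of $\pi$. Pushing down via $i\circ\pi$ yields an isotopy of symplectic sub-orbifold spheres in $M$ transversal to $\Sigma_{\cal O}(M)$; concatenating with the short symplectic isotopies realising the initial perturbations near $\Sigma_{\cal O}(M)$ (which exist by the relative equivariant Darboux--Weinstein Theorem \ref{relative_local_neighbourhood}) gives an isotopy from $F_0$ to $F_1$ as desired.

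The main obstacle is the continuous selection of the path $p_t$ so that $V_t \notin B_t$ for every $t \in [0,1]$. At isolated parameter values a naively chosen $p_t$ may land on a fibre in the finite bad set $B_t$, and one has to perturb $p_t$ locally (and, if necessary, wiggle $\overline J_t$) to dodge these finitely many obstructions. This is a standard one-parameter transversality argument made possible by the smoothness of the total moduli space of irreducible fibres over $t$ recorded in Theorem \ref{zhang}(4), which ensures that ``avoiding $B_t$'' is an open and dense condition stable under small perturbations of the path.
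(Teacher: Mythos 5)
Your proposal follows essentially the same route as the paper: pass to the regularisation $\overline M$, put both spheres into the fibre class, connect two auxiliary almost complex structures (agreeing with the fixed integrable one near the orbifold preimage) by a path, run Zhang's fibration theorem fibrewise, select a continuous family of irreducible fibres avoiding the finitely many bad ones, and push the family back down through $i\circ\pi$. The paper packages the one-parameter selection step as Corollary \ref{transisotopy}, whose proof uses automatic transversality (Theorem \ref{smoothdef}) to show the good fibres form a connected open submanifold of $M\times[0,1]$ — this is exactly the ``standard one-parameter transversality'' you invoke informally at the end, and it makes precise why the path $p_t$ exists without any extra wiggling of $\overline J_t$.
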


The proof of this theorem will use the same idea as in Theorem \ref{symplorbispheres} and will rely additionally on the following two statements. The first result follows from {\it automatic transversality} (see \cite{HLS} and \cite[Theorem 4.5]{Wen}).

\begin{theorem}\label{smoothdef} Let $(M,\omega)$ be an irrational ruled surface and $J_t$ be a smooth family of compatible almost complex structures, $t\in [0,1]$. Suppose that for some $t_0\in (0,1)$, $\Sigma_{t_0}$ is a smooth almost complex sphere in $(M,\omega, J_{t_0})$ with $\Sigma_{t_0}^2=0$. Then there is a small open neighbourhood $U(\Sigma_{t_0})\subset M\times (0,1)$ and a diffeomorphism $\varphi: B^3\times S^2\to U(\Sigma_{t_0})$ such that for any $x\in B^3$ the image $\varphi(x, S^2)$ lies in some fibre $M\times t'$ and is $J_{t'}$-holomorphic. 
\end{theorem}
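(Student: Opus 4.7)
The plan is to invoke automatic transversality for embedded pseudoholomorphic spheres in dimension four. Because $\Sigma_{t_0}$ is an embedded $J_{t_0}$-holomorphic sphere with $\Sigma_{t_0}\cdot\Sigma_{t_0}=0$, the adjunction formula gives $c_1(M)\cdot\Sigma_{t_0}=2$; in particular $[\Sigma_{t_0}]$ is the fibre class $\mathcal F$. The Hofer--Lizan--Sikorav criterion \cite{HLS} (see also \cite[Theorem 4.5]{Wen}) then implies that the linearised Cauchy--Riemann operator $D_u$ is surjective for any somewhere-injective parametrisation $u\colon S^2\to M$ of $\Sigma_{t_0}$, since for embedded spheres in dimension four the condition $c_1(M)\cdot[\Sigma]>0$ suffices for automatic transversality.

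First I would form the parametric moduli space
\[
\mathscr{M}=\{(t,[u]):t\in(0,1),\ u\colon S^2\to M\text{ is }J_t\text{-holomorphic in class }\mathcal F\}/{\rm Aut}(S^2)
\]
and apply the parametric implicit function theorem at $(t_0,[\Sigma_{t_0}])$. Surjectivity at $t_0$ (and by openness for $t$ nearby) yields that $\mathscr M$ is a smooth manifold near $(t_0,[\Sigma_{t_0}])$, of dimension $(2c_1(M)\cdot\mathcal F+2\cdot 2-6)+1=3$. Passing to the universal curve
\[
\widetilde{\mathscr M}=\{(t,[u],z):(t,[u])\in\mathscr M,\ z\in S^2\}/{\rm Aut}(S^2),
\]
I obtain a smooth five-manifold carrying a smooth evaluation map $\mathrm{ev}\colon\widetilde{\mathscr M}\to M\times(0,1)$.

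Next I would show that $\mathrm{ev}$ is a local diffeomorphism at the point representing $\Sigma_{t_0}$. Positivity of intersections in dimension four gives that two distinct $J_{t'}$-holomorphic curves in class $\mathcal F$ have intersection number $\mathcal F\cdot\mathcal F=0$ and hence are disjoint. Consequently, for each $t'$ close to $t_0$ the two-dimensional moduli slice $\mathscr M_{t'}$ parametrises a pairwise disjoint family of embedded $J_{t'}$-holomorphic spheres, so the fibrewise evaluation $\widetilde{\mathscr M}_{t'}\to M\times\{t'\}$ is an injective immersion of a four-manifold into a four-manifold, hence a local diffeomorphism onto an open subset. Varying $t$ transversely, $\mathrm{ev}$ is a local diffeomorphism from a neighbourhood of $\Sigma_{t_0}$ in $\widetilde{\mathscr M}$ onto a neighbourhood $U(\Sigma_{t_0})$ in $M\times(0,1)$.

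Finally, a smooth local slice to the residual ${\rm Aut}(S^2)$-action through $[\Sigma_{t_0}]$ furnishes a three-ball $B^3\subset\mathscr M$, and choosing a smooth family of parametrisations of the curves over $B^3$ trivialises the universal curve as $B^3\times S^2$. The resulting composition $\varphi\colon B^3\times S^2\to U(\Sigma_{t_0})$ sends each $\{x\}\times S^2$ onto a $J_{t'}$-holomorphic sphere in the appropriate fibre $M\times\{t'\}$, which is exactly what is required. The main obstacle is surjectivity of the linearised Cauchy--Riemann operator at $\Sigma_{t_0}$; this is precisely the content of the automatic transversality theorem invoked above, and everything else reduces to the standard implicit function theorem package combined with positivity of intersections.
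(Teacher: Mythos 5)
Your proposal is correct and takes essentially the same route as the paper, whose proof of this statement is simply the citation of automatic transversality (\cite{HLS}, \cite[Theorem 4.5]{Wen}); you have spelled out the standard package that this citation encodes (parametric implicit function theorem for the moduli space, universal curve, evaluation map, positivity of intersections). Two small points you leave implicit: the evaluation map is an immersion because nontrivial kernel elements of the normal Cauchy--Riemann operator on the degree-zero normal bundle of a square-zero sphere are nowhere vanishing, and the identification $[\Sigma_{t_0}]=\mathcal{F}$ does not follow from adjunction alone (it needs the structure of spheres in ruled surfaces, e.g.\ Theorem \ref{zhang}), though your argument works verbatim with the class $[\Sigma_{t_0}]$ and never actually uses this identification.
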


\begin{corollary}\label{transisotopy} Let $(M,\omega)$ be an irrational ruled surface, $\Sigma_1,\ldots, \Sigma_n\subset M$ be a collection of symplectic surfaces and $S_0, S_1$ be two symplectic spheres in $M$, transversal to $\Sigma_i$ and satisfying $S_0^2=S_1^2=0$. Let $J_t$ be a smooth family of $\omega$-compatible almost complex structures, $t\in [0,1]$ such that 1) $\Sigma_i$ are $J_t$-holomorphic for all $t$, 2) $S_0$ is $J_0$-holomorphic, 3) $S_1$ is $J_1$-holomorphic. 

Then there is a smooth isotopy $S_t$, from $S_0$ to $S_1$ in $M$, such that  $S_t$ is a smooth $J_t$-holomorphic sphere  transversal to all $\Sigma_i$ for any $t\in [0,1]$.

\end{corollary}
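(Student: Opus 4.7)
The plan is to construct the isotopy $S_t$ by selecting, for each $t$, a point $p(t) \in M$ and setting $S_t$ to be the unique $J_t$-holomorphic subvariety in the fibre class through $p(t)$. Since each $S_i$ is a symplectic sphere with $S_i^2 = 0$ and $\int_{S_i}\omega > 0$, uniqueness of the symplectic fibre class in an irrational ruled surface forces $[S_0] = [S_1] = \mathcal{F}$. For each $t$, Theorem \ref{zhang} provides a moduli space $\mathcal{M}_{\mathcal{F}}^t$ of $J_t$-holomorphic $\mathcal{F}$-subvarieties, homeomorphic to a genus-$g$ surface with at most $b_2(M) - 2$ reducible members, together with a continuous ``fibre through a point'' map $f_t \colon M \to \mathcal{M}_{\mathcal{F}}^t$; by the Remark after Theorem \ref{zhang}, every irreducible fibre is a smooth $J_t$-holomorphic sphere.

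Next I would define the bad set at time $t$ as $B^t = \{\, p \in M : f_t(p) \text{ is reducible, or is an irreducible fibre tangent to some } \Sigma_i \,\}$. The reducible contribution is a finite union of $J_t$-holomorphic spheres. For each $\Sigma_i$ with $g(\Sigma_i) > 0$, Corollary \ref{finitangence} ensures only finitely many irreducible fibres are tangent to $\Sigma_i$; if instead some $\Sigma_i$ is a sphere, then by Theorem \ref{zhang}(3) it is a component of a reducible $\mathcal{F}$-fibre, so disjoint from every irreducible fibre (two distinct $J_t$-holomorphic curves in the same class $\mathcal{F}$ would intersect positively, forcing $\mathcal{F}^2 > 0$), and it contributes no tangencies. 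Hence $B^t$ is a finite union of embedded $2$-spheres in $M$, and the total bad set $B = \bigsqcup_{t \in [0,1]} \{t\} \times B^t$ has real dimension at most $3$ in the $5$-manifold $[0,1] \times M$, i.e.\ codimension at least $2$.

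The endpoints $p_0 \in S_0$ and $p_1 \in S_1$ lie in $M \setminus B^0$ and $M \setminus B^1$ respectively, because for any $p \in S_i$ the fibre through $p$ is $S_i$ itself, which is irreducible and transverse to each $\Sigma_j$. Since a codimension-$2$ subset of a $5$-manifold is avoided by a generic smooth $1$-parameter family with prescribed endpoints in the complement, a general-position argument yields a smooth section $p \colon [0,1] \to M$ with $p(0) = p_0$, $p(1) = p_1$ and $(t, p(t)) \notin B$ for all $t$. Setting $S_t := f_t(p(t))$ produces a smooth irreducible $J_t$-holomorphic sphere transverse to every $\Sigma_j$; smoothness of the family $t \mapsto S_t$ follows from the local $3$-parameter smooth deformation of Theorem \ref{smoothdef} applied near each $(t, S_t)$, and the boundary conditions $S_0 = S_{t=0}$, $S_1 = S_{t=1}$ are forced by uniqueness of the $J_t$-fibre through a point together with $p(0) \in S_0$, $p(1) \in S_1$. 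The main subtlety lies in producing the good section $p$: the bad set $B$ need not be a smooth submanifold, since configurations of reducible fibres and tangency loci with the $\Sigma_i$ may behave irregularly in $t$, so classical transversality against a smooth submanifold is unavailable and one must rest on the soft codimension-$2$ bound above; once this is in place the remaining steps are essentially formal.
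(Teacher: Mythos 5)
Your approach is structurally dual to the paper's: you work with the \emph{bad set} $B = \bigsqcup_t \{t\}\times B^t$, whereas the paper works with the \emph{good set} $U \subset M\times[0,1]$ consisting of $(p,t)$ such that $p$ lies on a smooth $J_t$-holomorphic sphere in class $\mathcal F$ transversal to all $\Sigma_i$ (so $U_t = M\setminus B^t$). The paper establishes that $U$ is open (this follows from Theorem \ref{smoothdef} combined with the fact that transversality is an open condition) and that each slice $U_t$ is the complement in $M$ of finitely many $J_t$-holomorphic curves (Theorem \ref{zhang} 4) and Corollary \ref{finitangence}), hence non-empty and connected. An open subset of $M\times[0,1]$ with connected, non-empty slices is connected, so a smooth path $\psi$ through $U$ from $(p_0,0)$ to $(p_1,1)$ exists; setting $S_t$ to be the fibre through $\psi(t)$ and appealing again to Theorem \ref{smoothdef} gives the smooth isotopy.

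The concrete gap in your write-up is precisely the step you flag as ``the main subtlety.'' The assertion that ``a codimension-$2$ subset of a $5$-manifold is avoided by a generic smooth $1$-parameter family with prescribed endpoints'' is not a theorem for arbitrary closed sets whose slices have codimension $2$: such sets need not be submanifolds, nor even nicely stratified, and ``soft codimension $2$'' (each slice has dimension $2$) only gives measure-zero in $M\times[0,1]$ by Fubini, which does not obstruct a $1$-dimensional graph from meeting it. You explicitly acknowledge this and then say the rest is formal, which makes the argument circular. The fix is exactly the paper's reformulation: rather than trying to run general position against $B$, observe that the complement $U$ is \emph{open} (this is the substantive input from Theorem \ref{smoothdef}) with connected slices, and for an open set these two facts already imply connectedness, and hence the existence of a path, without any regularity assumptions on the bad set. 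I would also flag that your claim that $B^t$ contains no tangency contribution from spherical $\Sigma_i$ is only obviously correct when $\Sigma_i$ is a component of a genuinely reducible fibre; if $\Sigma_i$ is itself in class $\mathcal F$ it is a smooth fibre and disjoint from all others, which still causes no trouble, but the degenerate case where $S_0$ or $S_1$ coincides with some $\Sigma_i$ merits a sentence.
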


\begin{proof} Let $U$ be the subset of $M\times [0,1]$ whose intersection with each fibre $M\times t$ is the union of all smooth $J_t$-holomorphic spheres in $M$ that are in homology class $\cal F$ and  transversal to $\Sigma_i$.
It follows directly from Theorem \ref{smoothdef} that $U$ is an open submanifold in $M\times [0,1]$. $U$ is connected, since by Theorem \ref{zhang} 4) and Corollary \ref{finitangence} for each $t$ the intersection $U\cap (M,t)$ is a complement in $M$ to a finite number of $J_t$-almost complex curves. It follows easily that there exists a smooth path $\psi: [0,1]\to M$ such that the path $(\psi(t),t)$ lies in $U$. Let now $S_t$ be  the unique $J_t$-holomorphic sphere in the class $\cal F$ containing the point $\psi(t)$ in $M$. It follows from Theorem \ref{smoothdef} that the family of spheres $S_t$ provides a smooth symplectic isotopy between $S_0$ and $S_1$.
\end{proof} 

We are now ready to prove Theorem \ref{orbisotopy}.

\begin{proof}[Proof of Theorem \ref{orbisotopy}] By perturbing slightly $F_1$ we may assure that the intersections $F_0\cap \Sigma_{\mathcal O}(M)$ and $F_1\cap \Sigma_{\cal O}(M)$ are disjoint. In such a case, we can introduce a semi-toric structure on a small neighbourhood $U_o$ of $\Sigma_{\cal O}(M)$ so that both $F_0$ and $F_1$ are holomorphic in $U_o$. Take now the regularisation $(\overline M, \overline J)$ of $(M,J)$ and extend the holomorphic structure $\overline J$ from $\overline U$ to the whole $\overline M$ in the following two ways. We choose $\overline J_0$ and $\overline J_1$ compatible with $\overline \omega$ to be equal to $\overline J$ on $\overline U_o$ and such that $\overline F_0$ is  $\overline J_0$-almost complex in $\overline M$ while $\overline F_1$ is  $\overline J_1$-almost complex in $\overline M$. 

Let $\overline J_t$, $t\in [0,1]$ be a smooth family of $\overline \omega$-compatible almost complex structures  that connects $\overline J_0$ with $\overline J_1$ restricting to $\overline J$ on $\overline U_o$. By Corollary \ref{transisotopy} there is a smooth family of spheres $\overline F_t$ such that each $\overline F_t$ is $\overline J_t$-holomorphic and transversal to the preimage of $\Sigma_{\cal O}(M)$ in $\overline M$. Clearly, spheres $\overline F_t$ are disjoint from the exceptional curves of the map $\pi: \overline M\to \widetilde M$. Hence,  the family of orbi-spheres $F_t=i\circ\pi(\overline F_t)$ in $M$ provides the desired isotopy from $F_0$ to $F_1$. 
\end{proof}

\section{Symplectic spheres in reduced spaces}\label{sympspheresec}

In this section we work with Hamiltonian $S^1$-manifold of dimension $6$ such that $M_{\min}$ is a surface of positive genus. Our goal is to prove Theorem \ref{fibreinregular} and its analogue for critical values of $H$, namely Theorem \ref{critfibres}.

\begin{theorem}\label{fibreinregular} Let $M$ be a Hamiltonian $S^1$-manifold of dimension $6$ such that $M_{\min}$ is a surface of positive genus. Let $c$ be a regular value of the Hamiltonian. Then in the reduced space $M_c$ there is a symplectic sub-orbifold $2$-sphere $F$ transversal to $\Sigma_{\cal O}(M_c)$  and satisfying $F\cdot F=0$.  

\end{theorem}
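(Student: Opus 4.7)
The plan is to apply Theorem~\ref{twoinoneOrbisphere} to $(M_c,\omega_c)$; the main technical step is producing the smooth orbi-sphere input required by that theorem.

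Two of the hypotheses hold for free. Since $c$ is a regular value, $M_c$ is a $4$-dimensional symplectic orbifold, and its stabilizers, being subgroups of $S^1$, are cyclic. By Theorem~\ref{Li}, $\pi_1(M_c)\cong\pi_1(M_{\min})$, which is nontrivial because $M_{\min}$ is a surface of positive genus.

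To construct the required smooth orbi-sphere, first fix $c_0=H_{\min}+\varepsilon$ with $\varepsilon$ small. Using the local K\"ahler model near $M_{\min}$ provided by Proposition~\ref{kahleratfix}, the reduced space $M_{c_0}$ has the structure of an $S^2$-orbi-bundle over $M_{\min}$; a generic fibre $F_0$ is a smooth sub-orbifold sphere transversal to $\Sigma_{\cal O}(M_{c_0})$ satisfying $F_0\cdot F_0=0$ and $\int_{F_0}\omega_{c_0}>0$. We then propagate $F_0$ to level $c$: on each open subinterval of regular values of $H$ in $[c_0,c]$, the gradient map of Remark~\ref{gradmapremark} is an orbifold diffeomorphism and thus preserves smoothness, transversality to the orbifold locus, self-intersection $0$ and positive area.

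The main obstacle is passage across an intermediate critical value $c^*$. For this we rely on the adjusted K\"ahler structure of Remark~\ref{localKahler6}: on a neighbourhood of $M^{S^1}\cap H^{-1}(c^*)$ together with the isotropy spheres at this level the $S^1$-action is holomorphic, so the arguments of Theorem~\ref{bimeromorphic} (supplemented by the local K\"ahler blow-ups of Lemma~\ref{Kahlerblowup} to handle fixed surfaces at level $c^*$) apply locally near the critical set and exhibit the transition $M_{c^*-\delta}\dashrightarrow M_{c^*+\delta}$ as a bimeromorphic correspondence whose exceptional locus lies over the projection of $M^{S^1}\cap H^{-1}(c^*)$ and of the isotropy spheres. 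A smooth orbi-sphere in $M_{c^*-\delta}$, perturbed if necessary so that it avoids the codimension $\geq 2$ strata of this exceptional locus (possible by dimension counting, since the sphere is $2$-dimensional inside the $4$-dimensional $M_{c^*-\delta}$), transfers under the correspondence to a smooth orbi-sphere in $M_{c^*+\delta}$ with the same self-intersection $0$ and still positive area. Iterating through the finitely many critical values of $H$ in $(c_0,c)$ yields the required smooth orbi-sphere in $M_c$, and Theorem~\ref{twoinoneOrbisphere} then produces a symplectic sub-orbifold sphere in its homology class, transversal to $\Sigma_{\cal O}(M_c)$ and satisfying $F\cdot F=0$, as desired.
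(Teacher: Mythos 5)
Your overall inductive scheme — propagate a smooth orbi-sphere in the fibre class from level $H_{\min}+\varepsilon$ up to level $c$, using gradient diffeomorphisms over regular intervals and the adjusted K\"ahler structure to handle critical levels, then invoke Theorem~\ref{twoinoneOrbisphere} — matches the structure of the paper's proof. However, there is a genuine gap in your treatment of the passage across a critical level $c^*$.

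You claim that the smooth orbi-sphere in $M_{c^*-\delta}$ can be ``perturbed if necessary so that it avoids the codimension $\geq 2$ strata of this exceptional locus (possible by dimension counting).'' This dimension count does not work. The exceptional locus of the bimeromorphic correspondence $M_{c^*-\delta}\dashrightarrow M_{c^*+\delta}$ contains contracted curves, which are real codimension $2$ inside the $4$-dimensional orbifold $M_{c^*-\delta}$. A generic perturbation of a $2$-dimensional sphere in a $4$-manifold makes it \emph{transversal} to such curves (meeting them in a finite set), not disjoint from them; disjointness would require codimension $\geq 3$. And transversal intersection is not good enough: if the sphere passes through a contracted locus, its image under the correspondence changes homology class, and in particular the self-intersection drops below $0$ (the strict transform through a blown-up point loses a unit of self-intersection). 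So the crucial conditions $F\cdot F=0$ and positive area are not preserved by your construction.

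The paper closes exactly this gap with a non-generic choice: by Corollary~\ref{adoptedweiyi}, one chooses the orbi-sphere $F_{c^*-\delta}$ to be $J$-holomorphic for a tamed $J$ that agrees with the integrable structure near the critical set. Zhang's structure theorem (Theorem~\ref{zhang}) then implies that an irreducible $J$-holomorphic sphere in the fibre class cannot meet the contracted holomorphic exceptional curves (which have negative self-intersection and sit in reducible fibres), so it is automatically disjoint from the exceptional locus of the gradient map. That positivity-of-intersections / fibration argument, not a generic transversality argument, is what allows the sphere to transit the critical level with its self-intersection and homology class intact.
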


\subsection{Proof of Theorem \ref{fibreinregular}}

\begin{proof}   Note, that the statement holds for $c$ close to $H_{\min}$, so the proof can be split into the following two statements.

{\bf i)} Let $c_1$ and $c_2$ be two regular values of $H$ such that $c_1<c_2$. Suppose that all values of $H$ in $[c_1,c_2]$ are regular. If the statement of the theorem holds for $M_{c_1}$ then it holds for $M_{c_2}$.

{\bf ii)} If $c$ is a critical value of $H$ and the statement of the theorem holds for $M_{c-\varepsilon}$ then it holds for $M_{c+\varepsilon}$.

To see that {\bf i)} holds, note that the gradient flow produces a diffeomorphism between smooth orbifolds $M_{c_1}$ and $M_{c_2}$. Let $F_{c_1}$ be a symplectic orbi-sphere in $M_{c_1}$  with $[F_{c_1}]={\cal F}_{c_1}$. $F_{c_1}$ is sent by the gradient flow to a smooth orbi-sphere $F_{c_2}$ in $M_{c_2}$ with $[F_{c_2}]={\cal F}_{c_2}$. By Corollary \ref{uniquefibre} 1) we have $\int_{F_{c_2}}\omega_{c_2}>0$. Hence we can apply Theorem  \ref{symplorbispheres} to produce a symplectic orbi-sphere in $M_{c_2}$ in the fibre class.

Let us now establish {\bf ii)}. We will make use of {\it adjusted} K\"ahler structure defined on  an $S^1$-invariant neighbourhood $U\subset M$ of the union of $M^{S^1}$ with all the isotropy spheres, see Remark \ref{localKahler6}. It will be useful for us to assume that $U$ has the following {\it cylindrical} property. There exists an $\varepsilon>0$ such that a point $p$ with $H(p)\in [c-\varepsilon,c+\varepsilon]$ belongs to $U$ if an only if the gradient trajectory passing through $p$ has a point of $U\cap H^{-1}(c)$ in its closure. Such a property can be always achieved by shrinking $U$ if necessary; we also assume that $c$ is the only critical value of $H$ in the interval $[c-\varepsilon, c+\varepsilon]$.

Let $\Sigma_1,\ldots,\Sigma_l$ be all the fixed surfaces in the level set $H^{-1}(c)$ and let $x_1,\ldots, x_m$ be all the isolated fixed points in this level set. 

It follows from Theorem \ref{analyticstr} that the K\"ahler structure on $U$ induces a  complex structure on the reduced neighbourhood $U_{c+\delta}$ for any $\delta\in [-\varepsilon, \varepsilon]$. It follows from Theorem \ref{bimeromorphic} that the map $gr^{c+\delta}_{c}: U_{c+\delta}\to U_c$ is a regular bi-meromorphic map, invertible close to $\Sigma_1,\ldots,\Sigma_l\subset U_c$. Hence the preimage $\Sigma_{i,\delta}$ of $\Sigma_i$ in $M_{c+\delta}$ under the map $gr^{c+\delta}_{c}$ is a smooth complex curve in $U_{c+\delta}$. Note as well, that the preimage of $x_j$ in $U_{c+\delta}$ under the map $gr^{c+\delta}_{c}: U_{c+\delta}\to U_c$ is either a point or a $\mathbb CP^1$.

Let us now apply Corollary \ref{adoptedweiyi} to $M_{c-\varepsilon}$. For this, we should choose an appropriate almost complex structure $J_{-\varepsilon}$ on $M_{c-\varepsilon}$, semi-toric at the orbifold locus of $M_{c-\varepsilon}$. Note that the reduced K\"ahler metric on $U_{c-\varepsilon}$ is already semi-toric thanks to additional symmetries coming from Remark \ref{localKahler6}. We only need to extend this K\"ahler metric to a neighbourhood of the whole orbifold locus of $M_{c-\varepsilon}$. To do this we keep the metric intact on all connected components of $U_{c-\varepsilon}$  containing surfaces $\Sigma_{i,-\varepsilon}$. As for components  of $U_{c-\varepsilon}$ containing the preimages   of points $p_i$, after shrinking them if necessary, we use Theorem \ref{semilocal4orb} to extend this semi-toric K\"ahler  structure to a neighbourhood of the whole orbi-locus of $M_{c-\varepsilon}$. Finally, we extend this (integrable) almost complex structure from a neighbourhood of the orbifold locus to an almost complex structure on whole $M_{c-\varepsilon}$. In exactly the same way we define an almost complex structure $J_{\varepsilon}$ on $M_{c+\varepsilon}$.

We apply now Corollary \ref{adoptedweiyi} to $(M_{c-\varepsilon}, J_{-\varepsilon})$ to get  a smooth $J_{-\varepsilon}$-holomorphic orbi-sphere $F_{c-\varepsilon}$  in the fibre class of $M_{c-\varepsilon}$ that does not pass through maximal orbi-points in $M_{c-\varepsilon}$.  Note that the map  
$$(gr_{c}^{c+\varepsilon})^{-1}\circ gr_{c}^{c-\varepsilon}:M_{c-\varepsilon}\to M_{c+\varepsilon}$$
is bimeromorphic on $U_{c-\varepsilon}$ and regular along the union of $\Sigma_{i,-\varepsilon}$. Since $F_{c-\varepsilon}$  is  disjoint from holomorphic exceptional curves in $M_{c-\varepsilon}$ contracted by the gradient map $gr_{c}^{c-\varepsilon}$, the above map is smooth along $F_{c-\varepsilon}$. We deduce that the image of $F_{c-\varepsilon}$ in $M_{c+\varepsilon}$ under this map is a smooth orbi-sphere. Hence we can again apply Theorem \ref{symplorbispheres} to produce a symplectic orbi-sphere in $M_{c+\varepsilon}$ in the fibre class. \end{proof}

\subsection{Symplectic fibres at critical levels}

Our second task is to understand what should play the role of symplectic fibre in $M_c$ for a critical value $c$ of $H$, and prove an analogue of Theorem \ref{fibreinregular} for critical levels. We choose again an adjusted K\"ahler structure on $M$ close to the union of $M^{S^1}$ with all the isotropy spheres. As in the previous section, we denote by $\Sigma_1,\ldots,\Sigma_l$  all the fixed surfaces in the level set $H^{-1}(c)$ and by $x_1,\ldots, x_m$ all the isolated fixed points in this level set. The reduced space $M_c$ is a topological orbifold, though the quotient K\"ahler metric on $M_c$ close to the image of $\cup_ix_i\cup_j \Sigma_j$ in  $ M_c$ is not an orbifold metric. However, one can deduce from  Theorem \ref{analyticstr} 3), that the neighbourhood of the image of $\cup_ix_i\cup_j \Sigma_j$ in $M_c$ acquires the structure of a complex surface with quotient singularities. 

\begin{theorem}\label{critfibres} Let $M$ be a Hamiltonian $S^1$-manifold of dimension $6$ with $M_{\min}$ and $M_{\max}$ surfaces of positive genus. Let $c$ be a critical value of $H$ in the interval $(H_{\min}, H_{\max})$. Then there is a sphere $F$ in $M_c$  with $F^2=0$, satisfying the following conditions.
\begin{enumerate}
\item $F$ is a symplectic sub-orbifold transversal to $\Sigma_{\cal O}(M_c)$
in the complement to the image of $\cup_ix_i\cup_j \Sigma_j$  in $M_c$. 

\item $F$ is holomorphic close to the images in $M_c$  of $\Sigma_1,\ldots, \Sigma_l$, and is transversal to these images.

\item Each $\Sigma_{j}$ of positive genus intersects $F$. 
\end{enumerate}

\end{theorem}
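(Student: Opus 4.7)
The plan is to produce $F$ by pushing forward a fibre orbi-sphere from a nearby regular level via the gradient map. Fix an adjusted K\"ahler structure on a neighbourhood of $M^{S^1}$ together with all isotropy spheres as in Remark \ref{localKahler6}, and choose $\varepsilon>0$ so that $c$ is the only critical value of $H$ in $[c-\varepsilon,c]$. Theorems \ref{analyticstr} and \ref{bimeromorphic} equip neighbourhoods $U_{c-\varepsilon}\subset M_{c-\varepsilon}$ and $U_c\subset M_c$ of the critical data with complex analytic structures (with isolated quotient singularities), with respect to which $gr_c^{c-\varepsilon}\colon U_{c-\varepsilon}\to U_c$ is a regular bimeromorphic holomorphic map, an isomorphism near each $\Sigma_{j,c}$ and a contraction of finitely many exceptional curves onto the singular points $x_{i,c}$.

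Using Theorem \ref{semilocal4orb}, I would extend the semi-toric K\"ahler structure from $U_{c-\varepsilon}$ to a neighbourhood of $\Sigma_{\cal O}(M_{c-\varepsilon})$, and then to an $\omega_{c-\varepsilon}$-tamed almost complex structure $J$ on $M_{c-\varepsilon}$, arranged so that the trace surfaces $T_j$ defined below are $J$-holomorphic. By Corollary \ref{adoptedweiyi}, there is a smooth $J$-holomorphic orbi-sphere $F_{c-\varepsilon}$ in the fibre class of $M_{c-\varepsilon}$, transversal to $\Sigma_{\cal O}(M_{c-\varepsilon})$ and disjoint from all the exceptional curves of $gr_c^{c-\varepsilon}$. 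Define $F:=gr_c^{c-\varepsilon}(F_{c-\varepsilon})$, which is an embedded sphere because $F_{c-\varepsilon}$ avoids those exceptional curves. Condition 2 follows from the invertibility and holomorphicity of $gr_c^{c-\varepsilon}$ near each $\Sigma_{j,c}$; condition 1 follows from a generic transversality choice for $F_{c-\varepsilon}$; and $F^2=0$ is preserved since the pushforward avoids the exceptional divisors.

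The hard part will be condition 3. For each positive-genus fixed surface $\Sigma_j\subset H^{-1}(c)$, consider the stable manifold $W^s_{\nabla H}(\Sigma_j)\subset M$, a 4-dimensional $S^1$-invariant symplectic submanifold approaching $\Sigma_j$ from below along the negative-weight direction; its trace $T_j\subset M_{c-\varepsilon}$ is a smooth symplectic surface homeomorphic to $\Sigma_j$ (hence of the same positive genus) and $gr_c^{c-\varepsilon}(T_j)=\Sigma_{j,c}$. I would then pass to the smooth regularization $\overline{M_{c-\varepsilon}}$, which is an irrational ruled symplectic $4$-manifold by Theorem \ref{symplorbispheres}. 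The lifts $\overline{F_{c-\varepsilon}}$ and $\overline{T_j}$ are disjoint from the exceptional divisors, both $\overline{J}$-holomorphic, and $\overline{F_{c-\varepsilon}}$ represents the symplectic fibre class of $\overline{M_{c-\varepsilon}}$. Corollary \ref{fibreintersection} then gives $\overline{F_{c-\varepsilon}}\cdot\overline{T_j}>0$, and positivity of intersection of almost complex curves forces a genuine geometric intersection point in $M_{c-\varepsilon}$; applying $gr_c^{c-\varepsilon}$ produces a point of $F\cap\Sigma_{j,c}$, completing the argument.
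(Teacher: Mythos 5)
Your proposal is correct but takes a genuinely different route from the paper. The paper proves Theorem \ref{critfibres} by performing an $S^1$-equivariant K\"ahler blow-up of $M$ along the fixed locus $\cup_i x_i\cup_j\Sigma_j$ at level $c$ (Lemma \ref{smoothblowup}), which by Lemma \ref{Kahlerblowup} turns $c$ into a regular level of the blown-up manifold $M'$; Theorem \ref{fibreinregular} is then applied directly to $M'_c$ and the resulting orbi-sphere is pushed to $M_c$ via the blow-down map $\varphi$. For Condition 3 the paper applies Corollary \ref{fibreintersection} in the resolution $\overline{M'_c}$, with the positive-genus surfaces being the traces of the exceptional divisors over the $\Sigma_j$. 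You instead stay inside $M$, work at a nearby regular level $c-\varepsilon$, and push a fibre orbi-sphere through the critical level via $gr_c^{c-\varepsilon}$, using Theorems \ref{analyticstr}, \ref{bimeromorphic} and Corollary \ref{adoptedweiyi} to avoid the contracted exceptional curves; this is in fact closer in spirit to the paper's own proof of part (ii) of Theorem \ref{fibreinregular}. For Condition 3 you replace the exceptional-divisor traces with the traces $T_j$ of the stable manifolds $W^s(\Sigma_j)$, which are positive-genus surfaces in $M_{c-\varepsilon}$; Corollary \ref{fibreintersection} and positivity of $J$-holomorphic intersections then force $F_{c-\varepsilon}\cap T_j\neq\emptyset$, and $gr_c^{c-\varepsilon}$ (which carries $T_j$ onto $\Sigma_{j,c}$) transports this to $F\cap\Sigma_{j,c}\neq\emptyset$. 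The blow-up route is cleaner in that it reduces everything to the regular-level case at the cost of an auxiliary manifold; your route avoids $M'$ but requires tracking a bit more analytic data near the critical level.

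Two small points would need a sentence in a polished version. For the transversality claim in Condition 2 you should invoke Corollary \ref{finitangence}: only finitely many fibres are tangent to each $T_j$, so a generic $F_{c-\varepsilon}$ meets every $T_j$ transversally, and this transversality survives $gr_c^{c-\varepsilon}$. Also, when the negative weight at $\Sigma_j$ equals $-1$ the trace $T_j$ is not contained in $\Sigma_{\cal O}(M_{c-\varepsilon})$, so to guarantee that $J$ is integrable near $T_j$ (and hence that $F_{c-\varepsilon}$ is holomorphic there and the push-forward makes sense) you should say explicitly that the adjusted K\"ahler structure of Remark \ref{localKahler6}, built via Proposition \ref{kahleratfix}, already covers a full $S^1$-invariant neighbourhood of $\Sigma_j$ containing the relevant portion of $W^s(\Sigma_j)$.
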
 

\begin{proof} Using Lemma  \ref{smoothblowup}, we can perform a small  K\"ahler $S^1$-equivariant  blow-up of $M$ at the union of points $x_i$ and surfaces $\Sigma_j$ to get a new Hamiltonian $S^1$-manifold $M'$. By Lemma \ref{Kahlerblowup}, $c$ is a regular level of $M'$ and there is a map $\varphi : M'_{c} \rightarrow M_{c}$ that is a biholomorphism over a neighbourhood of each $\Sigma_{j}$, holomorphic over a neighbourhood of each $x_{j}$ and a symplectomorphism of orbifolds elsewhere.

Applying Theorem  \ref{fibreinregular} to $M_c'$ we get a symplectic orbi-sphere $F'$ in $M_c'$ transversal to $\Sigma_{\cal O}(M_c')$. Let us take its image $\varphi(F')$ in $M_c$. It is symplectic outside of $U$ and holomorphic inside $U$. Hence it satisfies both Properties 1 and 2.
 
To prove that each $\Sigma_{j}$ of positive genus intersects $F$, we apply Corollary \ref{fibreintersection} on the symplectic resolution $\overline M'_{c}$.
\end{proof}

\subsubsection{Constructing a holomorphic slice at critical levels}

The main result of this section is Lemma \ref{holoslicelemma}, which can be summarised as follows. Let $F \subseteq M_{c}$ be the sphere constructed in Theorem \ref{critfibres}, and let $Q: H^{-1}(c) \rightarrow M_{c}$ be the quotient map. Then on a K\"ahler neighbourhood (in $M$)  of each fixed surface $\Sigma_i$, which we denote $U_i$, the union of gradient spheres that intersect $Q^{-1}(F) \cap U_i$ is a smooth complex surface in $U_i$, which we refer to as a holomorphic slice. These slices will be used in the proof of Lemma \ref{stripexisence}, to establish the existence of the symplectic fibre.

We start with the following self-evident lemma.
\begin{lemma} \label{equihol} Let $L_1\oplus L_2$ be a sum of two line bundles over a curve $C$. Suppose we have a $\mathbb C^*$-action on the total space of weight $p>0$ on $L_1$ and of weight $-q<0$ on $L_2$. Then the natural (categorical) quotient map $\psi: L_1\oplus L_2\to L_1^{q}\otimes L_2^p$ sends $\mathbb C^*$-orbits to points.
\end{lemma}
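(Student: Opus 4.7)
The plan is to verify the claim by a direct fibre-wise calculation, since the $\mathbb{C}^*$-action respects the bundle projection to $C$. First I would unwind the definition of $\psi$: on a fibre over $c\in C$ it is the map $\psi_c \colon L_{1,c}\oplus L_{2,c}\to L_{1,c}^{\otimes q}\otimes L_{2,c}^{\otimes p}$ sending $(v_1,v_2)\mapsto v_1^{\otimes q}\otimes v_2^{\otimes p}$. In a local trivialisation in which $L_{1,c}\cong L_{2,c}\cong\mathbb{C}$, this is simply the polynomial map $(v_1,v_2)\mapsto v_1^q\,v_2^p$, and it is independent of the choice of trivialisation because the transition functions of $L_1^{\otimes q}\otimes L_2^{\otimes p}$ are precisely the $q$-th and $p$-th tensor powers of those of $L_1$ and $L_2$.

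Next, invariance is an immediate computation: for $t\in\mathbb{C}^*$, the action on the fibre sends $(v_1,v_2)$ to $(t^p v_1, t^{-q} v_2)$, and
\[
\psi_c(t^p v_1, t^{-q} v_2) \;=\; (t^p v_1)^{\otimes q}\otimes (t^{-q} v_2)^{\otimes p} \;=\; t^{pq-qp}\bigl(v_1^{\otimes q}\otimes v_2^{\otimes p}\bigr) \;=\; \psi_c(v_1,v_2).
\]
Hence every $\mathbb{C}^*$-orbit is collapsed to a point by $\psi$, which is all the statement requires. There is no real obstacle: the lemma is a formal consequence of how the tensor-power weights of $L_1^{\otimes q}\otimes L_2^{\otimes p}$ combine with the weights $(p,-q)$ of the action. (If one wished further to identify $\psi$ with the categorical quotient, one would remark that $v_1^q v_2^p$ generates the ring of $\mathbb{C}^*$-invariants on $\mathbb{C}^2$ under the action of weights $(p,-q)$, but this is not needed for the stated claim.)
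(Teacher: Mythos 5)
Your proof is correct; the paper itself labels this lemma as self-evident and supplies no proof, and your fibre-wise computation of the weight cancellation $t^{pq-qp}=1$ is exactly the calculation being implicitly invoked. The parenthetical observation that $v_1^q v_2^p$ generates the invariant ring is a nice extra justifying the "categorical quotient" phrasing, but you are right that it is not needed for the stated conclusion.
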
 

We would now like to apply Theorem \ref{analyticstr} to a neighbourhood of the zero-section in $L_{1} \oplus L_{2}$. In Proposition \ref{kahleratfix} we constructed the K\"ahler neighbourhood of the zero section in $L_{1} \oplus L_{2}$, so that it admits an equivariant K\"ahler embedding into the projectivised bundle $\mathbb{P}(L_{1} \oplus L_{2} \oplus \mathcal{O})$.

\begin{lemma}\label{complexfixed}
Consider the compatible K\"ahler form $\omega$ on a neighbourhood of the zero section in $L_{1} \oplus L_{2}$ constructed in Proposition \ref{kahleratfix}, and let $H$ be the corresponding Hamiltonian (normalised so that the zero-section is on level $0$). Then  $H^{-1}(0)/S^{1} $ may be identified  biholomorphically with a neighbourhood of the zero section in $L_{1}^{q} \otimes L_{2}^{p}$.
\end{lemma}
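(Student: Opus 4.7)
The plan is to identify $H^{-1}(0)/S^1$ with a neighbourhood of the zero section in $L_1^q\otimes L_2^p$ via the map $\psi:L_1\oplus L_2\to L_1^q\otimes L_2^p$ of Lemma \ref{equihol}, and to match this concrete identification with the complex-analytic structure on the reduced space supplied by Theorem \ref{analyticstr}.

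First, restricting to a small $S^1$-invariant neighbourhood $W$ of the zero section, I would check that $\psi|_W$ is $\C^*$-invariant, surjects onto a neighbourhood of the zero section of $L_1^q\otimes L_2^p$, and has fibres equal to the closures of $\C^*$-orbits. These statements are local and follow from the standard description of the affine GIT quotient of $\C^2$ by the $\C^*$-action with weights $(p,-q)$: here $z_1^q z_2^p$ generates the ring of invariants, using $\gcd(p,q)=1$, which is forced by the effectiveness of the $S^1$-action near the zero section. I would then observe that every $\C^*$-orbit closure in $W$ meets $H^{-1}(0)$: for orbits with both coordinates nonzero this is because $H=\tfrac12 p|z_1|^2 - \tfrac12 q|z_2|^2$ is strictly monotone in $|t|$ along the orbit and changes sign; for orbits lying in one of the summands, the zero section lies in the closure and is on level $0$ by normalisation. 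Consequently $\psi$ descends to a continuous bijection $\bar\psi:H^{-1}(0)/S^1\to \psi(W)$, which is a homeomorphism by fibrewise analysis of the locus $\{p|z_1|^2=q|z_2|^2\}/S^1$.

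To promote $\bar\psi$ to a biholomorphism I would work over the complement of the zero section first. On the open set $W^*=W\setminus(\{z_1=0\}\cup\{z_2=0\})$ the $\C^*$-action is free, and Theorem \ref{analyticstr}~2), applied at any nearby regular level, identifies the corresponding reduction with the GIT quotient $W^*//\C^*$, which via $\psi$ is biholomorphic to the complement of the zero section in $L_1^q\otimes L_2^p$. Composing with the gradient flow then shows that $\bar\psi$ is a biholomorphism on a dense open subset of $H^{-1}(0)/S^1$. Since the target is smooth and $H^{-1}(0)/S^1$ has only normal (quotient) singularities by Theorem \ref{analyticstr}~3), Riemann's extension theorem applied to $\bar\psi^{-1}$ extends the biholomorphism across the images of the coordinate axes of each fibre.

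The main technical subtlety is precisely this last step: matching the complex-analytic structure provided by Theorem \ref{analyticstr}~3) at the critical level $H=0$ with the concrete one pulled back by $\bar\psi$ from $L_1^q\otimes L_2^p$. Once the biholomorphism is established on a dense open set, I expect this to follow from normality together with Hartogs-type extension, completing the identification of $H^{-1}(0)/S^1$ with a neighbourhood of the zero section in $L_1^q\otimes L_2^p$.
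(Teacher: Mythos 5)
Your proposal reaches the right conclusion, but it takes a noticeably longer route than the paper. The key observation you may have overlooked is that Theorem~\ref{analyticstr}~2) is stated for \emph{general} $c\in (H_{\min},H_{\max})$, not only for regular values: it identifies $X_c$ with the categorical quotient $U_c//\C^*$, where $U_c$ is the set of points whose $\C^*$-orbit closures meet $H^{-1}(c)$. The paper's proof is therefore two lines: note (as you do) that near the zero section every $\C^*$-orbit closure intersects $H^{-1}(0)$, so $U_0$ is the whole neighbourhood, and then Theorem~\ref{analyticstr}~2) applied directly at $c=0$ gives $H^{-1}(0)/S^1 \cong (L_1\oplus L_2)//\C^*$, which equals $L_1^q\otimes L_2^p$ by Lemma~\ref{equihol}. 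What you do instead is apply Theorem~\ref{analyticstr}~2) only at nearby \emph{regular} levels, transport via the gradient flow, establish the biholomorphism on the dense open set $W^*$ where both coordinates are nonzero, and then invoke normality of $H^{-1}(0)/S^1$ together with Riemann/Hartogs extension of $\bar\psi^{-1}$ to cross the image of the zero section. That extension argument is sound (the image of the zero section is a codimension-one analytic subset, $\bar\psi^{-1}$ is continuous hence locally bounded, the target embeds locally in some $\C^N$, and a holomorphic bijection onto a normal space is a biholomorphism), so your proof works; it is simply a more conservative argument that avoids trusting the GIT identification at the critical level, at the cost of the extra extension step. If you adopt the paper's reading of Theorem~\ref{analyticstr}~2), your first three sentences plus Lemma~\ref{equihol} already close the proof, and everything from ``To promote $\bar\psi$ to a biholomorphism'' onward becomes unnecessary.
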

\begin{proof}
On a neighbourhood of the zero section, the closure of every $\C^{*}$-orbit intersects the level $H = 0$. Hence, by the second statement of Theorem \ref{analyticstr}, $H^{-1}(0)/S^{1}$ may be identified holomorphically with $(L_{1} \oplus L_{2})//\C^{*} \cong L_{1}^{q} \otimes L_{2}^{p}$. 
\end{proof}

\begin{lemma}\label{holoslicelemma} Let $C$ be a complex curve and $L_1$, $L_2$ be two line bundles on it. Consider a $\mathbb C^*$-action on the total space of $L_1\oplus L_2$ such that the weight on $L_1$ is $p>0$ and the weight on $L_2$ is $-q<0$. Let $\omega$ be a K\"ahler form defined in an $S^1$-invariant neighbourhood $U$ of the zero section and $H$ be the corresponding Hamiltonian. Let $U_{0}$  denote $(U \cap H^{-1}(0))/S^{1}$ which is naturally a complex manifold by Corollary \ref{complexfixed}. Let $D\subset U_0$ be a holomorphic disk transversal to $C\subset U_0$. Let finally $V\subset U$ be the union of the $\mathbb C^*$-orbits which have closure intersecting $H^{-1}(0)$ in a point that projects to $D\subset U_0$. Then $V$
is a smooth holomorphic submanifold in $U$.
\end{lemma}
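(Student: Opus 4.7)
The plan is to realise $V$ as the preimage $\psi^{-1}(D)\cap U$ under the categorical quotient map $\psi\colon L_1\oplus L_2\to L_1^q\otimes L_2^p$ of Lemma~\ref{equihol}, and then verify smoothness by a local holomorphic computation. The main obstacle will be that $\psi$ is not a holomorphic submersion along the two divisors $\{x=0\}$ and $\{y=0\}$, so the smoothness of $\psi^{-1}(D)$ does not automatically follow from the smoothness of $D$; the transversality hypothesis on $D$ is exactly what will compensate for this.

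First, I would combine Lemma~\ref{equihol} with Lemma~\ref{complexfixed} to identify $U_0$ biholomorphically with an open neighbourhood of the zero section in $L_1^q\otimes L_2^p$, with the identification induced by $\psi$. Under this identification the curve $C\subset U_0$ corresponds to the zero section of $L_1^q\otimes L_2^p$, and $D$ becomes a holomorphic disk transversal to it. Since every $\mathbb C^*$-orbit is sent by $\psi$ to a single point, and since for each $v\in U$ the closure of $\mathbb C^*\cdot v$ inside $U$ meets $H^{-1}(0)$ either in a single $S^1$-orbit (when $v$ lies off the two divisors) or in a single fixed point on $C$ (when $v$ lies on one of the divisors or on $C$), and in both cases this projects under the identification to the point $\psi(v)$, a short case analysis shows that a $\mathbb C^*$-orbit $O\subset U$ lies in $V$ if and only if $\psi(O)\in D$. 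Hence $V=\psi^{-1}(D)\cap U$.

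Next, I would verify that $\psi^{-1}(D)\cap U$ is a smooth holomorphic submanifold. Smoothness is local, so I would pick local holomorphic coordinates $(z,x,y)$ on $L_1\oplus L_2$ centred at any point of $C$, with $z$ parametrising $C$ and $x,y$ linear fibre coordinates in $L_1,L_2$, together with matching coordinates $(z,u)$ on $L_1^q\otimes L_2^p$; in these trivialisations $\psi$ takes the explicit form $\psi(z,x,y)=(z,x^qy^p)$. Transversality of $D$ to $C=\{u=0\}$ at the unique intersection point $p_0$ means the tangent to $D$ at $p_0$ has non-zero $u$-component, so $D$ may be locally written as a graph $z=\phi(u)$ for some holomorphic $\phi$ with $\phi(0)=0$; equivalently $D$ is locally cut out by $z-\phi(u)=0$. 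Consequently $\psi^{-1}(D)$ is locally cut out by the single holomorphic equation $z-\phi(x^qy^p)=0$, and the partial derivative of the left-hand side with respect to $z$ is identically $1$. The gradient is therefore nowhere zero, and the holomorphic implicit function theorem yields that $\psi^{-1}(D)\cap U$ is a smooth holomorphic hypersurface, as required.
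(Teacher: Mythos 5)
Your proof follows essentially the same route as the paper's: identify $V$ with $\psi^{-1}(D)\cap U$ for the quotient map $\psi$ of Lemma~\ref{equihol}, write $\psi$ in local coordinates, express $D$ as the zero set of a holomorphic function with nonvanishing base derivative, and conclude by pulling back. (Your exponents $x^q y^p$, matching the target $L_1^q\otimes L_2^p$, are in fact the correct ones; the paper's displayed formula $z_1^p z_2^q$ is a small typo.)
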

\begin{proof} Consider the map $\psi : L_{1} \oplus L_{2} \rightarrow  L_{1}^{q} \otimes L_{2}^{p}$ from Lemma \ref{equihol}. In a local trivialisation $(x,z_{1},z_{2})$ we may write $$\psi(x,z_{1},z_{2}) = (x,z_{1}^{p}z_{2}^{q}).$$ 
Since $D$ is transversal to the zero-section, in a local trivialisation $(x,z)$ of $L_{1}^{q} \otimes L_{2}^{p}$ we may write $D = \{f = 0 \}$, for some holomorphic $f : \C^{2} \rightarrow \C$  with $\frac{\partial f}{\partial x} \neq 0$. To prove the result note that $V = \{  \psi^{*}f = 0 \}$ and $\frac{\partial ( \psi^{*}f)}{\partial x} \neq 0$, hence $V$ is locally a smooth hypersurface.
\end{proof}

\section{The symplectic fibre}\label{sympfibresec}

In this section we construct the {\it symplectic fibre}.

\begin{definition} Let $M$ be a Hamiltonian $S^1$-manifold of dimension $6$ with $M_{\min}$ and $M_{\max}$ surfaces of positive genus. A connected, $S^1$-invariant, symplectic submanifold ${\cal F}(M)\subset M$ is called a {\it symplectic fibre} if it is $4$-dimensional and intersects $M^{S^1}$ transversally. 
\end{definition}

We prove here the existence part of the following theorem.

\begin{theorem}\label{fibreconsturction} Let $M$ be a Hamiltonian $S^1$-manifold of dimension $6$ with $M_{\min}$ and $M_{\max}$ surfaces of positive genus. Then the symplectic fibre ${\cal F}(M)\subset M$ exists. Moreover, any two symplectic fibres in $M$ are $S^1$-equivariantly symplectomorphic.

\end{theorem}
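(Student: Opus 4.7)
I would construct ${\cal F}(M)$ as the total space of a smoothly varying one-parameter family of symplectic orbi-spheres $F_{t}\subset M_{t}$ with $t\in[H_{\min},H_{\max}]$, each in the fibre class ${\cal F}_{t}$. Given such a family, the pre-image $P_{t}:=Q_{t}^{-1}(F_{t})\subset H^{-1}(t)$ is a pre-symplectic $3$-submanifold by Lemma \ref{presymth}, so ${\cal F}(M):=\bigcup_{t}P_{t}$ is a $4$-dimensional, $S^{1}$-invariant subset of $M$. To see it is symplectic at a regular level, note that its tangent plane at a point consists of $T_{x}P_{t}$ together with a direction $v$ transverse to $H^{-1}(t)$; the $\omega$-kernel of $T_{x}P_{t}$ is the $S^{1}$-orbit direction $X$, and $\omega(X,v)=dH(v)\neq 0$. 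Transversality to $M^{S^{1}}$ is built into the construction: each $F_{t}$ that meets the image of a fixed submanifold will be arranged to be transverse to it inside $M_{t}$. Thus the problem reduces to producing $\{F_{t}\}$ smoothly in $t$, and to ensuring the union is smooth across critical levels of $H$.

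On an interval $[a,b]\subset(H_{\min},H_{\max})$ of regular values, I would fix an adjusted K\"ahler structure as in Remark \ref{localKahler6}, form the smooth family of regularisations $\overline M_{t}$ of Definition \ref{resolutionDef}, and fix a smooth family $\overline J_{t}$ of $\overline\omega_{t}$-compatible almost complex structures that restrict to the induced integrable complex structures near $\Sigma_{\cal O}(M_{t})$. Theorem \ref{zhang} organises the $\overline J_{t}$-holomorphic subvarieties in the class $[\overline{\cal F}_{t}]$ into a $1$-dimensional moduli space; the parametric smoothness from automatic transversality (Theorem \ref{smoothdef}) and the avoidance results in Corollary \ref{finitangence} and Corollary \ref{adoptedweiyi} let me select a smooth section of this parametric moduli space whose values are irreducible spheres $\overline F_{t}$ disjoint from the exceptional divisors of $\pi$ and transverse to the pre-image of $\Sigma_{\cal O}(M_{t})$. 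Pushing forward by $i\circ\pi$ yields a smooth family $F_{t}=(i\circ\pi)(\overline F_{t})$ whose pre-image assembles into a smooth $4$-submanifold of $H^{-1}([a,b])$.

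At a critical value $c\in(H_{\min},H_{\max})$ I would choose $F_{c}$ to be the orbi-sphere supplied by Theorem \ref{critfibres}, which is holomorphic in the adjusted K\"ahler neighbourhood of each fixed surface $\Sigma_{j}$ on the level $H=c$ and transverse to the image of $\Sigma_{j}$ in $M_{c}$. By Lemma \ref{holoslicelemma} applied in the local toric model around $\Sigma_{j}$ (cf.\ Proposition \ref{kahleratfix} and Lemma \ref{complexfixed}), the pre-image of $F_{c}$ near $\Sigma_{j}$ is a smooth holomorphic hypersurface in $M$ transverse to $\Sigma_{j}$. Near isolated fixed points on $H=c$ the sphere $F_{c}$ avoids their images, so smoothness there follows from the regular-interval argument. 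To match with the two regular sub-intervals adjacent to $c$, I would propagate $F_{c}$ to $F_{c\pm\varepsilon}$ using the biholomorphism property of $gr_{c}^{c\pm\varepsilon}$ on the K\"ahler neighbourhoods (Theorem \ref{bimeromorphic}); uniqueness of the irreducible $\overline J_{t}$-holomorphic fibre through a given point (Theorem \ref{zhang}(1)) then forces the smooth family from the regular regime, once normalised through a base-point that tracks the propagated sphere, to coincide with the holomorphic slice near the fixed surfaces.

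The main obstacle is exactly this matching at critical levels: simultaneously ensuring that, over a small interval $(c-\varepsilon,c+\varepsilon)$, the $4$-dimensional union $\bigcup_{t}P_{t}$ is a smooth submanifold of $M$ and that the sphere $F_{c}$ produced by Theorem \ref{critfibres} glues to the smooth $\overline J_{t}$-family coming from the adjacent regular regimes. This is resolved by combining three ingredients from Sections \ref{sectopology}--\ref{sympspheresec}: the adjusted K\"ahler structures, which pin down integrable models on both $M$ and the $M_{t}$ near critical fixed sets; the holomorphic slice Lemma \ref{holoslicelemma}, which converts transversality of $F_{c}$ to fixed surfaces into smoothness of ${\cal F}(M)$ as a submanifold of $M$; and uniqueness/continuity of $\overline J_{t}$-holomorphic fibres through a base-point (Theorem \ref{zhang}), which makes the smooth propagation on regular intervals compatible with the holomorphic model at the critical level. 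Patching the local families on each sub-interval into a global smooth family over $[H_{\min},H_{\max}]$ is then a standard application of Theorem \ref{orbisotopy} together with a relative-isotopy argument realising the orbi-sphere isotopies as smooth deformations of the four-dimensional union away from critical levels.
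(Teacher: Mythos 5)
Your construction of the \emph{existence} of the symplectic fibre tracks the paper's approach closely. The paper organises the proof via ``$ab$-strips'' — local $S^1$-invariant symplectic hypersurfaces-with-boundary — proves their existence near each value of $H$ (using the regular/critical dichotomy, Theorem \ref{symplorbispheres}, Theorem \ref{critfibres}, and Lemma \ref{holoslicelemma} exactly as you suggest), then glues overlapping strips via the isotopy Theorem \ref{orbisotopy}. Your plan reaches the same ingredients; the phrasing as a ``smooth global one-parameter family of orbi-spheres'' is a repackaging of the same gluing: you end by invoking Theorem \ref{orbisotopy} to interpolate, which is precisely the content of the paper's gluing lemma. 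One small inaccuracy: you assert that near isolated fixed points on the critical level $H=c$ the sphere $F_c$ \emph{avoids} their images; in general it need not, and the paper instead uses the fact that $F_c$ is holomorphic in the adjusted K\"ahler neighbourhood of the fixed set and hence the union of gradient lines over $F_c$ is a complex (hence smooth) surface near those points. This does not invalidate your plan, since you are already invoking the K\"ahler models there, but the justification should be smoothness-via-holomorphicity rather than avoidance.

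The genuine gap is that your proposal says nothing about the second half of the statement: that \emph{any two} symplectic fibres in $M$ are $S^1$-equivariantly symplectomorphic. This is not a minor addendum; the paper devotes a separate section to it. The uniqueness follows from a different circle of ideas: one shows (Corollary \ref{iso}) that the Karshon graph of fixed points $\mathcal{Q}$ of any symplectic fibre is determined, including the Hamiltonian values and edge weights, by the graph $\mathcal{G}$ of fixed surfaces of $M$ itself; Karshon's classification Theorem \ref{unigraph} then gives an equivariant symplectomorphism between any two fibres. None of this is touched in your sketch, and it cannot be extracted from the existence construction: an orbi-sphere isotopy between two fibres' traces in $M_c$ does not by itself produce an $S^1$-equivariant symplectomorphism of the $4$-dimensional submanifolds, since the isotopy is happening in the quotients and the strip-gluing is only engineered to produce \emph{one} fibre, not to compare two. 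You would need to import the graph-of-fixed-surfaces machinery (Theorem \ref{struc} and Corollary \ref{iso}) and Karshon's uniqueness theorem to close this.
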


\begin{remark} Theorem \ref{fibreconsturction}  a slightly weaker version of Theorem \ref{symfibmain}, but this is the result that we need to prove Theorem \ref{theo} and  Theorem \ref{maintheorem}. Note however, that the case of  Theorem \ref{symfibmain} when $\dim(M_{\min})=2$ and $\dim(M_{\max})=4$  can be treated in an almost identical way. One  needs to observe that in the case when $M_{\min}$ is a genus $g$ surface and $M_{\max}$ has dimension $4$, $M_{\max}$ is an irrational ruled surface over a genus $g$ surface. Then all the steps of the proof can be repeated. For this reason we don't present the full proof of Theorem \ref{symfibmain} in this article.
\end{remark}

\subsection{Basic properties of symplectic fibres}

Let us first establish some simple properties of the symplectic fibre.
 
\begin{lemma} \label{reduceduniq} Let $M$ be as in Theorem \ref{fibreconsturction}. A symplectic fibre ${\cal F}(M) \subset M$ intersects both $M_{\min}$ and $M_{\max}$ in a unique point. For each $c\in (H_{\min}, H_{\max})$ the trace ${{\cal F}(M)}_c\subset M_c$ is a $2$-sphere with $[{{\cal F}(M)}_c]={\cal F}_c\in H_2(M_c)$.
\end{lemma}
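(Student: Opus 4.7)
The plan is first to use Karshon's classification to give $\mathcal{F}(M)$ the structure of a toric $4$-manifold, then identify the extremal points of $H|_{\mathcal{F}(M)}$ with single points in $M_{\min}$ and $M_{\max}$, and finally identify the trace in each reduced space with a representative of the fibre class.

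First I would observe that transversality of $\mathcal{F}(M)$ with $M^{S^{1}}$ has two consequences: $\mathcal{F}(M)$ avoids every isolated fixed point of $M^{S^{1}}$ (a $4$-dimensional submanifold in a $6$-manifold cannot be transverse to a $0$-dimensional submanifold containing any of its points), and each intersection with a fixed surface of $M$ is transverse and hence $0$-dimensional. So $\mathcal{F}(M)\cap M^{S^{1}}$ is a finite set, contained in the union of the fixed surfaces of $M$. The induced $S^{1}$-action on $\mathcal{F}(M)$ is Hamiltonian with isolated fixed points, so by \cite[Theorem 5.1]{Ka} the symplectic manifold $\mathcal{F}(M)$ is toric, and in particular simply connected. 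By Lemma \ref{manysimpleproperties} the minimum of $H|_{\mathcal{F}(M)}$ is attained on a connected subset of $\mathcal{F}(M)^{S^{1}}$, hence at a single point $p_{-}$, and similarly we obtain a single maximum $p_{+}$.

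Next I would prove $p_{-}\in M_{\min}$ via a weights argument. Let $\Sigma\subset M^{S^{1}}$ be the fixed surface of $M$ containing $p_{-}$. The transversality $T_{p_{-}}\mathcal{F}(M)\oplus T_{p_{-}}\Sigma = T_{p_{-}}M$ decomposes the $S^{1}$-representation on $T_{p_{-}}M$, so its three complex weights read $(0,a,b)$ with $(a,b)$ the weights on $T_{p_{-}}\mathcal{F}(M)$. Since $p_{-}$ is a local minimum of $H|_{\mathcal{F}(M)}$, both $a$ and $b$ are strictly positive; hence every weight at $p_{-}$ in $M$ is non-negative and $\Sigma$ is a local minimum component of $H$ on $M$. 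By the standard convexity result for Hamiltonian $S^{1}$-actions, level sets $H^{-1}(t)$ are connected, which forces $H$ to have a unique local minimum component, namely $M_{\min}$; thus $\Sigma=M_{\min}$ and $p_{-}\in M_{\min}$. Since $\mathcal{F}(M)\cap M_{\min}$ coincides with the set where $H|_{\mathcal{F}(M)}$ attains the value $H_{\min}$, this intersection equals $\{p_{-}\}$; applying the same argument to $-H$ gives $\mathcal{F}(M)\cap M_{\max}=\{p_{+}\}$.

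For the statement about the trace I would exploit the toric structure of $\mathcal{F}(M)$: for every regular value $c$ of $H$ the level set $\mathcal{F}(M)\cap H^{-1}(c)$ is the preimage under the $T^{2}$-moment map of a line segment in the moment polygon of $\mathcal{F}(M)$, and its $S^{1}$-quotient $\mathcal{F}(M)_{c}$ is topologically a $2$-sphere that inherits a symplectic sub-orbifold structure in $M_{c}$ via symplectic reduction. To identify its homology class I would verify the three characterising conditions of Corollary \ref{uniquefibre}. Positivity $\int_{\mathcal{F}(M)_{c}}\omega_{c}>0$ is automatic since the trace is a symplectic orbi-sphere. The restriction $\phi_{c}|_{\mathcal{F}(M)_{c}}\colon S^{2}\to M_{\min}$ factors through $\phi|_{\mathcal{F}(M)}\colon \mathcal{F}(M)\to M_{\min}$, which is null-homotopic because $\mathcal{F}(M)$ is simply connected and $M_{\min}$ is a $K(\pi,1)$ with $\pi\ne 1$; this gives $\phi_{c*}[\mathcal{F}(M)_{c}]=0$.

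The main obstacle is pinning the class down to be exactly $\mathcal{F}_{c}$ (which in particular requires $[\mathcal{F}(M)_{c}]^{2}=0$) rather than a positive multiple. For $c=H_{\min}+\varepsilon$ with $\varepsilon$ small, $M_{c}$ is identified with the projectivised normal bundle of $M_{\min}$ in $M$ and the trace $\mathcal{F}(M)_{c}$ is precisely the $\CP^{1}$-fibre over $p_{-}$, so $[\mathcal{F}(M)_{c}]=\mathcal{F}_{c}$ in this range and the self-intersection vanishes. This identity then propagates across any interval of regular values, because the gradient map is a diffeomorphism of orbifolds sending trace to trace and, by Corollary \ref{gradphicomute}, fibre class to fibre class. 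Across a critical value $c_{0}$ I would invoke the local K\"ahler model of Section \ref{locKahlsec}: the reduced spaces $M_{c_{0}\pm\varepsilon}$ are related by a symplectic birational transformation under which the trace deforms continuously and both classes transform by the same map on $H_{2}$. Combining these steps yields $[\mathcal{F}(M)_{c}]=\mathcal{F}_{c}$ for every $c\in(H_{\min},H_{\max})$.
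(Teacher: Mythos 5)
Your proposal is correct and follows essentially the same route as the paper's proof: transversality together with the positivity of the $S^{1}$-weights at the minimum of $H|_{\mathcal{F}(M)}$ forces the fixed component containing that minimum to be $M_{\min}$, connectedness of the level sets of $H|_{\mathcal{F}(M)}$ gives uniqueness of the intersection point, and the trace class is identified via the characterising properties of Corollary~\ref{uniquefibre}. You add two pieces of detail the paper leaves implicit — the detour through Karshon's theorem (which is more than is needed; connectedness of $\mathcal{F}(M)_{\min}$ and the positive-genus condition $\pi_{2}(M_{\min})=0$ already give everything used) and the explicit handling of the positive-scalar ambiguity in Corollary~\ref{uniquefibre} via the boundary identification at $c=H_{\min}+\varepsilon$, a point the paper passes over quickly.
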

\begin{proof}
Let $x\in {\cal F}(M)$ be a point where $H$ attains its minimum and let $C$ be the connected component of $M^{S^1}$ that contains $x$. Since ${\cal F}(M)$ is transversal to  $C$ at $x$ and $\mathcal{F}(M)$ is $4$-dimensional, we see that $C = M_{\min}$. Since all level sets of $H$ on ${\cal F}(M)$ are connected, we see that ${\cal F}(M)$ intersects $M_{\min}$ in a unique point. 

It is clear now that for any $c\in (H_{\min}, H_{\max})$ the surface ${{\cal F}(M)}_c$ is a sphere in $M_c$  whose cohomology class satisfies all the properties of Corollary \ref{uniquefibre}. It follows that  $[{{\cal F}(M)}_c]={\cal F}_c$.
\end{proof}

\begin{proposition}\label{fibercharacterisation} Let $M$ be as in Theorem \ref{fibreconsturction} and let $N\subset M$ be a smooth, $4$-dimensional, $S^1$-invariant submanifold. $N$ is a symplectic fibre if and only if it satisfies the following properties. 
\begin{enumerate}

\item $N$ intersects $M^{S^1}$ transversally.
\item  The set of critical points of the restriction $H|_N$ coincides with $N\cap M^{S^1}$.
\item   In the complement $M\setminus M^{S^1}$ the intersection of $N$ with each level set $H=c$ is pre-symplectic.
\end{enumerate}  
\end{proposition}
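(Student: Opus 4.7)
The plan is to verify each implication separately, with the converse direction being the nontrivial one.

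For the forward direction, suppose $N$ is a symplectic fibre. Condition (1) is part of the definition. For (2), since $N$ is $S^{1}$-invariant, the generating vector field $X$ is tangent to $N$, and with respect to the symplectic form $\omega|_N$ on $N$ one has $d(H|_N)=\iota_{X|_N}(\omega|_N)$; hence the critical points of $H|_N$ coincide with the zeros of $X|_N$, i.e. with $N\cap M^{S^1}$. For (3), given $p\in N\setminus M^{S^1}$, condition (2) gives $d(H|_N)_p\neq0$, so $N\cap H^{-1}(H(p))$ is a smooth codimension-one submanifold of $N$. Since $\omega|_N$ has rank $4$, its restriction to this hypersurface has rank $2$ with $\mathbb{R}X$ in the kernel, and the $S^1$-action there is free, giving the pre-symplectic structure of Definition \ref{presymdef}.

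For the converse, assume (1)--(3); the task is to show that $\omega|_{T_pN}$ is non-degenerate at every $p\in N$. At $p\in N\setminus M^{S^1}$ condition (2) yields $d(H|_N)_p\neq0$, so the slice $T_p(N\cap H^{-1}(H(p)))$ is a codimension-one subspace of $T_pN$ on which, by (3), $\omega|_N$ has rank $2$ with kernel $\mathbb{R}X_p$ (the kernel contains $X_p$ and is one-dimensional). Picking any $u\in T_pN$ with $dH(u)\neq0$, one has $\omega(X_p,u)=-dH(u)\neq0$, and elementary linear algebra then forces $\omega|_{T_pN}$ to be non-degenerate.

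At $p\in N\cap M^{S^1}$, let $C$ denote the component of $M^{S^1}$ containing $p$. Transversality rules out $\dim C=0$, so $\dim C\in\{2,4\}$. If $\dim C=2$, then $T_pN\cap T_pC=0$ by dimension count; the $S^1$-invariant $4$-dimensional subspace $T_pN$ contains no zero-weight vectors and must therefore coincide with the sum of non-trivial weight subspaces of $T_pM$, namely the symplectic normal space to $C$ at $p$, so $\omega|_{T_pN}$ is non-degenerate. If $\dim C=4$, then $\Sigma:=N\cap C$ is locally a smooth $2$-surface through $p$ and the $S^1$-weight decomposition of $T_pN$ gives $T_pN=T_p\Sigma\oplus V$, with $V\subset T_pM$ the $2$-dimensional normal to $C$. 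Since $T_p\Sigma\subset T_pC$ and $V$ are $\omega$-orthogonal and $\omega|_V$ is non-degenerate, it suffices to show $\omega|_{T_p\Sigma}\neq0$. Working in an equivariant Darboux chart where $M\cong C\times V$ with $\omega=\omega_C+\omega_V$ and $H=H|_C+\tfrac{k}{2}|v|^{2}$, one parametrizes $N$ locally by an $S^{1}$-equivariant map of the form $(\sigma,v)\mapsto(\alpha(\sigma,|v|^{2}),v\cdot g(\sigma,|v|^{2}))$ with $g(p,0)=1$, and applies condition (3) to the $3$-dimensional slices $N\cap H^{-1}(c)$ for $c$ slightly above $H(p)$; the explicit pre-symplectic rank-$2$ constraint along a circle fibre sitting over $\sigma=p$ forces $\omega|_{T_p\Sigma}$ to be non-degenerate.

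The principal obstacle is the last step: non-degeneracy of $\omega|_N$ is an open condition, so a naive limit from the generic locus $N\setminus M^{S^1}$ cannot directly produce non-degeneracy on the fixed set. The equivariant Darboux model is essential for converting the pre-symplectic constraint on nearby slices into the required non-degeneracy of $\omega$ on $T_p\Sigma$ inside the symplectic fixed component $C$.
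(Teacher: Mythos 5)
Your proof is correct and follows essentially the same route as the paper: the forward direction is the same computation, and the converse at generic points uses the same identification of $\ker\bigl(\omega|_{T_pN\cap T_pH^{-1}(c)}\bigr)$ with the span of the infinitesimal generator $X_p$, together with the nonvanishing of $\omega(X_p,u)=-dH(u)$ for a transverse $u\in T_pN$. Your $\dim C=2$ argument (that an $S^1$-invariant $4$-plane transversal to a $2$-dimensional fixed surface must equal the symplectic normal space $\omega^{\perp}(T_pC)$) is precisely the linear-algebra step the paper uses.

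Where you diverge is in devoting most of your effort, and your concluding discussion, to the case $\dim C=4$. This case is vacuous in the setting of Theorem \ref{fibreconsturction}. A $4$-dimensional connected component of $M^{S^1}$ in a $6$-manifold has codimension $2$ and is therefore a local minimum or maximum of the Morse--Bott function $H$; since the level sets of $H$ are connected (Lemma \ref{manysimpleproperties}), $M_{\min}$ and $M_{\max}$ are the unique local extrema, and both are surfaces by hypothesis. Hence $\dim C\in\{0,2\}$, and you have already observed that $\dim C=0$ is incompatible with transversality of a $4$-dimensional $N$. The paper's proof implicitly uses this by treating only points lying on a fixed surface $\Sigma$. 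Your equivariant Darboux computation for $\dim C=4$ is therefore unnecessary, and since it is also the sketchiest part of your write-up, its billing as ``the principal obstacle'' is misplaced: the actual content of the fixed-point case is the short weight-space observation that you state correctly for $\dim C=2$.

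Two minor points: in the forward direction you should say the $S^1$-action on $N\cap H^{-1}(c)$ is \emph{fixed-point free} rather than free, matching Definition \ref{presymdef} (there can be finite stabilizers off $M^{S^1}$). And it is worth flagging, as the paper tacitly does, that the weight-space decomposition of $T_pM$ at a fixed point is $\omega$-orthogonal once one fixes an invariant compatible $J$, which is what makes the sum of nonzero-weight subspaces coincide with $\omega^{\perp}(T_pC)$.
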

\begin{proof}{\it Only if.} Suppose that $N$ is a symplectic fibre. Then property 1) holds by definition. Since $N$ is symplectic, each critical points of $H$ on $N$ is fixed by $S^1$ and so belongs to $M^{S^1}$. Property 3) holds since $N$ is  symplectic.

{\it If.} Let us assume that properties 1)-3) hold and  deduce that $N$ is a symplectic submanifold of $M$. Consider two cases.

i) Let $x\in N$ be a point in the complement $M\setminus M^{S^1}$ and let $H(x)=c$. Then the level set $H^{-1}(c)$ is a submanifold in a neighbourhood of $x$. Denote by $T_xH^{-1}(c)$ the ($5$-dimensional) tangent space to $H^{-1}(c)$ at $x$ and let $v_x\in T_xH^{-1}(c)$ be a vector tangent to the $S^1$-orbit through $x$. Note that $\omega^{\perp}(v_x)=T_xH^{-1}(c)$, and so for any non-zero vector $v'\in T_xM$ that doesn't lie in $T_xH^{-1}(c)$ we have $\omega(v_x,v')\ne 0$. In particular, since $H|_N$ is non-critical at $x$, it follows that $T_xN$ is transversal to $T_xH^{-1}(c)$, and we can choose a vector $v\in T_xN$, such that $\omega(v_x,v)\ne 0$.

Assume now by contradiction that $\omega$ is degenerate on $T_xN$. Then  the restriction of $\omega$ to  $T_xN$ has a $2$-dimensional kernel, which we denote by $K_{\omega}$. Since $N\cap H^{-1}(c)$ is pre-symplectic at $x$, the kernel of $\omega$ restricted to $T_xN\cap T_xH^{-1}(c)$ is generated by $v_x$. And so, the intersection of $K_{\omega}$ with $T_xH^{-1}(c)$ must contain $v_x$. But this is impossible since $\omega(v_x,v)\ne 0$. 

ii) Let now $x\in N$ be a point that lies on a surface $\Sigma\subset M$ fixed by the $S^1$-action. It follows from simple linear algebra that there is a unique $4$-dimensional $S^1$-invariant subspace in $T_xM$ transversal to $T_x\Sigma$, namely $\omega^{\perp}(T_x\Sigma)$. Hence, $T_xN$ coincides with this space and therefore is a symplectic subspace of $T_xM$.
\end{proof}

The following statement is an immediate corollary of Proposition \ref{fibercharacterisation}. We will use it to prove that the symplectic fibre exists.
 
\begin{corollary}
Let $N$ be an $S^1$-invariant smooth $4$-dimensional submanifold of $M$ transversal to $M^{S^1}$. Suppose that for any  $c\in (H_{\min}, H_{\max})$ the surface $N_c\subset M_c$ is a $2$-sphere in the fibre class and is a symplectic sub-orbifold at all points in $(M\setminus M^{S^1})_c$. Then $N$ is a symplectic fibre in $M$. 
\end{corollary}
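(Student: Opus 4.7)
The plan is to verify the three conditions of Proposition \ref{fibercharacterisation}, which would immediately identify $N$ as a symplectic fibre. Condition (1), that $N$ intersects $M^{S^1}$ transversally, is part of the hypothesis and requires no work. For condition (3), I would argue locally: at any point $x\in N\setminus M^{S^1}$ with $c=H(x)$, the differential $dH_x$ is nonzero (since the critical set of $H$ is precisely $M^{S^1}$), so $H^{-1}(c)$ is a smooth coisotropic hypersurface in a neighbourhood of $x$ and the $S^1$-action is locally free there. Since by hypothesis $N_c\subset M_c$ is a symplectic sub-orbifold at $[x]\in (M\setminus M^{S^1})_c$, a local application of Lemma \ref{presymth} shows that $N\cap H^{-1}(c)$ is pre-symplectic in a neighbourhood of $x$.

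The main step, which I expect to carry the actual content, is condition (2): that $H|_N$ has no critical points outside $N\cap M^{S^1}$. I would prove this by a dimension count. Fix $x\in N\setminus M^{S^1}$, set $c=H(x)$, and let $X$ denote the generating vector field of the $S^1$-action. Since $N$ is $S^1$-invariant and $x$ is not fixed, $X_x\in T_xN$ is nonzero. Suppose for contradiction that $dH|_{T_xN}=0$, so that $T_xN\subset \ker(dH_x)=T_xH^{-1}(c)$, a space of dimension $5$. Consider the natural projection $T_xH^{-1}(c)\to T_{[x]}M_c$ whose kernel is exactly $\mathbb{R}\cdot X_x$. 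Since $X_x$ lies in $T_xN$, the image of $T_xN$ under this projection has dimension $\dim T_xN - 1 = 3$. But this image is contained in $T_{[x]}N_c$, which has dimension at most $2$ since $N_c$ is a $2$-sphere. This contradiction forces $dH|_{T_xN}\ne 0$, establishing (2).

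With all three conditions of Proposition \ref{fibercharacterisation} verified, the conclusion that $N$ is a symplectic fibre follows directly. The dimension-count in condition (2) is the only nontrivial ingredient; once it is in place, the corollary is essentially a repackaging of the preceding proposition together with Lemma \ref{presymth}.
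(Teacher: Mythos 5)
Your overall plan—verifying conditions (1)--(3) of Proposition \ref{fibercharacterisation}—is exactly the paper's approach. The paper's own proof is terse: it treats conditions (1) and (2) as immediate and only records that condition (3) follows from Lemma \ref{presymth}. You handle (1) and (3) in the same way. You are right to notice that condition (2) is not vacuous and to try to supply an argument for it; however, the argument you give has a gap.

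The dimension count for condition (2) does not close. Suppose $x\in N\setminus M^{S^1}$ were a critical point of $H|_N$ and set $c=H(x)$. You correctly note that then $T_xN\subset T_xH^{-1}(c)$, that $X_x\in T_xN$, and that the image of $T_xN$ under the quotient projection $p:T_xH^{-1}(c)\to T_{[x]}M_c$ has dimension $3$. But the asserted inclusion $p(T_xN)\subset T_{[x]}N_c$ is unjustified, and in fact the relation between these two spaces runs the other way. Recall that $N_c$ is by definition the image under $Q$ of $N\cap H^{-1}(c)$, not of all of $N$. Since $N_c$ is a smooth sub-orbifold at $[x]$ and $Q$ is a submersion (away from $M^{S^1}$), the set $N\cap H^{-1}(c)=Q^{-1}(N_c)$ is a smooth $3$-manifold near $x$, and $T_x(N\cap H^{-1}(c))$ is a \emph{proper} $3$-dimensional subspace of $T_xN$ — precisely because $N$ fails to meet $H^{-1}(c)$ transversally at the critical point $x$. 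What $p$ carries onto $T_{[x]}N_c$ is this proper subspace, yielding $\dim p\bigl(T_x(N\cap H^{-1}(c))\bigr)=2=\dim T_{[x]}N_c$. The full image $p(T_xN)$, which equals $T_{[x]}(N/S^1)$, strictly \emph{contains} $T_{[x]}N_c$, so your count gives $3\geq 2$ rather than a contradiction. The step "this image is contained in $T_{[x]}N_c$" is where the argument breaks.

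Ruling out such tangential critical points requires a different input — for instance, a topological argument using that $N_c$ is required to remain a smooth $2$-sphere as $c$ crosses the putative critical value, or (as in the paper's actual applications of this corollary) the observation that the submanifolds $N$ to which the statement is applied are unions of gradient trajectories, along which $H$ is strictly monotone, so that condition (2) holds by construction. As written, your linear-algebra step does not establish condition (2).
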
 
\begin{proof} We only need to show that property 3) of Proposition  \ref{fibercharacterisation} holds. This follows from Lemma \ref{presymth}.
\end{proof}

\subsection{Proof of Theorem \ref{fibreconsturction}: Existence}
We will construct the symplectic fibre  by patching together a finite number of strips. 

\begin{definition} Let $M$ be as in Theorem \ref{fibreconsturction} and choose $a,b\in [M_{\min}, M_{\max}]$ with $a<b$. A symplectic $S^1$-invariant submanifold  $N_a^b\subset M$ with boundary $\partial N_a\cup \partial N_b$ is called an {\it $ab$-strip} if the following holds
\begin{itemize}
\item $N$ is transversal to $M^{S^1}$. 
\item $H$ is constant on $\partial N_a$ and on $\partial N_b$ and $H(\partial N_a)=a<b=H(\partial N_b)$. 
\item For any $c\in [a,b]\cap (H_{\min}, H_{\max})$ the trace $N_c\subset M_c$ is a sphere in homology class ${\cal F}_c$.
\end{itemize}
\end{definition}
By a slight abuse of notations in this definition we allow $a=H_{\min}$ (or $b=H_{\max}$), in which case the strip has only one boundary component. 
\begin{lemma}\label{stripexisence} Let $M$ be as in Theorem \ref{fibreconsturction}, then for any $c\in (H_{\min}, H_{\max})$ there exists an $ab$-strip in $M$ with $a<c<b$.
\end{lemma}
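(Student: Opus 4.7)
Choose $a,b$ with $H_{\min} < a < c < b < H_{\max}$ so that $[a,b]$ contains no critical values of $H$. By Theorem \ref{fibreinregular} together with Corollary \ref{uniquefibre}, $M_c$ contains a symplectic sub-orbifold sphere $F_c$ representing the fibre class ${\cal F}_c$ and transversal to $\Sigma_{\cal O}(M_c)$. By Lemma \ref{presymth}, the preimage $Q^{-1}(F_c) \subset H^{-1}(c)$ is pre-symplectic. Since the gradient flow is a smooth $S^1$-equivariant symplectomorphism between reduced spaces at regular levels, saturating $Q^{-1}(F_c)$ under this flow over $t \in [a,b]$ yields a smooth $S^1$-invariant 4-dimensional submanifold $N_a^b \subset M$, disjoint from $M^{S^1}$, whose slice at each $t$ is pre-symplectic. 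Proposition \ref{fibercharacterisation} then shows $N_a^b$ is symplectic, hence an $ab$-strip.

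\textbf{Critical $c$: choosing the sphere.} If $c$ is critical, fix $\varepsilon > 0$ so that $c$ is the only critical value in $[c-\varepsilon,c+\varepsilon]$, set $a = c-\varepsilon$, $b = c+\varepsilon$, and equip $M$ with an adjusted K\"ahler structure (Remark \ref{localKahler6}) on a cylindrical $S^1$-invariant neighbourhood $U$ of $M^{S^1}$ together with all isotropy spheres. Let $\Sigma_1, \ldots, \Sigma_l$ and $x_1, \ldots, x_m$ be the fixed surfaces and isolated fixed points in $H^{-1}(c)$. Theorem \ref{critfibres} supplies a sphere $F \subset M_c$ with $F\cdot F = 0$, symplectic and transversal to $\Sigma_{\cal O}(M_c)$ in the complement of the images $Q(\Sigma_j)$ and $Q(x_i)$, and holomorphic, transversal to each $Q(\Sigma_j)$. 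Inspecting the proof of Theorem \ref{critfibres}, $F$ is the image of a Zhang fibre on the smooth resolution $\overline{M}_c'$ of the equivariant blow-up at the $\Sigma_j$ and $x_i$; by varying the base point in $\overline{M}_c'$ and using Theorem \ref{zhang}, we may arrange that this fibre is generic and in particular that $F$ avoids every $Q(x_i)$.

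\textbf{Critical $c$: patching the strip.} Build $N_a^b$ piecewise. Near each intersection $y_j := F \cap Q(\Sigma_j)$, work inside the K\"ahler neighbourhood of $\Sigma_j$ furnished by Proposition \ref{kahleratfix}; Lemma \ref{complexfixed} endows the reduced model with a complex structure, so we may choose a small holomorphic disc $D_j \subset F$ centred at $y_j$. Lemma \ref{holoslicelemma} then produces a smooth holomorphic K\"ahler surface $V_j \subset M$ whose points are the $\mathbb{C}^*$-orbits meeting $Q^{-1}(D_j)$; this $V_j$ is symplectic and meets $\Sigma_j$ transversally in one point. On the complement $F^\circ := F \setminus \bigcup_j D_j$, the preimage $Q^{-1}(F^\circ)$ contains no fixed points (by genericity of $F$ and smallness of each $D_j$), so the gradient flow is smooth there; let $N^\circ$ be its saturation over $t \in [a,b]$. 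In the cylindrical overlap $U \cap V_j$, both $V_j$ and $N^\circ$ are described as the union of gradient orbits through $Q^{-1}(F)$ and therefore coincide; hence $N_a^b := N^\circ \cup \bigcup_j V_j$ is a smooth $S^1$-invariant 4-dimensional submanifold with boundary in the levels $H=a$ and $H=b$.

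\textbf{Main obstacle.} The principal technical point is verifying symplecticity of $N_a^b$ across the critical level $c$ and across the patching seams; smoothness of the patching is automatic from the cylindrical description. I would then apply Proposition \ref{fibercharacterisation}: (i) transversality of $N_a^b$ to $M^{S^1}$ holds because each $V_j$ meets $\Sigma_j$ transversally in one point and $F$ avoids every $Q(x_i)$; (ii) the critical set of $H|_{N_a^b}$ coincides with $N_a^b \cap M^{S^1}$, consisting of exactly one fixed point inside each $V_j$; and (iii) each non-fixed level slice of $N_a^b$ is pre-symplectic, being K\"ahler inside the $V_j$ and the gradient image of the pre-symplectic set $Q^{-1}(F^\circ)$ (Lemma \ref{presymth}) elsewhere. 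Finally, the trace of $N_a^b$ on each regular level $t \in [a,b]$ is a sphere; that its class is ${\cal F}_t$ follows from Corollary \ref{uniquefibre} combined with the invariance of the fibre class under the gradient map (Corollary \ref{gradphicomute}), completing the construction.
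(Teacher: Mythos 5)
Your overall strategy mirrors the paper's proof: regular levels handled via a symplectic orbi-sphere from Theorem~\ref{fibreinregular} saturated by the gradient flow, critical levels handled via Theorem~\ref{critfibres} together with the holomorphic slices of Lemma~\ref{holoslicelemma}, with Proposition~\ref{fibercharacterisation} used to confirm the result is a strip. However, there is a genuine gap in the regular case that then propagates to the critical case. You assert that \emph{``the gradient flow is a smooth $S^1$-equivariant symplectomorphism between reduced spaces at regular levels''} and use this to conclude that every level slice of $N_a^b$ is pre-symplectic. This is false: the gradient flow is the flow of $\nabla H$, not of the Hamiltonian vector field, so it does not preserve $\omega$; and on the quotients, Lemma~\ref{DH} (Duistermaat--Heckman) shows $[\omega_t]$ varies with $t$, so $gr^{c}_{c'}$ cannot be a symplectomorphism $(M_c,\omega_c)\to(M_{c'},\omega_{c'})$. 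Being a symplectic sub-orbifold is therefore \emph{not} preserved by the gradient map. The correct justification, as in the paper, is an openness argument inside $M$: the $4$-dimensional saturation of the pre-symplectic $3$-manifold $Q^{-1}(F_c)$ is symplectic in a neighbourhood of $Q^{-1}(F_c)$, so one must take $[a,b]$ to be a sufficiently small interval about $c$ (it cannot be a fixed interval chosen in advance as you do). You should replace your symplectomorphism claim by ``symplecticity is an open condition, so the saturation is a symplectic submanifold of $M$ on $H^{-1}([c-\varepsilon,c+\varepsilon])$ for $\varepsilon$ small enough,'' and the same qualifier must be added in the critical case where you again appeal to ``the gradient image of the pre-symplectic set.'' With that repair, the remaining steps (fibre class preserved under gradient maps via Corollary~\ref{gradphicomute}, patching the holomorphic slices $V_j$ over the $\Sigma_j$, applying Proposition~\ref{fibercharacterisation}) are in line with the paper.
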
 

\begin{proof} Suppose first that $c$ is not a critical value of $H$. Take in $M_c$ a sub-orbifold sphere $F$ given by Theorem \ref{symplorbispheres} and let $N^3$ be the pre-symplectic $3$-dimensional manifold in $H^{-1}(c)$ that projects to $F$. Consider the union of all gradient lines in $M$ that intersect $N^3$. This space is an open $4$-dimensional submanifold of $M$ and it is symplectic close to $N^3$. It is easy to see that for $\varepsilon $ small enough the intersection of this space with $H^{-1}([c-\varepsilon, c+\varepsilon])$ is a $(c-\varepsilon, c+\varepsilon)$-strip.

Suppose now $c$ is a critical value. If $c=H_{\min}$ (or $c=H_{\max}$) we can take as a strip a small $4$-dimensional $S^1$-invariant symplectic disk transversal to $M_{\min}$ (respectively to $M_{\max}$).

If $c$ is a non-extremal critical value, by  Theorem \ref{critfibres} there exists a sphere $F\subseteq M_{c}$ and a K\"ahler neighbourhood $U$ of the fixed set, such that $F$ is a symplectic sub-orbifold on the complement of $U$ and a holomorphic curve on $U$, which is transversal to the fixed set. Take now the union of all gradient spheres in $M$ whose trace in $M_c$ belongs to $F$. Call the intersection of this set with  $H^{-1}(c -\varepsilon,c+\varepsilon)$ by $N$. Over $F \cap (M_{c} \setminus U)$, $N$ satisfies all the required properties to be a strip by the arguments used in the case when $c$ is regular. Over $F \cap U$ this follows from Lemma \ref{holoslicelemma}.
\end{proof}

The next lemma permits one to glue two overlapping $ab$-strips.

\begin{lemma} Let $N_a^c$ and $N_b^d$ be two strips in $M$ with $a<b<c<d$. Then there exists an $ad$-strip $N_a^d$ in $M$.
\end{lemma}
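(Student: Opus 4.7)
\medskip

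\noindent\emph{Proof plan.} The goal is to construct the $ad$-strip by keeping $N_a^c$ near level $a$, keeping $N_b^d$ near level $d$, and interpolating between them inside a small regular neighbourhood of some level $t_0$ in the overlap. Choose a regular value $t_0\in(b,c)$, which exists because the critical values of $H$ are isolated by Lemma \ref{manysimpleproperties}. Pick $\varepsilon>0$ so that $I:=[t_0-\varepsilon,t_0+\varepsilon]\subset(b,c)$ contains only regular values of $H$. Then the gradient flow of an $S^1$-invariant metric gives an $S^1$-equivariant diffeomorphism $H^{-1}(I)\cong H^{-1}(t_0)\times I$, and correspondingly a smooth trivialization of the orbifold bundle of reduced spaces $M_t\cong M_{t_0}$ for $t\in I$ (with the symplectic form on $M_{t_0}$ varying linearly in $t$ by Duistermaat--Heckman, Lemma \ref{DH}).

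Under this identification, the traces of $N_a^c$ and $N_b^d$ become two smooth families $G_1(t),G_2(t)\subset M_{t_0}$ of symplectic sub-orbifold spheres in the class $\mathcal{F}_{t_0}$, transversal to $\Sigma_{\mathcal{O}}(M_{t_0})$. At $t=t_0$ the spheres $G_1(t_0)$ and $G_2(t_0)$ satisfy the hypotheses of Theorem \ref{twoinoneOrbisphere}, so by part 2 of that theorem they are connected by a smooth isotopy $F_s$, $s\in[0,1]$, of symplectic orbi-spheres transversal to the orbifold locus. The plan is now to use this isotopy, together with a smooth cutoff $s(t):I\to[0,1]$ equal to $0$ near $t_0-\varepsilon$ and to $1$ near $t_0+\varepsilon$, to produce a smooth family $F(t)$ of symplectic orbi-spheres on $M_{t_0}$ with $F(t)=G_1(t)$ for $t$ near $t_0-\varepsilon$ and $F(t)=G_2(t)$ for $t$ near $t_0+\varepsilon$. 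Lifting $F(t)$ to a pre-symplectic $3$-submanifold in $H^{-1}(t)\subset H^{-1}(I)$ via the trivialization gives a pre-symplectic $4$-submanifold, which by Lemma \ref{presymth} and Proposition \ref{fibercharacterisation} is symplectic; concatenating with $N_a^c|_{H\leq t_0-\varepsilon}$ and $N_b^d|_{H\geq t_0+\varepsilon}$ produces the desired $ad$-strip $N_a^d$.

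The step that needs most care is the construction of the smooth family $F(t)$: the issue is that naive cut-and-paste via the isotopy $F_s$ need not remain symplectic and transversal to $\Sigma_{\mathcal O}(M_{t_0})$ throughout a two-parameter deformation. I would handle this by passing to the regularisation $\overline{M_{t_0}}$ of Definition \ref{resolutionDef}, lifting $G_1(t)$ and $G_2(t)$ to smooth families of symplectic spheres in the irrational ruled surface $\overline{M_{t_0}}$, and choosing a smooth two-parameter family of tamed almost complex structures $\overline J(t,s)$, integrable near the exceptional locus, for which $\overline G_1(t)$ is $\overline J(t,0)$-holomorphic and $\overline G_2(t)$ is $\overline J(t,1)$-holomorphic. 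A parametric version of Corollary \ref{transisotopy}, obtained by the automatic transversality of Theorem \ref{smoothdef} applied to the product family, then yields a smooth family $\overline F(t,s)$ of $\overline J(t,s)$-holomorphic spheres of zero self-intersection interpolating the two; setting $\overline F(t):=\overline F(t,s(t))$ and pushing down to $M_{t_0}$ using Theorem \ref{zhang}(3) and Corollary \ref{finitangence} produces the required family $F(t)$ of orbi-spheres transversal to $\Sigma_{\mathcal O}(M_{t_0})$ after a generic perturbation of the path $(t,s(t))$.
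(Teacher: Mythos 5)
Your approach is a genuine alternative route and can probably be made to work, but it is noticeably heavier than what the statement requires, and it has a gap at the very end worth flagging.

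The paper sidesteps the two-parameter difficulty you run into by a simple preliminary normalization that your argument omits. Before interpolating, it applies small symplectic isotopies supported near a single regular level $e\in(b,c)$ so that both strips become tangent to $\nabla H$ throughout $H^{-1}([e,e+\varepsilon])$. After that, the traces of the two strips in $M_{e'}$ for $e'\in[e,e+\varepsilon]$ are effectively constant in $e'$ (they are just gradient-flow pushforwards of $\Sigma_0$ and $\Sigma_1$). Consequently only a single one-parameter isotopy $\Sigma_t$ in $M_e$ from Theorem \ref{orbisotopy}, reparametrised to be constant near the endpoints and pushed forward by the normalized gradient flow $gr_e^{e+\varepsilon t}$, is needed to fill in the middle. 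Because you skip this normalization, your traces $G_1(t),G_2(t)$ genuinely depend on $t$, which is precisely what forces the two-parameter family $\overline F(t,s)$, the two-parameter tamed structures $\overline J(t,s)$, and a parametric version of automatic transversality. That can be pushed through (Theorem \ref{smoothdef} does generalize to families, and the ``generic perturbation of the path $(t,s(t))$'' can avoid the reducible-fibre locus), but it is more machinery than the lemma calls for and introduces extra bookkeeping — in particular, to lift the $G_i(t)$ to smooth spheres in $\overline{M_{t_0}}$ you first need to make them holomorphic near the orbi-locus, as in the proof of Theorem \ref{symplorbispheres}.

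Separately, there is a gap at the end of your argument. Your interpolating family $F(t)=\overline F(t,s(t))$ consists of $\overline J(t,s(t))$-holomorphic spheres, and $\overline J(t,s)$ is tamed by $\overline\omega_{t_0}$; so $F(t)$ is symplectic with respect to $\omega_{t_0}$. But for the lift of $F(t)$ in $H^{-1}(t)$ to be pre-symplectic — which is what Proposition \ref{fibercharacterisation} needs — $F(t)$ must be symplectic with respect to $\omega_t$, the reduced form at level $t$. These agree only at $t=t_0$. To close the gap one uses openness of the symplectic condition plus compactness of the family over $[0,1]$, shrinking $\varepsilon$ accordingly; the paper states this step explicitly (``by decreasing $\varepsilon$ if necessary we can insure that the submanifolds $gr_e^{e+\varepsilon t}(N_t)$ are pre-symplectic for all $t\in[0,1]$''), and you should add it too.
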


\begin{proof}\label{gluingstrips} Let $e\in (b,c)$ be any regular value of $H$. By  applying small symplectic isotopies supported in a neighbourhood of the level set $H=e$  to $N_a^c$ and $N_b^d$, we may assure that intersections of $N_a^c$ and $N_b^d$ with $H^{-1}[e, e+\varepsilon]$ are tangent to $\nabla H$. 

Denote by $N_0$ and $N_1$ the intersections of  the strips $N_a^c$ and $N_b^d$ with the level set $H=e$. Let $\Sigma_0$ and $\Sigma_1$ be the projections of  $N_0$ and $N_1$ to $M_e$. These surfaces in $M_{e}$ are orbifold spheres  in the class ${\cal F}_e$ and are transversal to the orbifold locus $\Sigma_{\cal O}M_e$. Hence, we can apply Theorem \ref{orbisotopy} to get a smooth symplectic isotopy $\Sigma_t$ in $M_e$ between $\Sigma_0$ and $\Sigma_1$, $t\in [0,1]$. We may assume that  $\Sigma_t=\Sigma_0$ for small $t$ and that $\Sigma_t=\Sigma_1$ for $t$ close to $1$. Finally, let $N_t$ be the corresponding isotopy of pre-symplectic manifolds in the level set $H=e$.

For any $e'\in [e, e+\varepsilon]$ let us denote by $gr_e^{e'}$ the map from the level set $H=e$ to $H=e'$ given by the normalized gradient flow. We will construct now the $ad$-strip $N_a^d$ by gluing it from three strips. The first strip is the intersection of $N_a^c$ with the subset $H\le e$ of $M$.  The third strip is the intersection of $N_b^d$ with the subset $H\ge e+\varepsilon$ of $M$. To get the second strip that connects these two, consider the subset of $M$ whose intersection with the level set $H=e+t\varepsilon$ is $gr_e^{e+\varepsilon t}(N_t)$. By decreasing $\varepsilon$ if necessary we can insure that submanifolds $gr_e^{e+\varepsilon t}(N_t)$ are pre-symplectic for all $t\in [0,1]$. It is clear that these three strips glue together to one $ad$-strip.
\end{proof}

Now, the proof of existence part of Theorem \ref{fibreconsturction} is clear.
\begin{proof}[Proof of Theorem \ref{fibreconsturction}, existence] Using Lemma \ref{stripexisence} we can find a cover of the interval $[H_{\min}, H_{\max}]$ by open sub-intervals $(a,b)$ for each of which there is an $ab$-strip. Choose a finite sub-cover and  then, using Lemma \ref{gluingstrips}, glue these strips consecutively to get a $H_{\min}H_{\max}$-strip,  i.e., a symplectic fibre.    
\end{proof}

The uniqueness part of Theorem \ref{fibreconsturction} will be established in Section \ref{uniquesec}.

\subsection{Symplectic fibres of relative symplectic Fano $6$-manifolds}

\begin{proposition}\label{fibredelpez}
Let $(M,\omega)$ be a relative symplectic Fano $6$-manifold with a Hamiltonian $S^{1}$-action such that $M_{\min} $ and $ M_{\max} $ are surfaces of genus $g>0$. Then the symplectic fibre $\mathcal{F}(M)$ is symplectomorphic to a toric del Pezzo surface.
\end{proposition}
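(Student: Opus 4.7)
\textbf{Plan for proof of Proposition \ref{fibredelpez}.} The strategy is to verify that $\mathcal{F}(M)$ is a $4$-dimensional toric symplectic Fano manifold, and then invoke Karshon's classification \cite[Theorem 5.1]{Ka} together with the del Pezzo result.

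First I would establish that the restricted $S^1$-action on $\mathcal{F}(M)$ is Hamiltonian with isolated fixed points. Since $\mathcal{F}(M)$ is $S^1$-invariant, the generating vector field $X$ is tangent to $\mathcal{F}(M)$, so $H|_{\mathcal{F}(M)}$ is a Hamiltonian for the restricted action. The fixed set is $\mathcal{F}(M)\cap M^{S^1}$, and by the transversality in Theorem \ref{symfibmain}, intersections have the expected dimension: the isolated fixed points of $M$ lie off $\mathcal{F}(M)$, while each $2$-dimensional fixed surface of $M$ meets $\mathcal{F}(M)$ in a finite set of isolated fixed points of $\mathcal{F}(M)$. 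By Lemma \ref{reduceduniq}, $\mathcal{F}(M)_{\min}$ and $\mathcal{F}(M)_{\max}$ are each single points.

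Next I would verify the weight sum formula hypothesis of Proposition \ref{converse} applied to $\mathcal{F}(M)$. Let $p\in\mathcal{F}(M)$ be an isolated fixed point; then $p$ lies on some fixed surface $\Sigma\subset M^{S^1}$, the weights of the $S^1$-action on $M$ at $p$ have the form $\{0,w_1,w_2\}$ with the $0$ weight corresponding to $T_p\Sigma$, and since $T_p\mathcal{F}(M)$ is the symplectic complement of $T_p\Sigma$ in $T_pM$, the weights of the restricted action at $p$ are exactly $\{w_1,w_2\}$. Since $M$ is relative symplectic Fano with $H$ normalized by Proposition \ref{relcho}, we have $H(p)=-0-w_1-w_2=-w_1-w_2$, which is precisely the weight sum formula for $\mathcal{F}(M)$ at $p$. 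The hypothesis in Proposition \ref{converse} that $[\omega|_{\mathcal{F}(M)_{\min}}]=c_1(\mathcal{F}(M))|_{\mathcal{F}(M)_{\min}}$ in cohomology is automatic because $\mathcal{F}(M)_{\min}$ is a point. Hence Proposition \ref{converse} shows $\mathcal{F}(M)$ is a symplectic Fano $4$-manifold.

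Finally, since $\mathcal{F}(M)$ is a symplectic $4$-manifold carrying a Hamiltonian $S^1$-action with isolated fixed points, \cite[Theorem 5.1]{Ka} implies that $\mathcal{F}(M)$ is symplectomorphic to a toric symplectic $4$-manifold. Combined with the Ohta-Ono theorem \cite[Theorem 1.3]{OhtaOno}, which identifies the diffeomorphism type with a del Pezzo surface, one concludes that $\mathcal{F}(M)$ is symplectomorphic to a toric del Pezzo surface. The only nontrivial step in the whole argument is the weight bookkeeping in the middle paragraph; once the inherited Hamiltonian structure and the splitting $T_pM = T_p\Sigma\oplus T_p\mathcal{F}(M)$ are identified, everything else is a direct appeal to results already in the paper.
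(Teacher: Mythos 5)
Your proof is correct and takes essentially the same route as the paper: verify the weight-sum formula on $\mathcal{F}(M)$ via the fixed surfaces of $M$, apply Proposition \ref{converse} to conclude $\mathcal{F}(M)$ is symplectic Fano, and then use Karshon's \cite[Theorem 5.1]{Ka} to get toricity, from which the del Pezzo identification follows. The appeal to Ohta--Ono is not needed, since a toric symplectic Fano $4$-manifold is already symplectomorphic to a toric del Pezzo, but this is a harmless redundancy.
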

\begin{proof}
Recall that $\mathcal{F}(M)$ is a symplectic $4$-manifold with a Hamiltonian $S^{1}$-action with isolated fixed points. If $\Sigma$ is a surface in $M$ with weights $\{w_{1},w_{2},0\}$ then $H(\Sigma) =-w_{1}-w_{2}$. Hence for any fixed point $p \in \mathcal{F}(M)$ with weights $\{w_{1},w_{2}\}$ we have $H(p)=-w_{1}-w_{2}$. It follows from Proposition \ref{converse} that $\mathcal{F}(M)$ is a symplectic Fano manifold and hence a toric del Pezzo by \cite[Theorem 5.1]{Ka}.
\end{proof}

\subsection{Symplectic fibre and restrictions on fixed spheres}

In this section we give a first application of existence of the symplectic fibre $\mathcal{F}(M)$. We give an upper bound on the area of fixed spheres in $6$-dimensional Hamiltonian $S^1$-manifolds with $M_{\min}$ a surface of positive genus and then apply it to relative symplectic Fanos in Corollary \ref{nosphere}.

\begin{proposition}\label{boundSphereAreas} Let $M$ be a  Hamiltonian  $S^1$-manifold of dimension $6$ with  $M_{\min}$ a surface of positive genus $g$. Suppose $\Sigma\subset M^{S^1}$ is a fixed sphere and let $H(\Sigma)=c$. Let ${\cal F}_c$ be the fibre class in $H_2(M_c)$. Then we have

1) $\omega_c({\cal F}_c)\ge \omega(\Sigma)$. 2) Equality holds if and only if $[\Sigma]={\cal F}_c$.  
\end{proposition}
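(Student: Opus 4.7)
The plan is to reduce the statement to Corollary \ref{sphereareabound} applied to the irrational ruled surface obtained by resolving $M_c$.

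First, I would let $\widetilde{\Sigma} := Q(\Sigma)$ be the image of $\Sigma$ in $M_c$ under the quotient map $Q \colon H^{-1}(c) \to M_c$. Because $\Sigma$ is pointwise fixed by $S^1$, the map $Q|_\Sigma$ is a homeomorphism onto $\widetilde{\Sigma}$, and the symplectic form $\omega$ restricted to $\Sigma$ coincides with the pullback of $\omega_c|_{\widetilde{\Sigma}}$; in particular $\omega_c(\widetilde{\Sigma}) = \omega(\Sigma)$. Note that $\widetilde{\Sigma}$ is a topological $2$-sphere.

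Next, I would endow a neighbourhood of $\Sigma$ in $M$ with an adjusted $S^1$-invariant K\"ahler structure as in Remark \ref{localKahler6}; this makes $\Sigma$ holomorphic. Via the induced K\"ahler structure in a neighbourhood of the orbifold locus of $M_c$ and Theorem \ref{semilocal4orb}, build the regularisation $\overline{M_c}$ of Definition \ref{resolutionDef}. Because $\widetilde{\Sigma}$ is holomorphic near $\Sigma_{\mathcal O}(M_c)$ (coming from the local toric K\"ahler model at $\Sigma$) and transversal to it, after a small isotopy if necessary we may assume it avoids all isolated orbi-points, and therefore lifts through the composition $\pi \circ i \colon \overline{M_c} \to M_c$ to a smooth symplectic sphere $\overline{\Sigma} \subset \overline{M_c}$. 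Since $\pi$ contracts only the exceptional divisor of the resolution and $\overline{\omega}$ agrees with $\pi^*\widetilde{\omega}$ away from that divisor, while $\widetilde{\omega}$ agrees with $\omega_c$ away from the smoothing neighbourhood, we have $\overline{\omega}(\overline{\Sigma}) = \omega_c(\widetilde{\Sigma}) = \omega(\Sigma)$.

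Third, since $\pi_1(\overline{M_c}) \cong \pi_1(M_c) \cong \pi_1(M_{\min}) \neq 0$ (using Theorem \ref{Li} and $g > 0$) and Theorem \ref{fibreinregular} (or Theorem \ref{critfibres} for critical $c$) provides a smooth symplectic sub-orbifold sphere in $M_c$ of zero self-intersection, Theorem \ref{symplorbispheres} ensures that $\overline{M_c}$ is an irrational ruled surface. Its fibre class $\overline{\mathcal F}$ satisfies $(\pi \circ i)_*[\overline{\mathcal F}] = \mathcal F_c$ and $\overline{\omega}(\overline{\mathcal F}) = \omega_c(\mathcal F_c)$ by the same area-preservation argument.

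Finally, apply Corollary \ref{sphereareabound} to $\overline{\Sigma}$ in $\overline{M_c}$: if $[\overline{\Sigma}] \neq [\overline{\mathcal F}]$, then $\overline{\omega}(\overline{\mathcal F}) > \overline{\omega}(\overline{\Sigma})$, giving strict inequality $\omega_c(\mathcal F_c) > \omega(\Sigma)$; if instead $[\overline{\Sigma}] = [\overline{\mathcal F}]$, then $\omega_c(\mathcal F_c) = \omega(\Sigma)$ and pushing forward gives $[\widetilde{\Sigma}] = \mathcal F_c$ in $H_2(M_c)$. The main obstacle is the second step: arranging the local K\"ahler/smoothing data so that $\widetilde{\Sigma}$ lifts to $\overline{M_c}$ as a genuine smooth symplectic sphere of exactly preserved area, which requires compatible choices of the adjusted K\"ahler metric near $\Sigma$ and of the smoothing/resolution parameters; the converse direction of the equality case also requires checking that the pushforward $(\pi \circ i)_*$ is injective on the relevant classes, which holds because the kernel is generated by exceptional classes with negative self-intersection.
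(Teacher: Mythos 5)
Your plan cannot be executed as written because $c = H(\Sigma)$ is always a \emph{critical} value of $H$ (since $\Sigma \subset M^{S^1}$), and the regularisation machinery you invoke — Theorem \ref{semilocal4orb}, Definition \ref{resolutionDef}, and hence the resolution $\overline M_c$ — is set up only for symplectic orbifolds with cyclic stabilizers, i.e.\ for reduced spaces at \emph{regular} levels. At the critical level $c$ the quotient $H^{-1}(c)/S^1$ is merely a topological orbifold (Lemma \ref{toporbifold}); there is no smooth symplectic orbifold structure on $M_c$ near the image of $\Sigma$ to which the smoothing-and-resolution construction of Section \ref{locKahlsec} applies, and Theorem \ref{fibreinregular} (which supplies the orbi-sphere needed to invoke Theorem \ref{symplorbispheres}) is also stated only for regular $c$. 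Your parenthetical fallback ``Theorem \ref{critfibres} for critical $c$'' does not repair this: the sphere $F$ produced there is symplectic only outside a neighbourhood of the fixed set and is merely holomorphic near the fixed surfaces, so it is not a symplectic sub-orbifold of $M_c$ in the sense required.

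The paper avoids this issue in both parts by never resolving $M_c$ directly. For part 1) it pushes $\Sigma$ down by the gradient flow to the nearby regular level $c-\varepsilon$, obtaining a genuine symplectic orbi-sphere $\Sigma_\varepsilon$ in the honest symplectic orbifold $M_{c-\varepsilon}$ with area close to $\omega(\Sigma)$, and then applies Corollary \ref{sphereareabound} in the resolution $\overline M_{c-\varepsilon}$. For part 2) it first shows $[\Sigma]^2<0$ whenever $[\Sigma]\ne{\cal F}_c$ (using $[\Sigma]\cdot{\cal F}_c=0$ and $b^+(M_c)=1$), then performs the $S^1$-equivariant blow-up of $M$ along $\Sigma$ (Lemma \ref{Kahlerblowup}), which makes $c$ a regular value for the blown-up manifold; the trace of the exceptional divisor at level $c$ is a sphere of area strictly greater than $\omega(\Sigma)=\omega_c({\cal F}_c)$, contradicting Corollary \ref{sphereareabound}. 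Your idea of extracting the equality case directly from Corollary \ref{sphereareabound} in a resolution is attractive and would indeed shorten part 2) if the resolution existed, but as it stands the proposal needs the regular-level detour (or the blow-up) to be valid. Incidentally, your remark about needing injectivity of $(\pi\circ i)_*$ is a red herring: the direction you actually use — equal classes push forward to equal classes — is automatic and needs no injectivity claim.
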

\begin{proof}1) Assume by contradiction $\omega_c({\cal F}_c)< \omega(\Sigma)$. Then for $\varepsilon$ small enough there is a symplectic orbi-sphere $\Sigma_{\varepsilon}\subset M_{c-\varepsilon}$  that is sent by the gradient flow to $\Sigma$. For small $\varepsilon$  we still have $\omega_c({\cal F}_{c-\varepsilon})< \omega_{c-\varepsilon}(\Sigma_{\varepsilon})$ and the reduced level set $M_{c-\varepsilon}$ is an orbifold. Hence we can take the symplectic resolution $\overline M_{c-\varepsilon}$ which is an irrational ruled surface. Since the resolution does not change the area of the fibre class and the area of $\Sigma_{\varepsilon}$, we get a contradiction with Corollary \ref{sphereareabound}.

2) Assume now $\omega_c({\cal F}_c)= \omega(\Sigma)$ and suppose by contradiction that $\Sigma$ is not the fibre class in $M_c$. Observe then that $[\Sigma]^2<0$. Indeed, this can be deduced from the following three facts: i) $[\Sigma]$ and ${\cal F}_c$ span a two-plane in $H_2(M_c)$, ii) $[\Sigma]\cdot {\cal F}_c=0$, iii) $b_+(M_c)=1$. 

Consider a symplectic $S^1$-invariant blow-up $M_{\Sigma}$  of $M$ in a small neighbourhood of $\Sigma$ (constructed in Lemma \ref{Kahlerblowup}) and let $E\subset M_{\Sigma}$ be the exceptional divisor. The trace of $E$ in the reduced space  of $M_{\Sigma}$ on level $c$ gives us a sphere of area larger than $\omega(\Sigma)$ (one uses here $[\Sigma]^2<0$). This again leads us to a contradiction with Corollary \ref{sphereareabound} as in part 1).
\end{proof}

\begin{corollary}\label{nosphere} Let $(M,\omega)$ be a relative symplectic Fano $6$-manifold with a Hamiltonian $S^{1}$-action. Suppose that $M_{\min}$ is a surface of positive genus $g$. If $\Sigma \subset M$ is a fixed sphere then we have $H(\Sigma) \geq 0$.
\end{corollary}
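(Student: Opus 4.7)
The plan is to combine Proposition~\ref{boundSphereAreas}, which bounds $\omega(\Sigma)$ from above by the slice area $\omega_c({\cal F}_c)$ of the symplectic fibre, with the Duistermaat--Heckman inequality of Lemma~\ref{DuisHeckSphere}(2) applied to the symplectic fibre $\mathcal{F}(M)$ (a toric del Pezzo by Proposition~\ref{fibredelpez}), and then to exploit integrality coming from the relative symplectic Fano condition.

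First I would set $c=H(\Sigma)$ and denote by $w_1,w_2>0$ the normal weights of $M_{\min}$ in $M$; by Lemma~\ref{reduceduniq} these coincide with the weights of the $S^1$-action on $\mathcal{F}(M)$ at its unique minimum point $\mathcal{F}(M)\cap M_{\min}$, and by the weight sum formula (Proposition~\ref{relcho}) we have $H_{\min}=-w_1-w_2$. Since $M_{\min}$ has positive genus while $\Sigma$ is a sphere, $\Sigma\ne M_{\min}$ and consequently $c>H_{\min}$. Proposition~\ref{boundSphereAreas} gives $\omega(\Sigma)\le\omega_c({\cal F}_c)$; crucially, because $\Sigma$ is $S^1$-fixed its image in $M_c$ is a single point, so $[\Sigma]=0\ne {\cal F}_c$ in $H_2(M_c)$ and the equality case of Proposition~\ref{boundSphereAreas}(2) is ruled out, upgrading this to the strict inequality $\omega(\Sigma)<\omega_c({\cal F}_c)$.

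Next I would apply Lemma~\ref{DuisHeckSphere}(2) to $\mathcal{F}(M)$ and combine with Lemma~\ref{reduceduniq}, which identifies $[\mathcal{F}(M)_c]={\cal F}_c$, to obtain
$$\omega_c({\cal F}_c)\;\le\;\frac{c-H_{\min}}{w_1w_2}\;=\;\frac{c+w_1+w_2}{w_1w_2}.$$
Since $M$ is relative symplectic Fano, $\omega(\Sigma)=\langle c_1(M),\Sigma\rangle$ is a positive integer, so $\omega(\Sigma)\ge 1$. Chaining the inequalities yields $1<(c+w_1+w_2)/(w_1w_2)$, equivalently $c>(w_1-1)(w_2-1)-1$. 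Since $c$ is itself an integer (again by the weight sum formula), this strict inequality between integers forces $c\ge (w_1-1)(w_2-1)\ge 0$.

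The main subtlety is that strictness in Proposition~\ref{boundSphereAreas}(2) is doing real work here: the non-strict version of the chain would only produce $c\ge -1$, which is not sufficient in the case $w_1=w_2=1$. Ruling out the borderline value $c=-1$ requires precisely the equality condition $[\Sigma]={\cal F}_c$, which fails because a fixed sphere collapses to a point in the reduced space. Every other ingredient is a direct application of results already proved in the paper, so no further obstacle is expected.
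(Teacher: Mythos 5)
There is a genuine gap, and it is located exactly where you flagged a potential subtlety. Your argument hinges on the claim that since $\Sigma$ is $S^{1}$-fixed, its image in $M_{c}$ is a single point, so $[\Sigma]=0 \neq \mathcal{F}_{c}$. This is false. The $S^{1}$-action is trivial on $\Sigma$, so each point of $\Sigma$ is a one-point orbit, and the quotient map $Q\colon H^{-1}(c)\to M_{c}$ restricts to an embedding of $\Sigma$; the image is a $2$-sphere of positive symplectic area, not a point. Proposition~\ref{boundSphereAreas} is a nontrivial comparison precisely because $[\Sigma]\in H_{2}(M_{c})$ is nonzero.

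Beyond the faulty justification, the strategy of forcing strict inequality cannot succeed in all cases. In the borderline case where the weights at $M_{\min}$ are $\{1,1\}$ and $H(\Sigma)=-1$, the paper explicitly notes that $M_{-1}$ is an $S^{2}$-bundle and that $\Sigma$ \emph{does} represent the fibre class, so $[\Sigma]=\mathcal{F}_{-1}$ and equality in Proposition~\ref{boundSphereAreas} holds. Your chain therefore only yields the non-strict bound $c\geq (w_{1}-1)(w_{2}-1)-1\geq -1$, which is adequate when $w_{1},w_{2}\geq 2$ (and matches the paper's easy case), but leaves $c=-1$ open whenever $\min(w_{1},w_{2})=1$. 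Ruling out $c=-1$ is exactly the hard part of the paper's proof; it requires a case analysis of the weights at $M_{\min}$, an intersection argument with the trace of the isotropy $4$-manifold containing $M_{\min}$ (to show $[\Sigma]\neq\mathcal{F}_{-1}$ when $\{w_{1},w_{2}\}=\{1,b\}$ with $b>1$), and a separate contradiction via the structure of fixed points of the toric del Pezzo $\mathcal{F}(M)$ in the case $w_{1}=w_{2}=1$. None of this is captured by your argument, so the proposal does not close the gap.
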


\begin{proof} Let $a,b$ be the weights of $S^1$-action on $M_{\min}$ so that $a,b\ge 1$. Suppose first that $a+b \geq 3$. By the weight sum formula \ref{relcho} (\ref{weightcho}) $H_{\min}=-a-b$.

Let ${\cal F}(M)$ be the symplectic fibre of $M$.   
Applying Lemma \ref{DuisHeckSphere} 2) to ${\cal F}(M)$ at the level $H=-c$ we get the following inequality:
$$\omega({\cal F}_{-c})\le \frac{a+b-c}{a\cdot b}.$$ 

Let us assume by contradiction that there is a fixed sphere $\Sigma$ in the level $H=-c$. We know that $\int_{\Sigma}\omega$ is a positive integer and moreover from Proposition \ref{boundSphereAreas} 1) we have $\int_{\Sigma}\omega\le \frac{a+b-c}{a\cdot b}$. So, unless $a=1$ or $b=1$ and $\int_{\Sigma}\omega=1=c$  we get a contradiction. 
 
Suppose first $a=1$, $b>1$, $c=1$, and  $\int_{\Sigma}\omega=1$. This means that $H(\Sigma)=-1$ and we will denote by $\Sigma$ as well the image of $\Sigma$ in $M_{-1}$. We will show that $\Sigma$ is not in the fibre class of $M_{-1}$. The contradiction then will follow from Proposition \ref{boundSphereAreas} 2). 

Denote by $N$ the isotropy submanifold of weight $b$ that contains $M_{\min}$.  Since the maximum of $H$ on $N$ is at least $H_{\min}+b=-1$, the trace $N_{-1} \subset M_{-1}$ is a certain genus $g$ surface $\Sigma_b$. Note now that $N$ and $\Sigma$ are disjoint in $M$, and so the image of $\Sigma$ in the reduced space $M_{-1}$ is disjoint from $\Sigma_b$. Clearly this means that $[\Sigma_b]\cdot \Sigma=0$. Finally, the fibre class ${\cal F}_{-1}$ of  $M_{-1}$ has intersection $1$ with $\Sigma_b$. We see that  ${\cal F}_{-1}\ne [\Sigma]\in H_2(M_{-1})$ and this gives us a contradiction.
 
Finally, we need to show that in case $a=b=1$ there can be no fixed sphere $\Sigma$ in $M$ with $H(\Sigma)=-1$. Assume by contradiction that such a sphere exists. Since $M_{-1}$ is an $S^2$-bundle, $\Sigma$ represents the fibre class in it. It follows that ${\cal F}(M)$ does not have fixed points at level $-1$, since such a point would correspond to a fixed surface in $M$ that intersects positively the fibre class. Since the weights at the minimum of $\mathcal{F}(M)$ are $\{1,1\}$, there must be a fixed point in $\mathcal{F}(M)$ with weights $\{-1,n\}$ ($n > 0$) and since there are no  fixed points on level $-1$, by the weight sum Formula \ref{relcho} (\ref{weightcho}), we must have $n = 1$. It follows that there is a fixed surface of positive genus in $M$ on level $0$, we denote it by $\Sigma'$. 

Note now, that the weights at $\Sigma$ must be $\{-1,2\}$. Let $N_2$ be the isotropy $4$-manifold with $(N_{2})_{\min} = \Sigma$. Since the weight of $N_{2}$ is $2$, we have $H((N_{2})_{\max})\ge  H(\Sigma)+2 = 1$ by Lemma \ref{gradsphereint}. Hence, $\Sigma'$ must intersect in $M_0$ the sphere traced by $N_2$, since the latter one is in the fibre class. This is a contradiction, since $\Sigma'$ is composed of fixed points. \end{proof}

\section{Fixed surfaces of positive genus in Hamiltonian $6$-manifolds}\label{fixsurfsec}

In this section we study Hamiltonian $S^{1}$-actions on a symplectic $6$-manifold $(M,\omega)$ such that $M_{\min}$ and $M_{\max}$ are surfaces. Our main goal is to understand the fixed surfaces in $M$, both in terms of the genus and the level sets which contain them.

The main result of the section is Theorem \ref{struc}, which relates the fixed points of the symplectic fibre $\mathcal{F}(M)$ to the fixed surfaces of positive genus in $M$. In particular, this provides strong restrictions on the possible genus of fixed surfaces. We will show that the graph of fixed points (defined in Definition \ref{Ka}) associated to any symplectic fibre is the same. In conjunction with Theorem \ref{unigraph} this will imply the uniqueness of the symplectic fibre.


\subsection{The graph of fixed surfaces}
 We start by introducing the {\it graph of fixed surfaces}. This is an analogue of the graph of fixed points originally defined by Karshon \cite{Ka} for symplectic $4$-manifolds, which we recall. 

\begin{definition}\label{Ka} \cite[Section 2.1]{Ka}
Let $X$ be a $4$-dimensional symplectic manifold with a Hamiltonian $S^{1}$-action with isolated fixed points. The graph of fixed points associated to the action, which we denote by $\mathcal{Q} = (V,E)$, is defined as follows
\begin{itemize}
\item The vertices $V$ are in bijection with the isolated fixed points. We denote the vertex associated to an isolated fixed point $p$ by $v_{p} $.
\item $v_{p_{1}}$  and $v_{p_{2}}$ are joined by an edge $e \in E$ if and only if there is an isotropy sphere containing both $p_{1}$ and $p_{2}$.

\end{itemize}
\end{definition}
Define a function $H: V \rightarrow \R$ by setting $H(v_{p}) = H(p)$. An edge corresponding to a gradient sphere $S$ is labelled by the weight of $S$. We orient the edges so that $H$ is strictly increasing with respect to this orientation. In symbols, we write $e = [S_{\min},S_{\max}]$ when $e$ corresponds to a gradient sphere $S$. In \cite{Ka} it was shown that this graph determines $(X,\omega)$ up to equivariant symplectomorphism.

\begin{theorem} \label{unigraph} \cite[Theorem 4.1]{Ka}
Let $X_{1}$,$X_{2}$ be symplectic $4$-manifolds with Hamiltonian $S^{1}$-actions with isolated fixed points. Then any isomorphism of the associated graphs (respecting the Hamiltonian and labelling of edges by weights) induces an equivariant symplectomorphism $\tau : X_{1} \rightarrow X_{2}$.
\end{theorem}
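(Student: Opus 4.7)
My plan is to construct $\tau$ chart-by-chart following the combinatorial structure of $\mathcal{Q}$, then glue by an equivariant Moser argument. At each isolated fixed point $p \in X_1^{S^1}$ with weights $(a,b)$, the equivariant Darboux theorem furnishes an $S^1$-equivariant symplectomorphism from a neighbourhood of $p$ onto a standard linear model $(\mathbb{C}^2,\omega_{\mathrm{std}})$ with $S^1$ acting by weights $(a,b)$. The two weights are determined by the graph: magnitudes at least $2$ correspond to incident edge labels, the remaining weights have magnitude $1$, and signs are read off from the directions of incident edges and from the Morse index of $H$ at $p$ (itself visible from the local structure of $H$ near $v_p$). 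Composing the two Darboux charts gives an $S^1$-equivariant symplectomorphism between neighbourhoods of each pair of corresponding fixed points.

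Next, I would extend these local maps along each edge $e = [S_{\min}, S_{\max}]$ of $\mathcal{Q}$, corresponding to an isotropy sphere $S$ of weight $w(S)$ (the edge label). By Lemma \ref{gradsphereint} the symplectic area $\int_S \omega = (H(S_{\max}) - H(S_{\min}))/w(S)$ is determined by graph data. An equivariant tubular neighbourhood of such an $S$ is classified, up to equivariant symplectomorphism relative to the endpoints, by $w(S)$, $\int_S\omega$, and the normal-direction weights at $S_{\min}$ and $S_{\max}$ --- all graph data. The relative equivariant Darboux-Weinstein Theorem \ref{relative_local_neighbourhood}(2) then extends the local symplectomorphisms near $S_{\min}$, $S_{\max}$ to one defined on a neighbourhood of $S$. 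Iterating this over all edges produces a partial equivariant symplectomorphism $\tau_W$ defined on an open set $W \subset X_1$ that contains $X_1^{S^1}$ together with all isotropy spheres.

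Finally, to extend $\tau_W$ to all of $X_1$, observe that on $X_1 \setminus W$ the $S^1$-action is free, and by Duistermaat-Heckman (Lemma \ref{DH}) the reduced symplectic forms on each $H^{-1}(c)/S^1$ depend only on the normal-bundle data at the fixed submanifolds, which is encoded in $\mathcal{Q}$. Hence the pull-back of $\omega_2$ along $\tau_W$ --- after extending $\tau_W$ arbitrarily to an equivariant diffeomorphism on the complement --- is cohomologous to $\omega_1$ relative to $W$. A relative equivariant Moser argument, interpolating between the two forms along a path that agrees with $\omega_1$ on a collar of $W$ and integrating the resulting time-dependent vector field (which vanishes on that collar), produces the desired global $\tau$ while preserving the chosen local models. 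The main obstacle is the consistent gluing across edges: the local symplectomorphisms at adjacent vertices must match on the tubular neighbourhood of the shared isotropy sphere, which is precisely why the relative formulation in Theorem \ref{relative_local_neighbourhood}(2) is essential --- it allows one to impose compatibility conditions at several fixed points simultaneously before applying Moser.
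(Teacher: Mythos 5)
The paper itself offers no proof---it is a direct citation of Karshon---so the comparison is really against her argument. Your sketch assembles the correct local ingredients: equivariant Darboux at fixed points with weights read from the graph, and relative equivariant Darboux--Weinstein along isotropy spheres, whose symplectic area and normal-bundle degree are indeed graph data via Lemma~\ref{gradsphereint} and Lemma~\ref{wsphere}. These steps are sound. The trouble is that the final gluing step conceals the crux of the theorem rather than resolving it. Your assertion that $\tau_W$ "can be extended arbitrarily to an equivariant diffeomorphism on the complement" is precisely what has to be proved, not assumed: the free part $X_i\setminus W$ fibres over the moment interval with reduced-space fibres that undergo blow-ups/blow-downs at critical levels, and showing that the two resulting fibred spaces are equivariantly diffeomorphic relative to the boundary data near $W$ is the substantive content of Karshon's theorem. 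It does not follow from matching local models near $X^{S^1}$; one has to compare the families of reduced spaces and the orbi-$S^1$-bundles $H^{-1}(c)\to M_c$ and argue inductively across critical levels, and this is where Duistermaat--Heckman and wall-crossing actually carry weight.

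Second, even granted an extension $\tau_0$, your Moser step needs (a) $[\tau_0^*\omega_2-\omega_1]=0$ in $H^2(X_1,W;\mathbb R)$---a statement about the total space, not implied by equality of reduced-space classes---and (b) a path of $S^1$-invariant symplectic forms from $\omega_1$ to $\tau_0^*\omega_2$ rel $W$. The latter is not automatic for large deformations, since the space of symplectic forms is not convex; one cannot simply take the straight-line interpolation. Karshon handles both issues by matching the manifolds slab by slab over the moment interval, producing an isotopy only between consecutive critical levels, where the comparison reduces to a problem on a cylinder over a fixed reduced space; alternatively one can route through her Theorem 5.1, which upgrades the circle action to a toric $T^2$-action whose Delzant polytope is determined by the graph, and then invoke Delzant's classification. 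Your plan needs one of these structural inputs; a single global Moser argument does not supply them.
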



Let us now adapt Definition \ref{Ka} to define a graph associated to a $6$-dimensional symplectic manifold with a Hamiltonian $S^{1}$-action.  

\begin{definition}
Let $(M,\omega)$ be a $6$-dimensional symplectic manifold with a Hamiltonian $S^{1}$-action such that $\dim(M_{\min}) = \dim(M_{\max}) = 2$. We define a graph $G = (V,E)$ associated to $M$ as follows: \begin{itemize}
\item The vertices of $\mathcal{G}$ are in bijection with the fixed surfaces in $M$. The vertex associated to a fixed surface $\Sigma$ is denoted $v_{\Sigma} $. 

\item Two vertices $v_{ \Sigma_{1} }$ and $v_{\Sigma_{2}}$ are joined by a unique edge $e \in E$ if and only if there exists an isotropy $4$-manifold $W$ such that $\Sigma_{1},\Sigma_{2} \subseteq W$.

\item Define $v_{\min},v_{\max} \in V$ to be the vertices corresponding to $M_{\min},M_{\max}$. These are called the \emph{extremal vertices}.  

\end{itemize}
\end{definition}

\textbf{Orientation and labelling associated to $\mathbf{\mathcal{G}}$}. Define a function $H: V \rightarrow \R$ by setting $H(v_{\Sigma}) = H(\Sigma)$. Note that $\mathcal{G}$ is naturally a directed graph, indeed if $\Sigma_{1},\Sigma_{2}$ are contained in an isotropy submanifold then $H(\Sigma_{1}) \neq H(\Sigma_{2})$, we direct the edge to the vertex with the higher value of $H$. In symbols, we write $e = [v_{\Sigma_{1}},v_{\Sigma_{2}}]$ when $H(\Sigma_{2}) > H(\Sigma_{1})$.

We also define a label on an edge $e =[v_{\Sigma_{1}},v_{\Sigma_{2}}]$ to be the weight of the isotropy $4$-manifold $W$ such that $\Sigma_{1},\Sigma_{2} \subseteq W$. We refer to this label as the weight of the edge $e$ or just $w(e)$. By definition we have $w(e) \geq 2$ for all $e \in E$.

If $e =[v_{\Sigma_{1}},v_{\Sigma_{2}}]$ and $w(e) = n$ then one of the weights at $\Sigma_{1}$ must be $n$ and one of the weights at $\Sigma_{2}$ must be $-n$. 

Note that since at most two isotropy $4$-manifolds can contain the same fixed surface, the number of edges incident to a vertex $v$ is at most two. We denote this number $\deg(v)$.

\begin{lemma}\label{Liapp}
Let $(M,\omega)$ be a symplectic $6$-manifold with a Hamiltonian $S^{1}$-action such that $\dim(M_{\min}) = \dim(M_{\max}) = 2$. Let $\mathcal{G}$ be the graph of fixed surfaces associated to $M$.
\begin{enumerate}

\item The genus of fixed surfaces is constant along connected components of $\mathcal{G}$.

\item Suppose that $\Sigma$ is a fixed surface with genus $g > 0$ and weights $\{w_{i}\}$, then $\deg(v_{\Sigma}) = \# \{w_{i}: |w_{i}| > 1\}$. In particular, if $\deg(v_{\Sigma}) = 1$, then $\Sigma$ must have a weight equal to $1$ or $-1$.

\end{enumerate}
\end{lemma}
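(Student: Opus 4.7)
The plan is to exploit the classical fact that a $2$-dimensional fixed component of a Hamiltonian $S^1$-action on a $4$-dimensional symplectic manifold is automatically extremal, since its $2$-dimensional normal bundle carries a single non-zero weight. For Part~1, suppose $\Sigma_1$ and $\Sigma_2$ are joined by an edge; by definition they lie in a common isotropy $4$-manifold $W$. The residual action of $S^1/\Z_{w(W)}$ endows $W$ with the structure of a Hamiltonian $S^1$-manifold, and the preceding observation forces each of $\Sigma_1,\Sigma_2$ to be either $W_{\min}$ or $W_{\max}$. Since the two vertices are distinct one has $\{\Sigma_1,\Sigma_2\}=\{W_{\min},W_{\max}\}$. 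If either $\Sigma_i$ has positive genus, Corollary~\ref{genus} applied to $W$ tells us both extremal submanifolds of $W$ are surfaces of genus $g(\Sigma_i)$, so $g(\Sigma_1)=g(\Sigma_2)$; otherwise both have genus zero. Constancy along connected components then follows by iterating this observation along paths in $\mathcal{G}$.

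For Part~2, at a point $p\in\Sigma$ the tangent space decomposes as $T_p\Sigma\oplus V_{w_1}\oplus V_{w_2}$, where $w_1,w_2$ are the two non-zero weights. Any isotropy $4$-manifold $W$ through $\Sigma$ of weight $n\geq 2$ is determined at $p$ by an $S^1$-invariant $2$-plane in the normal, namely $V_{w_i}$ for some $i$ with $|w_i|=n$; conversely each weight $w_i$ with $|w_i|\geq 2$ produces such an isotropy submanifold. Moreover effectiveness of the action rules out coincidences $|w_i|=|w_j|$ for $i\ne j$: if $|w_1|=|w_2|=n\geq 2$ then $\Z_n\subset S^1$ would act trivially on $T_pM$ and hence, by connectedness, on all of $M$. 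Thus the isotropy $4$-manifolds through $\Sigma$ are in bijection with the weights $w_i$ satisfying $|w_i|\geq 2$. For each such $W$, the argument of Part~1 shows $\Sigma\in\{W_{\min},W_{\max}\}$, and since $g(\Sigma)>0$ Corollary~\ref{genus} guarantees that the opposite extremum is a fixed surface $\Sigma'\ne\Sigma$ of genus $g(\Sigma)$; this produces a distinct edge of $\mathcal{G}$ incident to $v_\Sigma$.

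Combining these observations yields $\deg(v_\Sigma)=\#\{w_i:|w_i|>1\}$, and the final assertion is immediate: if $\deg(v_\Sigma)=1$ then precisely one of $w_1,w_2$ has absolute value at least $2$, so the other is a non-zero integer of absolute value less than $2$, i.e.\ equal to $\pm 1$. The main subtleties to watch for are (i) ensuring that the positive-genus hypothesis is genuinely used to rule out the possibility that the ``other extremum'' of $W$ is an isolated fixed point rather than a fixed surface, and (ii) verifying that distinct weights $w_i$ with $|w_i|\geq 2$ yield distinct isotropy $4$-manifolds (and therefore distinct incident edges in $\mathcal{G}$), which is where effectiveness enters in an essential way.
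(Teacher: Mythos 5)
Your proof is correct and follows essentially the same route as the paper's: both Part~1 and Part~2 reduce to Corollary~\ref{genus} applied to the isotropy $4$-manifolds through $\Sigma$, after noting that a fixed surface in a $4$-dimensional Hamiltonian $S^1$-manifold is necessarily extremal. Your version spells out a few details the paper leaves implicit (the extremality of $\Sigma_i$ in $W$ and the effectiveness argument ruling out $|w_1|=|w_2|$), but the underlying argument is the same.
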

\begin{proof}

1. Suppose that $e = [v_{\Sigma_{1}},v_{\Sigma_{2}}]$ is an edge in $\mathcal{G}$. Then there exists an isotropy $4$-manifold $W$ such that $\Sigma_{1},\Sigma_{2} \subseteq W$. Hence, $\Sigma_{1}$ and $\Sigma_{2}$ have the same genus by Corollary \ref{genus}.

2. Let $\{w_{i}\}$ denote the weights of the action along $\Sigma$. For each weight $w_{i}$ such that $|w_{i}| > 1$ there is an isotropy $4$-manifold containing $\Sigma$ with weight $w_{i}$. By Corollary \ref{genus} these isotropy $4$-manifolds contain two fixed surfaces of genus $g$ so correspond to edges in $\mathcal{G}$.
\end{proof}

\begin{definition}\label{ggg} \begin{enumerate}
\item Denote by $\mathcal{G}_{+}$ the sub-graph of $\cal G$ consisting of fixed surfaces of positive genus.

\item Denote by $\mathcal{G}_{g}$ the sub-graph consisting of fixed surfaces of genus $g = g(M_{\min})$.

\item Denote by $\mathcal{G}_{0}$ the sub-graph consisting of fixed surfaces of genus $0$.
\end{enumerate}
\end{definition}

\subsection{The graph of fixed points of the symplectic fibre}
Let $(M,\omega)$ be a $6$-dimensional symplectic manifold with a Hamiltonian $S^1$-action such that $M_{\min}, M_{\max}$ are surfaces of genus $g > 0$. Let ${\cal F}(M)$ be a symplectic fibre associated to $M$. Recall that $\mathcal{F}(M) \subset M$ is a $4$-dimensional symplectic submanifold inheriting a Hamiltonian $S^{1}$-action with isolated fixed points. 

Here we show an explicit relation (in most cases an isomorphism) between the graph $\mathcal{G}_{+}$ defined in the previous section and $\mathcal{Q}$, the graph of fixed points of $\mathcal{F}(M)$. In particular, we will gain an understanding of the possible genus of fixed surfaces in $M$.

\begin{definition} We will say that ${\cal F}(M)$ is {\it reflective} if there is a symplectic involution $\tau :\mathcal{F}( M ) \rightarrow \mathcal{ F}(M)$ that commutes with the $S^1$-action and induces a non-trivial permutation on ${\cal F}(M)^{S^1}$.
\end{definition}

\begin{lemma}\label{reflch}
$\mathcal{F}(M)$ is reflective if and only if its graph of fixed points $\mathcal{Q}$ has an order two endomorphism (preserving the weights and Hamiltonian).
\end{lemma}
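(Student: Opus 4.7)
I will treat each direction separately; the converse is the delicate one.

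\emph{Necessity.} Suppose $\mathcal{F}(M)$ is reflective with involution $\tau$. I would verify step by step that $\tau$ descends to a graph endomorphism with the required properties. First, since $\tau$ commutes with $S^{1}$, it sends $\mathcal{F}(M)^{S^{1}}$ to itself and matches the weight decomposition of the isotropy representation at each fixed point with that at its image, giving a vertex-set permutation that preserves the weight labels. Second, writing $X$ for the generator of the circle action, equivariance yields $\tau_{*}X = X$, and combining this with $\tau^{*}\omega = \omega$ gives $d(\tau^{*}H) = \iota_{X}\omega = dH$, so $\tau^{*}H = H + c$ for some constant $c$; the involution property $\tau^{2} = \mathrm{id}$ forces $c = 0$, so $H$ is preserved. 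Third, isotropy spheres are characterised equivariantly as components of fixed loci of finite subgroups of $S^{1}$, so $\tau$ permutes them while preserving their weights, giving an endomorphism of the edge set that respects labels and orientation. The induced map on $\mathcal{Q}$ is nontrivial by assumption on $\tau|_{\mathcal{F}(M)^{S^{1}}}$ and has order two since $\tau^{2} = \mathrm{id}$.

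\emph{Sufficiency.} Given an order-two endomorphism $\phi$ of $\mathcal{Q}$ — which is automatically a graph automorphism — the first step would be to apply Theorem \ref{unigraph} with $X_{1} = X_{2} = \mathcal{F}(M)$ and the graph isomorphism $\phi$ to produce some equivariant symplectomorphism $\tau$ realising $\phi$. The main obstacle is that a priori $\tau^{2}$ is only known to induce the identity on $\mathcal{Q}$, not to equal the identity; so $\tau$ need not be an involution. To resolve this I would use the toric structure on $\mathcal{F}(M)$: since $\mathcal{F}(M)$ is a symplectic four-manifold carrying a Hamiltonian $S^{1}$-action with isolated fixed points, \cite[Theorem 5.1]{Ka} extends this to a Hamiltonian $T^{2}$-action realising $\mathcal{F}(M)$ as a toric manifold. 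The data preserved by $\phi$ (values of $H$ and weights of the chosen $S^{1}$-subaction) corresponds to an integral affine symmetry of the moment polygon that commutes with projection to the $S^{1}$-factor, and such a combinatorial symmetry of the polygon lifts canonically to an involutive equivariant symplectomorphism by the Delzant-type reconstruction. An alternative approach is to revisit Karshon's proof of Theorem \ref{unigraph} directly, choosing the local equivariant identifications at each fixed point to be linear involutions compatible with $\phi$; these choices glue consistently along isotropy spheres precisely because $\phi$ preserves the edge labels, yielding $\tau$ with $\tau^{2} = \mathrm{id}$ by construction.
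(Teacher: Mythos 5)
Your proof is correct and follows essentially the same route as the paper, which also deduces the sufficiency direction from Theorem~\ref{unigraph} and treats necessity as immediate. The one place you improve on the paper is worth highlighting: the paper simply asserts that Theorem~\ref{unigraph} gives an \emph{involution}, whereas you correctly observe that Karshon's uniqueness theorem only produces \emph{some} $S^1$-equivariant symplectomorphism $\tau$ realizing the graph automorphism $\phi$, and that $\tau^2$ realizing $\phi^2=\mathrm{id}$ does not by itself force $\tau^2=\mathrm{id}$. Your proposed fix via the toric structure is the right one: since $\mathcal{F}(M)$ is toric by \cite[Theorem 5.1]{Ka} and the graph data (weights of edges, $H$-values of vertices) determines the Delzant polygon together with the projection $L$ giving $H$, an order-two graph automorphism compatible with $H$ corresponds to an integral affine symmetry $\sigma$ of the polygon commuting with $L$, and the functoriality of the Delzant reconstruction gives $\tau_\sigma$ with $\tau_\sigma^2=\tau_{\sigma^2}=\mathrm{id}$. (Alternatively, if $\tau^2$ happens to land in the image of $S^1$, one can directly correct $\tau$ by a half-rotation.) So this is the paper's argument with a genuine gap closed.
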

\begin{proof}
If such an endomorphism exists, then the corresponding $S^{1}$-equivariant symplectic involution of $\mathcal{F}(M)$ is given by Theorem \ref{unigraph}. The other direction is obvious.
\end{proof}

\begin{corollary} \label{nonref}
Suppose that $\mathcal{F}(M)$ is reflexive. Then the weights at $M_{\min}$ (resp. $M_{\max}$) are $\{1,1,0\}$ (resp. $\{-1,-1,0\}$).
\end{corollary}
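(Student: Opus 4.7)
The plan is to reduce the statement to showing that the two nonzero weights $\{w_1,w_2\}$ of the $S^1$-action at $M_{\min}$ satisfy $w_1=w_2=1$; the argument at $M_{\max}$ will be entirely symmetric. By Lemma \ref{reduceduniq}, $\mathcal{F}(M)$ meets $M_{\min}$ in a single point $p_{\min}$, which is the unique minimum of $H|_{\mathcal{F}(M)}$ and is therefore fixed by the reflective involution $\tau$. Since $\mathcal{F}(M)$ is transversal to $M_{\min}$ at $p_{\min}$ and has complementary dimension, $T_{p_{\min}}\mathcal{F}(M)$ is forced to be the unique $S^1$-invariant symplectic complement to $T_{p_{\min}}M_{\min}$, so the weights of the $S^1$-representation on $T_{p_{\min}}\mathcal{F}(M)$ are exactly $\{w_1,w_2\}$, both positive.

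The main step is to show $w_1=w_2$. The strategy is to argue by contradiction: assuming $w_1\ne w_2$, the tangent space at $p_{\min}$ splits into two distinct weight subspaces $V_{w_1}\oplus V_{w_2}$, each of which must be preserved by the $S^1$-equivariant linear involution $d\tau|_{p_{\min}}$. The two gradient spheres $S_1,S_2$ at $p_{\min}$ are tangent at $p_{\min}$ to these two weight subspaces, so $\tau$ preserves each $S_i$. On a gradient sphere with non-zero weight, a direct computation shows the group of $S^1$-equivariant symplectomorphisms fixing a chosen pole is isomorphic to $S^1$, whose only involutive elements (the identity and $-1$) both fix the opposite pole as well. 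Therefore $\tau$ fixes the other endpoint $q_i$ of each $S_i$.

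To finish the claim I propagate this observation along the cycle of gradient spheres that forms the fixed-point graph of $\mathcal{F}(M)$ (toric by \cite[Theorem 5.1]{Ka}). The critical input is that every intermediate fixed point of $\mathcal{F}(M)$ is a saddle with weights of opposite signs: the connectedness of the level sets of the Hamiltonian $H|_{\mathcal{F}(M)}$ (a standard consequence of Atiyah--Guillemin--Sternberg) forbids any local minimum or maximum other than $p_{\min}$ and $p_{\max}$. Consequently, at every intermediate fixed point the two weight subspaces are distinct, so the same argument applies: $\tau$ preserves both gradient spheres meeting the point and fixes their other endpoints. Iterating along both arcs of the cycle joining $p_{\min}$ to $p_{\max}$ forces $\tau$ to fix every fixed point of $\mathcal{F}(M)$, contradicting the non-triviality of the induced permutation of $\mathcal{F}(M)^{S^1}$ required by reflexivity. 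Hence $w_1=w_2$.

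Finally, effectiveness of the $S^1$-action on $M$ forces $\gcd(w_1,w_2)=1$: if $d=\gcd(w_1,w_2)>1$, then equivariant linearization in a tubular neighbourhood of $M_{\min}$ would make $e^{2\pi i/d}\in S^1$ act trivially on a neighbourhood of $M_{\min}$, hence trivially on the connected manifold $M$. Combined with $w_1=w_2$, this gives $w_1=w_2=1$, so the weights at $M_{\min}$ are $\{1,1,0\}$; an analogous argument at $p_{\max}$ yields $\{-1,-1,0\}$ at $M_{\max}$. The main obstacle in this plan is ensuring propagation cannot be blocked at an intermediate vertex, which is precisely why the uniqueness of the extrema (to guarantee the saddle condition) is essential.
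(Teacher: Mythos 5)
Your proof is essentially correct and follows the same idea as the paper's (which is stated very tersely): a nontrivial order-two symmetry fixing the minimum must preserve each of the two boundary divisors emanating from $p_{\min}$ if their weights differ, and then propagation along the cycle of boundary divisors forces it to fix every vertex, a contradiction; effectiveness then yields $w_1=w_2=1$.

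One small point worth tightening: you pass from ``$d\tau|_{p_{\min}}$ preserves the two weight subspaces'' to ``$\tau$ preserves each gradient sphere $S_i$,'' but preservation of tangent directions at a point does not by itself guarantee preservation of the gradient spheres as subsets. Either one should first average the compatible almost complex structure $J$ over the compact group generated by $S^1$ and $\tau$ so that $\tau$ preserves the gradient flow, or (cleaner, and closer to the paper's route) invoke Lemma \ref{reflch} to replace $\tau$ by the order-two endomorphism of the graph of fixed points, where the preservation of edges with matching weight labels is automatic and the propagation becomes purely combinatorial. With either of these adjustments the argument is complete.
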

\begin{proof}
The existence of an order 2 endomorphism of the graph of fixed points of $\mathcal{F}(M)$ implies the weights at $M_{\min}$ must be $\{n,n,0\}$ for some $n>0$. By the effectiveness of the action we must have $n=1$.
\end{proof}

\begin{lemma} \label{strucjist}
Let $M$ be a Hamiltonian $S^1$-manifold of dimension $6$ with $M_{\min}$ and $M_{\max}$ surfaces of genus $g>0$. Suppose there exists a fixed surface $\Sigma$ in $M$ such that $\Sigma \cdot \mathcal{F}_{H(\Sigma)}=2$.  Then all non-extremal fixed surfaces of positive genus in $M$ lie in the same connected component of $\mathcal{G}$. Furthermore, each such surface is of genus $g(\Sigma)$ and has intersection 2 with the symplectic fibre.
\end{lemma}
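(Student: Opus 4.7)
The plan is to split the argument into a propagation step within the connected component of $\Sigma$ in $\mathcal{G}$ and an exclusion step showing no other component contains a non-extremal positive-genus fixed surface. The two intersection points $\{p_{1},p_{2}\}=\Sigma\cap\mathcal{F}(M)$ are fixed points of the induced $S^{1}$-action on $\mathcal{F}(M)$ at the common Hamiltonian level $H(\Sigma)$; since $\Sigma$ is connected, both $p_{1}$ and $p_{2}$ carry the same pair of non-zero weights $\{a,b\}$, namely the non-zero weights of $\Sigma\subset M$.

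For the propagation step, I would take any edge $e=[v_{\Sigma_{1}},v_{\Sigma_{2}}]$ of $\mathcal{G}$ coming from a $4$-dimensional isotropy submanifold $N\subset M$ with extremal surfaces $\Sigma_{1},\Sigma_{2}$: Corollary \ref{genus} forces $g(\Sigma_{1})=g(\Sigma_{2})$ while Lemma \ref{Isotropy fibre} forces $\Sigma_{1}\cdot\mathcal{F}(M)=\Sigma_{2}\cdot\mathcal{F}(M)$. Iterating outward from $\Sigma$ along edges of $\mathcal{G}$, every fixed surface in the connected component of $\Sigma$ must have genus $g(\Sigma)$ and fibre-intersection $2$.

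For the exclusion step, suppose for contradiction that a non-extremal positive-genus fixed surface $\Sigma''$ lies in a different $\mathcal{G}$-component. By \cite[Theorem 5.1]{Ka}, $\mathcal{F}(M)$ is a toric symplectic $4$-manifold; its graph of fixed points $\mathcal{Q}$ is the boundary cycle of the moment polygon, and convexity splits $\mathcal{Q}$ into two arcs joining $p_{\min}^{\mathcal{F}}\in M_{\min}$ to $p_{\max}^{\mathcal{F}}\in M_{\max}$ along which $H$ is strictly monotone. Since $H(p_{1})=H(p_{2})$, the points $p_{1},p_{2}$ must lie on opposite arcs. The coincidence of their weights and Hamiltonian values should force, by a polygon-combinatorial argument, an order-two endomorphism of $\mathcal{Q}$ preserving $H$ and the weight labels; Lemma \ref{reflch} then lifts it to an $S^{1}$-equivariant symplectic involution $\tau\colon\mathcal{F}(M)\to\mathcal{F}(M)$ exchanging $p_{1}\leftrightarrow p_{2}$ and fixing $p_{\min}^{\mathcal{F}},p_{\max}^{\mathcal{F}}$. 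Combining $\tau$ with the uniqueness of the symplectic fibre (Theorem \ref{fibreconsturction}), every non-extremal fixed point of $\mathcal{F}(M)$ corresponding to a positive-genus fixed surface $\Sigma'$ of $M$ should be $\tau$-paired with a second fixed point lying on the same $\Sigma'$, so $\Sigma'\cdot\mathcal{F}(M)=2$. Chains of $4$-dimensional isotropy submanifolds joining $\tau$-paired fixed points then produce edges of $\mathcal{G}$ connecting $\Sigma''$ back to $\Sigma$, contradicting the separate-component assumption.

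The main obstacle is the exclusion step: constructing $\tau$ intrinsically from the weight coincidence at $p_{1},p_{2}$, and then matching $\tau$-orbits in $\mathcal{F}(M)^{S^{1}}$ with fixed surfaces of $M$. In other words, one has to show that the combinatorial symmetry of the moment polygon of the toric surface $\mathcal{F}(M)$ is compatible with the extrinsic geometry of its embedding in $M$ in such a way that a $\tau$-swapped pair of fixed points of $\mathcal{F}(M)$ genuinely sits on a single positive-genus fixed surface of $M$. The rigidity needed for this comes precisely from the uniqueness of $\mathcal{F}(M)$ up to $S^{1}$-equivariant symplectomorphism.
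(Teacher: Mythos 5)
Your propagation step — using Corollary \ref{genus} and Lemma \ref{Isotropy fibre} to push genus and fibre-intersection along edges of $\mathcal{G}$ — matches the paper. The exclusion step, however, has a genuine gap. You try to produce the reflective involution $\tau$ of $\mathcal{F}(M)$ from the single coincidence at $p_{1},p_{2}$, but this is circular: the statement that non-extremal fixed points of $\mathcal{F}(M)$ come in pairs with matching $H$ and weights (which is what $\tau$ amounts to) is precisely a consequence of the lemma you are proving (it is derived in the proof of Theorem \ref{struc}, case 1, \emph{from} this lemma). A single matched pair on opposite arcs of $\mathcal{Q}$ does not, by a ``polygon-combinatorial argument,'' force a global $\mathbb{Z}_{2}$-symmetry of the moment polygon. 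Moreover, even granting $\tau$, the claim that $\tau$-paired fixed points of $\mathcal{F}(M)$ necessarily lie on the same fixed surface of $M$ is asserted but not established; uniqueness of $\mathcal{F}(M)$ up to equivariant symplectomorphism does not by itself give this.

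The paper's exclusion argument is more elementary and avoids circularity. Given an edge $e=[v_{\Sigma},v_{\Sigma'}]$ of $\mathcal{G}$, the four points $p_{1},p_{2},p_{1}',p_{2}'$ of $\Sigma\cap\mathcal{F}(M)$ and $\Sigma'\cap\mathcal{F}(M)$ lie on two boundary divisors $S_{1},S_{2}$ of the toric surface $\mathcal{F}(M)$ with $p_{i},p_{i}'\in S_{i}$; since boundary divisors have no interior fixed points, $\mathcal{F}(M)$ has no fixed point at levels in $(H(\Sigma),H(\Sigma'))$, and Theorem \ref{critfibres}(3) then forbids positive-genus fixed surfaces of $M$ in that range. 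Iterating over the chain $A$ covers the interval $[H(\Sigma_{-}),H(\Sigma_{+})]$. For the remaining ranges the paper observes that $\Sigma_{-}\ne M_{\min}$ (since $M_{\min}\cdot\mathcal{F}(M)=1$), so $\Sigma_{-}$ meets $\mathcal{F}(M)$ in two fixed points both with weights $\{-1,N\}$, and Corollary \ref{neededcor} then rules out any further fixed point of $\mathcal{F}(M)$ strictly between $H_{\min}$ and $H(\Sigma_{-})$; applying Theorem \ref{critfibres}(3) again finishes the exclusion below $\Sigma_{-}$, and the argument for $-H$ handles the range above $\Sigma_{+}$. You should replace the $\tau$-based exclusion with this direct level-by-level argument.
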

\begin{proof}

Suppose that $\Sigma'$ is a fixed surface such that $\Sigma$ and $\Sigma'$ are contained in an isotropy submanifold $N$ and without loss of generality that $H(\Sigma') > H(\Sigma)$. By Lemma \ref{Isotropy fibre} \begin{displaymath}
\Sigma \cdot \mathcal{F}_{H(\Sigma)} = \Sigma' \cdot \mathcal{F}_{H(\Sigma')} =2.
\end{displaymath} 
Suppose $\Sigma$ has weights $\{w_{1},w_{2},0  \}$ and $\Sigma'$ has weights $\{-w_{1},w_{2}',0  \}$. Then there are two fixed points $p_{1},p_{2} \in \mathcal{F}(M)$ on level $H(\Sigma)$ with weights $\{w_{1},w_{2}\}$ and two fixed points $p_{1}',p_{2}' \in \mathcal{F}(M)$ on level $H(\Sigma')$ with weights $\{-w_{1},w_{2}'\}$. On the other hand, there are boundary divisors $S_{1}$,$S_{2} \subseteq \mathcal{F}(M)$ such that $p_{1},p_{1}' \in S_{1}$ and $p_{2},p_{2}' \in S_{2}$. Hence, there is no fixed point $q \in \mathcal{F}(M)$ with $H(q) \in (H(\Sigma),H(\Sigma'))$. This in turn implies that $M$ has no fixed surfaces of positive genus in this range (by Theorem \ref{critfibres}(3)).

Let the component of $\cal G$ containing $v_{\Sigma}$ be denoted by $A \subseteq \cal G$. Then by the above, all surfaces corresponding to vertices of $A$ have intersection $2$ with the symplectic fibre. We claim that all non-extremal surfaces of positive genus in $M$ correspond to vertices of $A$.

Let $\Sigma_{+},\Sigma_{-}$ be the fixed surfaces corresponding to the vertices of $A$ where $H$ achieves its $\max$/$\min$ respectively. By the above all fixed surfaces of positive genus in  $H^{-1}([H(\Sigma_{-}),H(\Sigma_{+})])$ correspond to vertices in $A$.

We have that $\deg(\phi|_{M_{\min}}) = 1$ since the (transversal) intersection point of $M_{\min}$ with the symplectic fibre is the unique point $\mathcal{F}(M)_{\min}$. Hence $\Sigma_{-} \neq M_{\min}$, so the weights at $\Sigma_{-}$ must be $\{-1,N,0\}$ where $N>0$. Hence, there must be two fixed points on level $H(\Sigma_{-})$ in $\mathcal{F}(M)$, both having weights $\{-1,N,0\}$. By Corollary \ref{neededcor} there is no fixed point $q  \in \mathcal{F}(M)$ such that $H(q) \in (H_{\min},H(\Sigma_{-})))$. Hence, there are no fixed surfaces of positive genus on these levels (by Theorem \ref{critfibres}(3)). Applying the same argument to $\Sigma_{+}$ we see that all non-extremal fixed surfaces of positive genus correspond to vertices of $A$. The lemma follows.
\end{proof}

\begin{theorem} \label{struc} Let $M$ be a Hamiltonian $S^1$-manifold of dimension $6$ with $M_{\min}$ and $M_{\max}$ surfaces of genus $g>0$. Let $\phi: M\to M_{\min}$ be the $S^1$-equivariant retraction constructed in Lemma \ref{retractmin}. 

Then a surface $\Sigma \subseteq M^{S^1}$ is a sphere if and only if the map $\phi|_{\Sigma}$ has degree $0$. If the genus of $\Sigma$ is positive, then $\phi|_{\Sigma}$ has degree $1$ or $2$. In addition, precisely one of the two following possibilities occur.

\begin{enumerate}

\item Suppose $M$ contains a fixed surface  $\Sigma$ such that $\Sigma \cdot \mathcal{F}_{H(\Sigma)} = 2$. Then  $M$ contains exactly $\frac{1}{2}\chi({\cal F}(M)) - 1$ non-extremal fixed surfaces of positive genus and the restriction of $\phi$ to each such surface has degree $2$. Furthermore, $\mathcal{F}(M)$ is reflexive. 

\item Otherwise, $M^{S^1}$ contains exactly $\chi({\cal F}(M))$ fixed surfaces of positive genus. Any such surface $\Sigma$ has  genus $g$ and the map $\phi: \Sigma\to M_{\min}$ has degree $1$.

\end{enumerate}
\end{theorem}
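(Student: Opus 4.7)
The plan is to deduce all statements from a single counting identity relating $\chi(\mathcal{F}(M))$ to the degrees of $\phi$ on fixed surfaces. Fix a symplectic fibre $\mathcal{F}(M)$. Its transversality to $M^{S^1}$ forces $\mathcal{F}(M)$ to miss every isolated fixed point of $M$, since at such a point $T_pM^{S^1}=0$ while $\dim\mathcal{F}(M)=4<6$. At each intersection with a fixed surface $\Sigma$ the submanifolds $\mathcal{F}(M)$ and $\Sigma$ are symplectic of complementary dimension, so the intersection is transversal, isolated, and (by positivity of symplectic intersections in complementary dimension) each point contributes $+1$. Projecting to $M_{H(\Sigma)}$ and using $\mathcal{F}^{*}_{H(\Sigma)}=\phi^{*}_{H(\Sigma)}(A)$ gives
\[
|\mathcal{F}(M)\cap \Sigma|\;=\;\Sigma\cdot \mathcal{F}_{H(\Sigma)}\;=\;\deg(\phi|_\Sigma).
\]
Since $\mathcal{F}(M)^{S^{1}}$ is isolated, $\mathcal{F}(M)$ is toric by \cite[Theorem 5.1]{Ka}, so $|\mathcal{F}(M)^{S^{1}}|=\chi(\mathcal{F}(M))$. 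Combined with Lemma \ref{reduceduniq}, which supplies a single intersection with each extremum, this yields
\begin{equation}\label{planeqn1}
\chi(\mathcal{F}(M))\;=\;2+\sum_{\Sigma\text{ non-extremal}} \deg(\phi|_\Sigma).
\end{equation}

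I next establish the degree characterisation. A sphere $\Sigma$ satisfies $\deg(\phi|_\Sigma)=0$ since $\pi_2(M_{\min})=0$. Conversely, for positive-genus $\Sigma$, I lift $\Sigma$ to the resolution $\overline{M}_{H(\Sigma)}$, which is irrational ruled by Theorem \ref{symplorbispheres}, and apply Corollary \ref{fibreintersection} to get $\deg(\phi|_\Sigma)>0$. The upper bound $\deg(\phi|_\Sigma)\le 2$ is the toric input: $\mathcal{F}(M)$ has a convex moment polygon, the $S^{1}$-Hamiltonian is a linear functional, and a line meets a convex polygon in at most two vertices. This proves the first two sentences of the theorem and confirms that fixed spheres contribute $0$ in \eqref{planeqn1}.

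In case 1 some fixed surface $\Sigma$ satisfies $\Sigma\cdot \mathcal{F}_{H(\Sigma)}=2$. Lemma \ref{strucjist} then forces every non-extremal fixed surface of positive genus to lie in the same component of $\mathcal{G}$ as $\Sigma$, with degree exactly $2$; substituting into \eqref{planeqn1} gives $\tfrac{1}{2}\chi(\mathcal{F}(M))-1$ such surfaces. The proof of Lemma \ref{strucjist} also organises the two fixed points of $\mathcal{F}(M)$ on each non-extremal level into a pair of identical weights joined to the extrema by two parallel chains of boundary divisors; this pairing extends to an order-two automorphism of the graph $\mathcal{Q}$ of $\mathcal{F}(M)$, so by Lemma \ref{reflch} the fibre $\mathcal{F}(M)$ is reflexive.

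In case 2 no fixed surface has $\Sigma\cdot \mathcal{F}_{H(\Sigma)}=2$, so every non-extremal fixed surface of positive genus has degree $1$; substituting into \eqref{planeqn1} yields $\chi(\mathcal{F}(M))-2$ such surfaces, for $\chi(\mathcal{F}(M))$ in total. For the genus statement, the lift $\overline{\Sigma}\subset \overline{M}_{H(\Sigma)}$ may be taken almost complex for a tamed $J$, so by Corollary \ref{finitangence} the induced map $\overline{\Sigma}\to \mathcal{M}_{\mathcal{F}}$ is a topological ramified cover of degree $\overline{\Sigma}\cdot \overline{\mathcal{F}}=1$, which is necessarily a homeomorphism; since $\mathcal{M}_{\mathcal{F}}$ is a genus-$g$ surface, this gives $g(\Sigma)=g$. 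The main obstacle throughout is the careful accounting behind \eqref{planeqn1}, which simultaneously needs transversality of $\mathcal{F}(M)$ to $M^{S^{1}}$ (excluding isolated fixed points on $\mathcal{F}(M)$) and positivity of symplectic intersections (excluding cancellations), so that every fixed point of $\mathcal{F}(M)$ is accounted for exactly once.
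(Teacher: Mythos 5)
Your proposal is essentially correct and follows the same route as the paper's proof: the equality $\deg(\phi|_\Sigma)=\Sigma\cdot\mathcal{F}_{H(\Sigma)}$ coming from $\mathcal{F}^*_c=\phi^*_c(A)$, the observation that $\mathcal{F}(M)$ meets $M^{S^1}$ only in transversal points lying on fixed surfaces, the upper bound $\deg\le 2$ from the toric structure on $\mathcal{F}(M)$, and the two-case dichotomy driven by Lemma \ref{strucjist} and Lemma \ref{reflch}. Your explicit counting identity \eqref{planeqn1} is a clean packaging of what the paper does implicitly when it derives the count $\frac{1}{2}\chi(\mathcal{F}(M))-1$ in case 1 and $\chi(\mathcal{F}(M))$ in case 2; it also makes transparent why the two cases are exhaustive and mutually exclusive.

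One imprecision worth flagging: twice you invoke the resolution $\overline{M}_{H(\Sigma)}$ (once to get $\deg(\phi|_\Sigma)>0$ via Corollary \ref{fibreintersection}, and again in case 2 to identify the genus via Corollary \ref{finitangence}). However, $H(\Sigma)$ is a critical level, where $M_{H(\Sigma)}$ is not a smooth symplectic $4$-orbifold but only a topological orbifold carrying a complex-analytic structure with quotient singularities near the image of the fixed set; the construction of $\overline{M}^4$ in Definition \ref{resolutionDef} is stated for smooth orbifolds with cyclic stabilizers. The paper handles this exactly as in the proof of Theorem \ref{critfibres}: it first performs an $S^1$-equivariant K\"ahler blow-up $M'\to M$ along the fixed set in the critical level (Lemma \ref{smoothblowup}, Lemma \ref{Kahlerblowup}), which makes $H(\Sigma)$ a regular value of $M'$, and then applies Corollary \ref{fibreintersection} (respectively Corollary \ref{finitangence}) to the resolution $\overline{M'}_{H(\Sigma)}$, with $\Sigma$ replaced by its lift in the exceptional divisor. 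Your argument is morally the same but should be routed through $\overline{M'}_{H(\Sigma)}$ to be literally correct. On the positive side, your use of Corollary \ref{finitangence} to conclude $g(\Sigma)=g$ in case 2 is more detailed than the paper's one-line assertion and correctly identifies the needed input: a degree-$1$ ramified cover of a genus-$g$ surface has no branching and is a homeomorphism, which is not automatic from $\deg(\phi|_\Sigma)=1$ alone.
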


\begin{proof}
Throughout this proof we will use the fact that for any fixed surface $\Sigma$, its intersection with the symplectic fibre, $\Sigma \cdot \mathcal{F}_{H(\Sigma)}$ is equal to the degree of $\phi|_{\Sigma}$. This follows from Lemma \ref{reduceduniq}, where we proved for each $c$ that $\mathcal{F}_{c}$ is Poincar\'e dual to $\phi_{c}^{*}(A)$, $A$ being the positive generator of $H^{2}(M_{\min},\Z)$ and $\phi_{c} : M_{c} \rightarrow M_{\min}$ is the map induced by $\phi$.

 If $\Sigma \subseteq M^{S^{1}}$ is a sphere then $\phi|_{\Sigma}: \Sigma \rightarrow M_{\min}$ has degree $0$ since $S^2$ is simply connected.

 Suppose that $\Sigma$ is a fixed surface of positive genus, then $\deg (\phi|_{\Sigma}) >0$.  This follows from the third statement of Theorem \ref{critfibres}. On the other hand, $\Sigma \cdot \mathcal{F}_{H(\Sigma)} \leq 2$ since there are at most two fixed points in $H^{-1}(c) \cap \mathcal{F}(M)$ for each $c$. We conclude that for any fixed surface of positive genus we must have $1 \leq \deg (\phi|_{\Sigma}) \leq 2$. Finally, we will show that precisely one of the two conditions {\it 1.} and {\it 2.} is satisfied.


Suppose first that $\Sigma$ is a fixed surface in $M$ such that $\Sigma \cdot \mathcal{F}_{H(\Sigma)} = 2$. By Lemma \ref{strucjist} each non-extremal fixed surfaces with positive genus has intersection 2 with the symplectic fibre and is of genus $g(\Sigma)$. It follows that there are exactly $\frac{1}{2}\chi({\cal F}(M)) - 1$ such surfaces.

Furthermore it follows that on each non-extremal critical level of $\mathcal{F}(M)$, there are two fixed points with the same weights. Hence there is a pairing between non-extremal fixed points of $\mathcal{F}(M)$ preserving the Hamiltonian and the weights. This shows that the graph of fixed points of $\mathcal{F}(M)$ has an endomorphism of order $2$, preserving the Hamiltonian and the weights. By Lemma \ref{reflch}, we see that $\mathcal{F}(M)$ is reflective. 

Otherwise the restriction of $\phi$ to each non-extremal fixed surface of positive genus is 1, implying that each such surface is of genus $g = g(M_{\min})$. This is condition {\it 2}. \end{proof}

\begin{corollary}\label{genusg}
Suppose $\mathcal{F}(M)$ is not reflective. Then $\mathcal{G} = \mathcal{G}_{g} \cup \mathcal{G}_{0}$, i.e. fixed surfaces in $M$ have genus $0$ or $g$.
\end{corollary}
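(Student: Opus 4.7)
The proof will be a direct consequence of Theorem \ref{struc}, essentially reading off the information about genera of fixed surfaces from the two cases there.

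The plan is to check that every fixed surface $\Sigma \subset M$ is either a sphere or has genus exactly $g$. I would split into three cases based on the role of $\Sigma$. First, if $\Sigma$ is one of the extremal fixed submanifolds $M_{\min}$ or $M_{\max}$, then by hypothesis $g(\Sigma) = g$, so $v_{\Sigma} \in \mathcal{G}_g$. Second, if $\Sigma$ has genus $0$, then by definition $v_{\Sigma} \in \mathcal{G}_0$. Third, suppose $\Sigma$ is a non-extremal fixed surface with $g(\Sigma) > 0$. Since $\mathcal{F}(M)$ is assumed not reflective, we are not in case 1 of Theorem \ref{struc} (which concludes that $\mathcal{F}(M)$ is reflexive whenever any fixed surface intersects the symplectic fibre with multiplicity $2$). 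Hence we must be in case 2, which asserts directly that $\Sigma$ has genus $g$, so $v_{\Sigma} \in \mathcal{G}_g$.

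Combining the three cases, every vertex of $\mathcal{G}$ lies in $\mathcal{G}_g \cup \mathcal{G}_0$, which is exactly the desired conclusion. There is no genuine obstacle here, since all the work has been done in Theorem \ref{struc}; the only subtlety worth making explicit is the logical dichotomy provided by the non-reflectivity assumption, which is what rules out the existence of fixed surfaces with intermediate genus (i.e.\ positive but different from $g$).
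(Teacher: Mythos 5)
Your proof is correct and uses the same argument as the paper: non-reflectivity of $\mathcal{F}(M)$ rules out case 1 of Theorem \ref{struc}, and case 2 then gives that every non-extremal positive-genus fixed surface has genus $g$. The paper states this in one line; your case-splitting simply makes the bookkeeping explicit.
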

\begin{proof}
If $\mathcal{F}(M)$ is not reflexive then we must be in case 2 of Theorem \ref{struc}.
\end{proof}

The following example shows that in the case 1 of Theorem \ref{struc},  one cannot bound the genus of fixed surfaces in terms of the genus $g$ of the base.

\begin{example} Let $C_g$ be a curve of genus $g$ and let $C_{g'}$ be a curve that admits a (possibly ramified) double cover of $C_g$. Let $\sigma$ be the involution on $C_{g'}$ such that $C_g\cong C_{g'}/\sigma$.

Consider $M=\mathbb CP^1\times \mathbb CP^1\times C_{g'}$. Choose a $\mathbb C^*$-action on $M$ that preserves all $\mathbb CP^1\times \mathbb CP^1$ fibres and acts on them diagonally via, 
$$((z_1:w_1),(z_2:w_2))\to ((tz_1:w_1),(tz_2:w_2)).$$

Consider the involution on $M$ given by the formula 

$$\sigma_M:=((z_1:w_1),(z_2:w_2),x)\to ((z_2:w_2),(z_1:w_1),\sigma(x)).$$

Then $M/\sigma_M$ is a complex projective orbifold. The orbifold locus comes from the diagonals in the $\mathbb CP^1\times \mathbb CP^1$- fibres over fixed points of the involution $\sigma$. Making a simple blow-up of this collection of curves we obtain a smooth projective $3$-fold with a unique non-extremal fixed curve isomorphic to $C_{g'}$  whose projection to the base $C_g$ has degree $2$.  
\end{example}

We now return to our general discussion. Let $\mathcal{Q}$ be the graph of fixed points associated to the symplectic fibre $\mathcal{F}(M)$. 

\begin{corollary} \label{iso}
Let $(M,\omega)$ be a symplectic $6$-manifold with a Hamiltonian $S^{1}$-action such that $M_{\min} $ and $ M_{\max} $ are surfaces of genus $g>0$. 

\begin{enumerate}
\item Suppose that $\Sigma \cdot \mathcal{F}_{H(\Sigma)}= 1$ for all fixed surfaces of positive genus. Then there is an isomorphism $I: \mathcal{Q} \rightarrow \mathcal{G}_{g}$, preserving the Hamiltonian and labelling of edges by weights.

\item Suppose $M$ contains a fixed surface $\Sigma$ such that $\Sigma \cdot \mathcal{F}_{H(\Sigma)} = 2$. Then there exists a map $I: \mathcal{Q} \rightarrow \mathcal{G}_{+}$, preserving $H$ and the labelling of edges. Furthermore, the sub-graph of $\cal Q$ corresponding to non-extremal fixed points splits into two connected components $\mathcal{C}_{1}$ and $\mathcal{C}_{2}$. On each $\mathcal{C}_{i}$, the restriction $I : \mathcal{C}_{i} \rightarrow \mathcal{G}_{+}$ is an isomorphism onto the sub-graph of $\mathcal{G}_{+}$ consisting of non-extremal fixed surfaces.

In both cases $I$ has the property that if $\Sigma$ is a  surface of positive genus with weights $\{w_{1},w_{2},0\}$, then $I(v_{\Sigma})$ represents a fixed point in $\mathcal{F}(M)$ with weights $\{w_{1},w_{2}\}$.

\end{enumerate}
\end{corollary}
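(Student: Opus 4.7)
I would define $I$ at the vertex level by sending each isolated fixed point $p \in \mathcal{F}(M)^{S^{1}}$ to $v_{\Sigma}$, where $\Sigma$ is the unique connected component of $M^{S^{1}}$ containing $p$; this is well-defined since $\mathcal{F}(M)$ meets $M^{S^{1}}$ transversally. Preservation of the Hamiltonian is immediate. For the weights, I use that $T_{p}\mathcal{F}(M)$ is a $4$-dimensional $S^{1}$-invariant complement to $T_{p}\Sigma$ inside $T_{p}M$; since $T_{p}M$ carries weights $\{w_{1},w_{2},0\}$ with fixed subspace $T_{p}\Sigma$, the transverse subspace $T_{p}\mathcal{F}(M)$ must carry exactly the weights $\{w_{1},w_{2}\}$. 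The image of $I$ lies in $\mathcal{G}_{+}$ because by Theorem \ref{struc} every fixed sphere $\Sigma$ satisfies $\Sigma \cdot \mathcal{F}_{H(\Sigma)}=0$ and so cannot contain a fixed point of $\mathcal{F}(M)$.

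\textbf{Vertex bijections.} I would then split into cases using Theorem \ref{struc}. The hypothesis of Case 1 excludes any intersection-$2$ surface, placing us in part (2) of Theorem \ref{struc}: there are exactly $\chi(\mathcal{F}(M))$ positive-genus fixed surfaces in $M$, each of genus $g$ and each containing a single fixed point of $\mathcal{F}(M)$. Since $\mathcal{F}(M)$ is a toric del Pezzo with $\chi(\mathcal{F}(M))$ isolated fixed points, $I$ is a vertex bijection onto $\mathcal{G}_{g}$. In Case 2, part (1) of Theorem \ref{struc} gives a reflective involution $\tau$ of $\mathcal{F}(M)$ pairing the non-extremal fixed points. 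By Corollary \ref{nonref} the weights at $\mathcal{F}(M)_{\min}$ are $\{1,1\}$; combined with reflectivity this forces the moment polygon of $\mathcal{F}(M)$ to have exactly two non-extremal fixed points on each non-extremal critical level, one on each side of the $\tau$-symmetry axis. Hence a pair $\{p,\tau(p)\}$ exhausts $\mathcal{F}(M)^{S^{1}} \cap H^{-1}(c)$ at their common level $c$, and both points must lie on the unique positive-genus fixed surface $\Sigma \subset M$ at level $c$ with $\Sigma \cdot \mathcal{F}_{H(\Sigma)}=2$. I then define $\mathcal{C}_{1}$ and $\mathcal{C}_{2}$ to be the non-extremal fixed points on the two sides of the symmetry axis; each is a subgraph of $\mathcal{Q}$ mapping bijectively under $I$ onto the non-extremal vertices of $\mathcal{G}_{+}$. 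They form the two connected components of the non-extremal sub-graph of $\mathcal{Q}$ because every non-extremal isotropy sphere in $\mathcal{F}(M)$ lies entirely in a single chain of boundary divisors of the moment polygon.

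\textbf{Edges and weight labels.} I would verify edge preservation in both directions. If $S \subset \mathcal{F}(M)$ is an isotropy sphere of weight $n \geq 2$ joining $p_{1}$ and $p_{2}$, then $S \subset M^{\mathbb{Z}_{n}}$ lies in some isotropy $4$-manifold $N \subset M$ of weight $n$; the components $\Sigma_{i}=I(v_{p_{i}})$ of $M^{S^{1}}$ containing $p_{i}$ are components of $N^{S^{1}}$, producing a weight-$n$ edge $[v_{\Sigma_{1}},v_{\Sigma_{2}}]$ in $\mathcal{G}$. Conversely, given an edge $[v_{\Sigma_{1}},v_{\Sigma_{2}}]$ of $\mathcal{G}_{+}$ arising from an isotropy $4$-manifold $N$ of weight $n$, Lemma \ref{Isotropy fibre} shows that $N_{c}\cdot \mathcal{F}_{c}$ is constant for $c\in H(N)$, equal to $\Sigma_{1}\cdot \mathcal{F}_{H(\Sigma_{1})}$ (namely $1$ in Case 1 and $2$ in Case 2). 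The locus $\bigcup_{c\in H(N)} (N_{c} \cap (\mathcal{F}(M))_{c})$ lifts to a one-dimensional $\mathbb{Z}_{n}$-fixed subset of $\mathcal{F}(M)$ joining the relevant fixed points: a single isotropy sphere in Case 1, and a $\tau$-invariant union of two isotropy spheres, one in each $\mathcal{C}_{i}$, in Case 2. The main delicate step is verifying in Case 2 that this lifted locus really splits into two disjoint $\tau$-symmetric components, which follows from the reflective symmetry and the polygon structure established above.
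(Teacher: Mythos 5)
Your proposal takes essentially the same approach as the paper: the paper's own proof is a two-sentence statement defining $I(v_p)=v_\Sigma$ via Theorem~\ref{struc} and declaring that the required properties follow from it, and you define $I$ in exactly the same way and then spell out the verification. That elaboration is correct in substance, but two points deserve a caution. First, you refer to $\mathcal{F}(M)$ as a ``toric del Pezzo,'' but the corollary's hypotheses are only those of Theorem~\ref{fibreconsturction}, not the relative symplectic Fano condition; only the toric property (via \cite[Theorem~5.1]{Ka}) is available. Fortunately your argument never uses the del Pezzo property in an essential way — only that $\mathcal{F}(M)$ is toric with $\chi(\mathcal{F}(M))$ fixed points arranged on the boundary of a convex polygon — so this is a wording slip rather than a gap. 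Second, and more substantively, your justification that the non-extremal part of $\mathcal{Q}$ has \emph{exactly} two connected components only establishes that each edge of $\mathcal{Q}$ lies within one of the two chains of the moment polygon, which bounds the number of components from below but does not show each chain is fully connected (i.e.\ that no interior boundary divisor of a chain has weight $1$). To close this, one should add the count: by Theorem~\ref{struc}(1), the non-extremal part of $\mathcal{G}_{+}$ is a connected chain with $k=\tfrac12\chi(\mathcal{F}(M))-1$ vertices and $k-1$ edges, each edge of which (via $N_c\cdot\mathcal{F}_c=2$ and Lemma~\ref{Isotropy fibre}) has exactly two preimages in $\mathcal{Q}$; so the non-extremal sub-graph of $\mathcal{Q}$ has $2k$ vertices and $2(k-1)$ edges, and since the convexity of the moment polygon of $\mathcal{F}(M)$ precludes any cycle avoiding the two extremal vertices, the Euler characteristic forces exactly two components. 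You flag this as ``the main delicate step,'' which is the right instinct, but the reflective symmetry alone does not supply the connectedness; the counting/convexity argument does.
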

\begin{proof}
By Theorem \ref{struc}, a fixed point $p \in \mathcal{F}(M)$ is equal to the intersection of $\mathcal{F}(M)$ with a fixed surface of positive genus $\Sigma$. We simply set $I(v_{p}) = v_{\Sigma}$. The required properties of $I$ follow from Theorem \ref{struc}.
\end{proof}

\subsection{Proof of Theorem \ref{fibreconsturction}: uniqueness}\label{uniquesec}

Using the results proven so far in this section, we will now show the uniqueness of the symplectic fibre up to an equivariant symplectomorphism.

\begin{proof}[ Proof of Theorem \ref{fibreconsturction}: uniqueness] 
In both cases of Corollary \ref{iso}, the graph of fixed points of any symplectic fibre is determined up to isomorphism by the graph of fixed surfaces of $M$ (including the data of the Hamiltonian and the weights of edges). By Theorem \ref{unigraph} this uniquely determines $\mathcal{F}(M)$ up to $S^{1}$-equivariant symplectomorphism.
\end{proof}

\subsection{The relative symplectic Fano case}

Finally, using results of this section we deduce a useful restriction on the range of relative symplectic Fano manifolds with a Hamiltonian $S^{1}$-action, which contain a fixed surface with weights  $\{1,1,0\}$,$\{-1,-1,0\}$ or $\{1,-1,0\}$. This will be needed in the next section.

\begin{lemma} \label{class}
Let $(M,\omega)$ be a relative symplectic Fano $6$-manifold with a Hamiltonian $S^{1}$-action such that $M_{\min} $ and $ M_{\max} $ are surfaces of genus $g>0$. Suppose that $M$ contains a fixed surface of genus $g$ with weights equal to $\{1,1,0\}$,$\{-1,-1,0\}$ or $\{1,-1,0\}$. Then $H(M) \subseteq [-3,3]$.
\end{lemma}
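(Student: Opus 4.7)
The plan is to reduce the problem to Lemma \ref{calc} applied to the symplectic fibre $\mathcal{F}(M)$. By Proposition \ref{fibredelpez}, the fibre $\mathcal{F}(M)$ is symplectomorphic to a toric del Pezzo surface carrying a Hamiltonian $S^{1}$-action with isolated fixed points. By Lemma \ref{reduceduniq}, $\mathcal{F}(M)$ meets both $M_{\min}$ and $M_{\max}$, so the restriction $H|_{\mathcal{F}(M)}$ attains both $H_{\min}$ and $H_{\max}$; in particular $H(M) \subseteq H(\mathcal{F}(M))$. Hence it suffices to show $H(\mathcal{F}(M)) \subseteq [-3,3]$.

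The first step is to transfer the hypothesis on $\Sigma$ into information about a fixed point of $\mathcal{F}(M)$. For this I will invoke Corollary \ref{iso}. By Theorem \ref{struc}, every positive genus fixed surface in $M$ intersects the symplectic fibre in either one or two points, so exactly one of the two cases of Corollary \ref{iso} applies. In both cases the resulting map $I$ preserves the non-trivial weights: a fixed surface with weights $\{w_{1},w_{2},0\}$ corresponds to a fixed point of $\mathcal{F}(M)$ with weights $\{w_{1},w_{2}\}$. Applied to the given surface $\Sigma$ of genus $g$, this produces a fixed point of $\mathcal{F}(M)$ with weights $\{1,1\}$, $\{-1,-1\}$ or $\{1,-1\}$ respectively.

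The second step is to apply Lemma \ref{calc}(2) directly to the toric del Pezzo $\mathcal{F}(M)$. Since it contains a fixed point with weights equal to one of the three pairs above, the lemma yields $H(\mathcal{F}(M)) \subseteq [-3,3]$, and combined with the reduction in the first paragraph this gives $H(M) \subseteq [-3,3]$.

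There is no substantial obstacle to implementing this plan: all of the nontrivial input is already packaged in previous results (the toric del Pezzo structure of the symplectic fibre, the weight-preserving correspondence of Corollary \ref{iso}, and the explicit toric bound in Lemma \ref{calc}). The only point worth verifying carefully is that the weight correspondence applies regardless of whether $\Sigma$ is an extremal surface, which is already built into the statement of Corollary \ref{iso}.
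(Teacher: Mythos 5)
Your proposal is correct and follows essentially the same route as the paper: pass to the symplectic fibre $\mathcal{F}(M)$, use Proposition \ref{fibredelpez} and the weight correspondence from Corollary \ref{iso} to transfer the hypothesis on $\Sigma$ to a fixed point of the toric del Pezzo $\mathcal{F}(M)$, and conclude via Lemma \ref{calc}. The only cosmetic difference is that the paper first dispatches the reflective case directly (there the extremal weights $\{1,1,0\}$ and $\{-1,-1,0\}$ force $H(M)=[-2,2]$) and then applies Corollary \ref{iso} only in the non-reflective case, whereas you apply the weight correspondence uniformly in both cases; both are valid.
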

\begin{proof}

If $\mathcal{F}(M)$ is reflective then the weights at $M_{\min}$ and $M_{\max}$ are $\{1,1\}$ and $\{-1,-1\}$ respectively so $H(M) = [-2,2]$. Hence, we may assume that $\mathcal{F}(M)$ is non-reflective. By Corollary \ref{iso}, $\mathcal{G}_{g} \cong \mathcal{Q}$ where $\mathcal{Q}$ is the graph of fixed points associated to $\mathcal{F}(M)$.  By Proposition \ref{fibredelpez}, $\mathcal{F}(M)$ is symplectomorphic to a toric del Pezzo surface. Applying Lemma \ref{calc}, we see that $H(\mathcal{F}(M)) \subseteq [-3,3]$ and since $\mathcal{G}_{g} \cong \mathcal{Q}$ the same holds for $H(M)$.
\end{proof}

\section{Proof of Theorem \ref{theo}}\label{sectionproftheo}



In this section we prove Theorem \ref{theo}. In the first three subsections we give the main component of the argument. As explained in the introduction, the main idea is to push through the proof of Lemma \ref{nonunitweights}. In practice this amounts to proving the inequality of Theorem \ref{isotropy inequality}. We view this inequality as a substitute to Lemma \ref{fourcor}, which applies to the normal bundles of fixed surface $\Sigma$ with weights equal to $\{-1,n,0\}$ ($n\geq 2$) and $M_{\min}$. This is what allows us to ``complete'' the cycle of isotropy $4$-manifolds and conclude the argument.

We briefly describe the main steps towards proving Theorem \ref{isotropy inequality}. Firstly, we prove restrictions on fixed submanifolds on levels close to $H_{\min}$. This allows us to flow down $\Sigma$ continuously between levels (see the beginning of Subsection \ref{defcyclesSc}), forming a continuous family of $2$-cycles $S_{c} \subseteq M_{c}$. Then we show that the quantity $\langle e(H^{-1}(c)), S_{c} \rangle$  is decreasing in terms of $c$ (see the preliminaries for the definition of $e(H^{-1}(c))$). We achieve this by proving that $\Sigma$ has positive intersection with the exceptional divisors emanating from isolated fixed points. The argument concludes in Subsection \ref{proofTheorem112} by showing that in the end $\Sigma$ flows to a particular section of the $S^{2}$-bundle $M_{c}$ for $c$ sufficiently close to $H_{\min}$.
 
In the final subsection, we conclude the proof by dealing with the remaining case where the range of the Hamiltonian is contained in the interval $[-3,3]$.

\begin{theorem} \label{aim}
Suppose $(M,\omega)$ is a relative symplectic Fano $6$-manifold with a Hamiltonian $S^1$-action such that $M_{\min} $ and $ M_{\max} $ are surfaces of genus $g>0$. Suppose additionally that there is no fixed surface of genus $g$ with weights $\{1,1,0\}$, $\{-1,-1,0\}$ or $\{1,-1,0\}$ in $M$. Then there exists a fixed surface $\Sigma \subseteq M$ of genus $g$ such that $\langle c_{1}(M),\Sigma \rangle \leq 2-2g$.
\end{theorem}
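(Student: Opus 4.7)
\textbf{Proof plan for Theorem \ref{aim}.} The strategy is to push through the cyclic argument from Lemma \ref{nonunitweights}, replacing the missing inequalities (coming from weight $\pm 1$ edges) by a Seiberg--Witten based substitute. First I would observe that the hypothesis forces $\mathcal F(M)$ to be non-reflective: if $\mathcal F(M)$ were reflective then by Corollary \ref{nonref} the weights at $M_{\min}$ would be $\{1,1,0\}$, and the weight sum formula \ref{relcho} would place $M_{\min}$ on level $-2$, giving a genus $g$ fixed surface with weights $\{1,1,0\}$, contrary to assumption. Consequently, by Corollary \ref{genusg} every fixed surface of positive genus in $M$ has genus exactly $g$, and by Corollary \ref{iso}(1) the graph $\mathcal G_g$ is isomorphic to the graph $\mathcal Q$ of fixed points of $\mathcal F(M)$ through a map preserving the Hamiltonian and the weights. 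By Proposition \ref{fibredelpez}, $\mathcal F(M)$ is a toric del Pezzo surface.

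Next I would extract the cyclic structure. On the toric del Pezzo $\mathcal F(M)$ the boundary divisors organise its fixed points $p_1,\dots,p_k=p_1$ into a cycle, in which consecutive points are joined by a boundary divisor of some weight $w\ge 1$. Via the isomorphism $\mathcal Q\cong \mathcal G_g$ this produces a cyclic sequence $\Sigma_1,\dots,\Sigma_k=\Sigma_1$ of genus-$g$ fixed surfaces in $M$. For every consecutive pair $(\Sigma_i,\Sigma_{i+1})$ whose corresponding boundary divisor in $\mathcal F(M)$ has weight $w\ge 2$, there is a bona fide isotropy $4$-submanifold $N_i\subset M$ of weight $w$ containing both surfaces (since a weight $w\ge 2$ common to the two fixed submanifolds forces the corresponding isotropy component in $M$ to be $4$-dimensional). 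Applying Lemma \ref{fourcor} to $N_i$ gives
\[
c_1\bigl(N_{N_i}(\Sigma_i)\bigr)+c_1\bigl(N_{N_i}(\Sigma_{i+1})\bigr)\le 0,
\]
where $N_{N_i}(\Sigma_j)$ denotes the normal bundle of $\Sigma_j$ inside $N_i$. Note that $N_{N_i}(\Sigma_j)$ is one of the two line summands of the full normal bundle $N(\Sigma_j)$ in $M$.

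The decisive, and genuinely difficult, step is to produce an analogous inequality for each consecutive pair $(\Sigma_i,\Sigma_{i+1})$ whose joining boundary divisor in $\mathcal F(M)$ has weight $1$; for such pairs no isotropy $4$-manifold exists in $M$ and Lemma \ref{fourcor} is unavailable. I would invoke here (and expect this to be the main obstacle) Theorem \ref{isotropy inequality}, whose proof is precisely the Seiberg--Witten argument sketched in the introduction: one flows the surface $\Sigma_i$ with weight pattern $\{-1,n,0\}$ down through the reduced spaces toward $H_{\min}$, applies the restrictions on fixed submanifolds near $H_{\min}$ to keep the deformation controlled across critical levels, and compares the resulting Euler numbers of the associated orbifold circle bundles on the reduced $4$-orbifolds (using the desingularisation of Section \ref{orbispheresection} to run Seiberg--Witten theory on the smooth model $\overline{M}^4$). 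The output is an inequality of the form
\[
c_1\bigl(L_i(\Sigma_i)\bigr)+c_1\bigl(L_i(\Sigma_{i+1})\bigr)\le 0,
\]
where $L_i(\Sigma_j)$ is the appropriate summand of $N(\Sigma_j)$, matching the splitting that the missing weight-$1$ isotropy would have induced.

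Finally, summing these inequalities around the full cycle, each $\Sigma_i$ contributes the sum of the two summands of $N(\Sigma_i)$, so that
\[
\sum_{i=1}^{k-1}c_1\bigl(N(\Sigma_i)\bigr)\le 0.
\]
By the pigeonhole principle some $\Sigma=\Sigma_i$ satisfies $c_1(N(\Sigma))\le 0$. Since $\Sigma$ has genus $g$, the adjunction identity $\langle c_1(M),\Sigma\rangle=c_1(T\Sigma)+c_1(N(\Sigma))=2-2g+c_1(N(\Sigma))$ yields the desired bound $\langle c_1(M),\Sigma\rangle\le 2-2g$, completing the argument modulo Theorem \ref{isotropy inequality}.
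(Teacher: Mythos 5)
Your proposal is correct and takes essentially the same approach as the paper: reduce to the non-reflective case so that $\mathcal G_g\cong\mathcal Q$ and $\mathcal F(M)$ is a toric del Pezzo, traverse the cyclic ordering of genus-$g$ fixed surfaces coming from the moment polygon of $\mathcal F(M)$, and close the cycle of inequalities by combining Lemma \ref{fourcor} (for edges of weight $\ge 2$) with Theorem \ref{isotropy inequality} (for weight-$1$ edges, applied to both $H$ and $-H$, which is legitimate because by Lemma \ref{fourbound} any weight-$1$ boundary divisor in $\mathcal F(M)$ must contain an extremal fixed point). The paper performs the final summation directly on $\langle c_1(M),\Sigma_i\rangle$ via adjunction rather than first isolating $\sum c_1(N(\Sigma_i))\le 0$ and then invoking the pigeonhole principle, but the two computations are equivalent.
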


Firstly we give a proof of Theorem \ref{aim} assuming Theorem \ref{isotropy inequality}. This slight abuse of ordering is to give the reader a better global understanding of the proof and give motivation for statements that come later in the section.

\begin{theorem}\label{isotropy inequality}
Let $(M,\omega)$ be as in Theorem \ref{aim}. Suppose that $\Sigma$ is a fixed surface with weights $\{-1,n,0\}$ $n \geq 2$. Then the weights at $M_{\min}$ are $\{1,m,0\}$ where $m >1$. Let $L_{1}$ be a sub-bundle of the normal bundle $N(M_{\min})$ with weight $1$ and  let $L_{2}$ be a sub-bundle of the normal bundle $N(\Sigma)$ with weight $-1$. Then $$c_{1}(L_{1})+c_{1}(L_{2}) \leq 0.$$
\end{theorem}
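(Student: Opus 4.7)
The plan has two parts. First, I would establish that the weights at $M_{\min}$ are $\{1,m,0\}$ with $m>1$. By the hypothesis of Theorem \ref{aim}, the weights at $M_{\min}$ are not $\{1,1,0\}$, so by Corollary \ref{nonref} the symplectic fibre $\mathcal{F}(M)$ is non-reflexive, and hence by Corollary \ref{iso} there is a weight-preserving isomorphism of graphs $\mathcal{Q}\cong\mathcal{G}_g$, where $\mathcal{Q}$ is the graph of fixed points of $\mathcal{F}(M)$. By Proposition \ref{fibredelpez}, $\mathcal{F}(M)$ is a toric del Pezzo surface. In it, the fixed point corresponding to $\Sigma$ carries weights $\{-1,n\}$, so a gradient sphere of weight $1$ descends from it; by Lemma \ref{fourbound} this sphere terminates at an extremal fixed point, which must be the minimum of $\mathcal{F}(M)$. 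This minimum corresponds to $M_{\min}$, and by Lemma \ref{resweight} applied in $\mathcal{F}(M)$ one of its weights is forced to equal $1$. Together with the exclusion of $\{1,1,0\}$, this yields $\{1,m,0\}$ with $m>1$.

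For the inequality, I would consider the descending unstable manifold of $\Sigma$ with respect to $H$: let $U$ denote the closure of the set of points whose upward gradient flow converges to $\Sigma$. Near $\Sigma$, $U$ is locally modelled on the total space of the weight $-1$ sub-bundle $L_2\subset N(\Sigma)$, so $U$ is $4$-dimensional, $S^1$-invariant, and symplectic there. For a regular value $c$ of $H$ close to $H(\Sigma)$, set $S_c=(U\cap H^{-1}(c))/S^1\subset M_c$, a $2$-cycle diffeomorphic to $\Sigma$. Propagating $U$ down towards $M_{\min}$, one would show that $U$ approaches $M_{\min}$ along the weight-$1$ sub-bundle $L_1\subset N(M_{\min})$, so that for $c$ close to $H_{\min}$ the trace $S_c$ becomes the section of the weighted $\mathbb{P}^1$-orbi-bundle $M_c\to M_{\min}$ coming from $L_1$.

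The key computation uses Lemma \ref{DH}. On any interval of regular values the function $c\mapsto\int_{S_c}\omega_c$ is linear with derivative $-\langle e(H^{-1}(c)),S_c\rangle$. Near $c\to H(\Sigma)^-$, the restriction of $H^{-1}(c)\to M_c$ to $S_c\cong\Sigma$ is the unit circle bundle of $L_2$ with the orientation induced by the weight $-1$ action, giving $\langle e(H^{-1}(c)),S_c\rangle=-c_1(L_2)$; near $c\to H_{\min}^+$ the analogous pairing on the section corresponding to $L_1$ equals $c_1(L_1)$. Comparing the total change of the symplectic area of $S_c$ across $[H_{\min},H(\Sigma)]$ with the integrated Euler pairing, and using non-negativity of the symplectic area, would yield $c_1(L_1)+c_1(L_2)\leq 0$.

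The main obstacle is handling intermediate critical values of $H$ in $(H_{\min},H(\Sigma))$: at these, $U$ may contain isolated fixed points, causing $S_c$ to become singular or change its topology. I would handle such points by performing small equivariant symplectic blow-ups as in Lemma \ref{Kahlerblowup} and replacing $S_c$ by its proper transform. The key technical claim is that the resulting exceptional divisors contribute with a controlled (non-negative) sign to the Euler pairing; proving this rests on a careful analysis of the weights at any isolated fixed point lying on $U$, constrained by the toric del Pezzo structure of $\mathcal{F}(M)$ and by the restrictions on fixed submanifolds close to $H_{\min}$ to be derived earlier in this section.
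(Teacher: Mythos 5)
Your overall architecture is right and mirrors the paper's: establish the form of the weights at $M_{\min}$ via $\mathcal{F}(M)$, then flow the cycle $\Sigma$ down through the reduced spaces, compare the Euler pairing $\langle e(H^{-1}(c)),[S_c]\rangle$ near $H(\Sigma)$ (where it is $-c_1(L_2)$) with the pairing near $H_{\min}$ (where it is $c_1(L_1)$), and control the jumps at intermediate critical levels. The first part is essentially the paper's argument via the toric del Pezzo $\mathcal{F}(M)$ (compare Corollary~\ref{us} and Lemma~\ref{threesix}), modulo a minor misattribution to Lemma~\ref{resweight}.

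There is, however, a real gap in the second part. You identify correctly that the crux is a sign statement: the exceptional $2$-spheres $E_{p,c}$ created at each intermediate isolated fixed point must contribute with the right sign to $\langle e(H^{-1}(c)),[S_c]\rangle$. But you propose to establish this by ``careful analysis of the weights'' and the constraints coming from the toric structure of $\mathcal{F}(M)$, and that is not enough. The needed fact is $E_{p,c}\cdot S_c\ge 0$ (Lemma~\ref{excint} in the paper). This has no purely weight-theoretic proof: $S_c$ is a genus-$g$ cycle and $E_{p,c}$ is a sphere in the $4$-orbifold $M_c$, and nothing about the local weights of the $S^1$-action controls their global intersection number. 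The paper obtains the inequality by lifting both to the resolution $\overline{M}_{c}$, recognising $\tilde E$ as an exceptional class (Lemma~\ref{nicesphere}, via Lemma~\ref{meromorphic}), recognising $\tilde S$ as an embedded symplectic surface of positive genus, and then invoking Theorem~\ref{inttheo}, which rests on Taubes--Li--Liu Seiberg--Witten theory (Theorem~\ref{TaubesLiLui}) to make $\tilde E$ and $\tilde S$ simultaneously $J$-holomorphic and therefore intersect non-negatively. This Seiberg--Witten input is the essential new ingredient, and your plan omits it.

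Two smaller points. First, your ``non-negativity of the symplectic area'' argument does not close the computation: positivity of $\int_{S_c}\omega_c$ alone tells you nothing about the direction of the slope jumps of $c\mapsto\int_{S_c}\omega_c$ at critical levels. What is actually required is the monotonicity statement Corollary~\ref{chernin} ($\langle e(H^{-1}(k+\varepsilon)),S_{k+\varepsilon}\rangle\ge\langle e(H^{-1}(k-\varepsilon)),S_{k-\varepsilon}\rangle$), which is exactly what the Seiberg--Witten step delivers together with $E_{p,c}^2<0$ and $\langle e(H^{-1}(c)),E_{p,c}\rangle=-\tfrac{1}{ab}<0$. Second, rather than blowing up $M$ and taking proper transforms, the paper exploits that every intermediate isolated fixed point has index $4$ with weights $\{-1,a,b\}$ (Lemma~\ref{twofixed}), so the downward gradient map extends continuously; the class $[S_c]$ is then propagated through critical levels by this map, and the correction terms $n_p[E_{p,c}]$ are controlled precisely by $E_{p,c}\cdot S_c\ge 0$. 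The proper-transform picture you sketch is morally similar but not how the bookkeeping is done.
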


\begin{proof}[Proof of Theorem \ref{aim} assuming Theorem \ref{isotropy inequality}] By assumption the weights at $M_{\min}$ are not $\{1,1,0\}$, so in particular $\mathcal{F}(M)$ is not reflexive by Corollary \ref{nonref}. Hence we are in case 2 of Theorem \ref{struc} and so by Corollary \ref{iso} we have that $\mathcal{G}_{g} \cong \mathcal{Q}$ where $\mathcal{Q}$ is the graph of fixed points corresponding to $\mathcal{F}(M)$. Also note that $\mathcal{F}(M)$ is a toric del Pezzo surface by Proposition \ref{fibredelpez}.

Let $v_{\min},v_{\max}$ denote the extremal vertices of $\mathcal{G}$, by our assumptions $\deg(v_{\min}) \geq 1$ and $\deg(v_{\max}) \geq 1$, hence the same holds true for the extremal vertices of $\mathcal{Q}$. By Lemma \ref{fourbound} the boundary divisors in $\mathcal{F}(M)$ with weight $1$ must contain one of the two extremal vertices. Hence  $\mathcal{F}(M)$ contains at most two boundary divisors with weight $1$.

Consider the ordering on the vertices of $\mathcal{Q}$ defined by traversing the edges of the moment polygon of $\mathcal{F}(M)$ cyclically. We label them $p_{1},\ldots,p_{n }$ where $p_{n+1} = p_{1}$ etc. so that $p_{i},p_{i+1} \in S_{i}$ for some boundary divisor $S_{i}$. This gives us an ordering of the fixed surfaces of genus $g$ in $M$, say $\Sigma_{1},\ldots,\Sigma_{n}$ where $\Sigma_{n+1} = \Sigma_{1}$ etc. 

Note that if $w(S_{i}) > 1$ then there is an isotropy $4$-manifold $N_{i}$ with weight $w(S_{i})$ such that $\Sigma_{i},\Sigma_{i+1} \subset N_{i}$. The normal bundle of $\Sigma_{i}$ has a unique equivariant splitting as $L_{i,1} \oplus L_{i,2}$ so that $L_{i,1}$ is the normal bundle of $\Sigma_{i}$ in $N_{i-1}$ and $L_{i,2}$ is the normal bundle of $\Sigma_{i}$ in $N_{i}$. Note that by assumption for each $\Sigma_{i}$, one of its weights has modulus greater than $1$, hence either $N_{i-1}$ or $N_{i}$ exists, which permits us to distinguish $L_{i,1}$ and $L_{i,2}$. Define $n_{i,j} = c_{1}(L_{i,j})$.

Let us sum $\langle c_{1}(M),\Sigma_{i} \rangle$ over all $\Sigma_{i}$,
\begin{displaymath}
\sum_{1 \leq i \leq n} \langle c_{1}(M),\Sigma_{i} \rangle = \sum_{1 \leq i \leq n} ((2-2g)+n_{i,1}+n_{i,2}). 
\end{displaymath}
By applying Lemma \ref{fourcor} when $w(S_{i}) > 1$ and Theorem \ref{isotropy inequality} (applied to the Hamiltonians $H$ and $-H$) when $w(S_{i}) = 1$, we have that $n_{i,2}+n_{i+1,1} \leq 0$ for all $i$. These inequalities imply that
\begin{displaymath}
\sum_{1 \leq i \leq n} \langle c_{1}(M),\Sigma_{i} \rangle \leq n(2-2g).
\end{displaymath} 
Hence, there exists a fixed surface $\Sigma$ such that $\langle c_{1}(M), \Sigma \rangle \leq 2-2g$. By Corollary \ref{genusg} we have that $g(\Sigma) = g$.
\end{proof}

\subsection{Fixed surface $\Sigma$ of positive genus with weights $\{-1,n,0\}$}
We now begin to give the proof of Theorem \ref{isotropy inequality}. In this subsection  $(M,\omega)$ is a relative symplectic Fano $6$-manifold with a Hamiltonian $S^1$-action such that $M_{\min} $ and $ M_{\max} $ are surfaces of genus $g>0$. We study the case when  $M$ contains  a fixed surface $\Sigma$ of positive genus  with the weights  $\{-1,n,0\}$ with $n \geq 2$. Our goal here is to give restrictions on fixed submanifolds, on levels below $H(\Sigma)$.

\begin{lemma} \label{threesix} 
Let $\Sigma \subseteq M$ be a fixed surface of positive genus with weights $\{-1,n,0\}$ where $n \geq 2$. Then the following statements  hold.
\begin{enumerate}

\item $H(\Sigma) -  H_{\min} \leq 3$.
 
\item The weights at $M_{\min}$ are $\{1,m,0\}$ where $m \geq (H(\Sigma) -  H_{\min} )$.

\item There is no fixed surface $\Sigma' \subseteq M$ with positive genus such that $H(\Sigma') \in (H_{\min},H(\Sigma))$.

\end{enumerate}
\end{lemma}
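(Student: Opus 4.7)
The plan is to transfer Corollary \ref{us} from the symplectic fibre $\mathcal{F}(M)$ to $M$ using the structural correspondence given by Corollary \ref{iso}. By Proposition \ref{fibredelpez}, $\mathcal{F}(M)$ is symplectomorphic to a toric del Pezzo surface carrying a Hamiltonian $S^1$-action with isolated fixed points, and by Corollary \ref{iso} there is a map $I \colon \mathcal{Q} \to \mathcal{G}_+$ from the graph of fixed points of $\mathcal{F}(M)$ to the graph of fixed surfaces of positive genus in $M$ that preserves both the Hamiltonian values and the weights. Under $I$, the extremum $M_{\min}$ pulls back to the minimum fixed point $p_{\min}$ of $\mathcal{F}(M)$, and the surface $\Sigma$ (with weights $\{-1,n,0\}$ and positive genus) pulls back to a fixed point $p \in \mathcal{F}(M)$ with weights $\{-1,n\}$ satisfying $H(p) = H(\Sigma)$.

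With this correspondence in place, parts (1) and (2) follow immediately by applying Corollary \ref{us} to the pair $(p_{\min}, p)$ inside the toric del Pezzo $\mathcal{F}(M)$: the first gives $H(\Sigma) - H_{\min} = H(p) - H(p_{\min}) \leq 3$, and the second says the weights at $p_{\min}$ are $\{1,m\}$ with $m \geq H(p) - H(p_{\min})$, which translates to weights $\{1,m,0\}$ at $M_{\min}$ with the same lower bound on $m$. For part (3), suppose for contradiction that there were a positive-genus fixed surface $\Sigma'$ with $H(\Sigma') \in (H_{\min}, H(\Sigma))$. By Theorem \ref{critfibres}(3), $\Sigma'$ must intersect $\mathcal{F}(M)$, so it corresponds under $I$ to a fixed point $p' \in \mathcal{F}(M)$ with $H(p') \in (H_{\min}, H(\Sigma)) = (H(p_{\min}), H(p))$, contradicting Corollary \ref{us}(3) applied in $\mathcal{F}(M)$.

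The one subtlety is whether $\mathcal{F}(M)$ might be reflexive, in which case Corollary \ref{iso} gives a folding of the graph rather than an isomorphism. Under the standing hypotheses of Theorem \ref{aim}, $\mathcal{F}(M)$ is non-reflexive: if it were, Corollary \ref{nonref} would force the weights at $M_{\min}$ to be $\{1,1,0\}$, and $M_{\min}$ itself is a fixed surface of genus $g$, contradicting the hypothesis that no such surface exists. Even without this exclusion, the only information used about $I$ is that it preserves Hamiltonian and weight data at the relevant vertices, which holds in both cases of Corollary \ref{iso}, so the argument above goes through regardless. The only genuine work, therefore, is verifying that $\Sigma$ corresponds to a fixed point of $\mathcal{F}(M)$ (automatic from Theorem \ref{critfibres}(3)) and that the Hamiltonians agree under $I$ (part of Corollary \ref{iso}); the rest is bookkeeping.
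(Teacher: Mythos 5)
Your proof is correct and takes essentially the same approach as the paper, whose entire proof of this lemma reads ``The result follows from applying Corollary \ref{us} to $\mathcal{F}(M)$.'' You have simply made explicit the ingredients that the paper leaves implicit---that $\mathcal{F}(M)$ is a toric del Pezzo (Proposition \ref{fibredelpez}), that positive-genus fixed surfaces meet $\mathcal{F}(M)$ in fixed points of matching Hamiltonian value and weights (Theorem \ref{struc} / Corollary \ref{iso} / Theorem \ref{critfibres}(3)), and that the reflexive case is either excluded or harmless---and these are exactly the facts the paper's one-line citation is resting on.
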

\begin{proof}  The result follows from applying Corollary \ref{us} to $\mathcal{F}(M)$. \end{proof}

Next, we will show that all fixed submanifolds on levels $k \in ( H_{\min},H(\Sigma))$ are isolated points and establish restrictions on the weights at these fixed points. First we will need to make the following definition.

\begin{definition}\label{flowdefi} Suppose that $p \in M$ is an isolated fixed point with weights $\{a,b,-c\}$ with $a,b,c>0$. Denote by $U(p) \subseteq M$ the union of all gradient spheres with minimum $p$. Denote by $E_{p,x}$ the trace of $U_{p}$ in $M_{x}$. If the weights at $p$ are $\{-a,-b,c\}$ with $a,b,c>0$, we define $E_{p,x}$ to be the trace of $D_{p}$, where $D_{p}$ is defined to be the union of gradient spheres with maximum $p$. 
\end{definition}

\begin{remark}\label{arearemark}
For $\alpha \in (0,1)$, $E_{p,H(p)+\alpha } \subseteq M_{H(p)+\alpha}$ is a smooth sub-orbifold, homeomorphic to $S^{2}$. Let us note some properties of $E_{p,H(p)+\alpha }$. \begin{enumerate}
\item By Lemma \ref{DuisHeckSphere} (and recalling the definition of Euler class for orbi-$S^{1}$-bundles given in Section \ref{prelimperlim}): \begin{displaymath}
\langle e(H^{-1}(H(p)+\alpha),[E_{p,H(p)+\alpha}] \rangle = -\frac{1}{ab}.
\end{displaymath}

\item Lemma \ref{DH} states that \begin{displaymath}
\frac{d}{d \alpha}( \omega( E_{H(p)+\alpha} ) ) = \langle -e(H^{-1}(H(p)+\alpha),[E_{p,H(p)+\alpha}] \rangle. 
\end{displaymath}
Hence,
\begin{displaymath}
\omega( E_{H(p)+\alpha} ) = \frac{\alpha}{ab}. 
\end{displaymath}

\end{enumerate}
\end{remark}

The following Lemma is well-known so we omit its proof.
\begin{lemma} \label{selfint}
Let $E_{p,x} \subseteq M_{x}$ be as in Definition \ref{flowdefi} and $x \in (H(p),H(p) +1)$. Then $E_{p,x} \cdot E_{p,x} <0$. 
\end{lemma}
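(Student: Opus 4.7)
The plan is to work in the equivariant Darboux model near $p$ and compute $E_{p,x} \cdot E_{p,x}$ as the orbifold degree of the normal bundle of $E_{p,x}$ in $M_x$. The negativity will emerge transparently from the sign of the descending weight $-c$.

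First, by the equivariant Darboux theorem, a neighbourhood of $p$ in $M$ is $S^{1}$-equivariantly symplectomorphic to a neighbourhood of $0 \in \mathbb{C}^{3}$ equipped with its standard symplectic form and the linear $S^{1}$-action of weights $(a,b,-c)$; the Hamiltonian in this model is $H_{0}(z) = \pi(a|z_{1}|^{2} + b|z_{2}|^{2} - c|z_{3}|^{2}) + H(p)$. In this local picture, $U(p)$ corresponds to $\{z_{3} = 0\}$, and for small $\alpha > 0$ the intersection $U(p) \cap H_{0}^{-1}(H(p)+\alpha)$ is the $3$-sphere $S^{3}(a,b) = \{a|z_{1}|^{2}+b|z_{2}|^{2} = \alpha/\pi\} \subset \mathbb{C}^{2}$. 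Its $S^{1}$-quotient $E_{p,H(p)+\alpha}$ is therefore the weighted projective line $\mathbb{P}(a,b)$, and the symplectic quotient $\mathbb{C}^{3}/\!/_{\alpha}S^{1}$ is, near the zero section, an orbifold complex line bundle over $\mathbb{P}(a,b)$ with fibre direction coming from $z_{3}$.

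The self-intersection $E_{p,x} \cdot E_{p,x}$ equals the orbifold first Chern number of this normal line bundle $N$. The principal orbifold $S^{1}$-bundle $S^{3}(a,b) \to \mathbb{P}(a,b)$ has Euler number $1/(ab)$ (this is essentially the content of Lemma \ref{DuisHeckSphere}(1) in its local form), and $z_{3}$ carries the weight $-c$ representation, so $N$ has first orbifold Chern number $-c/(ab)$. Consequently
\[
E_{p,H(p)+\alpha} \cdot E_{p,H(p)+\alpha} \;=\; -\frac{c}{ab} \;<\; 0.
\]
Because the hypothesis on the range $(H(p),H(p)+1)$ guarantees that $E_{p,x}$ stays a smooth sub-orbifold homeomorphic to $S^{2}$ throughout, the spheres $E_{p,x}$ form a topologically trivial family (they are all isotopic to the sphere obtained from the local model), so the self-intersection is independent of $x$ in this range. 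The symmetric case in which the weights at $p$ are $\{-a,-b,c\}$ is handled by applying the previous computation to $(M,-H)$.

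The main subtlety is establishing the identification of the orbifold normal bundle in the local model and confirming that its first Chern number is computed by the weighted Hopf contribution $1/(ab)$ twisted by the $z_{3}$-weight; once this is in place, the sign and the explicit value $-c/(ab) < 0$ are immediate. Everything else is a routine topological invariance argument for the self-intersection number within a smooth family of sub-orbifolds.
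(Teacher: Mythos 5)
Your approach is a natural one (the paper itself omits the proof, calling the lemma ``well-known'', so there is nothing to compare it against), and your final formula $E_{p,x}\cdot E_{p,x}=-c/(ab)<0$ is correct. The key observation—that near $p$ the reduced space is the total space of an orbifold line bundle over the weighted projective line $\mathbb{P}(a,b)$, with the $z_3$-direction supplying the normal bundle—is exactly the right local picture, and the sanity check at $(a,b,c)=(1,1,1)$ (blow-up of $\mathbb{C}^2$, exceptional sphere of square $-1$) confirms the answer.

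One wrinkle worth flagging: you state that the principal orbi-$S^1$-bundle $S^3(a,b)\to\mathbb{P}(a,b)$ has Euler number $1/(ab)$ and cite Lemma~\ref{DuisHeckSphere}(1), but that lemma actually reads $\langle e(H^{-1}(m+\varepsilon)),[M_{m+\varepsilon}]\rangle=\frac{-1}{ab}$. Your twist-by-the-weight step then introduces a compensating sign, so the final $-c/(ab)$ comes out right; but as written, either the quoted Euler number or the twisting rule carries the wrong sign. To make the derivation airtight you should pin down one convention explicitly. For instance: the normal orbi-bundle of $E_{p,x}$ in $M_x$ is the associated bundle of the circle bundle $P=U(p)\cap H^{-1}(x)\to E_{p,x}$ under the representation of weight $-c$, in the form $(P\times\mathbb C)/S^1$ with diagonal action $\lambda\cdot(v,w)=(\lambda\cdot v,\lambda^{-c}w)$; this coincides with the associated bundle of the \emph{character} of weight $+c$ in the $(p,v)\sim(pg,\rho(g)^{-1}v)$ convention that makes $c_1(\mathrm{assoc.})=(\text{weight})\cdot e(P)$, and then $c_1(N_E)=c\cdot\bigl(-\tfrac1{ab}\bigr)=-\tfrac{c}{ab}$, consistent with the paper's sign in Lemma~\ref{DuisHeckSphere}. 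Finally, your closing remark that the self-intersection is constant across $(H(p),H(p)+1)$ deserves one more word: the gradient-flow maps give orbifold diffeomorphisms of tubular neighbourhoods of $E_{p,x}$ for $x$ in this range precisely because the gradient spheres emanating from $p$ cannot reach their maxima before level $H(p)+1$ (integrality of $\omega$ on spheres, via Lemma~\ref{gradsphereint}), so no critical point of $H$ lies on $U(p)$ in this window.
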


Now we give the main result of this subsection.

\begin{lemma}\label{twofixed}
Let $\Sigma \subseteq M$ be a fixed surface of positive genus such that the weights at $\Sigma$ are $\{-1,n,0\}$ with $n \geq 2$. Then for each integer $k \in (H_{\min},H(\Sigma))$, any fixed submanifold contained in $H^{-1}(k)$ must be an isolated fixed point with weights $\{ -1,a,b \}$ for some $a,b>0$.
\end{lemma}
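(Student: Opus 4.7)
I would prove the two clauses separately. For \emph{isolation}, I would invoke Lemma~\ref{threesix}(3) to exclude positive-genus fixed surfaces on level $k$ and Corollary~\ref{nosphere} to exclude fixed spheres (since $k<H(\Sigma)=1-n\leq-1<0$), leaving only an isolated fixed point $p$.

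For the \emph{weight structure}, writing the weights at $p$ as $w_1\leq w_2\leq w_3$, the non-minimality of $p$ forces $w_1<0$, and the weight-sum formula together with $H(p)<0$ forces $w_3>0$. It then remains to establish $w_1=-1$ (the key step) and $w_2\geq 0$ (a parallel argument). My plan for the key step is to assume $|w_1|\geq 2$ and work with the isotropy $4$-manifold $N'\subset M$ of weight $|w_1|$ through $p$. Repeating the isolation argument inside $N'$: $N'_{\min}$ lies at level $<H(p)<0$, so by Corollary~\ref{nosphere} it cannot be a sphere, and by Lemma~\ref{threesix}(3) it cannot be a positive-genus surface in $(H_{\min},H(\Sigma))$; hence $N'_{\min}=M_{\min}$. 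Since the weight of $N'$ at $M_{\min}$ lies in $\{1,m\}$, this pins $|w_1|=m$. The surface $N'_{\max}$ is then a genus-$g$ fixed surface at level $\geq H(\Sigma)$; it cannot equal $\Sigma$ (the weight $-m$ would otherwise appear in $\{-1,n,0\}$, impossible for $m\geq 2$ and $n>0$), so by the isomorphism $\mathcal{Q}\cong\mathcal{G}_g$ of Corollary~\ref{iso} it corresponds to a distinct vertex $q\in\mathcal{F}(M)$ carrying weight $-m$.

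Applying Corollary~\ref{us}(3) to $q$ in the toric del Pezzo $\mathcal{F}(M)$ forbids any fixed point of $\mathcal{F}(M)$ at levels in $(H_{\min},H(q))$; since $v_\Sigma$ is such a point at level $H(\Sigma)>H_{\min}$, we get $H(q)\leq H(\Sigma)$, and combined with $H(q)\geq H(\Sigma)$ this forces $H(q)=H(\Sigma)$. The weight-sum formula in $\mathcal{F}(M)$ then determines the second weight at $q$ to be $m+n-1$. To close the argument I would combine these structural facts with two local constraints at $p$: effectiveness of the $S^1$-action ($\gcd(w_1,w_2,w_3)=1$) and the identity $\dim T_pN'=4$, the latter requiring exactly one of $w_2,w_3$ to be divisible by $m=|w_1|$. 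Together with the bound $m\leq n+1$ from Corollary~\ref{us}(1) and the very restricted set of admissible integer values $k\in(-1-m,1-n)$, these constraints would leave no admissible weight triple $(w_1,w_2,w_3)$, yielding the desired contradiction. A parallel argument---applied to a hypothetical second negative weight $w_2$ and combined with the now-established $w_1=-1$---would establish $w_2\geq 0$.

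\textbf{Main obstacle.} The macroscopic steps (existence of $N'$, identifying $N'_{\min}=M_{\min}$ and hence $|w_1|=m$, and placing $N'_{\max}$ at level $H(\Sigma)$ via Corollary~\ref{us}(3)) are clean and use only results developed in earlier sections. The hard part will be the terminal arithmetic case analysis ruling out the few weight triples that survive the structural reductions; this is where the tight coupling between $k$, $m$, $n$, the divisibility condition from $\dim T_pN'=4$, and effectiveness must be squeezed.
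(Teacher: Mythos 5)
Your isolation step matches the paper's: Lemma~\ref{threesix}(3) excludes positive-genus fixed surfaces on level $k$, and Corollary~\ref{nosphere} excludes fixed spheres since $k<H(\Sigma)<0$. That part is fine.

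For the weight structure there are two problems, one minor and one fundamental. The minor one: your application of Corollary~\ref{us}(3) to $q$ is invalid. That corollary requires the fixed point in question to have weights $\{-1,n'\}$, $n'\geq 2$; but your $q$ carries the weight $-m$ with $m\geq 2$, so the hypothesis fails. You actually do not need that step at all---once you reach $|w_1|=m$, Lemma~\ref{missinglemma} gives $m=|w_1|\leq H(p)-H_{\min}<H(\Sigma)-H_{\min}$, while Lemma~\ref{threesix}(2) gives $m\geq H(\Sigma)-H_{\min}$, an immediate contradiction. The paper proves ``every negative weight is $-1$'' by exactly this two-line argument (via Lemma~\ref{missinglemma} and Lemma~\ref{resweight}.2), without invoking any isotropy $4$-manifold.

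The fundamental gap is that ruling out negative weights $\geq 2$ does not finish the lemma: you still must exclude weight triples of the form $\{-1,-1,w\}$ with $w>0$, and your ``parallel argument'' cannot do this. If $w_2=-1$ there is no isotropy $4$-manifold of weight $1$ to work with, so the entire mechanism of your key step collapses, and none of the arithmetic constraints you list engage. This case is in fact the hard part of the lemma, and the paper devotes most of the proof to it: one constructs the $2$-cycle $E_{p,H(p)-\varepsilon}$ of Definition~\ref{flowdefi}, uses Lemma~\ref{selfint} to see it has negative self-intersection, and then derives a contradiction either from the $S^{2}$-bundle structure of $M_{H_{\min}+1-\varepsilon}$ (when $H(p)=H_{\min}+1$) or from the Duistermaat--Heckman area computation of Remark~\ref{arearemark} together with the homological decomposition of $[E_{p,H(p)-\varepsilon}]$ into exceptional classes at the lower critical level (when $H(p)=H_{\min}+2$). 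None of this is present or suggested in your proposal.
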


\begin{proof}
Statement 3 of Lemma \ref{threesix} shows that there is no fixed surface of genus $g$ in $H^{-1}(k)$. On the other hand, since $k < H(\Sigma) < 0$ there is no fixed sphere on level $k$ by Corollary \ref{nosphere}. Hence all fixed points on level $k \in (H_{\min},H(\Sigma))$ are isolated.

We observe that for an isolated fixed $p$ with $H(p) \in (H_{\min},H(\Sigma))$ any negative weight at $p$ must be equal to $-1$. This can be deduced from Lemma \ref{missinglemma} and Lemma \ref{resweight}.2) once one has that the weights at $M_{\min}$ are $\{1,m,0\}$ where $m > H(p)-H_{\min}$. This was shown in Lemma \ref{threesix}. Hence to prove the lemma it remains to exclude fixed points of the form $\{-1,-1,w\}$ where $w >0$. We consider now two cases:

Suppose first that  $H(p) = H_{\min}+1$. The sphere $E_{p,H(p) -\varepsilon} \subseteq M_{H(p) -\varepsilon}$ constructed in Definition \ref{flowdefi} has strictly negative self-intersection by Lemma \ref{selfint}. However, $M_{H(p) -\varepsilon}$ is an $S^{2}$-bundle over $M_{\min}$ hence contains no sphere with non-zero self intersection. This yields a contradiction.

Suppose now that $H(p) = H_{\min}+2 $, so that  $H(\Sigma) - H_{\min} = 3$. Applying Definition \ref{flowdefi}, we get a smoothly embedded sphere $E_{p,H(p) - \varepsilon} \subseteq M_{H(p) -\varepsilon}$ for $\varepsilon \in (0,1)$.  By Remark \ref{arearemark}.2, \begin{displaymath}
\omega(E_{p,H(p) - \varepsilon}) = \varepsilon.
\end{displaymath}

Note that $E_{p,H(p)-\varepsilon} $ is disjoint from the trace of the isotropy $4$-manifold $N$ such that $N_{\min} = M_{\min}$. Also, if $\phi : M_{H(p) - \varepsilon} \rightarrow M_{\min}$ is the map induced by the retraction constructed in Lemma \ref{retractmin}, then $\phi|_{E_{p,H(p)-\varepsilon}}$ has degree $0$. Hence, as a class in $H_{2}(M_{H(p) -\varepsilon})$ we have 
$$[E_{p,H(p)-\varepsilon}] = \sum n_{i} [E_{p_{i},H(p)-\varepsilon }]$$ 
where $\{p_{i}\}$ is the set of isolated fixed points on level $H_{\min}+1$. On the other hand note that for each $p_{i}$ \begin{displaymath}
\lim_{\varepsilon \rightarrow 1}  \omega( E_{p_{i},H(p)-\varepsilon }) = 0.
\end{displaymath} However, $\omega (S_{\varepsilon})$ tends to $1$ as $\varepsilon \rightarrow 1$ by Remark \ref{arearemark}.2. This yields a contradiction.
\end{proof}

\subsection{The downward gradient flow close to $M_{\min}$ and exceptional spheres}
In this subsection, we again denote by $(M,\omega)$ a relative symplectic Fano $6$-manifold with a Hamiltonian $S^{1}$-action such that $M_{\min},M_{\max}$ are surfaces of genus $g>0$, such that $M$ contains no fixed surface of genus $g$ with weights $\{1,1\}$,$\{-1,-1\}$ or $\{1,-1\}$. As before, we suppose that $M$ contains a fixed surface $\Sigma$ of genus $g$ with weights $\{-1,n,0\}$ and $n \geq 2$.

First, we will define continuous maps between reduced spaces of $M$, which extends the usual partially defined gradient maps $gr^{c_{1}}_{c_{2}}$. Then we study the cycles that are contracted by these maps, namely $E_{p,x}$. In particular, we will show that these spheres lift to exceptional spheres in the resolutions of the reduced spaces which were constructed in Section \ref{locKahlsec}.

Let $k \in (H_{\min},H(\Sigma))$ be an integer and $\varepsilon > 0$ be small enough. From Definition \ref{gradmapdef} we have maps \begin{displaymath}
gr^{k-\varepsilon}_{k} : M_{k - \varepsilon} \rightarrow M_{k}
\end{displaymath}
and
\begin{displaymath}
gr^{k+\varepsilon}_{k} : M_{k+\varepsilon} \rightarrow M_{k}.
\end{displaymath}

By Lemma \ref{twofixed} all fixed points with $k = H(p)$ are isolated with weights $\{-1,a,b\}$ where $a,b>0$. This implies that $gr^{k-\varepsilon}_{k}$ is a homeomorphism. Hence 
\begin{displaymath}
(gr^{k-\varepsilon}_{k})^{-1} \circ gr^{k+\varepsilon}_{k}  : M_{k+\varepsilon} \rightarrow M_{k - \varepsilon}
\end{displaymath}
is continuous and extends the usual partially defined gradient map.

\begin{definition}\label{flowmap}
Let $c_{1},c_{2} \in (H_{\min},H(\Sigma))$ with $c_{1}<c_{2}$. Denote by $gr_{c_{1}}^{c_{2}} : M_{c_{2}} \rightarrow M_{c_{1}}$ the map extending the partially defined gradient map as was explained above.
\end{definition}

\begin{remark}
Let $k \in  (H_{\min},H(\Sigma))$ be an integer and let $\varepsilon$ be a constant such that $0<\varepsilon<1$. Let $p_{1},\ldots,p_{n}$ be the isolated fixed points in the level set $H^{-1}(k)$. Then $gr_{k-\varepsilon}^{k+\varepsilon}$ contracts each of the $E_{p_{i},k+\varepsilon}$ (see Definition \ref{flowdefi}) and is a homeomorphism on their complement.
\end{remark}

Let $p$ be an isolated fixed point such that $H(p) \in (H_{\min},H(\Sigma))$. In the following lemma we construct an exceptional sphere  in the symplectic resolution ${\overline M}_{H(p)+\varepsilon}$ of $M_{ H(p)+\varepsilon}$ (see Definition \ref{resolutionDef}) for small $\varepsilon>0$.


\begin{lemma} \label{nicesphere}
Let $p$ be an isolated fixed point with $H(p) \in (H_{\min},H(\Sigma))$. Let $\pi : \overline{M}_{H(p)+\varepsilon} \rightarrow M_{H(p)+\varepsilon}$ denote the resolution. Then for $\varepsilon > 0$ small enough there is a exceptional sphere $\tilde{E} \subseteq \overline{M}_{H(p)+\varepsilon} $ such that in homology $\pi_{*}([\tilde{E}]) = [E_{p,H(p)+\varepsilon}]$ (where $E_{p,x}$ is as in Definition \ref{flowdefi} ).
\end{lemma}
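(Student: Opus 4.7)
The plan is to reduce the question to Lemma \ref{meromorphic} by identifying, near $E_{p,H(p)+\varepsilon}$, both $M_{H(p)+\varepsilon}$ and its resolution with explicit models coming from the $S^1$-equivariant K\"ahler structure around $p$. By Lemma \ref{twofixed} the weights of the action at $p$ are $\{-1,a,b\}$ with $a,b>0$, and by Remark \ref{localKahler6} I will work in an adjusted K\"ahler chart around $p$ where the action is linear on coordinates $(z_1,z_2,z_3)$ of $\mathbb{C}^3$ with weights $(-1,a,b)$. For $\varepsilon>0$ small enough that $(H(p),H(p)+\varepsilon]$ contains no further critical values of $H$, Theorem \ref{analyticstr} identifies a neighbourhood of $E_{p,H(p)+\varepsilon}$ in $M_{H(p)+\varepsilon}$ biholomorphically with a neighbourhood of the zero section in the GIT quotient of $\mathbb{C}^3$ at positive linearisation by this $\mathbb{C}^*$-action; that quotient is an orbi-line bundle over the weighted projective line $\mathbb{C}\mathrm{P}^{1}(a,b)$ and $E_{p,H(p)+\varepsilon}$ corresponds precisely to its zero section.

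I will then use that the affine categorical quotient $\mathbb{C}^3/\!/\mathbb{C}^*$ is isomorphic to $\mathbb{C}^2$ (generated by the invariants $z_1^a z_2$ and $z_1^b z_3$) and that the natural proper map from the GIT quotient onto this $\mathbb{C}^2$ contracts $E_{p,H(p)+\varepsilon}$ to the origin while being an isomorphism elsewhere. Composing with the resolution $\pi\colon\overline{M}_{H(p)+\varepsilon}\to M_{H(p)+\varepsilon}$ yields, over a neighbourhood of the origin in $\mathbb{C}^2$, a proper birational morphism from the smooth surface $\overline{M}_{H(p)+\varepsilon}$ whose exceptional set over the origin is a tree of rational curves made up of the strict transform $\tilde{E}_0$ of $E_{p,H(p)+\varepsilon}$ together with the components $E_1,\dots,E_N$ of the exceptional locus of $\pi$.

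Since any proper birational morphism between smooth surfaces factors as a sequence of blow-ups at points (Zariski factorisation), Lemma \ref{meromorphic} applies. Taking the distinguished component to be $\tilde{E}_0$ it produces an exceptional class $\alpha=[\tilde{E}_0]+\sum_{j=1}^N n_j[E_j]$ with all $n_j\geq 0$, represented by a smoothly embedded sphere and satisfying $\alpha\cdot\alpha=-1$, $K\cdot\alpha=1$. Positivity $\int_\alpha\overline{\omega}>0$ follows because $\overline{\omega}$, as built in Definition \ref{resolutionDef}, is positive on every exceptional curve of $\pi$ and on $\tilde{E}_0$ (which is a K\"ahler curve). Theorem \ref{TaubesLiLui} therefore represents $\alpha$ by a smoothly embedded symplectic sphere $\tilde{E}$, and $\pi_*[\tilde{E}]=[E_{p,H(p)+\varepsilon}]$ because $\pi$ restricts to a degree-one map from $\tilde{E}_0$ onto $E_{p,H(p)+\varepsilon}$ and collapses each $E_j$ to a point.

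The hardest step to make airtight is the local identification in the first paragraph, i.e.\ that the adjusted K\"ahler structure on $M$ realises $M_{H(p)+\varepsilon}$ near $E_{p,H(p)+\varepsilon}$ as the asserted orbi-line bundle over $\mathbb{C}\mathrm{P}^{1}(a,b)$. This will require combining Remark \ref{localKahler6} with the analytic structure on reduced spaces from Theorem \ref{analyticstr} and tracking $E_{p,H(p)+\varepsilon}$ through the gradient map, in the same spirit as the critical-level analysis carried out in the proof of Theorem \ref{fibreinregular}; once this model is in place, the remainder of the argument is a direct application of classical birational surface geometry together with Lemma \ref{meromorphic} and Theorem \ref{TaubesLiLui}.
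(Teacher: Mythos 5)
Your proposal is essentially the same as the paper's proof: both identify a neighbourhood of $p$ with a linear model in $\mathbb{C}^3$, use the fact that the negative weight $-1$ makes the "downward" quotient a smooth $\mathbb{C}^2$, observe that the gradient map (equivalently, the map to the affine categorical quotient) contracts $E_{p,H(p)+\varepsilon}$ to the origin, compose with the resolution $\pi$ to obtain an iterated blow-up of $\mathbb{C}^2$, and then invoke Lemma \ref{meromorphic}. Two small remarks: your final appeal to Theorem \ref{TaubesLiLui} is superfluous, because in this paper an "exceptional sphere" is by definition merely a smooth (not necessarily symplectic) sphere representing an exceptional class, and Lemma \ref{meromorphic} already supplies such a class; and your explicit positivity check $\int_\alpha\overline\omega>0$ via the K\"ahler form on the exceptional curves, while correct, is left implicit in the paper's proof.
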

\begin{proof}

We may (equivariantly) identify a neighbourhood $U$ of $p$ with a neighbourhood of the origin in $\C^{3}$ so that the $S^{1}$-action is linear. Denote by $U_{x}$ the reduced space of $U$ at level $x$.

For $\varepsilon> 0$ small enough, $U_{H(p) - \varepsilon}$ is biholomorphic to a neighbourhood of the origin in $\C^{2}$, since the negative weight at $p$ equals $-1$. The complex analytic map 
$$F = gr_{H(p)-\varepsilon}^{H(p)+\varepsilon}:U_{ H(p)+\varepsilon} \rightarrow U_{H(p) -\varepsilon}$$ 
is a weighted blow-up map that contracts $E_{H(p)+\varepsilon}$ (see Definition \ref{flowmap}). 

Consider now a resolution of singularities $\pi:\overline{M}_{ H(p)+\varepsilon} \rightarrow M_{ H(p)+\varepsilon}$ that is holomorphic on $U$. Where the composition $F \circ \pi$ is defined, it is a surjective morphism of smooth complex surfaces, hence a composition of simple blow-ups. The result now follows from applying Lemma \ref{meromorphic} to $F \circ \pi$.
\end{proof}

\begin{lemma} \label{sphereclose}
Let $p$ be an isolated fixed point such that $H(p) \in (H_{\min},H(\Sigma))$. Assume the weights at $p$ are $\{-1,a,b\}$ with $a,b>0$ and $\gcd(a,b) =1$. Let $c_{1},c_{2} \in (H(p),H(\Sigma))$ be such that $c_{1}<c_{2}$ and $c_{1}-H(p) <1$. Then there is an exceptional sphere $\tilde{E} \subset \overline{M}_{c_{2}}$ such that in homology \begin{displaymath}
(gr^{c_{2}}_{c_{1}} \circ \pi)_{*} [\tilde{E}] = [E_{p,c_{1}}].
\end{displaymath}
\end{lemma}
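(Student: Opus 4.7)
The plan is to reduce the lemma to a strengthening of Lemma \ref{nicesphere} valid at arbitrary level, and then to prove that strengthening by reusing the weighted-blow-down picture of Lemma \ref{nicesphere} with the short local flow replaced by the composite flow from $c_2$ all the way down past $H(p)$.

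For the reduction, note that the $S^{1}$-invariant $4$-manifold $U(p)$ is preserved by the downward gradient flow, so the continuous map $gr^{c_{2}}_{c_{1}}\colon M_{c_{2}}\to M_{c_{1}}$ sends $E_{p,c_{2}}$ onto $E_{p,c_{1}}$. Hence it suffices to produce, for any regular $c\in (H(p),H(\Sigma))$, an exceptional sphere $\tilde E_{c}\subset\overline{M}_{c}$ with $\pi_{*}[\tilde E_{c}]=[E_{p,c}]$; for then $(gr^{c_{2}}_{c_{1}}\circ\pi)_{*}[\tilde E_{c_{2}}]=gr^{c_{2}}_{c_{1},*}[E_{p,c_{2}}]=[E_{p,c_{1}}]$.

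To construct such $\tilde E_{c}$, fix a small $\varepsilon>0$ and consider the composite downward gradient map $F_{c}=gr^{c}_{H(p)-\varepsilon}\colon M_{c}\to M_{H(p)-\varepsilon}$, which is well-defined as the continuous extension of the partial gradient map thanks to Lemma \ref{twofixed} and Definition \ref{flowmap}. Since the unique negative weight at $p$ is $-1$, the local reduced space $U_{H(p)-\varepsilon}$ is biholomorphic to a neighbourhood of the origin in $\C^{2}$, and the image of $E_{p,c}$ under $F_{c}$ is the origin. Equip $M$ with the adjusted $S^{1}$-invariant Kähler structure of Remark \ref{localKahler6}. Using Theorem \ref{analyticstr} and Theorem \ref{bimeromorphic}, one argues that on a sufficiently small $S^{1}$-invariant neighbourhood $V\subset M_{c}$ of $E_{p,c}$, the restriction $F_{c}|_{V}$ is a proper holomorphic contraction into $U_{H(p)-\varepsilon}\cong\C^{2}$ that contracts $E_{p,c}$ to the origin. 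Choose the resolution $\pi:\overline{M}_{c}\to M_{c}$ so that it is holomorphic over $V$. Then $F_{c}\circ\pi\colon\pi^{-1}(V)\to U_{H(p)-\varepsilon}$ is a proper birational morphism between smooth complex surfaces, hence a finite sequence of simple blow-ups. Lemma \ref{meromorphic}, applied with $E_{i}$ taken to be the strict transform of $E_{p,c}$, then produces the required exceptional sphere $\tilde E_{c}$ supported on $\pi^{-1}(E_{p,c})$.

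The principal difficulty is verifying that $F_{c}|_{V}$ is genuinely holomorphic onto the Kähler chart $U_{H(p)-\varepsilon}$, in spite of $F_{c}$ threading through potentially several intermediate critical levels of $H$. Here one exploits the key input from Lemma \ref{twofixed}: every fixed point in the slab $H^{-1}((H_{\min},H(\Sigma)))$ is isolated with weights of the form $\{-1,a',b'\}$. At each such fixed point the adjusted Kähler structure supplies a linear local model in which the downward gradient map is a weighted blow-down between smooth complex surfaces, exactly as in the proof of Lemma \ref{nicesphere}. Gluing these local pictures together across the intervals of regular values (on which the gradient flow is an orbifold diffeomorphism) and shrinking $V$ until its downward orbit stays inside the chosen Kähler neighbourhoods at every intermediate stage yields the desired global holomorphic contraction, after which the rest of the argument proceeds exactly as in Lemma \ref{nicesphere}.
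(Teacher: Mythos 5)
Your approach is genuinely different from the paper's, and it contains a gap that I do not see how to close. The paper produces the exceptional sphere at the \emph{lower} level $c_1$ (within distance $1$ of $H(p)$, where the local model at $p$ applies directly via Lemma \ref{nicesphere}), and then transports it \emph{up} to $\overline{M}_{c_2}$ using the inverse gradient map $(gr^{c_2}_{c_1})^{-1}$; this transport is purely topological, because Lemma \ref{twofixed} guarantees that all intermediate fixed points have negative weight $-1$, so the points contracted by $gr^{c_2}_{c_1}$ are finitely many \emph{smooth} (non-orbifold) points in $M_{c_1}$, which the exceptional sphere can be perturbed to avoid. You instead propose to build the sphere directly at $c_2$ by flowing \emph{down} all the way past $H(p)$ to $M_{H(p)-\varepsilon}$ and exhibiting this long-range flow as a holomorphic blow-down.

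The gap is in the assertion that $F_{c_2}|_V$ is a holomorphic contraction. The adjusted K\"ahler structure of Remark \ref{localKahler6} exists only in a neighbourhood of $M^{S^1}$ together with the isotropy spheres. The cycle $E_{p,c_2}$ is the trace of $U(p)$, and $U(p)$ is mostly swept out by weight-$1$ gradient spheres emanating from $p$; these are \emph{not} isotropy spheres and are not contained in the K\"ahler region. Hence no amount of shrinking $V$ will keep its downward gradient orbit inside the K\"ahler charts ``at every intermediate stage'': between the linear K\"ahler models at fixed points there are stretches of regular values on which the gradient flow is only a symplectomorphism of orbifolds and carries no complex structure with respect to which it could be holomorphic. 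Theorems \ref{analyticstr} and \ref{bimeromorphic} are stated for (globally or at least ambiently) K\"ahler $S^1$-manifolds and cannot be invoked here. A secondary issue: the intermediate claim ``produce $\tilde E_c$ with $\pi_*[\tilde E_c]=[E_{p,c}]$ for any regular $c$'' is not obviously well-posed for $c-H(p)\geq 1$, since Remark \ref{arearemark} only guarantees that $E_{p,H(p)+\alpha}$ is a smooth orbi-sphere for $\alpha\in(0,1)$, and beyond that range $E_{p,c}$ can degenerate. The paper's push-up argument sidesteps both difficulties by working holomorphically only in the small K\"ahler neighbourhood near $p$ and otherwise arguing topologically.
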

\begin{proof}
The weights at $p$ are $\{-1,a,b\}$ where $a,b>0$. We suppose that $a,b>1$ otherwise the proof is easier. Let $S_{1},S_{2}$ be the gradient spheres with minimum at $p$, with weights $a$ and $b$ respectively. Then there is an $S^{1}$-invariant K\"ahler structure on a neighbourhood $U$ of $S_{1} \cup S_{2}$, by Theorem \ref{semilocalKahler}. We may construct a resolution $\pi$ of reduced spaces, such that $\pi|_{U} : \overline{ U}_{z} \rightarrow  U_{z}$ is holomorphic for any $z \in (H(p),H(\Sigma))$. The biholomorphism \begin{displaymath}
 gr^{c_{2}}_{c_{1}}: U_{c_{2}} \rightarrow U_{c_{1}} 
\end{displaymath} lifts to a diffeomorphism of the resolutions \begin{displaymath}
{\overline{gr}}^{c_{2}}_{c_{1}} : \overline{ U}_{c_{2}} \rightarrow \overline{ U}_{c_{1}}.
\end{displaymath}

Recall that all fixed points on levels contained in $(H_{\min},H(\Sigma))$ are isolated and have weights $\{-1,a',b'\}$ where $a',b'>0$. The traces in $M_{c_{1}}$ of gradient spheres with maximum at such isolated fixed points give us a finite collection of non-orbifold points $\{q_{1},\ldots,q_{k}\} \subseteq M_{c_{1}}$. Then $(gr^{c_{2}}_{c_{1}})^{-1}$ is well defined and a homeomorphism on $M_{c_{1}} \setminus \{q_{1},\ldots,q_{k}\}$. Since each $q_{i}$ is a non-orbifold point, $\{q_{1},\ldots,q_{k}\}$ corresponds to a finite collection of points in the resolution $\overline{M}_{c_{1}}$.

By Lemma \ref{nicesphere}, there is an exceptional sphere in $\tilde{E} \subseteq M_{c_{1}}$ such that $\pi_{*}[\tilde{E}] = [E_{p,c_{1}}]$. We may perturb $\tilde{E}$ smoothly to be disjoint from the $q_{i}'s$. Hence using $(gr_{c_{1}}^{c_{2}})^{-1}$, we get an exceptional sphere in $\overline{M}_{c_{2}}$ with the required properties. \end{proof}

\subsection{The $2$-cycles $S_{c}$ and Euler numbers of their associated orbi-bundles} \label{defcyclesSc}

We will proceed to study the family of $2$-cycles $S_{c} \subseteq M_{c}$, defined by mapping $\Sigma$ via the gradient maps $gr^{c_{2}}_{c_{1}}$. We prove in Lemma \ref{fibre wiggly} that the cycles $S_{c}$ have intersection $1$ with the symplectic fibre. Then, in Lemma \ref{excint} we show that they have non-negative intersection with the spheres $E_{p,x}$ contracted by the gradient maps. Eventually, we conclude the section by proving the inequality stated in Corollary \ref{chernin}, from which it is straightforward to deduce Theorem \ref{isotropy inequality}.

\begin{definition} Let $\Sigma$ be as in Theorem \ref{isotropy inequality}. Denote by $D(\Sigma) \subseteq M$ the union of all gradient spheres with maximum in $\Sigma$. For $c \in (H(\Sigma)-1,H(\Sigma)]$ let $S_{c}$ denote the trace of $D(\Sigma)$ in $M_{c}$. More generally for $c \in (H_{\min},H(\Sigma)]$, define $S_{c} = gr_{c}^{c'}(S_{c'})$ where $c'$ is any value in $(H(\Sigma)-1,H(\Sigma))$. It is easy to check that the $2$-cycle $S_{c}$ is independent of the choice of $c'$.

\end{definition}
\begin{remark}
For $c \in (H(\Sigma)-1,H(\Sigma)]$, $S_{c}$ is a smooth surface diffeomorphic to $\Sigma$. For $c = H(\Sigma)- \varepsilon$, with $\varepsilon>0$ small enough, $S_{c}$ is a symplectic surface.
\end{remark}

\begin{lemma}\label{fibre wiggly}
Let $c \in (H_{\min},H(\Sigma)]$ and $\mathcal{F}_{c} \subseteq M_{c}$ be the symplectic fibre then \begin{displaymath}
\mathcal{F}_{c} \cdot S_{c} = 1.
\end{displaymath}
\end{lemma}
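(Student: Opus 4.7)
The plan is to identify $\mathcal{F}_c \cdot S_c$ with the mapping degree of $\phi_c|_{S_c}$, compute this degree for $c$ just below $H(\Sigma)$ where $S_c$ is canonically diffeomorphic to $\Sigma$, and then propagate it down to all smaller $c$ via the continuously-extended gradient map of Definition \ref{flowmap}.

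First I would rewrite the intersection using $\mathcal{F}_c^{*}=\phi_c^{*}(A)$, so that
$$
\mathcal{F}_c \cdot S_c \;=\; \langle \phi_c^{*}(A),[S_c]\rangle \;=\; \langle A,\phi_{c*}[S_c]\rangle,
$$
reducing the claim to the statement $\phi_{c*}[S_c]=[M_{\min}]$. The standing hypotheses of Theorem \ref{aim} rule out the weights $\{1,1,0\}$ at $M_{\min}$, so by Corollary \ref{nonref} $\mathcal{F}(M)$ is not reflexive, and case 1 of Corollary \ref{iso} applies; in particular $\Sigma\cdot\mathcal{F}_{H(\Sigma)}=1$, equivalently $\phi_{*}[\Sigma]=[M_{\min}]$.

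Next I would establish the claim for $c$ close to $H(\Sigma)$. Since $\Sigma$ has weights $\{-1,n,0\}$ with $n\ge 2$, the downward normal direction is precisely the weight-$(-1)$ line bundle, and for $c\in(H(\Sigma)-1,H(\Sigma))$ the set $D(\Sigma)\cap H^{-1}(c)$ is the corresponding circle bundle over $\Sigma$. Its $S^{1}$-quotient $S_c$ is therefore canonically diffeomorphic to $\Sigma$, and gradient-flowing $\Sigma$ down to level $c$ supplies an explicit homotopy from $\phi|_{\Sigma}$ to the composition of this diffeomorphism with $\phi_c|_{S_c}$. Hence $\phi_{c*}[S_c]=\phi_{*}[\Sigma]=[M_{\min}]$, giving the lemma for such $c$.

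Finally I would propagate down. Fix any $c'\in(H(\Sigma)-1,H(\Sigma))$ and, for $c\in(H_{\min},c')$, use the continuous extension $gr=gr_c^{c'}:M_{c'}\to M_c$ of Definition \ref{flowmap}. This extension is globally defined because, by Lemma \ref{twofixed}, every critical value in $(H_{\min},H(\Sigma))$ contains only isolated fixed points with weights $\{-1,a,b\}$, so the upward partial gradient map across such a level is a homeomorphism and can be inverted. By construction of $S_c$, $[S_c]=gr_{*}[S_{c'}]$. Chaining Corollary \ref{gradphicomute} through each critical level in $(c,c')$, and using the local homeomorphism to splice the homotopies across, produces a global homotopy $\phi_c\circ gr\simeq\phi_{c'}$ of maps $M_{c'}\to M_{\min}$; therefore
$$
\phi_{c*}[S_c] \;=\; (\phi_c\circ gr)_{*}[S_{c'}] \;=\; \phi_{c'*}[S_{c'}] \;=\; [M_{\min}],
$$
and the lemma follows. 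The only delicate step I foresee is the chaining argument across critical levels, which serves as the substitute for the usual ``$\phi$ commutes with the gradient flow'' in the presence of collapsing fibres of $gr$, and depends crucially on the restrictive weight structure supplied by Lemma \ref{twofixed}.
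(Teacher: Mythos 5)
Your proposal is correct and follows essentially the same route as the paper: both identify $\mathcal{F}_c\cdot S_c$ with the degree of $\phi$ restricted to the cycle $S_c$, start from $\deg(\phi|_{\Sigma})=1$ (Theorem \ref{struc}), and propagate by homotopy invariance of degree using the continuity of the family $S_c$ across critical levels. The paper packages the continuity argument into a single map $S:\Sigma\times(H_{\min},H(\Sigma)]\to M/S^{1}$ rather than chaining Corollary \ref{gradphicomute} level by level, but this is just a presentational difference and rests on exactly the same underlying facts about the extended gradient maps.
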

\begin{proof}
Let $\phi : M \rightarrow M_{\min}$ be the equivariant retraction (see Lemma \ref{retractmin}). There exists a continuous map $S: \Sigma \times (H_{\min},H(\Sigma)] \rightarrow M/S^{1}$, such that the restriction of this map to $\Sigma \times \{c\}$ represents the $2$-cycle $S_{c} \subseteq M_{c}$.

Hence, the degree of $\phi \circ S $ restricted to $\Sigma \times \{c\}$ is equal to $deg (\phi|_{\Sigma})$ for all $c \in (H_{\min},H(\Sigma)]$. By Theorem \ref{struc} $deg (\phi|_{\Sigma}) = 1$. Now the result follows since \begin{displaymath}
deg((\phi \circ S)  |_{\Sigma \times \{c\}}) = \mathcal{F}_{c} \cdot S_{c}.
\end{displaymath} 
\end{proof}

\begin{lemma} \label{excint} Let $p\in M$ be an isolated fixed point such that $H(p) \in (H_{\min},H(\Sigma))$. Then for $\alpha \in (0,1)$, we have that \begin{displaymath}
E_{p,H(p)+\alpha}  \cdot S_{H(p)+\alpha} \geq 0.
\end{displaymath}
\end{lemma}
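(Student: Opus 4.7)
The plan is to lift the problem from the orbifold $M_c$, where $E_{p,c}$ and $S_c$ need not be smooth, to a smooth symplectic $4$-manifold where Taubes' positivity for exceptional classes (Theorem \ref{inttheo}) can be applied. First I pick $c_2 \in (\max(c, H(\Sigma)-1), H(\Sigma))$; this is possible since the downward weight at $\Sigma$ is $-1$, so $S_{c_2}$ is a smooth embedded symplectic surface of genus $g(\Sigma) > 0$ in $M_{c_2}$, diffeomorphic to $\Sigma$. By Lemma \ref{sphereclose} applied with $c_1 = c$ and this $c_2$ (noting that $c - H(p) = \alpha < 1$ as required), there is an exceptional sphere $\tilde E \subset \overline{M}_{c_2}$ in the symplectic resolution $\pi : \overline{M}_{c_2} \to M_{c_2}$ satisfying $(gr^{c_2}_c \circ \pi)_*[\tilde E] = [E_{p,c}]$.

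Next, after a small symplectic perturbation of $S_{c_2}$ within its isotopy class (preserving its embedded symplectic nature and positive genus), I may arrange that it avoids every orbifold point of $M_{c_2}$ and is disjoint from every other contracted sphere $E^{*}_{q,c_2} := (gr^{c_2}_{H(q)+\varepsilon})^{-1}(E_{q,H(q)+\varepsilon})$ arising from isolated fixed points $q$ with $H(q) \in (c, c_2)$. The preimage $\tilde S := \pi^{-1}(S_{c_2}) \subset \overline{M}_{c_2}$ is then a smoothly embedded symplectic surface of positive genus, disjoint from every exceptional divisor of $\pi$. Applying Theorem \ref{inttheo} in $\overline{M}_{c_2}$ yields $\tilde E \cdot \tilde S \geq 0$.

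It remains to show that $\tilde E \cdot \tilde S = E_{p,c} \cdot S_c$. Since $\tilde S$ is disjoint from the exceptional divisors of $\pi$ and $\pi_*[\tilde E] = [E_{p,c_2}]$ by Lemma \ref{nicesphere}, the projection formula gives $\tilde E \cdot \tilde S = [E_{p,c_2}] \cdot [S_{c_2}]$ in $M_{c_2}$. Moreover, because we have arranged $S_{c_2}$ to avoid the contracted spheres $E^{*}_{q,c_2}$, the gradient map $gr^{c_2}_c$ restricts to a homeomorphism on neighbourhoods of both $E_{p,c_2}$ and $S_{c_2}$, sending them respectively to $E_{p,c}$ and $S_c$, and therefore preserves the intersection number, yielding $E_{p,c} \cdot S_c \geq 0$.

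The main obstacle I anticipate is the careful bookkeeping in the last paragraph: verifying the projection-formula identity through the composed bimeromorphic map $gr^{c_2}_c \circ \pi$ in the orbifold setting, and ensuring that the symplectic perturbation of $S_{c_2}$ can simultaneously be made to avoid all the relevant singular, exceptional, and contracted loci while preserving its homology class and remaining a smoothly embedded symplectic surface of positive genus.
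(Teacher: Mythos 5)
Your proposal follows essentially the same strategy as the paper: use Lemma~\ref{sphereclose} to manufacture an exceptional sphere $\tilde E$ in the resolution $\overline{M}_{c_2}$, lift $S_{c_2}$ to a positive-genus symplectic surface $\tilde S$ there, invoke Theorem~\ref{inttheo} to get $\tilde E \cdot \tilde S \geq 0$, and push forward to level $c = H(p)+\alpha$.

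There is, however, a genuine gap. Lemma~\ref{sphereclose} carries the explicit hypothesis that the weights at $p$ are $\{-1,a,b\}$ with $\gcd(a,b)=1$, and you never verify it. This is not a formality: $\gcd(a,b)>1$ is perfectly possible for an isolated fixed point covered by Lemma~\ref{twofixed}, and if it holds then Lemma~\ref{sphereclose} simply does not apply. The paper's proof handles this by first disposing of the trivial case: if $p \notin S_{H(p)}$ then $E_{p,c} \cdot S_c = 0$ automatically. When $p \in S_{H(p)}$, the fact that $S_{H(p)}$ is built by flowing down the weight-$1$ sphere bundle $D(\Sigma)$ shows that some weight-$1$ gradient sphere has minimum at $p$, which forces $\gcd(a,b)=1$ and makes Lemma~\ref{sphereclose} applicable. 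You need to add this case split and the $\gcd$ observation before your invocation of Lemma~\ref{sphereclose} is legitimate. A secondary, more minor point: you allow $c_2$ anywhere in $(\max(c,H(\Sigma)-1),H(\Sigma))$, but the paper only guarantees that $S_{c_2}$ is symplectic and disjoint from the orbifold locus for $c_2$ sufficiently close to $H(\Sigma)$; choosing $c_2 = H(\Sigma)-\varepsilon$ with $\varepsilon$ small (as the paper does) avoids the perturbation step you introduce and the bookkeeping you flag as a concern at the end.
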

\begin{proof}
If $p$ is not contained in $S_{H(p)}$ then clearly  $E_{p,H(p)+\alpha}  \cdot S_{H(p)+\alpha} = 0$. So we will assume $p \in S_{H(p)}$.

By Lemma \ref{twofixed} the weights at $p$ are $\{-1,a,b\}$ where $a,b>0$. Since $p \in S_{H(p)}$, there is a gradient sphere $S'$ of weight $1$ with $S'_{\min} = p$ so  $\gcd(a,b)=1$.

Let $\pi : \overline{M}_{H(\Sigma) - \varepsilon} \rightarrow M_{H(\Sigma) - \varepsilon}$ be the resolution for $\varepsilon\in (0,1)$. Since $\gcd(a,b)=1$, we can apply Lemma \ref{sphereclose} for the values $c_{2} = H(\Sigma) -\varepsilon$ and $c_{1} = H(p)+\alpha$. We see that there is an exceptional sphere $\tilde{E} \subseteq \overline{M}_{c_{2}}$ such that \begin{displaymath}(gr^{c_{2}}_{c_{1}} \circ \pi)_{*}[\tilde{E}] = [E_{p,c_{1}}]. \end{displaymath}

Let now $\varepsilon > 0$ be small enough so that $S_{H(\Sigma) - \varepsilon} \subseteq M_{H(\Sigma) - \varepsilon}$ is symplectic.  Since $S_{H(\Sigma) - \varepsilon}$ does not intersect the orbifold locus of $M_{H(\Sigma) - \varepsilon}$, it lifts to a symplectic surface in $\tilde{S} \subseteq \overline{M}_{H(\Sigma) - \varepsilon}$. By Theorem \ref{inttheo} \begin{displaymath}
\tilde{E} \cdot \tilde{S} \geq 0.
\end{displaymath}
Now the inequality follows since 
$$(gr^{c_{2}}_{c_{1}} \circ \pi)_{*}[\tilde{E}] = [E_{p,H(p)+\alpha}]\;\;\; {\rm and}\;\;\; (gr^{c_{2}}_{c_{1}} \circ \pi)_{*}[\tilde{S}] = [S_{H(p)+\alpha}].$$
 \end{proof}

\begin{corollary}\label{chernin}
 Let $k \in (H_{\min},H(\Sigma))$ be an integer and let $\varepsilon \in (0,1)$. Then \begin{displaymath}
 \langle e(H^{-1}(k+\varepsilon)),S_{k+\varepsilon} \rangle \geq \langle e(H^{-1}(k-\varepsilon)),S_{k-\varepsilon} \rangle.
 \end{displaymath}
\end{corollary}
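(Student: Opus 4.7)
The plan is to compare the pairing $\langle e(H^{-1}(c)), [S_c]\rangle$ on the two sides of the critical level $k$ by analysing the birational map between the reduced spaces. Write $e_\pm = e(H^{-1}(k\pm\varepsilon))$ and $S_\pm = S_{k\pm\varepsilon}$. By Lemma \ref{twofixed}, the fixed points on level $k$ are isolated, say $p_1, \ldots, p_n$, each with weights $\{-1, a_i, b_i\}$, $a_i, b_i > 0$. The extended gradient map $gr = gr_{k-\varepsilon}^{k+\varepsilon}\colon M_{k+\varepsilon} \to M_{k-\varepsilon}$ from Definition \ref{flowmap} contracts each $E_{p_i, k+\varepsilon}$ to a single point and is a homeomorphism of orbifolds on the complement. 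Unwinding the definition of $S_c$ through a common reference level $c' \in (H(\Sigma)-1, H(\Sigma))$ shows that $gr_*[S_+] = [S_-]$ in rational homology.

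The main step is to establish a decomposition $e_+ = gr^*e_- + \sum_i \mu_i\, \mathrm{PD}[E_{p_i,+}]$ in $H^2(M_{k+\varepsilon};\mathbb{Q})$ with each $\mu_i > 0$. On the complement of $\bigcup_i E_{p_i,+}$ the gradient flow gives an $S^1$-equivariant orbi-bundle identification of $H^{-1}(k-\varepsilon)$ with $H^{-1}(k+\varepsilon)$, so $(e_+ - gr^*e_-)$ restricts to zero there. The long exact sequence of the pair $(M_+, M_+ \setminus \bigcup_i E_{p_i,+})$ combined with the orbifold Thom isomorphism then places this class in the rational span of the Poincar\'e duals $\mathrm{PD}[E_{p_i,+}]$. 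To find the coefficients $\mu_j$, pair with $[E_{p_j,+}]$ and use: (i) $\langle e_+, [E_{p_j,+}]\rangle = -1/(a_j b_j)$ from Remark \ref{arearemark}.1 applied locally at $p_j$; (ii) $\langle gr^*e_-, [E_{p_j,+}]\rangle = \langle e_-, gr_*[E_{p_j,+}]\rangle = 0$ since $gr$ contracts $E_{p_j,+}$ to a point; (iii) $[E_{p_i,+}]\cdot [E_{p_j,+}] = 0$ for $i\neq j$ by disjointness; and (iv) $[E_{p_j,+}]^2 < 0$ by Lemma \ref{selfint}. Taken together these yield $\mu_j\, [E_{p_j,+}]^2 = -1/(a_j b_j)$, and since both sides are negative, $\mu_j > 0$.

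With the decomposition in hand, the projection formula gives
\[
\langle e_+, [S_+]\rangle \;=\; \langle gr^*e_-, [S_+]\rangle + \sum_i \mu_i\, [E_{p_i,+}]\cdot [S_+] \;=\; \langle e_-, [S_-]\rangle + \sum_i \mu_i\, [E_{p_i,+}]\cdot [S_+].
\]
By Lemma \ref{excint} each intersection number $[E_{p_i,+}]\cdot [S_+]$ is non-negative, and by the previous step each $\mu_i$ is positive, so the correction term is $\geq 0$ and the desired inequality follows. The main technical point is Step 2, namely the justification of the Poincar\'e dual decomposition in the orbifold setting; this is handled by working locally near each $p_j$ in the explicit K\"ahler model provided by Theorem \ref{semilocalKahler} (where the reduction becomes a holomorphic weighted blowdown), so that the orbifold Thom/Gysin computation reduces to a standard calculation on a neighbourhood of a weighted projective line in rational coefficients.
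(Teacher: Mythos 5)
Your proof is correct, and it takes the dual route to the paper's. The paper decomposes the homology class: writing $F = gr^{k+\varepsilon}_{k-\varepsilon}$, it observes that $[S_{k+\varepsilon}] = F^*[S_{k-\varepsilon}] + \sum_p n_p[E_{p,k+\varepsilon}]$ (total transform versus proper transform), pairs this with each $[E_{p,+}]$ to find $n_p\,[E_{p,+}]^2 = E_{p,+}\cdot S_+$ so that $n_p\le 0$ by Lemmas \ref{excint} and \ref{selfint}, and concludes using $\langle e_+,[E_{p,+}]\rangle = -1/(a_pb_p) < 0$ from Remark \ref{arearemark}. You instead decompose the Euler class $e_+ = gr^*e_- + \sum_p \mu_p\,\mathrm{PD}[E_{p,+}]$, determine $\mu_p\,[E_{p,+}]^2 = \langle e_+,[E_{p,+}]\rangle < 0$ so that $\mu_p > 0$, and conclude using $[E_{p,+}]\cdot[S_+]\ge 0$. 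Both arguments rest on the same ingredients (Lemmas \ref{twofixed}, \ref{excint}, \ref{selfint}, Remark \ref{arearemark}, and the identification $\langle gr^*e_-,[S_+]\rangle = \langle e_-,[S_-]\rangle$), and the two correction terms are in fact numerically identical: the paper's $\sum_p n_p\langle e_+,E_{p,+}\rangle$ and your $\sum_p\mu_p(E_{p,+}\cdot S_+)$ both equal $\sum_p (E_{p,+}\cdot S_+)\langle e_+,E_{p,+}\rangle\big/[E_{p,+}]^2$. So yours is a genuine, correct transposition of the argument. The price of your route is the extra long-exact-sequence/Thom-isomorphism step needed to place $e_+ - gr^*e_-$ in the rational span of the exceptional Poincar\'e duals, whereas the paper's statement that $[S_+]-F^*[S_-]$ lies in the span of the $[E_{p,+}]$ is the more standard blow-down fact; both claims, however, encode the same localisation information and are of comparable difficulty to justify rigorously for orbifolds.
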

\begin{proof}
Let \begin{displaymath}F = gr^{k+\varepsilon}_{k-\varepsilon}: M_{k+\varepsilon} \rightarrow M_{k- \varepsilon},\end{displaymath} be the downward flowing gradient map. Then, since $F$ only contracts the spheres $E_{p,k+ \varepsilon}$ and is a homeomorphism on their complement, we have
$$
[S_{k+\varepsilon}] = F^{*}([S_{k - \varepsilon}])+\sum_{p \in \mathcal{P}_{k}} n_{p} [E_{p,k+ \varepsilon}].
$$
Here $\mathcal{P}_{k}$ is the set of isolated fixed points such that $H(p) = k$. 

For each $p\in \mathcal{P}_{k} $,
Lemmas \ref{excint} and \ref{selfint} tell us
$$E_{p,k+\varepsilon}  \cdot S_{k+\varepsilon} \geq 0,\;\;\;
 E_{p,k+ \varepsilon} \cdot E_{p,k+ \varepsilon} <0.$$ 
 Combining these two inequalities, we have that $n_{p}\leq 0$ for each $p \in \mathcal{P}_{k}$. 
 
Note that since $S_{k-\varepsilon}$ can be perturbed to be disjoint from the images of $E_{p,k+\varepsilon}$ under $gr^{k + \varepsilon}_{k-\varepsilon}$, we have that
$$
\langle e(H^{-1}(k - \varepsilon)),F^{*}([S_{k - \varepsilon}]) \rangle  = \langle e(H^{-1}(k+\varepsilon)),[S_{k+\varepsilon}] \rangle. 
$$

Therefore it is sufficient to show that 
$$\langle  e(H^{-1}(k+\varepsilon)) , E_{p,k+\varepsilon} \rangle < 0$$ 
for each $p \in \mathcal{P}_{k}$. To show this, recall that each $p \in \mathcal{P}_{k}$ has weights $\{-1,a,b\}$ where $a,b>0$. Hence,  
$$\langle e(H^{-1}(k+\varepsilon)) , E_{p,k+\varepsilon} \rangle = \frac{-1}{ab} < 0$$ 
by Remark \ref{arearemark}.
\end{proof}

\subsection{Proof of Theorem \ref{isotropy inequality}} \label{proofTheorem112}

\begin{definition} \label{theotherdefinition}

 Consider the sub-bundle $L_{1}$ of $N(M_{\min})$ with weight $1$, where $N(M_{\min})$ denotes the normal bundle of $M_{\min}$ in $M$. Define a subset $U(M_{\min},L_{1}) \subseteq M$ to be the union of all gradient spheres that are tangent to $L_{1}$ at $M_{\min}$. For $x \in (H(\Sigma),H(\Sigma)+1)$ define $T_{x} \subseteq M_{x}$ to be the trace of $U(M_{\min},L_{1})$. 
\end{definition}

\begin{remark}
Note that $T_{x}$ is a section of the $S^{2}$-bundle $M_{x} \rightarrow M_{\min}$. For $(x-H_{\min}) = \varepsilon$ small enough, it is a symplectic surface.
\end{remark}

\begin{lemma} \label{bottomclass}Let $x \in (H_{\min},H_{\min}+1)$ then as homology classes \begin{displaymath}
[T_{x}] = [S_{x}].
\end{displaymath}
\end{lemma}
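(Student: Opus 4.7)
The plan is to show that $S_x=T_x$ as subsets of $M_x$, from which the homology identity follows immediately. First I observe that for $x\in(H_{\min},H_{\min}+1)$, Lemma \ref{twofixed} together with the weight sum formula ensures that no critical values of $H$ lie in $(H_{\min},x]$, so $M_x$ is topologically a $\PP(1,m)$-orbibundle over $M_{\min}$ with $T_x$ a section. Rationally, $H_2(M_x,\QQ)=\QQ[T_x]\oplus\QQ[\mathcal{F}_x]$. Since $[S_x]\cdot[\mathcal{F}_x]=[T_x]\cdot[\mathcal{F}_x]=1$ (the former by Lemma \ref{fibre wiggly}) and both classes project with degree $1$ onto $[M_{\min}]$ under $\phi_{x*}$, we can write $[S_x]=[T_x]+k[\mathcal{F}_x]$ for some integer $k$, and I will show $k=0$ by establishing $S_x=T_x$.

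The main geometric input is a weight-matching argument. Any gradient sphere $S\subseteq D(\Sigma)$ has weight $1$, since at its top endpoint in $\Sigma$ it is tangent to a weight $-1$ eigenspace. If its bottom endpoint lies on $M_{\min}$, then it is tangent there to an eigenspace of weight $\pm 1$; since the weights at $M_{\min}$ are $\{1,m\}$ with $m\geq 2$ (Theorem \ref{isotropy inequality}), this eigenspace must be $L_1$, so $S\subseteq U(M_{\min},L_1)$. The same weight-matching at an intermediate isolated fixed point $p^*\in H^{-1}((H_{\min},H(\Sigma)))$ (which has weights $\{-1,a,b\}$ with $a,b>0$ by Lemma \ref{twofixed}) shows that the downward gradient sphere from $p^*$ is tangent to $L_1$ at $M_{\min}$, hence also lies in $U(M_{\min},L_1)$; its trace at level $x$ is therefore a single point of $T_x$.

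Let $V\subseteq\Sigma$ be the open set of points whose downward gradient sphere terminates on $M_{\min}$; its complement is a finite set of points $q^*$ whose gradient sphere terminates instead at some intermediate isolated fixed point $p^*$. For $q\in V$ the trace of the corresponding gradient sphere at level $x$ is a single point of $T_x$, by the previous paragraph. For $q^*\in\Sigma\setminus V$, its gradient sphere has trace in $M_{c'}$ (with $c'$ just below $H(\Sigma)$) lying on the exceptional sphere $E_{p^*,c'}$; under the downward gradient map $gr_x^{c'}$ (which passes through level $H(p^*)$), $E_{p^*,\cdot}$ is contracted to the trace at level $x$ of the downward gradient sphere from $p^*$, which lies in $T_x$. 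Hence the continuous map $\Sigma\cong S_{c'}\to M_x$ defining $S_x$ factors through $T_x$, and its composition with $\phi_x|_{T_x}\colon T_x\to M_{\min}$ is homotopic to $\phi|_\Sigma$, which has degree $1$ by Theorem \ref{struc}. Since a degree-$1$ continuous map between closed connected surfaces is surjective, $S_x=T_x$ as subsets of $M_x$, and pushing forward $[\Sigma]$ through the degree-$1$ map $\Sigma\to T_x$ yields $[S_x]=[T_x]$ in $H_2(M_x)$.

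The main technical subtlety will be verifying the precise behaviour of $gr_x^{c'}$ near each intermediate fixed point $p^*$, specifically that $E_{p^*,\cdot}$ is contracted onto the trace of the downward gradient sphere from $p^*$. I would handle this by invoking the K\"ahler model furnished by Theorem \ref{semilocalKahler} near $p^*$, in which both $E_{p^*,\cdot}$ and the downward gradient sphere appear as explicit complex subvarieties and the downward gradient map becomes a standard weighted blow-down contracting $E_{p^*,\cdot}$ precisely onto the trace of the downward sphere.
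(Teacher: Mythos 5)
Your proposal is correct in spirit and in fact establishes a strictly stronger statement than the paper does (the set-theoretic equality $S_x=T_x$ rather than merely the homology identity), but the route is substantially longer and leaves one small gap. The paper's proof is a short intersection-theory argument: $S_x$ is disjoint from $N_x$, the trace of the weight-$m$ isotropy $4$-manifold $N$ through $M_{\min}$ (points of $S_x$ sit on weight-$1$ gradient spheres and so have trivial stabiliser, while non-fixed points of $N_x$ have stabiliser $\mathbb Z_m$), so $[S_x]\cdot[N_x]=0$; together with $[S_x]\cdot\mathcal F_x=1$ from Lemma \ref{fibre wiggly} this pins down $[S_x]$ uniquely, since $[N_x]$ and $\mathcal F_x$ are independent in the rank-two group $H_2(M_x)$ and $[T_x]$ satisfies the same two equations. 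You already had the decomposition $[S_x]=[T_x]+k\mathcal F_x$ in hand, and pairing with $[N_x]$ (noting $\mathcal F_x\cdot N_x=1$) would have given $k=0$ at once, without any tracking of gradient chains. As for the geometric argument you gave: the weight-matching dichotomy at $M_{\min}$ (weight $1$ forces tangency to $L_1$ because the weights are $\{1,m\}$ with $m>1$) and the degree-$1$ surjectivity argument do work, but the sentence asserting that $E_{p^*,\cdot}$ is contracted onto the trace at level $x$ of the downward gradient sphere from $p^*$ tacitly assumes this downward sphere reaches $M_{\min}$ directly. It may instead terminate at another intermediate isolated fixed point $p^{**}$ sitting above level $x$ (intermediate fixed points live at integer levels $\geq H_{\min}+1$), in which case the extended map $gr_x^{c'}$ must be followed through a chain of weight-$1$ downward spheres before it bottoms out on $M_{\min}$; each link has weight $1$ so the chain still lands tangent to $L_1$, but this iteration must be spelled out (here $H(\Sigma)-H_{\min}\le3$ bounds the length of the chain). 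With that fix your argument is complete, though considerably heavier than the paper's.
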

\begin{proof}
$M_{x}$ is homeomorphic to an $S^{2}$-bundle over $M_{\min}$. Note that $T_{x}$ is a section of this bundle, and so is the trace $N_{x}$ of the isotropy $4$-manifold $N$ containing $M_{\min}$. Note that the symplectic fibre $\mathcal{F}_{x} \subseteq M_{x}$ represents a fibre of this $S^{2}$-bundle. 

Note that $[S_{x}] \cdot [N_{x}] = 0$, since $S_{x}$ and $N_{x}$ are disjoint. On the other hand, by Lemma \ref{fibre wiggly}, $[S_{x}] \cdot \mathcal{F}_{x} = 1$. These two equalities uniquely characterise $[T_{x}] \in H_{2}(M_{x})$. Hence, $[S_{x}] = [T_{x}]$. 
\end{proof}

\begin{proof}[\textbf{Proof of Theorem \ref{isotropy inequality}}]

For $c_{1} \in (H_{\min},H_{\min}+1)$ we have by Lemma \ref{bottomclass} that  \begin{displaymath}
 e(H^{-1}(c_{1})),[S_{c_{1}}] \rangle =\langle e(H^{-1}(c_{1})),[T_{c_{1}}] \rangle = c_{1}(L_{1}).
\end{displaymath} For $
 c_{2} \in (H(\Sigma) - 1,H(\Sigma))$ we have that \begin{displaymath}
 e(H^{-1}(c_{2})),[S_{c_{2}}] \rangle = -c_{1}(L_{2}),
 \end{displaymath}
since for $H(\Sigma)-c_{2} = \varepsilon$ small enough, $S_{c_{2}}$ is the unit circle bundle of $L_{2}$ (the minus sign coming from the fact that $L_{2}$ has weight $-1$). Hence, by Corollary \ref{chernin} we conclude that $c_{1}(L_{1})+c_{1}(L_{2}) \leq 0$. 
\end{proof}

\subsection{Small Hamiltonian}\label{smallhamsec}

In this subsection we will prove the following:
\begin{theorem}\label{smallh}
Let $M$ be a relative symplectic Fano $6$-manifold with a Hamiltonian $S^{1}$-action such that $M_{\min},M_{\max}$ are surfaces of genus $g>0$. Suppose that $H(M) \subseteq [-3,3]$. Then there exists a fixed surface $\Sigma \subseteq M$ of genus $g(\Sigma) \geq g$ such that $\langle c_{1}(M),\Sigma \rangle \leq 2-2g$.
\end{theorem}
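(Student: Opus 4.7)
The plan is to argue by contradiction, using the equivariant localisation formula of Lemma \ref{bigloc} for $c_{1}^{S^{1}}(M)$ as the main identity. Assume that every fixed surface $\Sigma\subseteq M$ of positive genus satisfies $\langle c_{1}(M),\Sigma\rangle > 2-2g$. First, I would reduce to a finite and highly constrained combinatorial picture: by Proposition \ref{fibredelpez} the symplectic fibre $\mathcal{F}(M)$ is a toric del Pezzo surface, so in combination with Corollary \ref{iso} and Theorem \ref{struc} the positive-genus fixed surfaces of $M$ are indexed either by the (at most six) fixed points of $\mathcal{F}(M)$ in the non-reflective case (with each such surface of genus $g$ intersecting $\mathcal{F}(M)$ in a single point) or by orbits of the involution of $\mathcal{F}(M)$ in the reflective case (with each pair having identical weights, Hamiltonian values, and genus, and intersecting $\mathcal{F}(M)$ in two points). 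Combined with $H(M)\subseteq[-3,3]$ and the weight sum formula of Proposition \ref{relcho}, this produces only finitely many weight configurations to consider.

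Next, I would rewrite the summands in Lemma \ref{bigloc} in a form that makes the assumed inequality visible. Using the weight sum formula, $\alpha(p)=-H(p)/\prod_{i}w_{i}(p)$, so each isolated fixed point contributes an $\alpha$ of controlled sign and magnitude in terms of $H(p)\in[-3,3]$. For a fixed surface with weights $\{w_{1},w_{2},0\}$ and normal Chern numbers $n_{1},n_{2}$, the identity
\begin{equation*}
\beta(\Sigma)\;=\;\frac{\langle c_{1}(M),\Sigma\rangle}{w_{1}w_{2}}+\frac{H(\Sigma)}{w_{1}w_{2}}\!\left(\frac{n_{1}}{w_{1}}+\frac{n_{2}}{w_{2}}\right)
\end{equation*}
follows from $2-2g_{\Sigma}=\langle c_{1}(M),\Sigma\rangle-n_{1}-n_{2}$ and $w_{1}+w_{2}=-H(\Sigma)$. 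This manipulation converts the contradiction hypothesis into an explicit lower bound for each positive-genus $\beta(\Sigma)$, separating the sign-definite term $\langle c_{1}(M),\Sigma\rangle/(w_{1}w_{2})$ from a correction of order $H(\Sigma)$, which is small because of the small Hamiltonian hypothesis.

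With these rewrites in place, I would sum over all fixed submanifolds and estimate each group separately: the isolated fixed points (bounded using $H(M)\subseteq[-3,3]$ and the very short list of admissible weight triples produced by Proposition \ref{relcho} together with Lemma \ref{missinglemma}), the genus-zero fixed surfaces (where the relative Fano condition yields $\langle c_{1}(M),\Sigma_{0}\rangle=\omega(\Sigma_{0})>0$ and, by Corollary \ref{nosphere}, $H(\Sigma_{0})\geq 0$), and the positive-genus fixed surfaces (handled using the contradiction hypothesis and the pairing/bijection coming from $\mathcal{F}(M)$). Matching these estimates against the identity $0=\sum\alpha(p)+\sum\beta(\Sigma)$ will produce the desired contradiction.

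The main obstacle I anticipate is the reflective case: here, non-extremal positive-genus surfaces may have genus strictly greater than $g$, contribute with intersection $2$ with $\mathcal{F}(M)$, and can occur on critical levels with $|H|$ as large as permitted by the fixed-point configuration of $\mathcal{F}(M)$. Keeping the bookkeeping of the doubled contributions consistent with the reflective symmetry of the $\mathcal{F}(M)$-graph, while still extracting a strict inequality from $\langle c_{1}(M),\Sigma\rangle>2-2g$, is where the argument requires most care; the symmetry of the reflective case is an asset here, since paired surfaces contribute identically to $\sum\beta$ and can be summed in pairs to match the fixed-point contributions of $\mathcal{F}(M)$.
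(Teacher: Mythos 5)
Your algebraic identity for $\beta(\Sigma)$ is correct, but the plan around it has a genuine gap: the \emph{correction term} $\frac{H(\Sigma)}{w_{1}w_{2}}\bigl(\frac{n_{1}}{w_{1}}+\frac{n_{2}}{w_{2}}\bigr)$ is \emph{not} controlled by the small-Hamiltonian hypothesis. While $|H(\Sigma)|\le 3$, the normal Chern numbers $n_{1},n_{2}$ are unconstrained integers (both in sign and in size), so the correction can be made arbitrarily large in magnitude; asserting it is ``small'' conflates bounding $H(\Sigma)$ with bounding $n_{i}$. Moreover the first term $\langle c_{1}(M),\Sigma\rangle/(w_{1}w_{2})$ is not sign-definite either: the contradiction hypothesis only gives $\langle c_{1}(M),\Sigma\rangle>2-2g\le 0$, so the numerator can still be negative, and $w_{1}w_{2}$ is negative for non-extremal positive-genus surfaces. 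As written, you cannot close the localisation identity.

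The paper's proof avoids exactly this problem by not analysing $\beta(\Sigma)$ in isolation. The key algebraic step (Lemma \ref{posgenus} and Equation (\ref{longeq})) is to form $\beta(\Sigma)+\langle c_{1}(M),\Sigma\rangle$, which equals
$$\left(1+\frac{1}{w_{1}w_{2}}\right)(2-2g')+n_{1}\left(1-\frac{1}{w_{1}^{2}}\right)+n_{2}\left(1-\frac{1}{w_{2}^{2}}\right),$$
so the $n_{i}$ appear with \emph{non-negative} coefficients that vanish when $|w_{i}|=1$. Summing along a chain in $\mathcal{G}_{+}$ and using $w_{i,2}=-w_{i+1,1}$, these regroup into $(n_{i,2}+n_{i+1,1})\bigl(1-\tfrac{1}{w_{i,2}^{2}}\bigr)$ with $n_{i,2}+n_{i+1,1}\le 0$ by Lemma \ref{fourcor} (applied to the isotropy $4$-manifold joining consecutive surfaces), while the endpoint $n$'s drop out because the endpoint surfaces each carry a weight of modulus $1$. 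That is what eliminates the unbounded $n_{i}$'s. If you want to push your contradiction framing through, you must incorporate this chain-level cancellation; the bijection with $\mathcal{F}(M)$ you cite (Corollary \ref{iso}) is indeed the tool, but it must be used via the boundary-divisor/Lemma \ref{fourcor} bookkeeping, not merely to count surfaces and match weights.
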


The proof will proceed using the localisation formula for $c_{1}^{S^{1}}(M)$ given in Theorem \ref{bigloc}. First, we prove some restrictions on contributions of fixed submanifolds to this formula. 

For the remainder of the section, we denote by $\mathcal{S}_{+}$ the set of fixed surfaces of positive genus and we denote by $\mathcal{S}_{0}$ the set of fixed surfaces of genus $0$.

\begin{lemma}\label{posgenus}

Let $M$ be a symplectic  $6$-manifold with a Hamiltonian $S^{1}$-action such that $M_{\min},M_{\max}$ are surfaces of genus $g > 0$. Then \begin{displaymath}
\sum_{\Sigma \in \mathcal{S}_{+}} \langle c_{1}(M),\Sigma \rangle  \leq -\sum_{\Sigma \in \mathcal{S}_{+}} \beta(\Sigma) .  
\end{displaymath} (with $\beta$ is as in Definition \ref{bigloc}).
\end{lemma}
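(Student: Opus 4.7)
The plan is to bound each summand $\langle c_1(M),\Sigma\rangle + \beta(\Sigma)$ individually, combining a pointwise analysis of the Euler characteristic contribution with a pairing argument for the normal bundle contributions based on the isotropy $4$-manifolds of $M$. For each $\Sigma \in \mathcal{S}_{+}$ with normal bundle weights $(w_1,w_2)$ and $n_i := c_1(L_i)$ for the weight-$w_i$ summand $L_i$, a direct calculation yields
\[
\langle c_1(M),\Sigma\rangle + \beta(\Sigma) \;=\; (2-2g)\cdot\frac{w_1 w_2+1}{w_1 w_2} \;+\; n_1\cdot\frac{w_1^2-1}{w_1^2} \;+\; n_2\cdot\frac{w_2^2-1}{w_2^2}.
\]
The target inequality asserts that the sum over $\Sigma \in \mathcal{S}_+$ of these quantities is non-positive.

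For the Euler characteristic contribution I would argue termwise. A case analysis on the signs of $(w_1,w_2)$ shows that $(w_1 w_2+1)/(w_1 w_2)\geq 0$: at $M_{\min}$ and $M_{\max}$ both weights share sign so $w_1 w_2\geq 1$; at non-extremal positive-genus surfaces the weights are opposite in sign, and either the pair is $\{1,-1\}$ (so the factor vanishes) or $|w_1 w_2|\geq 2$ (so numerator and denominator are both negative). Coupled with $2-2g\leq 0$, each such contribution is at most zero.

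For the normal bundle terms, note that the coefficient $(w_i^2-1)/w_i^2$ vanishes for $|w_i|=1$ and is strictly positive for $|w_i|\geq 2$, so only the large-weight contributions play a role. For each weight $|w_i|\geq 2$ at some $\Sigma\in \mathcal{S}_+$, there is an isotropy $4$-submanifold $N$ of weight $|w_i|$ containing $\Sigma$. Since $\Sigma$ has positive genus, Corollary~\ref{genus} applied to $N$ forces both extremal submanifolds of $H|_N$ to be surfaces of the same positive genus; hence the opposite extremum $\Sigma'$ of $N$ also lies in $\mathcal{S}_+$. The final clause of Lemma~\ref{fourcor} applied to $N$ then gives $n(\Sigma,N)+n(\Sigma',N)\leq 0$, where these Chern numbers coincide with the relevant $n_i$'s in $M$.

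Reorganising the sum $\sum_{\Sigma\in \mathcal{S}_+}\bigl[n_1(1-1/w_1^2)+n_2(1-1/w_2^2)\bigr]$ as a sum indexed by the isotropy $4$-manifolds $N$ of weight $\geq 2$ with both extrema in $\mathcal{S}_+$ (each such $N$ appearing exactly twice in the original double sum, once per extremum), one obtains $\sum_N (n(\Sigma)+n(\Sigma'))(1-1/w(N)^2)\leq 0$. Adding this to the non-positive Euler characteristic sum yields the desired inequality. The crux lies in the pairing step: the fact that every weight of magnitude $\geq 2$ at a positive-genus fixed surface pairs with another surface in $\mathcal{S}_+$ rests entirely on Corollary~\ref{genus}, together with the last assertion of Lemma~\ref{fourcor} providing the correct sign.
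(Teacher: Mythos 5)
Your proof is correct, and the essential ingredients coincide with the paper's: the identity rewriting $\langle c_1(M),\Sigma\rangle + \beta(\Sigma)$ in terms of the normal-bundle weights and degrees, the termwise sign analysis of the Euler-characteristic contribution, and the inequality $n(\Sigma,N)+n(\Sigma',N)\leq 0$ from Lemma~\ref{fourcor}, fed by Corollary~\ref{genus} to ensure both extrema of each relevant isotropy $4$-manifold lie in $\mathcal{S}_+$. The only genuine difference is organizational: the paper enumerates each connected component of $\mathcal{G}_+$ as a chain $\Sigma_1,\ldots,\Sigma_n$ (appealing to Lemma~\ref{Liapp}) and must separately argue that the endpoint normal-bundle contributions drop out because $|w_{1,1}|=|w_{n,2}|=1$, whereas you index directly by isotropy $4$-manifolds and observe that the coefficient $1-1/w_i^2$ automatically annihilates all weight-$\pm1$ contributions, so no chain decomposition or endpoint analysis is needed; this also treats interval and cycle components of $\mathcal{G}_+$ uniformly, whereas the paper treats the cycle case only in passing. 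One small imprecision: the factor $2-2g$ in your displayed identity should read $2-2g(\Sigma)$, the genus of the particular fixed surface (which can differ from $g=g(M_{\min})$ in case 1 of Theorem~\ref{struc}); the inequality you invoke is unaffected since $g(\Sigma)>0$ for every $\Sigma\in\mathcal{S}_+$. You also implicitly use that $\Sigma$ is an extremum of $H|_N$, which holds because the normal bundle of $\Sigma$ in $N$ is a single weight space, so $\Sigma$ is a local (hence, $N$ being compact and Hamiltonian, global) extremum --- worth stating if writing this out in full.
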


\begin{proof}
By Lemma \ref{Liapp} the genus of fixed surfaces is constant along connected components of $\mathcal{G}$. Let $\Sigma_{i} $ for $i=1,\ldots,n$ be a chain of fixed surfaces corresponding to such a  component with $g(\Sigma_{i})=g'>0$ for each $i$. The calculation below proves the statement of the lemma for each such connected component. The case when the component is a cycle is easier so we just present the case when the component is an interval. 

There is a sequence of isotropy $4$-manifolds $N_{i}$ for $i =1,\ldots,n-1$ such that $\Sigma_{i},\Sigma_{i+1} \subset N_{i}$. Note that since $v_{\Sigma_{1}}$ and $v_{\Sigma_{n}}$ have degree $1$ in $\mathcal{G}$, by Lemma \ref{Liapp} $\Sigma_{1},\Sigma_{n}$ both have a weight with modulus $1$.

The normal bundle of $\Sigma_{i}$ may be split equivariantly as $L_{i,1} \oplus L_{i,2}$, where $L_{i,1}$ is the normal bundle of $\Sigma_{i}$ in $N_{i-1}$ and $L_{i,2}$ is the normal bundle of $\Sigma_{i}$ in $N_{i}$. Let $n_{i,j} = c_{1}(L_{i,j})$ and $w_{i,j}$ be the weight associated to $L_{i,j}$. With this convention we have that $|w_{1,1}| = |w_{n,2}| = 1$. 

Note that $w_{i,2} = -w_{i+1,1}$ and in particular $w_{i,2}^2 = w_{i+1,1}^2$. Applying this to the sum of $\beta(\Sigma_{i})$ we have
\begin{displaymath}
\sum_{i=1}^{n} \beta(\Sigma_{i}) = \sum_{i=1}^{n}\left( \frac{2-2g'}{w_{i,1} w_{i,2}} - \frac{n_{i,1}}{w_{i,1}^2} - \frac{n_{i,2}}{w_{i,2}^2}\right)
\end{displaymath}
\begin{displaymath}
=\sum_{i=1}^{n} \frac{2-2g'}{w_{i,1} w_{i,2}}+ \sum_{i=1}^{n-1} \frac{-1}{w_{i,2}^2} (n_{i,2}+n_{i+1,1}) - n_{1,1} - n_{n,2}.
\end{displaymath}

We sum $\langle c_{1}(M), \Sigma_{i} \rangle$ over $i = 1,\ldots,n$.
\begin{displaymath}
\sum_{i=1}^{n} \langle c_{1}(M), \Sigma_{i} \rangle
= \sum_{i=1}^{n} (2-2g') +\sum_{i=1}^{n-1} (n_{i,2}+n_{i+1,1})+n_{1,1}+n_{n,2}.
\end{displaymath}
For each $i$, $(n_{i,2} +n_{i+1,1}) \leq 0$ by Lemma \ref{fourcor}. Combining these inequalities with the above equations we deduce that the quantity
\begin{equation} \label{longeq}
\sum_{i=1}^{n} (\beta(\Sigma_{i})+\langle c_{1}(M),\Sigma_{i} \rangle) = \sum_{i=1}^{n}\left(1+\frac{1}{w_{i,1} w_{i,2}}\right)(2-2g')+\sum_{i=1}^{n-1} (n_{i,2}+n_{i+1,2})\left(1 - \frac{1}{w_{i,2}^{2}}\right)
\end{equation}
is non-positive.
\end{proof} 
 
\begin{lemma}\label{isosimp}
Suppose that $(M^{6},\omega)$ is a relative symplectic Fano $6$-manifold such that $M_{\min} $ and $ M_{\max} $ are surfaces of genus $g>0$. Assume additionally that $H(M) \subseteq [-3,3]$. Then if $w$ is a weight at an isolated fixed point in $M$ then $1 \leq |w| \leq 2$.
\end{lemma}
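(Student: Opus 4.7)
The plan is to assume for contradiction that $p$ is an isolated fixed point with some weight $w$ of $|w| \geq 3$; by replacing $H$ with $-H$ (which preserves all hypotheses) I may assume $w>0$. First I would use the weight sum formula at $M_{\min}$ and $M_{\max}$ to show that weights at the extremal submanifolds lie in $\{1,2\}$: the weights at $M_{\min}$ are $\{a,b,0\}$ with $a,b\geq 1$ and $a+b=-H_{\min}\leq 3$, and symmetrically at $M_{\max}$. Then Lemma \ref{missinglemma} applied to $p$ gives $w \leq H_{\max}-H(p)\leq 3-H(p)$, hence $H(p)\leq 3-w\leq 0$.

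The gradient sphere $S$ of weight $w$ with $S_{\min}=p$ has integer area $\int_S\omega\geq 1$ by the Fano condition, so $H(S_{\max})-H(p)\geq w$. For $w\geq 4$, this forces $H(S_{\max})\geq 3 = H_{\max}$, hence $S_{\max}\subseteq M_{\max}$; but then $M_{\max}$ would have a weight $-w$ with $|w|\geq 4$, contradicting the extremal bound. So it remains to rule out $w=3$.

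For $w=3$ I would split into cases based on how many weights of $p$ lie in $3\mathbb{Z}$; by the same extremal argument applied to weights $\pm 6$, such weights can only be $\pm 3$. (a) If $p$ has weights $\{3,3,w_3\}$, the weight sum forces $w_3\leq -3$, while Lemma \ref{missinglemma} forces $|w_3|\leq H(p)+3=-3-w_3$, giving $0\leq -3$, contradiction. (b) If $p$ has weights $\{3,-3,w_3\}$, the argument above applied to $+3$ shows $H(p)<0$, while symmetrically the weight $-3$ gives $H(p)>0$, contradiction. (c) Otherwise the other two weights $w_1,w_2$ of $p$ satisfy $w_1,w_2\notin 3\mathbb{Z}$. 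Then $(T_pM)^{\mathbb{Z}_3}$ has real dimension $2$, so the connected component of $M^{\mathbb{Z}_3}$ containing $p$ is $2$-dimensional; since connected components of finite-group fixed sets have constant dimension, the same must hold at $F=S_{\max}$. This rules out $F$ being a fixed sphere (effectiveness of $S^1$ would force the weights $\{-3,v,0\}$ to have $v=3$, contradicting $\gcd(3,v)=1$) and $F$ being a positive-genus fixed surface (whose $\mathbb{Z}_3$-fixed set would be $4$-dimensional, as the other normal weight is coprime to $3$). So $F$ would have to be an isolated fixed point with weights $\{-3,u_1,u_2\}$, $u_i\notin 3\mathbb{Z}$; from $\int_S\omega\geq 1$ together with $H\in[-3,3]$ one obtains $H(F)=H(p)+3$, so $H(p)\in\{-2,-1\}$. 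A direct enumeration using Lemma \ref{missinglemma} at both $p$ and $F$ then shows no admissible integer tuple $(w_1,w_2,u_1,u_2)$ exists.

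The main obstacle is case (c): the dimension-of-fixed-set argument is the crucial topological input that rules out the positive-genus target, after which one is reduced to a slightly tedious but finite enumeration of integer weights. Cases (a), (b), and the $w\geq 4$ case are direct applications of the weight sum formula and Lemma \ref{missinglemma}.
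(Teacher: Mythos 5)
Your $w \geq 4$ case has a gap. You assert that for $w \geq 4$ the gradient sphere from $p$ must reach level $3$, but this fails when $w = 4$ and $H(p) = -2$: here $\int_S\omega \leq (3-(-2))/4 < 2$ forces $\int_S\omega = 1$ and $H(S_{\max}) = 2$, not $3$. The weights at $p$ are then $\{4,-1,-1\}$ and at $S_{\max}$ are $\{-4,1,1\}$, and this fixed-point data is fully consistent with the weight sum formula and with Lemma \ref{missinglemma}; moreover the analogue of your $\mathbb{Z}_n$-fixed-locus dimension argument (which you invoke in case (c) for $w=3$) gives nothing here, since both $p$ and $S_{\max}$ are isolated points with a $2$-dimensional $\mathbb{Z}_4$-fixed component, and the enumeration of weights closes without contradiction. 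So this configuration survives every constraint you have brought to bear, and the lemma is not yet proved.

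The missing ingredient is topological rather than arithmetic. The paper's proof pins down level $\pm 2$ first: if an isolated fixed point at $H = 2$ had weights $\{1,1,-4\}$, then by Lemma \ref{selfint} the trace $E_{p,2+\varepsilon}$ would be a sphere of negative self-intersection in $M_{2+\varepsilon}$; but $M_{2+\varepsilon}$ is homeomorphic to an $S^2$-bundle over the positive-genus surface $M_{\max}$, which admits no such sphere. This is exactly what eliminates your problematic configuration (apply it with $H \to -H$ to handle $\{4,-1,-1\}$ at level $-2$), and once level $\pm 2$ is fixed, the cases $|H(p)| \leq 1$ follow by pushing a weight $\geq 3$ up to a fixed point at level $2$ or $3$ — very much in the spirit of your gradient-sphere step, but without the loophole. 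Your $w=3$ analysis via $\mathbb{Z}_3$-fixed sets and a weight enumeration is essentially correct but becomes more elaborate than necessary once the level-$2$ constraint is in place; in any event it does not substitute for the self-intersection argument that the $w=4$ case genuinely requires.
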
\label{wtwo}
\begin{proof}
Any isolated fixed point $p$ with $H(p) = 2$ must have all positive weights equal to $1$ by Lemma \ref{missinglemma}. Hence the weights at $p$ are either $\{1,1,-4\}$ or $\{1,-2,-1\}$ by the weight sum formula \ref{relcho} (\ref{weightcho}). We claim that the combination $\{1,1,-4\}$ is impossible. Indeed, this would imply the existence of a sphere in  $M_{2+\varepsilon}$ with negative self-intersection (see Lemma \ref{selfint}), contradicting that $M_{2+\varepsilon}$ is homeomorphic to an $S^{2}$-bundle over a surface of positive genus. Hence the weights at $p$ must be $\{1,-2,-1\}$.

If $p$ satisfies $|H(p)| \leq 1$ then by the above it cannot have a weight greater than $2$, since this would imply the existence of a fixed point of level $H = 2$, by Lemma \ref{gradsphereint}, which does not have weights $\{1,-2,-1\}$. Repeating the same argument for $-H$ proves the result.
\end{proof}

\begin{lemma} \label{isolatedloc}
Suppose that $(M^{6},\omega)$ is a relative symplectic Fano $6$-manifold such that $M_{\min} $ and $ M_{\max} $ are surfaces of genus $g>0$. Assume additionally that $H(M) \subseteq [-3,3]$. Then for any isolated fixed point $p$ with weights $\{w_{1},w_{2},w_{3}\}$, we have that 
\begin{displaymath}
\alpha(p) = \frac{w_{1}+w_{2}+w_{3}}{w_{1}w_{2}w_{2}} \leq 0.
\end{displaymath}
\end{lemma}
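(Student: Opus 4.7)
The plan is to exploit the weight sum formula together with the severe constraint on weights provided by Lemma \ref{isosimp}, and reduce to a finite case analysis.

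First I would rewrite $\alpha(p)$ in terms of the Hamiltonian. By Proposition \ref{relcho} (the weight sum formula), we have $H(p) = -(w_1+w_2+w_3)$, so
\[
\alpha(p) \;=\; \frac{w_1+w_2+w_3}{w_1 w_2 w_3} \;=\; \frac{-H(p)}{w_1 w_2 w_3}.
\]
Thus the desired inequality $\alpha(p)\leq 0$ is equivalent to the statement that $H(p)$ and the product $w_1 w_2 w_3$ have the same sign (or that $H(p)=0$). So the task reduces to a sign comparison.

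Next I would observe that since $M_{\min}$ and $M_{\max}$ are $2$-dimensional surfaces, the isolated fixed point $p$ is non-extremal. In particular, the weights at $p$ cannot all be positive (which would make $p$ a local minimum) nor all be negative (which would make $p$ a local maximum), so $p$ must have at least one weight of each sign. Combined with Lemma \ref{isosimp}, which forces $|w_i|\in\{1,2\}$ for all $i$, this leaves only two sign patterns (up to reordering): either two positive weights and one negative weight, or two negative and one positive.

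The main (and only) step remaining is to check each sign pattern. In the ``two positive, one negative'' case the product $w_1w_2w_3$ is negative, and the sum $w_1+w_2+w_3$ is at least $1+1-2=0$ since the positive contributions are each at least $1$ and the negative contribution has magnitude at most $2$; hence $H(p)=-(w_1+w_2+w_3)\leq 0$, so $-H(p)$ and $w_1w_2w_3$ are both $\leq 0$ (strictly $<0$ unless the sum vanishes), giving $\alpha(p)\leq 0$. The ``two negative, one positive'' case is entirely symmetric: the product is positive and the sum is at most $0$. No case presents any genuine obstacle, since the magnitude bound $|w_i|\leq 2$ from Lemma \ref{isosimp} exactly prevents the lone weight from dominating the other two in magnitude, which is the only way the signs of the sum and product could conspire to make $\alpha(p)$ positive.
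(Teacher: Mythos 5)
Your proof is correct and follows essentially the same approach as the paper: both arguments reduce to the sign-pattern case analysis enabled by the bound $1\le|w_i|\le 2$ from Lemma \ref{isosimp}, with the observation that $p$ non-extremal forces mixed signs. Your preliminary reformulation of $\alpha(p)$ via the weight sum formula as $-H(p)/(w_1w_2w_3)$ is a harmless detour that the paper omits, since the inequality $w_1+w_2+w_3\ge 0$ (in the two-positive-one-negative case) can be read off directly without passing through $H(p)$.
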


\begin{proof}Let $p$ be an isolated fixed point with weights $\{w_{1},w_{2},w_{3}\}$, we have $1\le |w_{i}| \leq 2$ for each $i$ by Lemma \ref{isosimp}. These weights can not be of the same sign, so either two of them are positive and one negative, or otherwise. Consider the first case. Then the sum of the two positive $w_{i}$ is at least $2$, so we have $w_1+w_2+w_3\ge 0$. It follows that $\frac{w_1+w_2+w_3}{w_1w_2w_3}\le 0$. The second case is similar.  
\end{proof}

\begin{lemma} \label{betapos}Suppose that $(M^{6},\omega)$ is a relative symplectic Fano $6$-manifold such that $M_{\min} $ and $ M_{\max} $ are surfaces of genus $g>0$. Assume additionally that $H(M) \subseteq [-3,3]$. Then
\begin{displaymath}
\sum_{\Sigma \in \mathcal{S}_{+}} \beta(\Sigma) \geq 0.
\end{displaymath}
\end{lemma}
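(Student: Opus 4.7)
The plan is to invoke the equivariant localisation formula for $c_1^{S^1}(M)$ supplied by Lemma \ref{bigloc}:
$$0 \;=\; \sum_{p} \alpha(p) \;+\; \sum_{\Sigma \in \mathcal{S}_{+}} \beta(\Sigma) \;+\; \sum_{\Sigma \in \mathcal{S}_{0}} \beta(\Sigma).$$
By Lemma \ref{isolatedloc} (which is precisely where the hypothesis $H(M)\subseteq[-3,3]$ gets used), each isolated contribution satisfies $\alpha(p)\leq 0$, hence $\sum_p \alpha(p)\leq 0$. The problem therefore reduces to showing that the contribution from the fixed spheres is non-positive, and in fact I will show each term $\beta(\Sigma)<0$ for $\Sigma\in\mathcal{S}_0$.

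The key preliminary step is to pin down the location and weights of fixed spheres. Corollary \ref{nosphere} gives $H(\Sigma)\geq 0$; applying the same corollary to the reversed $S^1$-action, whose Hamiltonian is $-H$ (and for which $M_{\max}$, still a positive genus surface, plays the role of the minimum), yields $H(\Sigma)\leq 0$. Hence every fixed sphere lies on the zero level of $H$, and Proposition \ref{relcho} forces the weights at $\Sigma$ to take the form $\{w,-w,0\}$ for some $w\neq 0$.

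With these weights, a short direct computation of $\beta(\Sigma)$ using the definition in Lemma \ref{bigloc} reduces to the identity
$$\beta(\Sigma) \;=\; -\frac{n_1+n_2+2}{w^2} \;=\; -\frac{\langle c_1(M),\Sigma\rangle}{w^2},$$
since $g(\Sigma)=0$. Now $[\Sigma]$ is represented by a two-sphere, so the relative symplectic Fano hypothesis gives $\langle c_1(M),\Sigma\rangle = \int_\Sigma \omega$, and this is strictly positive since $\Sigma$ is a symplectic submanifold. Hence $\beta(\Sigma)<0$ for every fixed sphere, and the desired inequality follows.

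I do not anticipate a serious obstacle here: the only slightly subtle point is confirming that Corollary \ref{nosphere} may be applied to $-H$, which requires noting that under reversal of the $S^1$-action the hypotheses of the corollary transfer (positive-genus extremal surface, normalised Hamiltonian via Proposition \ref{relcho}). Everything else is an elementary application of the localisation formula combined with the adjunction-type identity $\langle c_1(M),\Sigma\rangle = 2-2g(\Sigma)+n_1+n_2$ for a fixed surface.
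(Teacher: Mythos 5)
Your proof is correct and follows essentially the same route as the paper: localise $c_1^{S^1}(M)$, use Lemma \ref{isolatedloc} to show $\alpha(p)\leq 0$, pin down fixed spheres to $H=0$ via Corollary \ref{nosphere} (applied to both $H$ and $-H$), and compute $\beta(\Sigma)$ for genus-zero fixed surfaces to be $-\langle c_1(M),\Sigma\rangle/w^2\leq 0$ by the Fano condition. The only cosmetic difference is that the paper immediately invokes effectiveness to set $w=1$, whereas you keep a general $w$ — but since $\gcd(w,-w)=w$ must equal $1$ for the action to be effective near $\Sigma$, this reduces to the same thing.
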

\begin{proof}
We start by calculating $\beta(\Sigma)$ for any $\Sigma \in \mathcal{S}_{0}$. By Corollary \ref{nosphere}, $H(\Sigma) = 0$. Hence, the weights at $\Sigma$ are $\{-1,1,0\}$ by the weight sum formula \ref{relcho} (\ref{weightcho}) and the fact that the $S^{1}$-action is effective. Therefore \begin{displaymath}
\beta(\Sigma) = -\langle c_{1}(M),\Sigma \rangle \leq 0,
\end{displaymath}
by the relative symplectic Fano condition applied to the sphere $\Sigma$.

By Lemma \ref{isolatedloc}, $\alpha(p) \leq 0$ for all $p \in \mathcal{P}$. Combining with the above we have that: 
\begin{displaymath}
\sum_{\Sigma \in \mathcal{S}_{0}} \beta(S)+\sum_{p \in \mathcal{P}} \alpha(p) \leq 0.
\end{displaymath}
Therefore by Equation (\ref{bigloceq}) of Lemma \ref{bigloc} we have that
\begin{displaymath} 
\sum_{\Sigma \in \mathcal{S}_{+}} \beta(\Sigma) \geq 0 .
\end{displaymath}
\end{proof}

\begin{proof}[Proof of Theorem \ref{smallh}] We will split the proof into two parts, depending on whether $\mathcal{F}(M)$ is reflective or not.

1. \textit{The case when $\mathcal{F}(M)$ is not reflective.} If $\mathcal{Q}$ is the graph of fixed points of $\mathcal{F}(M)$ then $\mathcal{Q} \cong \mathcal{G}_{g}$ by Corollary \ref{iso}. On the other hand, all $\Sigma \in \mathcal{S}_{+}$ are of genus $g$. 

By applying Lemma \ref{fourcor} to $\mathcal{F}(M)$ we have that \begin{displaymath}
\sum_{\Sigma_{i} \in \mathcal{S}_{+}} \frac{2-2g}{w_{i,1}w_{i,2}} = 0.
\end{displaymath} 
Therefore substituting this into Equation (\ref{longeq}) from the last line of the proof of Theorem \ref{smallh}, over each connected component $C_{j} \subset \mathcal{G}_{g}$ we obtain:
\begin{displaymath}
\sum_{\Sigma_{i} \in \mathcal{S}_{+}} (\beta(\Sigma_{i})+\langle c_{1}(M),\Sigma_{i} \rangle) = n(2-2g)+\sum_{\Sigma_{i} \in C_{j}}\sum_{i=1}^{n-1} (n_{i,2}+n_{i+1,2})\left(1 - \frac{1}{w_{i,2}^{2}}\right).  
\end{displaymath}
Hence,
\begin{displaymath} 
\sum_{\Sigma \in \mathcal{S}_{+}} (\beta(\Sigma)+\langle c_{1}(M),\Sigma \rangle) \leq  n(2-2g).   
\end{displaymath}
By Lemma \ref{betapos} the sum of $\beta(\Sigma)$ is non-negative, so we obtain:
\begin{displaymath}
\sum_{\Sigma \in \mathcal{S}_{+}}  \langle c_{1}(M),\Sigma \rangle \leq n(2-2g).
\end{displaymath}
This tell us that there exists a fixed surface $\Sigma'$ with $g(\Sigma') = g$ and such that $\langle c_{1}(M),\Sigma' \rangle \leq 2-2g. $

2. \textit{The case when $\mathcal{F}(M)$ is reflective.} Applying Corollary \ref{iso}.2
 for the reflective case, the number of fixed surfaces is equal to $\frac{\chi(\mathcal{F}(M))}{2}+1$.  Since $\mathcal{F}(M)$ is a toric del Pezzo surface $\chi(\mathcal{F}(M)) = 4$ or $6$, and so $\mathcal{S}_{+}$ contains either $3$ or $4$ surfaces.

Let us now show how the theorem follows from the following claim.
\begin{claim} \label{inclaim}
\begin{displaymath}
\sum_{\Sigma \in \mathcal{S}_{+}}\left(1+\frac{1}{w_{i,1} w_{i,2}}\right)(2-2g(\Sigma)) \leq 4(2-2g).
\end{displaymath}
\end{claim}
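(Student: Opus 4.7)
The strategy is to evaluate the left-hand side of Claim~\ref{inclaim} explicitly, by combining the localisation identity of Lemma~\ref{fourcor} applied to the symplectic fibre $\mathcal{F}(M)$ with the pairing of fixed points coming from the reflective involution. This should turn the inequality into a short finite case check governed by the invariant $k := \tfrac{1}{2}\chi(\mathcal{F}(M)) - 1 \in \{1,2\}$.

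First I will record consequences of the reflective hypothesis. Corollary~\ref{nonref} forces the weights at $M_{\min}$ and $M_{\max}$ to be $\{1,1,0\}$ and $\{-1,-1,0\}$, so the two extremal surfaces contribute exactly $2(2-2g) + 2(2-2g) = 4(2-2g)$ to the sum in the claim. By Lemma~\ref{strucjist} all non-extremal surfaces of $\mathcal{S}_+$ share a single positive genus $g' \geq 1$, and by Corollary~\ref{iso}.2 there are $k$ of them.

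Next I will apply Lemma~\ref{fourcor} to $\mathcal{F}(M)$, which by Proposition~\ref{fibredelpez} is a toric del Pezzo with isolated fixed points. The two extremal fixed points of $\mathcal{F}(M)$ sit inside $M_{\min}$ and $M_{\max}$, where by transversality the tangent space of $\mathcal{F}(M)$ picks up precisely the normal weights of the ambient fixed surface; they therefore have weights $\{1,1\}$ and $\{-1,-1\}$ and together contribute $1+1 = 2$ to Lemma~\ref{fourcor}. Using Corollary~\ref{iso}.2, the reflective pairing groups the remaining $2k$ non-extremal fixed points of $\mathcal{F}(M)$ into $k$ pairs, each pair corresponding to a non-extremal $\Sigma \in \mathcal{S}_+$ and consisting of two points both carrying weights $\{w_1(\Sigma), w_2(\Sigma)\}$. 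Lemma~\ref{fourcor} therefore collapses to
\[
2 + \sum_{\Sigma \text{ non-ext}} \frac{2}{w_1(\Sigma)\,w_2(\Sigma)} = 0, \qquad \text{i.e.} \qquad \sum_{\Sigma \text{ non-ext}} \frac{1}{w_1(\Sigma)\,w_2(\Sigma)} = -1.
\]

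Substituting these data into the left-hand side of the claim produces the identity
\[
\sum_{\Sigma \in \mathcal{S}_+}\!\Bigl(1 + \tfrac{1}{w_1 w_2}\Bigr)(2-2g(\Sigma)) = 4(2-2g) + (2-2g')(k-1),
\]
reducing the claim to the inequality $(2-2g')(k-1) \leq 0$. If $k=1$ this is automatic (equality holds), while if $k=2$ it follows from $g' \geq 1$, which is automatic since $g'$ is a positive integer. The only non-bookkeeping step, and the one I anticipate as the main obstacle, is verifying that the pairing supplied by Corollary~\ref{iso}.2 genuinely forces the Atiyah--Bott identity for $\mathcal{F}(M)$ to collapse into the single equality $\sum 1/(w_1 w_2) = -1$; everything else reduces to inserting the correct weight and genus data.
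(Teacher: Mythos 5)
Your proposal is correct, and it takes a genuinely different route from the paper's proof. The paper bounds each non-extremal summand individually: it notes that in the reflective case each non-extremal $\Sigma \in \mathcal{S}_+$ has weights $\{1,-1,0\}$, $\{-1,2,0\}$ or $\{-2,1,0\}$ (a consequence of the weight restrictions on $\mathcal{F}(M)$ furnished by Lemma~\ref{calc} / Lemma~\ref{isosimp}), so that $1 + \tfrac{1}{w_1 w_2}$ is $0$ or $\tfrac{1}{2}$, and multiplying by $2-2g' \leq 0$ makes each term non-positive. You instead apply the Atiyah--Bott identity of Lemma~\ref{fourcor} to the toric fibre $\mathcal{F}(M)$, which has only isolated fixed points, so it reads $\sum_{p} \frac{1}{a_p b_p} = 0$. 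The extremal fixed points of $\mathcal{F}(M)$ contribute $1+1=2$ (their weights $\{1,1\}$, $\{-1,-1\}$ come from transversality to $M_{\min}$, $M_{\max}$), and Corollary~\ref{iso}.2 pairs the $2k$ non-extremal fixed points into $k$ pairs with matching weights, giving $\sum_{\text{non-ext }\Sigma} \frac{1}{w_1 w_2} = -1$. Since Lemma~\ref{strucjist} forces a common genus $g'\geq 1$ on non-extremal surfaces, the left-hand side of the claim then equals exactly $4(2-2g) + (2-2g')(k-1)$, which is $\leq 4(2-2g)$ for $k\in\{1,2\}$. Your approach replaces the paper's termwise weight bookkeeping with a single localization identity, producing an exact formula rather than a bound; the paper's version is a bit more self-contained since it never needs to invoke Lemma~\ref{fourcor}, but both arguments rest on the same structural inputs (toric del Pezzo fibre, reflective pairing) and are equally valid.
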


To see that Theorem \ref{smallh} follows from this claim, we substitute this inequality into Equation (\ref{longeq}) from the proof of Lemma \ref{posgenus}. This substitution shows  that  $$
\sum_{\Sigma \in \mathcal{S}_{+}} (\beta(\Sigma)+\langle c_{1}(M),\Sigma \rangle )\leq 4(2-2g).
$$
On the other hand, Lemma \ref{betapos} tells us that the sum of $\beta(\Sigma)$ is positive. Hence, 
$$
\sum_{\Sigma \in \mathcal{S}_{+}}  \langle c_{1}(M),\Sigma \rangle \leq 4(2-2g).
$$
Since there are most $4$ fixed surfaces, there exists at least one fixed surface $\Sigma$ with $g(\Sigma) \geq g$ such that $\langle c_{1}(M),\Sigma \rangle \leq 2-2g$.
 
\textit{Proof of Claim \ref{inclaim}}. Since $\mathcal{F}(M)$ is reflective, the weights at $M_{\min},M_{\max}$ are $\{1,1,0\}$, $\{-1,-1,0\}$ respectively. By a direct computation, the contribution of the extremal fixed surfaces to the left hand side is $4(2-2g)$. Hence, it is sufficient to show that the contribution of non-extremal fixed surfaces must be non-positive. This also follows from direct computation, once one notes that the weights at any such surface must be equal to $\{1,-1,0\}$, $\{-1,2,0\}$ or $\{-2,1,0\}$.  \end{proof}

\section{Proof of Theorem \ref{maintheorem} and Corollary \ref{chern}} \label{hirsection} 

In this section we finalise the proof of Theorem \ref{maintheorem} and deduce Corollary \ref{chern} from it. The only  remaining case of Theorem \ref{maintheorem} to prove is the following one.

\begin{theorem}\label{MmindlePezzo}
Let $M$ be a symplectic Fano $6$-manifold with a Hamiltonian $S^1$-action and such that $M_{\max}$ is a point and $M_{\min}$ has dimension $4$. Then $b_2(M_{\min})\le 9$, and moreover $M_{\min}$ is diffeomorphic to a del Pezzo surface.
\end{theorem}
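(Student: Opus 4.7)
The plan is to combine the Duistermaat--Heckman strategy from Theorem \ref{fourtheo} with the equivariant Hirzebruch $\chi_y$-genus localisation used to prove Corollary \ref{chern}.

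First, the effectiveness of the action forces the weights at $M_{\min}$ to be $\{0,0,1\}$, so the normal bundle $N$ of $M_{\min}$ is a complex line bundle of weight $1$, and the weight sum formula gives $H(M_{\min})=-1$. By Theorem \ref{Li} applied to the point $M_{\max}$, both $M$ and $M_{\min}$ are simply connected. Proposition \ref{listofweights} and Lemma \ref{nAnB} show that the only other fixed components are isolated points of types $A$, $B$, $C$ (with weights $\{1,-1,-2\}$, $\{2,-1,-1\}$, $\{1,-1,-1\}$) and fixed surfaces on the level set $H=0$ with weights $\{1,-1,0\}$.

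To prove the diffeomorphism statement I would mimic the proof of Theorem \ref{fourtheo}: since $(-1,0)$ is disjoint from the set of critical values of $H$, Duistermaat--Heckman gives $[\omega_x]=c_1(TM_{\min})-x\,c_1(N)$ on $M_x\cong M_{\min}$ for $x\in(-1,0)$. One would then apply a \cite[Theorem 10.1]{GS}-style cut along $H=0$ to produce a Hamiltonian $S^1$-manifold $(\tilde M,\tilde\omega,\tilde H)$ whose action has no fixed points in $\tilde H^{-1}(-\varepsilon,\varepsilon)$ and with $\tilde M_0\cong M_{\min}$; then Duistermaat--Heckman applied to $\tilde M$ identifies $\tilde M_0$ as a symplectic Fano $4$-manifold, and Ohta--Ono \cite[Theorem 1.3]{OhtaOno} makes it a del Pezzo up to diffeomorphism. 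The main obstacle here is that level $H=0$ carries not only fixed surfaces (the semi-free case already handled in Theorem \ref{fourtheo}) but also the isolated type $B$ fixed points with weights $\{2,-1,-1\}$ and $\Z_2$-stabilisers along two directions, so the \cite{GS} cutting must be carried out in the orbifold category, followed by a symplectic resolution of the resulting quotient singularities on $\tilde M_0$ (or, alternatively, after an equivariant modification of $M$ near the type $B$ points that removes the non-semi-free behaviour on level $0$).

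The bound $b_2(M_{\min})\le 9$ is an immediate consequence of the del Pezzo classification, but it can also be obtained directly from the equivariant $\chi_y$-genus localisation, following the method used for Corollary \ref{chern}. Rigidity of $\chi_y^{S^1}(M)$ for symplectic Fano $6$-manifolds, together with $\chi_0(M)=1$, forces $\chi_y(M)=1+ay-ay^2-y^3$ for some integer $a$; localising at the fixed components with contribution $(-y)^{n^-}\chi_y(F)$ (where $n^-$ is the number of negative normal weights), one has
\[
\chi_y(M)=\chi_y(M_{\min})+(n_A+n_B+n_C)\,y^2-y\sum_{\Sigma}\chi_y(\Sigma)-y^3.
\]
Since $M_{\min}$ is a simply connected $4$-manifold with Todd genus $\chi_0(M_{\min})=\chi_0(M)=1$, one deduces $b_+(M_{\min})=1$ and $\chi_y(M_{\min})=1-b_2(M_{\min})\,y+y^2$; substituting $\chi_y(\Sigma_g)=(1-g)(1-y)$ and comparing coefficients of $y$ and $y^2$ yields $b_2(M_{\min})=1+n_A+n_B+n_C$. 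Coupled with the signature identity $c_1^2(M_{\min})=10-b_2(M_{\min})$ valid for simply connected $4$-manifolds with $b_+=1$ and Todd genus $1$, the bound $c_1^2(M_{\min})\ge 1$ (which follows from integrality of $c_1^2$ together with the positivity of $[\omega_x]^2$ as $x\to 0^-$) gives $b_2(M_{\min})\le 9$.
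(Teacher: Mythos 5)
Your argument splits into two halves, and they have very different status.

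\textbf{The bound $b_2(M_{\min})\le 9$.} Your route through the equivariant $\chi_y$-genus is correct and genuinely different from the paper's. You first use the $\chi_y$-localisation together with the duality $\chi_y=(-y)^3\chi_{1/y}$: the $y^3$-coefficient is computed purely from $M_{\max}$, which forces $\chi_0(M)=1$ and hence (again by localisation) $\chi_0(M_{\min})=1$, so $b_+(M_{\min})=1$. Matching the $y$- and $y^2$-coefficients then gives $b_2(M_{\min})=1+n_A+n_B+n_C$ (the contributions of the genus-$g$ fixed surfaces cancel, so you don't need to know their genera). Combining Noether, $c_1^2(M_{\min})=10-b_2(M_{\min})$, with $c_1^2(M_{\min})={\rm DH}(0)>0$ and integrality yields $b_2\le 9$. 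The paper instead proves $b_2(M_{\min})=1+n_A+n_B+n_C$ by counting weighted blow-ups along the gradient flow (Lemma \ref{b2fixpoints}) and bounds $n_A+n_B+n_C$ by evaluating ${\rm Vol}(M_0)$ via Corollary \ref{isoDuisCor} together with Lemma \ref{nAnB}(1)--(2). Both approaches ultimately hinge on positivity of ${\rm Vol}(M_0)$; yours replaces the chain-counting in Lemma \ref{nAnB} by the $\chi_y$-duality and gains the observation that the $\Sigma$-contributions drop out automatically. One small caution: ``positivity of $[\omega_x]^2$ as $x\to0^-$'' alone gives $c_1^2\ge 0$; you need that the DH function is continuous and still strictly positive at $x=0$ (because $H_{\max}>0$) to get $c_1^2>0$ and then $\ge 1$.

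\textbf{The diffeomorphism type.} Here your proposal has a genuine gap that you identify but do not close. The semi-free argument of Theorem \ref{fourtheo} relies on Corollary \ref{newref}, which guarantees that the only fixed components on level $H=0$ are surfaces with weights $\{1,-1,0\}$, so the cut space $\tilde M_0$ is a smooth symplectic $4$-manifold and Ohta--Ono applies. In your case level $0$ also carries the type $B$ isolated fixed points with weights $\{2,-1,-1\}$, so the \cite{GS} cut at level $0$ produces a symplectic orbifold with isolated $\Z_2$-quotient singularities. Neither of your two suggested repairs actually works as stated: a symplectic resolution of $\tilde M_0$ changes $c_1$ by exceptional classes, so the resolved surface is no longer symplectic Fano, and Ohta--Ono cannot be applied to it; and an ``equivariant modification of $M$ near the type $B$ points'' risks altering the diffeomorphism type of $M_{\min}$, which is precisely what you are trying to determine. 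The paper sidesteps these difficulties entirely: it observes that $M_{H_{\max}-\varepsilon}$ is a (possibly weighted) projective plane containing a smooth sphere of positive self-intersection, and that the partial gradient map $gr^{H_{\max}-\varepsilon}_{-1+\varepsilon}$ extends to a diffeomorphism on the complement of finitely many points because all non-extremal isolated fixed points have Morse index $4$ (Proposition \ref{listofweights}, Theorem \ref{bimeromorphic}(2)). Perturbing the sphere off those indeterminacy points and pushing it down produces a smoothly embedded sphere of positive self-intersection in $M_{\min}$, so Theorem \ref{ruledcriterion} gives rationality, and $b_2\le 9$ then pins it down to a del Pezzo. I would recommend adopting the paper's descent-of-sphere argument for this half.
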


Let us explain why this result indeed finishes the proof of Theorem \ref{maintheorem}.

\begin{proof}[Proof of Theorem \ref{maintheorem}] The manifold $M_{\min}$ can be of dimensions $0$, $2$, or $4$ and so we consider separately these three cases.

1) $\dim(M_{\min})=0$. In this case $M_{\min}$ is a point since it is connected.

2) $\dim(M_{\min})=2$.  In this case three possibilities can occur. First, $\dim(M_{\max})=0$, in which case $\pi_1(M_{\min})=\pi_1(M_{\max})=0$ by Theorem  \ref{Li} and so $M_{\min}$ is a $2$-sphere. Second case is when $\dim(M_{\max})=2$. This case is covered by Theorem \ref{theo}. The case when $\dim(M_{\max})=4$ follows from Theorem \ref{fourtheo}. Indeed, to apply the theorem invert the $S^1$-action, which changes the sign of the Hamiltonian and swaps $M_{\min}$ with $M_{\max}$.

3) $\dim(M_{\min})=4$. In this case we consider two possibilities. First is when $\dim(M_{\max})\ge 2$, this is treated in Theorem \ref{fourtheo}. Second case, when $\dim(M_{\max})=0$, is the content of Theorem \ref{MmindlePezzo}. 
\end{proof}

%
%
%
%
%
%
%
%
%

\subsection{Proof of Theorem \ref{MmindlePezzo}} \label{theorem04sec}

Recall briefly results of Section \ref{restictions04} which give restrictions on $M^{S^1}$ in the case when $\dim(M_{\min})=4$ and $\dim(M_{\max})=0$. We established in Proposition \ref{listofweights} that the weights at $M_{\max}$ can be either $\{-1,-1,-1\}$ or $\{-1,-1,-2\}$. 

Additionally to this, we explained that isolated non-extremal fixed points in $M$ can be of types $A$, $B$ and $C$, that have the following weights respectively.
$$\{1,-1,-2\}:\:{\rm type}\; A,\;\; \{2,-1,-1\}:\:{\rm type}\; B,\;\; \{1,-1,-1\}:\:{\rm type}\; C.$$
The number of fixed points of types $A$, $B$, and $C$ are denoted $n_A$, $n_B$, and $n_C$ respectively.

The proof of Theorem \ref{MmindlePezzo} uses the following lemma. 

\begin{lemma}\label{b2fixpoints} Let $M$ be as in Theorem  \ref{MmindlePezzo}. Then $b_2(M_{\min})$ is equal to the number of isolated fixed points in $M$.
\end{lemma}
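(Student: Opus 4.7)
The plan is to exploit the fact that for a Hamiltonian $S^1$-action on a compact symplectic manifold the Hamiltonian $H$ is a perfect Morse--Bott function (classical result of Atiyah--Bott, Kirwan), and to extract the desired identity by comparing the Poincar\'e polynomial decompositions coming from $H$ and from $-H$. The numerics will follow at once from the classification of fixed data already established in Proposition \ref{listofweights} and Lemma \ref{nAnB}.

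First I would catalogue the Morse--Bott indices. With respect to $H$: the index of $M_{\min}$ is $0$; any fixed surface $\Sigma$ is by Lemma \ref{nAnB}(3) contained in $H^{-1}(0)$ with weights $\{w,-w,0\}$, hence a single negative weight and index $2$; by Proposition \ref{listofweights} every non-extremal isolated fixed point has weights $\{1,-1,-2\}$, $\{2,-1,-1\}$ or $\{1,-1,-1\}$, i.e.\ exactly two negative weights, so index $4$; and $M_{\max}$ is a point with three negative weights, so index $6$. The Morse--Bott indices with respect to $-H$ are obtained by counting positive weights instead: $M_{\max}$ has index $0$, every non-extremal isolated fixed point and every fixed surface has index $2$, and $M_{\min}$ (with weights $\{0,0,w\}$, $w>0$) has index $2$.

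Next, denoting by $n_{\mathrm{iso}}=n_A+n_B+n_C$ the number of non-extremal isolated fixed points, by $N_\Sigma$ the number of fixed surfaces, and by $P_t$ the Poincar\'e polynomial, perfectness for $H$ and $-H$ yields
\begin{equation*}
P_t(M) \;=\; P_t(M_{\min}) + t^{2}\!\!\sum_{\Sigma}P_t(\Sigma) + n_{\mathrm{iso}}\,t^{4} + t^{6},
\end{equation*}
\begin{equation*}
P_t(M) \;=\; 1 + n_{\mathrm{iso}}\,t^{2} + t^{2}\!\!\sum_{\Sigma}P_t(\Sigma) + t^{2}P_t(M_{\min}).
\end{equation*}
Reading off the coefficient of $t^{2}$ in each gives $b_2(M)=b_2(M_{\min})+N_\Sigma$ from the first and $b_2(M)=n_{\mathrm{iso}}+N_\Sigma+1$ from the second. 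Subtracting yields $b_2(M_{\min})=n_{\mathrm{iso}}+1$, and since $M_{\max}$ is the only extremal isolated fixed point ($M_{\min}$ being $4$-dimensional), the right-hand side is exactly the total number of isolated fixed points of the action on $M$, as required.

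No serious obstacle is anticipated: the whole argument is a bookkeeping exercise once perfectness of $H$ and the classification of weights from Section \ref{restictions04} are in hand. The only mild point to check is that the contributions $t^{2}P_t(\Sigma)$ from fixed surfaces cancel identically between the two expansions, which is immediate since the index of each $\Sigma$ is the same for $H$ and for $-H$.
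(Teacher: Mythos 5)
Your argument is correct, but it takes a genuinely different route from the paper's. The paper computes $b_2(M_{\min})$ by a wall-crossing argument on the reduced spaces: starting from $M_{H_{\max}-\varepsilon}$, which is a weighted projective plane with $b_2=1$, it observes that passing each non-extremal critical level corresponds to a weighted blow-up at the isolated fixed points there (they all have Morse index $4$, hence complex index $(1,2)$), while fixed surfaces do not change $b_2$ of the quotient; iterating down to $M_{\min}$ gives the count. Your approach instead works upstairs on $M$ itself, invoking the Kirwan/Atiyah--Bott perfectness of the Hamiltonian as a Morse--Bott function, and balances the two Poincar\'e-polynomial decompositions coming from $H$ and $-H$. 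Both hinge on the same index bookkeeping extracted from Proposition \ref{listofweights} and Lemma \ref{nAnB}, but your version avoids the discussion of how the reduced spaces change topology and replaces the blow-up description by a single cancellation of the surface contributions in degree $2$. The paper's route has the advantage that it also identifies the topology of $M_{\min}$ as an iterated blow-up of a (weighted) projective plane, information that is reused immediately afterwards in the diffeomorphism-type part of the proof of Theorem \ref{MmindlePezzo}; your route is shorter and more self-contained if only the numerical identity $b_2(M_{\min})=\#\{\text{isolated fixed points}\}$ is wanted. One small thing worth making explicit if you write this up: perfectness requires the negative normal bundles of the critical submanifolds to be orientable, which here is automatic because a compatible $S^1$-invariant almost complex structure makes them complex.
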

\begin{proof} Let us see how the topology of the reduced space $M_c$ changes as $c$ decreases from $H_{\max}-\varepsilon$ to $H_{\min}$. For $c=H_{\max}-\varepsilon$, and $\varepsilon$ small enough, the orbifold $M_c$ is a weighted projective plane and so $b_2(M_c)=1$. According to Proposition \ref{listofweights} all non-extremal isolated fixed points on $M$ have index $4$. For this reason, when $c$ passes a non-extremal critical level, the space $M_c$ undergoes a weighted blow-up at all isolated fixed points at this level. At the same time, fixed surfaces in $M$ don't affect the topology of $M_c$. This proves the lemma. \end{proof}

\begin{proof}[Proof of Theorem \ref{MmindlePezzo}.] We will first deal with the topology of $M_{\min}$ and then will deal with its diffeomorphism type.

{\it 1) Topology of $M_{\min}$.} According to Lemma \ref{b2fixpoints}, in order to prove that $b_2(M_{\min})\le 9$ we need to show that $M$ has at most $8$ non-extremal isolated fixed points. In notations of Proposition \ref{listofweights} we need to prove $n_A+n_B+n_C\le 8$. To prove this bound we  will evaluate the volume of $M_0$ using Corollary \ref{isoDuisCor}.  Consider now two cases.

Suppose first $M_{\max}$ has weights $\{-1,-1,-1\}$. By Lemma \ref{nAnB} 3)  fixed points in the range $H>0$ are isolated. By Proposition \ref{listofweights} non-extremal fixed points in the range $H>0$ are of type $A$ or $C$. Hence, we can use Corollary \ref{isoDuisCor} (\ref{isoDuis}) to get
$${\rm Vol}(M_0)=\int_{M_0}\omega_0^2=9-2n_A-n_C.$$
Since the volume ${\rm Vol}(M_0)$ is positive and $n_B=n_A$ by Lemma \ref{nAnB} 1), the desired bound  $n_A+n_B+n_C\le 8$ follows from this equality.

Suppose next that $M_{\max}$ has weights $\{-1,-1,-2\}$.  Applying Equation (\ref{isoDuis}) as above we get
$$\int_{M_0}\omega_0^2=8-2n_A-n_C.$$
By Lemma \ref{nAnB} 2)  $n_B=n_A+1$ and so the desired bound  
$n_A+n_B+n_C\le 8$ is proven.

{\it 2) Diffeomorphism type.} Since $b_2(M_{\min})\le 9$, in order to prove that $M_{\min}$ is diffeomorphic to a del Pezzo surface, it is sufficient to show that it is a rational symplectic $4$-manifold. Let us explain how to deduce this from Theorem \ref{ruledcriterion}. 

According to Theorem \ref{ruledcriterion} it is enough to produce in $M_{\min}$ a smoothly embedded sphere with positive self-intersection. 

Assume first that $M_{\max}$ is a point with weights $\{-1,-1,-2\}$. In this case the reduced space $M_{4-\varepsilon}$ is symplectomorphic to a singular quadratic surface (a cone over a conic). Hence $M_{4-\varepsilon}$ contains a smooth sphere $S$ with self-intersection $2$. 

Note now that the partially defined map $gr^{4-\varepsilon}_{-1+\varepsilon}: M_{4-\varepsilon}\dashrightarrow M_{-1+\varepsilon}$ can be extended to a diffeomorphism on a complement to a finite number (at most $7$) of points in $M_{4-\varepsilon}$. This follows from the fact that  non-extremal isolated fixed points in $M$ have index $4$, and this can be achieved using   Theorem \ref{bimeromorphic} 2) as in the proof of Theorem \ref{fibreinregular}. Let us now perturb the sphere $S$ in $M_{4-\varepsilon}$ to disjoin it from the finite collection of indeterminacy  points of $gr^{4-\varepsilon}_{-1+\varepsilon}$  and take in $M_{-1+\varepsilon}$ the smooth sphere $S'=gr^{4-\varepsilon}_{-1+\varepsilon}(S)$. Clearly, $(S')^2=S^2=2$.

The case when $M_{\max}$ is a point with weights $\{-1, -1, -1\}$ is even easier. Instead of a conic we can start with a complex line in $M_{3-\varepsilon}\cong \mathbb CP^2$. This will give us a smooth sphere with self-intersection $1$ in $M_{-1+\varepsilon}$ in the same way as in the first case. \end{proof}

\subsection{Proof of Corollary \ref{chern}}
Now we deduce Corollary \ref{chern} from Theorem \ref{maintheorem}, i.e. we show that $c_{1}c_{2}(M) =24$.

\begin{proof}[Proof of Corollary \ref{chern}.]

Let $\chi_{y}(M) \in \Z[y]$ denote the  Hirzebruch genus of $M$. 
We have the following localisation formula \cite{F}
\begin{displaymath}
\chi_{y}(M) = \sum_{F \subset M^{S^{1}}} (-y)^{d_{F}} \chi_{y}(F),
\end{displaymath} 
where $d_{F}$ is the number of negative weights at the fixed submanifold $F$ (hence $\frac{1}{2}$ times the Morse-Bott index of $F$).
 
 The Todd genus of $M$ may be expressed in terms of Chern numbers $$td(TM)[M] = \frac{1}{24}c_{1}c_{2}(M).$$ The Todd genus of $M$ is also equal to the constant term of $\chi_{y}(M)$ \cite{F}. The only fixed submanifold that that can contribute to the constant term of $\chi_{y}(M)$ must have $d_{F} = 0$. In other words the Todd genus of $M$ is equal to the Todd genus of $M_{\min}$. Hence it is sufficient to show that the Todd genus of $M_{\min}$ is $1$. This is immediate since $M_{\min}$ is diffeomorphic to a del Pezzo surface, a $2$-sphere or a point by Theorem \ref{maintheorem}. \end{proof}

\appendix

\section{Appendix. Smoothing K\"ahler orbifolds}\label{smoothingSection}

The main goal of this appendix is to show how to smooth locally the metric on $2$-dimensional K\"ahler orbifolds with a semi-toric structure at the orbifold locus (see Definition \ref{semitoricdef}). The main result is Theorem \ref{smoothingdisivors}, it shows how to smooth an orbi-metric with singularities along a collection of divisors to a metric which has only isolated quotient singularities. The smoothing preserves the cohomology class of the metric and does not change it outside of a small neighbourhood of the orbifold locus. The main tool for proving   Theorem \ref{smoothingdisivors} is the technique for gluing K\"ahler metrics based on \emph{ regularised maximum}, which we recall first.

\subsection{Regularised maximum}

Using regularised maximum  one can in certain circumstances  glue together K\"ahler metrics defined on open sets, covering a complex manifold. After recalling the definition, we state  Lemma \ref{gluing} which follows from  \cite[Lemma 5.18]{DE}.

Let $\theta\in \mathbb C^{\infty}(\mathbb R,\mathbb R)$  be a non-negative function with support in $[-1,1]$ such that $\int_{\mathbb R}\theta(h)dh=1$ and  $\int_{\mathbb R}h\theta(h)dh=0$.

\begin{definition} For $\eta \in \mathbb{R}_{>0}$, the \emph{regularised maximum} $M_{\eta}$ of functions $t_1,\ldots, t_p$ (given on any space) is defined as follows
$$M_{\eta}(t_1,\dots, t_p)=\int_{\mathbb R^p}\max\{t_1+h_1,\dots,t_p+h_p\}\Pi_{1\le j\le p}\theta(h_j/\eta)dh_1\dots dh_p$$
\end{definition}

\begin{lemma}\label{gluing} Let $U$ be an open complex manifold covered by two open submanifolds, $U=U_1\cup U_2$ and let $\partial U_1, \partial U_2$ be the boundaries of the closures of $U_1$ and $U_2$.  Let $\omega'$ be a closed real $(1,1)$-form on $U$. Suppose that on each $U_i$ there is a K\"ahler form $\omega_i$ and a continuous function $\varphi_i$ extendible to $\partial U_i$, so that the following conditions hold.

1) $\omega'+\sqrt{-1}\partial\bar\partial \varphi_i=\omega_i$ on $U_i$.

2) There is a constant $c>0$ such that $\varphi_1-c>\varphi_2$ on $U_1\cap  \partial U_2$ and $\varphi_2-c>\varphi_1$ on $U_2\cap \partial U_1$.

Then there is a K\"ahler metric $\omega$ on $U$, that coincides with $\omega_1$ on $U_1\setminus U_2$ and with $\omega_2$ on $U_2\setminus U_1$. Such $\omega$ can be expressed as 
\begin{equation}
\omega=\omega'+\sqrt{-1}\partial\bar\partial M_{\eta}(\varphi_1, \varphi_2),
\end{equation} 
where $M_{\eta}$ is the regularised maximum with $\eta$ small enough. 
 
\end{lemma}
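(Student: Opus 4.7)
The plan is to patch together $\omega_1$, $\omega_2$ and the ``blended'' form $\omega' + \sqrt{-1}\partial\bar\partial M_\eta(\varphi_1,\varphi_2)$ by choosing $\eta$ small enough that the regularised maximum agrees with the genuine maximum in the relevant regions. First I would record the basic properties of $M_\eta$: it is smooth; it is non-decreasing and convex in each variable; it is translation invariant in the sense that $M_\eta(t_1+s,t_2+s)=M_\eta(t_1,t_2)+s$ (this follows from $\int\theta=1$); and, crucially, if $t_i>t_j+2\eta$ then $M_\eta(t_1,\ldots,t_p)=t_i$ (this uses that $\theta$ is supported in $[-1,1]$).

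Next, using condition 2) together with the continuity of $\varphi_1,\varphi_2$ on the closures of $U_1,U_2$, shrink a neighbourhood of $U_1\cap \partial U_2$ to an open set $V_1\subset U_1\cap U_2$ on which $\varphi_1-\varphi_2>c/2$; similarly produce $V_2$ near $U_2\cap\partial U_1$ with $\varphi_2-\varphi_1>c/2$. Fix any $\eta<c/4$. Then on $V_1$ we have $M_\eta(\varphi_1,\varphi_2)=\varphi_1$, and on $V_2$ we have $M_\eta(\varphi_1,\varphi_2)=\varphi_2$. Consequently, if we define
\begin{equation*}
\omega=\begin{cases} \omega_1 & \text{on } U_1\setminus U_2,\\ \omega_2 & \text{on } U_2\setminus U_1,\\ \omega'+\sqrt{-1}\partial\bar\partial M_\eta(\varphi_1,\varphi_2) & \text{on } U_1\cap U_2,\end{cases}
\end{equation*}
the three definitions agree on the overlaps: on $V_1$ the blended form equals $\omega'+\sqrt{-1}\partial\bar\partial\varphi_1=\omega_1$, and symmetrically on $V_2$. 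This gives a well-defined smooth real $(1,1)$-form on $U$ in the correct cohomology class.

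Finally, I need to verify that $\omega$ is K\"ahler, i.e.\ that $\omega' + \sqrt{-1}\partial\bar\partial M_\eta(\varphi_1,\varphi_2)$ is positive on $U_1\cap U_2$. This is where the translation invariance of $M_\eta$ enters decisively. Locally write $\omega'=\sqrt{-1}\partial\bar\partial\rho$ for a smooth potential $\rho$; then $\rho+\varphi_i$ is strictly plurisubharmonic because $\omega_i=\sqrt{-1}\partial\bar\partial(\rho+\varphi_i)>0$. By translation invariance,
\begin{equation*}
\rho+M_\eta(\varphi_1,\varphi_2)=M_\eta(\rho+\varphi_1,\rho+\varphi_2),
\end{equation*}
and the standard fact that $M_\eta$ applied to (strictly) plurisubharmonic functions produces a (strictly) plurisubharmonic smooth function, which is the essential content of \cite[Lemma 5.18]{DE}, then yields positivity of the blended form.

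The main obstacle I anticipate is the last step: preservation of strict positivity by $M_\eta$. Convexity of $M_\eta$ only immediately yields preservation of plurisubharmonicity via integration of the convex combinations $\max(u_i+h_i)$; to deduce strict positivity one has to argue either by a uniform lower bound for $\omega_1,\omega_2$ on compact subsets of $U_1\cap U_2$ together with smoothness of $M_\eta$, or by appealing directly to the quasi-plurisubharmonic version of Demailly's result. Once this point is in hand the remainder of the proof is elementary book-keeping.
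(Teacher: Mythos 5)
Your proof is correct and takes essentially the same route as the paper, whose own proof simply cites \cite[Lemma 5.18]{DE}; what you wrote is a faithful unwinding of Demailly's argument (locality of $M_\eta$ once $\eta<c/4$, translation invariance, and preservation of plurisubharmonicity under regularised max). The strict-positivity point you flag is resolved exactly by the device you suggest: near any point of $U_1\cap U_2$ pick a local strictly psh reference $\phi_0$ and $\delta>0$ small enough that $\rho+\varphi_i-\delta\phi_0$ is psh for $i=1,2$; then translation invariance $M_{\eta}(t_1+s,t_2+s)=M_{\eta}(t_1,t_2)+s$ applied with $s=\delta\phi_0$ gives $i\partial\bar\partial M_{\eta}(\rho+\varphi_1,\rho+\varphi_2)\ge \delta\, i\partial\bar\partial\phi_0>0$.
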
 
\begin{proof}
This lemma follows directly from \cite[Lemma 5.18]{DE}.
\end{proof}

\subsection{Modifying  K\"ahler metrics in small neighbourhoods}

In this section we show how a K\"ahler metric can be modified in a small neighbourhood of a point to match a different metric. The following lemma is classical but we provide a proof for a lack of reference.

\begin{lemma}\label{implant} Let $U$ be a complex manifold, $x\in U$ be a point and $U_x\subset U$ be a neighbourhood of $x$. Suppose  that $g$ is a K\"ahler metric on $U$ and $g_x$ is a K\"ahler metric on $U_x$. Then there exists a K\"ahler metric $g'$ on $U$ that is equal to $g_x$ on a smaller neighbourhood $U_x'\subset U_x$ of $x$ and equal to $g$ on $U\setminus U_x$.
\end{lemma}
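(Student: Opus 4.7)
The plan is to construct $g'$ by modifying $g$ only on a small coordinate ball around $x$, using a cutoff function applied to the local K\"ahler potential of the difference $\omega_{g_x}-\omega_g$, where $\omega_g$ and $\omega_{g_x}$ denote the K\"ahler forms of $g$ and $g_x$. First, choose holomorphic coordinates $z=(z_1,\ldots,z_n)$ centred at $x$ with $B_{2R}\subset U_x$ for some $R>0$; after a linear coordinate change I may assume the Hermitian matrices of $g$ and $g_x$ at $x$ coincide. On the ball $B_{2R}$ the closed real $(1,1)$-form $\omega_{g_x}-\omega_g$ is $i\partial\bar\partial$-exact by the local $dd^c$-lemma, so I may write $\omega_{g_x}-\omega_g=i\partial\bar\partial\psi$ there. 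Adjusting $\psi$ by a suitable pluriharmonic function (real part of a holomorphic polynomial of degree $\le 2$), I normalise so that $\psi(0)=0$, $d\psi(0)=0$, and $\partial_i\partial_j\psi(0)=0$; combined with the fact that $\partial_i\partial_{\bar j}\psi(0)=(g_{x,i\bar j}-g_{i\bar j})(0)=0$ from the coordinate choice, this yields $\psi=O(|z|^3)$ and $|d\psi|=O(|z|^2)$ on $B_{2R}$.

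Next, for small $\varepsilon>0$ choose a smooth cutoff $\chi_\varepsilon$ on $U$ supported in $B_\varepsilon$ and identically $1$ on $B_{\varepsilon/2}$, with $|d\chi_\varepsilon|=O(1/\varepsilon)$ and $|d^2\chi_\varepsilon|=O(1/\varepsilon^2)$. Set
\[
\omega'\ :=\ \omega_g + i\partial\bar\partial\bigl(\chi_\varepsilon\,\psi\bigr)
\]
on all of $U$; this is well-defined because $\chi_\varepsilon\psi$ is supported in $B_\varepsilon\subset U_x$. Then $\omega'=\omega_{g_x}$ on $B_{\varepsilon/2}$ and $\omega'=\omega_g$ on $U\setminus B_\varepsilon$, so it suffices to verify that $\omega'$ is positive on the transition annulus $B_\varepsilon\setminus B_{\varepsilon/2}$.

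Expanding by the Leibniz rule gives
\[
\omega' \ =\ (1-\chi_\varepsilon)\,\omega_g+\chi_\varepsilon\,\omega_{g_x}+\psi\cdot i\partial\bar\partial\chi_\varepsilon+i\partial\chi_\varepsilon\wedge\bar\partial\psi+i\partial\psi\wedge\bar\partial\chi_\varepsilon.
\]
The first two terms form a convex combination of $\omega_g$ and $\omega_{g_x}$, whose eigenvalues are bounded below by some $\lambda>0$ on $\bar B_{2R}$. The remaining error terms are controlled pointwise by $C\bigl(|\psi|\cdot|d^2\chi_\varepsilon|+|d\chi_\varepsilon|\cdot|d\psi|\bigr)=O(\varepsilon^3/\varepsilon^2+\varepsilon^2/\varepsilon)=O(\varepsilon)$. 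Choosing $\varepsilon$ small enough that this error is smaller than $\lambda$, the form $\omega'$ is everywhere positive, and taking $U_x'=B_{\varepsilon/2}$ gives the required K\"ahler metric $g'$.

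The main obstacle is exactly this positivity estimate on the transition annulus, and it hinges on the second-order vanishing $\psi=O(|z|^3)$---obtained by combining the coordinate normalisation at $x$ with the pluriharmonic freedom in choosing the potential---which is what allows the derivatives of the cutoff to be absorbed by the main convex-combination term as $\varepsilon\to 0$. An alternative route via the regularised-maximum construction of Lemma~\ref{gluing} (applied to $\varphi_1=\psi$, $\varphi_2=0$ on two nested balls around $x$) is possible, but requires essentially the same rescaling argument to arrange the sign conditions on $\varphi_1-\varphi_2$, so the direct cutoff approach appears most transparent.
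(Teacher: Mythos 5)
The key normalization step in your argument—``after a linear coordinate change I may assume the Hermitian matrices of $g$ and $g_x$ at $x$ coincide''—does not hold. A linear change of coordinates $z\mapsto Lz$ transforms both Hermitian matrices by the \emph{same} congruence $A\mapsto L^*AL$, so if $g_{i\bar j}(0)\neq g_{x,i\bar j}(0)$ in one coordinate system they differ in every coordinate system; the best one can achieve is that one matrix is the identity while the other is positive diagonal. Consequently $\partial_i\partial_{\bar j}\psi(0)=g_{x,i\bar j}(0)-g_{i\bar j}(0)$ is in general a nonzero Hermitian tensor, and the pluriharmonic correction you invoke (which controls only $\psi(0)$, $d\psi(0)$ and the holomorphic Hessian $\partial_i\partial_j\psi(0)$, but has vanishing $\partial_i\partial_{\bar j}$) cannot remove it. Thus $\psi$ vanishes to order two, not three.

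This is fatal to the estimate on the transition annulus: $\psi\cdot|d^2\chi_\varepsilon|$ is then $O(\varepsilon^2)\cdot O(\varepsilon^{-2})=O(1)$, and since $i\partial\bar\partial\psi=\omega_{g_x}-\omega_g$ is an $O(1)$ intrinsic quantity, no rescaling of the cutoff region (wider annuli, slower cutoffs, etc.) can push the error below order $1$. Your cutoff argument is a clean proof when the two metrics osculate to second order at $x$, but the general statement needs more. This is precisely what the paper's regularised-maximum construction supplies: one perturbs the potential of $g$ to $\tilde u=u+\delta\chi\log|z|$, which remains strictly plurisubharmonic for small $\delta$ but tends to $-\infty$ at $x$, so $M_\eta(\tilde u,\,v-C)$ automatically equals $v-C$ near $x$ and $u$ near $\partial B$ without any hypothesis on the matching of jets. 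Your closing remark that this route ``requires essentially the same rescaling argument to arrange the sign conditions'' is therefore inaccurate: the logarithmic pole does the work your order-three vanishing was meant to do, and it is available unconditionally.
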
 

\begin{proof}  Let $B \subseteq U_{x}$ be a standard coordinate ball, such that $x = 0$. We may find strictly plurisubharmonic functions $u$ and $v$ such that $\omega_{g} = i \partial \bar{\partial} u$ and   $\omega_{g_{x}} = i \partial \bar{\partial} v $. Next, we fix a cut-off function $\chi$ that is equal to $1$ near $x$, compactly supported on $B$ and vanishes close to $\partial B$. We may choose $\delta>0$ small enough so that the function \begin{displaymath}
\tilde{u}(z) = u(z)+\delta \chi(z) \log(|z|),
\end{displaymath} 
is strictly plurisubharmonic on $B \setminus \{x\}$. Next, we choose $C$ large enough so that $v - C < u$ in a neighbourhood of $\partial B$. The function $\varphi= M_{\eta}(\tilde{u},v-C)$ is smooth and strictly plurisubharmonic on $B$. Since on a neighbourhood of $\partial{B}$, $ u = \tilde{u}$ and $v - C < u$, the K\"ahler form associated to $g' = i \partial \bar{\partial} \varphi$, $\omega_{g'} $ is equal to $\omega_{g}$ there. Hence, we may define a K\"ahler metric on $U$, that is equal to $\omega_{g'}$ on $B$ and equal to $\omega_{g}$ on $U \setminus B$. One may check that this metric satisfies the required properties.
\end{proof}

\begin{remark}\label{sympiso} Let $g$ and $g'$ be K\"ahler metrics from Lemma \ref{implant} and consider the family of K\"ahler metrics $g_t=(1-t)g+tg'$, $t\in [0,1]$ on $U$. Let $\omega_t$ be the corresponding family of symplectic forms. Then symplectic manifolds $(U,\omega_t)$ are all symplectomorphic and moreover the symplectomorphism can be chosen to be equal to the identity on  $U\setminus U_x$.  
 
The symplectomorphism $(U,\omega)\to (U,\omega')$ we will be denoted $\Phi$.
\end{remark}

The following corollary is a symplectic variation of Lemma \ref{implant}.

\begin{corollary}\label{symplant} Let $(U,\omega)$ be a symplectic manifold, $x\in U$ be a point and $U_x\subset U$ be a neighbourhood of $x$. Suppose  that $g$ is a K\"ahler metric on $U$ and $g_x$ is a K\"ahler metric on $U_x$, both compatible with $\omega$. Then there exists a neighbourhood $U_x'\subset U_x$ of $x$ and a K\"ahler metric $g'$ on $U$ compatible with $\omega$,  such that $g=g_x$ on $U_x'$ and $g'=g$ on $U\setminus U_x$.
\end{corollary}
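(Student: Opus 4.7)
The strategy is to first construct via Lemma~\ref{implant} a K\"ahler metric $\widetilde g$ on $U$ that equals $g_x$ on a smaller neighbourhood $U_x' \subset U_x$ of $x$ and equals $g$ on $U \setminus U_x$, and then to correct the associated symplectic form $\widetilde\omega$ back to $\omega$ via Moser's trick, producing a diffeomorphism $\Phi : U \to U$ supported in the annular region $U_x \setminus U_x'$ with $\Phi^{*}\widetilde\omega = \omega$; the desired metric is then $g' = \Phi^{*}\widetilde g$.

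For Lemma~\ref{implant} to apply directly, one needs $g$ and $g_x$ to be K\"ahler with respect to a common integrable complex structure on the gluing region. After shrinking $U_x$ to a small Darboux ball around $x$, I would use a Darboux-Weinstein-type argument, combined with Lemma~\ref{sympextension} to extend a local symplectomorphism to a global one that is the identity outside a small neighbourhood of $x$, in order to arrange that the complex structures $J_g$ and $J_{g_x}$ (which are determined by $g$, $g_x$, and $\omega$ via the compatibility relation) coincide on a neighbourhood of $x$. With this alignment in place, Lemma~\ref{implant} produces $\widetilde g$ with the required patching properties.

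Since both $g_x$ and $g$ are compatible with $\omega$, the form $\widetilde\omega$ agrees with $\omega$ on $U_x' \cup (U \setminus U_x)$, so $\widetilde\omega - \omega$ is a closed $2$-form supported in $U_x \setminus U_x'$. Choosing $U_x$ to be a small contractible neighbourhood of $x$, one can write $\widetilde\omega - \omega = d\alpha$ for a $1$-form $\alpha$ compactly supported in $U_x \setminus U_x'$ via the relative Poincar\'e lemma; shrinking $U_x$ further ensures that the linear path $(1-t)\omega + t\widetilde\omega$ remains symplectic throughout $t \in [0,1]$. Moser's trick applied to this path then yields the desired $\Phi$, which is the identity outside $U_x \setminus U_x'$. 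Hence $g' = \Phi^{*}\widetilde g$ is K\"ahler (with integrable complex structure $\Phi^{*}\widetilde J$), is compatible with $\omega$, and agrees with $g_x$ on $U_x'$ and with $g$ on $U \setminus U_x$.

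The main obstacle is the preparatory alignment of the two $\omega$-compatible integrable complex structures $J_g$ and $J_{g_x}$ near $x$, which is what reduces the problem to the hypothesis of Lemma~\ref{implant}; the gluing and Moser correction after this reduction are standard.
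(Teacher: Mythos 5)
Your Moser-trick correction (which is what Remark \ref{sympiso} supplies in the paper) and the final gluing are fine, but the preparatory alignment step does not work, and this is precisely the step you flagged as ``the main obstacle.'' You propose to find a local symplectomorphism $\psi$ near $x$ (extended globally via Lemma~\ref{sympextension}) with $\psi^{*}J_{g_x}=J_g$ on a whole neighbourhood of $x$. But since $g=\omega(\cdot,J_g\cdot)$ and $g_x=\omega(\cdot,J_{g_x}\cdot)$, any symplectomorphism intertwining $J_g$ and $J_{g_x}$ would automatically be a Riemannian isometry between $g$ and $g_x$ near $x$. Two $\omega$-compatible K\"ahler metrics at a point generically have different curvature (e.g.\ different scalar curvature at $x$), so no such local isometry exists; the alignment can only be achieved at the single point $x$, at the level of tangent spaces, not on an open set. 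The Darboux--Weinstein machinery normalises symplectic data, not the jet of an integrable $J$, so it cannot bridge this gap.

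The paper avoids the problem by reversing the order of operations. Rather than trying to make $J_g$ and $J_{g_x}$ literally coincide first, it works on the fixed complex manifold $(U,J_g)$ and uses Lemma~\ref{implant} to implant near $x$ a K\"ahler metric $\hat g_x$ that is \emph{holomorphically isometric} to a piece of $(U_x,g_x)$ --- this is always available, because a local biholomorphism from $(U,J_g)$ near $x$ to $(U_x,J_{g_x})$ near $x$ exists and can be used to pull back $g_x$. The resulting $g''$ equals $g$ outside a small ball and is holomorphically isometric to $g_x$ near $x$, but is not equal to $g_x$. Then the Moser step (Remark~\ref{sympiso}) produces $\Phi$ with $\Phi^{*}g''$ compatible with $\omega$; the isometry from $\Phi^{*}g''$ near $x$ to $g_x$ near $x$ is now a \emph{holomorphic} isometry between K\"ahler metrics whose K\"ahler forms are both $\omega$, hence a local symplectomorphism. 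Only at this point is Lemma~\ref{sympextension} invoked --- to extend \emph{that} symplectomorphism to a global one $\overline{I'}$, identity outside $U_x$, so that $g'=\overline{I'}_{*}(\Phi^{*}g'')$ equals $g_x$ near $x$ on the nose. In short: the symplectomorphism-extension lemma is used at the end to relocate an isometric implant onto $g_x$, where the needed local symplectomorphism exists by construction, rather than at the beginning to align complex structures, where it would generally fail to exist.
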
 

\begin{proof}  Using Lemma \ref{implant}, we can deform $g$ to a K\"ahler metric $g''$ on $U$ satisfying two properties. 1) $g''$ coincides with $g$ outside of a small neighbourhood of $x$. 2) A small neighbourhood $U_{\varepsilon}(x)\subset U_{x}$ admits an isometric embedding $I: (U_{\varepsilon}(x),g'')\to (U_x,g_x)$. Using further the symplectomorphism  $\Phi:(U,\omega)\to (U,\omega'') $ mentioned in Remark \ref{sympiso} we obtain a K\"ahler metric $\Phi^*(g'')$ on $U$, compatible with $\omega$.

The metric $g'$ on $U$ will be obtained from $\Phi^*(g'')$ by a symplectic automorphism of $(U,\omega)$. By definition of $g''$ a small neighbourhood of $x$ endowed with the metric $\Phi^*(g'')$ admits an isometric embedding $I'$ into $(U_x,g_x)$  sending $x$ to $x$. Shrinking further this small neighbourhood and using Lemma \ref{sympextension}  we can extend $I'$ to a symplectic automorphism $\overline I'$ of $(U,\omega)$ that is equal to the identity on $U\setminus U_x$. The metric $g'$ we are looking for is then given by $g'=\overline I'(\Phi^*(g''))$.
\end{proof}

\begin{remark}\label{orbisymplant} Note that Corollary \ref{symplant} holds as well for orbifolds, where instead for Lemma \ref{sympextension} one has to use its orbifold version, see Remark \ref{orbisympextension}. 
\end{remark}

\subsection{$S^1$-invariant K\"ahler metrics on orbi-bundles}

The main result of this section is Corollary \ref{mainsmoothing}. It constructs a smoothing  of  an $S^1$-invariant, orbifold K\"ahler metric which is defined on the total space of a line bundle, with the orbifold locus along the zero section of the bundle.

Let $M$ be a K\"ahler manifold and $(L,h)$ be a Hermitian line bundle on it. Denote by $\cal L$ the total space of $L$, let $M_0$ be the zero section in $\cal L$ and let $\pi: {\cal L}\to M_0$ be the projection. A point of $\cal L$ will be denoted as $(x,z)$, where $x\in M_0$, and $z\in L_x$. By $|z|_h^2$ we denote the function equal to the square of the norm given by $h$. By $U_c \subset \cal L$ we denote the neighbourhood of $M_0$ consisting of points $(x,z)$ with $|z|_h\le c$.

For each integer $n>1$ one can turn $\cal L$ into   a complex orbifold ${\cal L}_n$ by declaring that all points of the zero section $M_0$ have stabilizer $\mathbb Z_n$. The next lemma and its corollary analyse K\"ahler and orbi-K\"ahler metrics on the neighbourhood $U_1$ of $M_0$.

\begin{lemma}\label{smoothlocal} Let $\omega$ be any $S^1$-invariant K\"ahler metric on the neighbourhood $U_1\subset \cal L$ of $M_0$, and denote the restriction of $\omega$ to $M_0$ by $\omega_0$. Then the metric $\omega$ can be presented as 
$$\pi^*(\omega_0)+i\partial \bar\partial f(x,z)|z|_h^2$$ 
where  $f$ is an $S^1$-invariant function satisfying the inequality $0<c_1<f(x,z)<c_2$ on $U_1$ for some positive constants $c_1$ and $c_2$.
\end{lemma}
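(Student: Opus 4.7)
The overall strategy is to produce $f$ by first finding local $S^1$-invariant Kähler potentials, peeling off the base piece, and gluing the remainders. Two global inputs are crucial. First, the function $|z|_h^2$ is globally defined on $\mathcal L$, is $S^1$-invariant, and cuts out $M_0$ transversally. Second, any $S^1$-invariant smooth function on $\mathcal L$ near $M_0$ is a smooth function of $x$ and $|z|_h^2$: in a local trivialization of $L$ the $S^1$-action is $z\mapsto e^{i\theta}z$, so invariant smooth functions are smooth functions of $(x,|z|^2)$, which coincide with smooth functions of $(x,|z|_h^2)$ since $|z|_h^2=h(x)|z|^2$ with $h>0$ smooth. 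Combined with Hadamard's lemma, any such function vanishing on $M_0$ is uniquely $|z|_h^2\cdot g$ for $g$ smooth and $S^1$-invariant.

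For the local step, cover $M$ by trivializing open sets $U_\alpha$. On each $\mathcal L|_{U_\alpha}\cap U_1$ pick a Kähler potential of $\omega$ and average it over $S^1$, so we may assume it is $S^1$-invariant. By the divisibility observation above, one writes
\[
\phi_\alpha(x,z)=\phi_\alpha^{(0)}(x)+g_\alpha(x,z)\,|z|_h^2
\]
with $g_\alpha$ smooth and $S^1$-invariant. Applying $i\partial\bar\partial$ and restricting to $M_0$, the base piece satisfies $i\partial\bar\partial\phi_\alpha^{(0)}=\omega|_{M_0}=\omega_0$ on the chart; since $\phi_\alpha^{(0)}$ is pulled back from $M_0$, we get $i\partial\bar\partial\phi_\alpha^{(0)}=\pi^*\omega_0$ on the whole chart. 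Thus locally $\omega-\pi^*\omega_0=i\partial\bar\partial(g_\alpha\,|z|_h^2)$.

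The main obstacle is gluing the $g_\alpha$'s into a global function. On an overlap, $u:=(g_\alpha-g_\beta)\,|z|_h^2$ is $S^1$-invariant and pluriharmonic, and vanishes on $M_0$. I would show any such $u$ is identically zero. In a local trivialization, $S^1$-invariance forces $u$ to be a smooth function of $x$ and $|z|^2$; the fiber equation $\partial_z\partial_{\bar z}u=0$ reads $(tF_t)_t=0$ where $F=u$ as a function of $(x,t)$ with $t=|z|^2$, so $tF_t\equiv c(x)$, and smoothness at $t=0$ forces $c\equiv 0$. Hence $F$ is independent of $t$, i.e.\ $u$ is pulled back from $M_0$; vanishing on $M_0$ then gives $u\equiv 0$. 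Therefore $g_\alpha=g_\beta$ off $M_0$, and by continuity everywhere on overlaps, yielding a global smooth $S^1$-invariant $f$ with $\omega=\pi^*\omega_0+i\partial\bar\partial(f\,|z|_h^2)$.

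Finally, positivity and boundedness: since $\pi^*\omega_0$ restricts to zero on each fiber of $\mathcal L$ (fibers are contracted by $\pi$), while $\omega$ restricts to a positive $(1,1)$-form on each fiber, the form $i\partial\bar\partial(f|z|_h^2)$ is positive on every fiber. A direct one-variable computation at the origin of the fiber gives this form as $i\,h(x)f(x,0)\,dz\wedge d\bar z$, so $f|_{M_0}>0$. By compactness of $M_0$ (inherited from $M$) and continuity, $f$ is bounded between positive constants in some neighbourhood of $M_0$, hence on $U_1$ after possibly replacing $h$ by a small multiple to shrink the domain. The crux of the argument is the rigidity statement used in the gluing step, namely that $S^1$-invariant pluriharmonic functions on a disk bundle vanishing on the zero section are identically zero.
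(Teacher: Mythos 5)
Your proof is correct, and while it rests on the same rigidity phenomenon as the paper's argument, the construction is organized rather differently. The paper defines the global potential $u$ directly and fiber-by-fiber: on each fiber $L_x\cap U_1$, the $S^1$-invariant restriction $\omega|_{\mathbb D_1}$ has a unique $S^1$-invariant potential $u_x$ with $u_x(0)=0$, and $u$ is the function so obtained; the residual form $\omega-\pi^*\omega_0-i\partial\bar\partial u$ is then shown to vanish because it is $S^1$-invariant and kills both the zero section and every fiber. You instead produce the potential by covering $M_0$ with trivializing charts, choosing $S^1$-averaged local K\"ahler potentials $\phi_\alpha$, peeling off the base part via Hadamard's lemma to write $\phi_\alpha=\phi_\alpha^{(0)}+g_\alpha|z|_h^2$, and then gluing the remainders $g_\alpha$ into a global $f$; the gluing step again uses the rigidity of $S^1$-invariant functions whose $\partial_z\partial_{\bar z}$ vanishes fiberwise. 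The paper's route is slightly tighter: it sidesteps Hadamard's lemma and the cocycle/gluing discussion because $u$ is already globally defined by its fiberwise characterization, and the rigidity argument is only invoked once to kill the residual form. Your route, on the other hand, is a standard patching-of-potentials template that transfers more readily to settings where a canonical fiberwise normalization is unavailable. Both arguments establish $f|_{M_0}>0$ the same way (fiber restriction of $\omega$ is positive, $\pi^*\omega_0$ is degenerate along fibers), and both need the same caveat you noted: the two-sided bound $0<c_1<f<c_2$ a priori holds on a possibly smaller tube around $M_0$, and one either shrinks $U_1$ or notes that this is all the paper actually uses downstream.
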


\begin{proof}

Note that for every point $x\in M_0$ there is a unique $S^1$-invariant function $u_x$ defined on the unit disk $\mathbb D_1\subset L_x$ such that  $\omega|_{\mathbb D_1}=i\partial\bar\partial u_x$  and $u_x(0)=0$. This clearly defines to us a smooth function $u$ on whole $U_1$. Let us first show that $\omega=\pi^*(\omega_0)+i\partial \bar\partial u$.

Indeed, the form $\omega'=\omega-\pi^*(\omega_0)-i\partial \bar\partial u$ is $S^1$-invariant, it vanishes on $M_0$, and vanishes on all fibres of $L$ in $\cal L$. It is clear, that to prove that such $\omega'$ is identically zero it would be enough to prove this when $M_0$ is a complex ball $B$ and $U_1$ is $B\times \mathbb D_1$. Let us represent then $\omega'$ as $i\partial\bar\partial v$, where $v$  vanishes on $B\times 0$ and $S^1$-invariant. Since $\omega'$ vanishes on each $\mathbb D_1$-fibre, $v$ is constant on each such fibre and hence it is identically zero.

Finally, to see that $0<c_1< u(x,z)|z|_h^{-2}<c_2$ on $U_1$ for some $c_1,c_2>0$, note that such an inequality holds uniformly on each $\mathbb D_1$-fibre, since $i\partial\bar\partial u(x,z)$ is K\"ahler on each such $\mathbb D_1$-fibre, $u(x,0)=0$ and $u(x,z)$ is $S^1$-invariant.
\end{proof}

\begin{corollary}\label{orbilocal} Fix an integer $n>1$ and introduce on $\cal L$ the structure  
of orbifold ${\cal L}_n$. Let $\omega$ be any $S^1$-invariant  K\"ahler orbi-metric on the neighbourhood $U_1\subset {\cal L}_n$ of $M_0$. Then the metric $\omega$ can be presented as 
$$\pi^*(\omega_0)+i\partial \bar\partial f(x,z)|z|_h^{\frac{2}{n}}$$ 
where  $f$ is an $S^1$-invariant function that satisfies the inequality $0<c_1<f(x,z)<c_2$ on $U_1$ for some positive constants $c_1$ and $c_2$.
\end{corollary}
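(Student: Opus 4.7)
The plan is to reduce to Lemma \ref{smoothlocal} by passing to a local smooth $n$-fold cover of ${\cal L}_n$. Fix an open $V\subset M_0$ over which $L$ trivialises, and identify $\pi^{-1}(V)\subset{\cal L}_n$ as the orbifold quotient of a smooth line bundle $\tilde{\mathcal L}_V = V\times\mathbb D$ (with fibre coordinate $w$) by the diagonal action of $\mathbb Z_n$ via $n$-th roots of unity on the fibre; the orbi-quotient map is $(x,w)\mapsto(x,z)$ with $z=w^n$ in the chosen trivialisation. Endow $\tilde{\mathcal L}_V$ with the Hermitian metric $\tilde h$ normalised so that $|\tilde w|^2_{\tilde h}=|z|^{2/n}_h$; this is intrinsic to the local orbifold structure of ${\cal L}_n$ near $M_0$.

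The $S^1$-action on ${\cal L}_n$ lifts to an $\tilde S^1$-action on the smooth cover, where $\tilde S^1$ is a connected $n$-fold cover of $S^1$ containing $\mathbb Z_n$ as a subgroup. The pullback $\tilde\omega$ of $\omega$ is then a smooth $\tilde S^1$-invariant K\"ahler metric on $\tilde{\mathcal L}_V$. Applying Lemma \ref{smoothlocal} to $\tilde\omega$ and its zero section yields
\[
\tilde\omega=\tilde\pi^{\,*}(\omega_0)+i\partial\bar\partial\bigl(\tilde f(x,w)\,|\tilde w|^2_{\tilde h}\bigr),
\]
with $\tilde f$ an $\tilde S^1$-invariant function satisfying $0<c_1<\tilde f<c_2$. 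Since $\mathbb Z_n\subset\tilde S^1$, the function $\tilde f$ is automatically $\mathbb Z_n$-invariant and hence descends to an $S^1$-invariant orbi-function $f$ on $\pi^{-1}(V)$. As $|\tilde w|^2_{\tilde h}$ descends to $|z|^{2/n}_h$, this gives the desired representation of $\omega$ on $\pi^{-1}(V)$ with the same bounds on $f$.

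To globalise, observe that on overlaps of different trivialisations the locally constructed functions $f$ must coincide: each produces the same potential $u=f\cdot|z|^{2/n}_h$ for $\omega-\pi^*\omega_0$, and an $S^1$-invariant potential vanishing on $M_0$ is unique by the same fibrewise argument used in the proof of Lemma \ref{smoothlocal} (which goes through unchanged for the orbifold disk $\mathbb D/\mathbb Z_n$). Thus $f$ is a well-defined $S^1$-invariant orbi-function on $U_1$ with $c_1<f<c_2$. The main subtlety I anticipate is the careful description of the $\tilde S^1$-lift of the $S^1$-action and of the Hermitian metric $\tilde h$ on the local $n$-th-root bundle: both are natural, but verifying that they are compatible with changes of local trivialisation and with the orbifold structure on ${\cal L}_n$ requires some bookkeeping.
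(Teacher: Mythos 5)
Your proposal is correct and follows essentially the same route as the paper: reduce to the smooth case of Lemma \ref{smoothlocal} by passing to a local $n$-th root cover with the induced Hermitian metric, apply that lemma, and observe that the resulting potential descends. The paper phrases this via a local $n$-th root line bundle $L'$ and the tensor-power map $\mu_n\colon\mathcal L'\to\mathcal L$ rather than via explicit trivialisations, and it leaves the uniqueness/gluing of the local potentials implicit; you make both of those points more explicit, but the underlying argument is the same.
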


\begin{proof}The statement is local, and so it is enough to prove it when there exists a $n$-th  root $L'$ of $L$, $L\cong L'^{\otimes n}$. Consider the holomorphic map $\mu_n:{\cal L'}\to {\cal L}$ that is a composition of the $n$-th tensor power map $L'\to L'^{\otimes n}$ and the isomorphism. Let $h'$ be the unique Hermitian metric on $L'$ such that norm $1$ vectors in $L'$ are sent to norm $1$ vectors in $L$. Then $\mu_n^*(\omega)$ is a smooth K\"ahler form on $U_1'$ and by Lemma \ref{smoothlocal} we have 

$$\mu_n^*(\omega)=\pi^*(\omega_0)+i\partial \bar\partial f'(x,z)|z|_h'^2.$$ 

Since $f'(x,z)$ is $S^1$-invariant and satisfies   $0<c_1<f'(x,z)<c_2$ we see that the function $f(x,z)$ induced from $f'(x,z)$ on the quotient $U_1=U_1'/\mathbb Z_n$ satisfies $0<c_1<f(x,z)<c_2$. At the same time $\mu$ sends the function $|z|_{h'}^2$  to $|z|_h^{\frac{2}{n}}$.
\end{proof}

\begin{corollary}\label{mainsmoothing} Fix an integer $n>1$, introduce on $\cal L$ the structure of orbifold ${\cal L}_n$ and let $a\in (0,1)$. Let $\omega_n$ be any $S^1$-invariant  K\"ahler orbi-metric on the neighbourhood $U_1\subset {\cal L}_n$ of $M_0$. The metric $\omega_n$ can be smoothed in $U_1$ to an $S^1$-invariant K\"ahler metric $\omega$ so that the following holds
\begin{enumerate}
\item The smoothed metric $\omega$ can be presented as $\omega_n+i\partial \bar\partial \varphi$, where $\varphi$ is a continuous function with support in $U_a$. In particular $\omega$ coincides with $\omega_n$ in $U_1\setminus U_{a}$.

\item For any open subset $U\subset M_0$ the smoothed metric on $\pi^{-1}(U)$ only depends on restriction of $\omega_n$ on $\pi^{-1}(U)$.

\item Suppose that for some open $U\subset M_0$ the metric on $\pi^{-1}(U)$ is that of a direct product of $\omega_0$ on $U$ with an $S^1$-invariant K\"ahler orbifold metric on a unit disk. Then the smoothing preserves the direct product structure. 
\end{enumerate}
\end{corollary}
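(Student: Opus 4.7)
The strategy is to combine the cohomological description provided by Corollary \ref{orbilocal} with the regularised maximum gluing lemma (Lemma \ref{gluing}), substituting the singular potential $f(x,z)|z|_h^{2/n}$ with a smooth quadratic potential in a very small neighbourhood of $M_0$.

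First I would apply Corollary \ref{orbilocal} to write
\[
\omega_n = \pi^{*}(\omega_0) + i\partial\bar\partial \varphi_1, \qquad \varphi_1(x,z) = f(x,z)|z|_h^{2/n},
\]
with $0<c_1<f<c_2$ on $U_1$. For a constant $c>0$ (to be fixed small) and $K>0$ (also to be fixed), define the candidate smooth potential
\[
\varphi_2(x,z) = c|z|_h^{2} + K.
\]
Because the vertical part of $i\partial\bar\partial|z|_h^{2}$ along $M_0$ is strictly positive and $\pi^*\omega_0$ is positive on the horizontal directions, for $c$ sufficiently small the form $\omega_2 := \pi^*\omega_0 + i\partial\bar\partial\varphi_2$ is a genuine K\"ahler form on some neighbourhood $U_b$ of $M_0$. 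I would use $\omega_n$ on the annular outer piece $V_1 := \{b' < |z|_h \le 1\}$ and $\omega_2$ on the inner piece $V_2 := U_b$, with $b' < b \le a$ to be determined.

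The heart of the argument is to choose $0 < b' < b < a$ and $c, K > 0$ so that the comparison hypotheses of Lemma \ref{gluing} hold on $\partial V_1$ and $\partial V_2$. On the outer boundary $\{|z|_h = b\}$ one has $\varphi_1 \geq c_1 b^{2/n}$ and $\varphi_2 \leq cb^2 + K$, so we require
\[
K + cb^2 + C < c_1 b^{2/n},
\]
for some $C>0$. On the inner boundary $\{|z|_h = b'\}$ one has $\varphi_1 \le c_2 (b')^{2/n}$ and $\varphi_2 \ge K$, so we need
\[
c_2 (b')^{2/n} + C < K.
\]
Since the function $t \mapsto t^{2/n}$ dominates $t \mapsto t^2$ near $0$, these two windows for $K$ are simultaneously open: fix $b<a$, then take $c$ small, then choose $b'$ small enough so that $c_2(b')^{2/n}$ is well below $c_1 b^{2/n} - cb^2 - 2C$, and pick $K$ in the resulting interval.

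With these choices, Lemma \ref{gluing} produces the K\"ahler form
\[
\omega := \pi^{*}(\omega_0) + i\partial\bar\partial\, M_\eta(\varphi_1, \varphi_2)
\]
on $U_1$ for $\eta>0$ small enough. For $\eta$ small, $M_\eta(\varphi_1,\varphi_2) = \varphi_2$ on $\{|z|_h \le b'\}$ (so $\omega$ is smooth on a neighbourhood of $M_0$) and $M_\eta(\varphi_1,\varphi_2) = \varphi_1$ on $\{|z|_h \ge b\}$ (so $\omega = \omega_n$ outside $U_b \subseteq U_a$). Setting $\varphi := M_\eta(\varphi_1,\varphi_2) - \varphi_1$ gives a continuous function, supported in $U_b \subseteq U_a$, with $\omega = \omega_n + i\partial\bar\partial\varphi$; this is property~1. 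Properties~2 and~3 follow because the construction is entirely pointwise in $(x,z)$: the inputs are the locally defined function $f$, the quantity $|z|_h^2$, and the fixed constants $c, K, \eta$. If $\omega_n$ is a product over an open $U \subset M_0$, then $f$ is independent of $x \in U$ and both $\varphi_1$ and $\varphi_2$ depend only on $|z|_h$ over $U$, so the regularised maximum preserves the product structure.

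The only delicate point is the Kähler condition for $\omega_2 = \pi^*\omega_0 + c \cdot i\partial\bar\partial|z|_h^2$ on $U_b$: the form $i\partial\bar\partial|z|_h^2$ has cross terms coming from the curvature of $h$, so one must check it is dominated by $\pi^*\omega_0$ for small enough $c$ and $b$. This is the main technical obstacle, though it is routine once one writes things in a local holomorphic trivialisation where $|z|_h^2 = e^{-\phi(x)}|z|^2$: restricted to $M_0$, the form $i\partial\bar\partial|z|_h^2$ is the positive vertical form $e^{-\phi(x)} i\, dz \wedge d\bar z$, and continuity allows one to absorb the mixed and horizontal error terms into $\pi^*\omega_0$ by shrinking $b$ and $c$.
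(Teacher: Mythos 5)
Your proposal matches the paper's proof essentially line by line: same appeal to Corollary \ref{orbilocal} for the potential decomposition, same choice of smooth competitor potential (a multiple of $|z|_h^2$ plus a constant), same comparison-of-potentials inequalities on two concentric shells exploiting that $t^{2/n}$ dominates $t^2$ near $0$, and same invocation of Lemma \ref{gluing}, followed by the observation that the construction is local over $M_0$ and depends only on $f$ and $|z|_h$. The only cosmetic difference is the extra scaling parameter $c$ in your $\varphi_2 = c|z|_h^2 + K$; the paper simply fixes the coefficient to $1$ and shrinks the radius $b$ instead, which suffices because $\pi^*\omega_0 + i\partial\bar\partial|z|_h^2$ is already Kähler on a small enough tube.
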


\begin{proof}

1) According to Corollary \ref{orbilocal} we have $\omega_n=\pi^*(\omega_0)+i\partial \bar\partial f(x,z)|z|_h^{\frac{2}{n}}$, where $0<c_1<f(x,z)<c_2$ on  $U_1$. Choose $b\in (0,a)$ such that the form  $\omega_1=\pi^*(\omega_0)+i\partial \bar\partial |z|_h^2$ is K\"ahler in $U_b$. It is not hard to see that there exist constants $d,e_1,e_2$ satisfying $0<d$ and $0<e_1<e_2<b$,  and such that 

\begin{equation}\label{ineqtwo}
e_1^2+d>c_2e_1^{\frac{2}{n}},\;\; e_2^2+d<c_1e_2^{\frac{2}{n}}.
\end{equation}

Now we can apply Lemma \ref{gluing} to $U_1= U_{e_2}\cup (U_1\setminus U_{e_1})$ and K\"ahler forms represented in the following form

$$\omega_1=\pi^*(\omega_0)+i\partial \bar\partial (|z|_h^2+d),\;\;\omega_n=\pi^*(\omega_0)+i\partial \bar\partial f(x,z)|z|_h^{\frac{2}{n}}. $$

From Inequalities (\ref{ineqtwo}) and the bounds on $f(x,z)$ it follows that the conditions of Lemma \ref{gluing} on potentials $|z|_h^2+d$ and $f(x,z)|z|_h^{\frac{2}{n}}$ are satisfied. So we can set 
$$\varphi=M_{\eta}(|z|_h^2+d, f(x,z)|z|_h^{\frac{2}{n}}),$$
to get a smooth  K\"ahler metric $\omega=\pi^*(\omega_0)+i\partial \bar\partial \varphi$ on $U_1$. The metric $\omega$ coincides with $\omega_1$ in $U_{e_1}$ and with $\omega_n$ in $U_1\setminus U_{e_2}$. Moreover, the function $f(x,z)|z|_h^{\frac{2}{n}}-\varphi$ is clearly continuous, and since  $e_2<a$, it is supported in $U_a$.  Hence condition 1) is established.

The validity of condition 2) follows from the proof of Corollary \ref{orbilocal}, indeed the values of the function $f(x,z)$ on $\pi^{-1}(U)$ only depend on the behaviour of the metric $\omega_n$ in $\pi^{-1}(U)$. Condition 3) holds, since in this case, the function $f(x,z)$ in $U$ only depends on $|z|_h$ and not on $x$.
\end{proof}

\subsection{Smoothing of K\"ahler orbi-metrics on surfaces}
In this section we will use Corollary \ref{mainsmoothing} to deduce a result on smoothing of K\"ahler metrics on complex orbifolds of dimension $2$. We will deal with the cases when the stabilizer of each point is cyclic. The underlying complex analytic surface of such an orbifold is a complex surface with quotient singularities of the type $\mathbb C^2/\mathbb Z_n$. It will be useful to us to write down the action of the generator of $\mathbb Z_n$ on $\mathbb C^2$ in the following form, 
\begin{equation}\label{quotientpresent}
 (x,z)\to (\mu_1^p\cdot x, \mu_2^q\cdot z),\; \mu_1=e^{2\pi i \cdot k_1/n},\;  \mu_2=e^{2\pi i\cdot k_2/n},
\end{equation} 
where $k_1$ and $k_2$ are coprime with $n$, while and $p$ and $q$ are coprime divisors of $n$.

\begin{theorem}\label{smoothingdisivors} Let $(S, g)$ be a K\"ahler orbifold of dimension $2$ with cyclic stabilizers and let $D_1,\ldots, D_k$ be the $1$-dimensional irreducible components of the orbifold locus of $S$. Suppose that $g$ is semi-toric at $D_1,\ldots, D_k$. Then for an arbitrary small neighbourhood $U$ of the orbifold locus of $S$ there is a smoothing $\omega'$ of $\omega$ such that

1) $\omega'=\omega$ on $S\setminus U$. 

2) $\omega'$ defines on $U$ a structure of a K\"ahler orbifold with isolated orbi-points.

3) $[\omega']=[\omega]\in H^2(S)$.

\end{theorem}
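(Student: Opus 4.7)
My strategy is to reduce the theorem to repeated applications of Corollary \ref{mainsmoothing}, one per divisor $D_i$, after first arranging that near each maximal orbi-point the metric has the product form demanded by item (3) of that corollary.

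First, at each maximal orbi-point $p_m$, I would use the semi-toric assumption: a neighbourhood of $p_m$ is orbifold-biholomorphic to $\C^2/\Gamma$ with $\Gamma$ cyclic acting diagonally, so that the two divisors through $p_m$ are the coordinate axes and $\omega$ is $T^2$-invariant there. Choosing a $\Gamma$-invariant product potential $u_1(|z_1|^2)+u_2(|z_2|^2)$ (for example, $u_i=|z_i|^2$), I would apply the orbifold version of Lemma \ref{implant}---whose proof uses regularised maxima of plurisubharmonic potentials, an operation that respects $\Gamma$-invariance---to replace $\omega$ inside a smaller neighbourhood of $p_m$ with $i\partial\bar\partial(u_1+u_2)$, while leaving $\omega$ unchanged outside a slightly larger neighbourhood. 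Performing this disjointly at all (finitely many) maximal orbi-points produces a new K\"ahler orbi-form, still denoted $\omega$, which on a definite neighbourhood of each $p_m$ is a direct product of two $S^1$-invariant orbi-disk metrics, agrees with the original form outside a small neighbourhood of the maximal orbi-points, and differs from it by $i\partial\bar\partial$ of a globally defined, compactly supported continuous function, hence represents the same cohomology class.

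Next, for each divisor $D_i$ the semi-toric $S^1$-action fixing $D_i$ together with the $S^1$-equivariant Darboux--Weinstein identification of Theorem \ref{relative_local_neighbourhood} identify an $S^1$-invariant tubular neighbourhood of $D_i$ with a neighbourhood of the zero section of the orbifold normal bundle $L_i^{(n_i)}\to D_i$. I would apply Corollary \ref{mainsmoothing} on this neighbourhood with the parameter $a$ chosen so small that the resulting potential $\varphi_i$ is supported inside the prescribed $U$. By item (2) of the corollary the smoothing is strictly local along $D_i$, so the smoothings along different divisors can be carried out in sequence with no interference and without disturbing the modifications of Step 1 outside the support of $\varphi_i$. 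Crucially, by item (3), on the product regions near each $p_m\in D_i$ established in Step 1 the smoothing preserves the product decomposition: it modifies only the disk factor---removing the one-dimensional orbi-structure which the subgroup $\Gamma_i\subset\Gamma$ gave along $D_i$---while the base factor, which carries the residual isolated orbifold singularity of the surface at $p_m$, is left untouched.

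Iterating over all $D_i$ produces the desired $\omega'$. Properties (1) and (2) follow by construction, since every $\varphi_i$ is supported in $U$ and every one-dimensional component of the orbifold locus has been smoothed, leaving only isolated quotient singularities at the maximal orbi-points; property (3) holds because every modification is of the form $\omega\mapsto\omega+i\partial\bar\partial\varphi$ with $\varphi$ globally defined and supported in $U$. The main technical obstacle I anticipate is the bookkeeping of Step 1: one must verify that the orbifold form of Lemma \ref{implant} genuinely produces a K\"ahler orbi-metric (which reduces to $\Gamma$-equivariance of regularised maximum on the uniformising cover of an orbi-chart) and that the chosen product potential is compatible, at the orbifold level, with the normal-bundle identification used in Step 2. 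Once these compatibilities are in hand, the rest of the argument is a direct bookkeeping application of Corollary \ref{mainsmoothing}.
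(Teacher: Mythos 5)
Your strategy is essentially the same as the paper's: flatten (or product-ify) the metric near each maximal orbi-point using the orbifold version of Lemma \ref{implant}, then smooth divisor by divisor via Corollary \ref{mainsmoothing}, keeping cohomology unchanged because every modification is $i\partial\bar\partial\varphi$ with $\varphi$ compactly supported. The paper uses "flat" where you use "product", which is a cosmetic difference since the flat metric is the special case $u_i=|z_i|^2$ that you even suggest.

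The gap is in Step 2 and it is not the one you flag at the end. Corollary \ref{mainsmoothing} is stated for a Hermitian line bundle over a smooth manifold $M_0$, whereas your $D_i$ is a one-dimensional orbifold with orbi-points at the maximal orbi-points $p_m$, so you cannot literally invoke the corollary over the orbifold normal bundle $L_i^{(n_i)}\to D_i$, and in particular item (3) of that corollary concerns a subset $U$ of a genuine manifold $M_0$, not a neighbourhood of an orbi-point of the base. Asserting that the smoothing "modifies only the disk factor and leaves the base factor with its residual isolated singularity untouched" is exactly the statement that needs a proof, not a consequence of the corollary as stated. The paper supplies the missing ingredient: it first smooths only along the regular part of $D_i$ away from $U_{\varepsilon/2}(\bar x)$, and then, to extend the smoothing across each $p_m$ with stabiliser $\Z_n$, it passes to the partial orbi-cover $B_\varepsilon/\Z_{p}$ (where $\Z_p\subset\Z_n$ is the generic stabiliser of $D_i$). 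Because $\Z_p$ acts on the uniformising chart fixing the preimage of $D_i$, the quotient $B_\varepsilon/\Z_p$ is a \emph{smooth} ball carrying a residual $\Z_{n/p}$-orbi-structure, and the preimage of $D_i$ is a smooth complex disk there; this is now honestly the line-bundle-over-a-manifold setting of Corollary \ref{mainsmoothing}, so item (3) can be applied legitimately (together with item (2) for locality) to extend the lifted smoothing across the centre $\Z_{n/p}$-equivariantly, and this descends. Your proposal needs this orbi-cover device (or an equivalent) to make the extension over the $p_m$ rigorous; once it is inserted the argument goes through and matches the paper's proof.
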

\begin{proof} To smooth the metric we will smooth it  consecutively along all divisors, starting from $D_1$. Suppose first that $D_1$ is disjoint from all other divisors and it does not contain singular points of $S$. In this case, all points of $D_1$ have the same stabilizer $\mathbb Z_p$. The complex  $S^1$-action  in a neighbourhood $U$ permits us to biholomorphically identify $U$ with a unit disk sub-bundle $U_1$ of a complex line bundle $L$ over $D_1$. This puts us in the setting of Corollary \ref{mainsmoothing}. Now the existence of a smoothing in arbitrary small neighbourhood of $D_1$ follows from this corollary.

Suppose now that  a generic point of $D_1$ has stabilizer $\mathbb Z_p$ 
while at a finite subset $\bar x\in D_1$ the stabilizers strictly contain $\mathbb{Z}_{p}$. Let us first use Lemma  \ref{implant} and deform slightly the metric in an $\varepsilon$-neighbourhood $U_{\varepsilon}(\bar x)$ of $\bar x$ to make it flat there in the orbifold sense. Then we can smooth the metric along the regular part of $D_1$ that avoids $U_{\varepsilon/2}(\bar x)$, applying Corollary \ref{mainsmoothing} as above. So we just need to explain how to extend this smoothing to $U_{\varepsilon}(\bar x)$. 

Let $x\in \bar x$ be a point with stabilizer $\mathbb Z_n$ where $n>p$. The local orbi-action is given by Equation (\ref{quotientpresent}). Consider the presentation of $U_{\varepsilon}(x)$ as $B_{\varepsilon}/\mathbb Z_n$ where $B_{\varepsilon}\subset \mathbb C^2$ is a flat complex ball. It is not hard to see that the subgroup $\mathbb Z_p\subset \mathbb Z_n$ is acting on $B_{\varepsilon}$ fixing the preimage of $D_1\cap U_{\varepsilon}(x)$ in $B_{\varepsilon}$ (which is a flat disk there). Hence $B_{\varepsilon}/\mathbb Z_p$ is a smooth complex disk, and it is an orbi-cover of $U_{\varepsilon}(x)$ with orbi-group $\mathbb Z_{n/p}$. The smoothing of the metric on the complement to $U_{\varepsilon/2}(x)$ in  $U_{\varepsilon}(x)$ can be lifted to its preimage in $B_{\varepsilon}/\mathbb Z_p$ and then extended to the whole preimage by  Corollary \ref{mainsmoothing} 3). In this way we are able to extend the smoothing to the point $x$, making it an orbi-point with stabilizer $\mathbb Z_{n/p}$.
 
Repeating now the above smoothing construction for all the divisors we obtain a K\"ahler orbi-metric on $S$, that satisfies properties 1) and 2). To see that property 3) holds as well, note that whenever we use  Corollary \ref{mainsmoothing} 1) to smooth $g$ along an orbi-curve, we add to $\omega_g$ an exact form $i\partial \bar \partial \varphi$. Hence we don't change the cohomology class of $\omega_g$. 
\end{proof}

\subsection{Orbi-K\"ahler metric along the orbifold locus}
The goal of this section is to prove the following proposition.

\begin{proposition}\label{kahlerdivisor} Let $(M^4,\omega)$ be a symplectic orbifold with cyclic stabilizers and let $D$ be a $2$-dimensional irreducible component of the orbifold locus. Then there is a neighbourhood $U$ of $D$ with a K\"ahler metric $g$ on $U$ compatible with $\omega$ and a Hamiltonian isometric $S^1$-action on $U$, fixing $D$. 
\end{proposition}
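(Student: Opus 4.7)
The strategy closely follows that of Proposition \ref{toricKahlernei}: I will construct a local model for a neighbourhood of $D$ as a Hermitian holomorphic line orbi-bundle over a K\"ahler curve orbifold, equipped with the natural fibrewise $S^1$-action, and then transfer this structure to $(M^4,\omega)$ using the orbifold equivariant Darboux--Weinstein theorem \ref{relative_local_neighbourhood}. The hypothesis that the stabilizers are cyclic ensures that all relevant local actions commute and that the normal orbi-bundle has the expected form.

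The first step is to produce the model. The suborbifold $D\subset M^4$ is a real $2$-dimensional symplectic orbifold, hence a Riemann surface orbifold $(C,j_C)$ equipped with a K\"ahler form $\omega|_D$ compatible with some chosen complex structure $j_C$. The orbifold normal bundle $N_D$ is a rank $1$ complex orbi-line bundle in which the generic stabilizer $\mathbb{Z}_n$ of $D$ acts on each fibre $\mathbb{C}$ by some primitive character $\zeta\mapsto \zeta^k$ with $\gcd(k,n)=1$, and similarly over the (finitely many) orbi-points of $C$ the larger cyclic stabilizer acts through a character on the fibre. Fix a holomorphic structure on $N_D$ extending $j_C$, and a Hermitian metric $h$ on $N_D$. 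Denote by $\mathcal N$ the total space of $N_D$ and by $\mathcal U\subset \mathcal N$ a small disk sub-orbi-bundle containing the zero section. Because the $S^1$-action on each fibre by $z\mapsto e^{i\theta}z$ commutes with every cyclic stabilizer, it descends to an effective $S^1$-action on $\mathcal N$ fixing the zero section pointwise.

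Next, I would equip $\mathcal U$ with an $S^1$-invariant K\"ahler metric $g_{\mathcal U}$ compatible with a symplectic form that restricts to $\omega|_D$ on the zero section. One concrete choice is the orbifold analogue of the standard construction: take a Chern connection $\nabla$ on $(N_D,h)$ and set
\[
\omega_{\mathcal U} \;=\; \pi^*(\omega|_D) \;+\; \tfrac{i}{2}\,\partial\bar\partial\,|z|_h^{2},
\]
where $\pi\colon \mathcal U\to D$ is the projection; after possibly shrinking $\mathcal U$, this is a K\"ahler form for the almost complex structure on $\mathcal U$ determined by $j_C$, $\nabla$ and the fibrewise complex structure. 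The scalar $S^1$-action preserves $h$ and $\pi$, hence is isometric and Hamiltonian for $\omega_{\mathcal U}$, with moment map a smooth multiple of $|z|_h^2$.

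Finally I would apply part (1) of Theorem \ref{relative_local_neighbourhood} with $G$ trivial (or equivalently with $G$ the generic stabilizer, incorporated in the orbi-structure) to $N=D\subset M^4$ and $N'=\text{zero section}\subset \mathcal U$. The two symplectic orbi-normal bundles $N_D$ and $N_D$ are identical by construction, so the identity gives the required bundle isomorphism $D$, and Darboux--Weinstein produces a symplectomorphism $\phi\colon U\to \mathcal U'$ of orbifold neighbourhoods of $D$ extending the identity. Pulling back $g_{\mathcal U}$ and the $S^1$-action along $\phi$ yields an $\omega$-compatible K\"ahler metric $g$ on $U$ together with an isometric Hamiltonian $S^1$-action fixing $D$ pointwise, which is what is required. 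The main subtlety to verify is that all constructions are compatible with the orbi-charts: this is straightforward because cyclic stabilizers act by diagonal characters on $TD\oplus N_D$, so the $S^1$-action and the chosen almost complex structure commute with every stabilizer and descend smoothly to the orbifold.
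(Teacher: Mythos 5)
Your strategy---build a K\"ahler model on the total space of the normal orbi-bundle $N_D$ using a potential of the form $\pi^*(\omega|_D)+i\partial\bar\partial\bigl(\text{norm squared}\bigr)$ with the scalar $S^1$-action on the fibres, then transfer it back to $M^4$ via the orbifold equivariant Darboux--Weinstein theorem---is exactly the paper's approach. Two technical points in the paper are worth flagging, since your write-up glosses over them. First, you say ``Fix a holomorphic structure on $N_D$ extending $j_C$, and a Hermitian metric $h$ on $N_D$'' without comment; the paper is more careful here, first installing a flat K\"ahler chart $\mathbb C^2/\mathbb Z_{n_i}$ near each maximal orbi-point $x_i\in D$ (where the stabilizer jumps), then extending the resulting flat connections to a Hermitian connection on all of $N_D$ to obtain the holomorphic structure. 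Second, for Darboux--Weinstein to apply with the identity as the bundle map, you need the fibrewise symplectic form of your model to agree with the symplectic form that $\omega$ induces on the orbi-normal bundle of $D$ in $M^4$; this forces $h$ to be the Hermitian metric determined by $\omega$ and the compatible almost complex structure, not an arbitrary one (or else you should allow a nontrivial symplectic bundle isomorphism $D$ in Theorem \ref{relative_local_neighbourhood}). Finally, your potential $|z|_h^2$ and the paper's $|z|_h^{2/p}$ are the same object written in different coordinates (orbi-chart coordinate versus coordinate on the underlying reduced total space, where $p$ is the generic stabilizer order along $D$); it is worth being explicit about which convention you use so that smoothness in orbi-charts is manifest.
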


The proof of this proposition is composed of four steps. In Step 1 we choose a flat K\"ahler  metric on $M^4$ close maximal orbi-points on $D$. In Step 2 we show that the normal orbi-bundle $N_D$ to $D$ has a natural holomorphic  structure. In Step 3 we choose a K\"ahler structure close to the zero section of $N_D$. In Step 4 we apply Darboux-Weinstein Theorem \ref{relative_local_neighbourhood}. 

\begin{proof} {\it Step 1.} Suppose that the stabilizer of a generic point of $D$ is $\mathbb{Z}_{p}$. Let $x_1,\ldots, x_m$ be all the points on $D$ with stabilizers $\mathbb Z_{n_i}$ with  $n_i>p$. For each $x_i$ we  take a Darboux neighbourhood $U_i$ of $x_i$ that is symplectomorphic to a neighbourhood of $0$ in the quotient $\mathbb C^2/\mathbb Z_{n_i}$. We assume that $\mathbb Z_{n_i}$ acts on $\mathbb C^2$  by isometries and, following presentation (\ref{quotientpresent}), is given by 
$$(x,z)\to (\mu_{i}^{p}\cdot x, \nu_{i}^{q_i}\cdot z).$$
Here $\mu_i$ and $\nu_i$ are primitive $n_i$-th roots of unity, and $q_i$ and $p$ are coprime divisors of $n_i$. Thus we have a flat K\"ahler metric compatible with $\omega$ on the union of $U_i$. This metric will be extended to a  neighbourhood of $D$.

{\it Step 2.} We will now introduce a holomorphic structure on the orbifold line bundle $N_D$ over $D$, isomorphic to the normal orbi-bundle to $D$. First, we can extend the complex structure on $\cup_iU_i$ to an almost complex structure $J$ on a neighbourhood of $D$, compatible with $\omega$. This gives us a complex structure on $D$ and makes $N_D$ a Hermitian orbi-bundle over $D$. 

Notice, that for $\varepsilon$ small enough the total space of the bundle $N_D$ over an $\varepsilon$-neighbourhood $U(x_i,\varepsilon)\subset D$ of $x_i$  can be identified with the quotient  of the cylinder $\{|z|<\varepsilon\}\subset \mathbb C^2$ by the above action of $\mathbb Z_{n_i}$. Hence we have a holomorphic structure on $N_D$, over the union of neighbourhoods  $U(x_i,\varepsilon)$. On each  punctured neighbourhood $U(x_i,\varepsilon)\setminus x_i$ we have a flat connection on $N_D$, induced from the flat metric on $\mathbb C^2/\mathbb Z_{n_i}$. In order to get a holomorphic structure on the whole $N_D$ it suffices to extend this flat connection from the union of punctured neighbourhoods $U_i$ to some Hermitian connection on $N_D$ over $D\setminus \cup_iU(x_i,\varepsilon)$. Such a connection will induce a holomorphic structure on the total space of $N_D$. 

{\it Step 3.} Let us now construct a positive closed $(1,1)$-form  on a neighbourhood of the zero section in the total space of $N_D$. We already have such a flat form on $N_D$ over the union of $U_i(x_i,\varepsilon)$ and we have such a form $\omega_0$ on $D$, $\omega_0=\omega|_D$. Hence we can define a $(1,1)$-form close to $D$ by the formula 
$$\omega_n=\pi^*(\omega_0)+i\partial \bar\partial|z|_h^{\frac{2}{p}},$$
as in Corollary \ref{orbilocal}. This form is positive in some neighbourhood of $D$ and it is $S^1$-invariant by construction.

{\it Step 4.} Finally, by Theorem \ref{relative_local_neighbourhood} 1) there is a symplectomorphism $\varphi$ from a neighbourhood of the zero section of $N_D$ to a neighbourhood of $D$ in $M^4$. This symplectomorphism induces the desired K\"ahler structure close to $D$.
\end{proof}

\subsection{Resolving isolated orbi-points and submanifolds}
The following two statements are standard so we omit their proof.

\begin{lemma}\label{kahleronisolated} Consider $\mathbb C^n$ with a flat K\"ahler metric $\omega$ and let $\Gamma\subset U(n)$ be a finite group acting on $\mathbb C^n$ by isometries so that the action is free on $\mathbb C^n\setminus 0$. Let $\pi: X\to \mathbb C^n/\Gamma$ be a resolution of singularities of the quotient. Then there is a K\"ahler metric on $X$ that coincides with $\pi^*(\omega)$ outside of a neighbourhood of the exceptional divisor $E$. 
\end{lemma}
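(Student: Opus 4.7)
The plan will be to construct $\omega_X$ explicitly in the form $\omega_X := \pi^*\omega + \varepsilon\eta'$, where $\eta'$ is a smooth closed $(1,1)$-form on $X$ that equals a positive curvature form near $E$ and vanishes outside a slightly larger neighbourhood of $E$. This shape is necessary because of a cohomological obstruction: writing $\omega_X$ as $\pi^*\omega + i\partial\bar\partial\varphi$ globally is impossible, since on any compact curve $C\subset E$ the K\"ahler form $\omega_X$ must have positive integral while $i\partial\bar\partial\varphi$ is exact and integrates to zero over $C$.

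The first step is to exploit that $\pi$ is projective (resolutions of isolated quotient singularities can be chosen projective), which guarantees the existence of a $\pi$-ample line bundle on $X$ of the form $L = \mathcal{O}_X(-D)$ for some effective divisor $D$ supported on $E$. Fix a smooth Hermitian metric $h$ on $L$ whose Chern curvature $\eta = -i\partial\bar\partial\log h$ is strictly positive on a neighbourhood $W$ of $E$. Let $s_D$ denote the canonical holomorphic section of $L^{-1} = \mathcal{O}_X(D)$ cutting out $D$, and set $\phi := -\log|s_D|^2_{h^{-1}}$, a smooth function on $X\setminus E$. By the Poincar\'e--Lelong formula, $i\partial\bar\partial\phi = -\eta$ on $X\setminus E$, and $\phi\to+\infty$ as one approaches $E$.

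Next, I would pick nested open neighbourhoods $V_1 \subset\subset V_2 \subset\subset W$ of $E$, together with a smooth cutoff $\chi\colon X\to[0,1]$ satisfying $\chi\equiv 0$ on $V_1$ and $\chi\equiv 1$ on $X\setminus V_2$. The product $\chi\phi$ then extends smoothly to all of $X$ by setting it equal to $0$ on $V_1$; this is legitimate because $\chi$ vanishes to all orders in a neighbourhood of $E$, where $\phi$ is singular. Define
\[
\eta' := \eta + i\partial\bar\partial(\chi\phi),
\]
a smooth closed $(1,1)$-form on $X$. By construction $\eta' = \eta$ on $V_1$ (where $\chi\equiv 0$), and $\eta' = \eta + i\partial\bar\partial\phi = 0$ on $X\setminus V_2$ (where $\chi\equiv 1$).

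Finally, set $\omega_X := \pi^*\omega + \varepsilon\eta'$ for some $\varepsilon>0$. On $V_1$ this equals $\pi^*\omega + \varepsilon\eta$, which is K\"ahler for small $\varepsilon$ since $\pi^*\omega$ is semi-positive on $V_1$ and $\eta$ is strictly positive there. On $X\setminus V_2$ we have $\omega_X = \pi^*\omega$, which is K\"ahler since $E\subset V_2$. The main point to check will be K\"ahlerness on the compact transition region $V_2\setminus V_1$: this is where the construction could fail, but the region avoids $E$, so $\pi^*\omega$ is uniformly strictly positive there, while $\eta'$ is a bounded smooth form on it; hence for $\varepsilon$ sufficiently small, $\omega_X$ is K\"ahler on the transition as well. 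The resulting $\omega_X$ is then a global K\"ahler metric on $X$ coinciding with $\pi^*\omega$ outside the neighbourhood $V_2$ of $E$, as required.
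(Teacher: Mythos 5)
Your construction is correct and is essentially the standard argument that the paper alludes to when it says the lemma is ``standard so we omit its proof.'' The plan — cut off the singular potential $\phi = -\log|s_D|^2_{h^{-1}}$ of a relatively ample divisor supported on $E$, so that $\eta' = \eta + i\partial\bar\partial(\chi\phi)$ interpolates between the curvature $\eta>0$ near $E$ and the zero form away from $E$, and then take $\pi^*\omega + \varepsilon\eta'$ with $\varepsilon$ small — is exactly the usual way to produce Kähler metrics on resolutions that agree with the pullback metric away from the exceptional set. The sign bookkeeping ($i\partial\bar\partial\phi = -\eta$ on $X\setminus E$) and the smooth extension of $\chi\phi$ across $E$ (because $\chi\equiv 0$ on an open set containing $E$, not merely to all orders at $E$) are both fine, and your identification of the cohomological obstruction to a globally $\partial\bar\partial$-exact correction is correct and useful motivation.

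One point deserves flagging: your argument requires $\pi$ to be a \emph{projective} resolution so that $\mathcal{O}_X(-D)$ with $D$ effective and supported on $E$ can be taken $\pi$-ample. The lemma as stated takes an arbitrary resolution $\pi$. In the case $n=2$, which is the only case the paper actually uses (all reduced spaces here are complex surfaces), every resolution of a normal surface singularity is automatically projective — any proper birational morphism of smooth surfaces factors into point blow-ups — so the hypothesis is free. For general $n$ non-projective resolutions exist, and for those the lemma would need either a different argument or the additional hypothesis of projectivity (indeed a non-projective resolution need not be Kähler at all). Since you parenthetically note that a projective resolution can be chosen, you are aware of this, but strictly speaking this changes ``Let $\pi$ be a resolution'' to ``Let $\pi$ be a projective resolution''; this is harmless for the paper's purposes.
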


 


\begin{lemma}\label{smoothblowup} Let $U$ be a possibly open manifold with a K\"ahler metric $g$ and let $X\subset U$ be a smooth compact K\"ahler submanifold of complex codimension $\ge 2$. Consider the blow up of $U$ in $X$, $\pi:U'\to U$. Then there exists a K\"ahler metric on  $U'$ that coincides with $\pi^*g$ outside of a small neighbourhood of the exceptional divisor $E\subset U'$. 

Moreover, in the case when a compact group $G$ is acting by K\"ahler isometries of $(U,X)$ the K\"ahler blow up can be preformed $G$-equivariantly.
\end{lemma}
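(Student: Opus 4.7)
The plan is to combine the classical Blanchard-type construction of K\"ahler metrics on a blow-up with the regularised maximum technique of Lemma~\ref{gluing}, so as to arrange that the new metric coincides with $\pi^{*}g$ outside a small neighbourhood of the exceptional divisor $E\subset U'$. The construction proceeds in two stages: first producing a K\"ahler form on a tubular neighbourhood of $E$, then gluing it with $\pi^{*}g$ on the complement of a smaller neighbourhood.

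For the first stage, I would use that $E$ is canonically identified with the projectivised normal bundle $\mathbb{P}(N_X)$ of $X$ in $U$, and that the line bundle $\mathcal{O}_{U'}(-E)$ restricts on $E$ to $\mathcal{O}_{\mathbb{P}(N_X)}(1)$, the relative hyperplane bundle, which is relatively ample along the fibres of $E \to X$. Choose a Hermitian metric $h$ on $\mathcal{O}_{U'}(-E)$ whose Chern curvature $\Theta_h$ is fibrewise positive on $E$; such an $h$ can be built by a partition-of-unity argument starting from fibrewise Fubini--Study forms. Then for $\varepsilon>0$ sufficiently small, the smooth closed form
\[
\omega_{\varepsilon} := \pi^{*}g + \varepsilon\,\Theta_{h}
\]
is K\"ahler on some tubular neighbourhood $V$ of $E$: at a point of $E$ the tangent space splits into horizontal and fibre directions, $\pi^{*}g$ is positive on horizontal directions (where it descends to $g|_X$) and $\varepsilon\,\Theta_h$ is positive on fibre directions, while the mixed terms are controlled for small $\varepsilon$.

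For the second stage, I would use that on $U'\setminus E$ the canonical section $s$ of $\mathcal{O}(E)$ is non-vanishing, so by Poincar\'e--Lelong one has $\varepsilon\,\Theta_{h} = i\partial\bar\partial \psi$ on $U'\setminus E$ for an explicit potential $\psi$ of the form $\pm\varepsilon\log|s|^{2}_{h'}$ (here $h'$ is the dual metric on $\mathcal{O}(E)$), which tends to $\mp\infty$ on approach to $E$. Taking the background form in Lemma~\ref{gluing} to be $\omega' = \omega_{\varepsilon}$, we set $\omega_{1} = \omega_{\varepsilon}$ on $U_{1} := V$ with potential $\varphi_{1}\equiv 0$, and $\omega_{2} = \pi^{*}g$ on $U_{2} := U'\setminus\overline{N_{r}(E)}$ with potential $\varphi_{2}$ given, up to an additive constant, by $\psi$. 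Applying Lemma~\ref{gluing} then yields the desired K\"ahler form $\omega = \omega' + i\partial\bar\partial M_{\eta}(\varphi_{1},\varphi_{2})$, which equals $\omega_{\varepsilon}$ on a subregion of $V$ and $\pi^{*}g$ on a subregion of $U_{2}$. The main technical point will be the bookkeeping of signs and radii so that the separation condition~(2) of Lemma~\ref{gluing} is met on the overlap of two concentric tubular boundaries $\partial N_{r}(E)$ and $\partial N_{R}(E)$; this reduces to an inequality of the shape $\varepsilon\log(R/r) > 2c$ for suitable $r < R$, which can always be achieved by shrinking $r$. Finally, for the equivariant statement, if a compact group $G$ acts on $(U, X, g)$ by K\"ahler isometries, the action lifts canonically to $U'$ preserving $E$, and averaging $h$ over $G$ yields a $G$-invariant Hermitian metric with the same fibrewise positivity; since the regularised maximum is symmetric in its two arguments, the resulting form $\omega$ is automatically $G$-invariant.
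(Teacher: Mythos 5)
Your first stage (producing $\omega_{\varepsilon}=\pi^{*}g+\varepsilon\,\Theta_{h}$ Kähler on a tube around $E$) is standard and correct, but the second stage — gluing to $\pi^{*}g$ via the regularised maximum of Lemma~\ref{gluing} — cannot work, and the obstruction is exactly the sign you left ambiguous with the ``$\pm$''/``$\mp$'' notation. By Poincar\'e--Lelong, $\Theta_{h}=i\partial\bar\partial\log|s|^{2}_{h'}$ on $U'\setminus E$ (with $\log|s|^{2}_{h'}\to -\infty$ along $E$), hence the potential $\varphi_{2}$ of $\pi^{*}g$ relative to $\omega'=\omega_{\varepsilon}$ must satisfy $i\partial\bar\partial\varphi_{2}=-\varepsilon\Theta_{h}$, i.e.\ $\varphi_{2}=-\varepsilon\log|s|^{2}_{h'}+\text{const}\to +\infty$ near $E$. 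Since $\varphi_{1}\equiv 0$ is bounded, the regularised maximum equals $\varphi_{2}$ near $E$, and the resulting form is $\pi^{*}g$ there — degenerate, not Kähler. Concretely, the two inequalities in condition~(2) of Lemma~\ref{gluing} read $C'<-c+2\varepsilon\log r$ at the inner boundary and $C'>c+2\varepsilon\log R$ at the outer boundary, which would require $\varepsilon\log(r/R)>2c$; with $r<R$ and $c>0$ this is impossible, so your displayed inequality $\varepsilon\log(R/r)>2c$ has the wrong sign. Moreover no choice of background form $\omega'$ repairs this: $\varphi_{1}-\varphi_{2}$ always has $i\partial\bar\partial(\varphi_{1}-\varphi_{2})=\varepsilon\Theta_{h}$, hence equals $\varepsilon\log|s|^{2}_{h'}$ plus a pluriharmonic term and is forced to $-\infty$ along $E$, so $\varphi_{1}>\varphi_{2}$ can never hold near $E$. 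This is a cohomological obstruction: $\bigl[\omega_{\varepsilon}-\pi^{*}g\bigr]$ pairs strictly positively with a fibre of $E\to X$, so the difference of potentials must blow down along $E$.

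The correct mechanism for this lemma is not a regularised maximum of plurisubharmonic potentials but an interpolation of the Hermitian metric itself. Let $h_{1}$ on $\mathcal{O}(-E)$ have positive fibre curvature near $E$, let $h_{0}$ be the flat metric on $\mathcal{O}(-E)|_{U'\setminus E}$ induced by the frame $s^{-1}$ (so $\Theta_{h_{0}}=0$), and set $\widehat{h}=h_{1}^{\chi}h_{0}^{1-\chi}$ for a cutoff $\chi$ equal to $1$ near $E$ and $0$ outside a small tube. Then $\Theta_{\widehat h}$ is a \emph{smooth} form on $U'$ that equals $\Theta_{h_{1}}$ near $E$ and vanishes identically outside the tube; the cross terms arising from $d\chi$ are supported where $\pi^{*}g$ is nondegenerate and are $O(\varepsilon)$, so $\pi^{*}g+\varepsilon\Theta_{\widehat h}$ is Kähler for $\varepsilon$ small and agrees with $\pi^{*}g$ where $\chi=0$. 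This is the classical construction of Blanchard/Griffiths--Harris type. Your treatment of the equivariant statement (canonical lift of the $G$-action to $U'$, averaging $h$ over $G$) is unaffected by this issue and is fine as stated, provided the cutoff $\chi$ is also chosen $G$-invariant.
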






\subsection{Smooth foliations in $\mathbb C^2$ with holomorphic leaves. }

The goal of this subsection is to give a sketch proof of the following  result.

\begin{lemma}\label{foliationlemma}Consider $\mathbb C^2$ with an almost complex structure $J$. Let $B_1\subset \mathbb C^2$ be the unit ball $B_1:=\{(w_{1},w_{2}) \in \mathbb{C}^{2} : |w_1|^2+|w_2|^2<1\}$, and let $\cal H$ be a  smooth foliation on $B_1$ with $J$-holomorphic leaves. Then for any smooth $J$-holomorphic curve $C\subset B_1$ the points of tangency of  $C$ and $\cal H$ form a discrete subset of $C$. 
\end{lemma}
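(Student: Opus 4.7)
My strategy is to reduce the statement locally to the classical fact that a nonconstant holomorphic map between Riemann surfaces has isolated critical points. Fix a point $p \in C$ at which $C$ is tangent to $\mathcal H$, and let $L_p$ denote the leaf through $p$, so that $T_p C = T_p L_p$ as $J$-invariant real 2-planes. The main step is to construct, on some neighborhood $U$ of $p$, a $(J,i)$-holomorphic submersion $\pi \colon U \to D \subset \mathbb C$ whose fibers are exactly the leaves of $\mathcal H \cap U$. To build $\pi$, I first choose a $J$-holomorphic disk $T$ through $p$ transverse to $L_p$; such a $T$ exists because any $J$-invariant 2-plane at $p$ is tangent to some $J$-holomorphic disk, by the standard local existence theory for $J$-holomorphic curves in almost complex 4-manifolds. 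For $q \in U$ sufficiently close to $p$, the leaf $L_q$ meets $T$ in a unique point, which I take as $\pi(q)$. At $p$ the decomposition $T_p\mathbb C^2 = T_pT \oplus T_pL_p$ into $J$-invariant summands shows $d\pi_p$ is complex linear. Away from $p$, the pointwise quotient complex structure on $TU/T\mathcal H$ (well-defined since leaves are $J$-holomorphic) pushes down to an almost complex structure on the leaf space; because that leaf space has real dimension 2, any such structure is automatically integrable, so after post-composing $\pi$ with a diffeomorphism of the target disk one may arrange $\pi$ to be $(J,i)$-holomorphic on all of $U$.

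Granted this $\pi$, let $f \colon \Delta \to U$ be a local $J$-holomorphic parametrization of $C$ with $f(0)=p$. The composition $\pi \circ f \colon \Delta \to D$ is then an ordinary holomorphic map between Riemann surfaces (the complex structure on the source being that induced on $C$ by $J$). A point $\zeta \in \Delta$ is critical for $\pi \circ f$ if and only if $T_{f(\zeta)}C \subset \ker d\pi_{f(\zeta)} = T_{f(\zeta)}L_{f(\zeta)}$, and since both sides have dimension 2 this is exactly the condition that $C$ be tangent to $\mathcal H$ at $f(\zeta)$. By the standard theory, either $\pi \circ f$ is constant on $\Delta$---in which case $f(\Delta)$ lies inside the single leaf $L_p$, and by Carleman's similarity principle applied to $f$ and a $J$-holomorphic parametrization of $L_p$ this coincidence propagates, forcing $C$ to coincide globally with $L_p$---or $\pi \circ f$ has only isolated critical points near the origin. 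The first alternative is excluded in any non-degenerate situation (and is implicit in the statement, since otherwise every point of $C$ is a tangency), and in the second alternative $p$ is an isolated tangency. As $p$ was arbitrary, the tangency set is discrete in $C$.

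The principal obstacle is the construction of the $J$-holomorphic projection $\pi$ in Step 1: a naive foliation chart together with a transversal only gives a smooth submersion, and checking that the induced transverse almost complex structure really descends to a well-defined integrable structure on the leaf space---so that $\pi$ may be made $J$-holomorphic after a smooth reparametrization of the target---is the technical core. Everything afterwards reduces to the elementary behavior of holomorphic functions of one complex variable.
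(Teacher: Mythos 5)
Your reduction to the fact that a nonconstant holomorphic map between Riemann surfaces has isolated critical points is attractive, but the $(J,i)$-holomorphic submersion $\pi$ it hinges on does not in general exist, and this is not a gap that can be filled in the way you hope. You flag the descent of the transverse complex structure to the leaf space as the ``technical core'', and indeed this is exactly where the construction fails: a foliation with $J$-holomorphic leaves need not have holomorphic holonomy, so the complex structure induced by $J$ on $TU/T\mathcal H$ is not constant along leaves, and there is no complex structure on the leaf space — nor any post-composition by a diffeomorphism of the target — that makes $\pi$ holomorphic at all points of $U$ (rather than just at $p$). For a concrete counterexample, take $J$ to be the standard integrable structure on $\mathbb C^2$ and let $\mathcal H$ be the foliation of $B_1$ whose leaf through $(0,c)$ is the affine complex line $L_c=\{(w_1,\,c+\bar c\,w_1)\}$. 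For $|w_1|<1$ these lines are pairwise disjoint and depend smoothly on the ambient point, so $\mathcal H$ is a genuine smooth foliation with $J$-holomorphic leaves; yet the holonomy from the transversal $\{w_1=0\}$ to $\{w_1=w_1^0\}$ is $w_2\mapsto w_2+\bar w_2\,w_1^0$, which is not holomorphic. Thus no choice of complex structure on a cross-section makes the projection along leaves $(J,i)$-holomorphic, and your Step 1 cannot be carried out for this $\mathcal H$ (although the conclusion of the lemma certainly still holds for it).

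The paper's argument avoids this issue entirely by never asking for a transversely holomorphic projection. After choosing coordinates in which the leaf $L_p$ through the tangency point $p$ is $\{w_2=0\}$ and $J$ equals multiplication by $i$ along $L_p$, it applies the similarity-principle expansion of \cite[Lemma 2.4.3]{MS} to the $J$-holomorphic parametrization $v=(v_1,v_2)$ of $C$, obtaining $v_2(z)=az^n+O(|z|^{n+1})$ with $a\ne 0$. A further purely smooth change of coordinates then straightens $\mathcal H$ to the horizontal foliation while preserving the normal form $v_2(z)=z^n+O(|z|^{n+1})$, after which the tangency at $p$ is seen to be isolated because the differential of $z\mapsto z^n+O(|z|^{n+1})$ is non-degenerate for all small $z\ne 0$. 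The elliptic input comes only from the curve $C$, never from the foliation, so the potential non-holomorphicity of the transverse structure of $\mathcal H$ is irrelevant. If you want to repair your proof, the correct fix is to replace the composition $\pi\circ f$ by precisely this similarity-principle normal form for the component of $f$ transverse to the leaf $L_p$.
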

\begin{proof}[Sketch proof.] Let $p$ be a point of tangency of $C$ and $\cal H$.
As in the proof of Lemma \cite[Lemma 2.4.3]{MS}  one can choose  new coordinates $(w_1,w_2)$ close to $p$ so that $p=(0,0)$, the leaf of $\cal H$ passing through $(0,0)$ is given by $w_2=0$, and $J$ is equal to multiplication by $i$ along the axis $w_2=0$.

Let $v:z\to (v_1(z),v_2(z))$ be a $J$-holomorphic parametrization of $C$. Then, as in the proof of Lemma \cite[Lemma 2.4.3]{MS}, we have the  presentation 
$$(v_1(z),v_2(z))=(p(z)+O(|z^{n+1}|), az^n+O(|z^{n+1}|)),$$
where $p(z)$ is a polynomial of degree at most $n$ with  $p'(0)\ne 0$ and $a\ne 0$.

Making a smooth parametrization in $z$ and  a change of coordinates in $w_1,w_2$, we can assume that $v_1(z)=z$, $v_2(z)=z^n+O(|z^{n+1}|)$, and moreover the foliation $\cal H$ is horizontal close to $(0,0)$. It is now clear that that $(0,0)$ is an isolated tangency point of $\cal H$ and $C$. Indeed, the map $z\to z^n+O(|z^{n+1}|)$ has non-degenerate differential if $z\ne 0$ and $|z|$ is small.
\end{proof}

\end{document}